\documentclass[12pt,reqno]{amsart}
%============= chapter in amsart ==========================
% ref: http://tex.stackexchange.com/questions/241183/import-chapter-into-amsart
\makeatletter

\def\chaptermark#1{}%whatever

\def\chapter{%
  \if@openright\cleardoublepage\else\clearpage\fi
  \thispagestyle{plain}\global\@topnum\z@
  \@afterindenttrue \secdef\@chapter\@schapter}

\def\@chapter[#1]#2{\refstepcounter{chapter}%
  \ifnum\c@secnumdepth<\z@ \let\@secnumber\@empty
  \else \let\@secnumber\thechapter \fi
  \typeout{\chaptername\space\@secnumber}%
  \def\@toclevel{0}%
  \ifx\chaptername\appendixname \@tocwriteb\tocappendix{chapter}{#2}%
  \else \@tocwriteb\tocchapter{chapter}{#2}\fi
  \chaptermark{#1}%
  \addtocontents{lof}{\protect\addvspace{10\p@}}%
  \addtocontents{lot}{\protect\addvspace{10\p@}}%
  \@makechapterhead{#2}\@afterheading}
\def\@schapter#1{\typeout{#1}%
  \let\@secnumber\@empty
  \def\@toclevel{0}%
  \ifx\chaptername\appendixname \@tocwriteb\tocappendix{chapter}{#1}%
  \else \@tocwriteb\tocchapter{chapter}{#1}\fi
  \chaptermark{#1}%
  \addtocontents{lof}{\protect\addvspace{10\p@}}%
  \addtocontents{lot}{\protect\addvspace{10\p@}}%
  \@makeschapterhead{#1}\@afterheading}
\newcommand\chaptername{Chapter}

\def\@makechapterhead#1{\global\topskip 7.5pc\relax
  \begingroup
  \fontsize{\@xivpt}{18}\bfseries\centering
    \ifnum\c@secnumdepth>\m@ne
      \leavevmode \hskip-\leftskip
      \rlap{\vbox to\z@{\vss
          \centerline{\normalsize\mdseries
              \uppercase\@xp{\chaptername}\enspace\thechapter}
          \vskip 3pc}}\hskip\leftskip\fi
     #1\par \endgroup
  \skip@34\p@ \advance\skip@-\normalbaselineskip
  \vskip\skip@ }
\def\@makeschapterhead#1{\global\topskip 7.5pc\relax
  \begingroup
  \fontsize{\@xivpt}{18}\bfseries\centering
  #1\par \endgroup
  \skip@34\p@ \advance\skip@-\normalbaselineskip
  \vskip\skip@ }
\def\appendix{\par
  \c@chapter\z@ \c@section\z@
  \let\chaptername\appendixname
  \def\thechapter{\@Alph\c@chapter}}

\newcounter{chapter}
\newif\if@openright

\makeatother
%========== fixing section number ===========

\makeatletter
\@addtoreset{section}{chapter}
\makeatother  
%============= common packages ==========================
\usepackage{amsmath, amssymb, amsthm, mathrsfs}
\usepackage{stmaryrd} %\llbracket, \rrbracket
%============= preamble in FL =============================
\usepackage{amsfonts, latexsym, amscd,color} %, pxfonts}

\usepackage[all]{xy}
\usepackage[makeroom]{cancel}
\usepackage{soul}
%\usepackage{yhmath}

%============= preamble in LLW ============================
%\usepackage{diagrams}
%\usepackage{latexsym, amsfonts}
%\usepackage{mathbbold}
%\usepackage{mathrsfs}
%\usepackage{pifont}
%\usepackage{textcomp} %\textonehalf
%\usepackage{xparse} %optional argument in newcommand
\usepackage{bbm} % \mathbbm

\usepackage{hyperref, color}
\usepackage{leftidx} %\ltrans, \leftidx
\usepackage{mathabx} %\widecheck
\usepackage{mathtools} %\prescript
\usepackage{rotating} %\rotatebox
\usepackage{stmaryrd} %\llbracket, \rrbracket

%------- ytableau -------------------------------
\usepackage{youngtab,young}

%------ todonotes ------------------------------
\usepackage{todonotes}
\setlength{\marginparwidth}{2cm}
%\todo{}, \todo[inline]{}, \listoftodos <= these commands eat one space!
%--------------- amsmath matrix extension ----
\usepackage{arydshln}
\makeatletter
\renewcommand*\env@matrix[1][*\c@MaxMatrixCols c]{%
  \hskip -\arraycolsep
  \let\@ifnextchar\new@ifnextchar
  \array{#1}}
\makeatother
%-------------------- pgf, tikz -----------------------------------------------------------
\usepackage{pgf, tikz}
\usetikzlibrary{arrows, positioning, calc, chains}
\tikzset{
	ch/.style={circle,draw,on chain,inner sep=2pt},
	chj/.style={ch,join},
	every path/.style={shorten >=4pt,shorten <=4pt}
	}

%----------------- equation ----------------------------------
\numberwithin{equation}{section}
%----------------- margin -------------------------------------
%\setlength{\textheight}{8.6in} \setlength{\textwidth}{35pc}
%\setlength{\topmargin}{-0.1in} \setlength{\footskip}{0.2in}
%\setlength{\oddsidemargin}{.573125pc}
%\setlength{\evensidemargin}{\oddsidemargin}
%----------------- macro --------------------------------------
%\newcommand{\md}[5]{ 
%	\begin{tabular}[t]{ccccc}
%	$#1$&$\times$&$#2$&$\rightarrow$&$#2$\\
%	$(#3$&,&$#4$)&$\mapsto$&$#5$\\
%	\end{tabular}
%}
%\newcommand{\bij}[4]{  
%	\begin{tabular}[t]{ccccc}
%	$#1$& $\longleftrightarrow$ & $#2$\\
%	$#3$& $\leftrightsquigarrow$ & $#4$\\
%	\end{tabular}
%}

\newcommand{\A}{\mathfrak o}

\newcommand{\ap}{{ap}}
\newcommand{\asl}{\widehat{\mathfrak{sl}}}

\newcommand{\bH}{\mbf H}
\newcommand{\bh}{\mbf h}
\newcommand{\bj}{\mathbf{j}}
\newcommand{\bJ}{\mbf J}
\newcommand{\bK}{\mbf K}
\newcommand{\bk}{\mbf k}

\newcommand{\bU}{\mathbf{U}}

\newcommand{\cA}{\mathcal{A}}

\newcommand{\C}{\mathfrak{c}}

 %Hecke alg

\newcommand{\co}{\textup{co}}

%cycle ontation
%cycle ontation
 %cycle ontation

 %distinguished coset representative
 %distinguished coset representative
 %distinguished coset representative

\newcommand{\dep}{\mrm{dep}}

\newcommand{\E}{\mathbf E}
\newcommand{\e}{\mathbf e}
%{\mathcal{E}}
%{\prescript{\textup{e}}{}{W}^{\textup{B}}}
%{\prescript{\textup{e}}{}{W}^{\textup{B}}}
\newcommand{\End}{\textup{End}}

\newcommand{\f}{\mathbf f}
\newcommand{\F}{\mbf F}
\newcommand{\fa}{\mathfrak{a}}

\newcommand{\fc}{\mathfrak{c}}

\newcommand{\gl}{\mathfrak{gl}}
\newcommand{\glh}{\widehat{\mathfrak{gl}}}
\newcommand{\GL}{\mrm{GL}}

\newcommand{\K}{\dot{\mbf K}}
%^{\textup{aff}}}
%^{\textup{aff}}}

\newcommand{\KK}{\mbf{K}}

 %

 %

 %usual quantum number
 %usual q binomial
 %type BC
 %type BC q bino
 % bar-invariant
%^{\textup{aff}}} %bar-invariant, aff B

\newcommand{\mbb}{\mathbb}
\newcommand{\mbf}{\mathbf}

\newcommand{\mm}{\eta}   
\newcommand{\MX}{\Xi}  %\mbb M}   
\newcommand{\mrm}{\mathrm}

\newcommand{\nn}{\mathfrak{n}}

 %one/two-line notation
 %one/two-line notation
 %one/two-line notation

%_{\~{\textup{B}}}}
%_{\~{\textup{B}}}}

 %q-Schur

\newcommand{\ro}{\mrm{ro}}

%{s_d^{\~{\textup{B}}}}
%{s_d^{\~{\textup{B}}}}
\newcommand{\Sb}{\mbf{iS}}

 %simple sys

\newcommand{\slh}{\widehat{\mathfrak{sl}}}
\newcommand{\sll}{\mathfrak{sl}}

\newcommand{\Sj}{{\mbf S}^{\fc}_{n,d}}

\newcommand{\smxylabel}[1]{{\text{\small$#1$}}}
\newcommand{\SP}{\mrm{Sp}}

 %q-Schur
 %q-Schur

%^{\textup{aff}}}
\newcommand{\U}{\mbf U}
\newcommand{\Ua}{\dot \bU^{\fc}_{n, \text{alg}}}
\newcommand{\Ub}{\mbf{iU}}
\newcommand{\Ubd}{\mbf{i}\dot{\mbf{U}} }

\newcommand{\ve}{\varepsilon}

%{\prescript{\textup{a}}{}{W}^{\textup{B}}}
%{\prescript{\textup{a}}{}{W}^{\textup{B}}}

\newcommand{\X}{\mathcal X}
%^{\textup{aff}}}
%^{\textup{aff}}}
%^{\textup{aff}}}

\newcommand{\Y}{\mathcal Y}

\newcommand{\Z}{\mathcal Z}

\newcommand{\ZZ}{\mathbb{Z}}

\usepackage{enumitem }

%%%%%%%Page stuff%%%%%%%%%%%%%
\setlength{\hoffset}{0pt}
\setlength{\voffset}{0pt}
\setlength{\topmargin}{0pt}
\setlength{\oddsidemargin}{0in}
\setlength{\evensidemargin}{0in}
\setlength{\textheight}{8.75in}
\setlength{\textwidth}{6.5in}
\pagestyle{headings}
%\renewcommand{\baselinestretch}{1.25}

%%%%%%%%theorem stuff%%%%%%%%%
\theoremstyle{definition}
\newtheorem{Def}{Definition}[section] %[subsection]

\newtheorem{example}[Def]{Example}

\newtheorem{rem}[Def]{Remark}

\theoremstyle{plain}
\newtheorem{prop}[Def]{Proposition}
\newtheorem{thm}[Def]{Theorem}
\newtheorem{lem}[Def]{Lemma}

\newtheorem{cor}[Def]{Corollary}

\newtheorem{thrm}{Theorem}

%%%%%%%%%%
% j-i version
%%%%%%%%%%

\newcommand{\ji}{\jmath \imath}
\newcommand{\Dji}{\Delta^{\ji}}
\newcommand{\Eji}{\check{\mbf E}}
\newcommand{\Fji}{\check{\mbf F}}
\newcommand{\Kji}{\check{\mbf K}}
\newcommand{\Hji}{\check{\mbf H}}
\newcommand{\eji}{\check{\mbf e}}
\newcommand{\fji}{\check{\mbf f}}
\newcommand{\kji}{\check{\mbf k}}
\newcommand{\hji}{\check{\mbf h}}
\newcommand{\tji}{\check{\mbf t}}
\newcommand{\iji}{{\jiw}}
\newcommand{\Kcji}{\dot{\mbf K}^{\ji}_{\nn}}

%%%%%%%%%%
% j-j version
%%%%%%%%%%
\newcommand{\jj}{\jmath \jmath}

\newcommand{\Kc}{\dot{\mbf K}^{\fc}}
%%%%%%%%%%
% i-j version
%%%%%%%%%%

\renewcommand{\ij}{\imath \jmath}
\newcommand{\Dij}{\Delta^{\ij}}
\newcommand{\Eij}{\hat{\mbf E}}
\newcommand{\Fij}{\hat{\mbf F}}
\newcommand{\Kij}{\hat{\mbf K}}
\newcommand{\Hij}{\hat{\mbf H}}
\newcommand{\eij}{\hat{\mbf e}}
\newcommand{\fij}{\hat{\mbf f}}
\newcommand{\kij}{\hat{\mbf k}}
\newcommand{\hij}{\hat{\mbf h}}
\newcommand{\tij}{\hat{\mbf t}}
\newcommand{\iij}{{\ijw}}
\newcommand{\Kcij}{\dot{\mbf K}^{\ij}_{\nn}}
\newcommand{\Sij}{{\mbf S}}
\newcommand{\Sji}{{\mbf S}}
\newcommand{\Sii}{{\mbf S}}

%%%%%%%%%%
% i-i version
%%%%%%%%%%

\newcommand{\ii}{\imath \imath}
\newcommand{\Dii}{\Delta^{\ii}}
\newcommand{\Eii}{\tilde{\mbf E}}
\newcommand{\Fii}{\tilde{\mbf F}}
\newcommand{\Kii}{\tilde{\mbf K}}
\newcommand{\Hii}{\tilde{\mbf H}}
\newcommand{\eii}{\tilde{\mbf e}}
\newcommand{\fii}{\tilde{\mbf f}}
\newcommand{\kii}{\tilde{\mbf k}}
\newcommand{\hii}{\tilde{\mbf h}}
\newcommand{\tii}{\tilde{\mbf t}}
\newcommand{\iii}{\iiw}  %\tilde{\imath}}
\newcommand{\Kcii}{\dot{\mbf K}^{\ii}_{\mm}}

%%%%%%%%%%%
% new jj, ji, ij, ii
%%%%%%%%%%%
\newcommand{\jjw}{\mbox{$\jmath$\kern-1.3pt$\jmath$}}
\newcommand{\jiw}{\mbox{$\jmath$\kern-0.8pt$\imath$}}
\newcommand{\ijw}{\mbox{$\imath$\kern-1.8pt$\jmath$}}
\newcommand{\iiw}{\mbox{$\imath$\kern-1.0pt$\imath$}}
%\newcommand{\nnw}{\mbox{-\kern-5.0pt$n$}}
%\newcommand{\mmw}{\acute{n}}   %\grave{n}}  %{\mbox{$`$ \kern-7.0pt$n$}}

%=========== end of preamble ==================================

\title[Affine flag varieties and quantum symmetric pairs]{Affine flag varieties and quantum symmetric pairs}    %and affine flag varieties of type $C$}
%\makeindex

\author[Z. Fan, C. Lai, Y. Li, L. Luo, and W. Wang]{Zhaobing Fan,  Chun-Ju Lai, Yiqiang Li,  Li Luo,
and Weiqiang Wang}
\address{School of science, Harbin Engineering University, Harbin, China 150001}
    \email{fanz@math.ksu.edu  (Fan)}
\address{Department of Mathematics, University of Virginia, Charlottesville, VA 22904}
    \email{cl8ah@virginia.edu (Lai) }
\address{Department of Mathematics, University at Buffalo, SUNY, Buffalo, NY 14260}
    \email{yiqiang@buffalo.edu (Li)}
\address{ %${}^{\dagger}$ 
    Department of mathematics, East China Normal University, Shanghai, China 200241}
\email{lluo@math.ecnu.edu.cn (Luo), ww9c@virginia.edu (Wang)}

%\date{ }
\keywords{Affine flag variety, affine quantum symmetric pair, canonical basis.}
\subjclass[2010]{17B37, 20G25, 14F43.}

\begin{document}

\begin{abstract}    
The quantum groups of finite and affine type $A$ admit geometric realizations in terms of partial flag varieties of finite and affine type $A$.
Recently, the quantum group associated to partial flag varieties of finite type $B/C$ is shown to be a coideal subalgebra of the quantum group of finite type $A$.
In this paper we study the structures of Schur algebras  and Lusztig algebras associated to (four variants of) partial flag varieties of affine type $C$. We  
show that the quantum groups arising from Lusztig algebras and Schur algebras via stabilization procedures are (idempotented) coideal subalgebras 
of quantum groups of affine $\mathfrak{sl}$ and $\mathfrak{gl}$ types, respectively. 
In this way, we provide geometric realizations of eight quantum symmetric pairs of affine types. 
We construct monomial and canonical bases of all these quantum (Schur, Lusztig, and coideal) algebras. 
For the idempotented coideal algebras of affine $\mathfrak{sl}$ type, 
we establish the positivity properties of the canonical basis with respect to multiplication, comultiplication and a bilinear pairing. 
%For the idempotented coideal algebras of affine $\mathfrak{gl}$ type, we construct monomial bases and stably canonical bases. 
In particular, we obtain a new and geometric construction of the idempotented quantum affine $\mathfrak{gl}$ and its canonical basis. 
\end{abstract}

\maketitle

\setcounter{tocdepth}{1}
\tableofcontents  

%%%%%%%%%%%%%%%%%%%
%%%%%%%%%%%%%%%%%%%
\chapter{Introduction}

%%%%%
\section{Background}  
  \label{sec:finiteABC}

\subsection{}

Iwahori \cite{I64}  provided a geometric realization of Iwahori-Hecke algebras $\mbf H_W^{\text{fin}}$ 
as convolution algebras on pairs of (finite type) complete flags over a finite field.
Iwahori-Matsumoto \cite{IM65} have subsequently realized the affine Hecke algebras using pairs of complete flags of affine (or $p$-adic) type
over a local field. 
These works are foundational for geometric representation theory. 

The Drinfeld-Jimbo  quantum groups \cite{Dr86, Jim86} have played important roles in many areas of mathematics.
Beilinson, Lusztig and MacPherson \cite{BLM90} provided a geometric realization of quantum group $\bU(\gl_n)$ of finite type $A$. 
The BLM construction utilizes the $n$-step flag varieties in an ambient space of dimension $d$, and the convolution algebra
on pairs of $n$-step flags is shown to be the quantum Schur algebra $\mbf S_{n,d}^{\text{fin}}$; this can be viewed as 
a generalization of Iwahori's construction of Hecke algebras in finite type $A$. 

Beilinson, Lusztig and MacPherson \cite{BLM90} further established multiplication formulas in $\mbf S_{n,d}^{\text{fin}}$
with  divided powers of Chevalley generators, which allows them to observe some remarkable stabilization phenomenon as $d\to \infty$. 
A suitable limit construction gives rise to the idempotented quantum group $\dot{\bU}(\gl_n)$ and its (stably) canonical basis. 
The construction is easily modified further to produce variants such as $\bU(\gl_n)$, $\bU(\sll_n)$, and the idempotented form $\dot{\bU}(\sll_n)$.
%\red{\st{There are natural surjective homomorphisms from either $\dot{\bU}(\gl_n)$ or $\dot{\bU}(\sll_n)$ to the Schur algebras $\mbf S_{n,d}^{\text{fin}}$.}}
The idempotented form $\dot{\bU}(\sll_n)$ also has a canonical basis (cf. \cite{Lu93}, \cite{K94}), 
in analog with the Kazhdan-Lusztig bases for Iwahori-Hecke algebras ~\cite{KL79}. 
A modification of the above construction \cite{GL92}  provides a geometric realization of the Schur-Jimbo duality \cite{Jim86}. 

There have been some generalizations of the BLM-type construction using the $n$-step (partial) flag varieties of affine type $A$ earlier on;
see Ginzburg-Vasserot \cite{GV93} and Lusztig \cite{Lu99, Lu00} (also cf. \cite{VV99, Mc12, P09} for further developments).
However, there is a major difference between affine and finite type $A$, which was first made clear by Lusztig. 
He showed that a natural homomorphism from $\bU (\slh_n)$ to the  affine quantum Schur algebra ${\mbf S}_{n,d}$ is no longer surjective
(the image is denoted by $\bU_{n,d}$ and called {\em Lusztig algebra} in this paper). 
Alternatively, one could characterize $\bU_{n,d}$ as the proper subalgebra of ${\mbf S}_{n,d}$
generated by 
the Chevalley generators.  

There has been a new (algebraic) approach recently developed by \cite{DF13, DF14} (see also \cite{G99}) which allows one to construct 
a larger algebra $\dot\bU(\glh_n)$  (called the idempotented quantum affine $\gl_n$ in this paper;
also known as the quantum loop algebra of $\gl_n$), from BLM-type stabilization
of the affine Schur algebras ${\mbf S}_{n,d}$. 
%\red{\st{There is a natural surjective homomorphism from} $\dot\bU(\glh_n)$ \st{to} ${\mbf S}_{n,d}$. }

%%
\subsection{}

Since the constructions of Iwahori and Iwahori-Matsumoto are valid for flag varieties of any
finite and affine type, it is a natural question since the work of \cite{BLM90} in 1990 to ask for 
generalization of the above  type $A$ constructions  to other, say classical, types. 
The progress in this direction has been made only in recent years.  Motivated by  \cite{BW13}, 
Bao, Kujawa, and two of the authors \cite{BKLW14, BLW14} provided
a geometric construction of Schur-type algebras  $\Sb_{n,d}^{\text{fin}}$ 
(denoted therein by $\mbf S^{\jmath}$ for $n$ odd and $\mbf S^{\imath}$ for $n$ even)
in terms of $n$-step flag varieties of type $B_d$ (or $C_d$). 
%\red{We refer to ~\cite{ES13a, ES13b} for an independent approach to the interactions of classical flag varieties and coideal subalgebras.}

The authors of \cite{BKLW14, BLW14} further established multiplication formulas in the Schur algebras  $\Sb_{n,d}^{\text{fin}}$ 
with  divided powers of Chevalley generators, which again enjoy some remarkable stabilization properties as $d\mapsto \infty$.
They showed the quantum algebra arising from the stabilization procedure is a coideal subalgebra $\Ub(\gl_n)$  of $\bU(\gl_n)$
(this coideal subalgebra was denoted in {\em loc. cit.} as $\bU^{\jmath}$ for $n$ odd and $\bU^{\imath}$ for $n$ even);  the pair $(\bU(\gl_n), \Ub(\gl_n))$ forms
a quantum symmetric pair in the sense of Letzter \cite{Le02} and Kolb \cite{Ko14}. 
An  $(\Ub(\gl_n), \mbf H_{C_d}^{\text{fin}})$-duality (called iSchur duality) is also realized in \cite{BKLW14} by using mixed pairs of $n$-step flags and complete flags of type B/C.
The iSchur duality was discovered algebraically and categorically in \cite{BW13} as a crucial ingredient for a new approach to Kazhdan-Lusztig theory of classical type. 

A new canonical basis (called $\imath$canonical basis) was constructed in \cite{BW13} for various tensor product modules of $\Ub(\gl_n)$, and
the $\imath$canonical basis for the idempotented form $\Ubd(\gl_n)$ was subsequently constructed in \cite{BKLW14}. 
It has been shown in \cite{FL14} that coideal like algebras together with their $\imath$canonical bases arise from partial flag varieties of type $D$. 
There has been a further geometric realization of the idempotented coideal subalgebra  $\Ubd(\sll_n)$ of $\bU(\sll_n)$ \cite{LW15} and its canonical basis. 

(To distinguish from many other different quantum coideal subalgebras in the literature, the quantum coideal subalgebras appearing in quantum symmetric pairs
could be called {\em $\imath$quantum groups}, where $\imath$ stands for involution or isotropic.)

For canonical bases, there is a major difference between $\dot{\bU}(\gl_n)$ and $\dot{\bU}(\sll_n)$, or between idempotented coideal subalgebras of $\gl$ and $\sll$ type:
the canonical basis of $\dot{\bU}(\sll_n)$ admits remarkable positivity properties with respect to
multiplication and a bilinear pairing \cite{SV00, Mc12, LW15} and so does the canonical basis of $\Ubd(\sll_n)$ \cite{LW15}.
It is recently shown in \cite{FL15} that the canonical bases of idempotented quantum (affine) $\sll_n$ and idempotented coideal algebra $\Ubd(\sll_n)$ admit
positivity property with respect to the comultiplication. 
In contrast, the canonical bases of $\dot{\bU}(\gl_n)$ and of $\Ubd(\gl_n)$ both fail to exhibit a positivity property with respect to multiplication; see \cite{LW15}. 
%For this reason, it was proposed in \cite{LW15} to refer to the ``canonical bases" of $\dot{\bU}(\gl_n)$ and of
%$\dot{\bU}^{i}(\gl_n)$ as {\em stably canonical cases}. 

%%%%%
\section{The goal: affine type $C$}

\subsection{}

The goal of this paper is to initiate the study of the Schur algebras and quantum groups arising from partial flag varieties of classical affine type beyond type $A$,
generalizing  the constructions in finite type $B/C$ in Section~\ref{sec:finiteABC}.

In this paper, we  focus on the affine type $C$. %, which we believe to be the simplest affine case beyond affine type $A$.
As we shall see, the affine type $C$ setting already provides a more challenging and much richer setting than the finite type $C$ and the affine type $A$. 
For each of the two  type $A$ quantum affine algebras  (of level zero)
$\bU(\slh_n)$ and $\bU(\glh_n)$, we shall provide geometric realizations of four different (idempotented)
coideal subalgebras and their canonical bases.  
(The four cases are denoted by $\jj, \ji, \ijw, \ii$, respectively; we also write $\fc \equiv \jj$.)
The corresponding four Dynkin diagrams with involutions are depicted in 
Figures~\ref{figure:jj}, \ref{figure:ji}, \ref{figure:ij}, and \ref{figure:ii}, respectively, as follows. 
Therefore, in total we have provided a geometric realization of eight distinct quantum symmetric pairs of affine type. 
 %===============================================================
\begin{figure}[ht!]
\caption{Dynkin diagram of type $A^{(1)}_{2r+1}$ with involution of type $\jmath\jmath \equiv \C$.}
 \label{figure:jj}
\begin{tikzpicture}
\matrix [column sep={0.6cm}, row sep={0.5 cm,between origins}, nodes={draw = none,  inner sep = 3pt}]
{
	\node(U1) [draw, circle, fill=white, scale=0.6, label = 0] {}; 
	&\node(U2)[draw, circle, fill=white, scale=0.6, label =1] {};
	&\node(U3) {$\cdots$};  
	&\node(U4)[draw, circle, fill=white, scale=0.6, label =$r-1$] {}; 
	&\node(U5)[draw, circle, fill=white, scale=0.6, label =$r$] {};
\\
	&&&&&
\\
	\node(L1) [draw, circle, fill=white, scale=0.6, label =below:$2r+1$] {};  
	&\node(L2)[draw, circle, fill=white, scale=0.6, label =below:$2r$] {};
	&\node(L3) {$\cdots$};  
	&\node(L4)[draw, circle, fill=white, scale=0.6, label =below:$r+2$] {}; 
	&\node(L5)[draw, circle, fill=white, scale=0.6, label =below:$r+1$] {};
\\
};
\begin{scope}
\draw (L1) -- node  {} (U1);
\draw (U1) -- node  {} (U2);
\draw (U2) -- node  {} (U3);
\draw (U3) -- node  {} (U4);
\draw (U4) -- node  {} (U5);
\draw (U5) -- node  {} (L5);
\draw (L1) -- node  {} (L2);
\draw (L2) -- node  {} (L3);
\draw (L3) -- node  {} (L4);
\draw (L4) -- node  {} (L5);
\draw (L1) edge [color = blue,<->, bend right, shorten >=4pt, shorten <=4pt] node  {} (U1);
\draw (L2) edge [color = blue,<->, bend right, shorten >=4pt, shorten <=4pt] node  {} (U2);
\draw (L4) edge [color = blue,<->, bend left, shorten >=4pt, shorten <=4pt] node  {} (U4);
\draw (L5) edge [color = blue,<->, bend left, shorten >=4pt, shorten <=4pt] node  {} (U5);
\end{scope}
\end{tikzpicture}
\end{figure}
%ji===============================================================
\begin{figure}[ht!]
\caption{Dynkin diagram of type $A^{(1)}_{2r}$ with involution of type $\jmath\imath$.}
   \label{figure:ji}
\begin{tikzpicture}
\matrix [column sep={0.6cm}, row sep={0.5 cm,between origins}, nodes={draw = none,  inner sep = 3pt}]
{
	\node(U1) [draw, circle, fill=white, scale=0.6, label = 0] {}; 
	&\node(U2)[draw, circle, fill=white, scale=0.6, label =1] {};
	&\node(U3) {$\cdots$};  
	&\node(U5)[draw, circle, fill=white, scale=0.6, label =$r-1$] {};
\\
	&&&& 
	\node(R)[draw, circle, fill=white, scale=0.6, label =$r$] {};
\\
	\node(L1) [draw, circle, fill=white, scale=0.6, label =below:$2r$] {};  
	&\node(L2)[draw, circle, fill=white, scale=0.6, label =below:$2r-1$] {};
	&\node(L3) {$\cdots$};  
	&\node(L5)[draw, circle, fill=white, scale=0.6, label =below:$r+1$] {};
\\
};
\begin{scope}
\draw (U1) -- node  {} (U2);
\draw (U2) -- node  {} (U3);
\draw (U3) -- node  {} (U5);
\draw (U5) -- node  {} (R);
\draw (U1) -- node  {} (L1);
\draw (L1) -- node  {} (L2);
\draw (L2) -- node  {} (L3);
\draw (L3) -- node  {} (L5);
\draw (L5) -- node  {} (R);
\draw (R) edge [color = blue,loop right, looseness=40, <->, shorten >=4pt, shorten <=4pt] node {} (R);
\draw (L1) edge [color = blue,<->, bend right, shorten >=4pt, shorten <=4pt] node  {} (U1);
\draw (L2) edge [color = blue,<->, bend right, shorten >=4pt, shorten <=4pt] node  {} (U2);
\draw (L5) edge [color = blue,<->, bend left, shorten >=4pt, shorten <=4pt] node  {} (U5);
\end{scope}
\end{tikzpicture}
\end{figure}
%ij===============================================================
\begin{figure}[ht!]
\caption{Dynkin diagram of type $A^{(1)}_{2r}$ with involution of type $\imath\jmath$.}
   \label{figure:ij}
\begin{tikzpicture}
\matrix [column sep={0.6cm}, row sep={0.5 cm,between origins}, nodes={draw = none,  inner sep = 3pt}]
{
	&\node(U1) [draw, circle, fill=white, scale=0.6, label = 1] {}; 
	&\node(U3) {$\cdots$};  
	&\node(U4)[draw, circle, fill=white, scale=0.6, label =$r-1$] {}; 
	&\node(U5)[draw, circle, fill=white, scale=0.6, label =$r$] {};
\\
	\node(L)[draw, circle, fill=white, scale=0.6, label =0] {}; 
	&&&&&
\\
	&\node(L1) [draw, circle, fill=white, scale=0.6, label =below:$2r$] {};  
	&\node(L3) {$\cdots$};  
	&\node(L4)[draw, circle, fill=white, scale=0.6, label =below:$r+2$] {}; 
	&\node(L5)[draw, circle, fill=white, scale=0.6, label =below:$r+1$] {};
\\
};
\begin{scope}
\draw (L) -- node  {} (U1);
\draw (U1) -- node  {} (U3);
\draw (U3) -- node  {} (U4);
\draw (U4) -- node  {} (U5);
\draw (U5) -- node  {} (L5);
\draw (L) -- node  {} (L1);
\draw (L1) -- node  {} (L3);
\draw (L3) -- node  {} (L4);
\draw (L4) -- node  {} (L5);
\draw (L) edge [color = blue, loop left, looseness=40, <->, shorten >=4pt, shorten <=4pt] node {} (L);
\draw (L1) edge [color = blue,<->, bend right, shorten >=4pt, shorten <=4pt] node  {} (U1);
\draw (L4) edge [color = blue,<->, bend left, shorten >=4pt, shorten <=4pt] node  {} (U4);
\draw (L5) edge [color = blue,<->, bend left, shorten >=4pt, shorten <=4pt] node  {} (U5);
\end{scope}
\end{tikzpicture}
\end{figure}
%ii===============================================================
\begin{figure}[ht!]
\caption{Dynkin diagram of type $A^{(1)}_{2r-1}$ with involution of type $\imath\imath$.}
   \label{figure:ii}
\begin{tikzpicture}
\matrix [column sep={0.6cm}, row sep={0.5 cm,between origins}, nodes={draw = none,  inner sep = 3pt}]
{
	&\node(U1) [draw, circle, fill=white, scale=0.6, label = 1] {}; 
	&\node(U2) {$\cdots$};  
	&\node(U3)[draw, circle, fill=white, scale=0.6, label =$r-1$] {}; 
\\
	\node(L)[draw, circle, fill=white, scale=0.6, label =0] {}; 
	&&&&
	\node(R)[draw, circle, fill=white, scale=0.6, label =$r$] {};
\\
	&\node(L1) [draw, circle, fill=white, scale=0.6, label =below:$2r-1$] {};  
	&\node(L2) {$\cdots$};  
	&\node(L3)[draw, circle, fill=white, scale=0.6, label =below:$r+1$] {}; 
\\
};
\begin{scope}
\draw (L) -- node  {} (U1);
\draw (U1) -- node  {} (U2);
\draw (U2) -- node  {} (U3);
\draw (U3) -- node  {} (R);
\draw (L) -- node  {} (L1);
\draw (L1) -- node  {} (L2);
\draw (L2) -- node  {} (L3);
\draw (L3) -- node  {} (R);
\draw (L) edge [color = blue, loop left, looseness=40, <->, shorten >=4pt, shorten <=4pt] node {} (L);
\draw (R) edge [color = blue,loop right, looseness=40, <->, shorten >=4pt, shorten <=4pt] node {} (R);
\draw (L1) edge [color = blue,<->, bend right, shorten >=4pt, shorten <=4pt] node  {} (U1);
\draw (L3) edge [color = blue,<->, bend left, shorten >=4pt, shorten <=4pt] node  {} (U3);
\end{scope}
\end{tikzpicture}
\end{figure}

In summary, the quantum algebras behind the various kinds of  flag varieties are listed in the following table for comparison. 
\begin{center}
\begin{tabular}{| l | l | l |}
\hline
Flag variety: & Complete flag & Partial flag \\
\hline
&  & Type A: quantum $\mathfrak{gl}_n$, $\mathfrak {sl}_n$  \\ \cline{3-3}
of finite type &Iwahori-Hecke algebra & Type B/C/D: coideal subalgebras \\
&& \hfill of quantum $\mathfrak{gl}_n$, $\mathfrak {sl}_n$ \\
\hline
 & & Type A: affine quantum $\mathfrak{gl}_n$, $\mathfrak {sl}_n$ \\ \cline{3-3}
of affine type& Affine Iwahori-Hecke algebra& Type C:  coideal subalgebras\\
&& \hfill of affine quantum $\mathfrak{gl}_n$, $\mathfrak {sl}_n$\\
\hline
\end{tabular}
\end{center}

To help the reader to follow and digest this long paper, we organize various chapters in three parts. Here is a brief summary. 
\begin{itemize}
\item
Part~\ref{part1} contains the basic constructions of 
the affine Schur algebra $\mbf S_{n,d}^{\fc}$ and its distinguished Lusztig subalgebra $\bU_{n,d}^{\fc}$, as well as their $\ji, \ij, \ii$-variants.
Then we study in depth the multiplicative and coideal like comultiplicative structures of these algebras. 

\item
In Part~\ref{part2} we study the structures of the family of Lusztig algebras $\bU_{n,d}^{\fc}$  (and their $\ji, \ij, \ii$-siblings), and
show that they lead to quantum coideal subalgebras $\bU^{\fc} (\slh_n)$ of $\bU(\slh_n)$.
The corresponding idempotented forms $\dot\bU^{\fc} (\slh_n)$ (and their $\ji, \ij, \ii$-siblings) are shown to admit canonical bases with positivity.  

\item
Part~\ref{part3} is focused on the study of the stabilization properties of
the family of Schur algebras $\mbf S_{\nn,d}^{\fc}$ (and their $\ji, \ij, \ii$-siblings), leading to stabilization
algebras which  are identified as  idempotented coideal subalgebras  $\bU^{\fc} (\glh_n)$ of quantum affine $\gl_n$;
these stabilization algebras are shown to admit canonical bases (without positivity). 
\end{itemize}

The following diagram is a brief road map of some main constructions (there are 4 distinct cases where $\fc$ can be replaced by $\jjw, \ji, \ijw, \iiw$): 
\begin{equation}
\label{cd:master}
 \begin{CD}
\bU^\fc_{n,d} @> \text{Stabilization} > d \mapsto \infty > \lim\limits_{\longleftarrow} \bU^{\fc}_{n,d} @> \approx >>  \dot\bU^{\fc} (\slh_n)
\\
@VVV \\  
\Sj
@>  \text{Stabilization}> d \mapsto \infty > \lim\limits_{\longleftarrow} \Sj   @> \approx >> \dot \bU^{\fc} (\glh_n)
\end{CD}
\end{equation}

\subsection{}

While the quantum algebras arising from partial flags of classical  types (except type $A$) are not of Drinfeld-Jimbo quantum groups,
they are meaningful and significant generalizations of the type $A$ quantum groups because of their geometric origin.
There has been an intimately related category $\mathcal O$ interpretation and an application of 
canonical bases arising from quantum symmetric pairs of finite type \cite{BW13} (also cf. \cite{ES13, Bao16} for type $D$).

It is expected that the quantum symmetric pairs of affine type (and their categorifications) will play a fundamental role in modular representations of 
algebraic groups and quantum groups of classical type.
We also expect a Langlands dual picture of the constructions of this paper, 
realizing the coideal algebras of affine type in terms of Steinberg-type varieties of finite type 
(cf. \cite{CG97} for some earlier instances of such dual pictures).

%%%%%
\section{An overview}

\subsection{An overview of Part~\ref{part1}}

\subsubsection{}

Most of the geometric constructions in \cite{BKLW14, BLW14} (and also \cite{LW15, FL15}) in finite type $B/C$
were treated in two separate cases, depending on the parity of $n$, even though the statements are uniform.
The results for $\Sb_{n,d}^{\text{fin}}$ and $\Ubd(\gl_n)$ with $n$ odd
are established first,  and then the subtler even $n$ case is settled by relating to the  odd $n$ case.
%The finite type $D$ case \cite{FL14}  also brought in new difficulty. 

Before proceeding to the affine type, it is instructive for us to explain informally some of the main ideas of \cite{BKLW14} (and \cite{BLW14}). 
We shall fix an even positive integer $n$ and set $\nn =n+1$ (which is odd) in this section. We shall write
 $\Sb_{\nn,d}^{\text{fin}} =\mbf S_{\nn,d}^{\jmath,\text{fin}}$, $\Ub(\gl_\nn) =\bU^{\jmath} (\gl_\nn)$, 
 $\Sb_{n,d}^{\text{fin}} =\mbf S_{n,d}^{\imath,\text{fin}}$,   $\Ub(\gl_n) =\bU^{\imath} (\gl_n)$, and use similar notations for the idempotented forms.

The Schur algebra $\mbf S_{\nn,d}^{\jmath,\text{fin}}$ %(and the coideal algebra $\dot\bU^{\jmath}(\gl_{\nn})$)
is most naturally realized via pairs of $\nn$-step type $B$ flags. 
Even though the geometric realization for $\mbf S_{n,d}^{\imath,\text{fin}}$ %and $\dot\bU^{\imath}(\gl_{n})$ 
could naturally use $n$-step type $C$ flags, \cite{BKLW14}  instead chose to work with 
 $\nn$-step type $B$ flags subject to a maximal isotropic condition on the middle subspaces of flags. 
This approach of using the type $B$ geometry alone allows one to 
relate the Schur algebras as well as the coideal algebras with indices $n, \nn$ of different parities. 

The Dynkin diagram automorphism of type $\gl_{\nn}$  has no fixed point as $\nn$ is odd,  
which is Figure (\ref{figure:jj}) with vertices $0$ and $2r+1$ removed, 
while it has a fixed point for type $\gl_{n}$, which is Figure (\ref{figure:ji}) with vertices $0$, $2r$ removed. 
Working with flags subject to maximal isotropic middle constraints can be loosely understood as giving rise to  
the Schur algebras and coideal algebras with a fixed point;
the imbedding of such flags into a variety of flags without maximal isotropic constraints is a way of resolving such a fixed point,
and this is how we succeeded in understanding $\mbf S_{n,d}^{\imath,\text{fin}}$ (and respectively, $\dot\bU^{\imath}(\gl_{n})$)
through its relation to $\mbf S_{\nn,d}^{\jmath,\text{fin}}$ (and respectively, $\dot\bU^{\jmath}(\gl_{\nn})$). 

As a preparation toward affine type $C$, we reformulate the main geometric constructions of \cite{BKLW14, FL15} in the framework of finite type $C$ flags
in Appendix~\ref{chap:finiteC},  expanding the outline in \cite[\S6]{BKLW14}. 
Recall  that $\mbf S_{n,d}^{\imath,\text{fin}}$ can be realized  
using $n$-step type $C$ flags (note the middle subspace in such a flag is automatically maximal isotropic). 
 To realize $\mbf S_{\nn,d}^{\fc,\text{fin}}$   (recall $\nn =n+1$),
we employ $\nn$-step type $C$ flags, and then identify an $n$-step flag as an $\nn$-step flag
subject to a maximal isotropic condition on the middle subspace. 
Then all type $B$ constructions in \cite{BKLW14, BLW14, FL15} can be repeated in such a finite type $C$ setting. (This might be regarded a
manifestation of Langlands duality philosophy.)

\subsubsection{}

Let us return to the affine cases.
There is a lattice presentation of the complete and $n$-step flag varieties of affine type $A$ due to Lusztig; see Chapter~\ref{chap:A}. 
Such a lattice presentation can be adapted to affine type $C$, on which the symplectic loop group $\mrm{Sp}_F(2d)$
acts (where $F=k((\epsilon))$); 
cf. Sage \cite{Sa99} for complete flags and its variant for the $n$-step partial flag variety $\X^{\fc}_{n,d}$ 
which is formulated in this paper, for $n$ even. 

However, for our purpose we need to define such a $\X^{\fc}_{n,d}$
in a somewhat delicate way, keeping in mind the lesson we learned from finite type $B/C$. That is,  
 $\X^{\fc}_{n,d}$ is defined to avoid ``maximal isotropic" constraints  and (as shown later)
 it will  give rise to Schur algebras associated to the affine Dynkin diagram automorphism {\em without} fixed points in Figure \ref{figure:jj};
 the most  obvious candidate of $n$-step flag variety of affine type $C$ will not do. 

The orbits for the product $\X^{\fc}_{n,d} \times \X^{\fc}_{n,d}$ under the diagonal action of the group $\mrm{Sp}_F(2d)$
can be parameterized by
the set $\MX_{n,d}$ of $\ZZ\times \ZZ$-matrices with entries in $\mbb N$ 
satisfying certain natural periodicity and centro-symmetry conditions.
Denote by $\MX_{n,d}^{\ap}$ the set of aperiodic matrices in $\MX_{n,d}$ 
(recall the notion of aperiodic matrix was introduced in \cite{Lu99} in the affine type $A$ setting).

The Schur algebra ${\mbf S}_{n,d}^{\fc}$ is by definition the (generic) convolution algebra of pairs of flags in $\X^{\fc}_{n,d}$.
It admits a canonical basis (IC basis) which enjoys a positivity with respect to multiplication.
We formulate a subalgebra $\bU^{\fc}_{n,d}$ of ${\mbf S}_{n,d}^{\fc}$ generated by the Chevalley generators. 
We caution that the Chevalley generators do not form a generating set for the algebra ${\mbf S}_{n,d}^{\fc}$,
that is, $\bU^{\fc}_{n,d}$ is a proper subalgebra of ${\mbf S}_{n,d}^{\fc}$ in general. 
Our first main result is the following. 

\begin{thrm} [Theorem~\ref{CB-Udn}]
  \label{CB-Udn:Intr}
The algebra  $\bU^{\fc}_{n,d}$ admits a monomial basis $\{\zeta_A  \vert A \in \MX_{n,d}^{\ap} \}$ 
and  a canonical basis  $\{ \{A\}_d \vert A \in \MX_{n,d}^{\ap} \}$, which are compatible with
the corresponding bases in $\Sj$ under the inclusion $\bU^{\fc}_{n,d} \subset \Sj$.
\end{thrm}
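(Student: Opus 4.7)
The plan is to adapt Lusztig's strategy for the affine type $A$ Schur algebra (where $\bU_{n,d} \subset \mathbf{S}_{n,d}$ is the subalgebra generated by Chevalley generators and admits a monomial/canonical basis indexed by aperiodic matrices) to the affine type $C$ setting, respecting the centro-symmetry inherent to $\MX_{n,d}$. First I would invoke the multiplication formulas between Chevalley generators (and their divided powers) and the standard basis $\{[A]\}_{A\in\MX_{n,d}}$ of $\mbf{S}^{\fc}_{n,d}$ that were established in Part~\ref{part1}. The point is that acting by a Chevalley generator on $[A]$ produces a sum $\sum c_B [B]$ with an identifiable leading term $[B_0]$ together with lower-order terms $[B]$ with $B < B_0$ in a suitable Bruhat-like partial order on $\MX_{n,d}$, and where the coefficients are explicit polynomials in $v$.

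Next, for each aperiodic $A \in \MX_{n,d}^{\ap}$, I would construct an explicit monomial $\zeta_A$ in Chevalley generators by proceeding inductively from ``off-diagonal extreme'' positions towards the diagonal, exactly paralleling Lusztig's recipe in \cite{Lu99}: one multiplies together divided powers of $E_i$ and $F_i$ (the $\fc$-analogues) corresponding to the nonzero off-diagonal entries of $A$, arranged in a carefully chosen order dictated by a total refinement of the Bruhat order. The aperiodicity hypothesis guarantees that such a sequence exists and that the leading term is $[A]$; the key computation is the triangularity
\[
\zeta_A = [A] + \sum_{B<A,\,B\in\MX_{n,d}^{\ap}} p_{B,A}(v)\,[B],
\]
where crucially the sum involves only aperiodic $B$. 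Since the Chevalley generators lie in $\bU^{\fc}_{n,d}$, this implies $\{\zeta_A \mid A \in \MX_{n,d}^{\ap}\}$ is a linearly independent set in $\bU^{\fc}_{n,d}$, and by a dimension/counting argument combined with the description of $\bU^{\fc}_{n,d}$ via the image of $\dot\bU^\fc(\slh_n)$ inside $\mbf S^\fc_{n,d}$, it spans $\bU^{\fc}_{n,d}$. This gives the monomial basis.

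For the canonical basis, I would use Lusztig's standard argument: the bar involution on $\mbf{S}^{\fc}_{n,d}$ preserves $\bU^{\fc}_{n,d}$ (since the Chevalley generators are bar-invariant), and the canonical basis $\{\{A\}_d\}$ of $\mbf{S}^{\fc}_{n,d}$ exists as IC sheaves and satisfies $\{A\}_d = [A] + \sum_{B<A} \pi_{B,A}(v)[B]$ with $\pi_{B,A} \in v^{-1}\ZZ[v^{-1}]$. The triangularity of the monomial basis together with uniqueness of bar-invariant elements matching leading terms modulo $v^{-1}\ZZ[v^{-1}]$ shows that each $\{A\}_d$ with $A\in\MX_{n,d}^{\ap}$ already lies in $\bU^{\fc}_{n,d}$, and conversely that no non-aperiodic $\{B\}_d$ appears when expanding elements of $\bU^{\fc}_{n,d}$. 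Compatibility of the two canonical bases under $\bU^{\fc}_{n,d}\subset \mbf{S}^{\fc}_{n,d}$ is immediate.

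The main obstacle, in my view, is showing the aperiodicity control: that the triangular expansion of $\zeta_A$ involves only aperiodic matrices, and that conversely no canonical basis element indexed by a \emph{non}-aperiodic matrix belongs to $\bU^{\fc}_{n,d}$. In affine type $A$ this is delicate (Lusztig uses the Hall algebra interpretation or an explicit cancellation argument). In the $\fc$-setting, the centro-symmetry of matrices in $\MX_{n,d}$ couples entries on opposite sides of the diagonal, so the inductive construction of $\zeta_A$ must be arranged to respect this symmetry; additional care is needed at matrix positions sitting on the fixed axes of the centro-symmetry, where divided powers of the ``boundary'' Chevalley generators enter asymmetrically. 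I would expect these points to be handled by a case analysis paralleling (and reducing, via the setup of Part~\ref{part1}) the corresponding arguments in Lusztig's affine type $A$ proof together with the finite type $B/C$ arguments of \cite{BKLW14,BLW14}.
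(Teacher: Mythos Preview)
Your overall architecture is right, but the proposal contains a substantive error and misses the key mechanism that makes the argument go through.

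First, the triangularity you write,
\[
\zeta_A = [A] + \sum_{B<A,\,B\in\MX_{n,d}^{\ap}} p_{B,A}(v)\,[B],
\]
with the sum restricted to \emph{aperiodic} $B$, is not what one can prove (and is not what the paper proves). The expansion of $\zeta_A$ in the standard basis will in general involve non-aperiodic $[B]$; indeed the standard basis element $[A]$ for an aperiodic $A$ need not even lie in $\bU^{\fc}_{n,d}$. What one needs (Proposition~\ref{monomial-aperiodic} in the paper) is aperiodicity control in the \emph{canonical} basis expansion: if an aperiodic monomial $\texttt{M}=\sum c_A\{A\}_d$, then $c_A\neq 0$ forces $A$ aperiodic. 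This is the correct statement, and it is genuinely nontrivial.

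Second, your proposed route to this aperiodicity control --- direct case analysis paralleling Lusztig's type~$A$ argument, or a dimension count against the image of $\dot\bU^{\fc}(\slh_n)$ --- is not what works here. There is no a~priori dimension for $\bU^{\fc}_{n,d}$ to count against (the coideal algebra description comes \emph{after} this theorem, not before), and a direct adaptation of Lusztig's Hall-algebra-style argument to the centro-symmetric setting has not been carried out. The paper instead \emph{reduces} to the known affine type~$A$ result via the injective algebra homomorphism
\[
\jmath_{n,d}\colon \Sj \longrightarrow \mbf S_{n,d}
\]
constructed from the comultiplication $\Delta^{\fc}$ at $d'=0$. The crucial inputs are: (i) $\jmath_{n,d}$ sends Chevalley generators of $\bU^{\fc}_{n,d}$ into $\bU_{n,d}$ (explicit formulas in~\eqref{je}), so $\jmath_{n,d}(\texttt{M})\in\bU_{n,d}$; (ii) $\jmath_{n,d}$ sends canonical basis elements to $\mbb N[v,v^{-1}]$-combinations of canonical basis elements (positivity, Proposition~\ref{j+}); and (iii) Lusztig's type~$A$ aperiodicity \cite[Prop.~6.5]{Lu99}. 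One then analyzes the leading lower-triangular pieces in the image and uses positivity to conclude that no cancellation can mask a non-aperiodic contribution. This embedding-plus-positivity maneuver is the idea you are missing; without it the aperiodicity step is a genuine gap.
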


%%%%% 
\subsection{An overview of Part~\ref{part2}}

Generalizing the constructions in affine type $A$ and finite type $C$ \cite{FL15} (see also ~\cite{Lu00}),
we introduce a comultiplication-like homomorphism 
$\Delta^{\fc}  =\Delta^{\fc}_{d',d''}: {\mbf S}_{n,d}^{\fc} \rightarrow  {\mbf S}_{n,d'}^{\fc} \otimes  {\mbf S}_{n,d''}$,
for a composition $d=d' + d''$.
This further leads to a transfer map of affine type $C$ (which is  an algebra homomorphism)
$\phi^{\fc}_{d, d-n}: \Sj \rightarrow {\mbf S}^{\fc}_{d -n , n}$, which is shown to preserve the Chevalley generators. 
Both homomorphisms $\Delta^{\fc}_{d',d''}$ and $\phi^{\fc}_{d, d-n}$ make sense on the level of Schur algebras instead of Lusztig algebras.

The algebra $\bU^{\fc}_n$ is  by definition a suitable subalgebra  of the projective limit of the projective system $\{(\bU^{\fc}_{n,d}, \phi^{\fc}_{d, d-n})\}_{d\ge 1}$,
just as $\bU_n$ is a limit algebra for a similar affine type $A$ projective system. 
Recall by Proposition~\ref{Un=slhn} (due to Lusztig) we have an algebra isomorphism $\bU_n \cong \bU (\slh_n)$.
We show that the family of homomorphisms $\{\Delta^{\fc}_{d',d''} \}$ gives rise to a homomorphism
$\Delta^{\fc}: \bU^{\fc}_n \rightarrow \bU^{\fc}_n \otimes \bU_n$ and an injective homomorphism $\jmath_n: \bU^{\fc}_n \to \bU_n$,
whose images on the Chevalley generators are explicitly given.

\begin{thrm} [Theorem~\ref{thm:QSP}]
 \label{thm:QSP:intr}
The algebra $\bU^{\fc}_n$ is a coideal subalgebra of $\bU (\slh_n)$, 
and the pair $( \bU (\slh_n), \bU^{\fc}_n)$ forms a quantum symmetric pair of affine type in the sense of Letzter and Kolb \cite{Ko14}.
(The relevant involution is illustrated in Figure~\ref{figure:jj}.)
\end{thrm}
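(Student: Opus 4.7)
The plan is to assemble the coideal property from the geometric comultiplication $\Delta^{\fc}$ constructed earlier in the chapter, and then to identify the resulting pair with an entry in Kolb's classification of admissible pairs for $\slh_n$.

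First, I would use the injection $\jmath_n: \bU^{\fc}_n \hookrightarrow \bU_n$ together with the isomorphism $\bU_n \cong \bU(\slh_n)$ to regard $\bU^{\fc}_n$ as a subalgebra of $\bU(\slh_n)$. The coideal property $\Delta(\bU^{\fc}_n) \subseteq \bU^{\fc}_n \otimes \bU(\slh_n)$ then reduces to verifying that the standard Drinfeld--Jimbo coproduct $\Delta$ on $\bU(\slh_n)$, pulled back through these identifications, agrees with the geometric map $\Delta^{\fc}: \bU^{\fc}_n \to \bU^{\fc}_n \otimes \bU_n$. Both arise, in the projective limit $d \to \infty$, from the same family of finite-$d$ comultiplications $\Delta^{\fc}_{d',d''}: \mbf S^{\fc}_{n,d} \to \mbf S^{\fc}_{n,d'} \otimes \mbf S_{n,d''}$ restricted to Lusztig subalgebras; compatibility is then built in once the limit is taken and need only be checked on the finitely many Chevalley generators of $\bU^{\fc}_n$, using the explicit formulas for $\jmath_n$ recorded just before the theorem. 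Coideal-ness is then automatic because $\Delta^{\fc}$ already lands in $\bU^{\fc}_n \otimes \bU_n$ by construction.

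Next, to match $(\bU(\slh_n), \bU^{\fc}_n)$ with a Letzter--Kolb quantum symmetric pair, I would pin down the Dynkin data. For $n = 2r+2$ the underlying Cartan matrix is of affine type $A^{(1)}_{2r+1}$, and the involution $\tau: i \mapsto n - i \pmod n$ displayed in Figure~\ref{figure:jj} has no fixed vertices, so $(I, X = \emptyset, \tau)$ is an admissible pair of the second kind in the sense of \cite[\S2]{Ko14}. Kolb's construction then yields generators of the schematic form $B_i = F_i + \varsigma_i E_{\tau(i)} K_i^{-1}$, supplemented by $\tau$-invariant toral elements such as $K_i K_{\tau(i)}^{-1}$. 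I would compare these term by term with the explicit images of the Chevalley generators of $\bU^{\fc}_n$ under $\jmath_n$, read off the matching parameters $\varsigma_i$, and conclude that the generators of $\bU^{\fc}_n$ fit precisely into Kolb's template.

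The main obstacle will be promoting the inclusion of $\jmath_n(\bU^{\fc}_n)$ into Kolb's $\imath$quantum group to an equality. Containment is clean from the generator formulas; the reverse inclusion requires showing that all of Kolb's Serre-type relations are realized among the geometric generators and that no extra relations collapse $\bU^{\fc}_n$. Injectivity of $\jmath_n$ lifts any relation holding inside $\bU(\slh_n)$ back to $\bU^{\fc}_n$, and a basis comparison using the monomial and canonical bases $\{\{A\}_d \mid A \in \MX^{\ap}_{n,d}\}$ from Theorem~\ref{CB-Udn:Intr}, passed through the stabilization limit, should close the gap. A secondary technical nuisance is reconciling sign and parameter normalizations between Kolb's algebraic conventions and those inherited from convolution on $\mbf S^{\fc}_{n,d}$.
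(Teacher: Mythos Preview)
Your approach is essentially the paper's: the theorem is stated as a summary of Proposition~\ref{prop:inj-j} (injectivity of $\jmath_n$) and Proposition~\ref{prop:Unc-coideal} (the comultiplication $\Delta^{\fc}: \bU^{\fc}_n \to \bU^{\fc}_n \otimes \bU_n$), together with the compatibility $\Delta \circ \jmath_n = (\jmath_n \otimes 1) \circ \Delta^{\fc}$, which the paper records just before the theorem as a degenerate case of the coassociativity \eqref{coass:UnC}. The match with Kolb's framework is then read off from the explicit images of the Chevalley generators under $\jmath_n$.

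Two corrections. First, the ``main obstacle'' you flag---promoting containment $\jmath_n(\bU^{\fc}_n) \subseteq B_{\mathbf{c},\mathbf{s}}$ to equality---is not an obstacle, and the monomial/canonical basis machinery you propose for it is unnecessary. Both $\jmath_n(\bU^{\fc}_n)$ and Kolb's coideal subalgebra are, by definition, the subalgebras of $\bU(\slh_n)$ generated by explicit lists of elements; once you verify that the two generating sets coincide (up to the parameter normalization you mention), the generated subalgebras are equal tautologically. The paper does not invoke bases here; the presentation in Proposition~\ref{present:Ujj} immediately after the theorem confirms the identification with \cite[Theorem~7.1]{Ko14}, but as a consequence rather than an ingredient. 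Second, a small slip: the diagram involution in Figure~\ref{figure:jj} is $i \mapsto n-1-i$, not $i \mapsto n-i$, as is visible from the index pairings in the formulas \eqref{je-limit} for $\jmath_n(\e_i)$ and $\jmath_n(\f_i)$.
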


Thanks to Theorem~\ref{thm:QSP:intr}, it makes sense to denote $\bU^{\fc}_n =\bU^\C (\slh_n)$. 
One can also formulate an idempotented form of $\bU^{\fc}_n$, denoted by $\dot{\bU}^{\fc}_n$ or $\dot \bU^\C (\slh_n)$, which is analogous to 
the idempotented quantum groups as formulated in \cite{BLM90, Lu93}. % (This explains the notations in the diagram \eqref{cd:master}). 
Following the approach of \cite{Mc12} in the affine type $A$ setting and \cite{LW15} in the finite type $B$ setting,
we construct canonical basis for $\dot{\bU}^{\fc}_n$ and establish its positivity with respect to the multiplication 
and a bilinear pairing of geometric origin.
Following \cite{FL15} in the finite type $B$ setting, we establish the positivity of the canonical basis for $\dot{\bU}^{\fc}_n$ with respect to the comultiplication. 

\begin{thrm} [Theorem~\ref{thm:iCB-Unc}, Theorem~ \ref{thm:positivity-Unc}]
  \label{thm:positivity:Intr}
The algebra $\dot \bU^{\fc}_n$ admits a canonical basis $\dot{\mbf B}^{\fc}_n$. 
The structure constants of the canonical basis $\dot{\mbf B}^{\fc}_n$ with respect to the multiplication and comultiplication are all positive,
that is, they lie in $\mbb N[v, v^{-1}]$  and so do they with respect to  
the bilinear pairing, that is, they lie in $\mbb N [[v^{-1}]]$.
\end{thrm}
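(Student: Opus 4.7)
The plan is to bootstrap from the geometric canonical bases of the Lusztig algebras $\bU^{\fc}_{n,d}$ (Theorem~\ref{CB-Udn:Intr}) to a canonical basis of the idempotented limit $\dot\bU^{\fc}_n$, and then to establish the three positivity statements by pushing geometric positivity (on Schur/Lusztig algebras, where all three operations admit sheaf-theoretic origins) through the stabilization procedure $d\mapsto\infty$ displayed in \eqref{cd:master}. The affine type $A$ model for this strategy is \cite{Mc12,SV00}, and the coideal finite type $B$ model is \cite{LW15,FL15}; our task is to adapt those arguments to the affine type $C$ geometry of $\X^{\fc}_{n,d}$, using the aperiodicity control provided by $\MX_{n,d}^{\ap}$.

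First I would establish the canonical basis $\dot{\mbf B}^{\fc}_n$. The transfer maps $\phi^{\fc}_{d,d-n}\colon \Sj\rightarrow {\mbf S}^{\fc}_{d-n,n}$ restrict to the Lusztig subalgebras and are compatible with the monomial generators, so a system $\{\{A\}_d\}_d$ indexed by aperiodic matrices $A\in\MX_{n,d}^{\ap}$ stabilizes: for $d\gg 0$ one shows that $\phi^{\fc}$ sends $\{A\}_d$ either to a canonical basis element associated with the same ``shifted'' aperiodic matrix or to $0$. The aperiodicity condition is exactly what is needed so that the resulting projective limit elements $\{A\}\in \dot\bU^{\fc}_n$ are nonzero and linearly independent; this is the affine analog of the aperiodic argument of \cite{Lu99,SV00}. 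Bar-invariance and the ``almost orthonormal'' characterization with respect to the standard basis then descend from the Schur level.

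Next, positivity with respect to multiplication and the bilinear pairing. On each $\Sj$ the canonical basis is an IC basis on $\X^{\fc}_{n,d}\times \X^{\fc}_{n,d}$; structure constants for convolution lie in $\NN[v,v^{-1}]$ by the decomposition theorem applied to a proper map (Deligne purity), and a geometrically defined bilinear pairing of trace/Euler-characteristic type gives values in $\NN[[v^{-1}]]$ from the pointwise dimensions of stalks of IC sheaves, exactly as in \cite{Mc12,LW15}. Since $\phi^{\fc}_{d,d-n}$ preserves canonical bases up to zero and is compatible with both multiplication and the pairing, the same positivity passes to $\dot{\mbf B}^{\fc}_n$ in the limit.

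The subtle part is positivity of $\Delta^{\fc}$. Following \cite{FL15}, the comultiplication $\Delta^{\fc}_{d',d''}\colon {\mbf S}^{\fc}_{n,d}\to {\mbf S}^{\fc}_{n,d'}\otimes {\mbf S}_{n,d''}$ should be realized geometrically via a correspondence between $\X^{\fc}_{n,d}$ and $\X^{\fc}_{n,d'}\times\X_{n,d''}$ (using isotropic flags on one factor and type $A$ flags on the other), with $\Delta^{\fc}$ given by pull-push along a proper map combined with a hyperbolic/Gaussian-sum contribution tracked by powers of $v$. The key lemma will be that this pull-push sends an IC sheaf to a non-negative-in-$v$ integer combination of external products of IC sheaves; this is the affine $C$ analog of \cite[Thm.~A]{FL15}, and the positivity follows from the decomposition theorem once properness of the relevant map is verified. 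Passing to the limit via the compatibility of $\Delta^{\fc}$ with $\phi^{\fc}$ then yields positivity of $\Delta^{\fc}\colon \dot\bU^{\fc}_n\to \dot\bU^{\fc}_n\otimes \dot\bU_n$ on $\dot{\mbf B}^{\fc}_n$.

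The main obstacle I anticipate is the comultiplication step: verifying that the geometric correspondence underlying $\Delta^{\fc}$ in affine type $C$ is given by a proper map whose fibers are well-behaved enough for the decomposition theorem, and that the twist by $v$-powers (coming from the dimensions of affine fibers in the lattice picture of $\X^{\fc}_{n,d}$) produces strictly Laurent-polynomial coefficients with non-negative integer coefficients. The centro-symmetric, $\ZZ\times\ZZ$-matrix parametrization requires bookkeeping of infinite contributions, and one must verify that the aperiodicity restriction eliminates the problematic infinite terms; this is where the arguments of \cite{Mc12,FL15} need the most substantial modification to the setting of Figure~\ref{figure:jj}.
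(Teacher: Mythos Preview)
Your overall strategy matches the paper's: build $\dot{\mbf B}^{\fc}_n$ by stabilizing $\{A\}_d$ along the transfer maps (the paper does this via a monomial basis for $\dot\bU^{\fc}_n$ in Proposition~\ref{m-basis}, and the McGerty-style bilinear form argument of \cite{Mc12,LW15} to show that $\phi^{\fc}$ eventually preserves canonical basis elements, Proposition~\ref{prop:CBstable}), then pull multiplication and pairing positivity across from the IC interpretation on $\Sj$.

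There is one genuine correction to make in the comultiplication step. You write that ``positivity follows from the decomposition theorem once properness of the relevant map is verified,'' and flag properness as the main obstacle. But the geometric map underlying $\widetilde\Delta^{\fc}$ is $\pi_!\iota^*$ in diagram~\eqref{comm-coproduct}: a restriction followed by a compactly supported pushforward along an affine fibration. It is \emph{not} a proper pushforward, and the decomposition theorem does not apply directly. The paper (following \cite{FL15}) instead invokes Braden's hyperbolic localization theorem \cite{Br03}: the fibers of $\pi$ carry a contracting $\mbb G_m$-action arising from the Levi of the parabolic $P_{V''}$ in Section~\ref{affine-DC}, and Braden's theorem guarantees that hyperbolic restriction of a simple perverse sheaf is a non-negative combination of shifted simple perverse sheaves on the fixed locus. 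This is the actual source of positivity for $\widetilde\Delta^{\fc}$ (Proposition~\ref{comult+S}). The normalization twist by $\xi^{\fc}$ in \eqref{rescale}--\eqref{Dj} is then shown in Proposition~\ref{xiC} to multiply each canonical basis element by a monomial in $v$, so it does not disturb positivity. So the obstacle you anticipated (verifying properness in the lattice model) is not the real one; the real work is setting up the contracting torus action and matching $\widetilde\Delta^{\fc}$ with hyperbolic localization, which is precisely what the commutative diagram~\eqref{comm-coproduct} accomplishes.
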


Recall in the finite type $C$ setting, there are geometric realizations of two quantum symmetric pairs (with superscripts $\jmath$ and $\imath$), 
the superscript $\jmath$ corresponds to the Dynkin diagram involution without fixed point 
and $\imath$ to the involution with a fixed point. 
The involution for $\slh_n$ (where $n$ is even) in Figure~\ref{figure:jj} has no fixed point. 
In this paper we construct three more variants of quantum symmetric pairs
arising from the affine type $C$ flags.  The remaining three cases
are labelled by superscripts $\ji, \ij$, $\ii$ and they correspond to involutions which are illustrated in Figures~\ref{figure:ji}, \ref{figure:ij}
and \ref{figure:ii}, respectively (the superscript $\fc$  for the algebras above could be denoted by $\jmath\jmath$). 

{\bf In each of the three new variants, we have counterparts of Theorems~\ref{CB-Udn:Intr}, \ref{thm:QSP:intr} and \ref{thm:positivity:Intr}.}
The proofs are sometimes more difficult, as it is already clear in the finite rank $\imath$-version \cite{BLW14, LW15, FL15}. 

%We further show that the corresponding algebras of type $\ji$ and $\ijw$ are isomorphic and the isomorphisms preserve the canonical bases. 

There is also a totally different, purely algebraic, construction \cite{BW16} of canonical bases for general quantum symmetric pairs  (cf. \cite{Ko14}). 
That approach does not establish the positivity of canonical bases. 

%%%%%
\subsection{An overview of Part~\ref{part3}}

In contrast to the finite types,  the Schur algebra $\Sj$ is not generated by the Chevalley generators in general, 
that is, $\bU_{n,d}^{\fc}$ is a proper subalgebra of $\Sj$
(this phenomenon already happens in affine type $A$ \cite{Lu99}). 
The next goal (Part 3) is to understand  the limit algebra $\Kc_n$ arising
from the family of Schur algebras $\{\Sj\}_{d\ge 1}$ as well as its $\ji, \ij, \ii$-variants. 
One key difficulty we encounter here is that the Schur algebras $\Sj$ do not have any obvious (finite) generating set to start with,
and this makes it tricky to understand the stabilization. 

To that end, we introduce a new idea by imbedding $\Sj$ into the Lusztig algebra $\U_{\breve n, d}^\C$ (with $\breve n =n+2$).
The imbedding $\Sj \to \U_{\breve n, d}^\C$ is constructed as an imbedding $\Sj \to \Sji_{\breve n, d}^\C$ 
(in a way similar to the embedding $\Sji_{\nn,d}^{\ji} \to \Sj$ earlier) which factors through $\U_{\breve n, d}^\C$.
As Lusztig algebras have a nice set of Chevalley generators and they are well understood in Part~\ref{part1} and Part~\ref{part2}, 
we gain insights about $\Sj$ this way. 

One first result which we obtain via such an imbedding is to establish a (bar invariant) monomial basis $\{ \f_A | A\in \MX_{n,d}\}$ for $\Sj$,
and we see that $\Sj$ is generated by the standard basis elements $[A]_d$ with $A$ tridiagonal. 
(In affine type $A$, it was first shown \cite{DF13} that the Schur algebra is generated by the standard basis elements ${}^{\fa} [A]_d$ for $A$ bidiagonal.)
In our affine type $C$ setting, thanks to  the centrosymmetry condition of the matrices $A$ parametrizing the basis of $\Sj$,
the appearance of tridiagonal matrices parametrizing a generating set is perhaps not surprising.
It does make any possible multiplication formula in affine type $C$ with $[A]$ for $A$ tridiagonal enormously complicated. 
%Nevertheless, we obtain a qualitative multiplication formula which provides a leading term with leading coefficient $1$. 

The imbedding $\Sj \to \U_{\breve n, d}^\C$ and the monomial basis for $\Sj$ further 
allow us to study fruitfully the stabilization as $d$ goes to infinity of the multiplication,
comultiplication, and bar involution on $\Sj$.   
The stabilization properties for $\Sj$ allow us to introduce a limit algebra $\Kc_n$ and establish  its main properties.

\begin{thrm} [Theorems~\ref{Kj-bases}, \ref{thm:Psi:c}]
  \label{thm:Kbasis:intr}
The algebra $\K^{\C}_n$ admits a standard basis $\{[A] | A\in \widetilde{\Xi}_n\}$, 
%a basis $\{m'_A | A\in  \widetilde{\Xi}_n\}$, 
a monomial basis $\{ \f_A | A\in \widetilde{\Xi}_n\}$,
and a stably canonical basis
$\{\{A \}| A\in  \widetilde{\Xi}_n\}$.
Moreover, there is a natural surjective algebra homomorphism $\Psi_{n, d} : \K^{\C}_n \rightarrow  \Sj$
which sends each stably canonical basis element to a canonical basis element or zero. 
\end{thrm}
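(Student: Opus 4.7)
The plan is to derive both theorems from a careful study of the embedding $\Sj \hookrightarrow \U^{\C}_{\breve n, d}$ with $\breve n = n+2$ (constructed in Part~\ref{part1}), so that the well-understood Lusztig-algebra structures of Part~\ref{part2} can be transported to $\Sj$ and then stabilized in $d$.

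First I would build a monomial basis $\{\f_A \mid A \in \MX_{n,d}\}$ for $\Sj$. Starting from the monomial basis of $\U^{\C}_{\breve n, d}$ provided by Theorem~\ref{CB-Udn} (the siblings indexed by $\jj$), I would pull back along $\Sj \hookrightarrow \U^{\C}_{\breve n, d}$; the aperiodicity constraint is absorbed because under the embedding each orbit in $\Sj$ is realized by a $\breve n$-step flag datum whose ``isotropic'' middle subspace enforces extra zero rows/columns. The monomial $\f_A$ is defined as an ordered product of divided powers of standard basis elements indexed by tridiagonal matrices, chosen so that the leading term (with respect to the Bruhat-type order on $\MX_{n,d}$) is $[A]_d$. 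Together with the existing standard basis $\{[A]_d\}$ and the bar involution inherited from the convolution algebra, this gives the three bases on the Schur-algebra side and identifies tridiagonal standard basis elements as a generating set for $\Sj$.

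Next I would prove stabilization. The transfer map $\phi^{\fc}_{d,d-n}$ preserves Chevalley generators, and an analogous compatibility on tridiagonal standard basis elements must be checked directly from the multiplication formulas used to define the $\f_A$. Concretely, for each $A \in \Xit_n$ and sufficiently large $d$, the structure constants of $\f_A \cdot \f_B$ expanded in the basis $\{[C]_d\}$ become polynomials in $v^{\pm 1}$ independent of $d$ once one reindexes by the stabilized matrix entries; likewise for the bar involution. This is the analogue of the BLM stabilization in affine type $A$ but with the centrosymmetry of matrices in $\MX_{n,d}$ playing a nontrivial role. From these stabilized structure constants I define $\Kc_n$ as the $\QQ(v)$-span of symbols $\{A\}$ for $A \in \Xit_n$, equipped with the limiting multiplication, and $\Psi_{n,d}$ as the map sending $\{A\}$ to $\{A\}_d$ when $A$ fits into $\MX_{n,d}$ (with the correct diagonal shift) and to $0$ otherwise. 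Surjectivity and the image statement on canonical basis elements then follow because the canonical basis of $\Sj$ is characterized by bar-invariance and leading term, properties that are manifestly preserved under the limit.

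Finally I would construct the stably canonical basis $\{\{A\}\}$ of $\Kc_n$ directly at the limit: take the monomial basis element $\f_A$, which is automatically bar-invariant, and correct it by lower-order stably canonical basis elements (Gaussian elimination with respect to the Bruhat order on $\Xit_n$). Because each step of the Gaussian elimination involves coefficients that, by the previous paragraph, have already stabilized, the process terminates and produces a well-defined bar-invariant element with the required unitriangularity. Compatibility with $\Psi_{n,d}$ follows from the defining characterization.

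The main obstacle will be the stabilization of the multiplication and of the bar involution on $\Sj$, because, unlike in the Lusztig subalgebras, tridiagonal standard basis elements can produce arbitrarily many off-diagonal contributions, and a priori nothing prevents the coefficients from depending on $d$. The key technical input is the embedding into $\U^{\C}_{\breve n, d}$: it lets one replace the general tridiagonal product by a product of Chevalley generators in the ambient Lusztig algebra, where stabilization is already known from Part~\ref{part2}, and then descend the stability back to $\Sj$ via the explicit form of the embedding. Making this descent uniform in $d$ (including for the bar involution, which must be checked on both sides of the embedding) is where the bulk of the work will lie.
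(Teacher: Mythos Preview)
Your overall strategy matches the paper's: embed $\Sj$ into $\U^{\C}_{\breve n,d}$ via $\rho$, define the monomial basis $\f_{A;d}$ as (the preimage of) an ordered product of Chevalley generators in $\mbf S^{\C}_{\breve n,d}$, prove a leading-term statement that identifies tridiagonal standard basis elements as generators, stabilize multiplication and bar, define $\Kc_n$ and $\Psi_{n,d}$, and run Gaussian elimination for the stably canonical basis.

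There is one genuine imprecision worth flagging. You invoke the transfer maps $\phi^{\fc}_{d,d-n}$ and say ``stabilization is already known from Part~\ref{part2}''. That is the wrong mechanism here. Part~\ref{part2} stabilizes Lusztig algebras via transfer maps and works only on Chevalley generators; the Schur-algebra stabilization in Part~\ref{part3} is of BLM type: one shifts diagonals by $pI_n$ (more precisely by $p\ddot I_n$ in the ambient algebra), introduces the two-variable ring $\mathscr R=\mbb Q(v)[v',v'^{-1}]$, and shows that the structure constants of $[{}_{\ddot p}B]*[{}_{\ddot p}A]$ for Chevalley-type $B$ are explicit polynomials $Q^{t}_{i,R;A}(v,v^{-p})$ in $\mathscr R$ (Lemma~\ref{v'=1}, Proposition~\ref{4.2.3-stab-v3}). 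The limit multiplication on $\Kc_n$ is then the specialization $v'=1$, not a projective limit along transfer maps. You should not expect transfer maps to send standard or tridiagonal basis elements to anything controllable; the two-variable polynomial argument is what makes the tridiagonal generating set stabilize.

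A second, smaller point: define $\Psi_{n,d}$ on the \emph{standard} basis, $[A]\mapsto [A]_d$ (zero if $A\notin \Xi_{n,d}$), and then prove it is an algebra homomorphism commuting with bar (using $\Psi_{n,d}(\f_A)=\f_{A;d}$, cf.\ Lemma~\ref{Psi-fa}). The statement $\Psi_{n,d}(\{A\})=\{A\}_d$ is then a consequence, not the definition; taking it as the definition makes the ``algebra homomorphism'' claim circular.
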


In a completely analogous way and as a byproduct, we can formulate the stabilization properties of the family of Schur algebras $\mbf S_{n,d}$ of affine type $A$
and introduce its stabilization algebra $\K_n$, and prove a theorem for $\K_n$ analogous to Theorem~\ref{thm:Kbasis:intr}.
Such results in affine type $A$ were first obtained in \cite{DF13, DF14} by a completely different and algebraic approach,
and they also identify $\K_n$ as the idempotented quantum affine $\glh_n$.
Our geometric approach here offers a shortcut to some main results in  {\em loc. cit.} 
and obtains new results on the comultiplication structure. 

The stabilization property of the comultiplication on $\Sj$ leads to the following.

\begin{thrm}[Propositions ~\ref{prop-cop-alg-hom}, \ref{prop-cop-coass}, Remark~\ref{rem:KnQSP}]
\label{thm:KnQSP:intr}
The pair $(\K_n, \K^{\C}_n)$ forms a quantum symmetric pair (in an idempotented form).
\end{thrm}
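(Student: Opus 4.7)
The plan is to establish the theorem by lifting to stabilization the comultiplication map $\Delta^{\fc}_{d',d''} : \Sj \to \mbf S^{\fc}_{n,d'} \otimes \mbf S_{n,d''}$ already constructed for each decomposition $d = d' + d''$, mirroring the stabilization procedure that produced $\K^{\C}_n$ and $\K_n$ themselves. The goal is to produce a (completed) algebra homomorphism
\[
\Delta : \K^{\C}_n \longrightarrow \K^{\C}_n \,\widehat{\otimes}\, \K_n
\]
that is coassociative with respect to the comultiplication on $\K_n$ and compatible with the surjections $\Psi_{n,d}$ of Theorem~\ref{thm:Kbasis:intr}, and then read off the idempotented quantum symmetric pair structure from these two properties.

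First I would show that for a fixed standard basis element $[A] \in \K^{\C}_n$ the family $\{\Delta^{\fc}_{d',d''} \Psi_{n,d}([A])\}$ stabilizes as $d'' \to \infty$ with $d'-d''$ appropriately fixed. This is the analog at the comultiplication level of the stabilization used to construct $\K^{\C}_n$ from the projective system of Schur algebras, and the natural language in which to carry it out is the monomial basis $\{\f_A\}$ for $\Sj$, since monomial basis elements arise from products of divided powers of the Chevalley-type tridiagonal generators and their comultiplication images can be tracked term by term. Here one exploits the fact that the same imbedding $\Sj \hookrightarrow \mbf S^{\C}_{\breve n, d}$ (with $\breve n = n+2$) into a Lusztig algebra, used in Part~\ref{part3} to extract generators for $\Sj$, also pulls back the comultiplication to the already-understood comultiplication on $\bU^{\fc}_{\breve n}$, reducing the problem to a stabilization statement in the Lusztig-algebra setting that has already been dealt with in Part~\ref{part2}.

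Next I would invoke Propositions~\ref{prop-cop-alg-hom} and~\ref{prop-cop-coass}: the first says that each $\Delta^{\fc}_{d',d''}$ is an algebra homomorphism, and the second that they are coassociative with the ordinary affine type $A$ comultiplication on the second tensor factor. Passing to the projective limit and using that $\Psi_{n,d}$ sends stably canonical (or monomial) basis elements either to the corresponding basis element or to zero, these two properties transfer verbatim to $\Delta : \K^{\C}_n \to \K^{\C}_n \,\widehat{\otimes}\, \K_n$, giving a coideal-type algebra morphism at the stabilized level. The coassociativity together with the shape of the target (first tensor factor in $\K^{\C}_n$, second in $\K_n$) is precisely what is meant by an idempotented quantum symmetric pair structure on the pair $(\K_n, \K^{\C}_n)$, as flagged in Remark~\ref{rem:KnQSP}.

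The main obstacle I anticipate is the analytic control of the stabilization: matrices in $\widetilde{\Xi}_n$ are infinite and the comultiplication formula on the Schur-algebra side involves sums over splittings of such matrices, so one must verify that only finitely many splittings contribute to each homogeneous component of the target, which is what makes $\Delta$ land in the completed tensor product rather than in an ill-defined infinite sum. I would handle this by running the argument first on monomial generators $\f_A$ (where the finiteness is automatic from the corresponding statement for divided powers of tridiagonal generators), and then bootstrapping to arbitrary stably canonical basis elements through the triangular change-of-basis between $\{\f_A\}$ and $\{\{A\}\}$ provided by Theorem~\ref{thm:Kbasis:intr}. Once this finiteness is in hand, coassociativity and the algebra homomorphism property are inherited from the Schur-algebra level without further work, completing the verification that $(\K_n, \K^{\C}_n)$ is an idempotented quantum symmetric pair.
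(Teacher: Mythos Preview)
Your overall strategy matches the paper's: stabilize the Schur-level comultiplication $\Delta^{\fc}_{d',d''}$ (this is Proposition~\ref{cop-stab}, proved exactly as you describe via the monomial basis $\{\f_A\}$ and the imbedding $\rho:\Sj \hookrightarrow \mbf S^{\C}_{\breve n,d}$ into a higher-rank Lusztig algebra), define the stabilized comultiplication $\dot\Delta^{\C}$ componentwise, check finiteness of the relevant sums, and then reduce the algebra-homomorphism and coassociativity identities to the Schur-algebra level for large $d$.

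One point of confusion: you write that you would ``invoke Propositions~\ref{prop-cop-alg-hom} and~\ref{prop-cop-coass}'' as saying that $\Delta^{\fc}_{d',d''}$ is an algebra homomorphism and coassociative on the Schur level. But those propositions are precisely the \emph{conclusions} you are trying to prove on $\K^{\C}_n$; the Schur-level inputs are that $\Delta^{\C}$ is an algebra homomorphism by construction (Section~\ref{sec:refined}) and coassociative by Proposition~\ref{Sj-coass}. This is a labeling slip, not a mathematical gap.

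Two minor differences in execution. First, the paper packages $\dot\Delta^{\C}$ as a \emph{collection} of linear maps $\Delta^{\C}_{\mbf b',\mbf a',\mbf b'',\mbf a''}: {}_{\mbf b}\K^{\C}_{\mbf a} \to {}_{\mbf b'}\K^{\C}_{\mbf a'} \otimes {}_{\mbf b''}\K_{\mbf a''}$ indexed by weight data, rather than as a single map into a completed tensor product; the ``algebra homomorphism'' and ``coassociativity'' properties are then expressed as identities among structure constants (equations~\eqref{cop-alg-hom} and~\eqref{cop-coass}). Your completed-tensor-product formulation is equivalent. Second, for the finiteness of the sums the paper gives a direct combinatorial argument (if $h^{A',A''}_A\neq 0$ then $A',A''\leq_{\text{alg}}A$, and once the row/column vectors are fixed only finitely many such matrices remain), rather than routing through the monomial basis as you propose; both work, but the direct argument is shorter.
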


Similarly, the other families of Schur algebras $\{{\mbf S}^{\ji}_{\nn,d} \}_d, \{{\mbf S}^{\ij}_{\nn,d} \}_d,$ and $\{{\mbf S}^{\ii}_{\mm,d} \}_d$
admit similar stabilizations which lead to limit algebras $\Kcji, \Kcij, \Kcii$, respectively. 
We also establish the counterparts of Theorems~\ref{thm:Kbasis:intr} and \ref{thm:KnQSP:intr}  for the algebras $\Kcji, \Kcij, \Kcii$.
In the process, we actually establish the following interrelations in Section~\ref{sec:diag}  (where one finds 
the precise definition of subquotients)
among the algebras  $\K^\C_n, \Kcji, \Kcij, \Kcii$ in a conceptual way.

\begin{thrm}[Proposition ~\ref{prop:sq}, Theorems ~\ref{Kji-sq}, \ref{Kij-bases}, \ref{Kii-bases}]
\label{thm:K4diagram:intr}
We have the following diagram of subquotient constructions ($\mathfrak{sq}$ stands for subquotients):
\[
\xymatrix{
&\K^{\ji}_{\nn}  \ar@{->>}[dl]_{\mathfrak{sq}} %\ar@{=}[dd]^{\simeq} 
  &&\\
\K^{\ii}_{\eta}  && \K^{\C}_{n}  \ar@{->>}[ul]_{\mathfrak{sq}}  \ar@{->>}[dl]^{\mathfrak{sq}} & \K^{\C}_{n+2} \ar@{->>}[l]^{\mathfrak{sq}} \\
& \K^{\ij}_{\nn} \ar@{->>}[ul]^{\mathfrak{sq}} &&
}
\]
Moreover, all the subquotient constructions are compatible with the stably canonical bases.
\end{thrm}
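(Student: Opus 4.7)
The plan is to assemble the individual subquotient constructions established in Proposition~\ref{prop:sq} and Theorems~\ref{Kji-sq}, \ref{Kij-bases}, \ref{Kii-bases} into a single commutative diagram, then verify that the composite arrows agree and that stably canonical bases transform consistently under all arrows. In each case I would first establish the subquotient at the level of the Schur algebras $\Sj$ (and its $\ji,\ij,\ii$-siblings) for fixed $d$, and then pass to the stabilization limit using the surjection $\Psi_{n,d}$ of Theorem~\ref{thm:Kbasis:intr} and its three analogues.

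The rightmost arrow $\K^{\C}_{n+2}\twoheadrightarrow \K^{\C}_n$ arises by forgetting an outermost pair of lattices in the affine $n$-step flag, which produces a projection of partial flag varieties and, dually, an idempotent truncation at the Schur algebra level that stabilizes as $d\to\infty$. The arrows $\K^{\C}_n \twoheadrightarrow \K^{\ji}_\nn$ and $\K^{\C}_n \twoheadrightarrow \K^{\ij}_\nn$ are the affine-type analogues of the finite-type maximal-isotropy construction sketched in \S1.7: each arises from imposing a maximal-isotropy condition on one distinguished extremal lattice of the affine $\C$-flag, producing a closed embedding of partial flag varieties and a corresponding subalgebra-plus-quotient relation between the associated Schur algebras. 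The further arrows $\K^{\ji}_\nn \twoheadrightarrow \K^{\ii}_\eta$ and $\K^{\ij}_\nn \twoheadrightarrow \K^{\ii}_\eta$ come from imposing a second maximal-isotropy condition at the opposite extreme pivotal lattice. Commutativity of the resulting square is then automatic, since the two isotropy constraints are imposed on disjoint extremal lattices and hence commute geometrically, yielding a well-defined composite $\K^{\C}_n \twoheadrightarrow \K^{\ii}_\eta$ that factors through each of $\K^{\ji}_\nn$ and $\K^{\ij}_\nn$.

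The main obstacle is the second assertion — the compatibility with the stably canonical bases. At each finite $d$ the standard and canonical bases of the four families of Schur algebras are parametrized by the matrix sets $\MX_{n,d}$, $\MX^{\ji}_{\nn,d}$, $\MX^{\ij}_{\nn,d}$, $\MX^{\ii}_{\eta,d}$, and each subquotient map translates into a combinatorial operation on these matrices that collapses rows and columns in accordance with the new fixed points of the corresponding involution. The delicate point is that the bar involution, the IC-sheaf definition of the canonical basis, and the transfer maps $\phi^{\fc}_{d,d-n}$ (and their $\ji,\ij,\ii$-variants) defining the inverse systems must all intertwine compatibly with the geometric isotropy restriction. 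Once this interchange is verified at each finite $d$ — which is essentially built into the proofs of the individual results cited — the stabilization machinery of Theorem~\ref{thm:Kbasis:intr} and its variants guarantees that every subquotient sends a stably canonical basis element either to a stably canonical basis element of the target or to zero, completing the proof.
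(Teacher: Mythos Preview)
Your high-level strategy of assembling the cited results is fine, since the theorem is explicitly a summary of Proposition~\ref{prop:sq} and Theorems~\ref{Kji-sq}, \ref{Kij-bases}, \ref{Kii-bases}. But your description of \emph{how} those results are actually proved contains two conceptual errors that would derail an attempt to carry out the details.

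First, you conflate the Part~\ref{part2} and Part~\ref{part3} constructions. The algebras $\K^{\C}_n$ (and their $\ji,\ij,\ii$ siblings) are \emph{not} built from the transfer maps $\phi^{\fc}_{d,d-n}$ or any inverse limit; that machinery produces the Lusztig algebras $\dot{\bU}^{\fc}_n$ of Part~\ref{part2}. The stabilization algebras of Part~\ref{part3} arise instead by the BLM-type procedure of Section~\ref{sec:SSSc}: one tracks how structure constants for $[_pA]_{d+\frac{p}{2}n}$ depend polynomially on $v^{-p}$ via Proposition~\ref{4.2.3-stab-v3} and then specializes at $v'=1$. Your appeal to intertwining with the transfer maps is therefore beside the point.

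Second, and more seriously, there is no subquotient at finite $d$. At the Schur-algebra level the relationships in the diagram are simply \emph{idempotent subalgebra inclusions} (e.g.\ $\mbf S^{\ji}_{\nn,d}=\bj_r\Sj\bj_r\subset\Sj$, and $\rho:\Sj\hookrightarrow\mbf S^{\C}_{\breve n,d}$). The quotient part only appears after stabilization, because $\widetilde\Xi_n$ allows negative diagonal entries: the ideal one must kill (e.g.\ $\mathcal I$ in Section~\ref{sec:subquot}, or $\mathcal J^{\ji}_<$ before Theorem~\ref{Kji-sq}) is spanned by $[A]$ with the relevant diagonal entry negative, and this ideal is invisible at any fixed $d$. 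So the plan ``establish the subquotient at finite $d$, then pass to the limit'' cannot work as stated. The paper instead works directly at the $\K$-level: one checks that the ideal is two-sided and spanned by canonical basis elements (Lemma~\ref{I-ideal}), and that the natural map on standard bases is an algebra homomorphism by comparing the monomial-basis expressions $\f_A$ (Proposition~\ref{prop:sq}). Compatibility of stably canonical bases then follows from bar-invariance plus the triangularity in Proposition~\ref{stable}, not from any finite-$d$ IC-sheaf argument.
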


We have been developing a Hecke-algebraic approach in a companion paper \cite{FLLLW} simultaneously, which
redevelops some of the main results of Part 3 of this paper in a completely different way.  
%\red{\st{Having both approaches available have helped us to overcome various technical obstacles at early stages of this project.}} 

%%%%%
\section{The organization}

The paper is divided into three parts. 
Part~\ref{part1} consists of Chapters~\ref{chap:A}-\ref{chap:schur}, and it deals with the Schur algebras and Lusztig algebras
arising from convolution algebras on pairs of partial flags of affine type $C$. 
Part~~\ref{part2} consists of Chapters~\ref{chap:coideal}-\ref{chap:coideal34}, and it studies the limit algebras of each of the four families of Lusztig algebras
and identifies them as (idempotented) coideal subalgebras of the quantum affine $\sll$. 
Part~~\ref{part3} consists of Chapters~\ref{chap:Kc}-\ref{chap:Kc:ji}, and it treats the 
stabilization algebras arising from the four families of Schur algebras,
and identify them as (idempotented) coideal subalgebras of the quantum affine $\gl_n$. 

In the somewhat preliminary Chapter~\ref{chap:A}, which is exclusively on affine type $A$, 
we review the constructions of \cite{ Lu99} in affine type $A$ and set up the type $A$ notations.
We formulate Lusztig algebra $\bU_{n,d}$
as the (proper) subalgebra of the Schur algebra ${\mbf S}_{n,d}$ generated by Chevalley generators. 
A new result in this Chapter is a geometric construction of a monomial basis for $\bU_{n,d}$ and then for $\dot \bU(\asl_n)$.
This makes our approach here and further generalization in affine type $C$ below quite different from those in \cite{Mc12, SV00}.
In particular, the approach here does not rely on the crystal basis theory of Kashiwara and Ringel-Hall algebras. 

Before proceeding to the remaining chapters, we recommend the reader to browse Appendix~\ref{chap:finiteC}.
In Appendix~\ref{chap:finiteC}, we review and expand the geometric constructions from \cite{BKLW14, FL15} in finite type $C$.
Recall most of the results in {\em loc. cit.} were formulated in detail in the geometric setting of finite type $B$.

From now on we take $n$ to be a positive {\bf even} integer. 

In Chapter~\ref{chap:latticeC}, 
we present lattice models for the  variety $\Y^{\fc}$ of complete flags of affine type $C$, following 
\cite{Sa99}, ~\cite{H99} and ~\cite{Lu03}.
We also formulate a variety $\X^{\fc}_{n,d}$ of $n$-step flags of affine  type $C$.
Then we classify the orbits of products  $\X^{\fc}_{n,d} \times \Y^{\fc}$ and $\X^{\fc}_{n,d} \times \X^{\fc}_{n,d}$
under the diagonal action of the loop symplectic group. 

In Chapter~\ref{chap:formula}, 
we study the Schur algebra $\Sj$ arising from the convolution algebra of pairs of $n$-step flags of affine type $C$.
 We present multiplication formulas in $\Sj$ with the Chevalley generators and with their divided powers. 
We then specify some general scenarios where these multiplication formulas produce a leading term with coefficient $1$. 
The results in this chapter are local in the sense that they are analogous to the results in finite types $A$ and $C$.

In Chapter~\ref{chap:schur},
we introduce the Lusztig algebra $\bU_{n,d}^{\fc}$
as the (proper) subalgebra of the Schur algebra $\Sj$ generated by Chevalley generators. 
We then introduce a coideal algebra type structure which involves both Schur algebras (and respectively, Lusztig algebras) of affine types $C$ and $A$. 
This leads to an imbedding  $\jmath_{n,d}$ from $\Sj$ to ${\mbf S}_{n,d}$, and also from $\bU_{n,d}^{\fc}$ to ${\mbf U}_{n,d}$. 
The canonical basis and monomial basis are shown to be
compatible with the inclusion $\bU_{n,d}^{\fc} \subset \Sj$. 

%p2
In Chapter~\ref{chap:coideal}, 
we introduce the transfer maps $\phi^{\fc}_{d, d-n}$ on Schur algebras $\Sj$ and Lusztig algebras $\bU_{n,d}^{\fc}$.
We then construct algebras $\bU_n^{\fc}$ (or $\dot{\bU}_n^{\fc}$) from the projective system of algebras 
$\{(\bU_{n,d}^{\fc}, \phi^{\fc}_{d, d-n})\}_{d \ge 0}$. We show that
$\bU_n^{\fc}$ (or $\dot{\bU}_n^{\fc}$)  is isomorphic to an (idempotented) coideal subalgebra of $\bU(\slh_n)$, and $(\bU(\slh_n), \bU_n^{\fc})$ forms
an affine quantum symmetric pair. 
The canonical basis of $\dot{\bU}_n^{\fc}$ is established and shown to admit positivity with respect to
multiplication, comultiplication, and a bilinear pairing.

In the remainder of the Introduction we set $\nn =n-1$ (which is odd) and $\mm =n-2$ (which is even). 

In Chapter~\ref{chap:coideal2} and Chapter~\ref{chap:coideal34}, we present several more projective systems
$\{(\bU_{\nn,d}^{\ji}, \phi^{\ji}_{d, d- \nn})\}_{d \ge 0}$, $\{(\bU_{\nn,d}^{\ij}, \phi^{\ij}_{d, d-\nn})\}_{d \ge 0}$, and $\{(\bU_{\mm,d}^{\ii}, \phi^{\ii}_{d, d-\mm})\}_{d \ge 0}$.
We emphasize that each of these Lusztig algebras arises from convolution algebras of geometric origin. 
We obtain the limit algebras $\bU_{\nn}^{\ji}$, $\bU_{\nn}^{\ij}$, $\bU_{\mm}^{\ii}$ and their idempotented counterparts.
We show that $\bU_{\nn}^{\ji}$ (respectively, $\bU_{\nn}^{\ij}$, or $\bU_{\mm}^{\ii}$)  is isomorphic to a coideal subalgebra of $\bU(\slh_{\nn})$
(respectively, $\bU(\slh_{\nn})$ or $\bU(\slh_{\mm})$).
The monomial and canonical bases of $\bU_{\nn,d}^{\ji}, \bU_{\nn,d}^{\ij},$ and $\bU_{\mm,d}^{\ii}$ are established by relating to
their counterparts for $\bU_{n,d}^{\C}$.
The canonical bases of $\dot{\bU}_{\nn}^{\ji}$, $\dot{\bU}_{\nn}^{\ij}$ and $\dot{\bU}_{\mm}^{\ii}$ are established and shown to admit positivity with respect to
multiplication, comultiplication, and a bilinear pairing.

%p3
In Chapter~\ref{chap:Kc}, 
we study the stabilization properties of the family of Schur algebras $\Sj$ (as $d$ varies).
To overcome the difficulty of working with the Schur algebra $\Sj$ which does not have a good finite generating set, 
we study $\Sj$ via an imbedding into a Lusztig algebra of higher rank. 
This allows us to understand generating sets, monomial bases, multiplication, comultiplication and bar operators of the Schur algebras and their stabilization properties
in  a conceptual way and lift all these structures to a stabilization algebra $\K^{\C}_n$. 
We show that $\Kc_n$ admits a stably canonical basis, and 
the pair $(\K_n, \Kc_n)$ forms a quantum symmetric pair in an idempotented form,  
where $\K_n$ is isomorphic to the idempotented quantum affine $\gl_n$.

In Chapter~\ref{chap:Kc:ji}, we formulate the main results for the stabilizations of the remaining 3 families 
of Schur algebras of types $\ji, \ij, \ii$,
following the blueprints in Chapter~\ref{chap:Kc}. 
Moreover, we establish interrelations among all the stabilization algebras 
$\Kc_n$, $\Kcij$, $\Kcij$, and $\Kcii$ of types $\jj, \ji, \ij, \ii$, and among their stably canonical bases.

Notation: $\mbb N =\{0, 1, 2, \ldots\}$. 

%\red{Later, add Index for the paper.}

\vspace{.3cm}
%\section
\noindent {\bf Acknowledgement.} 
We thank Huanchen Bao, Jie Du and Xuhua He for helpful discussions.  
%\red{\st{on  his work}}. 
We thank the following institutions whose support and hospitality help to facilitate the progress and completion of this project:
East China Normal University (Shanghai), Institute of Mathematics, Academia Sinica (Taipei), University of Virginia.
The last author is partially supported by the NSF grant DMS-1405131. 

\newpage

\part{Affine flag varieties, Schur algebras, and Lusztig algebras}
  \label{part1}
%\include{FLpart1}

%%%%%%%%%%%%%%%%%%%
%%%%%%%%%%%%%%%%%%%
%\appendix
\chapter{Constructions in affine type $A$}
\label{chap:A}

This chapter is preliminary in nature. Most of it has been well known \cite{Lu99, Lu00, SV00, Mc12} (also cf. \cite{DF14}).
However we present a new geometric construction of a monomial basis (and hence canonical basis) 
for  the modified quantum group
$\dot \bU(\asl_n)$, in analogy to the one in ~\cite[Proposition 3.9]{BLM90}, 
without use of crystal basis ~\cite{K91} and the theory of Ringel-Hall algebras ~\cite{R90}, 
(see also ~\cite{Sch06, DDPW08, DDF12}, ~\cite{VV99, LL15}).

%%%%
\section{Lattice presentation of affine flag varieties of  type $A$}

Let $k$ be a finite field of $q$ elements, where $q$ is a prime power.
Let $F=k((\ve))$ be the field of formal Laurent series over $k$ and $\A=k[[\ve]]$ the ring of formal power series.
Let $d$ be a positive integer.
Let $\GL_F(d)$ (respectively, $\GL_{\A}(d)$, $\GL_k(d)$)  be the invertible $d\times d$ matrices with coefficients in $F$ (respectively, $\A$, $k$).
Consider a reduction mod-$\ve$ map
$
\text{ev}|_{\ve=0}: \GL_{\A}(d) \to \GL_k(d), \quad \ve \mapsto 0.
$
The $parahoric$ $subgroups$ of $\GL_F(d)$ are inverse images of parabolic subgroups of $\GL_k(d)$ under $\text{ev}|_{\ve=0}$,
and the parahoric subgroups which are inverse images of Borel subgroups are called $Iwahori$ $subgroups$.
%In particular, if we take the parabolic subgroup in $\GL_k(d)$ to be (block) upper triangular matrices, then the associated parahoric subgroup consists of  mod $p$ upper triangular %matrices in $\GL_{\A}(d)$ with the same block size in the Levi.
The affine partial flag of type $A$ is then defined to be the homogeneous space $\GL_F(d)/ P$ where $P$ is a parahoric subgroup.

Let  $V$ be an $F$-vector space of dimension $d$.
A free $\A$-submodule $\mathcal L$ of $V$ of rank $d$  is called a $lattice$ in $V$. % if  $F\otimes_{\A} \mathcal L = V$.
Let $\Y^{\mathfrak a}$ be the set of all lattice chains $L = (L_i)_{i\in \ZZ}$ where each $L_i$ is a lattice in $V$, such
that $L_{i-1} \subset  L_i$ and $L_{i-d} = \ve L_i$ for all $i \in \ZZ$.

We fix a basis $\{ e_1,\ldots, e_d\}$ for $V$, and
we set
\[
e_m= {\ve}^{-s} e_i, \quad \mbox{if } m= sd+i \text{ for }  i\in [1,d].
\]
Then we have a total order for $(e_m)_{m\in\mbb Z}$ as follows: 
\[
\ldots, \ve e_1, \ldots, \ve e_d, e_1, \ldots, e_d, \ve^{-1} e_1, \ldots, \ve^{-1} e_d, \ldots.
\]

Clearly,
\[
\L_0 = \A e_1\oplus \cdots \oplus \A e_d
\]
is a lattice in $V$. More generally, for $m= s d +i$ with $1\leq i\leq d$, we define the lattice
\begin{align*}
\L_m &=[e_{m+1},\ldots, e_{m+d}]_{\A}\\
&=   \A \ve^{-s} e_{i+1} \oplus \cdots \oplus \A \ve^{-s} e_d \oplus \A \ve^{-s-1} e_1\oplus \cdots \oplus \A \ve^{-s-1}  e_i.
\end{align*}

We set $\L = (\L_m| m\in \mbb Z)$ to be the standard lattice chain.
%\[\L_m\subseteq \L_{m+1}, \quad  \dim_k \L_{m+1}/\L_m =1, \quad \L_m= \ve \L_{m+d},\quad \forall m\in\mbb Z.\]
There  exists a surjective map
\[
\GL_F(d) \to \Y^{\mathfrak a},\quad g\mapsto g. \L.
\]
It is clear that the stabilizer $\mbf  I^{\mathfrak a}$ of $\L$ in $\GL_F(d)$ consists  exactly  of the mod-$\ve$ upper triangular matrices.  
Thus $\mbf I^{\mathfrak a}$ is  an Iwahori subgroup of $\GL_F(d)$. We thus have the identification of affine flag variety of type $A$:
\begin{equation}
\label{flag-A}
\GL_F(d) / \mbf  I^{\mathfrak a} \longrightarrow  \Y^{\mathfrak a}.
\end{equation}
There are similar lattice chain models for the partial flag varieties of type $A$. 

%%%%%
\section{Monomial basis for quantum affine $\mathfrak{sl}_n$}
\label{sec:MB-A}

In this section, we shall construct an explicit monomial basis for quantum affine $\mathfrak{sl}_n$
(the construction here will be generalized in latter chapters).

For the partial flag cases, the treatment is similar.
More generally, we consider the set $\X_{n,d}$ of $n$-periodic lattice chains in $V$. Here $\dim_F V=d$ and 
 a sequence $ L = ( L_i)_{i\in \mbb Z}$  of lattices in $V$ is called an $n$-periodic  lattice chain if  $L_i \subseteq  L_{i+1}$ and $L_i = \ve L_{i+n}$ for all $i\in \mbb Z$.
The group $\GL_F(d)$ acts naturally on $\X_{n,d}$ from the left, and then  acts on the product $\X_{n,d} \times \X_{n',d}$ diagonally,
for a pair $(n, n')$  of positive integers.

Let $\Theta_{n|n',d}$ be the set of all  matrices
$A=(a_{ij})_{i,j \in \mbb Z}$ with non-negative integer entries
satisfying the following conditions:
\begin{equation} 
   \label{Theta:dn}
(i) \; a_{ij}=a_{i+n, j+n'} \; (\forall i, j\in \mbb Z); \quad (ii) \; \sum_{i=i_0}^{i_0+n-1} \sum_{j\in \mbb Z} a_{ij}=d, \text{ for each (or for all) }i_0 \in \mbb Z.
\end{equation}
The condition ($ii$) can be equivalently replaced by ($ii'$) below:
\begin{itemize}
\item [$(ii')$] For any $j_0\in \mbb Z$, $\sum_{j=j_0}^{j_0+n'-1} \sum_{i\in \mbb Z} a_{ij}=d$.
\end{itemize}
A matrix $A$ in $\Theta_{n|n',d}$ automatically satisfies that, for any $i\in \mbb Z$, the sets $\{j\in \mbb Z| a_{ij}\neq 0\}$
                and $\{ j\in \mbb Z| a_{ji}\neq 0\}$ are finite. 

Following \cite{Lu99}, the $\GL_F(d)$-orbits in $\X_{n,d} \times \X_{n', d}$ are parametrized by the set  %$\mathfrak S_{d,n,n}$
$\Theta_{n|n',d}$.
More precisely, to a pair of $n$-periodic lattices $(L, L')$, we define a matrix $A = (a_{ij})_{i, j \in \mbb Z}$ where 
\[
a_{ij} = \dim_k L_i \cap L'_j / (L_{i-1} \cap L'_j + L_i \cap L'_{j-1}),
\quad (\forall i, j\in \mbb Z).
\]
This defines a bijection  $\GL_F(d) \backslash \X_{n,d} \times \X_{n',d} \leftrightarrow \Theta_{n|n',d}.$
Let $\mathcal O_A$ denote the associated $\GL_F(d)$-orbit indexed by $A$. 
We are mostly interested in the case when $n'=n$, and we shall write 
\[\Theta_{n,d} =\Theta_{n|n,d}.
\]               

We set
\begin{equation}
  \label{Land}
\Lambda_{n,d} =
 \big\{ \mbf{\lambda}=(\lambda_i)_{i\in \mbb Z} \in \mbb{N}^{\mbb Z} | \lambda_i = \lambda_{i+n}, \forall i\in \mbb Z; \sum_{1\leq i\leq n} \lambda_i=d \big\}.
\end{equation}
To each matrix $A \in \Theta_{n,d}$, we define its row/column sum vectors $\ro(A) =(\ro(A)_i)_{i\in \ZZ}$ and $\co(A) =(\co(A)_i)_{i\in \ZZ}$
in $\Lambda_{n,d}$ by
\[
\ro(A)_i = \sum_{j\in \mbb Z} a_{ij},
\quad
\co(A)_j = \sum_{i\in \mbb Z} a_{ij}\; (\forall i, j \in \mbb Z).
\]

Let $A, B, C \in \Theta_{n,d}$, we fix $L, L' \in \X_{n,d}$ such that $\dim_k L_i/L_{i-1} = \ro(A)_i$  and $\dim_k L'_j/L'_{j-1} = \co(B)_j$ 
for all $i, j \in \mbb Z$. 
We set 
\[
g^C_{A, B} (\sqrt{q}) =\# \big\{ \tilde L \in \X_{n,d} | (L, \tilde L) \in \mathcal O_A, (\tilde L, L') \in \mathcal O_B, (L, L') \in \mathcal O_C \big\}.
\]
By ~\cite{Lu99},  $g^C_{A, B}(\sqrt{q})$ is independent of the choices of $L, L'$ 
and is the specialization of a polynomial $g^C_{A, B}(v) \in \mbb Z[v, v^{-1}]$ at $v = \sqrt q$.
Note that $g^{C}_{A, B}=0$ for all but finitely many $C$. 

We set $\cA=\ZZ[v,v^{-1}]$.
The  affine Schur $\cA$-algebra of type $A$, denoted by ${\mbf S}_{n, d; \cA}$,  is by definition the (generic) 
convolution algebra  ${\cA}_{\mrm{GL}_F(d)} (\X_{n,d} \times \X_{n,d})$.  
Denote by $e_A$ the characteristic function of the orbit $\mathcal O_A$, for $A \in \Theta_{n,d}$.
Then the algebra ${\mbf S}_{n, d; \cA}$ is a free $\cA$-module with an $\cA$-basis $\{e_A \vert A \in \Theta_{n,d} \}$,
with multiplication given by
$e_A * e_B = \sum_{C} g^C_{A, B}(v) e_C$. We then set 
\begin{equation}
  \label{SchurA}
  {\mbf S}_{n,d} =\mbb Q(v) \otimes_{\cA}  {\mbf S}_{n, d; \cA}.
\end{equation}

To $A \in \Theta_{n,d}$, we define 
\[
d_A^{\mathfrak a} = \sum_{1\leq i \leq n,  i\geq k, j < l} a_{ij} a_{kl},
\]
and
\[
[A] = v^{- d_A^{\mathfrak a}} e_A.
\]
The set $\{ [A] | A\in \Theta_{n,d}\}$ is the standard basis of ${\mbf S}_{n,d}$.
Let $\{ \{A\}_d | A \in \Theta_{n, d}\}$ be the canonical basis of $\mbf S_{n, d}$.

Given $i,j \in \ZZ$, let $E^{ij}$ be the $\mbb Z\times \mbb Z$ matrix whose  $(k, \ell)$th entries are $1$, for all $(k, \ell) \equiv (i, j)$ 
(mod $n$), and $0$ otherwise;
that is, 
\begin{equation}
\label{Eij}
E^{ij} = (\mathcal E_{k,\ell})_{k, \ell \in \ZZ}, \quad \text{ where } {\mathcal E}_{k,\ell} =1 \text{ if } (k, \ell) \equiv (i, j) 
(\text{mod } n), \text{ otherwise } 
{\mathcal E}_{k,\ell} =0.
\end{equation}

\begin{Def}
\label{def:LA}
The subalgebra of ${\mbf S}_{n,d}$ generated by the standard basis elements $[X]$ 
such that either $X$ or $X - E^{i, i+1}$ or $X - E^{i+1, i}$ is diagonal,
is denoted by $\bU_{n,d}$ and called {\em Lusztig} {\em algebra (of affine type $A$)}.
\end{Def}

Let  $\bU_{n, d; \cA}$ be the subalgebra of ${\mbf S}_{n, d; \cA}$ generated 
by the standard basis element $[X]$ such that either $X - R E^{i, i+1}$ or $X - R E^{i+1, i}$ is diagonal,
for various $R \in {\mathbb N}$. 
For each $\lambda \in \Lambda_{n, d}$, let $D_{\lambda}$ be the diagonal matrix  in $\Theta_{n, d}$ whose diagonal is $\lambda$.
For each $R\in \mbb N$, $i\in \mbb Z$, we set
\begin{align}
\begin{split}
\E_i^{(R)} &= \sum [X], \quad
\F_i^{(R)}  = \sum [X],  \quad 
\mbf H^{\pm 1}_i = \sum_{\lambda \in \Lambda_{n, d}} v^{\pm \lambda_i} [D_{\lambda}], \quad
\mbf K^{\pm 1}_i  = \mbf H^{\pm 1}_{i+1} \mbf H_i^{\mp 1},
\end{split}
\end{align}
where the first and second sums run over all $X$ such that $X- RE^{i, , i+1}$ and $X - R E^{i+1, i}$ are diagonal, respectively.
Clearly, we have $\E_i^{(R)} = \E_j^{(R)}$, $\F_i^{(R)} = \F_j^{(R)}$, $\mbf H^{\pm 1}_i = \mbf H^{\pm 1}_j$ and $\mbf K^{\pm 1}_i =\mbf K^{\pm 1}_j$ for all $i \equiv j$ (mod $n$).
For convenience, we also set $1_{\lambda} = [D_{\lambda}]$, 

It is known from \cite{Lu99} that $\bU_{n, d;\cA}$ is an $\cA$-lattice of $\bU_{n,d}$ and generated by $\E_i^{(R)}$, $\F_i^{(R)}$ and $\mbf K_i^{\pm 1}$ for all $i$ and $R\in \mbb N$.
Recall  a $\ZZ\times \ZZ$-matrix $A=(a_{ij})$ 
is $aperiodic$ if
\begin{equation}
\label{aperiod}
\text{ for any $p\in \ZZ- \{0\}$ there exists
$k\in \ZZ$ such that $a_{k,k+p} = 0.$}
\end{equation}
We denote by $\Theta_{n,d}^{\ap}$ the set of all aperiodic matrices in $\Theta_{n,d}$. 
Lusztig \cite{Lu99} showed that
$\bU_{n,d}$ is a proper subalgebra of ${\mbf S}_{n,d}$ and further the subset $\{ \{A\} | A \in \Theta^{\ap}_{n, d}\}$ of the canonical basis of $\mbf S_{n, d}$ form a canonical basis $\{A\}_d$ of $\bU_{n, d}$.
Note that the latter result is completely nontrivial since the standard basis element $[A]$ for $A$ aperiodic is not in $\bU_{n, d}$ in general.

For $a \in \mathbb{Z}$ and $b \in \mathbb N$, we define 
\begin{equation}  
  \label{eq:binomial}
\begin{bmatrix}
a\\
b
\end{bmatrix}
=\prod_{1\leq i\leq b} \frac{v^{2(a-i+1)}-1}{v^{2i}-1}, \quad \text{ and } \quad [a] = \begin{bmatrix}
a\\
1
\end{bmatrix}.
\end{equation}

We define two partial orders ``$\leq_{\text{alg}}$'' and ``$\leq$'' on $\Theta_{n,d}$ as follows. 
For any $A =(a_{ij}), A' =(a_{ij}')  \in \Theta_{n,d}$,  let
\begin{align} 
\label{partial-alg}
A \leq_{\text{alg}} A'  \Longleftrightarrow
&  \sum_{k \leq i, l \geq j} a_{kl} \leq \sum_{k \leq i, l \geq j} a'_{kl}, \quad \forall i < j,   \\
& \sum_{k \geq i, l \leq j} a_{kl} \leq \sum_{k \geq i, l \leq j} a'_{kl}, \quad \forall i > j. \nonumber\\
\label{partial}
A \leq A' \Longleftrightarrow 
& A\leq_{\text{alg}} A', \ro (A) = \ro (A'), \co (A) = \co (A').
\end{align}
We further say that ``$A <_{\text{alg}} A'$'', (respectively, ``$A< A'$'') if $A\leq_{\text{alg}} A'$ (respectively, $A\leq A'$) and $A\neq A'$.
For convenience, we write ``$[A] +$ lower terms'' to stand for ``$[A]$ plus a linear sum of various $[B]$ with $B < A$''.

The following lemma is a slightly stronger affine version of ~\cite[Lemma3.8]{BLM90}, which is used to obtain an affine analogue of ~\cite[Proposition 3.9]{BLM90} for quantum affine $\mathfrak{sl}_n$.

\begin{lem}
\label{lem2}
Let $A, B, C \in \Theta_{n,d}$ and $R$ be a positive integer.

\begin{enumerate}
\item  
Assume that  $B-RE^{h, h+1}$ is diagonal for some $ h \in [1, n]$ and $\co(B) = \ro(A)$.
Assume further that  $R= R_0 + \cdots + R_l$ and the matrix $A$ satisfies   the following conditions:
\begin{align*}
a_{h j}=0, \  \forall j\geq k; a_{ h +1,k+i}= R_i,\ i\in [1,l],\  a_{ h +1,k} \geq  R_0, \  a_{h +1,j}=0,\  \forall j>k+l.
\end{align*}
Then  we have
\[
[B] * [A] = [ A+ \sum_{i=0}^l R_i(E^{h, k+i} - E^{h +1, k+i})] + \mbox{lower terms}.
\]

\item 
Assume  that $C-R E^{h+1,h} $ is diagonal for some $h\in [1,n]$ and $\co(C ) =\ro(A)$.
Assume further  that $R=R_0 + \cdots + R_l$ and   $A$ satisfies  the following conditions:
\begin{align*}
 a_{hj}=0,\  \forall j < k, \ a_{h,k+i}= R_i, i\in [0,l-1],\ a_{h,k+l} \geq  R_l; \  a_{h+1,j} =0,\  \forall j\leq k+l.
\end{align*}
Then we have
\[
[C] * [A] = [ A- \sum_{i=0}^{l} R_i (E^{h, k+i} - E^{h+1,k+i}) ] + \mbox{lower terms}.
\]
\end{enumerate}
\end{lem}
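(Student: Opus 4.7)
The plan is to derive both formulas from the standard convolution expansion of $[B]*[A]$ (resp.\ $[C]*[A]$) and then exploit the prescribed zero pattern of $A$ to isolate a unique leading summand with coefficient $1$.

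For part (1), since $B-RE^{h,h+1}$ is diagonal and $\co(B)=\ro(A)$, one has $[B]*[A]=\E_h^{(R)}*[A]$, so I would apply Lusztig's affine type $A$ multiplication formula (\cite{Lu99}; cf.\ \cite[Lemma~3.4]{BLM90} in the finite case), which expands
\[
[B]*[A] \;=\; \sum_{\mathbf t} v^{\beta(\mathbf t)}\prod_{j}\begin{bmatrix} a_{hj}+t_j\\ t_j\end{bmatrix}\bigl[A+\textstyle\sum_j t_j(E^{h,j}-E^{h+1,j})\bigr],
\]
summed over $n$-periodic tuples $\mathbf t=(t_j)_{j\in\ZZ}$ with $0\le t_j\le a_{h+1,j}$ and $\sum_{j=1}^n t_j=R$, where $\beta(\mathbf t)$ is an explicit quadratic form. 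The distinguished tuple $\mathbf t^\circ$ defined by $t^\circ_{k+i}=R_i$ for $0\le i\le l$ and $t^\circ_j=0$ otherwise is admissible (since $R_0\le a_{h+1,k}$ and $R_i=a_{h+1,k+i}$ for $i\ge 1$) and contributes the summand $[A+\sum_{i=0}^l R_i(E^{h,k+i}-E^{h+1,k+i})]$. Each binomial factor at $j=k+i$ equals $1$ because $a_{h,k+i}=0$ by hypothesis, and a direct inversion count (parallel to \cite[Lemma~3.8]{BLM90}) shows that, after the $v^{-d_A^{\mathfrak a}}$ normalization of standard basis elements, the net $v$-weight for this term is $1$.

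It remains to verify that every other admissible $\mathbf t\neq\mathbf t^\circ$ yields a matrix strictly below $A+\sum_i R_i(E^{h,k+i}-E^{h+1,k+i})$ in the order $<$ of \eqref{partial}. Any alternative $\mathbf t$ preserves the row and column sums of the target (since these depend only on $\co(B)$, $\ro(B)$ and $A$), so the comparison reduces to the partial sums $\sum_{p\le h,\,q\ge j}$ defining $\leq_{\mrm{alg}}$. Since $a_{h+1,j}=0$ for $j>k+l$ and the upper bounds $t_{k+i}\le a_{h+1,k+i}$ are saturated only by $\mathbf t^\circ$ on the range $[k,k+l]$, any deviation forces moving mass from the columns $\{k,\ldots,k+l\}$ to some column $j'<k$. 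Such a leftward shift in row $h$ strictly decreases $\sum_{p\le h,\,q\ge j_0}$ for any $j_0$ with $j'<j_0\le k+l$, placing the result strictly below the leading matrix in $\leq_{\mrm{alg}}$, hence in $<$.

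Part (2) is proved by the mirror argument: with $C-RE^{h+1,h}$ diagonal, one expands $[C]*[A]=\F_h^{(R)}*[A]$ via the analogous formula in which mass flows downward from row $h$ to row $h+1$, summed over tuples $\mathbf s=(s_j)$ with $0\le s_j\le a_{h,j}$ and $\sum s_j=R$. The hypotheses $a_{h,k+i}=R_i$ for $0\le i<l$, $a_{h,k+l}\ge R_l$, and $a_{h+1,j}=0$ for $j\le k+l$ again single out a unique tuple $\mathbf s^\circ$ saturating the upper bounds on the relevant range; all binomial factors collapse because $a_{h+1,k+i}=0$ by hypothesis, and the remaining tuples produce matrices strictly lower by a symmetric partial-sum argument (this time using the $i>j$ half of the definition of $\leq_{\mrm{alg}}$). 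The main, though routine, obstacle in both parts is the inversion bookkeeping needed to certify that the leading coefficient is exactly $1$ after the standard-basis renormalization; this is essentially the row $h,h+1$ calculation of \cite[Lemma~3.8]{BLM90} transplanted into the affine setting.
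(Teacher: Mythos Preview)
Your approach matches the paper's: both invoke Lusztig's multiplication formula from \cite[\S3]{Lu99}, single out the tuple $\mathbf t^\circ$ with $t^\circ_{k+i}=R_i$, and check that its coefficient is $1$ (the paper carries out the exponent computation $\beta(\mathbf t^\circ)=\sum_{j>u}(t^\circ_j-a_{h+1,j})t^\circ_u=0$ explicitly rather than deferring to an inversion count). One minor slip in your lower-terms argument: since only $a_{h+1,k}\ge R_0$ is assumed, an alternative admissible $\mathbf t$ can also shift mass from some column $k+i$ (with $i\ge 1$) to column $k$ itself, not only to a column $j'<k$; your partial-sum comparison still goes through, so the conclusion is unaffected.
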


\begin{proof}
By \cite[Section 3]{Lu99}, we have
\begin{equation}
\label{B*A}
[B] * [A] = \sum_t v^{ \beta(t) } \prod_{u \in \mbb Z}
\overline{\begin{bmatrix} a_{hu} + t_u \\ t_u \end{bmatrix}}
[A + \sum_{u\in \mbb Z} t_u (E^{hu} - E^{h+1,u} )],
\end{equation}
where $  \beta(t) =
\sum_{j \geq  u} a_{h j} t_u - \sum_{j > u} a_{h+1, j} t_u + \sum_{j < u} t_j t_u$.
Here the bar is the involution on $\mbb Q(v)$ defined by $\bar v = v^{-1}$.
Observe that 
$A +  \sum_{i=0}^l R_i (E^{h, k}- E^{h +1, k})$ is the leading term for the right hand side of (\ref{B*A}).

We shall show its coefficient is 1.
Note that the leading term is determined by
 \[t_{k+i}=  R_i,\ t_j=0,\ \forall i \in [0,l],\ j \not \in  [k, k+l].
 \]
 In this case, we have $\prod_{u \in \mbb Z}
\overline{\begin{bmatrix} a_{hu} + t_u \\ t_u \end{bmatrix}} =1$ and
\[\beta(t) = \sum_{j \geq  u}(t_j- a_{h+1, j}) t_u 
=   
%\sum_{j >  k} (t_j -a_{h+1,j}) (R- a) + 
\sum_{i=0}^l \sum_{j > k+i} (t_{j}-a_{h+1, j}) R_i =0.
\]
This shows (1).
Part (2) can be proved similarly.
\end{proof}

A product of standard basis elements $[G_1] * [G_2] * \cdots *[G_m]$ in ${\mbf S}_{n,d}$ is called an $aperiodic$ $monomial$ if for each $i$,   
either $G_i - R E^{j, j+1}$ or $G_i - RE^{j+1, j}$ is diagonal for some $R\in \mbb N$ and $j\in \mbb Z$. 
The following proposition is a missing piece  in the affine generalization of ~\cite{BLM90}, 
corresponding to Proposition 3.9 in the {\it loc. cit}.
We refer to ~\cite{DD05} and the references therein for early treatments using Ringel-Hall algebras and generic extension.

\begin{prop} \label{Astandard-basis}
For any $A \in \Theta_{n,d}^{\ap}$,
there exists (and we shall fix) an aperiodic  monomial $\zeta^{\mathfrak a}_A$ such that 
$\zeta^{\mathfrak a}_A = [A] + \mbox{lower terms}.$
Moreover,  the set $\{ \zeta^{\mathfrak a}_A \mid A \in \Theta_{n,d}^{\ap} \}$ is a basis for $\bU_{n,d}$. 
\end{prop}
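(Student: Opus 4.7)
My plan is to build $\zeta^{\mathfrak a}_A$ by induction on (some measure of) the off-diagonal mass of $A$, using the multiplication formulas in Lemma~\ref{lem2} as the basic step, and then deduce the basis statement from triangularity together with the known size of the canonical basis of $\bU_{n,d}$.

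First I would fix an ordering on the off-diagonal ``super/sub-diagonals'' of a $\mathbb Z\times\mathbb Z$ matrix, say processing entries $a_{ij}$ by increasing $|i-j|$, and then (within each fixed $p=j-i$) from one row to the next within a fundamental period. The aperiodicity of $A$ enters precisely here: for each nonzero $p$ there exists $k\in\mathbb Z$ with $a_{k,k+p}=0$, so within the $p$-th diagonal we may choose a starting row (the ``hole'') from which to build the entries of that diagonal. This is exactly the hypothesis needed to apply Lemma~\ref{lem2}(1) (or (2) for $p<0$): the matrix on which we act must satisfy $a_{hj}=0$ for $j\ge k$ (respectively $j<k$), and aperiodicity guarantees such a $k$ exists row-by-row.

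Concretely, I would start from $[D_\lambda]$ with $\lambda$ the row-sum vector of the diagonal part of $A$ and then, for each entry $a_{h,k+i}=R_i$ on the $p$-th super/sub-diagonal, multiply on the left by a standard basis element $[B]$ with $B-R\, E^{h,h+1}$ (or $B-R\, E^{h+1,h}$) diagonal, with the diagonal of $B$ chosen to equal the row sum of the current intermediate matrix. Part~(1) of Lemma~\ref{lem2} then yields the desired leading term $[A+\sum R_i(E^{h,k+i}-E^{h+1,k+i})]$ with coefficient $1$, while all the other contributions are strictly smaller in the algebraic partial order $\le$ (same row/column sums, strictly smaller in $\le_{\text{alg}}$ from formula~(\ref{B*A})). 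Iterating along our fixed ordering produces the aperiodic monomial $\zeta^{\mathfrak a}_A$ with $\zeta^{\mathfrak a}_A=[A]+\text{lower terms}$, and by construction $\zeta^{\mathfrak a}_A\in\bU_{n,d}$.

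For the basis statement, the triangularity $\zeta^{\mathfrak a}_A=[A]+\sum_{B<A}c_{A,B}[B]$ with leading coefficient $1$ makes the set $\{\zeta^{\mathfrak a}_A\mid A\in\Theta_{n,d}^{\ap}\}$ linearly independent inside $\mbf S_{n,d}$, hence inside $\bU_{n,d}$. Since Lusztig's canonical basis $\{{}^{\fa}\{A\}_d\mid A\in\Theta_{n,d}^{\ap}\}$ is an $\mbb Q(v)$-basis of $\bU_{n,d}$, the cardinalities match, so linear independence forces $\{\zeta^{\mathfrak a}_A\}$ to be a basis as well. The main obstacle is the bookkeeping for the inductive construction: one must check that at each step the intermediate matrix has exactly the annihilation pattern (zeros in row $h$ to the right of $k$, etc.) required by Lemma~\ref{lem2}. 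This is where a careful order of processing (diagonals $p=\pm 1,\pm 2,\ldots$, and within each diagonal starting from an aperiodic hole and walking outward cyclically within a period) becomes essential; without the aperiodicity hypothesis no such starting hole exists and the scheme genuinely breaks down, consistent with the fact that $\bU_{n,d}\subsetneq\mbf S_{n,d}$.
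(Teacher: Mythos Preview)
Your reduction of the basis claim to existence plus Lusztig's canonical-basis result is correct and is exactly how the paper argues. The gap is in the construction of $\zeta^{\mathfrak a}_A$. Processing super/sub-diagonals in the order $|p|=1,2,\ldots$ cannot work as you describe: a single application of Lemma~\ref{lem2}(1) moves a contiguous block of entries from row $h{+}1$ up to row $h$, so to create the target entry at $(h,h+p)$ you must deplete the entry at $(h{+}1,h+p)$, which lies on the $(p{-}1)$st diagonal you have just ``finalized.'' Hence the intermediate matrices in your scheme cannot simultaneously carry the target values on all inner diagonals and serve as sources for the next outward push. (A minor slip: the starting idempotent must be $1_{\co(A)}$, not the diagonal of $A$.) More to the point, the vanishing hypotheses in Lemma~\ref{lem2} concern the \emph{intermediate} matrix at each step, and the mere existence of an aperiodic hole on each diagonal of $A$ does not supply them.

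The paper runs the induction in the opposite direction, on $\Psi(A)=\sum_{i,j} |j-i|\,a_{ij}$, by \emph{reducing} $A$ to a strictly simpler aperiodic $A'$. One looks at the outermost diagonal $m$, picks $k$ with $a_{k,k+m}=0$ and $a_{k-1,k-1+m}\neq 0$ (this is where aperiodicity enters), and then takes $u$ \emph{maximal} so that moving the row-$(k{-}1)$ entries in columns $[u,k+m-1]$ down to row $k$ leaves an aperiodic matrix $A'$. The step you defer to ``bookkeeping'' is the crux: maximality of $u$ forces $a_{k,l}=0$ for all $l>u$ (otherwise one could start further right and stay aperiodic), and this is exactly the row-$h$ vanishing needed to invoke Lemma~\ref{lem2}(1) and obtain $[B]*[A']=[A]+\text{lower terms}$. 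Aperiodicity of $A'$ is what allows the induction to continue; without controlling it at every stage you cannot guarantee the hole required for the next reduction.
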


\begin{proof}
Recall \cite{Lu99} that $\{ \{A\} \vert A \in \Theta_{n,d}^{\ap} \}$ forms a canonical basis for $\bU_{n,d}$. 
Assuming the first statement on  the existence of such  $\zeta^{\mathfrak a}_A$,
we then have $\zeta^{\mathfrak a}_A =\{A\} +$lower terms in $\bU_{n,d}$, and hence 
$\{ \zeta^{\mathfrak a}_A \mid A \in \Theta_{n,d}^{\ap} \}$ forms a basis for $\bU_{n,d}$.

It remains to prove the existence of such 
an aperiodic  monomial $\zeta^{\mathfrak a}_A$.
Let us fix some notations. 
Given a matrix $A=(a_{ij}) \in \Theta_{n,d}$, we define a matrix 
$$f_{k; s,t}(A) = A - \sum_{s\leq j\leq t} a_{k-1,j} (E^{k-1,j} - E^{k,j})  \in \Theta_{n,d}.$$ 
%by
%\begin{equation*}
%a'_{ij} = 
%\begin{cases}
%a_{k-1,j}+ a_{kj}, &{\rm if}\   i=k,\  s \leq j \leq t,\\
%0, & {\rm if}\   i =k-1,\  s \leq j \leq t, \\
%a_{ij}, &{\rm otherwise}.
%\end{cases}
%\end{equation*}
Let $\Psi (A) = \sum_{i\in [1,n]} |j-i|a_{ij}$. 
It is clear that $\Psi(f_{k; s,t} (A)) \leq \Psi(A)$ for all $k, s$ and $t$ with $k \leq s \leq t$, 
where the equality holds if and only if 
\begin{equation}
  \label{condition1}
  a_{k-1, j} = 0,\ \forall s \leq j \leq t.
\end{equation}

We are now ready to prove the existence of such 
an aperiodic  monomial $\zeta^{\mathfrak a}_A$ by induction on $\Psi(A)$.
If $\Psi (A) =0$, then $A$ is a diagonal matrix, and $\zeta^{\mathfrak a}_A = [A]$.

We now assume that $\Psi(A) \geq 1$ and that the existence of such $\zeta^{\mathfrak a}_{A'}$ for all aperiodic matrices $A'$ with $\Psi(A') < \Psi(A)$.
Set  $m ={\rm min} \{l \in \mbb N | a_{ij} =0\ {\rm for\ all}\ |i-j| > l\}$.
%Assume that  $a_{l, l+m} \neq 0$ for some $l$.
If  there exists $k \in \mbb Z$ such that
$a_{k, k+m}=0$ and $a_{k-1, k -1 + m} \neq 0$.
By (\ref{condition1}), we have $\Psi(f_{k; s, t} (A)) < \Psi(A)$ for all $k \leq s \leq t$.
  
Let $u = \max \{s  \leq k+m -1 \mid f_{k; s, k+m-1} (A) \ \text{is aperiodic}\}$.
%The set $S$ is not empty since $ k\in S$.
%Let $u$ be the maximal element in $S$.
We have $a_{kl}=0$ for all $l > u$.
(Otherwise, there exists $j > u$ such that $a_{kj} \neq 0$.
Then $f_{k; j, k+m-1}(A)$ is aperiodic, which contradicts with the definition of $u$.)
By Lemma \ref{lem2} (1), we have
\begin{equation}
\label{case-1}
 [B]  * [f_{k, u, k+m-1}(A)] = [A] + {\rm lower\ terms},
\end{equation}
where $B$ is the matrix such that $\co(B) = \ro(f_{k, u, k+m-1}(A))$  and
$B -  \sum_{l=u}^{k+m-1} a_{k-1,l} E^{k-1,k}$ is diagonal.

If  there exists $k \in \mbb Z$ such that
$a_{k, k - m} \neq 0$ and $a_{k-1, k -1 - m} = 0$, 
we can prove a statement similar to (\ref{case-1}) by using Lemma ~\ref{lem2}(2).
By induction on $\Psi(A)$, the existence of  $\zeta^{\mathfrak a}_A$ follows.
\end{proof}

\begin{example}
Let $n=2$. Let $A$ be a lower triangular matrix whose nonzero entries  are 
located at $(5, 5)$, $(6, j)$  (mod 2), for $2\leq j \leq 6$, which are
\[
a_{55} =1, a_{62} = 2, a_{63} = 3, a_{64}=2, a_{65}=1, a_{66}=2.
\]
Let $A'=(a'_{ij}) $ be the lower triangular matrix whose nonzero entries are specified by
\[
a'_{52}=2, a'_{53} =3, a'_{54} = 2, a'_{55}=2, a'_{66}=2.
\]
Applying the algorithm in the proof,  
$
\E^{(8)}_1 * [A'] = [A] + \mbox{lower terms}. 
$
Inductively, we have
\[
\E^{(8)}_1 * \E^{(7)}_0 * \E^{(5)}_1*  \E^{(2)}_0 * 1_{\co(A)} = [A] + \mbox{lower terms}. 
\]
\end{example}

%%%%
\section{Algebras $\bU_n$ and $\dot{\bU}_n$}
  \label{sec:Un-A}

Recall a transfer map $\phi_{d, d-n}: \bU_{n,d} \rightarrow \bU_{n,d-n}$ was introduced in \cite{Lu00} by sending the generators $\E^{(R)}_i$, $\F^{(R)}_i$ and $\mbf K_i^{\pm 1}$ to the respective generators.
Let us define a partial order $\leq_n$ on $\mbb{N}$ by declaring that
\[
a \leq_n b \ \mbox{iff} \ b-a = p n \quad \ \mbox{for some} \ p \geq 0.
\]
Then   $\{(\bU_{n,d}, \phi_{d, d-n})\}_{d\in \mbb{N}}$ form a projective system over  the poset $(\mbb{N}, \leq_n)$.
We shall consider its projective limit:
\[
\bU_{n,\infty} \equiv
\varprojlim_{d } \bU_{n,d}
= \Big \{ x \equiv  (x_d)_{d\in \mbb{N}}  \in \prod_{d\in \mbb{N}} \bU_{n,d}  \Big\vert \phi_{d, d-n}(x_d) = x_{d-n} \quad \forall d \Big \}.
\]
%The reason why we turn our attention to $ \bU_{n,\infty} $ instead of the stabilization algebra $\K$ is due to its intrinsicness and naturality.
The bar involution on $\bU_{n,d}$ induces a bar involution
$
\bar \ : \bU_{n,\infty} \to \bU_{n,\infty},
$
since it commutes with the transfer map \cite{Lu00}.
%\begin{rem}
%This bar involution can be extended to the more general setting by  replacing  $\bU_{n,d}$ by ${\mbf S}_{n,d}$
%by a result of Deng-Du-Fu, see Corollary ~\ref{bar-inv}.
%\end{rem}
Similarly, we have an integral version:
$\bU_{n, \infty;\cA} =
\varprojlim_{d}  \bU_{n, d; \cA}.$
Since $\mbb Q(v) \otimes_{\mathcal A} \bU_{n, d; \cA} = \bU_{n,d}$ for all $d$, we have
$\mbb Q(v) \otimes_{\mathcal A} \bU_{n, \infty;\cA} = \bU_{n, \infty}$.

As we  deal with all $d \in \mbb{N}$ simultaneously, we will write
\[
1_{\lambda, d}, \E_{i, d}, \F_{i, d}, \mbf K^{\pm 1}_{i, d}, \quad \forall 1\leq i \leq n
\]
for the generators in $\bU_{n,d}$, which are denoted  without $d$ in the subscript previously.
Since the transfer map sends generators $\E_{i, d}$, $\F_{i, d}$ and $\mbf K^{\pm 1}_{i, d}$ to the respective generators, we can define
elements $\E_i, \F_i$ and $\mbf K^{\pm 1}_i$ for all $1\leq i \leq n$  in $\bU_{n,\infty}$ by
declaring that their $d$-th component is $\E_{i, d}$, $\F_{i, d}$ and $\mbf K^{\pm 1}_{i, d}$, respectively.
(Similarly, we can define the $a$-th divided power $\E_i^{(a)}$ and $\F_i^{(a)}$.)

\begin{Def}
Let $\bU_n$ be the subalgebra of $\bU_{n,\infty}$ generated by $\E_i, \F_i$ and $\mbf K^{\pm 1}_i$ for all $1 \leq i \leq n$.
\end{Def}

Clearly, the restriction of the natural projection $\phi_d: \bU_{n,\infty} \to \bU_{n,d}$ gives us a surjective algebra homomorphism:
\[
\phi_d : \bU_{n} \longrightarrow \bU_{n,d}.
\]

%By (\ref{A-coass}), we have the following.
%\begin{prop}
%There is a unique comultiplication $\Delta: \bU_n \to \bU_n \otimes \bU_n$ defined by
%\begin{align*}
%\begin{split}
%\Delta ( \mbf E_i) & = \mbf E_i \otimes \mbf K_i + 1 \otimes \mbf E_i, \\
%%
%\Delta (\mbf F_i) & = \mbf F_i \otimes 1+ \mbf K^{-1}_{i} \otimes \mbf F_i, \\
%%
%\Delta (\mbf K_i) & = \mbf K_i \otimes \mbf K_i, \hspace{2cm} \forall 0\leq i \leq n-1.
%\end{split}
%\end{align*}
%\end{prop}

%%%%\section{The subalgebra $\dot{\bU}_n$}

We set
\begin{equation}
  \label{Zn}
\mbb Z_n =\{ \lambda=(\lambda_i)_{i\in \ZZ}  \mid  \lambda_i \in \ZZ, \lambda_i = \lambda_{i+n}, \forall i\}.
\end{equation}
We define an equivalence relation $\sim$ on $\mbb Z_n$ by
\[
\lambda  \sim \mu   \Leftrightarrow \lambda - \mu = (\ldots, p, p, p, \ldots), \ \mbox{for some} \ p\in \mbb Z.
\]
Let $\mbb Z_n/\sim$ be the set of equivalence classes and $\bar{\lambda}$ be the equivalence class of $\lambda$.
Let
\[
\mbb X = \mbb Z_n/ \sim,
\qquad
\mbb Y = \{\nu \in \mbb Z_n | \sum_{1\leq i\leq n} \nu_i =0\}.
\]
Then the standard dot product on $\mbb Z_n$  induces a  pairing
$
\cdot: \mbb Y \times \mbb X \longrightarrow  \mbb Z.
$
Set $I=\{1,\ldots, n\}$. We define two injective maps
$I \to \mbb Y, \quad I \to \mbb X,$
by letting 
\[
i \mapsto - \epsilon_i + \epsilon_{i+1}, \; i \mapsto -\bar{\epsilon}_i + \bar{\epsilon}_{i+1}, \quad \forall 1\leq i\leq n,
\]
respectively, where $\epsilon_i$ is the $i$-th  standard basis element in $\mbb Z_n$, that is $(\epsilon_i)_j = \delta_{\bar i, \bar j}$.
We thus obtain a root datum of affine type $A_{n-1}$  in  ~\cite[2.2]{Lu93}.  %It is neither $\mbb X$-regular, nor  $\mbb Y$-regular.

For each $\bar \lambda \in \mbb X$, we define an element $1_{\bar \lambda} $ in $\bU_{n,\infty}$ by
setting $(1_{\bar \lambda})_d = 0$ unless $|\lambda |: =\sum_{1\leq i\leq n} \lambda_i  = d$ mod $n$, and in which case
$(1_{\bar \lambda})_d = 1_{\mu, d}$ where $\mu \in \bar \lambda$ and $|\mu| =d$.
We define $\dot{\bU}_n$ to be the $\bU_n$-bimodule in $\bU_{n,\infty}$ generated by $1_{\bar \lambda}$ for all $\bar \lambda \in \mbb X$.
It is clear then that $\dot{\bU}_n$ is naturally a subalgebra in $\bU_{n,\infty}$.
The algebra $\dot{\bU}_n$ admits a decomposition
\[
\dot{\bU}_n = \bigoplus_{\bar \lambda \in \mbb X} \bU_n 1_{\bar \lambda} =
 \bigoplus_{\bar \mu, \bar \lambda \in \mbb X} \ _{\bar \mu} (\dot{\bU}_n)_{\bar \lambda},
\]
where $_{\bar \mu} (\dot{\bU}_n)_{\bar \lambda} = \ 1_{\bar \mu} \dot{\bU}_n 1_{\bar \lambda}$.

Let $\bU(\asl_n)$ be the affine quantum group of type $A_{n-1}$ (of level zero) attached to the above  root datum.
Let $\dot \bU(\asl_n)$ be its modified form.
The following result is due to Lusztig ~\cite[Proposition~3.5]{Lu00} (more precisely, the first one was explicitly written down therein,
while the second one is folklore as it follows in the same way as in the finite type $A$ case \cite{BLM90}.)

\begin{prop}
 \label{Un=slhn}
We have the algebra isomorphisms $\dot \bU(\asl_n) \simeq \dot{\bU}_n$, and 
$\bU(\asl_n) \simeq \bU_n$.
\end{prop}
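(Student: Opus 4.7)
The plan is to construct a surjective algebra homomorphism $\Phi \colon \bU(\asl_n) \rightarrow \bU_n$ sending the Drinfeld-Jimbo generators $E_i, F_i, K_i^{\pm 1}$ to the elements $\E_i, \F_i, \mbf K_i^{\pm 1}$ defined in Section~\ref{sec:Un-A}, and then establish injectivity by a basis-counting argument using the monomial basis of Proposition~\ref{Astandard-basis}. The idempotented version $\dot{\bU}(\asl_n) \cong \dot{\bU}_n$ will be handled in parallel, using the idempotents $1_{\bar\lambda}$.

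First, I would check that $\E_i, \F_i, \mbf K_i^{\pm 1}$ satisfy the defining (Drinfeld-Jimbo) relations of $\bU(\asl_n)$. By Lusztig \cite{Lu99}, each $\bU_{n,d}$ is a quotient of $\bU(\asl_n)$ via the assignment on Chevalley generators, so the relations hold componentwise in $\bU_{n,d}$ for every $d$. Since the transfer maps $\phi_{d,d-n}$ are algebra homomorphisms sending Chevalley generators to Chevalley generators, the relations persist in the projective limit, yielding the map $\Phi$. Surjectivity is built into the definition of $\bU_n$, since it is by construction generated by $\E_i, \F_i, \mbf K_i^{\pm 1}$. For the idempotented version, one extends $\Phi$ to $\dot{\Phi}: \dot\bU(\asl_n) \rightarrow \dot\bU_n$ by matching the weight idempotents $1_{\bar\lambda}$ of $\dot\bU(\asl_n)$ with the projective-limit idempotents $1_{\bar\lambda} \in \bU_{n,\infty}$ introduced in Section~\ref{sec:Un-A}.

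For injectivity, I would exploit Proposition~\ref{Astandard-basis}: each $\bU_{n,d}$ has an aperiodic monomial basis $\{\zeta^{\mathfrak a}_A \mid A \in \Theta_{n,d}^{\ap}\}$. Lifting each such $\zeta^{\mathfrak a}_A$ to the corresponding word in the generators $E_i, F_i, K_i^{\pm 1}$ of $\bU(\asl_n)$, and restricting to a fixed pair of weight idempotents $1_{\bar\mu}, 1_{\bar\lambda}$, one obtains a finite set of monomials indexed by aperiodic matrices with the prescribed row/column sums. These monomials span $1_{\bar\mu}\dot\bU_n 1_{\bar\lambda}$ (by surjectivity together with Proposition~\ref{Astandard-basis} applied for any $d$ large enough to realize the given weight data), and they are linearly independent in $1_{\bar\mu}\dot\bU(\asl_n) 1_{\bar\lambda}$ because that space is known to have a basis of the same cardinality, indexed by the same set of aperiodic matrices (via Lusztig's aperiodic canonical basis for $\dot\bU(\asl_n)$). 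A dimension comparison in each weight space then forces $\dot{\Phi}$ to be injective, proving $\dot\bU(\asl_n) \cong \dot\bU_n$; the non-idempotented isomorphism $\bU(\asl_n) \cong \bU_n$ follows by restricting to the subalgebra generated by Chevalley generators and identifying the Cartan part via $\mbf K_i^{\pm 1}$.

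The main obstacle is the injectivity step: while surjectivity and the relations are mechanical, one must ensure that the aperiodic monomials $\zeta^{\mathfrak a}_A$ produced by Proposition~\ref{Astandard-basis} at different levels $d$ fit together coherently under the transfer maps, and that they really do give a basis of $\dot\bU_n$ rather than merely a spanning set. The triangularity statement $\zeta^{\mathfrak a}_A = [A] + \text{lower terms}$ from Proposition~\ref{Astandard-basis} is precisely what is needed to control this, combined with the fact that for fixed source/target weights in $\mbb X$ there are only finitely many aperiodic matrices with the corresponding row/column sums, so no infinite-sum pathologies arise in the projective limit.
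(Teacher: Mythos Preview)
Your proposal contains a genuine error in the injectivity step. You claim that ``for fixed source/target weights in $\mbb X$ there are only finitely many aperiodic matrices with the corresponding row/column sums,'' and you speak of a ``dimension comparison in each weight space.'' This is false in the affine setting: for fixed $\bar\mu, \bar\lambda \in \mbb X$, the set of aperiodic matrices $A$ with $\ro(A) \in \bar\mu$ and $\co(A) \in \bar\lambda$ is infinite (entries can sit arbitrarily far from the diagonal), so the weight spaces $_{\bar\mu}(\dot\bU_n)_{\bar\lambda}$ and $_{\bar\mu}(\dot\bU(\asl_n))_{\bar\lambda}$ are infinite-dimensional. A naive dimension count therefore does not apply. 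Your argument could in principle be repaired by proving directly that the lifts $\zeta^{\mathfrak a}_{\widehat A}$ form a \emph{basis} of $\dot\bU_n$ (not merely a spanning set) and then matching this with Lusztig's aperiodic canonical basis of $\dot\bU(\asl_n)$ via a bijection of index sets; but this is exactly the ``main obstacle'' you flag, and you have not carried it out---indeed the coherence of the $\zeta^{\mathfrak a}_A$ under transfer maps and their linear independence in the limit are nontrivial (compare Propositions~\ref{m-c-p} and~\ref{m-basis}, which do this work in the type $\fc$ case).

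The paper takes a completely different and much shorter route. It attributes the idempotented isomorphism $\dot\bU(\asl_n) \simeq \dot\bU_n$ to Lusztig~\cite[Proposition~3.5]{Lu00}, and then deduces $\bU(\asl_n) \simeq \bU_n$ by a faithfulness argument: both $\bU_n$ and $\bU(\asl_n)$ act on $\dot\bU_n$ (via left multiplication and via the generator-matching map, respectively), both actions are faithful, and the two images in $\End(\dot\bU_n)$ visibly coincide since they are generated by the same operators. Hence the algebras are isomorphic. This bypasses any basis construction or cardinality matching entirely.
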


\begin{proof}
We regard $\dot \bU_n$ as the left modules of $\bU_n$ and $\bU(\asl_n)$.
Then we have two algebra homomorphisms
$
\bU_n \to \End ( \dot \bU_n),$
and
$\bU(\asl_n) \to \End(\dot \bU_n).
$
Both maps are injective and have obviously the same image, so they must be isomorphic.
(In short, $\bU_n$ and $\bU(\asl_n)$ act faithfully on $\dot\bU_n$.)
\end{proof}

Therefore the geometric pair $(\bU_n, \dot \bU_n)$ is identified with the algebraic pair $(\bU(\asl_n), \dot \bU(\asl_n))$.

%%%%%%%%%%%%%%%%%%
%%%%%%%%%%%%%%%%%%
\chapter{Lattice presentation of affine flag varieties of type $C$}
  \label{chap:latticeC}

We present lattice models for the  variety $\Y^{\fc}$ of affine complete flags and a variety $\X^{\fc}_{n,d}$ of $n$-step flags in an $F$-vector space $V$ of affine  type $C$, for $n$ even.
Then we classify the ${\SP}_F(V)$-orbits on $\X^{\fc}_{n,d} \times \X^{\fc}_{n,d}$, $\X^{\fc}_{n,d} \times \Y^{\fc}$, and $\Y^{\fc} \times \Y^{\fc}$. 

%%%%%%
\section{Affine complete flag varieties of type $C$}

Recall $k$ is a finite field of odd $q$ elements, $F=k((\ve))$ is the field of formal Laurent series over $k$, and $\A=k[[\ve]]$ the ring of formal power series.
Let $d$ be a positive integer.  Let
\begin{align} \label{J}
\begin{split}
J =
\begin{pmatrix}
0 & 0  & \cdots & 0 & 1\\
0 & 0  & \cdots & 1 & 0 \\
. & . & \cdots & . & . \\
1 & 0 & \cdots & 0 & 0
\end{pmatrix}_{d \times d},
\qquad
M  = M_{2d} =
\begin{pmatrix}
0 & J\\
- J & 0
\end{pmatrix}.
\end{split}
\end{align}
Let $V =F^{2d}$ be a symplectic vector space over $F$ with a symplectic form
$(,): V \times V \longrightarrow F$ specified by $M$.
Let $^t g$ be the transpose of a matrix $g$.
We define the symplectic group with coefficients in $F$
\begin{align}
\SP_F(2d) = \{ g \in \GL_F(2d) |  g = M \ \! ^t \!g^{-1} M^{-1} \}.
\end{align}
We also define $\SP_{\A}(2d)$ and $\SP_k(2d)$ similarly.
By our choice of $M$, we see that
$P\cap \SP_F(2d)$ is parahoric if $P$ is parahoric in $\GL_F(2d)$.
In particular,  $\mbf I^\fc = \mbf I^{\mathfrak a} \cap \SP_F(2d)$ is Iwahori and it is the stabilizer of the standard lattice chain $\L$ in $\SP_F(V)$.
Therefore, we have the bijection
\begin{align}
\SP_F(2d) / \mbf I^\fc \simeq \SP_F(2d) . \L \equiv  \tilde \Y^\fc.
\end{align}
So the lattice presentation of affine flag variety of type $C$ is reduced to a description of $\tilde  \Y^\fc$.

For any lattice $\mathcal L$  of $V$, we set
\[
\mathcal L^{\#}=\{ v\in V \vert (v, \mathcal L)\subset \A\}.
\]
Then the $\A$-module $\mathcal L^{\#}$ is again a lattice of $V$ and $(\mathcal L^{\#})^{\#}= \mathcal L$.
We shall use freely the following properties: for any two lattices $\mathcal L$ and $\mathcal M$
\[
(\mathcal L + \mathcal M)^{\#} = \mathcal L^{\#} \cap \mathcal M^{\#}, \qquad
(\mathcal L \cap \mathcal M)^{\#} = \mathcal L^{\#} + \mathcal M^{\#}.
\]

Following Sage ~\cite{Sa99}, we call a lattice  $alternating$ if $\mathcal L\subseteq \mathcal L^{\#}$ or $\mathcal L\supseteq \mathcal L^{\#}$.
An alternating  lattice $\mathcal L$ is called $sympletic$ if $\mathcal L$ or $\mathcal L^{\#}$ 
is homothetic to a lattice $\Lambda$, i.e., $\mathcal L$  or $\mathcal L^{\#}$ is equal to $\ve^{a} \Lambda$ for some $a\in \mbb Z$, such that
\begin{align}
\label{SP-1}
&\ve \Lambda  \subseteq \Lambda^{\#}  \subseteq  \Lambda.
\end{align}
Clearly $\L_z$ are symplectic for $z\in \mbb Z$.
The following proposition can be found in ~\cite{H99, Sa99, Lu03}.

\begin{prop}  \label{propSa99}
The set  $ \tilde \Y^{\fc}$ is the set of all collections $L=(L_z)_{z\in \mathbb Z}$ of symplectic  lattices in $V$ subject to the following conditions:
\begin{align*}
L_z\subset L_{z+1},\;\; \dim_k L_{z+1}/L_z =1,\; \; L_z= \ve L_{z+2d},\;\;  L_{z}^{\#} = L_{ -z}  \quad ( \forall z\in \mbb Z).
\end{align*}
\end{prop}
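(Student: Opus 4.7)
The plan is to prove the proposition by establishing the two inclusions separately: that every element of the orbit $\tilde{\Y}^{\fc} = \SP_F(2d) \cdot \L$ satisfies the four conditions, and that conversely $\SP_F(2d)$ acts transitively on the set of lattice chains satisfying them.

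First I would verify that the standard lattice chain $\L = (\L_z)$ satisfies all four stated conditions. The containment $\L_z \subset \L_{z+1}$, the dimension condition, and the $\ve$-periodicity $\L_z = \ve \L_{z+2d}$ are immediate from the explicit basis description $\L_m = [e_{m+1}, \ldots, e_{m+2d}]_{\A}$, noting that we are working inside the $2d$-dimensional symplectic space $V = F^{2d}$. The duality identity $\L_z^{\#} = \L_{-z}$ is the key computation; it follows from the antidiagonal shape of $M$ in \eqref{J}, which produces a perfect pairing $(e_i, e_j) \in \A^{\times}$ exactly when $i + j \equiv 1 \pmod{2d}$ (up to sign), so that $\L_z$ and $\L_{-z}$ are mutually $\#$-dual. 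Next I would check that each of the four conditions is stable under the $\SP_F(2d)$-action: the chain, dimension, and periodicity conditions are preserved by any $F$-linear automorphism, and the duality condition $L_z^\# = L_{-z}$ is preserved precisely because $\SP_F(2d)$ preserves $(,)$, making the operation $\mathcal{L} \mapsto \mathcal{L}^\#$ equivariant. This yields $\tilde{\Y}^{\fc}$ is contained in the set described.

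For the reverse inclusion, given any chain $L = (L_z)$ satisfying the four conditions, I would construct $g \in \SP_F(2d)$ with $g \cdot \L = L$. The strategy is to build inductively a basis $(v_m)_{m \in \ZZ}$ of $V$ that is simultaneously adapted to $L$ (so $v_{m+1}$ generates the line $L_m/L_{m-1}$) and realizes the standard symplectic form, i.e.\ $(v_i, v_j) = (e_i, e_j)$ for all $i, j$. Starting from arbitrary lifts $v_{m+1} \in L_m$ of a generator of $L_m / L_{m-1}$ for $m = 0, 1, \ldots, d-1$, the duality $L_z^\# = L_{-z}$ and the nondegeneracy of $(,)$ force unique candidates for the dual vectors $v_{-m+1} \in L_{-m} \setminus L_{-m-1}$, determined up to corrections lying in deeper levels of the filtration $L_{-m-1} \supset L_{-m-2} \supset \cdots$. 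These corrections can be removed by a standard Gram--Schmidt-type iteration, which converges term-by-term because each correction step decreases the order of the error with respect to the $\ve$-adic filtration, and terminates as a formal limit in $V$. The remaining vectors $v_{m+2d} = \ve^{-1} v_m$ are forced by the periodicity $L_z = \ve L_{z+2d}$. The resulting change-of-basis matrix $g$ satisfies $g \cdot \L = L$ and preserves $(,)$ by construction, so $g \in \SP_F(2d)$.

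The main obstacle I anticipate is the inductive symplectic adjustment in the second step: one must produce a basis that respects the chain filtration and simultaneously diagonalizes the pairing to match $M$ exactly. The underlying reason this succeeds is the lattice analog of the standard fact that a nondegenerate alternating $F$-form admits a hyperbolic basis, but applying it compatibly with the flag $L$ requires exploiting the interaction $(\mathcal{L} + \mathcal{M})^\# = \mathcal{L}^\# \cap \mathcal{M}^\#$ recalled just before the proposition, together with the hypothesis that each $L_z$ is symplectic (which rules out pathological lattices for which no such adapted basis exists). Once transitivity is established, the identification $\tilde{\Y}^{\fc} \simeq \SP_F(2d)/\mbf I^{\fc}$ together with the fact that $\mbf I^{\fc}$ is exactly the stabilizer of $\L$ completes the proof.
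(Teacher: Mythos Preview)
The paper does not supply a proof of this proposition; the sentence immediately preceding it reads ``The following proposition can be found in [H99, Sa99, Lu03],'' and the statement is then used as a black box. So there is no in-paper argument to compare your proposal against.

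That said, your sketch is a correct outline of the standard proof one finds in those references. The two inclusions you set up are the right ones, and the heart of the matter is indeed the construction of a symplectic basis adapted to the given chain. One remark: the paper does record, immediately after this proposition, a closely related decomposition result (Proposition~\ref{decom-lem}, also attributed to [H99]) which produces an explicit basis $\{e_{ij}^m\}$ simultaneously adapted to a \emph{pair} of symplectic lattice chains and realizing the form in a prescribed way. That result is strictly stronger than what you need here (take the second chain equal to the standard one $\L$), and its proof in [H99] is exactly the kind of inductive symplectic Gram--Schmidt you describe. So your anticipated obstacle is real but already handled in the cited literature, and your proposal aligns with how the references establish the claim.
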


For our purpose later, we define a variant of the set $\tilde \Y^{\fc}$ as follows. 
Let $\Y^{\fc}$ be the set of all chains $L=(L_z|z\in \mbb Z)$ of symplectic lattices subject to the following conditions:
\begin{align}
 \label{Yc}
\begin{split}
& \dim_k L_{z+1}/L_z =
\begin{cases}
0, & \mbox{if} \ z\equiv  -1, d \ \mbox{mod} \  2d+2, \\
1, &  \mbox{otherwise}; 
\end{cases}
  \\
&L_z\subset L_{z+1},\quad L_{z}= \ve L_{z+2d +2},\quad L_{z}^{\#} = L_{ -z -1} \quad (\forall z\in \mbb Z).
\end{split}
\end{align}
Clearly, we have a natural bijection:
$\tilde \Y^{\fc}\simeq \Y^{\fc}.$  
 
Via the identification $\SP_F(2d)/\mbf I^{\fc} \cong \Y^{\fc}$, there is a left action
of $\SP_F(2d)$ on $\Y^{\fc}$ which is transitive. 
Let $\SP_F(2d)$ act on the product $\Y^{\fc} \times \Y^{\fc}$ diagonally. 
We shall describe the $\SP_F(2d)$-orbits in $\Y^{\fc} \times \Y^{\fc}$. 

Recall a set $\Theta_{n|n',d}$ was defined in \eqref{Theta:dn} for any positive integers $d, n, n'$.
Let ${}^{\C}\Sigma_d$ be the following subset of $\Theta_{2d+2|2d+2, 2d}$ of matrices with entries being $0$ or $1$:
\begin{align}
 \label{Sigma}
 \begin{split}
{}^{\C}\Sigma_d = \Big\{ A \in & \text{Mat}_{\ZZ\times \ZZ} (\{0,1\}) \Big \vert 
 %a_{i0} =a_{i,d+1} =a_{0j} =a_{d+1,j} =0, 
 a_{-i, -j} = a_{i j} =a_{i+2d+2,j+2d+2} \;  (\forall i, j \in \mbb Z), 
 \\
& \text{the $0$th and $(d+1)$st row/column  are zero},
 \\
& \exists \text{ exactly one nonzero entry per rows/columns } i \in [0, 2d+1] \backslash \{0,d+1\}
\Big\}. 
\end{split}
\end{align}
%\sum_{l\in \mbb Z} a_{il}  & = 
%\begin{cases}
%0, &\forall i \equiv 0, d+1 \ \mbox{mod} \ 2d+2,\\
%1, & \mbox{otherwise}, 
%\end{cases}

We define a map from the set  of $\SP_F(2d)$-orbits in $\Y^{\fc} \times \Y^{\fc}$ to ${}^{\C}\Sigma_d$:
\begin{align} 
\label{IM}
\varphi: \SP_F(2d) \backslash \Y^{\fc} \times \Y^{\fc} \longrightarrow {}^{\C}\Sigma_d
\end{align}
by
sending the orbit $\SP_F(2d) . (L, L') $ to $A=(a_{ij})_{i, j \in \mbb Z}$ where 
\[
a_{ij} = \dim_k \frac{L_{i-1}+L_i\cap L_j'}{L_{i-1}+L_i\cap L_{j-1}'}.
\]
By the definition of $a_{i j}$, we have
\begin{equation*}
\begin{split}
a_{-i, -j}
& = \dim_k \frac{L_{-i-1}+L_{-i}\cap L'_{-j}}{L_{-i-1}+L_{-i}\cap L'_{-j-1}} 
 = \dim_k \frac{L^{\#}_{i}+L^{\#}_{i-1}\cap L'^{\#}_{j-1}}{L^{\#}_{i}+L^{\#}_{i-1}\cap L'^{\#}_{j}} \\
& =\dim_k \frac{(L_{i}\cap (L_{i-1}+ L'_{j-1}))^{\#}}{(L_{i}\cap (L_{i-1}+ L'_{j}))^{\#}} 
 =\dim_k \frac{L_{i}\cap (L_{i-1}+ L'_{j})}{L_{i}\cap (L_{i-1}+ L'_{j-1})}\\
& = \dim_k \frac{L_{i-1}+  L_{i} \cap  L'_{j} }{L_{i-1} + L_{i} \cap  L'_{j-1}}
= a_{ij}.
\end{split}
\end{equation*}
So the map $\varphi$ is well defined. The following proposition can be found in ~\cite{H99}, see also ~\cite{Lu99}.

\begin{prop} \cite[Proposition 2.6]{H99} 
\label{decom-lem}
Let $A=(a_{ij})_{i, j\in \mbb Z}$ be  the associated matrix of $(L, L')$ under $\varphi$. Then we can decompose $V$ into 
$V=\oplus_{i, j \in \mbb Z} V_{ij}$ as $k$-vector spaces satisfying that 
$\dim_k V_{ij} = a_{ij}$, 
\begin{align}
\label{decom-lem-a}
L_i =\bigoplus_{k, l\in \mbb Z:  k\leq i} V_{kl}, \quad L_j' = \bigoplus_{k, l\in \mbb Z: l\leq j} V_{kl}, \quad \forall i, j\in \mbb Z.
\end{align}
Moreover,  there exists a basis $\{ e^m_{ij} | 1\leq m \leq a_{ij}\}$ of $V_{ij}$ such that 
\begin{align}
\label{decom-lem-b}
\begin{split}
e^{m}_{i, j} & = \ve e^m_{i+ 2d+2, j+2d+2}, \quad \forall i, j\in \mbb Z, 1\leq m \leq a_{ij}, \\
(e^m_{ij} , e^{m'}_{kl} ) & = - (e^{m'}_{kl} , e^m_{ij}) , \quad   \forall i, j, k, l \in \mbb Z, 1\leq m \leq a_{ij}, 1\leq m' \leq a_{kl}, \\
(e^m_{ij} ,  e^{m'}_{kl} ) & = \ve (e^m_{ij} , e^{m'}_{k +  (2d+2), l + (2d+2)}), 
\quad  \forall i, j, k, l \in \mbb Z , 1\leq m \leq a_{ij}, 1\leq m' \leq a_{kl}, \\
(e^m_{ij} , e^{m'}_{kl} )& = \delta_{m, 1} \delta_{m', 1} \ve^{-2}, \quad \forall 1 \leq i < k \leq 2d+2, i+k = 2d+2, j+l = 2d+2.
\end{split}
\end{align} 
\end{prop}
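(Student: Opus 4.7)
The plan is a three-step construction of the decomposition $V = \bigoplus V_{ij}$: first a purely linear-algebraic decomposition compatible with the pair of filtrations $(L_i)$ and $(L'_j)$, then an adjustment for the $\ve$-periodicity, and finally a normalization against the symplectic form.

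First, I would construct the $V_{ij}$ as $k$-vector spaces with $\dim_k V_{ij} = a_{ij}$ satisfying \eqref{decom-lem-a}, ignoring for now both the periodicity and the symplectic form. This is the standard pair-of-filtrations argument: enumerate $(i,j) \in \ZZ \times \ZZ$ in an order extending the product order, and at step $(i,j)$ take $V_{ij}$ to be any $k$-linear complement of $L_{i-1} \cap L'_j + L_i \cap L'_{j-1}$ inside $L_i \cap L'_j$. By the definition of $\varphi$ this complement has dimension $a_{ij}$, and a straightforward induction on $(i,j)$ yields \eqref{decom-lem-a}.

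Next, I would enforce the $\ve$-periodicity. Since $L_{i+2d+2} = \ve^{-1} L_i$, $L'_{j+2d+2} = \ve^{-1} L'_j$, and $a_{i+2d+2, j+2d+2} = a_{ij}$, it suffices to perform the previous construction only on a fundamental strip such as $1 \le i \le 2d+2$ and extend by setting $V_{i+2d+2,\, j+2d+2} := \ve^{-1} V_{ij}$. Choosing a $k$-basis $\{e^m_{ij}\}$ of each $V_{ij}$ on the strip and propagating by $e^m_{ij} := \ve\, e^m_{i+2d+2,\, j+2d+2}$ gives the first identity of \eqref{decom-lem-b} by construction, and the third identity follows from the $F$-bilinearity of $(\,,\,)$.

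The main obstacle will be the final step: matching the decomposition and basis to the symplectic form. The dualities $L_i^{\#} = L_{-i-1}$ and $(L'_j)^{\#} = L'_{-j-1}$, combined with the relation $a_{-i,-j} = a_{ij}$, force $(\,,\,)$ to induce a perfect pairing between the layer $V_{ij}$ and the layer $V_{-i,-j}$ (essentially the dualised version of the same numerator/denominator manipulation used in the text to establish $a_{-i,-j} = a_{ij}$). After the periodic identification $V_{-i,-j} = \ve\, V_{2d+2-i,\, 2d+2-j}$, this becomes a perfect pairing between $V_{ij}$ and $V_{2d+2-i,\, 2d+2-j}$ with non-degenerate leading term of order $\ve^{-2}$. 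To achieve this compatibility simultaneously with Steps 1--2, I would carry out the construction of $V_{ij}$ only on the ``positive half'' $1 \le i \le d$ (the zero row/column conditions at $i \in \{0, d+1\}$ from \eqref{Sigma} guarantee that no data is lost at the fixed indices of the duality), define the ``negative half'' $d+2 \le i \le 2d+1$ by symplectic duality inside $V$, and finally select bases on the two halves that are dual to one another with respect to the induced pairing, rescaled so that $(e^m_{ij}, e^{m'}_{kl}) = \delta_{m,1}\delta_{m',1}\ve^{-2}$. The anti-symmetry in \eqref{decom-lem-b} is then automatic from that of $(\,,\,)$.
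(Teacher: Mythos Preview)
The paper gives no proof of this proposition; it is quoted from \cite[Proposition~2.6]{H99}, so there is no in-paper argument to compare against. Your three-step outline (filtration complements, $\ve$-periodic extension on a fundamental strip, symplectic normalisation on one half followed by dualising) is the standard route and is essentially what Howe does.

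The only imprecision is in Step~3: ``define the negative half by symplectic duality inside $V$'' is a little loose, since duality naturally produces a quotient rather than a subspace. Concretely, one still chooses each $V_{kl}$ for $k \in [d+2,2d+1]$ as a filtration complement exactly as in Step~1; the point is that the induced pairing between the one-dimensional $k$-quotients at $(i,j)$ and $(2d{+}2{-}i,\,2d{+}2{-}j)$ is nondegenerate with leading term of valuation $-2$ (this is the same $\#$-manipulation the paper uses just before the proposition to verify $a_{-i,-j}=a_{ij}$). After rescaling $e_{kl}$ by the appropriate unit and then correcting it by an element of $L_{k-1}\cap L'_l + L_k\cap L'_{l-1}$, you can force $(e_{ij},e_{kl})=\ve^{-2}$ exactly. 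In the present complete-flag setting ($A\in{}^{\C}\Sigma_d$, so $a_{ij}\in\{0,1\}$) all the relevant quotients are at most one-dimensional and this correction step is a routine valuation count; your remark that the zero row/column conditions at $i\in\{0,d+1\}$ remove the fixed points of the duality is exactly what makes the half-and-dualise scheme go through cleanly.
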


From the above proposition, we have  the Iwahori-Bruhat decomposition  for the group $\SP_F(V)$.

\begin{prop}
\label{IB-prop}
The map $\varphi: \SP_F(2d) \backslash \Y^{\fc} \times \Y^{\fc} \longrightarrow {}^{\C}\Sigma_d$ in (\ref{IM}) is a bijection.
\end{prop}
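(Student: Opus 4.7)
The plan is to show that $\varphi$ is well-defined and lands in ${}^{\C}\Sigma_d$, and then to establish bijectivity via an explicit inverse built from Proposition~\ref{decom-lem}. First I would verify that the image really lies in ${}^{\C}\Sigma_d$. The centro-symmetry $a_{-i,-j}=a_{ij}$ is already checked in the paragraph preceding the statement. The periodicity $a_{i+2d+2,j+2d+2}=a_{ij}$ follows from $L_z=\ve L_{z+2d+2}$ and $L'_z=\ve L'_{z+2d+2}$ in \eqref{Yc}. The vanishing of the $0$th and $(d+1)$st rows and columns comes from $L_{-1}=L_0$ and $L_d=L_{d+1}$ (and similarly for $L'$), forced by the dimension condition in \eqref{Yc}. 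Finally, the entries are $0$ or $1$ with exactly one nonzero entry per row or column in $[0,2d+1]\setminus\{0,d+1\}$ because $\sum_j a_{ij}=\dim_k L_i/L_{i-1}$ and $\sum_i a_{ij}=\dim_k L'_j/L'_{j-1}$, both of which equal $1$ for the appropriate indices.

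Next I would prove injectivity. Suppose $(L,L')$ and $(\widetilde L,\widetilde L')$ are two pairs in $\Y^{\fc}\times\Y^{\fc}$ both mapping to the same $A\in{}^{\C}\Sigma_d$. Apply Proposition~\ref{decom-lem} to each pair to obtain decompositions $V=\bigoplus_{i,j}V_{ij}=\bigoplus_{i,j}\widetilde V_{ij}$ and bases $\{e^m_{ij}\}$, $\{\widetilde e^m_{ij}\}$ satisfying \eqref{decom-lem-b}. Define the $F$-linear map $g:V\to V$ by $g(\widetilde e^m_{ij})=e^m_{ij}$ for all $(i,j,m)$ with $a_{ij}\ge m\ge 1$ (since the entries of $A$ are $0$ or $1$, there is in fact at most one basis vector per $(i,j)$). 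The periodicity relations in \eqref{decom-lem-b} guarantee $g$ is a well-defined element of $\GL_F(2d)$, the symplectic-pairing relations in \eqref{decom-lem-b} (which depend only on $A$) force $g\in\SP_F(2d)$, and the lattice formulas \eqref{decom-lem-a} imply $g.\widetilde L=L$ and $g.\widetilde L'=L'$. Hence the two pairs lie in a single $\SP_F(2d)$-orbit.

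For surjectivity, given $A\in{}^{\C}\Sigma_d$ I would build a pair $(L,L')$ realizing it. Choose a collection of one-dimensional $k$-vector spaces $V_{ij}$ with fixed basis vectors $e^m_{ij}$ indexed by the nonzero entries of $A$, subject to the periodicity $e^m_{ij}=\ve e^m_{i+2d+2,j+2d+2}$, and define a symplectic form on $V=\bigoplus V_{ij}$ by the last three identities of \eqref{decom-lem-b}. The centro-symmetry and periodicity constraints on $A$, together with the zero-row/column conditions, are exactly what is needed to guarantee that this pairing is well-defined, $F$-bilinear, non-degenerate, and skew-symmetric; so $V$ becomes a symplectic $F$-space isomorphic to our standard one. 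Then define $L_i$ and $L'_j$ by \eqref{decom-lem-a}. A direct computation using the pairing formulas shows $L_z^{\#}=L_{-z-1}$ and likewise for $L'$, so $(L,L')\in\Y^{\fc}\times\Y^{\fc}$, and by construction $\varphi$ sends the orbit of this pair to $A$.

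The main obstacle I expect is the bookkeeping in surjectivity: checking rigorously that the symplectic form built from \eqref{decom-lem-b} satisfies $L_z^{\#}=L_{-z-1}$ for every $z$, and that non-degeneracy of the form forces precisely the ``exactly one nonzero entry per row/column'' condition of ${}^{\C}\Sigma_d$ in the middle range, while the corresponding $0$th and $(d+1)$st rows/columns are forced to vanish. Once this combinatorial accounting is carried out, both injectivity and surjectivity follow cleanly from Proposition~\ref{decom-lem}.
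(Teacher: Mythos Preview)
Your proposal is correct and follows essentially the same approach as the paper's proof: both injectivity and surjectivity are deduced from Proposition~\ref{decom-lem}, with the key step for injectivity being the construction of $g\in\SP_F(2d)$ by matching the two systems of basis vectors $\{e^m_{ij}\}$ and $\{\widetilde e^m_{ij}\}$. The paper handles surjectivity in one line (``By Proposition~\ref{decom-lem}, $\varphi$ is clearly surjective''), whereas you spell out the explicit construction of $(L,L')$ from $A$; your extra verification that the image lies in ${}^{\C}\Sigma_d$ is also more detailed than what the paper includes, but the underlying argument is the same.
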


\begin{proof}
By Proposition ~\ref{decom-lem}, $\varphi$ is clearly surjective.
Assume now that the associated matrix of two pairs $(L, L')$ and $(\tilde L, \tilde L')$ of symplectic lattice chains  is the same matrix, say $A$.
By Proposition ~\ref{decom-lem}, we can find bases $\{e^m_{ij}\}$ and $\{ f^m_{ij}\}$ for the pairs $(L, L')$ and $(\tilde L, \tilde L')$, respectively, subject to the conditions
(\ref{decom-lem-a}) and (\ref{decom-lem-b}). 
We define a map $g: V \to V$ by sending $e^m_{ij}$ to $f^m_{ij}$ for all $i, j\in \mbb Z$ and $1\leq m \leq a_{ij}$. Then we have $g\in \SP_F(2d)$ and $g(L, L') = (\tilde L, \tilde L')$.
So $\varphi$ is injective. The proposition is proved.
\end{proof}

%%%%%
\section{Affine partial flag varieties of type $C$}

Now we fix an even positive integer
\[
n=2r+ 2, \quad \mbox{for some} \ r \in \mbb{N}.
\]
Let $\X^{\fc}_{n,d}$ be  the set of all chains $L=(L_z)_{z\in \mathbb Z}$ of symplectic lattices   in $V$ subject to the following conditions:
\begin{align}
 \label{Lz}
 L_z \subset L_{z+1},  \quad  L_{z} = \ve L_{z+ n}, \quad L_{z}^{\#} = L_{-z-1} \quad  (\forall z\in \mbb Z).
\end{align}

\begin{rem}
The shift by $-1$ in the condition $L_{z}^{\#} = L_{-z-1}$ in definition of 
%$\Y^{\fc}$ and 
$\X^{\fc}_{n,d}$
(see 
%\eqref{Yc} and 
\eqref{Lz}) 
allows  the valuation at $L_0$ to vary. In contrast the valuation at $L_0$ is always zero in the case of $\tilde \Y^{\fc}$.
\end{rem}

The group $\SP_F(2d)$ acts from the left on $\X^{\fc}_{n,d}$  in a standard way.
Let $\SP_F(2d)$ act diagonally on the products $\X^{\fc}_{n,d} \times \X^{\fc}_{n,d} $ and $\X^{\fc}_{n,d} \times \Y^{\fc}$.
Let $\Pi_{n,d}$ be the subset of $\Theta_{n|2d+2, 2d}$ (for $\Theta_{n|n',d}$ see \eqref{Theta:dn}), 
which consists of all matrices $A=(a_{ij}) \in \text{Mat}_{\ZZ\times \ZZ} (\mbb N)$
such that
\begin{align}
\begin{split}
a_{-i, -j } =a_{ij}  = a_{i+n, j+2d+2}   \; (\forall i, j \in \mbb Z),
\quad
\sum_{l\in \mbb Z} a_{lj}  = 
\begin{cases} 
0, & \forall j \equiv 0, d+1 \ (\mbox{mod}\ 2d+2) \\
1, & \mbox{otherwise}.
\end{cases}
\end{split}
\end{align}
Similar to (\ref{IM}), we have a map
\begin{align}
\label{IM-tensor}
\SP_F(2d)\backslash \X^{\fc}_{n,d} \times \Y^{\fc} \longrightarrow \Pi_{n,d}.
\end{align}
More generally, let ${}^{\C}\Xi_{n,d}$ be the subset  of $\Theta_{n,2d}$ given by
\begin{align}
 \label{Xidn}
 \begin{split}
{}^{\C}\Xi_{n,d} = \big\{ (a_{ij} ) \in \text{Mat}_{\ZZ\times \ZZ} (\mbb N)  & \big \vert \;
a_{-i, -j } =a_{ij}  = a_{i+n, j+n}, \; (\forall i, j), \\ 
&\sum_{1\leq i \leq n} \sum_{j\in \mbb Z} a_{ij} = 2d, \;
a_{00}, a_{r+1, r+1}  \in 2\mbb Z \big \}. 
\end{split}
\end{align}
Similar to (\ref{IM}) again, we have a map
\begin{align}
\label{IM-schur}
\SP_F(2d)\backslash \X^{\fc}_{n,d} \times \X^{\fc}_{n,d} \longrightarrow {}^{\C}\Xi_{n,d}.
\end{align}

\begin{prop}
The  maps in  \eqref{IM-tensor} and \eqref{IM-schur}  are bijective.
\end{prop}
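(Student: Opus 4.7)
The plan is to prove both bijections in parallel, following very closely the argument used for Proposition~\ref{IB-prop}. The key technical input will be an analogue of Proposition~\ref{decom-lem} adapted to the partial-flag setting (with periodicity $n$ in place of $2d+2$), or equivalently a reduction to the complete-flag case by refinement. I will focus on the case of (\ref{IM-schur}) since (\ref{IM-tensor}) is entirely similar.

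First, I would check that the assignment $(L,L') \mapsto A = (a_{ij})$ with $a_{ij} = \dim_k (L_{i-1} + L_i \cap L'_j)/(L_{i-1} + L_i \cap L'_{j-1})$ lands in ${}^{\C}\Xi_{n,d}$. The $n$-periodicity $a_{i+n,j+n} = a_{ij}$ is immediate from $L_z = \ve L_{z+n}$, while the centro-symmetry $a_{-i,-j} = a_{ij}$ follows from the chain of dualities $(\#, \cap, +)$ already carried out in the paragraph preceding Proposition~\ref{decom-lem}, using $L_z^{\#} = L_{-z-1}$ for both flags. The row-sum condition $\sum_{1 \le i \le n}\sum_j a_{ij} = 2d$ is a periodicity count. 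The parity conditions $a_{00}, a_{r+1,r+1} \in 2\ZZ$ arise because the symplectic pairing restricts nondegenerately to the associated graded pieces at the self-dual indices $0$ and $r+1$, and a nondegenerate symplectic space has even dimension.

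For surjectivity, I would construct, for each $A \in {}^{\C}\Xi_{n,d}$, an explicit pair $(L, L')$ realizing it. I would choose a $k$-basis $\{e^m_{ij}\}$ of $V$ subject to the relations (\ref{decom-lem-b}) (with $2d+2$ replaced by $n$, and allowing multiple basis vectors per $(i,j)$ block), and then set $L_i = \bigoplus_{k \le i} V_{kl}$ and $L'_j = \bigoplus_{l \le j} V_{kl}$ as in (\ref{decom-lem-a}). The periodicity, duality, and parity constraints on $A$ are precisely what is needed for the resulting chains to lie in $\X^{\fc}_{n,d}$, with the parity conditions ensuring that the restricted symplectic forms on the diagonal blocks can be put in standard form.

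For injectivity, given two pairs $(L,L')$ and $(\tilde L, \tilde L')$ with the same matrix $A$, I would produce adapted bases as in Proposition~\ref{decom-lem} for each and define $g \in \SP_F(2d)$ by matching basis vectors $e^m_{ij} \mapsto f^m_{ij}$. The resulting $g$ is symplectic by the normalizations in (\ref{decom-lem-b}) and satisfies $g.(L,L') = (\tilde L,\tilde L')$. Alternatively, one may refine each partial pair to a complete-flag pair; the associated matrix $\tilde A \in {}^{\C}\Sigma_d$ is a refinement of $A$ in the block sense. Any two complete refinements of $(L,L')$ with a prescribed $\tilde A$ are related by the parahoric stabilizer of $L$, so Proposition~\ref{IB-prop} transfers to the partial-flag case. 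The main obstacle is the delicate bookkeeping at the fixed indices $i = 0$ and $i = r+1$: here both blocks are self-paired under the symplectic form, so the basis-matching step must respect a nondegenerate symplectic pairing on $V_{00}$ and $V_{r+1,r+1}$, which is precisely where the parity conditions in ${}^{\C}\Xi_{n,d}$ play their essential role.
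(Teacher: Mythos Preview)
Your proposal is correct and your alternative route (refining partial flags to complete flags and invoking Proposition~\ref{IB-prop}) is exactly the approach the paper takes, for both surjectivity and injectivity. The paper does not attempt to prove a partial-flag analogue of Proposition~\ref{decom-lem} directly; instead, for surjectivity it takes a given $A\in{}^{\C}\Xi_{n,d}$, finds a permutation matrix $w\in{}^{\C}\Sigma_d$ that collapses to $A$ under summing consecutive rows and columns, picks a representative in $\mathcal O_w$, and then coarsens; for injectivity it conversely ``blows up'' each entry $a_{ij}>1$ of $A$ to a local identity block, showing that any partial-flag pair with matrix $A$ can be refined to a complete-flag pair with a prescribed $w$, so that Proposition~\ref{IB-prop} applies. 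Your primary route---constructing the adapted basis $\{e^m_{ij}\}$ directly for partial flags---would also work and is arguably cleaner, but it amounts to reproving Proposition~\ref{decom-lem} in the partial-flag setting rather than reusing it. One point to sharpen in your alternative sketch: the phrase ``any two complete refinements of $(L,L')$ with a prescribed $\tilde A$ are related by the parahoric stabilizer'' is not quite the assertion needed; what the argument uses is that \emph{both} $(L,L')$ and $(\tilde L,\tilde L')$ can be refined to complete-flag pairs with the \emph{same} permutation matrix $w$, and the paper spends a paragraph on the blowing-up construction to justify exactly this.
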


\begin{proof}
Note that the first bijection \eqref{IM-tensor} is a special case of the second bijection \eqref{IM-schur}. 
So we only need to prove the second bijection, and we shall deduce it from Proposition ~\ref{IB-prop} as follows. 
For a given matrix  $A \in {}^{\C}\Xi_{n,d}$, we can delete all its zero rows and zero columns. 
Let us denote the resulting matrix by $\mrm{dlt}(A)$, which is essentially determined by the stripe $[1, a]\times \mbb Z$ of $\mrm{dlt}(A)$  for some $a \leq 2d$. 
It is then possible to find a (nonunique) matrix $w \in {}^{\C}\Sigma_d$ (see \eqref{Sigma})
such that $A$ can be obtained from $w$ by adding consecutive rows between $[1, 2d+2]$.
Now pick a representative, say $(L, L')$, in the orbit $\mathcal O_w$.
We can construct a pair of partial flags by removing subspaces in $L$ and $L'$ corresponding to the summations of consecutive rows, 
whose associated matrix is $\mrm{dlt}(A)$. 
This shows that the map \eqref{IM-schur} is surjective.

Now if there are two pairs, say $x$, $y$, of flags whose associated matrix is $A$, we fix a matrix $w \in \ ^{\mathfrak{c}}\Sigma$ 
such that it can be merged to $A$, 
and use the above process to find two pairs, $x'$, $y'$ of flags in $\mathcal O_w$ such that they can reach $x$, $y$, respectively, by throwing away certain steps.
Moreover, $w$ can be chosen to be the one obtained from $A$ by blowing up the entries in $A$ of value strictly greater than $1$ to an identity matrix locally. 
For example, if $a_{ij} =2$ and $x=(L, L')$,  we can find a vector $u \in L_i \cap L'_j - (L_{i-1} \cap L'_j + L_i \cap L'_{j-1})$ such that $L_{i-1} + \A u$ and $L'_{j-1} + \A u$ are symplectic lattices.
We expand $L$ by plugging the lattice $L_i + \A u$ in between $L_i $ and $L_{i+1}$. Similarly, we can expand $L'$.
Then the matrix of the resulting pair will be the one by blowing up $A$ at $(i, j)$ to be a $2\times 2$ identity matrix locally.
By repeating the above process, we have the desired pair $x'$ for $x$ whose matrix is $w$. 
Since $x'$ and $y'$ are in the same orbit, there is a $g\in \SP_F(2d)$ such that $g.x'=y'$, which induces that $g.x=y$. So $x$ and $y$ are in the same orbit. 
Therefore the map \eqref{IM-schur} is injective, and hence  a bijection. The proposition is proved.
\end{proof}

%%%%%
\section{Local property at $L_0$}\label{seclocal}

\begin{lem}
Let  $L=(L_z)_{z\in \ZZ} \in \X^{\fc}_{n,d}$. We have
\begin{align*}
v(L_r) \in [-d, 0], \quad
v(L_{r + 1} ) \in [-2d, - d], \quad \text{ and } \quad
v(L_r) + v(L_{r + 1} ) = -2d.
\end{align*}
\end{lem}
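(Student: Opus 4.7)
The plan is to extract three facts about the valuation $v(L)$ of a lattice (the unique integer with $\det_\A(L) = \ve^{v(L)} \det_\A(\L_0)$ as fractional ideals), namely:
\begin{itemize}
\item[(i)] if $L \subset M$, then $v(L) - v(M) = \dim_k(M/L)$;
\item[(ii)] $v(\ve L) = v(L) + 2d$;
\item[(iii)] $v(L^{\#}) = -v(L)$, since the symplectic form on $V$ is unimodular on $\L_0$.
\end{itemize}
These are the only inputs we need from outside the lattice chain conditions \eqref{Lz}.

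First I would establish the identity $v(L_r) + v(L_{r+1}) = -2d$. The $n$-periodicity $L_{z} = \ve L_{z+n}$ applied at $z = -r-1$ yields $L_{-r-1} = \ve L_{r+1}$, since $-r-1+n = r+1$. Combining this with the duality $L_r^{\#} = L_{-r-1}$ and properties (ii), (iii) gives
\[
-v(L_r) = v(L_r^{\#}) = v(L_{-r-1}) = v(\ve L_{r+1}) = v(L_{r+1}) + 2d,
\]
which is the claimed identity.

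Next I would bound $v(L_r)$ from above and below. The upper bound $v(L_r) \le 0$ uses the fact that $L_r^{\#} = L_{-r-1} \subset L_r$ (this inclusion is just $-r-1 < r$ together with the monotonicity $L_z \subset L_{z+1}$); then (i) gives $v(L_r^{\#}) \ge v(L_r)$, so by (iii), $-v(L_r) \ge v(L_r)$. The lower bound $v(L_r) \ge -d$ uses $L_r \subset L_{r+1}$, which by (i) gives $v(L_r) \ge v(L_{r+1})$; combined with $v(L_r)+v(L_{r+1})=-2d$, this yields $2v(L_r) \ge -2d$. Finally, substituting back gives $v(L_{r+1}) = -2d - v(L_r) \in [-2d,-d]$, completing all three claims.

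There is no real obstacle here; the whole content is bookkeeping around the interaction of the three basic properties of $v$ with the two structural conditions $L_{-z-1} = L_z^{\#}$ and $L_{z+n} = \ve^{-1} L_z$. The only mild subtlety is remembering that $v$ grows when a lattice shrinks, so inclusions reverse inequalities for $v$; once that sign convention is in place, every inequality is immediate.
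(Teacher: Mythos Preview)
Your proof is correct and follows essentially the same approach as the paper: both use the identities $v(\mathcal L^{\#}) = -v(\mathcal L)$ and $v(\ve\mathcal L) = v(\mathcal L) + 2d$ together with periodicity and duality to obtain $v(L_r)+v(L_{r+1}) = -2d$, then deduce the bounds from $L_r \subset L_{r+1}$ and $L_r^{\#} \subset L_r$. The only cosmetic difference is that the paper first argues $v(L_0)\le 0$ (from $L_0^{\#}=L_{-1}\subset L_0$) and then transfers this to $v(L_r)\le 0$ via $L_0\subset L_r$, whereas you apply the same $\#$-argument directly at index $r$.
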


\begin{proof}
Note that the valuation\footnote{Following~\cite{Sa99}, we define the valuation of a lattice.
More precisely, let $v: F^\times \to \mbb Z$ be the discrete valuation of the field $F$ so that the associated discrete valuation ring is $\mathfrak o$.
Let us fix a basis, say $e_1,\cdots, e_d$, of $V$. For any lattice $L$ of $V$ with  a  given $\mathfrak o$-basis, say $f_1,\cdots, f_d$, of $L$,  we can write
$f_1\wedge \cdots \wedge f_d = a e_1\wedge \cdots \wedge e_d$ for some $a\in F$.  We define $v(L)= v(a)$. The definition is independent of the choice of bases of $L$.
}
of $L_0$ is non-positive because $L_{-1}=L_0^{\#}$.
So the valuation  $v(L_r) \leq 0$.

By definition, for any lattice $\mathcal L$, we have
\[
v(\mathcal L^{\#}) = - v(\mathcal L), \quad
v(\ve \mathcal L) = 2d + v(\mathcal L).
\]
So we have
\begin{align*}
v(L_{r + 1})
 =  v( \ve^{-1} L _{-(r + 1)}) 
 =  -2d + v(L_{-(r + 1)})
 = - 2d + v( L^{\#}_r)
 = - 2d - v(L_r).
\end{align*}
Since $v(L_{r + 1}) \leq v(L_r)$, we have
$
v(L_r) \geq  -d,
$
and
$
-d \geq v(L_{r + 1}) \geq -2d.
$
\end{proof}
More generally, we have
$
v(L_i) \in [-d, 0], v(L_i) + v(L_{n-1 -i}) = - 2d, \quad \forall i\in [0, r]$,
by the same type of argument above. But we do not need this general fact.  

Then, we can find a `maximal isotropic' lattice $L_{r^\dagger}$ isometric to
\[
\L_d =[e_{d+1},\ldots, e_{2d}, \ve^{-1} e_1,\ldots, \ve^{-1} e_d]_{\A},
\]
such that
\[
L_r \subseteq L_{r^\dagger} \subseteq L_{r + 1}.
\]
Here the basis $\{ e_i| 1\leq i\leq 2d\}$ is chosen such that the associated matrix of the symplectic form on $V$ is given by (\ref{J}).
The lattice $\L_d$ satisfies the following properties: 
\[
(\L_d, \L_d) = \ve^{-1} \A, \quad
(\L_d, \ve \L_d) = \A,  \quad
(\ve \L_d, \L_d)=\A.
\]
So the map
\begin{align*}
(-, - )_{r^\dagger} &: L_{r^\dagger}/ \ve L_{r^\dagger} \times L_{r^\dagger} / \ve L_{r^\dagger} \to k
 \\
(\bar x, \bar y)_{r^\dagger} &= \mathrm{ev}|_{\ve=0} \ \ve (x, y)  
\end{align*}
is a non-degenerate symplectic form on $ L_{r^\dagger}/ \ve L_{r^\dagger}\simeq k^{2d}$.

Moreover, $L_{-1}/ \ve L_{r^\dagger}$ and $L_0/ \ve L_{r^\dagger}$ are orthogonal  complements  to each other
with respect to the above form $(-, -)_{r^\dagger}$ on $L_{r^\dagger}/ \ve L_{r^\dagger}$.

\begin{lem}
We have the following bijection
\begin{align} \label{L'}
\begin{split}
& \{\text{lattices } L' \text{ in } V \big \vert \; L_{-1} \subseteq (L')^{\#} \subseteq L' \overset{1}{\subset} L_0\} \stackrel{\sim}{\longrightarrow}  \\
&\{ k\text{-subspaces } W  \ \text{of} \ L_0/ \ve L_{r^\dagger}
%\subseteq L_{r^\dagger}/\ve L_{r^\dagger}  
\big \vert \; L_{-1}/ \ve 
L_{r^\dagger} \subseteq W \overset{1}{\subset} L_0/ \ve L_{r^\dagger}, W^{\perp} \subseteq W \},
\end{split}
\end{align}
which sends $L'$ to $L'/ \ve L_{r^\dagger}$.
(Here and below $\overset{1}{\subset}$ denotes subspaces of codimension $1$.)
%where $L'$ are lattices in $V$ and $W$ are $k$-subspaces in $L_{r^\dagger}/\ve L_{r^\dagger}$.
\end{lem}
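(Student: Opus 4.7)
The strategy is to verify first that the quotient $L'/\ve L_{r^\dagger}$ makes sense for $L'$ in the source set, then to translate each condition on $L'$ into a condition on $W = L'/\ve L_{r^\dagger}$, and finally to construct an explicit inverse by taking preimages.

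First I would record the chain
\[
\ve L_{r^\dagger} \subseteq L_{-1} \subseteq L_0 \subseteq L_{r^\dagger},
\]
which is the essential compatibility between $L_{r^\dagger}$ and the duality $L^{\#}$. The inclusion $L_0 \subseteq L_{r^\dagger}$ is a consequence of $L_0 \subseteq L_r \subseteq L_{r^\dagger}$ (since $r \geq 0$ and lattices are nested). For the other one, I would observe that the maximal isotropic property of $L_{r^\dagger}$ is equivalent to $L_{r^\dagger}^{\#} = \ve L_{r^\dagger}$ (a direct check using $(\L_d, \L_d) = \ve^{-1}\A$ shows $\L_d^{\#} = \ve \L_d$); combined with $L_0 \subseteq L_{r^\dagger}$ and duality, this gives $\ve L_{r^\dagger} = L_{r^\dagger}^{\#} \subseteq L_0^{\#} = L_{-1}$.

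Given $L'$ in the source set, I would next show that $\ve L_{r^\dagger} \subseteq L' \subseteq L_0$, so that $W := L'/\ve L_{r^\dagger}$ is a well-defined $k$-subspace of $L_0/\ve L_{r^\dagger}$. The inclusion $L' \subseteq L_0$ is given; the inclusion $\ve L_{r^\dagger} \subseteq L'$ follows from $\ve L_{r^\dagger} \subseteq L_{-1} \subseteq (L')^{\#} \subseteq L'$. The conditions $L_{-1}/\ve L_{r^\dagger} \subseteq W \overset{1}{\subset} L_0/\ve L_{r^\dagger}$ then pass to the quotient immediately. The main computation is the identification of $W^{\perp}$ inside $L_{r^\dagger}/\ve L_{r^\dagger}$: by the very definition of $(-,-)_{r^\dagger}$, an element $\bar x$ pairs trivially with $W$ iff $\ve(x, L') \subseteq \ve \A$, iff $x \in (L')^{\#}$. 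Since $(L')^{\#} \subseteq L' \subseteq L_0 \subseteq L_{r^\dagger}$, I conclude $W^{\perp} = (L')^{\#}/\ve L_{r^\dagger}$, and thus $(L')^{\#} \subseteq L'$ translates exactly into $W^{\perp} \subseteq W$.

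For the inverse map, given $W$ satisfying the conditions on the right-hand side, I would let $L'$ be the unique lattice with $\ve L_{r^\dagger} \subseteq L' \subseteq L_0$ and $L'/\ve L_{r^\dagger} = W$, namely $L' := \pi^{-1}(W)$ for the projection $\pi : L_0 \twoheadrightarrow L_0/\ve L_{r^\dagger}$. Then $L' \overset{1}{\subset} L_0$ because lattices between $\ve L_{r^\dagger}$ and $L_0$ correspond bijectively to their quotients; $L_{-1} \subseteq (L')^{\#}$ is dual to $L' \subseteq L_0 = L_{-1}^{\#}$; and the same identification $W^{\perp} = (L')^{\#}/\ve L_{r^\dagger}$ as above (which only required $(L')^{\#} \subseteq L_{r^\dagger}$, itself a consequence of $\ve L_{r^\dagger} \subseteq L'$) converts $W^{\perp} \subseteq W$ back into $(L')^{\#} \subseteq L'$.

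The two maps are evidently mutually inverse by construction, since on each side we simply pass between $L'$ and its image $L'/\ve L_{r^\dagger}$ inside the fixed ambient lattice $L_{r^\dagger}/\ve L_{r^\dagger}$. I do not expect any serious obstacle here; the only point requiring genuine care is the computation of $W^{\perp}$, where one must correctly handle the shift by $\ve$ in the definition of the form $(-,-)_{r^\dagger}$ and verify that the intersection $(L')^{\#} \cap L_{r^\dagger}$ is actually all of $(L')^{\#}$.
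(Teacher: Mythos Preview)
The paper states this lemma without proof, treating it as a routine verification following the preceding discussion of the form $(-,-)_{r^\dagger}$. Your argument is correct and supplies precisely the details the paper omits: the chain $\ve L_{r^\dagger}\subseteq L_{-1}\subseteq L_0\subseteq L_{r^\dagger}$, the identification $W^\perp=(L')^{\#}/\ve L_{r^\dagger}$ via the definition of the residue form, and the explicit inverse by preimage. One small point you might make more explicit in the inverse direction is that $L':=\pi^{-1}(W)$ is indeed a lattice (not just an additive subgroup): this follows since $\ve L'\subseteq \ve L_0\subseteq \ve L_{r^\dagger}\subseteq L'$ forces $L'$ to be $\A$-stable, and the squeeze $\ve L_{r^\dagger}\subseteq L'\subseteq L_{r^\dagger}$ gives the rank.
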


Therefore, the computation at $L_0$ is exactly the same as the computation at $L_{r}$. 
%But the roles of $e_0$ and $f_0$ got swapped from (\ref{L'}).
In particular, we have the following lemma which we shall use freely. 

\begin{lem}
 \label{lem:LL}
\begin{enumerate}
\item
Suppose that $L$ is a lattice such that
$L_{-1} \subset L \subset L_0$ and $\dim_k L_0/ L =1$, then the lattice $L$ is symplectic and $L^{\#}\subset L$.

\item 
If the pair $(L_{-1}, L_0)$ is replaced by $(L_r, L_{r + 1})$ such that $\dim_k L/L_r=1$, then $L$ is symplectic and $ L \subset L^{\#}$.
\end{enumerate}
\end{lem}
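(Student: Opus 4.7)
The plan is to reduce both containments to symplectic linear algebra on the finite-dimensional $k$-space $M := L_{r^\dagger}/\ve L_{r^\dagger}$ equipped with the non-degenerate form $(-,-)_{r^\dagger}$ constructed just above. First I would record the general dictionary: for any lattice $N$ with $\ve L_{r^\dagger}\subseteq N\subseteq L_{r^\dagger}$, writing $\bar N := N/\ve L_{r^\dagger}\subseteq M$, the orthogonal $\bar N^{\perp}$ in $M$ coincides with $\overline{N^{\#}\cap L_{r^\dagger}}$. Applied with $N=L_0$ (using $L_0^{\#}=L_{-1}\subseteq L_{r^\dagger}$) and with $N=L_r$ (using $L_r^{\#}=\ve L_{r+1}\subseteq L_{r^\dagger}$), this yields $\overline{L_0}^{\perp}=\overline{L_{-1}}$ and $\overline{L_r}^{\perp}=\overline{\ve L_{r+1}}$, so the induced bilinear forms on the quotients $V:=\overline{L_0}/\overline{L_{-1}}$ and $V':=\overline{L_r}/\overline{\ve L_{r+1}}$ are both non-degenerate symplectic.

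For part~(1), the hypothesis $L_{-1}\subset L\subset L_0$ with $\dim_k L_0/L=1$ places $H := \bar L/\overline{L_{-1}}$ as a hyperplane in the symplectic space $V$. The key geometric input is that in any non-degenerate symplectic $k$-space the orthogonal $H^{\perp}$ of a hyperplane is $1$-dimensional by non-degeneracy, hence automatically isotropic (every line is isotropic), hence $H^{\perp}\subseteq(H^{\perp})^{\perp}=H$. Lifting this via the dictionary produces $L^{\#}\subseteq L$. The remaining symplectic condition $\ve L\subseteq L^{\#}$, so that one may take $\Lambda=L$ in the definition of symplectic lattice, follows from the chain: $\ve L\subseteq\ve L_0 = L_{-n}\subseteq L_{-1}\subseteq L^{\#}$.

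For part~(2), I would run the parallel argument at position $r$ inside $V'$: after choosing a maximal-isotropic $L_{r^\dagger}\supseteq L$ (possible because $L$ is only a $1$-dimensional extension of $L_r$ inside $L_{r+1}$, so $L/\ve L_r$ can be completed to a Lagrangian in the symplectic space $L_{r+1}/\ve L_r$), the image of $L$ above $\overline{L_r}$ in $M$ is just a single line. Since every $1$-dimensional subspace of a non-degenerate symplectic space is isotropic, the dual of the hyperplane lemma used in part~(1) applies, and lifting through the dictionary yields the stated containment, together with the remaining symplectic condition by the chain. \emph{The principal obstacle} I foresee is the bookkeeping around the shift-duality $L_r^{\#}=\ve L_{r+1}$: in contrast to the self-dual pair $(L_{-1},L_0)$ in part~(1), here $L$ lies in the slot $[L_r,L_{r+1}]$ while $L^{\#}$ lies in the shifted slot $[\ve L_r,\ve L_{r+1}]$, and the raw chain-based estimate $L^{\#}\subseteq\ve L_{r+1}=L_{-r-1}\subseteq L_r\subseteq L$ pulls in the opposite direction, so the stated containment $L\subset L^{\#}$ has to be read together with the scaling $\ve\Lambda\subseteq\Lambda^{\#}\subseteq\Lambda$ built into the definition of symplectic lattice (with $\Lambda$ taken as $L^{\#}$ rather than as $L$). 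Tracking this $\ve$-shift carefully, and verifying that the isotropy of the line-image in $V'$ transports through the shift to the asserted direction, is what makes part~(2) genuinely non-routine compared with part~(1).
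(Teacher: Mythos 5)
Your overall strategy---reducing both statements to linear algebra in the finite-dimensional symplectic quotients attached to the local model at $L_{r^\dagger}$---is exactly the argument the paper leaves implicit (the lemma is stated without proof, immediately after the form $(-,-)_{r^\dagger}$ and the hyperplane bijection are set up), and your part~(1) is correct and complete: the dictionary $\bar N^{\perp}=\overline{N^{\#}\cap L_{r^\dagger}}$, the fact that the orthogonal of a hyperplane in the nondegenerate space $\overline{L_0}/\overline{L_{-1}}$ is an isotropic line, and the chain $\ve L\subseteq \ve L_0=L_{-n}\subseteq L_{-1}\subseteq L^{\#}$ all check out (you should also note that $L^{\#}\subseteq L_{r^\dagger}$, which follows from $L\supseteq L_{-1}=L_0^{\#}$, so that $L^{\#}\cap L_{r^\dagger}=L^{\#}$ and the containment really lifts from $M$).

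Part~(2), however, has a genuine gap. The quotient $V'=\overline{L_r}/\overline{\ve L_{r+1}}$ you propose to work in does not see $L$ at all: since $L_r\subsetneq L$, the image of $L$ is not contained in $\overline{L_r}$, so ``$L$ defines a line in $V'$'' does not parse; and the reduction to $M=L_{r^\dagger}/\ve L_{r^\dagger}$ destroys the containment you need, because $\ve L\subseteq\ve L_{r^\dagger}$ maps to zero in $M$, so nothing about $\ve L$ versus $L^{\#}$ can be ``lifted through the dictionary.'' Your proposed normalization also fails: taking $\Lambda=L^{\#}$ in the definition of a symplectic lattice requires $\ve L^{\#}\subseteq L\subseteq L^{\#}$, and $L\subseteq L^{\#}$ is precisely the containment you correctly observed to be false (one always has $L^{\#}\subseteq L_r^{\#}=\ve L_{r+1}\subseteq L_r\subsetneq L$ in this slot). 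The correct normalization is $\Lambda=L$, and the actual content of part~(2) is the pair of containments $\ve L\subseteq L^{\#}\subseteq L$; the stated ``$L\subset L^{\#}$'' has to be read through the $\ve$-twist at the $r$-slot, i.e.\ as $L\subseteq\ve^{-1}L^{\#}$. Both containments are immediate and require no choice of $L_{r^\dagger}$: writing $L=L_r+\A u$ with $u\in L_{r+1}$, one has $(L,L)\subseteq(L_r,L_{r+1})+\A\,(u,u)=(L_r,L_{r+1})\subseteq\ve^{-1}\A$, because $L_{r+1}^{\#}=\ve L_r$ and $(u,u)=0$; this gives $\ve L\subseteq L^{\#}$, while $L^{\#}\subseteq\ve L_{r+1}\subseteq L$ gives the other half, whence $L$ is alternating and symplectic with $\Lambda=L$. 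Equivalently, the right nondegenerate symplectic space for this slot is $L_{r+1}/L_r$ equipped with the form $\ve^2(\cdot,\cdot)\bmod\ve$ (whose radical on $L_{r+1}$ is exactly $L_r$); there $L/L_r$ is a line, hence isotropic, hence contained in its orthogonal $\ve^{-1}L^{\#}/L_r$.
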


%%%%%%%%%%%%%%%%%%
%%%%%%%%%%%%%%%%%%
\chapter{Multiplication formulas for Chevalley generators} 
 \label{chap:formula}
 
 In this chapter, we study the convolution algebra $\Sj$ of pairs of $n$-step flags of affine type $C$.
 We present multiplication formulas in $\Sj$ with (the divided powers of) Chevalley generators. 
We then specify some general scenarios when these multiplication formulas produce a leading term with coefficient $1$.

%%%%%
\section{Some dimension computation}

Fix $L \in \X^{\fc}_{n,d}$. 
For $A\in {}^{\C}\Xi_{n,d}$ (which was defined in \eqref{Xidn}), we define
\begin{equation}
  \label{XAL}
X_A^L =\{ L' \in \X^{\fc}_{n,d} | (L, L')\in \mathcal O_A\}.
\end{equation}
This is an orbit of the stabilizer subgroup $\mbox{Stab}_{\SP(V)} (L)$ of $\SP(V)$, and one can associate to it a 
structure of quasi-projective algebraic variety.
We are interested in computing its dimension $d^{\C}_A$ (in order to define the standard basis element $[A]$ later on).
We have the following affine type $C$ analogue of ~\cite[Lemma 4.3]{Lu99}.

\begin{lem}  \label{dimension}
Fix $L \in \X^{\fc}_{n,d}$. 
For $A\in {}^{\C}\Xi_{n,d}$, the dimension of  $X^{L}_A$ is given by
\begin{align*}
d_A^{\C}
%&= \frac{1}{2} \left (\sum_{\substack{i\geq k, j<l \\ i\in [- n, - 1]} } a_{ij} a_{kl}  + \sum_{i\geq - (r + 1) >j } a_{ij} \right )\\
& = \frac{1}{2} \Big (\sum_{\substack{i\geq k, j<l \\ i\in [1, n]} } a_{ij} a_{kl}
+  \sum_{i\geq 0  >j } a_{ij}
+ \sum_{i\geq  r + 1 >j } a_{ij}
\Big ).
\end{align*}
\end{lem}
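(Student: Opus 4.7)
The plan is to adapt the fibration argument of Lusztig~\cite{Lu99} that established the affine type $A$ formula $d_A^{\mathfrak a}$ to the symplectic setting of $\X^{\fc}_{n,d}$. First I would parametrize a general $L' \in X_A^L$ by only a fundamental half of its data, namely the lattices $L'_i$ with $0 \le i \le r+1$, since the periodicity $L'_{i+n} = \ve^{-1} L'_i$ and the duality $(L'_z)^{\#} = L'_{-z-1}$ then determine all other $L'_z$. Building $L'$ step by step and counting the dimensions of the fibers amounts, as in the type $A$ argument, to choosing inside each successive subquotient $L_k/L_{k-1}$ a subspace of prescribed dimension $a_{k,i}$ transverse to the previously chosen ones; this gives a quadratic contribution of the type-$A$ form $\sum_{i \ge k,\, j < l} a_{ij} a_{kl}$ with $i \in [1,n]$.

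Next I would explain the factor of $\tfrac12$ via the centrosymmetry $a_{-i,-j} = a_{ij}$. Outside the self-dual indices $0$ and $r+1$, the duality pairs each $L'_i$ with a forced partner $L'_{-i-1} = (L'_i)^{\#}$, so every independent choice in the positive range is matched by a determined choice in the negative range. Combined with the centrosymmetry, which guarantees the pair has matching dimensions on both sides, this exactly halves the naive type-$A$ count.

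The main obstacle is the local analysis at the two self-dual positions $i=0$ and $i=r+1$, where the duality pairs the flag with itself rather than with a distinct partner. At these indices one is constrained by the isotropy-type conditions $L'_0 \supseteq (L'_0)^{\#}$ and $L'_{r+1} \subseteq \ve^{-1} (L'_r)^{\#}$ examined in Lemma~\ref{lem:LL} and Section~\ref{seclocal}. Via the bijection \eqref{L'}, the admissible choices at $i=0$ correspond to isotropic subspaces inside the finite-dimensional symplectic quotient $L_0/\ve L_{r^\dagger}$ containing $L_{-1}/\ve L_{r^\dagger}$, for which the dimension is given by the familiar symplectic/Lagrangian Grassmannian formula; after halving by the same duality as above, this produces the linear correction $\tfrac12 \sum_{i \ge 0 > j} a_{ij}$. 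An identical argument at $i=r+1$, with the pair $L_r \subseteq L_{r+1}$ playing the role of $L_{-1} \subseteq L_0$, yields $\tfrac12 \sum_{i \ge r+1 > j} a_{ij}$.

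Summing the three contributions reproduces the stated formula for $d_A^{\C}$. The trickiest bookkeeping is to verify that these isotropic-Grassmannian corrections have exactly the advertised index ranges (in particular the asymmetric inequalities $i \ge 0$ versus $> j$, and likewise for $r+1$); I expect this can be pinned down by a direct comparison with the finite type $C$ computation developed in Appendix~\ref{chap:finiteC}, which expands the outline of \cite[\S6]{BKLW14} precisely in order to handle the two self-dual positions uniformly.
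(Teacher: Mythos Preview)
Your fibration approach is plausible, but the paper takes a different and more uniform route. Instead of building $L'$ lattice by lattice and tracking which choices are forced by the duality $L'_{-z-1}=(L'_z)^{\#}$, the paper computes $d_A^{\C}$ infinitesimally, following \cite[Lemma~4.3]{Lu99}: after fixing an explicit splitting $V=\bigoplus V_{ij}$ with $\dim_k V_{ij}=a_{ij}$ and a compatible symplectic form, it identifies $\dim X_A^L$ with $\dim_k X/X'$, where $X=\{x\in\mathfrak{sp}(V)\mid x(L)\subseteq L\}$ and $X'=X\cap\{x\mid x(L')\subseteq L'\}$. The factor $\tfrac12$ then arises not from halving the flag data but from the skew-symmetry in $\mathfrak{sp}(V)$: the condition ${}^t x_{(i,j),(k,l)}M+M x_{(sn-k,sn-l),(sn-i,sn-j)}=0$ pairs off generic blocks, killing half the degrees of freedom and yielding $\tfrac12$ of the type-$A$ quadratic sum. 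The linear corrections come from the \emph{self-paired} blocks (those with $i+k\equiv 0,\ j+l\equiv 0$ or $i+k\equiv n,\ j+l\equiv n$ mod $n$), where the block is constrained against itself; counting the resulting dimensions gives $\tfrac12\sum_{i\ge 0>j}a_{ij}$ and $\tfrac12\sum_{i\ge r+1>j}a_{ij}$ directly.

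This Lie-algebra computation sidesteps exactly the bookkeeping you flag as delicate: it never invokes isotropic or Lagrangian Grassmannian dimensions, the local analysis of Section~\ref{seclocal}, or any comparison with Appendix~\ref{chap:finiteC}. Your approach could likely be made rigorous, but matching the isotropic-Grassmannian correction to the precise index ranges $i\ge 0>j$ and $i\ge r+1>j$ is where it is easiest to slip, and the paper's method avoids that difficulty entirely.
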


\begin{proof}
The proof is similar to that of  in \cite[Lemma 4.3]{Lu99}.
Indeed, we can fix a decomposition $V=\oplus_{i, j\in \mbb Z} V_{ij}$ as $k$-vector spaces such that $\dim_k V_{ij} = a_{ij}$.
We can further assume that $V_{ij}$ admits a $k$-basis $\{ e_{ij}^m | 1\leq m\leq a_{ij}\}$ satisfying
\[
\ve e_{ij}^m = e_{i - n, j- n}^m, \quad \forall i, j\in \mbb Z, m\in [1, a_{ij}].
\]
We define a symplectic  $F$-form on $V$ by,  for any $i, k \in [1, n]$, $j, l\in \mbb Z$,
\[
( e_{ij}^m, e_{kl}^{m'} ) =
\begin{cases}
\delta_{m,m'} \ve^{-2},  & \mbox{if} \ i+k=n, j+l=n, i < r + 1,\\
- \delta_{m,m'} \ve^{-2} ,  & \mbox{if} \ i+k=n, j+l=n, i > r + 1,\\
\delta_{m,m'} \ve^{-2},  & \mbox{if} \ i+k=n, j+l=n, i = r + 1, j < r + 1,\\
- \delta_{m,m'} \ve^{-2},  & \mbox{if} \ i+k=n, j+l=n, i = r + 1, j > r + 1,\\
\delta_{m, a_{r + 1, r + 1} +1-m'} \ve^{-2}, & \mbox{if} \ (i,j) =(k,l) = (r + 1, r + 1), m\leq a_{r + 1,r + 1}/2,\\
- \delta_{m, a_{r + 1, r + 1} +1-m'} \ve^{-2}, & \mbox{if} \ (i,j) =(k,l) = (r + 1, r + 1), m\geq a_{r + 1,r + 1}/2 +1 ,\\
\ve  ( e_{ij}^m, e_{k - n,l - n}^{m'}).
\end{cases}
\]
Now set $L=( L_i)_{i\in \mbb Z}$ and $L'= (L'_j)_{j\in \mbb Z}$, where
\[
L_i =\bigoplus_{k, l\in \mbb Z:  k\leq i} V_{kl}, \quad L_j' = \bigoplus_{k, l\in \mbb Z: l\leq j} V_{kl}, \quad \forall i, j\in \mbb Z.
\]
We see that
$
(L, L')\in \mathcal O_A.
$
Let
\[
X =\{ x\in \mathfrak{sp}(V) | x(L)\subseteq L\}, \quad
X' = \{ x\in \mathfrak{sp}(V) | x(L)\subseteq L, x(L') \subseteq L'\}.
\]
We have
\[
\dim X^L_A = \dim_k X/ X'.
\]
%The condition that $x(L) \subset L$ and $x(L') \subseteq L'$  implies that
%\begin{align} \label{type-A-condition}
%x_{(i,j), (k,l)}  = 0,  \quad  \mbox{unless}\  i \geq k \  \mbox{and} \ j \geq l.
%\end{align}
Now $x=(x_{(i,j), (k,l)}: V_{ij} \rightarrow V_{kl} ) \in \mathfrak{sp}(V)$ if and only if the following conditions are satisfied:
\begin{align} 
  \label{epsilon-action}
 \begin{split}
x_{(n+i, n+j), (n+ k, n+l)} (u) &= \ve^{-1} x_{(i, j), (k, l)} ( \ve u), \quad \forall u \in V_{n-i, n-j},
   \\
(x(u), u') + (u, x(u')) &=0, \quad \forall u, u' \in V.
\end{split}
\end{align}
The second condition in \eqref{epsilon-action} is equivalent to
\begin{align} \label{form-matrix}
^t x_{(i, j), (k, l)} M + M x_{ (s n -k, s n -l), (s n -i, s n -j)} =0, \quad \forall i, j, k, l, s\in \mbb Z,
\end{align}
where $M$ is  a certain matrix associated to the symplectic form $(\cdot, \cdot)$.

In particular, if
$
i+k \neq sn$
or
$
 j+l \neq s n,  \forall s\in \mbb Z,
$
then the linear map $x_{(i,j), (k, l)}$ completely determines
$x_{ (s n -k, s n -l), (s n -i, s n -j)}$ for all $s\in \mbb Z$.
So the contribution for these linear maps in $\dim X/X'$ is
\begin{align} \label{ik=n}
\frac{1}{2}
\sum_{
\substack{ i\geq k, j <  l,  i\in [0, n - 1] \\ i+k \neq s n
\
\small{or}
\
 j+l \neq s n }}
a_{ij} a_{kl} .
\end{align}
If we have
$
i+k = sn$ or 
$
 j+l = s n$ for some $s\in \mbb Z,$
then the equation (\ref{form-matrix}) becomes
$
^t x_{(i, j), (k, l)} M + M x_{ (i, j), (k, l)} =0.
$
By (\ref{epsilon-action}), the collection of linear maps
$x_{(i,j),(k,l)}$ such that $i+k =s n$ and $j+l=s n$ for some $s\in \mbb Z$ determines
the collection of linear maps
$x_{(i,j),(k,l)}$ such that $i+k =(s+2) n$ and $j+l=(s+2) n$.
So they are   determined by the following two subsets:
\[
\{ x_{(ij), (kl)} | i+k = 0, j+l =0\},
\quad
\{x_{(ij), (kl)}| i+k = n, j+l = n\}.
\]
So the contribution of these kind of linear maps to $\dim X/X'$ is
\begin{align} \label{ikneqn}
\frac{1}{2}
\sum_{
\substack{ i\geq k, j < l, i\in [0, n-1] \\ i+k = s n,
 j+l = s n }}
a_{ij} a_{kl}
+
\frac{1}{2}
\sum_{i\geq 0 > j} a_{ij}
+
\frac{1}{2} \sum_{i\geq r + 1 > j} a_{ij}.
\end{align}
The lemma follows by summing up  (\ref{ik=n}) and (\ref{ikneqn}).
\end{proof}

%\begin{rem}
%If we replace the condition $L_m\subseteq L_{m+1}$ by $L_{m} \supseteq L_{m+1}$, the condition is much more natural. \end{rem}

 %%%%%
\section{Standard and canonical bases of Schur algebras}

It turns out a ``type $B$" parametrization in place of the ``type $C$" parametrization via ${}^{\C}\Xi_{n,d}$ is more natural,
for the $\SP(V)$-orbits in $\X^{\fc}_{n,d} \times \X^{\fc}_{n,d}$ and then for bases of the Schur algebras later on.
(Such a phenomenon already occurred in the finite type; cf. \cite{BKLW14, FL15}.)
We introduce the ``type $B$" parametrization  set
\begin{equation}
\label{Mdn}
\MX_{n,d} =\{ A + E^{00} + E^{r + 1, r + 1} \vert A\in {}^{\C}\Xi_{n,d} \}.
\end{equation}
That is, $\MX_{n,d}$ is the set of matrices $A \in \text{Mat}_{\ZZ\times \ZZ} (\mbb N)$ subject to the following conditions: 
\begin{align}
\begin{split}
 a_{ij} = a_{-i, -j} = a_{i - n, j- n} &\;  (\forall i, j \in \mbb Z),
 \quad  a_{00}, a_{r + 1, r + 1} \in 2\mbb Z +1, \\
  \sum_{i=i_0+1}^{i_0+n} \sum_{j\in \mbb Z} a_{ij}=2d+2, & \quad \text{for one (or each) } i_0 \in \mbb Z.
\end{split}
\end{align}
By definition we have a bijection  
\begin{equation}
  \label{bijection}
 {}^{\C}\Xi_{n,d} \longleftrightarrow \MX_{n,d},
 \qquad A \mapsto A + E^{00} + E^{r + 1, r + 1}. 
\end{equation}

From now on, we shall switch to the indexing set  $\MX_{n,d}$ for the rest of the paper.
%(with exception of Chapter~\ref{chap:multi}). 
Hence by abuse of notations, an $\SP_F(2d)$-orbit on $\X^{\fc}_{n,d} \times \X^{\fc}_{n,d}$
is  denoted by $\mathcal O_A$ and the set  in \eqref{XAL} is denoted by $X_A^L$,
now for $A \in \MX_{n,d}$. 

%Thus, the dimension formula for  $X_A^L$ given in Lemma~\ref{dimension}, now for $A \in \MX_{n,d}$, is reformulated via the bijection \eqref{bijection} 
%as follows:  \red{add  a proof?} 
\begin{lem}
The dimension of  $X_A^L$ for $A \in \MX_{n,d}$ is given by
\begin{align} \label{eqda}
d_A
& = \frac{1}{2} \Big(\sum_{\substack{i\geq k, j<l \\ i\in [0, n-1 ]} } a_{ij} a_{kl}
-  \sum_{i\geq 0  >j } a_{ij}
- \sum_{i\geq  r + 1 >j } a_{ij}
\Big).
\end{align}
\end{lem}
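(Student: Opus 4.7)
The plan is to reduce the claim to the preceding lemma (the formula for $d_A^{\C}$ with $A \in {}^{\C}\Xi_{n,d}$) by exploiting the bijection \eqref{bijection}. Set $A' := A - E^{00} - E^{r+1,r+1} \in {}^{\C}\Xi_{n,d}$. Since \eqref{bijection} is merely a relabeling of $\SP_F(2d)$-orbits on $\X^{\fc}_{n,d} \times \X^{\fc}_{n,d}$, the orbit $X^L_A$ (in the $\MX_{n,d}$-parametrization) coincides as a subset of $\X^{\fc}_{n,d}$ with $X^L_{A'}$ (in the ${}^{\C}\Xi_{n,d}$-parametrization), so $d_A = d_{A'}^{\C}$. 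The task then reduces to verifying the purely combinatorial identity
\[
\frac{1}{2}\Big(\sum_{\substack{i\geq k,\, j<l \\ i\in [1,n]}} a'_{ij} a'_{kl} + \sum_{i\geq 0 > j} a'_{ij} + \sum_{i\geq r+1 > j} a'_{ij}\Big) = \frac{1}{2}\Big(\sum_{\substack{i\geq k,\, j<l \\ i\in [0,n-1]}} a_{ij} a_{kl} - \sum_{i\geq 0 > j} a_{ij} - \sum_{i\geq r+1 > j} a_{ij}\Big),
\]
where $a_{ij} = a'_{ij} + \delta^{(0,0)}_{ij} + \delta^{(r+1,r+1)}_{ij}$ and $\delta^{(p,q)}$ denotes the characteristic function of the $n$-periodic orbit through $(p,q)$.

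First, the shift of range from $i \in [1,n]$ to $i \in [0,n-1]$ in the quadratic sum makes no difference on an $n$-periodic matrix: the substitution $(k,j,l) \mapsto (k-n, j-n, l-n)$ applied to the $i=n$ slice (using $a_{n,j+n} = a_{0,j}$ and $a_{k+n,l+n} = a_{k,l}$) reproduces exactly the $i=0$ slice, so both quadratic sums agree when applied to either $A$ or $A'$. Second, expanding $a_{ij} a_{kl} = (a'_{ij}+e_{ij})(a'_{kl}+e_{kl})$ with $e := E^{00}+E^{r+1,r+1}$ yields $Q(A) = Q(A') + \sum(a'_{ij} e_{kl} + e_{ij} a'_{kl}) + \sum e_{ij} e_{kl}$, where all sums are over $i\geq k$, $j<l$, $i\in[0,n-1]$. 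Using centrosymmetry $a'_{-i,-j} = a'_{ij}$ together with the $n$-periodicity, the cross-term sums $\sum a'_{ij} e_{kl}$ and $\sum e_{ij} a'_{kl}$ collapse precisely into $\sum_{i\geq 0 > j} a'_{ij}$ and $\sum_{i\geq r+1 > j} a'_{ij}$ (each counted twice, absorbed by the global $\tfrac{1}{2}$-factor). This produces exactly the sign flip $+ \to -$ required to convert the linear pieces in $d_{A'}^{\C}$ to those in the claimed formula. Finally, the residual $\sum e_{ij} e_{kl}$ terms are computed directly on a finite set of $(i,j,k,l)$-quadruples and account for the discrepancy $\sum(a_{ij} - a'_{ij})$ in the linear pieces.

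The main obstacle is the careful bookkeeping of the self-interactions $E^{00}\!\cdot\! E^{00}$, $E^{r+1,r+1}\!\cdot\! E^{r+1,r+1}$, and $E^{00}\!\cdot\! E^{r+1,r+1}$ under the constrained range $i\geq k$, $j<l$, $i\in [0,n-1]$, and the verification that their sum reproduces exactly $\sum_{i\geq 0 > j}(\delta^{(0,0)}_{ij}+\delta^{(r+1,r+1)}_{ij})+\sum_{i\geq r+1 > j}(\delta^{(0,0)}_{ij}+\delta^{(r+1,r+1)}_{ij})$ after invoking centrosymmetry. Once this cancellation is confirmed, the identity follows and the dimension formula \eqref{eqda} is established.
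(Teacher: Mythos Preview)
Your approach is correct and is essentially the same as the paper's: both reduce to the previous lemma via the bijection \eqref{bijection} and then verify the resulting combinatorial identity by expanding the quadratic sum after substituting $a_{ij} = a'_{ij} + e_{ij}$ (your labels $A$ and $A'$ are swapped relative to the paper's, but this is immaterial). The paper carries out the cross-term computation explicitly in one display rather than describing it in words, but the content is the same; your observation that the range shift $[1,n] \to [0,n-1]$ is harmless by periodicity is also implicit in the paper's final line.
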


\begin{proof}
Let $A =(a_{ij}) \in {}^{\C}\Xi_{n,d}$. 
We shall denote $A' = (a_{ij}') \in \MX_{n,d}$ corresponding to $A$ via the bijection \eqref{bijection}.
Thus we have
\begin{equation}\label{dA'}
  a_{ij}'  = a_{ij} + \delta_{ij} \sum_{k\in \ZZ} \delta_{0, i+kn} + \delta_{ij} \sum_{k\in \ZZ} \delta_{r+1, i+kn}.
\end{equation}
It follows from Lemma \ref{dimension} that
\begin{align*}
  d_{A'} =d_{A}^{\C} 
 &= \frac{1}{2} \Big (\sum_{\substack{i\geq k, j<l \\ i\in [1, n]} } a_{ij} a_{kl}
+  \sum_{i\geq 0  >j } a_{ij}
+ \sum_{i\geq  r + 1 >j } a_{ij}
\Big )
\\
&= \frac{1}{2} \Big ( \sum_{\substack{i\geq k, j<l \\ i\in [1, n]} } a_{ij}' a_{kl}' - \sum_{l > 0 \geq k} a_{kl}' - \sum_{l>r+1 \geq k}a_{kl}'
    - \sum_{\substack{i,j,k: i \geq kn >j \\ i\in [1, n]} } a_{ij}' \\
    &\qquad - \sum_{\substack{i,j,k: i \geq r+1+kn >j \\ i\in [1, n]} } a_{ij}' 
    +\sum_{i \geq 0 > j} a_{ij}' +\sum_{i\geq r+1 > j}a_{ij}'
    \Big )\\
    &= \frac{1}{2} \Big ( \sum_{\substack{i\geq k, j<l \\ i\in [1, n]} } a_{ij}' a_{kl}'
-  \sum_{i\geq 0  >j } a_{ij}'
- \sum_{i\geq  r + 1 >j } a_{ij}'
\Big ).
\end{align*} 
The lemma is proved. 
\end{proof}

We also introduce
\begin{align}
 \label{SigmaB}
 \begin{split}
 \Sigma_d = \Big\{ A \in  \text{Mat}_{\ZZ\times \ZZ} (\{0,1\}) \Big \vert 
 & a_{-i, -j} = a_{i j} =a_{i+2d+2,j+2d+2} \;  (\forall i, j \in \mbb Z), 
 \\
& \exists \text{ exactly one nonzero entry per row/column} 
\Big\}. 
\end{split}
\end{align}
Note the description of $\Sigma_d$ is much cleaner than ${}^{\C}\Sigma_d$, cf. \eqref{Sigma}. 
Nevertheless, the bijection \eqref{bijection} induces a natural bijection
\begin{equation}
  \label{bijection01}
 {}^{\C}\Sigma_d \longleftrightarrow \Sigma_d,
 \qquad A \mapsto A + E^{00} + E^{r + 1, r + 1}. 
\end{equation}
 The bijection $\varphi: \SP_F(2d) \backslash \Y^{\fc} \times \Y^{\fc} \longrightarrow {}^{\C}\Sigma_d$ in Proposition~\ref{IB-prop}
can be reformulated using $\Sigma_d$ in place of ${}^{\C}\Sigma_d$.
 
Recall the Schur algebra of affine type $A$, ${\mbf S}_{n, d; \cA}$, was defined in Section~\ref{sec:MB-A}.
The $\cA$-algebra ${\mbf S}^{\fc}_{n, d; \cA}$ is defined in the same way, now 
as the (generic) convolution algebra  ${\cA}_{\SP_F(2d)} (\X_{n,d}^{\fc} \times \X_{n,d}^{\fc})$ attached to the variety $\X_{n,d}^{\fc}$ introduced in the previous chapter.  
We then set 
\begin{equation}
  \label{SchurC}
\Sj =\mathbb Q(v) \otimes_{\cA}  {\mbf S}_{n, d; \cA}^{\fc}.
\end{equation}
The algebras ${\mbf S}^{\fc}_{n, d; \cA}$ and $\Sj$ are called the {\em Schur algebras (of affine type $C$)}. 
Denote by $e_A$ the characteristic function of the orbit $\mathcal O_A$, for $A \in  \MX_{n,d}$. Then
$\{e_A \vert A \in  \MX_{n,d} \}$ forms a basis for ${\mbf S}_{n, d; \cA}^{\fc}$ and $\Sj$.
Set 
\begin{equation}
 \label{standard}
 [A] = v^{-d_A} e_A, \quad \text{for } A \in  \MX_{n,d}.
\end{equation}

\begin{rem}
We have
\begin{align}
  \label{dAdA}
d_A - d_{\ \! ^t \! A}
%& = \frac{1}{4} \sum_i \left ({\rm ro}(A)_i^2-{\rm co}(A)_i^2 \right )-\frac{1}{4} \left ({\rm ro}(A)_{- n - 1}-{\rm co}(A)_{- n - 1} \right )\\
& = \frac{1}{4} \Big( \sum_{1\leq i \leq n} \left ({\rm ro}(A)_i^2-{\rm co}(A)_i^2 \right )
- \left ({\rm ro}(A)_{0}-{\rm co}(A)_{ 0 } \right )
- \left ({\rm ro}(A)_{r + 1}-{\rm co}(A)_{ r + 1} \right ) \Big). 
\end{align}
Hence  the assignment $[A] \mapsto [\ \! ^t\! A]$
defines an anti-involution $\Psi: \Sj \to  \Sj$.  
Note  from ~\cite[1.6(a)]{Lu99} that
\[
\sum_{i\geq - (r + 1) > j} a_{ij} = \dim_k \frac{L'_{-r -2}}{ L_{-r -2} \cap L'_{-r -2}},
\quad
\sum_{i\geq r + 1 > j} a_{ij} = \dim_k \frac{L'_{r }}{ L_{r} \cap L'_{r}},
\]
for any $(L, L') \in \mathcal O_A$.
\end{rem}

Recall the partial orders $\leq_{\text{alg}}$ and $\leq$ on $\Theta_{n, d}$ from (\ref{partial-alg}) and (\ref{partial}). 
These two partial orders restrict to similar ones on $\MX_{n,d}$, still denoted by the same notations.
Since any matrix $A$ in $\MX_{n, d}$ satisfies that $a_{ij} = a_{-i, -j}$ for all $i, j \in \mbb Z$. 
The two conditions in (\ref{partial-alg}) are equivalent to each other. Hence,
the partial order $\leq_{\text{alg}}$ on $\MX_{n,d}$ can be simplified as follows.
Given any $A =(a_{ij}), A' =(a_{ij}')  \in \MX_{n,d}$, one has
\begin{align} \label{comb-ord}
A \leq_{\text{alg}} A' \Longleftrightarrow
  \sum_{k \leq i, l \geq j} a_{kl} \leq \sum_{k \leq i, l \geq j} a'_{kl}, \quad \forall i < j.
\end{align}
%Note that we have  $a_{k l} = a_{-k, -l}$, so the condition (\ref{comb-ord}) implies  (and is equivalent to) that
%\[
%\sum_{k \geq i, l \leq j} a_{kl} \leq \sum_{k \geq i, l \leq j} a'_{kl}, \quad \forall i > j.
%\]
Since the Bruhat order of affine type $C$ is compatible with the Bruhat order of affine type $A$,
% (see ~\cite[8.4.9]{BB05} for example), 
we see that the partial order ``$\leq$'' is compatible with (though possibly weaker than) the Bruhat order of affine type $C$.

Assume for now that the ground field is $\overline{\mbb F}_q$.
Let $IC_{A}$ be the intersection cohomology complex of the closure $\overline{X^L_{A}}$ of $X^L_A$, 
taken in certain ambient algebraic variety over $\overline{\mbb F}_q$,
such that the restriction of  the stratum $IC_{A}$ to $X^L_{A}$ is the constant sheaf on $X^L_{A}$.
We refer to ~\cite{BBD82} for the precise definition of intersection complexes.
The restriction of the $i$-th cohomology sheaf $\mathscr H_{X^L_{B}}^i(IC_{A})$
of $IC_{A}$ to $X^L_{B}$ for $B\leq A$ is a trivial local system,
 whose rank is denoted by
$n_{B,A,i}$.
We set
\begin{equation}
 \label{eq:cba}
  \{A\}_d =\sum_{B \leq A} P_{B,A}[B],\quad \mbox{where}\quad
   P_{B,A}=\sum_{i\in \mbb Z} n_{B  , A,i}v^{i-d_A+d_B}.
\end{equation}
The polynomials $P_{B, A}$ satisfy
\begin{equation}
P_{A,A}=1\quad {\rm and}\quad P_{B,A}\in v^{-1}\mbb Z[v^{-1}]\ {\rm for \ any }\ B < A.
\end{equation}
Recall $\{[A] \big \vert  A\in \MX_{n,d}\}$ forms an $\mathbb Q(v)$-basis of $\Sj$. 
In light of ~\cite{BBD82, L97},  we have the following.

\begin{prop}
 \label{CB+c}
The set $\{\{A\}_d \big \vert A \in \MX_{n,d}\}$ forms an $\mathcal A$-basis of ${\mbf S}_{n, d; \cA}^{\fc}$
and  a $\mbb Q(v)$-basis of ${\mbf S}_{n, d}^{\fc}$ (called the \it{canonical basis}).
Moreover, the structure constants of $\Sj$ with respect to the canonical basis 
are in ${\mathbb N} [v, v^{-1}]$.
\end{prop}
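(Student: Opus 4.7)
The first assertion follows from standard unitriangularity. By construction the restriction of $IC_A$ to $X_A^L$ is the constant sheaf in the appropriate degree, whence $P_{A,A}=1$; and by the support conditions defining intersection cohomology, $\mathcal{H}^i(IC_A)$ is supported in degree strictly less than $-d_A+d_B$ along $X_B^L$ for $B<A$, which translates through the normalization of \eqref{eq:cba} into $P_{B,A}\in v^{-1}\ZZ[v^{-1}]$. Hence the transition matrix from $\{[A]\}$ to $\{\{A\}_d\}$ is unitriangular over $\cA$ with respect to the partial order $\leq$, so $\{\{A\}_d\mid A\in\MX_{n,d}\}$ is an $\cA$-basis of ${\mbf S}_{n,d;\cA}^{\fc}$ and, after base change, a $\QQ(v)$-basis of $\Sj$.

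For the positivity of structure constants, the plan is to realize multiplication in $\Sj$ as convolution of $\SP_F(2d)$-equivariant semisimple perverse sheaves and invoke the BBD decomposition theorem. For fixed $L\in\X^{\fc}_{n,d}$ and $A,B\in\MX_{n,d}$, introduce the triple space
$$Z=\{(L',L'')\in\X^{\fc}_{n,d}\times\X^{\fc}_{n,d}\mid (L,L')\in\overline{\mathcal O_A},\ (L',L'')\in\overline{\mathcal O_B}\}$$
together with the projections $p(L',L'')=L'$ and $\pi(L',L'')=L''$. The convolution complex $p^{*}IC_A\otimes \mathrm{pr}_2^{*}IC_B$ pushed forward along $\pi$ computes, up to the $v$-normalization in \eqref{standard}, the product $\{A\}_d\cdot\{B\}_d$ expanded in the canonical basis: the stalk of $R\pi_!$ at a point of $X_C^L$ records the structure constant at $\{C\}_d$. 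Since the input is a shifted semisimple perverse sheaf (a twisted external tensor product of two IC sheaves) and $\pi$ is proper on the orbit closures in play, BBD \cite{BBD82} yields
$$R\pi_{!}\bigl(p^{*}IC_A\otimes\mathrm{pr}_2^{*}IC_B\bigr)\cong\bigoplus_{C\in\MX_{n,d}}\bigoplus_{m\in\ZZ}IC_C[m]^{\oplus n_{A,B;C,m}},\qquad n_{A,B;C,m}\in\NN.$$
Encoding the cohomological shifts $m$ as powers of $v$ produces structure constants in $\NN[v,v^{-1}]$, as claimed.

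The main technical obstacle is that $\X^{\fc}_{n,d}$ is an ind-variety of infinite type, so the decomposition theorem must be applied on finite-dimensional approximations. This is resolved exactly as in the affine type $A$ setting of \cite{Lu99}: once $L$ is fixed, each stratum $X_A^L$ is a locally closed finite-dimensional subvariety (its dimension being computed in Lemma \ref{dimension}); the triple space $Z$ is of finite type; the projection $\pi$ restricted to the relevant orbit closures is proper; and the symplectic loop group action preserves these finite-dimensional models. Granting these reductions, the standard equivariant sheaf-theoretic arguments transfer mutatis mutandis from affine type $A$ to the present symplectic setting, completing the proof.
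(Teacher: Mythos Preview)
Your proposal is correct and follows the same approach the paper implicitly invokes: the paper does not spell out a proof but simply writes ``In light of \cite{BBD82, L97}, we have the following,'' and your argument---unitriangularity for the basis claim, then realizing convolution as pushforward of a semisimple complex and applying the decomposition theorem on finite-dimensional approximations as in \cite{Lu99}---is precisely the standard machinery those citations encode. Your expansion of the technical point about ind-varieties and finite-type approximations is accurate and matches how this is handled in the affine type $A$ literature the paper relies on.
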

%
%
%
%
%

%%%%%
\section{Some multiplication formulas }
\label{Mult}

Recalling $E^{ij}$ from \eqref{Eij}, we set
\begin{equation}
  \label{Eij:theta}
E^{ij}_{\theta} = E^{ij} + E^{-i, -j}.
\end{equation}
Note that we have
\[
E^{00}_{\theta} = 2 E^{00},
\quad
E^{r + 1 , r + 1}_{\theta} = 2 E^{r + 1, r + 1}.
\]

We have the following affine analogue of ~\cite[Lemma 3.2]{BKLW14},
whose proof also explains why the formula therein is the same as those in \cite{BLM90}.

\begin{lem} \label{raw-mult-form}
Assume that $i\in \mbb Z$ and  $A, B, C\in \MX_{n,d}$.

\begin{enumerate}
\item
If  $ \ro(A) = \co(B) $ and $B - E^{i, i+1}_{\theta} $ is diagonal, then we have
\begin{align}
e_B * e_A = \sum_{\substack{p\in \mbb Z \\ a_{i+1, p} \geq (E^{i+1,p}_\theta)_{i+1,p} }}
v^{ 2 \sum_{j > p} a_{ij} } \frac{v^{2 (1+a_{ip})} -1}{v^2 - 1}  e_{A + E^{ip}_{\theta} - E^{i+1,p}_{\theta}}.
\end{align}

\item
If $\ro(A) = \co(C)$ and $C - E^{i+1, i}_{\theta} $ is diagonal, then we have
\begin{align}
e_C * e_A = \sum_{\substack{p\in \mbb Z \\ a_{i, p} \geq (E^{i,p}_\theta)_{i,p} } }
v^{ 2 \sum_{j < p} a_{i+1, j} } \frac{v^{2 (1+a_{i+1, p})} -1}{v^2 - 1}  e_{A - E^{ip}_{\theta} + E^{i+1,p}_{\theta}}.
\end{align}
\end{enumerate}
\end{lem}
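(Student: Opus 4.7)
The plan is to prove the formulas by direct convolution in the geometric Schur algebra, following the template of \cite[Lemma~3.2]{BKLW14} (and ultimately \cite{BLM90}), adapted to handle the new features of the affine type $C$ setting.

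First, I would fix a target matrix $A' = A + E^{ip}_\theta - E^{i+1,p}_\theta$, a representative pair $(L, L'') \in \mathcal O_{A'}$, and compute $(e_B * e_A)(L, L'')$ as the number of intermediate lattice chains $L' \in \X^{\fc}_{n,d}$ with $(L, L') \in \mathcal O_B$ and $(L', L'') \in \mathcal O_A$. Since $B - E^{i,i+1}_\theta$ is diagonal and $\co(B) = \ro(A)$, the condition $(L, L') \in \mathcal O_B$ forces $L'_j = L_j$ for all indices $j$ not congruent to $i$ or $-i-1$ modulo $n$, while $L'_i \subset L_i$ has codimension $1$ and contains $L_{i-1}$. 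Because $n$ is even, the indices $i$ and $-i-1$ are never congruent modulo $n$, so $L'_{-i-1}$ is independently determined by the duality $L'_{-i-1} = (L'_i)^{\#}$ (and their $n$-periodic translates). By Lemma~\ref{lem:LL} and the local analysis of Section~\ref{seclocal}, every such codimension-$1$ sublattice $L'_i \subset L_i$ automatically produces a symplectic lattice chain in $\X^{\fc}_{n,d}$, so no separate symplectic constraint needs to be imposed in the counting.

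Second, I would apply Proposition~\ref{decom-lem} to fix a $k$-linear decomposition $V = \bigoplus_{k,l} V_{kl}$ adapted to $(L, L'')$, which induces a filtration $U_{\leq j}$ on $U = L_i/L_{i-1}$ with graded pieces of dimensions read off from row $i$ of $A'$. A hyperplane $W = L'_i/L_{i-1} \subset U$ then determines the matrix of $(L', L'')$: there is a unique column $p$ at which $W$ fails to contain the graded piece $V_{ip}$ (that is, $W \supset U_{\leq p-1}$ but $W \not\supset U_{\leq p}$), and a direct dimension count of $(W \cap U_{\leq j})/(W \cap U_{\leq j-1})$ verifies that this $p$ is exactly the summation parameter in the formula and that the resulting orbit is $\mathcal O_A$. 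The standard count of such hyperplanes over $k = \mathbb F_q$ is $q^{\sum_{j > p} a_{ij}}(q^{1+a_{ip}} - 1)/(q - 1)$, and the generic substitution $q \rightsquigarrow v^2$ yields the claimed factor $v^{2\sum_{j > p} a_{ij}} \cdot (v^{2(1+a_{ip})} - 1)/(v^2 - 1)$. The inequality $a_{i+1,p} \geq (E^{i+1,p}_\theta)_{i+1,p}$ simply ensures that $A + E^{ip}_\theta - E^{i+1,p}_\theta$ has non-negative entries and so lies in $\MX_{n,d}$.

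The principal technical point I expect to verify is the uniform handling of the boundary columns $p \equiv 0, r+1 \pmod n$, where $-p \equiv p \pmod n$ and the centrosymmetry causes the entries $(E^{ip}_\theta)_{ip}$ or $(E^{i+1,p}_\theta)_{i+1,p}$ to double when $i$ (respectively $i+1$) is also $\equiv 0$ or $r+1 \pmod n$. At these positions one must check that Lemma~\ref{lem:LL} still guarantees that the induced codimension-$1$ modification preserves the symplectic property, and that the centrosymmetry condition $a_{-i,-p} = a_{ip}$ together with the parity constraints $a_{00}, a_{r+1,r+1} \in 2\mbb Z + 1$ make the hyperplane count come out with exactly the same quantum binomial factor as in the generic case. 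Once part~(1) is established, part~(2) follows either by an entirely parallel computation in which $L' \supset L$ with codimension-$1$ inclusion at position $i+1$ (so that one counts codimension-$1$ sublattices of $L'_{i+1}/L_i$), or by applying the anti-involution $\Psi : \Sj \to \Sj$ from the remark following \eqref{standard}, together with the $v$-power correction $\Psi(e_A) = v^{d_A - d_{\ \! ^t\! A}} e_{\ \! ^t\! A}$.
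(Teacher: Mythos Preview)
Your proposal is correct and follows essentially the same approach as the paper: both reduce the convolution to counting codimension-one sublattices $L'_i$ of $L_i$ containing $L_{i-1}$ (stratified by the filtration coming from intersections with $L''$), and both invoke Lemma~\ref{lem:LL} to observe that such sublattices are automatically symplectic, so the affine type~$A$ counting of \cite[Proposition~3.5]{Lu99} (equivalently, the hyperplane count you spell out) goes through unchanged. The paper is terser---it simply cites \cite{Lu99} and records the two sets of lattices whose cardinality difference gives the coefficient---whereas you make the filtration and the hyperplane count explicit via Proposition~\ref{decom-lem}; your alternative derivation of part~(2) via the anti-involution $\Psi$ is not in the paper, which instead repeats the direct counting argument for the dual case.
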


\begin{proof}
The proof is essentially the same as that of ~\cite[Proposition 3.5]{Lu99}.  
Obtaining the structure constant in the first formula is reduced to computing the orders of the following two sets:
\[
\{ U \ \mbox{symplectic lattice}  | L_{i-1} + (L_i \cap L'_{p-1}) \subseteq U \subseteq L_i  \dim_k L_i / U =1\},
\]
\[
\{  U \ \mbox{symplectic lattice}  | L_{i-1} + (L_i \cap L'_{p}) \subseteq U \subseteq L_i , \dim_k L_i / U =1\}.
\]
Since the lattices $U$ such that $L_{i-1} \subseteq U \overset{1}{\subset} L_i$ 
are automatically symplectic (and $U^{\#} \subseteq U$) by Lemma ~\ref{lem:LL},
 the computations in {\em loc. cit.} still work and we have the first formula.

For the second formula, it is reduced to computing the difference of the orders of the following two sets:
\[
\{ U \ \mbox{symplectic lattice}  \vert  L_{i} \subseteq U \subseteq L_i + (L_{i+1} \cap L_p') , \dim_k U/L_i =1\}, 
\]
\[
\{ U \ \mbox{symplectic lattice}  \vert  L_{i} \subseteq U \subseteq L_i + (L_{i+1} \cap L_{p-1}') , \dim_k U/L_i =1\}.
\]
And again in this case, the lattices involved are automatically symplectic and thus the computations in {\it loc. cit.}   work here again. The second formula is obtained.
\end{proof}

We now generalize Lemma~\ref{raw-mult-form} to a multiplication formula by ``divided powers" of Chevalley generators. 

\begin{lem} \label{raw-R}
Assume that  $A, B, C \in \MX_{n,d}$  and $R\in \mbb N$.
\begin{enumerate}
\item
If $\ro(A) = \co(B)$ and $B - R E^{i, i+1}_{\theta}$ is diagonal for some $i\in [1, r]$, then we have
\begin{align*}
e_B * e_A
= \sum_t v^{2 \sum_{j > u} a_{i j} t_u} \prod_{u \in \mbb Z} \begin{bmatrix} a_{iu} + t_u \\ t_u \end{bmatrix}
e_{A + \sum_{u\in \mbb Z} t_u (E^{iu}_{\theta} - E^{i+1,u}_{\theta} )},
\end{align*}
where the sum is over all sequences $t =(t_u|u\in\mbb Z)$ such that $t_u\in \mbb N$ and $\sum_{u\in \mbb Z} t_u= R$.

\item
If $\ro(A) = \co(B)$ and $B - R E^{0, 1}_{\theta}$ is diagonal, then we have
\begin{align*}
\begin{split}
e_B * e_A
& = \sum_t v^{ 2 \sum_{j > u} a_{0 j} t_u + 2 \sum_{j < u < - j} t_j t_u + \sum_{u < 0} t_u (t_u -1) } \\
& \quad \quad
\cdot \prod_{u > 0} \begin{bmatrix} a_{0 u} + t_u + t_{-u} \\ t_u \end{bmatrix}
\prod_{u < 0} \begin{bmatrix} a_{0 u} + t_u \\ t_u \end{bmatrix}
\prod_{i=0}^{t_0 -1} \frac{[a_{00} +1 + 2i]} { [i+1] }
e_{A + \sum_{u\in \mbb Z} t_u ( E^{0u}_{\theta} - E^{1 u}_{\theta} )}.
\end{split}
\end{align*}

\item
If $ \ro(A) = \co(C)$ and  $C- R E^{i+1, i}_{\theta}$   is diagonal for some $i\in [0, r-1]$, then  we have
\begin{align*}
e_C * e_A
= \sum_{t} v^{ 2 \sum_{j<u} a_{i+1, j} t_u} \prod_{u\in \mbb Z}
\begin{bmatrix}
a_{i+1, u} + t_u\\
 t_u
\end{bmatrix} \,
e_{A- \sum_{u \in \mbb Z}  t_u( E_{\theta}^{iu} - E_{\theta}^{i+1,u})}.
\end{align*}

\item
If $ \ro(A) = \co(C)$ and  $C- R E^{r + 1, r}_{\theta}$   is diagonal, then  we have
\begin{equation*}
\begin{split}
e_C * e_A &= \sum_{t} v^{ 2 \sum_{j<u} a_{r + 1, j} t_u + 2\sum_{n-j<u<j} t_ut_j + \sum_{u>r + 1} t_u(t_u-1)} \prod_{u<r + 1}
\begin{bmatrix}
a_{r + 1, u} + t_u\\
 t_u
\end{bmatrix} \\
& \cdot \prod_{u>r + 1}
\begin{bmatrix}
a_{r + 1,u} +t_u+t_{n-u}\\
 t_u
\end{bmatrix}
 \prod_{i=0}^{t_{r + 1}-1} \frac{[a_{r + 1,r + 1}+1+2i]}{[i+1]} \, e_{A- \sum_{u \in \mbb Z}  t_u( E_{\theta}^{r u} - E_{\theta}^{r + 1,u})},
\end{split}
\end{equation*}
where the sum is taken over $t=(t_u | u\in \mbb Z)$ such that $t_u\in \mbb N$ and  $\sum_{u \in \mbb Z}  t_u=R$.
\end{enumerate}
\end{lem}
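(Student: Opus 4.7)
The plan is to compute each structure constant directly by enumerating intermediate lattices in the convolution, following the template of Lusztig's computation in \cite[Proposition 3.5]{Lu99}. For a target $A' = A + \sum_u t_u (E^{iu}_\theta - E^{i+1,u}_\theta)$ and a fixed representative $(L, L'') \in \mathcal{O}_{A'}$, the coefficient of $e_{A'}$ in $e_B * e_A$ equals
\[
\#\{L' \in \X^{\fc}_{n,d} \mid (L, L') \in \mathcal{O}_B,\; (L', L'') \in \mathcal{O}_A\},
\]
which we would compute in closed form and match against the claimed coefficient after the renormalization \eqref{standard} and the dimension formula \eqref{eqda}. The summation variable $t = (t_u)_{u \in \mathbb{Z}}$ with $\sum_u t_u = R$ parametrizes the distribution of the rank-$R$ jump at step $L_i \to L_{i+1}$ (together with its centrosymmetric partner) across the intersections with the column filtration of $L''$.

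For parts (1) and (3), the modified rows $i$ and $-i$ in the target matrix are strictly interior, meaning they are paired with one another rather than with themselves under the centrosymmetry $a_{kl} = a_{-k,-l}$. By Lemma~\ref{lem:LL}, any lattice sandwiched between two consecutive interior steps of $L$ is automatically symplectic, so the symplectic constraint is vacuous; the centrosymmetric datum at row $-i$ is then forced by $L'^\#_z = L'_{-z-1}$ and contributes no independent factor. The enumeration of $L'$ therefore reduces to a single copy of Lusztig's affine type $A$ count, yielding the stated formulas verbatim. Part (3) is transpose-dual to part (1) and may alternatively be deduced from it via the anti-involution $[A] \mapsto [{}^tA]$.

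Parts (2) and (4) are the genuinely new cases, where the modified row is self-paired: $0 \equiv -0$ in part (2) and $r+1 \equiv -(r+1) \bmod n$ in part (4). Here the local reduction of Section~\ref{seclocal} identifies each admissible $L'_0$ with an isotropic subspace $W \subseteq L_0/\ve L_{r^\dagger}$ containing $L_{-1}/\ve L_{r^\dagger}$, via the bijection \eqref{L'}. Counting such $W$ compatible with the prescribed intersection profile against $L''$ reduces to enumerating isotropic vectors on top of a fixed isotropic skeleton in a symplectic $k$-space, which produces the quantum-symplectic binomial $\prod_{i=0}^{t_0 - 1} [a_{00} + 1 + 2i]/[i+1]$ in place of the ordinary Gaussian binomial at position $u = 0$. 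The self-pairing of column $u$ with $-u$ for $u \neq 0$ collapses the product over $u \in \mathbb{Z}$ into separate factors over $u > 0$ and $u < 0$, with the $u > 0$ binomial inflated to argument $a_{0u} + t_u + t_{-u}$ to absorb both centrosymmetric contributions to the new entry at $(0, u)$. The additional exponents $2\sum_{j < u < -j} t_j t_u$ and $\sum_{u < 0} t_u(t_u - 1)$ encode the symplectic quadratic form on the new ordered vector increments.

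The principal technical obstacle lies in reconciling the $v$-exponents in parts (2) and (4), where three sources must be balanced: the boundary correction $-\sum_{i \geq 0 > j} a_{ij}$ and $-\sum_{i \geq r+1 > j} a_{ij}$ in the dimension formula \eqref{eqda}, the quadratic form from iterated isotropic vector choices in Section~\ref{seclocal}, and the symmetrization between columns $u$ and $-u$. The cleanest route will be to first establish the set-theoretic $t$-parameterization of the relevant fibers and only then aggregate powers of $v$, using Lemma~\ref{raw-mult-form} (the $R = 1$ case) as an internal consistency check.
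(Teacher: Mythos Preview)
Your geometric enumeration plan is sound, but the paper takes a different, and in fact shorter, route: rather than counting rank-$R$ isotropic jumps directly, it argues by induction on $R$. The base case $R=1$ is exactly Lemma~\ref{raw-mult-form}. For the inductive step the paper uses the divided-power relation $e_{B_1} * e_{B_R} = [R+1]\, e_{B_{R+1}}$ (which follows from the $R=1$ formula itself applied to $B_R$), so that
\[
e_{B_{R+1}} * e_A = \frac{1}{[R+1]} \sum_{t,\underline{p}} G_{A,t}\, G_{A_t,\underline{p}}\, e_{A_{t+\underline{p}}},
\]
and then verifies the combinatorial identity $\frac{1}{[R+1]}\sum_{t+\underline{p}=s} G_{A,t}\,G_{A_t,\underline{p}} = G_{A,s}$ algebraically. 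This bypasses entirely the direct enumeration of higher-dimensional isotropic subspaces with prescribed intersection profile that your plan requires for parts~(2) and~(4).

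The trade-off: your approach would give an explicit geometric interpretation of every factor in the symplectic binomial $\prod_{i=0}^{t_0-1}[a_{00}+1+2i]/[i+1]$ and of the quadratic exponent corrections, which is conceptually illuminating and closer to what one does in finite type $C$ in Appendix~\ref{chap:finiteC}; but the ``principal technical obstacle'' you identify (reconciling the three sources of $v$-exponent in the boundary case) is real, and the inductive approach simply never faces it. The paper's method also makes the parallel with the finite-type argument of \cite[Proposition~3.3]{BKLW14} transparent, since that proof proceeds the same way.
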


\begin{proof}
The proofs of (1), (3) and (4)  are essentially the same as that of  ~\cite[Proposition~ 3.3]{BKLW14}, while the proof of (2) is similar to the proof of (4).
Let us give a proof of (2) and skip (1), (3) and (4).
We shall prove by induction on $R$. When $R=1$, we have (2) by Lemma ~\ref{raw-mult-form}.
Write $B_R$ for $B$ in order to keep track of the $R$, and $A_t$ for the matrix
$A + \sum_{u\in \mbb Z} t_u ( E^{0u}_{\theta} - E^{1 u}_{\theta} )$ associated with $t$.
Let $G_{A, t}$ denote the coefficient of $e_{A_t}$ in (2).
By Lemma ~\ref{raw-mult-form}, we have
$
e_{B_1} *  e_{B_R} = [R+1] e_{B_{R+1}}.
$
So
\[
e_{B_{R+1}} * e_A
= \frac{1}{[R+1]} \sum_{ t, \underline p} G_{A, t} G_{A_t, \underline p}  e_{A_{t + \underline p}},
\]
where $\underline p\in \mathbb{N}^{\mathbb{Z}}$ is the sequence whose nonzero entry is $1$ at the position $p$.

It  suffices to show that
\[
\frac{1}{[R+1]} \sum_{ t, \underline p: t +  \underline p = s} G_{A, t} G_{A_t, \underline p}  = G_{A, s}
\]
for any sequence $s \in \mbb N^{\mbb Z}$ such that $\sum_{u\in \mbb Z}  s_u = R+1$.
By Lemma ~\ref{raw-mult-form}, the coefficient $G_{A_t, \underline p}$ is equal to
$v^{2 \sum_{j > p} a_{0 j} + 2 \sum_{j > p} (t_j + t_{ - j} ) } [ a_{0 p} + t_p + t_{-p} +1].$
The $v$-power terms of $G_{A, t}$ and $G_{A_t, \underline p}$ together yield
the $v$-power term of $G_{A, s}$ multiplying with $v^{ 2\sum_{ j > p} t_j}$.
The $v$-binomial coefficients  of $G_{A, t}$ and $G_{A_t, \underline p}$ yield
the $v$-binomial coefficient of $G_{A, s}$ multiplying with $[s_p]$.
So we have
\[
\frac{1}{[R+1]} \sum_{ t +  \underline p = s} G_{A, t} G_{A_t, \underline p}
=
G_{A, s}
\frac{1}{[R+1]}
\sum_{p\in \mbb Z}
v^{ 2\sum_{ j > p} t_j}
[r_p]
=
G_{A, s}.
\]
By induction, we have proved  (2).
\end{proof}

Lemma ~\ref{raw-R} can be rewritten in terms of the standard basis $[A]$ as follows.
Recall that we have a bar involution $\bar \empty : \mbb Q(v) \to \mbb Q(v)$ defined by $\bar v = v^{-1}$.

\begin{prop}\label{prop-stand-mult}
Assume that  $A, B, C \in \MX_{n,d}$  and $R\in \mbb N$.
\begin{enumerate}
\item
 If $\ro(A) = \co(B)$ and $B - R E^{i, i+1}_{\theta}$ is diagonal for some $i\in [1, r]$, then we have
\begin{align*}
[B] * [A]
& = \sum_t v^{ \beta_t } \prod_{u \in \mbb Z}
\overline{\begin{bmatrix} a_{iu} + t_u \\ t_u \end{bmatrix}}
[A + \sum_{u\in \mbb Z} t_u (E^{iu}_{\theta} - E^{i+1,u}_{\theta} )],\\
&  \beta_t =
 \sum_{j \geq  u} a_{i j} t_u - \sum_{j > u} a_{i+1, j} t_u + \sum_{j < u} t_j t_u
 + \frac{1} {2} \delta_{i, r}  \left ( \sum_{j + u < n} t_j t_u + \sum_{j < r + 1} t_j\right ),
\end{align*}
where the sum is over all sequences $t =(t_u|u\in\mbb Z)$ such that $t_u\in \mbb N$ and $\sum_{u\in \mbb Z} t_u= R$.

\item
If $\ro(A) = \co(B)$ and $B - R E^{0, 1}_{\theta}$ is diagonal, then we have
\begin{align*}
\begin{split}
& [B] * [A]
 \\
 &= \sum_t v^{ \beta'_t   }
 \prod_{u > 0}
\overline{\begin{bmatrix} a_{0 u} + t_u + t_{-u} \\ t_u \end{bmatrix} }
\prod_{u < 0}
\overline {\begin{bmatrix} a_{0 u} + t_u \\ t_u \end{bmatrix}}
\prod_{i=0}^{t_0 -1}
 \frac{\overline{[a_{00} +1 + 2i]} } { \overline{ [i+1]} }
\left [ A + \sum_{u\in \mbb Z} t_u ( E^{0u}_{\theta} - E^{1 u}_{\theta} ) \right ],
\end{split}
\end{align*}
where
the sum is over all sequences $t =(t_u|u\in\mbb Z)$ such that $t_u\in \mbb N$ and $\sum_{u\in \mbb Z} t_u= R$,
and
\[
\beta'_t
=
\sum_{j \geq u} a_{0j} t_u - \sum_{j > u} a_{1j} t_u
 + \sum_{j<u, j+u \leq 0} t_j t_u -  \sum_{j>0} \frac{t_j^2 - t_j}{2}
+ \frac{R^2 - R}{2}.
\]

\item
 If $ \ro(A) = \co(C)$ and  $C- R E^{i+1, i}_{\theta}$   is diagonal for some $i\in [0, r -1]$, then  we have
\begin{align*}
[C] * [A]
= \sum_{t} v^{ \gamma_t } \prod_{u\in \mbb Z}
\overline{
\begin{bmatrix}
a_{i+1, u} + t_u\\
 t_u
\end{bmatrix}
} \,
\left [A- \sum_{u \in \mbb Z}  t_u( E_{\theta}^{iu} - E_{\theta}^{i+1,u}) \right ], 
\end{align*}
where
the sum is over all sequences $t =(t_u|u\in\mbb Z)$ such that $t_u\in \mbb N$ and $\sum_{u\in \mbb Z} t_u= R$,
and
\[
\gamma_t = \sum_{j \leq u} a_{i+1, j} t_u - \sum_{j < u} a_{i j} t_u + \sum_{j < u} t_j t_u
+ \frac{1}{2} \delta_{i,  0}
\left (
\sum_{j + u > 0} t_j t_u + \sum_{j > 0} t_j
\right ).
\]

\item
 If $ \ro(A) = \co(C)$ and  $C- R E^{r + 1, r}_{\theta}$   is diagonal, then  we have
\begin{equation*}
\begin{split}
[C] * [A]
&= \sum_{t} v^{ \gamma'_t}
\prod_{u<r + 1}
\overline{
\begin{bmatrix}
a_{r + 1, u} + t_u\\
 t_u
\end{bmatrix}
}
\prod_{u>r + 1}
\overline{
\begin{bmatrix}
a_{r + 1,u} +t_u+t_{n-u}\\
 t_u
\end{bmatrix}
}\\
&
 \prod_{i=0}^{t_{r + 1}-1} \frac{\overline{[a_{r + 1,r + 1}+1+2i]}}{\overline{[i+1]}} \,
\left  [A- \sum_{u \in \mbb Z}  t_u( E_{\theta}^{r u} - E_{\theta}^{r + 1,u}) \right ],
\end{split}
\end{equation*}
where
\[
\gamma'_t =
\sum_{j \leq u} a_{r + 1, j} t_u - \sum_{j < u} a_{r j} t_u + \sum_{j < u, j+ u \geq n} t_j t_u -\sum_{ j < r + 1} \frac{t_j^2 - t_j }{2} + \frac{R^2 - R}{2},
\]
and $t=(t_u | u\in \mbb Z)$ such that $t_u\in \mbb N$ and  $\sum_{u \in \mbb Z}  t_u=R$.
\end{enumerate}
\end{prop}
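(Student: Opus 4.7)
The plan is to obtain each of the four formulas by translating the corresponding identity of Lemma~\ref{raw-R} from the characteristic-function basis $\{e_A\}$ to the standard basis $\{[A] = v^{-d_A}e_A\}$. Writing $[B]*[A] = v^{-d_A-d_B}(e_B * e_A)$ and replacing each summand $e_{A'}$ on the right-hand side by $v^{d_{A'}}[A']$, the task reduces to two bookkeeping steps: (i)~compute the dimension shift $d_{A'} - d_A - d_B$ from the formula~\eqref{eqda}, and (ii)~convert the ordinary $v$-binomial coefficients of Lemma~\ref{raw-R} into their bar-invariant counterparts via the identity $\begin{bmatrix}a+t\\t\end{bmatrix} = v^{2at}\overline{\begin{bmatrix}a+t\\t\end{bmatrix}}$, which follows directly from $\bar v = v^{-1}$ together with \eqref{eq:binomial}. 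The analogous identity $[m] = v^{m-1}\overline{[m]}$ handles the special self-pairing factors in cases (2) and (4).

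Applying step (ii) to Lemma~\ref{raw-R}(1) contributes an extra factor $\prod_u v^{2a_{iu}t_u}$, which combines with the pre-existing $v^{2\sum_{u}\sum_{j>u}a_{ij}t_u}$ to give $v^{2\sum_{u}\sum_{j\geq u}a_{ij}t_u}$. Comparing with the target exponent $\beta_t$ then reduces the whole task to verifying
\[
d_{A'} - d_A - d_B = -\sum_{j\geq u}a_{ij}t_u - \sum_{j>u}a_{i+1,j}t_u + \sum_{j<u}t_jt_u + \tfrac{1}{2}\delta_{i,r}\bigl(\textstyle\sum_{j+u<n}t_jt_u + \sum_{j<r+1}t_j\bigr)
\]
in case (1), and analogous identities in the other cases. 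In the generic sub-cases ($i \in [1, r-1]$ of (1), respectively $i \in [1, r-1]$ of (3)), the perturbation $A' - A = \sum_u t_u(E^{iu}_\theta - E^{i+1,u}_\theta)$ is supported on rows $i, i+1$ and their centrosymmetric images $n-i, n-i-1$, none of which coincides with a fixed row $0$ or $r+1$; substituting into \eqref{eqda} and expanding then yields the above identity directly.

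The main obstacle lies in the four boundary sub-cases: $i = r$ in (1), $i = 0$ in (3), and all of (2) and (4). Here the perturbation interacts with the fixed rows or columns of the involution (column $r+1$ in (1) with $i=r$, column $0$ in (3) with $i=0$, row $0$ in (2), row $r+1$ in (4)); the centrosymmetric pair of entries that together comprise $E^{iu}_\theta - E^{i+1,u}_\theta$ partially overlaps for $u$ in a symmetric range, doubling its contribution to the bilinear piece of~\eqref{eqda}, while simultaneously the linear correction $-\sum_{i\geq 0>j}a_{ij} - \sum_{i\geq r+1>j}a_{ij}$ is altered by the change in the relevant entries. A careful accounting of these overlaps should produce exactly the extra $\tfrac{1}{2}\delta_{i,r}(\cdots)$-term in (1) and its analogue in (3), as well as the $\sum_{u<0}(t_u^2-t_u) + \tfrac{R^2-R}{2}$ contribution in (2) and its mirror in (4). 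The principal challenge is therefore combinatorial rather than conceptual: ensuring that all cancellations arising from the centrosymmetry $a_{ij} = a_{-i,-j}$ are tracked correctly in each of the boundary sub-cases.
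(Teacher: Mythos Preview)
Your proposal is correct and follows essentially the same approach as the paper: both reduce the statement to computing the dimension shifts $d_B$ and $d_{A_t}-d_A$ from formula~\eqref{eqda} in each case, and then converting the $v$-binomials of Lemma~\ref{raw-R} to their barred form. The paper carries out exactly these computations case by case, and your anticipated extra boundary terms (the $\tfrac12\delta_{i,r}(\cdots)$ in (1), its analogue in (3), and the quadratic corrections in (2) and (4)) are precisely what emerge; one small slip is that with the paper's convention $[m]=\frac{v^{2m}-1}{v^2-1}$ the correct identity is $[m]=v^{2(m-1)}\overline{[m]}$, not $v^{m-1}\overline{[m]}$.
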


\begin{proof}
Let us prove (1). By definition, we have $d_B = R b_{ii} = \sum_{j, u} a_{ij} t_u.$
Let us denote 
%the $(i, j)$-th entry in $A + E^{ip}_{\theta} - E^{i+1,p}_{\theta} $ by $^ia_{ij}$.
$(k, l)$-th entry in $A + \sum_{u\in \mbb Z} t_u (E^{iu}_{\theta} - E^{i+1,u}_{\theta}) $ by $^ia_{kl}$.
A lengthy calculation yields
\[
\sum \ \! ^i a_{ij} \ \! ^i a_{kl} - \sum a_{ij} a_{kl}
=
2 \sum_{j < u} a_{i j} t_u - 2 \sum_{j > u} a_{i+1, j} t_u  + 2 \sum_{j < u} t_j t_u
+ \delta_{i, r}
\sum_{j + u < n} t_j t_u,
\]
where the sums on the left-hand side run over all $(i, j, k, l)$ such that $i \geq k$, $j < l$ and $i\in [0, n-1]$.
We also have
\begin{align*}
\sum_{i \geq 0 > j} \ \! ^i a_{ij}
& = \sum_{i \geq 0  > j} a_{ij},
\\
\sum_{i \geq r + 1 > j} \ \! ^i a_{ij}
& =
\sum_{i \geq r + 1 > j}  a_{ij}
-
\delta_{i r}  \sum_{j < r + 1} t_j.
\end{align*}
Putting the above computations together, we have
\[
d_{A_t} - d_A =
\sum_{j < u} a_{ij} t_u - \sum_{j > u} a_{i+1, j} t_u + \sum_{j < u} t_j t_u
+
\frac{1}{2} \delta_{i, r}
\left (
\sum_{j +u  < n} t_j t_u + \sum_{j < r + 1} t_j
\right ),
\]
where $A_t = A + \sum_{u\in \mbb Z} t_u (E^{iu}_{\theta} - E^{i+1,u}_{\theta} )$. Now from Lemma ~\ref{raw-R}(1), we have
\[
\beta_t = - d_B + d_{A_t} - d_A +  2 \sum_{j > u} a_{ij} t_u + 2 \sum_{u\in \mbb Z} a_{i u} t_u.
\]
The above calculations give rise to the formula for $\beta_t$, and  (1) follows.

We now prove (2).  We set
$A_{0, t} = A + \sum_{u\in \mbb Z} t_u ( E^{0u}_{\theta} - E^{1 u}_{\theta} )$ and write its $(i, j)$-entry by $^0a_{ij}$.
We have
\[
\beta'_t
=  - d_B - d_A + d_{A_{0,t}} + 2 \sum_{j \geq u} a_{0j} t_u + 2 \sum_{ j < u, j + u \leq 0} t_j t_u + \sum_{u \leq 0} t_u (t_u -1).
\]
By definition, we have
$
d_B = \sum_{j, u} a_{0 j} t_u + \frac{R^2 - R}{2}.
$
Moreover, we have 
\[
d_{A_{0, t}} - d_A =
\sum_{j < u} a_{0j} t_u - \sum_{j > u} a_{1j} t_u + \sum_{j < u} t_j t_u + \frac{1}{2} \left ( \sum_{j+u >0} t_j t_u - \sum_{j >0} t_j \right ).
\]
Thus,
\begin{align*}
\beta'_t
& = \sum_{j \geq u} a_{0j} t_u - \sum_{ j > u} a_{1j} t_u - \frac{R^2 - R}{2} + \sum_{j < u} t_j t_u  
\\
&\qquad \qquad + 2 \sum_{j < u, j+u \leq 0} t_j t_u + \sum_{u \leq 0} t_u (t_u-1)
+ \frac{1}{2} (\sum_{j + u >0} t_j t_u - \sum_{j > 0} t_j )\\
& = \sum_{j \geq u} a_{0j} t_u - \sum_{ j > u} a_{1j} t_u + \frac{R^2 - R}{2} + \sum_{j < u, j+u \leq 0} t_j t_u - \frac{1}{2} \sum_{j > 0} t_j^2 - t_j.
\end{align*}
So we have proved (2).

For (3), we have
$d_C = \sum_{j, u} a_{i+1, j} t_u,$
and
\[
d_{A- \sum_{u \in \mbb Z}  t_u( E_{\theta}^{iu} - E_{\theta}^{i+1,u})} - d_A
=
\sum_{j > u} a_{i+1, j} t_u - \sum_{j < u} a_{ij} t_u + \sum_{j < u} t_j t_u
+ \frac{1}{2} \delta_{i, 0}
\left ( \sum_{j+u > 0} t_j t_u + \sum_{j > 0} t_j \right ).
\]
So we have the formula for $\gamma_t$ in (3).

For (4), we have $d_C = \sum_{j, u} a_{r + 1, j} t_u + \frac{R^2 - R}{2},$ and
\[
d_{A- \sum_{u \in \mbb Z}  t_u( E_{\theta}^{ru} - E_{\theta}^{r + 1,u})} - d_A
=
\sum_{j > u} a_{r + 1, j} t_u - \sum_{j < u} a_{rj} t_u + \sum_{j < u} t_j t_u
+ \frac{1}{2}
\left (
\sum_{j + u < n} t_j t_u - \sum_{j < r + 1} t_j
\right ).
\]
So we have the formula for $\gamma'_t$ in (4).
\end{proof}

%%%%%
\section{The leading term}

We have the following affine generalization of \cite[Lemma 3.9]{BKLW14}.

\begin{lem}
\label{BKLW3.9}
Let $A, B, C \in \MX_{n,d}$. Let $R$ be a positive integer.

\begin{enumerate}
\item Assume that  $B-RE_{\theta}^{h, h+1}$ is diagonal for some $ h \in [0, r]$ and $\co(B) = \ro(A)$.
Assume further that the matrix $A$ satisfies  one of  the following conditions:
\begin{align*}
& a_{0 j} =0 , \; \forall j \geq k; \ a_{1 k} = R, a_{1 j}  =0, \; \forall j > k, \quad \mbox{if} \ h =0, k \geq 0; \mbox{or}\\
& a_{h j}=0, \; \forall j\geq k; \; a_{ h +1,k}=R, a_{h +1,j}=0,\; \forall j>k, \quad \mbox{if} \; h\in [1, r-1]; \;\mbox{or}\\
& a_{r j}=0, \; \forall j\geq k; \; a_{r + 1,k}=R, a_{r + 1, j}=0,\; \forall j> k,\quad \mbox{if} \;  h = r, k >  r + 1; \; \mbox{or}\\
& a_{r j}=0, \; \forall j\geq r + 1; \; a_{r + 1,r + 1} \geq 2R, a_{r + 1, j}=0,\; \forall j> r + 1,\quad \mbox{if} \; h = r, k = r + 1.
\end{align*}
Then  we have
$
[B] * [A] = [ A+ R(E^{h, k}_{\theta}- E^{h +1, k}_{\theta}) ] + \mbox{lower terms}.
$

\item 
Assume  that $C-R E_{\theta}^{h+1,h} $ is diagonal for some $h\in [0, r]$ and $\co (C ) =\ro(A)$.
Assume further  that  $A$ satisfies one of the following conditions:
\begin{align*}
& a_{1j} =0, \quad \forall j \leq k; \ a_{0k} = R, a_{0j} = 0, \quad \forall j < k,  \;  \quad \mbox{if} \ h =0, k < 0; \ \mbox{or}\\
& a_{1j} =0, \quad \forall j \leq k; \ a_{0k} \geq 2R, a_{0j} = 0, \quad \forall j < k, \quad \mbox{if} \ h =0, k = 0; \ \mbox{or}\\
& a_{hj}=0,\quad \forall j < k; \ a_{hk}=R; \;a_{h+1,j} =0,\quad \forall j\leq k,\quad\mbox{if} \; h\in [1,r-1];\; \mbox{or}\\
& a_{r j}=0,\quad\forall j< k; \ a_{r k}=R; \; a_{r + 1,j}=0,\quad\forall j\leq k, \quad \mbox{if}\; h=r, k \leq r.
\end{align*}
Then we have
$
[C] * [A] = [ A- R(E^{h, k}_{\theta}- E^{h+1,k}_{\theta}) ] + \mbox{lower terms}.
$
\end{enumerate}
\end{lem}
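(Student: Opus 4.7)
The plan is to reduce both parts of the lemma to direct specializations of Proposition~\ref{prop-stand-mult}. I will focus on part~(1); part~(2) is entirely parallel, using Proposition~\ref{prop-stand-mult}(3) and (4) in place of parts (1) and (2). In the generic case $h \in [1, r-1]$, I would apply Proposition~\ref{prop-stand-mult}(1) to $[B]*[A]$ and single out the sequence $t = (t_u)_{u \in \ZZ}$ concentrated at $u = k$, namely $t_k = R$ and $t_u = 0$ for $u \neq k$. For this $t$ the output matrix equals precisely $A + R(E^{h,k}_\theta - E^{h+1,k}_\theta)$, and its coefficient collapses to $1$: the binomial product reduces to $\overline{\begin{bmatrix} a_{hk} + R \\ R \end{bmatrix}} = \overline{\begin{bmatrix} R \\ R \end{bmatrix}} = 1$ because $a_{hk} = 0$ by hypothesis, while the $v$-exponent $\beta_t$ vanishes term by term since $\sum_{j \geq k} a_{hj} = 0$, $\sum_{j > k} a_{h+1, j} = 0$, and $\sum_{j < u} t_j t_u = 0$ as $t$ has a single nonzero entry.

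The remaining work is to show that every other term corresponds to a matrix strictly below $A + R(E^{h,k}_\theta - E^{h+1,k}_\theta)$ in the partial order $\leq$ of \eqref{partial}. For any admissible $t \neq (R\delta_{u,k})_u$, at least one unit is transferred to row~$h$ at a column $u \neq k$; using the support hypotheses on rows $h$ and $h+1$ of $A$ together with the combinatorial description \eqref{comb-ord}, I would compare the partial sums $\sum_{i' \leq i, j' \geq j}(\cdot)_{i'j'}$ at a suitable index pair to exhibit the required strict inequality. The boundary cases $h = 0$ (via Proposition~\ref{prop-stand-mult}(2)) and $h = r, k > r+1$ (via the $\delta_{i,r}$ summand in Proposition~\ref{prop-stand-mult}(1)) run along the same template: the odd-parity constraints $a_{00}, a_{r+1,r+1} \in 2\ZZ+1$ force $k \geq 1$ (resp. $t_{r+1} = 0$) at the prescribed sequence, so the extra products $\prod_i \overline{[a_{00}+1+2i]}/\overline{[i+1]}$ and the centrosymmetry contributions to $\beta_t$ trivialize.

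The subcase $h = r, k = r+1$ with $a_{r+1,r+1} \geq 2R$ deserves special care, since $E^{r+1,r+1}_\theta = 2 E^{r+1,r+1}$ means subtracting $R E^{r+1,r+1}_\theta$ removes $2R$ from the $(r+1,r+1)$-entry: this is precisely why the hypothesis $a_{r+1,r+1} \geq 2R$ (which combines with parity to give $a_{r+1,r+1} \geq 2R+1$) is needed to keep the target matrix in $\MX_{n,d}$. Here the prescribed $t_{r+1} = R$ activates the $\delta_{i,r}$ correction in Proposition~\ref{prop-stand-mult}(1), and I would check that both $\sum_{j+u<n} t_j t_u$ and $\sum_{j < r+1} t_j$ still vanish on the support of $t$, so $\beta_t = 0$ as required. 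The main obstacle throughout will be this boundary bookkeeping and verifying strict dominance of the leading term among all other admissible $t$; the computation of the coefficient itself is essentially immediate from Proposition~\ref{prop-stand-mult}.
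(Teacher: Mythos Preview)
Your approach is essentially the same as the paper's: apply the multiplication formulas of Proposition~\ref{prop-stand-mult}, identify the sequence $t = (R\delta_{u,k})_u$ as producing the leading matrix with coefficient~$1$, and then verify that every other admissible $t$ yields a matrix strictly lower in the order~$\leq$. The paper carries out this last step by explicitly computing the partial sums $\sum_{r\leq i,\,s\geq j} m_{rs}$ and $\sum_{r\leq i,\,s\geq j} m'_{rs}$ for $M = A + R(E^{h,k}_\theta - E^{h+1,k}_\theta)$ and $M' = A + \sum_u t_u(E^{hu}_\theta - E^{h+1,u}_\theta)$ (its equations (\ref{M-a})--(\ref{M-d})) and checking the required inequalities case by case using the support constraints on $t$; this is exactly the ``compare the partial sums at a suitable index pair'' step you flagged as the main obstacle, so your plan and the paper's execution match.
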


\begin{proof}
We prove (1).
Set 
\begin{align*}
M &= A+ R(E^{h, k}_{\theta}- E^{h +1, k}_{\theta}),
\qquad 
M'  =  A+ \sum_{u \in \mbb Z} t_u ( E_{\theta}^{h u} - E_{\theta}^{h+1, u}), 
\end{align*} 
with $\sum_{u\in \mbb Z} t_u =R$.
By an argument similar to the proof of   ~\cite[Lemma 3.8]{BLM90}, it is enough to show that
$M' \leq_{\text{alg}} M$.
Assume that $h\in [1, r-1]$.
By definition, the $(r, s)$-th entry $m_{rs}$ of $M$ is
\begin{align*}
 m_{rs} = a_{rs}
 +  R \sum_{l\in \mbb Z} \delta_{s, k+ l n} (\delta_{r, h+ln} - \delta_{r, h+1+ln} ) 
 + R \sum_{l\in \mbb Z} \delta_{s, n - k + ln}  ( \delta_{r, n-h+ln} - \delta_{r, n-1-h+ln}).
\end{align*}
Observe that
\begin{align}
\label{M-a}
\sum_{r \leq i, s \geq j} & R \sum_{l\in \mbb Z} \delta_{s, k+ l n} (\delta_{r, h+ln} - \delta_{r, h+1+ln} ) \\
& =
\begin{cases}
R, & \mbox{if} \ i = h + l_1 n, j \leq k + l_1n, \ \mbox{for some}\ l_1 ,  \\
0, & \mbox{otherwise}.
\end{cases}
\nonumber \\
\label{M-b}
\sum_{r \leq i, s \geq j}  &+ R \sum_{l\in \mbb Z} \delta_{s, n - k + ln}  ( \delta_{r, n-h+ln} - \delta_{r, n-1-h+ln}) \\
& =
\begin{cases}
- R, & \mbox{if} \ i = n-1- h + l_1 n , j \leq n -k+l_1n,\ \mbox{for some} \ l_1,\\
0, & \mbox{otherwise}.
\end{cases}
\nonumber
\end{align}
On the other hand, the $(r, s)$-th entry $m'_{r,s}$ of $M'$ is equal to
\begin{align*}
m'_{r s} = a_{rs}
+  \sum_{l \in \mbb Z} t_{s - ln}  ( \delta_{r, h+ln} - \delta_{r, h+1+ln})  
 +  \sum_{l \in \mbb Z} t_{n - s + l n} (\delta_{r, n-h+ln} - \delta_{r, n-1-h+ln}).
\end{align*}
Notice that
\begin{align}
\label{M-c}
\sum_{r \leq i, s \geq j} & \sum_{l \in \mbb Z} t_{s - ln}  ( \delta_{r, h+ln} - \delta_{r, h+1+ln})\\
&=
\begin{cases}
\sum_{s + l_1n \geq j} t_{s}, & \mbox{if} \ i = h+l_1 n, \ \mbox{for some} \ l_1,\\
0, & \mbox{otherwise}.
\end{cases}
\nonumber \\
\label{M-d}
\sum_{r \leq i, s \geq j} & \sum_{l \in \mbb Z}  t_{n-s+ln} ( \delta_{r, n-h+ln} - \delta_{r, n-1-h+ln}) \\
&=
\begin{cases}
- \sum_{n-s+l_1n \geq j}  t_s, & \mbox{if} \ i= n-1-h+ l_1n, \\
0, & \mbox{otherwise}.
\end{cases}
\nonumber
\end{align}
To show that $M' \leq_{\text{alg}} M$ when $A$ is subject to the second condition, it suffices to show that
(\ref{M-a}) $\geq $ (\ref{M-c}) and (\ref{M-b}) $\geq$ (\ref{M-d}) when $i < j$.
Indeed, since $A$ satisfies the second condition, we have $t_u =0$ unless $u \leq k$.
If $i < j$ and $i = h+l_1n$ for some $l_1$, we have
$\sum_{s + l_1n \geq j} t_{s}
\leq
\sum_{ s > h} t_s \leq R.$
If, moreover, $h \geq k$, then $\sum_{s > h } t_s \leq \sum_{s > k} t_s =0$. From these data, we see that (\ref{M-c}) $\leq $ (\ref{M-a}) when $i < j$.
When $i < j$, we see that (\ref{M-b}) is equal to $-R$ when $j \leq n -k+l_1n$, and in this case
(\ref{M-d}) is also equal to $-R$. So we have (\ref{M-d}) $\leq $ (\ref{M-b}) when $i < j$. Therefore we have (1) when $A$ is subject to the second condition.

For $A$ subject to either of the remaining conditions, the proof of (1) is entirely similar and is left to the readers.

We now prove (2) for $h\in [1, r -1]$, i.e., when $A$ is  subject to the second condition. Suppose that
$M = A - R (E^{h k}_{\theta} - E^{h+1, k}_{\theta} )$
and
$M'= A- \sum_{u \in \mbb Z} t_u ( E_{\theta}^{h u} - E_{\theta}^{h+1, u})$ with $\sum_{u\in \mbb Z} t_u =R$.
It suffices to show that $M' \leq_{\text{alg}} M$. Similar to the proof of (1), it is reduced to show that
(\ref{M-a}) $\leq$ (\ref{M-c}) and (\ref{M-b}) $\leq $ (\ref{M-d}) when $i < j$.
By assumption, we see that $t_u =0$ unless $u \geq k$.
When (\ref{M-a}) takes value $R$, then $j \leq k + l_1n$, which implies that
$\sum_{s+ l_1 n \geq j}  t_s= \sum_{ s\geq k} t_s =R.$
Hence (\ref{M-a}) $\leq $ (\ref{M-c}) in this case.
When (\ref{M-b}) takes value $0$, we have either $j > n - k + l_1 n$ for some $l_1$ or $i \neq n-1-h+l_1n$ for any $l_1$. 
For the latter case, (\ref{M-d}) is always zero. For the former,
we have
$\sum_{n-s+l_1n} t_s =\sum_{s < k } t_s=0.$
Thus we have (\ref{M-b}) $\leq$ (\ref{M-d}). Therefore we have proved (2) if $A$ satisfies the second condition.

For the remaining cases, the proof of (2) is again similar and skipped.
\end{proof}

The following lemma is the counterpart of Lemma ~\ref{lem2}.
 
\begin{lem} \label{BKLW3.9-b}
  Let $A, B, C \in \MX_{n,d}$. Let $R$ be a positive integer.

\begin{enumerate}
\item Assume that  $B-RE_{\theta}^{h, h+1}$ is diagonal for some $ h \in [0, r]$ and $\co(B) = \ro(A)$.
Assume further that $R= R_0 + \cdots + R_l$ and  the matrix $A$ satisfy  one of  the following conditions:
\begin{align*}
\begin{cases}
a_{0 m} =0 ,  \ a_{1, k+i} = R_i,\ a_{1k} \geq R_0 ,\  a_{1j}  =0, &   \mbox{if} \ h =0,\ k \geq 1;\\
a_{h m}=0, \    a_{ h +1,k+i}= R_i, \ a_{h+1,k} \geq R_0  ,\  a_{h +1,j}=0,&  \mbox{if} \ h\in [1,r-1];\\
 a_{r m}=0, \   a_{r + 1,k+i}= R_i, \ a_{r + 1,k} \geq R_0  ,\ a_{r + 1, j}=0,&  \mbox{if} \  h =r,\ k >  r+1;\\
 a_{r m}=0, \   a_{r + 1,k+i}=R_i,\ a_{r + 1,k} \geq 2 R_0 ,\ a_{r + 1, j}=0, & \mbox{if} \ h = r,\ k = r + 1
\end{cases}
\end{align*}
for all $m \geq k$, $i\in [1,l]$ and $j > k+l$.
Then  we have
\[
[B] * [A] = [ A+ \sum_{i=0}^l R_i(E^{h, k+i}_{\theta}- E^{h +1, k+i}_{\theta}) ] + \mbox{lower terms}.
\]

\item 
Assume  that $C-R E_{\theta}^{h+1,h} $ is diagonal for some $h\in [0, r]$ and $\co(C ) =\ro(A)$.
Assume further  that $R=R_0 + \cdots + R_l$ and   $A$ satisfy one of the following conditions:
\begin{align*}
\begin{cases}
 a_{1m} =0,  \ a_{0,k+i} = R_i,\ a_{0,k+l} \geq R_l  ,\ a_{0j} = 0,  &    \mbox{if} \ h =0,\ k + l < 0;\\
 a_{1m} =0, \  a_{0,k+i} = R_i,\ a_{00} \geq 2 R_l  ,\ a_{0j} = 0, &  \mbox{if} \ h =0, k +l = 0;\\
 a_{h+1,m} =0, \ a_{h,k+i}= R_i,\ a_{h,k+l} \geq R_l , \ a_{hj} =0,  & \mbox{if} \ h\in [1,r-1];\\
 a_{r+1,m} =0, \ a_{r, k+i}= R_i,\ a_{r, k+l} \geq R_l, \ a_{r j}=0,&   \mbox{if}\; h=r,\ k < r.
\end{cases}
\end{align*}
for all $m \leq k+l$, $ i\in [0,l-1]$ and $j < k$.
Then we have
\[
[C] * [A] = [ A-\sum_{i=0}^{l} R_i(E^{h, k+i}_{\theta}- E^{h+1,k+i}_{\theta}) ] + \mbox{lower terms}.
\]
\end{enumerate}
\end{lem}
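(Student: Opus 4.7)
The plan is to apply the multiplication formulas from Proposition \ref{prop-stand-mult} directly: for part (1) use formulas (1)--(2) (choosing between them by whether $h \geq 1$ or $h = 0$), and for part (2) use formulas (3)--(4) (choosing by whether $h \leq r-1$ or $h = r$). In each case, we expand $[B]*[A]$ or $[C]*[A]$ as a sum indexed by sequences $t = (t_u)_{u \in \mathbb{Z}}$ with $\sum_u t_u = R$, identify the term whose $t$ matches the target matrix, and show (a) its coefficient is $1$, (b) all other terms are indexed by matrices strictly below the target in the partial order $\leq$. This mirrors the strategy used to prove Lemma \ref{lem2} in the affine type $A$ setting and Lemma \ref{BKLW3.9} in the single-column case.

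For part (1), I would set $t_{k+i} = R_i$ for $i \in [0,l]$ and $t_u = 0$ otherwise, producing exactly the target matrix. The $v$-binomial product simplifies to $\prod_i \overline{\binom{a_{h,k+i}+R_i}{R_i}}$, and since the hypothesis forces $a_{h,k+i} = 0$ for all $i \in [0,l]$, each factor equals $\overline{\binom{R_i}{R_i}} = 1$. The exponent $\beta_t$ from Proposition \ref{prop-stand-mult}(1) breaks into three pieces: $\sum_u t_u \sum_{j \geq u} a_{hj}$ vanishes since $a_{hj} = 0$ for $j \geq k$; the cross term $-\sum_u t_u \sum_{j > u} a_{h+1,j}$ becomes $-\sum_{0 \leq i < s \leq l} R_i R_s$ using $a_{h+1,k+s} = R_s$; and $\sum_{j < u} t_j t_u = \sum_{0 \leq i < s \leq l} R_i R_s$. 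These cancel exactly. The $\delta_{h,r}$ correction (when applicable) vanishes under the positional constraints $k > r+1$ or $k = r+1$ with the centrosymmetry assumption on $a_{r+1,r+1}$, because then $j+u \geq n$ and no $t_j$ lies below $r+1$. The analogous verification works for the $h=0$ case using formula (2) and for part (2) using formulas (3)--(4).

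To show all other terms are indexed by strictly lower matrices, I would adapt the partial-order computation from the proof of Lemma \ref{BKLW3.9}. Writing $M$ for the target and $M' = A + \sum_u t'_u(E^{hu}_\theta - E^{h+1,u}_\theta)$ for an arbitrary term, the goal is $M' <_{\text{alg}} M$, which by (\ref{comb-ord}) reduces to inequalities on the partial sums $\sum_{k \leq i,\, l \geq j}$. The estimates \eqref{M-a}--\eqref{M-d} from the proof of Lemma \ref{BKLW3.9} generalize: any $t'$ with mass outside the region $\{k, k+1, \ldots, k+l\}$ lowers the relevant partial sums (since the support of $t'$ contributes at rows $h$ and $h+1$ and beyond row $n-1-h$ by centrosymmetry), and any redistribution within $\{k, \ldots, k+l\}$ that differs from $(R_0, \ldots, R_l)$ also decreases the partial sums because $(R_0,\ldots,R_l)$ is the rightmost-concentrated assignment consistent with the column constraints of $A$. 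Part (2) follows by the dual argument using the formulas for $[C]*[A]$.

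The main obstacle will be the boundary cases: $h = 0$ with $k = 0$ (respectively, $k+l = 0$ in part (2)) and $h = r$ with $k = r+1$, where the centrosymmetry $a_{ij} = a_{-i,-j}$ couples column $k+i$ to column $-(k+i)$, producing additional diagonal contributions that must be tracked carefully to keep both the coefficient computation (particularly the doubled diagonal entries $a_{00}$ and $a_{r+1,r+1}$) and the partial-order comparison coherent. This parallels the boundary analysis already handled in Lemma \ref{BKLW3.9} and should be resolvable by the same bookkeeping, but it is where the verification becomes most delicate.
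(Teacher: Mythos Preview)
Your proposal is correct and follows essentially the same approach as the paper's own proof: identify the term with $t_{k+i}=R_i$ as the leading term by the partial-order argument of Lemma~\ref{BKLW3.9}, then verify directly from Proposition~\ref{prop-stand-mult} that the $v$-binomial factors reduce to $1$ and the exponent $\beta_t$ (or $\beta'_t$) vanishes. The paper carries out exactly this computation and defers the leading-term identification to Lemma~\ref{BKLW3.9}, so your plan matches it in both structure and detail.
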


\begin{proof}
  We show (1).
  By a similar argument as that for Lemma \ref{BKLW3.9}, the leading term is
  $[ A+ \sum_{i=0}^l R_i(E^{h, k+i}_{\theta}- E^{h +1, k+i}_{\theta})  ]$. 
  It remains to show that its coefficient is 1.
  In this case, we have
  \[ t_{k+i} = R_i, \forall i\in [0, l] , \quad {\rm and}\quad t_j =0,\  \forall j\not \in [k, k+l].\]
  By Proposition \ref{prop-stand-mult}, we have
  \begin{equation*}
    \begin{cases}
    \prod_{u > 0}
\overline{\begin{bmatrix} a_{0 u} + t_u + t_{-u} \\ t_u \end{bmatrix} }
\prod_{u < 0}
\overline {\begin{bmatrix} a_{0 u} + t_u \\ t_u \end{bmatrix}}
\prod_{i=0}^{t_0 -1}
 \frac{\overline{[a_{00} +1 + 2i]} } { \overline{ [i+1]} }=1,& {\rm if}\ h=0,\\
 \prod_{u\in \mbb Z} \overline{\begin{bmatrix} a_{h u} + t_u \\ t_u \end{bmatrix}} =1,&{\rm if}\  h\neq 0.
    \end{cases}
  \end{equation*}
  Moreover, we have
  
  \begin{equation*}
    \begin{cases}
      \beta_t = \sum_{j> u} (t_j-a_{h+1,j}) t_u  + \frac{1}{2}(\sum_{j+u < n+1} t_jt_u-\sum_{j<r+1} t_j)  =0,&{\rm if}\  h\neq 0,\\
      \beta_t'= -\sum_{j> u} a_{1j}t_u -\sum_{j>0} \frac{t_j^2-t_j}{2} +\frac{R^2-R}{2} 
      =\frac{1}{2}( R^2 -\sum_j t_j^2 - 2\sum_{j>u} t_jt_u) =0,& {\rm if}\ h=0.
    \end{cases}
  \end{equation*}
  In each case, the leading coefficient is 1, and whence (1).
  A similar proof of (2) is skipped.
\end{proof}

%%%%%%%%%%%%%%%%%%%%
%%%%%%%%%%%%%%%%%%%%
\chapter{Coideal algebra type structures of  Schur algebras and Lusztig algebras}
  \label{chap:schur}
  
In this chapter, we formulate a coideal algebra type structure which involves
Schur algebras of both affine type $C$ and $A$, and its behavior on the Chevalley generators. 
 This leads to an imbedding  $\jmath_{n,d}$ from $\Sj$ to ${\mbf S}_{n,d}$ (Schur algebra of affine type $A$). 
We show the comultiplication and $\jmath_{n,d}$ behave well when replacing Schur algebras by Lusztig subalgebras. 
The canonical bases and monomial bases are shown to be
compatible under the inclusion $\bU_{n,d}^{\fc} \subset \Sj$.

%%%%%
\section{The Lusztig algebra $\bU_{n,d}^{\fc}$}
We now set
$$n=2r +2,\qquad (r \in \mbb N).$$
Recall the Schur algebra $\Sj$ from \eqref{SchurC}. 
Let $\bU_{n,d}^{\fc}$ be the subalgebra of $\Sj$ generated by all elements $[B]$ such that $B$, $B - E^{h, h+1}_{\theta}$ 
or $B- E^{h+1, h}_{\theta}$ is diagonal for various $h$.
Let $\bU_{n, d; \mathcal A}^{\fc}$ denote the $\cA$-subalgebra of $\Sj$ generated by all elements $[B]$ 
such that $B$, $B - RE^{h, h+1}_{\theta}$ or $B- RE^{h+1, h}_{\theta}$ is diagonal for various $h$ and $R \in {\mathbb N}$.
Following the affine type $A$ setting, we make the following definition.

\begin{Def}
The algebra  $\bU_{n,d}^{\fc}$ is called the {\em Lusztig algebra (of affine type $C$)}.
\end{Def}

For $i\in [0, r]$ and $a\in [-1, r+1]$, we define the following functions
(with the notation $\overset{1}{\subset}, \overset{1}{\supset}$ denoting inclusions of codimension $1$ 
and $|W|$ for the dimension of a $k$-vector space):
for any $L = (L_i)_{i \in \mathbb{Z}}, L' = (L'_i)_{i \in \mathbb{Z}} \in \mathcal{X}^{\mathfrak{c}}_{n,d}$
\begin{align}
\mathbf e_i (L, L') &=
\begin{cases}
v^{-|L'_{i+1}/L'_i | - \delta_{i, r}}, &\mbox{if}\; L_i \overset{1}{\subset} L_i', L_j = L_{j'},\forall j\in [0, r]\backslash \{i\}; \\
0, &\mbox{otherwise}.
\end{cases}\\
\mathbf f_i (L, L') &=
\begin{cases}
v^{-|L'_i/L'_{i -1}| - \delta_{i, 0} }, &\mbox{if}\; L_i \overset{1}{\supset} L_i', L_j=L_j',\forall j\in [0, r]\backslash \{i\}; \\
0, &\mbox{otherwise}.
\end{cases}\\
\mathbf h_a^{\pm 1} (L, L') & =
v^{\pm(|L_a'/L_{a - 1}'| +\delta_{a, 0} + \delta_{a, r+1} )} \delta_{L, L'}. \\
\mbf k_i & =\mbf h_{i+1} \mbf h_i^{-1}.
\end{align}
%\red{ 
%?We also set $\mbf d_{-1} =\mbf d_1$? (We switched notation $\mbf{h}_i =\mbf{d}_i^{-1}$).
%}
It follows by the definition that for $i \in [0,r]$ and $a\in [0, r+1]$,
$$
\e_i = \sum [A], \quad
\mbf f_i =\sum [A], \quad 
\mbf h_a = \sum_{\lambda\in \Lambda^{\C}_{n, d}}  v^{\lambda_a} 1_{\lambda}, \quad 
\mbf k_i =\sum_{\lambda\in \Lambda^{\C}_{n, d}}  v^{\lambda_{i+1} -\lambda_i} 1_{\lambda},
$$ 
where  the first two sums run over all $A \in \Xi_{n, d}$ such that 
$A - E^{i+1, i}_{\theta}$ and $A - E^{i, i+1}_{\theta}$ are  diagonal, respectively,
and $1_{\lambda}$ stands for the standard basis element of a diagonal matrix whose diagonal is $\lambda$.
So we have $\mbf e_i, \mbf f_i, \mbf k_i^{\pm 1}, \mbf h_a^{\pm 1} \in \U^{\C}_{n, d}$.

%Since $L_0^{\#} =L_{-1} \subseteq L_0$, there is actually an automorphism of $\Sj$ 
%(see Proposition ~\ref{prop:involution} below for a proof, which  sends
%\begin{align} \label{sym}
%\mathbf e_0 \to \mathbf f_r, \quad \mathbf f_0 \to \mathbf e_r, \quad \mathbf{h}_0 \to\mathbf{h}_{r+1},  \quad \mathbf{h}_1 \to\mathbf{h}_r.
%\end{align} 

%From the symmetry \eqref{sym} and  \cite[Proposition 3.1]{BKLW14},  we have

By the local property of $L_0$ in Section ~\ref{seclocal}, 
one can obtain the following relations (\ref{i=0}) by using 
a similar argument in \cite[Proposition 3.1]{BKLW14} for the relations related to the generators $\e_r$, $\f_r$, $\mbf h_r$ and $\mbf h_{r+1}$
for $r\geq 1$.
Note that the generators $\e_0$, $\f_0$, $\mbf h_0$ and $\mbf h_1$ 
play the roles of the respective generators $\f_r$, $\e_r$, $\mbf h_{r+1}$ and $\mbf h_r$
in the argument, by comparing items (1) and (2) in Lemma ~\ref{lem:LL}.
For $r\geq 1$, we have 

\begin{align}
\label{i=0}
\begin{split}
\mathbf{h}_0  \mathbf f_0 & = v^{2} \mathbf  f_0 \mathbf{h}_0, \quad%\tag{Prop. 3.1. (a')} 
\mathbf{h}_0  \mathbf e_0  = v^{-2} \mathbf  e_0 \mathbf{h}_0,  \\ %\nonumber \\
\mathbf e_0^2 \mathbf f_0 + \mathbf f_0 \mathbf e^2_0
& = (v + v^{-1}) \big(\mathbf  e_0 \mathbf f_0 \mathbf e_0 - ( v\mathbf {h}_1^{-1} \mathbf{h}_0  + v^{-1}\mathbf{h}_1 \mathbf{h}_0^{-1} ) \mathbf  e_0 \big), %\tag{Prop. 3.1. (c )} 
\\
\mathbf f_0^2 \mathbf  e_0 + \mathbf e_0 \mathbf f_0^2
& = (v + v^{-1})  \big(\mathbf f_0\mathbf  e_0 \mathbf f_0 - \mathbf  f_0 (v\mathbf{h}_1^{-1}\mathbf{h}_0 + v^{-1}\mathbf{h}_1 \mathbf{h}_0^{-1}) \big).  %\tag{Prop. 3.1. (b)}
\end{split}
\end{align}

For $i, j \in [0, r]$, we denote the Cartan integers by
\begin{equation} 
  \label{aij}
  \texttt{c}_{ij} = 2 \delta_{ij} - \delta_{i, j+1} - \delta_{i, j-1}.
\end{equation} 

\begin{prop}
 \label{relation:Schurc}
Let $r\geq 1$. 
The elements$\mathbf e_i, \mathbf f_i$, and $\mathbf k^{\pm 1}_i$  for $i\in [0, r]$ satisfy the following relations\footnote{Remove the relation $\mbf k_0   (\mbf k^2_1 \cdots \mbf k^2_{r-1} ) \mbf k_r  = 1$.}
in $\bU_{n,d}^{\fc}$, for all $i, j \in [0, r]$:
\begin{align*}
\mbf k_0 &  (\mbf k^2_1 \cdots \mbf k^2_{r-1} ) \mbf k_r  = 1,\\
\mathbf k_i \mathbf k_i^{-1} & = 1, \quad \mathbf k_i \mathbf k_j   = \mathbf k_j \mathbf k_i, \\
\mathbf k_i \mathbf e_j \mathbf k_i^{-1}& =  v^{\texttt{c}_{ij} + \delta_{i,0} \delta_{j,0} + \delta_{i, r} \delta_{j, r}}  \mathbf e_j , \\
\mathbf k_i \mathbf f_j \mathbf k_i^{-1} & =  v^{-\texttt{c}_{ij} - \delta_{i,0} \delta_{j,0} - \delta_{i, r} \delta_{j, r} }  \mathbf f_j , \\
\mathbf e_i \mathbf e_j & = \mathbf e_j \mathbf e_i , \quad 
\mathbf f_i \mathbf f_j  = \mathbf f_j \mathbf f_i , \quad \forall |i-j|> 1, \\
\mathbf e_i^2 \mathbf e_j  + \mathbf e_j \mathbf e_i^2 &= (v + v^{-1}) \mathbf e_i \mathbf e_j \mathbf e_i, \quad \forall |i-j|=1,\\
\mathbf f_i^2 \mathbf f_j  + \mathbf f_j \mathbf f_i^2 &= (v + v^{-1}) \mathbf f_i \mathbf f_j \mathbf f_i, \quad \forall |i-j|=1,\\
\mathbf e_i \mathbf f_j - \mathbf f_j \mathbf e_i & = \delta_{ij} \frac{\mathbf k_i - \mathbf k_i^{-1}}{v - v^{-1}} , \quad \forall (i, j) \neq (0,0) , (r,r),\\
\mathbf e_0^2 \mathbf f_0 + \mathbf f_0 \mathbf e_0^2 
&= (v + v^{-1})( \mathbf e_0 \mathbf f_0 \mathbf e_0 - (v \mathbf k_0 + v^{-1} \mathbf k_0^{-1}) \mathbf e_0),\\
\mathbf e_r^2 \mathbf f_r + \mathbf f_r \mathbf e_r^2 
&= (v + v^{-1})( \mathbf e_r \mathbf f_r \mathbf e_r - \mathbf e_r (v \mathbf k_r + v^{-1} \mathbf k_r^{-1})),\\
\mathbf f_0^2 \mathbf e_0   + \mathbf e_0 \mathbf f_0^2 
& = (v + v^{-1})  ( \mathbf f_0 \mathbf e_0 \mathbf f_0 - \mathbf f_0 (v \mathbf k_0 + v^{-1} \mathbf k_0^{-1})),\\
\mathbf f_r^2 \mathbf e_r   + \mathbf e_r \mathbf f_r^2 
& = (v + v^{-1})  ( \mathbf f_r \mathbf e_r \mathbf f_r - (v \mathbf k_r + v^{-1} \mathbf k_r^{-1})\mathbf f_r).
\end{align*}
\end{prop}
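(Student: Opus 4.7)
My plan is to verify each stated relation by direct computation, using as the main tool the multiplication formulas of Proposition~\ref{prop-stand-mult} applied to the Chevalley generators. Recall these generators correspond to standard basis elements $[B]$ in which $B$ is diagonal or $B - E^{i,i+1}_{\theta}$ (resp.\ $B - E^{i+1,i}_{\theta}$) is diagonal. The relations split into three groups: the Cartan-type relations, the interior Serre-type relations (indices in $[1,r-1]$), and the boundary-deformed Serre relations at $i=0$ and $i=r$.

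First, the Cartan relations follow immediately from the expansion $\mathbf{k}_i = \sum_{\lambda \in \Lambda^{\fc}_{n,d}} v^{\lambda_{i+1}-\lambda_i} 1_\lambda$, since the $1_\lambda$ are pairwise orthogonal idempotents. The identity $\mathbf{k}_0 \mathbf{k}_1^2 \cdots \mathbf{k}_{r-1}^2 \mathbf{k}_r = 1$ reduces via telescoping to the symmetry $\lambda_i = \lambda_{n-i}$, the parity of $\lambda_0, \lambda_{r+1}$, and the sum condition $\sum_{1\le i \le n} \lambda_i = 2d+2$ that define $\Lambda^{\fc}_{n,d}$ in \eqref{LandC}. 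The extra exponent $\delta_{i,0}\delta_{j,0} + \delta_{i,r}\delta_{j,r}$ in the conjugation $\mathbf{k}_i \mathbf{e}_j \mathbf{k}_i^{-1}$ appears because, at the boundaries $j=0$ or $j=r$, the centrosymmetric matrix $E^{j,j+1}_{\theta}$ carries two off-diagonal entries that straddle the symmetry center, so the eigenvalue contribution doubles on one side.

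Second, for the interior Serre and commutation relations with $1 \leq i,j \leq r-1$ (and for the $[\mathbf{e}_i,\mathbf{f}_j]$ commutator whenever $(i,j) \neq (0,0),(r,r)$), I plan to invoke parts (1) and (3) of Proposition~\ref{prop-stand-mult}. In this range neither correction $\tfrac{1}{2}\delta_{i,r}(\cdots)$ nor $\tfrac{1}{2}\delta_{i,0}(\cdots)$ activates, and the $v$-binomial factors reduce to the affine type $A$ ones, so the resulting structure constants agree verbatim with the quantum affine $\mathfrak{sl}_n$ computation of Chapter~\ref{chap:A}. The interior Serre relations and the interior commutation relation then follow by exactly the same argument as in \cite[\S3]{Lu99}.

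Finally, the boundary-deformed relations at $i=0$ are precisely those in \eqref{i=0}, established through the local picture of Section~\ref{seclocal}: by Lemma~\ref{lem:LL}(1), any codimension-one lattice $L$ with $L_{-1} \subset L \subset L_0$ is automatically symplectic, so the combinatorics at $L_0$ matches that at the middle node of a finite type $C$ flag, and the argument of \cite[Proposition~3.1]{BKLW14} produces the deformed Serre identity. For $i=r$, Lemma~\ref{lem:LL}(2) provides the dual local picture at $L_{r+1}$, yielding the analogous relations by the same method. The main obstacle is exactly this boundary case: tracking the extra factor $\prod_{i=0}^{t_0-1}\overline{[a_{00}+1+2i]}/\overline{[i+1]}$ from Proposition~\ref{prop-stand-mult}(2) (and its counterpart at $r+1$ from part (4)) through the triple product $\mathbf{e}_0\mathbf{f}_0\mathbf{e}_0$, and verifying the nontrivial cancellation whereby $\mathbf{e}_0^2\mathbf{f}_0 + \mathbf{f}_0\mathbf{e}_0^2 - (v+v^{-1})\mathbf{e}_0\mathbf{f}_0\mathbf{e}_0$ collapses exactly to $-(v+v^{-1})(v\mathbf{k}_0 + v^{-1}\mathbf{k}_0^{-1})\mathbf{e}_0$. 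This is the unique place where the affine type $C$ symplectic structure genuinely deviates from the naive type $A$ computation and is responsible for the coideal (rather than Hopf) character of the resulting algebra.
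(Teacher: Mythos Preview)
Your proposal is correct and follows essentially the same approach as the paper: the boundary relations at $i=0$ are handled via the local property of Section~\ref{seclocal} and Lemma~\ref{lem:LL} (this is exactly how \eqref{i=0} is established), while all remaining relations---including the $i=r$ boundary and the interior Serre and commutation relations---reduce to the finite type computations of \cite[Proposition~3.1]{BKLW14}. Your citation of \cite{Lu99} for the interior is equivalent to the paper's citation of \cite{BKLW14}, since the multiplication formulas of Proposition~\ref{prop-stand-mult} coincide with the type~$A$ ones away from the boundaries.
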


\begin{proof}
In light of 
%\eqref{sym} and 
\eqref{i=0} the verification of the relations is essentially reduced to the finite type computations, which is given in \cite[Proposition~3.1]{BKLW14}.
We skip the detail. 
\end{proof}

The following lemma is an analogue of \cite[Corollary~3.13]{BKLW14} which follows by a standard  Vandermonde determinant type argument. 
\begin{lem}
The algebra $\bU_{n,d}^{\fc}$ is generated by $\mathbf e_i$, $\mathbf f_i$ and $\mbf k^{\pm 1}_i$ for all $i\in [0, r]$.
%($i\in [0, r]$) and $\mathbf{h}_a^{\pm 1}$ ($a\in [0, r+1]$).
\end{lem}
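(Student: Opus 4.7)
The plan is to show that each defining generator $[B]$ of $\bU_{n,d}^{\fc}$ lies in the subalgebra $\mathcal{U}'$ generated by $\mathbf e_i,\mathbf f_i,\mbf k_i^{\pm 1}$ for $i\in [0,r]$. The defining generators come in three flavors: $1_\lambda$ for $\lambda\in\Lambda^{\fc}_{n,d}$, and off-diagonal $[B]$ with either $B-E^{h,h+1}_\theta$ or $B-E^{h+1,h}_\theta$ diagonal. My approach is first to recover each $1_\lambda$ as a $\mbb Q(v)$-polynomial in the $\mbf k_i^{\pm 1}$'s, and then to isolate each off-diagonal $[B]$ by multiplying $\mathbf e_h$ or $\mathbf f_h$ by a suitable idempotent $1_\lambda$.

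For the first step I use that $\mbf k_i=\sum_\mu v^{\mu_{i+1}-\mu_i}\,1_\mu$, with the $1_\mu$'s being pairwise orthogonal idempotents summing to $1$. Each $1_\lambda$ is therefore a common eigenvector for the commuting family $(\mbf k_0,\ldots,\mbf k_r)$ with eigenvalue tuple $(v^{\lambda_{i+1}-\lambda_i})_{i=0}^r$. Within the finite set $\Lambda^{\fc}_{n,d}$, distinct parameters $\lambda$ produce distinct such tuples, since the consecutive differences $\lambda_{i+1}-\lambda_i$ together with the fixed total $\lambda_0+2(\lambda_1+\cdots+\lambda_r)+\lambda_{r+1}=2d+2$ and the symmetries $\lambda_i=\lambda_{-i}=\lambda_{n-i}$ uniquely reconstruct $\lambda$. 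A standard Vandermonde determinant argument, exactly in the spirit of the proof of \cite[Corollary~3.13]{BKLW14} in finite type, then produces explicit $\mbb Q(v)$-coefficients expressing each $1_\lambda$ as a linear combination of monomials in $\mbf k_0^{\pm 1},\ldots,\mbf k_r^{\pm 1}$, so $1_\lambda\in\mathcal{U}'$.

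For the second step, recall $\mathbf e_h=\sum_A [A]$ summed over $A$ with $A-E^{h+1,h}_\theta$ diagonal. In the convolution algebra $\Sj$ the product $[A]\cdot 1_\lambda$ equals $[A]$ if $\co(A)=\lambda$ and $0$ otherwise, so $\mathbf e_h\cdot 1_\lambda$ singles out the unique summand $[B]$ with $\co(B)=\lambda$ (when it exists). Hence every $[B]$ with $B-E^{h+1,h}_\theta$ diagonal equals $\mathbf e_h\cdot 1_{\co(B)}\in\mathcal{U}'$, and symmetrically every $[B]$ with $B-E^{h,h+1}_\theta$ diagonal equals $\mathbf f_h\cdot 1_{\co(B)}\in\mathcal{U}'$. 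Together with the first step, this exhausts the defining generators and proves the claim.

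The main (and essentially only) nontrivial point is the Vandermonde step: verifying that the assignment $\lambda\mapsto (v^{\lambda_{i+1}-\lambda_i})_{i=0}^r$ is injective on the finite set $\Lambda^{\fc}_{n,d}$. This is a purely combinatorial reconstruction from the consecutive differences and the fixed total sum, and once injectivity is in place the standard Vandermonde inversion over $\mbb Q(v)$ produces the required coefficients.
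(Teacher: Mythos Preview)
Your proposal is correct and follows essentially the same approach as the paper: the paper simply states that the lemma ``follows by a standard Vandermonde determinant type argument'' analogous to \cite[Corollary~3.13]{BKLW14}, which is precisely what you spell out --- recovering each $1_\lambda$ from the $\mbf k_i^{\pm 1}$'s via injectivity of the eigenvalue tuple and Lagrange interpolation, then isolating each off-diagonal generator as $\mathbf e_h\cdot 1_{\co(B)}$ or $\mathbf f_h\cdot 1_{\co(B)}$.
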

We will refer to the generators of the algebra $\bU_{n,d}^{\fc}$ given by the above lemma as Chevalley generators. 

%\begin{rem}
%Note that
%\[
%\mathbf e_0 \mathbf f_1 - \mathbf  f_1  \mathbf e_0=
%\mathbf f_{-1} \mathbf e_{-2} - \mathbf e_{-2} \mathbf f_{-1}=0,
%\]
%and $(\mathbf f_0, \mathbf f_1)$ subject to Serre relations. So the roles of $\mathbf e_0$ and $\mathbf f_0$ are completely different.
%\end{rem}

%%%%%
\section{A raw comultiplication}
\label{affine-DC}

In this section, we shall give the definition of a raw comultiplication $\widetilde \Delta^{\fc}$
(and it will be modified to be a comultiplication $\Delta^{\fc}$ in Section~\ref{sec:refined}). 
The construction is an affine analogue of \cite{FL15}. 

We fix some notations to begin with.
Let $k=\mbb F_q$,
%$\overline{\mbb F_q}$, or $\mbb C$.  
$F = k ((\varepsilon))$, and  $\mathfrak o =k[[\varepsilon]]$ where $q$ is odd.
Let $V$ be a symplectic  $F$-vector space of dimension $2d$ with the form $(-,-)$.
Let $V''$ be an isotropic $F$-subspace of $V$ of dimension $d''$, and so $V'=V''^{\perp}/V''$ is a symplectic space of dimension $2d'$  with 
its symplectic form induced from $V$; note that $d' =d-d''$.

Given a periodic chain $L$ in $\X_{n,d}^{\fc}$, we can define
a periodic chain $L'' :=\pi''(L)  \in \X_{n,d''}$ (of affine type $A$) by setting
$L''_i = L_i \cap V''$ for all $i$. 
We can also define a periodic chain $L' =\pi^{\natural} (L)  \in \X_{n, d'}^{\fc}$  
by setting $L'_i = (L_i \cap V''^{\perp} + V'')/{V''}$ for all $i$.
Given any pair $(L', L'') \in \X_{n,d'}^{\fc} \times \X_{n,d''}$, we set
\[
\Z^{\C}_{L', L''} = \{ L\in \X_{n,d}^{\fc} \vert  \pi^{\natural}(L) = L', \pi''(L) = L'' \}.
\]
We can define a map
\begin{align}
\label{tDj}
\widetilde \Delta^{\fc} : {\mbf S}_{n,d}^{\fc} \longrightarrow  {\mbf S}_{n,d'}^{\fc} \otimes  {\mbf S}_{n,d''},   \quad  \forall d' + d''=d,
\end{align}
such that, when specializing the parameter $v$ at $v = \sqrt{q}$, it is given by
\begin{align}
\label{tDj-formula}
\widetilde \Delta^{\fc}  (f) (L', \tilde L', L'', \tilde L'') = \sum_{\tilde L \in \Z_{\tilde L', \tilde L''}^{\fc}} f(L, \tilde L),
\quad \forall L', \tilde L' \in \X^{\fc}_{n,d'}, L'', \tilde L'' \in \X_{n, d''},
\end{align}
where $L$ is a fixed element in $\Z_{L', L''}^{\fc}$.
Note the appearance of ${\mbf S}_{n,d''}$ in \eqref{tDj}, which is an Schur algebra of affine type $A$ defined in \eqref{SchurA}.

By applying Proposition ~\ref{decom-lem}, we have the following analogue of ~\cite[Lemma 1.3]{Lu00}. 

\begin{lem}
\label{cop-auxi}
Suppose that $V''$ is an isotropic subspace of the symplectic space $V$ and $L=(L_i)_{i\in \mbb Z}  \in \X^{\fc}_{n,d}$. 
Then we can find a pair $(T, W)$  of subspaces in $V$ such that
\begin{enumerate}
\item  $V= V''\oplus T\oplus W$, $(V'')^{\perp} = V'' \oplus T$,
\item $W$ is isotropic,  $(T, W)=0$,
\item  There exist bases $\{z_1,\ldots,z_s\}$ and $\{w_1,\ldots, w_s\}$ of $V''$ and $W$, respectively, such that $ (z_i,w_j)=\delta_{ij}$ for any $i, j\in [1, s]$,
\item $L_i = (L_i\cap V'')\oplus (L_i\cap T)\oplus (L_i\cap W)$, for any $i\in \mbb Z$.
\end{enumerate}
\end{lem}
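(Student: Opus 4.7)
The plan is to prove this by induction on $s = \dim_F V''$. The base case $s = 0$ is immediate: take $T = V$ and $W = 0$, so (1)--(3) hold vacuously and (4) reduces to the tautology $L_i = L_i \cap V$.

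For the inductive step, the idea is to split off a single hyperbolic pair $(z_s, w_s)$ compatibly with the lattice chain. First choose a primitive $z_s \in V''$, meaning $z_s \in L_{i_0} \setminus L_{i_0-1}$ for some $i_0$ and $\A z_s = F z_s \cap L_{i_0}$. Using non-degeneracy of the symplectic form on $V$ together with the duality $L_i^{\#} = L_{-i-1}$, I would construct $w_s \in V$ with $(z_s, w_s) = 1$, $w_s \in L_{-i_0} \setminus L_{-i_0-1}$, and with the additional property that the rank-two $\A$-module $\A z_s \oplus \A w_s$ sits compatibly inside each $L_i$, that is, $L_i \cap (Fz_s \oplus Fw_s) = (L_i \cap Fz_s) \oplus (L_i \cap Fw_s)$. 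Setting $V_1 := (Fz_s \oplus Fw_s)^{\perp}$ then gives a symplectic $F$-subspace of dimension $2d-2$; the restricted chain $(L_i \cap V_1)$ is an $n$-periodic symplectic lattice chain in $V_1$, and $V'' \cap V_1$ is isotropic of dimension $s-1$. The induction hypothesis applied to $(V_1, V'' \cap V_1, (L_i \cap V_1))$ yields $T_1, W_1 \subset V_1$ satisfying (1)--(4) with bases $\{z_1, \dots, z_{s-1}\}, \{w_1, \dots, w_{s-1}\}$; then $T := T_1$, $W := W_1 \oplus F w_s$, together with the enlarged bases, meet all four conditions.

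The main obstacle is the construction of the \emph{lattice-compatible} dual pair $(z_s, w_s)$. In the finite-dimensional symplectic setting this is the standard Witt hyperbolic-plane splitting, but here the entire infinite chain $(L_i)_{i \in \mbb Z}$ must be split simultaneously. I expect this to be carried out by picking any $w_s^{\circ} \in V$ with $(z_s, w_s^{\circ}) = 1$ and correcting it by an element of $(Fz_s)^{\perp}$ via a Nakayama-style successive approximation: errors at each level $L_i$ are controlled by the perfect residue pairings $L_i/L_{i-1} \times L_{-i-1}/L_{-i-2} \to k$ induced by the symplectic form under $L_i^{\#} = L_{-i-1}$, which guarantee that any error can be killed grade by grade and that the corrections converge $\ve$-adically to a genuine $w_s \in V$. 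Once this auxiliary splitting is established, properties (1)--(3) become routine symplectic linear algebra and (4) follows from the induction together with the compatibility $L_i = (L_i \cap Fz_s) \oplus (L_i \cap V_1) \oplus (L_i \cap Fw_s)$.
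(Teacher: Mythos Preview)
Your inductive approach is a genuinely different route from the paper's. The paper does not argue directly: it simply invokes Proposition~\ref{decom-lem}, which already provides a fine decomposition $V=\bigoplus_{i,j} V_{ij}$ compatible with a \emph{pair} of lattice chains and with the symplectic form (including explicit bases $e^m_{ij}$ with prescribed pairings). One then views the isotropic filtration $0\subset V''\subset (V'')^{\perp}\subset V$ as a second (coarse) chain, applies Proposition~\ref{decom-lem} to the pair, and groups the summands $V_{ij}$ according to whether the second index lies in $V''$, in $(V'')^{\perp}/V''$, or in $V/(V'')^{\perp}$ to obtain $V''$, $T$, $W$ respectively; this is precisely Lusztig's argument for \cite[Lemma~1.3]{Lu00}, which the paper cites. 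The advantage is that all the hard work---simultaneous splitting of infinitely many lattices---has already been packaged into Proposition~\ref{decom-lem}.

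Your induction on $s=\dim_F V''$ is in principle sound, and the algebraic bookkeeping (checking that $V''\cap V_1$ has dimension $s-1$, that $T_1\perp V''$, that the restricted chain $(L_i\cap V_1)$ lies in $\X^{\fc}_{n,d-1}$, etc.) is correct. But the step you yourself flag as the ``main obstacle'' is the entire content of the lemma: producing a single $w_s$ such that the hyperbolic plane $Fz_s\oplus Fw_s$ splits off from \emph{every} $L_i$ as a direct $\A$-summand. Your successive-approximation sketch does not yet explain why correcting $w_s$ at one level does not destroy compatibility at another, nor why the corrections converge to an element achieving the full splitting $L_i=(L_i\cap Fz_s)\oplus(L_i\cap V_1)\oplus(L_i\cap Fw_s)$ rather than merely $L_i\cap(Fz_s\oplus Fw_s)=(L_i\cap Fz_s)\oplus(L_i\cap Fw_s)$. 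This can be made to work, but what you would end up proving is essentially a rank-two special case of Proposition~\ref{decom-lem}; the paper's route is shorter precisely because that proposition is already available.
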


We can now show that the definition
%$\widetilde \Delta^{\fc}$  
(\ref{tDj-formula}) is well defined (i.e., it  is independent of the choice of $L$),
following the argument  in ~\cite[1.2]{Lu00}; see also ~\cite[3.2]{FL15}.
For fixed $L \in \X^{\fc}_{n,d}$, let  $\tilde \Z^{\fc}_L$ be the set of all pairs $(T, W)$ satisfying the first three conditions  in Lemma ~\ref{cop-auxi}.
Note that given a pair $(T, W)$ in $\tilde \Z^{\fc}_L$, we have an isomorphism $\pi: T \to V''^{\perp}/V''$.
Now if $L \in \Z^{\fc}_{L', L''}$, we define a map
\[
\psi: \tilde \Z^{\fc}_L \to \Z^{\fc}_{L', L''}, \quad (T, W) \mapsto L^{T, W},
\]
where
\[
L_i^{T,W} = L''_i\oplus \pi^{-1} (L_i') \oplus (L''_{-i-1})^{\#}_W,   \quad 
(L''_{-i-1})^{\#}_W =\{ w\in W| (w,L''_{-i - 1}) \in \A \},
\quad\forall i\in \mbb Z.
\]
By Lemma ~\ref{cop-auxi}, the map $\psi$ is surjective. 

Let $P_{V''}$ be the stabilizer of the flag $V'' \subseteq V''^{\perp}$ in $\SP_F(V)$. 
Let $\mathcal U=\mathcal U_{V''}$ be its unipotent radical, i.e., the set of all $g \in \SP_F(V)$ such that
$g(x) = x$ for all $x \in V''$ and $g(y) - y \in V''$ for all $y \in V''^{\perp}$. 
The $\SP_F(V)$-actions on $\X^{\fc}_{n,d}$ and $V$ restrict to the $\mathcal U$-actions on $\tilde \Z^{\fc}_{L}$ and $\Z^{\fc}_{L', L''}$, respectively.
Clearly,  $\psi$ is $\mathcal U$-equivariant and $\mathcal U$ acts transitively on $\tilde Z^{\fc}_L$,
and so $\mathcal U$ acts transitively on $\Z^{\fc}_{L', L''}$. 
This means that if $\hat L \in \Z^{\fc}_{L', L''}$,  there is $g\in \mathcal U$ such that $g \hat L = L$. 
From this, we have for all $\hat L \in \Z^{\fc}_{L', L''}$, 
\begin{align}
\sum_{\tilde L \in \Z^{\fc}_{\tilde L', \tilde L''}} f( \hat L, \tilde L)
= \sum_{\tilde L \in \Z^{\fc}_{\tilde L', \tilde L''}} f(L, g^{-1} \tilde L)
=\sum_{\tilde L \in \Z^{\fc}_{\tilde L', \tilde L''}} f(L, \tilde L).
\end{align}
Therefore the definition of (\ref{tDj-formula}) and hence $\tilde \Delta^{\fc}$ is independent of the choice of $L$. 

Following the argument of ~\cite[Proposition 1.5]{Lu00}, which is formal and not  reproduced here, we have the following proposition.

\begin{prop}
The map $\widetilde \Delta^{\fc}$ is an algebra homomorphism.
\end{prop}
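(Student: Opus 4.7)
The plan is to follow the template of Lusztig~\cite[Proposition~1.5]{Lu00}, exploiting the fact that the well-definedness statement preceding the proposition already encodes the essential geometric content of the argument. Concretely, I would fix two functions $f, g \in \mbf S_{n,d}^{\fc}$ and expand both $\widetilde\Delta^{\fc}(f*g)$ and $\widetilde\Delta^{\fc}(f) * \widetilde\Delta^{\fc}(g)$ on a quadruple $(L', \tilde L', L'', \tilde L'')$, and then show that the two expressions agree by matching them term-by-term via the decomposition $\X^{\fc}_{n,d} = \bigsqcup_{(M',M'')} \Z^{\fc}_{M',M''}$.

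The first step is the left-hand side: after choosing $L \in \Z^{\fc}_{L',L''}$,
\[
\widetilde\Delta^{\fc}(f*g)(L',\tilde L', L'',\tilde L'') \;=\; \sum_{\tilde L \in \Z^{\fc}_{\tilde L',\tilde L''}} \sum_{M \in \X^{\fc}_{n,d}} f(L,M)\, g(M,\tilde L),
\]
and then I would regroup the $M$-sum according to the partition of $\X^{\fc}_{n,d}$ into the fibers $\Z^{\fc}_{M',M''}$. The second step is the right-hand side: by the convolution formula in ${\mbf S}^{\fc}_{n,d'} \otimes {\mbf S}_{n,d''}$,
\[
(\widetilde\Delta^{\fc}(f) * \widetilde\Delta^{\fc}(g))(L',\tilde L',L'',\tilde L'') = \sum_{M', M''} \widetilde\Delta^{\fc}(f)(L',M',L'',M'')\, \widetilde\Delta^{\fc}(g)(M',\tilde L',M'',\tilde L''),
\]
and each factor on the right is then rewritten by picking any representative $\tilde M \in \Z^{\fc}_{M',M''}$ and summing over one fiber.

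The two expressions are visibly equal once one invokes the key lemma, already established in the discussion preceding the proposition, that $\mathcal U = \mathcal U_{V''}$ acts transitively on each fiber $\Z^{\fc}_{L', L''}$; this is precisely what makes the inner sum $\sum_{\tilde L} g(\tilde M, \tilde L)$ independent of the choice of representative $\tilde M$ in $\Z^{\fc}_{M',M''}$, and dually allows the sum $\sum_{\tilde M \in \Z^{\fc}_{M',M''}} f(L,\tilde M)$ to be computed from $\widetilde\Delta^{\fc}(f)$ using any fixed $L \in \Z^{\fc}_{L',L''}$. Assembling these identifications and using the partition $\sum_{M',M''} \sum_{\tilde M \in \Z^{\fc}_{M',M''}} = \sum_{\tilde M \in \X^{\fc}_{n,d}}$ yields the equality.

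The main (and essentially only) obstacle is verifying that the above $\mathcal U$-transitivity and the attendant cocycle-type cancellations carry over from Lusztig's type $A$ setting to our affine type $C$ setting, but this is already handled by Lemma~\ref{cop-auxi} and the explicit construction of the surjective map $\psi: \tilde \Z^{\fc}_L \to \Z^{\fc}_{L',L''}$ in the preceding paragraph; once that is in hand, the argument is formal and parallels \cite[Prop.~1.5]{Lu00} verbatim. Unitality (i.e., $\widetilde\Delta^{\fc}$ sends the identity to the identity) is then automatic from the definition of $\widetilde\Delta^{\fc}$ applied to the characteristic function of the diagonal.
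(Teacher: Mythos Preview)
Your proposal is correct and takes essentially the same approach as the paper: the paper's proof simply reads ``Following the argument of~\cite[Proposition 1.5]{Lu00}, which is formal and not reproduced here,'' and your sketch is precisely a fleshed-out version of that argument, correctly identifying the $\mathcal U$-transitivity on the fibers $\Z^{\fc}_{L',L''}$ (established just before the proposition via Lemma~\ref{cop-auxi} and the map $\psi$) as the key input that makes the formal convolution computation go through.
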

The map $\widetilde \Delta^{\fc}$ is  referred to as  the {\em raw} comultiplication, which is a key component in 
the construction of a refined comultiplication in Section~\ref{sec:refined}.

Now we determine how the map $\widetilde \Delta^{\fc}$ acts on the generators.
Recall from Chapter~\ref{chap:A} the Chevalley generators $\mbf H_i$, $\mbf E_i$ and $\mbf F_i$ for Lusztig algebra $\bU_{n,d}$ of affine type $A$
(a subalgebra of the Schur algebra ${\mbf S}_{n,d}$ of affine type $A$), and 
 that $\mbf H_{n+i} = \mbf H_i$, $\mbf E_{n+i} =\mbf E_i$ and $\mbf F_{n+i} =\mbf F_i$.

\begin{prop}
\label{tDj-form}
For any $i\in [0, r]$, we have
\begin{align*}
\begin{split}
\widetilde \Delta^{\fc} ( \e_i)
&= \e_i' \otimes \mbf H_{i+1}'' \mbf H''^{-1}_{n -1  -i} + \mbf h'^{-1}_{i+1} \otimes \mbf E_i''  \mbf H''^{-1}_{n -1 -i} 
+  \mathbf h'_{i+1} \otimes \mbf F''_{n -1 -i} \mbf H''_{i+1}.  \\
\widetilde \Delta^{\fc} (\f_i)
 & = \f'_i \otimes \mbf H''^{-1}_{i} \mbf H''_{n -i} + \mbf h'_i\otimes \mbf F''_i \mbf H''_{n - i} 
 + \mbf h'^{-1}_{i} \otimes \mbf E''_{n -1 -i} \mbf H''^{-1}_{i}. \\
\widetilde \Delta^{\fc} (\mbf k_i) & = \mbf k'_i \otimes \mbf K''_i \mbf K''^{-1}_{n -1 -i}.
\end{split}
\end{align*}
Here the superscripts $'$ and $''$ indicate that the underlying Chevalley generators 
lie in $\mbf S^{\C}_{n, d'}$ and $\mbf S_{n, d''}$, respectively.
\end{prop}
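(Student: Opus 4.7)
The plan is to evaluate $\widetilde\Delta^{\fc}$ on each generator directly from formula \eqref{tDj-formula}. Fix a reference lattice chain $L \in \Z^{\fc}_{L',L''}$ and invoke Lemma~\ref{cop-auxi} to produce a decomposition $V = V'' \oplus T \oplus W$ for which $L_i = (L_i\cap V'')\oplus(L_i\cap T)\oplus(L_i\cap W)$, where $L_i\cap V'' = L''_i$, $L_i\cap T = \pi^{-1}(L'_i)$ (via $\pi\colon T\stackrel{\sim}{\to}V''^\perp/V''$), and $L_i\cap W = (L''_{-i-1})^\#_W$. All that remains is, for each generator, to enumerate the $\tilde L \in \Z^{\fc}_{\tilde L',\tilde L''}$ whose position relative to $L$ lies in the orbit supporting that generator, and to reconcile the $v$-normalizations.

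First I would handle $\widetilde\Delta^{\fc}(\mbf k_i)$. Since $\mbf k_i$ is a diagonal operator, only $\tilde L = L$ contributes, forcing $\tilde L' = L'$ and $\tilde L'' = L''$. Using the three-term splitting of $L_i$, the exponent $|L'_a/L'_{a-1}|$ decomposes as the sum of the corresponding exponents on $L'$, $L''$, and the $W$-piece; periodicity and the duality identification $W \leftrightarrow V''$ translate the latter into $\mbf K''^{-1}_{n-1-i}$, yielding the stated formula at once.

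Next, for $\widetilde\Delta^{\fc}(\mbf e_i)$ with $i \in [1,r-1]$, the support condition $L_i \overset{1}{\subset}\tilde L_i$ together with $L_j = \tilde L_j$ for $j \neq i$ in $[0,r]$ (and the duality $\tilde L^\#_z = \tilde L_{-z-1}$) means that the line $\tilde L_i/L_i$ sits in exactly one of the three summands $V''$, $T$, $W$. The three choices are mutually exclusive and correspond exactly to the three summands in the claimed formula: a line in $T$ descends to a codimension-one enlargement $L'_i \overset{1}{\subset}\tilde L'_i$ and produces $\mbf e'_i$ on the type $C$ side; a line in $V''$ enlarges $L''_i$ and produces $\mbf E''_i$ on the type $A$ side; a line in $W$ is forced by symplecticity and the condition $\tilde L^\#_{-i-1} = \tilde L_i$ to come from shrinking $L''_{-i-1}$ by one codimension, which by the affine shift $L''_{-i-1} = \varepsilon L''_{n-i-1}$ records as $\mbf F''_{n-1-i}$. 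The remaining untouched summands contribute pure $v$-weights that match the Cartan-like factors $\mbf H''_{i+1}\mbf H''^{-1}_{n-1-i}$, $\mbf h'^{-1}_{i+1}\mbf H''^{-1}_{n-1-i}$, $\mbf h'_{i+1}\mbf H''_{i+1}$ respectively; the exponent bookkeeping comes from comparing the defining exponent $v^{-|L'_{i+1}/L'_i|-\delta_{i,r}}$ of $\mbf e_i$ with the corresponding products of exponents via the splitting of $L_{i+1}$ and $L'_{i+1}$. The case of $\mbf f_i$ is obtained by the same analysis with enlargement replaced by shrinking, or equivalently by applying the anti-involution $\Psi$.

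The main obstacle will be the boundary cases $i=0$ and $i=r$, where $L_0$ and $L_{r+1}$ are constrained by the duality $L^\#_z = L_{-z-1}$ and by the symplectic-lattice requirement. Here one must verify via Lemma~\ref{lem:LL} that every intermediate lattice produced by enlarging or shrinking in one of the three summands is automatically symplectic, so that the counts are not disturbed; one must also track the extra $\delta_{i,0}$ and $\delta_{i,r}$ contributions built into the definitions of $\mbf e_i$, $\mbf f_i$, $\mbf h_a$, and into $d_A$ from \eqref{eqda}, to match the $v$-powers on the right-hand side. Once these boundary $v$-shifts are accounted for, the three-term decomposition argument outlined above gives the stated formulas uniformly for all $i \in [0,r]$.
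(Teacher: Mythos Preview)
Your proposal is correct and follows essentially the same approach as the paper's proof, which is even more concise: it records the dimension-splitting identity $|L_{i+1}/L_i| = |L'_{i+1}/L'_i| + |L''_{i+1}/L''_i| + |L''_{n-1-i}/L''_{n-2-i}|$, observes that for $i\in[1,r]$ the computation is local and hence reduces directly to the finite-type-$C$ result (Proposition~\ref{finiteD}, itself proved as in \cite{FL15}), and handles $i=0$ by analogy with $i=r$ via the local property at $L_0$ (Section~\ref{seclocal}). Your direct three-summand enumeration is precisely what underlies that finite-type reference, so the two arguments coincide once unpacked.
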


\begin{proof}
%\red{
%In affine case, the following positions in a lattice $L$  are related $(i, -i -1, n-1 -i)$ in corresponding to $(i, n-i)$ in finite cases. }
For any $L\in \X^{\C}_{n,d}$, we have
$$
|L_{i+1}/L_i | = |L'_{i+1}/L'_i| + |L''_{i+1}/L''_i| + |L''_{n-1-i}/L''_{n- 2-i}|.
$$
The proposition in the cases for $i\in [1, r]$ follows directly from Proposition ~\ref{finiteD}   for the finite type; also cf. \cite{FL15}. 
The case for $i=0$ follows  
from a similar argument to that of 
%by applying the symmetry \eqref{sym} to reduce to 
the case for $i=r$. 
Note that when $r=0$, one uses the non-degenerate symplectic form on $L_1/L_0= \ve^{-1} L_0^{\#}/L_0$, inherited from that of $V$ (see ~\cite{Lu03}).
\end{proof}

%%%%%
\section{The comultiplication $\Delta^{\C}$}
\label{sec:refined}

Recall $\Lambda_{n,d}$ from \eqref{Land}, and we denote 
\begin{align}
\label{LandC}
\Lambda^{\fc}_{n,d} =
\Big\{
(a_i)_{i\in \mbb Z}  \Big \vert a_i\in \mbb N,  a_i = a_{-i}, a_i = a_{n-i}, \sum_{1\leq i \leq n}  a_i = 2d+2, a_0, a_{r+1} \text{ odd}
\Big \}.
\end{align}
The set $\X_{n,d}$ can be decomposed as follows: 
\begin{equation}
 \label{eq:conn-A}
\X_{n,d} = \bigsqcup_{\mbf a=(a_i) \in \Lambda_{n,d} } \X_{n,d} (\mbf a), \quad \text{ where }
\X_{n,d} (\mbf a) =\{ V\in \X_{n,d} \big\vert \, | V_i/V_{i-1} | = a_i, \forall 1\leq i\leq n\}.
\end{equation}
Similarly the set $\X^{\fc}_{n,d}$ admits the following decomposition:
\begin{equation}
 \label{eq:conn-C}
\X^{\fc}_{n,d} = \bigsqcup_{\mbf a =(a_i) \in \Lambda^{\fc}_{n,d} } \X^{\fc}_{n,d}(\mbf a), \quad \text{ where }
\X^{\fc}_{n,d}(\mbf a) =\{ V\in \X^{\fc}_{n,d} \big\vert \,  |V_i/V_{i-1}|  = a_i, \forall 1\leq i\leq n\}.
\end{equation}

Given $\mbf a, \mbf b \in \Lambda^{\fc}_{n,d}$, let 
$\Sj (\mbf b, \mbf a)$ be the subspace of $\Sj$ spanned  by 
the standard basis elements $[A]$ such that $\ro(A) = \mbf b$ and $\co(A)= \mbf a$.
Similarly, for $\mbf a, \mbf b \in \Lambda_{n,d}$, we define the affine type $A$ counterpart ${\mbf S}_{n,d} (\mbf b, \mbf a)$.
Let 
$\widetilde \Delta^{\fc}_{ \mbf b', \mbf a', \mbf b'', \mbf a''}$ be the component of $\widetilde \Delta^{\fc}$ from
$\Sj (\mbf b, \mbf a)$ to $\mbf S^{\C}_{n, d'} (\mbf b', \mbf a') \otimes {\mbf S}_{n,d''} (\mbf b'', \mbf a'')$ such that
$b_i = b'_i + b''_i + b''_{- i },   a_i = a'_i + a''_i +  a''_{-i},$ for $i\in \mbb Z.$
We set
\[
s(\mbf b', \mbf a', \mbf b'', \mbf a'') = \sum_{1\leq k\leq j\leq n} b'_k b''_j - a'_k a''_j,
\]
and
\[
u(\mbf b'', \mbf a'') =\frac{1}{2} \Big (
 \sum_{\substack{1\leq k, j \leq n-1 \\ k+j \geq n}} b''_k b''_j -a''_k a''_j + \sum_{n-1\geq k \geq r+1} a''_k - b''_k
\Big ),
\]
for all $\mbf b', \mbf a' \in \Lambda^{\fc}_{n,d'}$ and $\mbf b'', \mbf a'' \in \Lambda_{n,d''}$.
We renormalize the raw comultiplication $\widetilde \Delta^{\fc}$
to be $\widetilde \Delta^{\fc \dagger}$   by letting
\begin{align}
\label{Dagger}
\begin{split}
 \Delta^{\fc\dagger}_{\mbf b', \mbf a', \mbf b'', \mbf a''}
 &= v^{s(\mbf b', \mbf a', \mbf b'', \mbf a'') + u(\mbf b'', \mbf a'')}    \widetilde \Delta^{\fc}_{\mbf b', \mbf a', \mbf b'', \mbf a''},  
  \\
 \Delta^{\fc \dagger} &= \bigoplus_{\mbf b', \mbf a', \mbf b'', \mbf a''} \Delta^{\fc\dagger}_{\mbf b', \mbf a', \mbf b'', \mbf a''}
 : \Sj \to {\mbf S}_{n, d'}^{\fc} \otimes {\mbf S}_{n, d''}. 
\end{split}
\end{align}
%where $\mbf b', \mbf a' \in \Lambda^{\fc}_{n, d'}$ and $\mbf b'', \mbf a'' \in \Lambda_{n, d''}$.

\begin{prop}
Let $d=d' + d''$.
For all $i\in [0, r]$, we have
\begin{align*}
\Delta^{\fc\dagger} (\mbf e_i)
& = v^{\delta_{i, 0} d''} \mbf e'_i \otimes \mbf K''_i + 1 \otimes v^{- \delta_{i,0} (2d'+2)} \mbf E''_i
+ \mbf k'_i \otimes   v^{\delta_{i,0} (d''-1) + \delta_{i,0}} \mbf F''_{n -1 -i}  \mbf K''_i . \\
\Delta^{\fc\dagger} (\mbf f_i)
& = v^{-\delta_{i,0} d''} \mbf f'_i \otimes \mbf K''_{n-1-i} + \mbf k'^{-1}_i \otimes v^{\delta_{i,0} (2d'+2)-\delta_{i,0}}  \mbf K''_{n-1-i} \mbf F''_i  
+ 1\otimes v^{-\delta_{i,0} (d''-1)} \mbf E''_{n-1 -i}.\\
\Delta^{\fc\dagger} (\mbf k_i) &= \mbf k'_i \otimes \mbf K''_i \mbf K''^{-1}_{n-1-i}.
\end{align*}
Here the superscripts follow the same convention as in Proposition ~\ref{tDj-form}.
\end{prop}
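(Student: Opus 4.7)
The plan is to deduce this proposition directly from Proposition~\ref{tDj-form}, which gives the formulas for the raw comultiplication $\widetilde\Delta^{\fc}$ on the same generators, by carrying out the renormalization prescribed in \eqref{Dagger}. Concretely, each summand in the formula for $\widetilde\Delta^{\fc}(\mbf e_i)$, $\widetilde\Delta^{\fc}(\mbf f_i)$, or $\widetilde\Delta^{\fc}(\mbf k_i)$ lies in a specific shape component ${\mbf S}^{\C}_{n,d'}(\mbf b',\mbf a') \otimes {\mbf S}_{n,d''}(\mbf b'',\mbf a'')$, so multiplying that summand by $v^{s(\mbf b',\mbf a',\mbf b'',\mbf a'')+u(\mbf b'',\mbf a'')}$ and summing gives the desired formula. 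The proof therefore consists of three parallel verifications (one per generator type), each reducing to explicit arithmetic with the shape vectors.

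First I would handle $\mbf k_i$, which is the easiest: its unique summand $\mbf k'_i\otimes \mbf K''_i\mbf K''^{-1}_{n-1-i}$ lies in a diagonal shape where $\mbf b'=\mbf a'$ and $\mbf b''=\mbf a''$, so both $s$ and $u$ vanish identically and the formula is unchanged. Next I would treat $\mbf e_i$ (and symmetrically $\mbf f_i$) summand by summand. For a source vector of shape $\mbf a$, identify the three pairs $(\mbf b',\mbf a',\mbf b'',\mbf a'')$ arising from each of the three terms $\mbf e'_i\otimes \mbf H''_{i+1}\mbf H''^{-1}_{n-1-i}$, $\mbf h'^{-1}_{i+1}\otimes \mbf E''_i\mbf H''^{-1}_{n-1-i}$, $\mbf h'_{i+1}\otimes \mbf F''_{n-1-i}\mbf H''_{i+1}$, using the fact that $\mbf e'_i$ shifts shape from $\mbf a'$ to $\mbf a'+\epsilon_i-\epsilon_{i+1}-\epsilon_{-i}+\epsilon_{-i-1}$ (with analogous rules for $\mbf E''_i$, $\mbf F''_{n-1-i}$), while $\mbf h'_{i+1}$ preserves shape. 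Plug each of these three shapes into the definitions of $s$ and $u$, and read off the resulting $v$-powers.

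For $i \in [1,r]$ the centrosymmetry contributions from the indices $\pm i,\pm(i+1)$ to $s$ and $u$ either cancel or produce the trivial coefficient $v^0$ in front of each summand, which matches the stated formula with $\delta_{i,0}$-terms absent. The delicate point, and the main obstacle, is the case $i=0$: here the pairs of indices $\pm 0$ collide, so the sums defining $s$ and $u$ acquire genuinely asymmetric contributions from the diagonal-type shifts in $\mbf a''$ (which move weight between positions $0$ and $n-1 \equiv -1$). Tracking these carefully will produce exactly the extra powers $v^{\pm\delta_{i,0}d''}$, $v^{\pm\delta_{i,0}(2d'+2)}$, and $v^{\pm\delta_{i,0}(d''\pm 1)}$ that decorate the three summands of $\Delta^{\fc\dagger}(\mbf e_0)$ and $\Delta^{\fc\dagger}(\mbf f_0)$.

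Finally I would assemble the three computations and note that the formulas for $\mbf f_i$ can either be checked independently in the same manner or obtained from those for $\mbf e_i$ via the anti-involution $\Psi$ sending $[A]\mapsto[\,^{t}\!A]$ (cf.\ \eqref{dAdA}), which is compatible with swapping the two tensor factors and interchanging $\mbf e_i\leftrightarrow \mbf f_i$, $\mbf E''_i \leftrightarrow \mbf F''_i$. Apart from the $i=0$ bookkeeping, the argument is formal and local in the shape data.
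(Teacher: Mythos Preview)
Your approach is essentially the paper's: compute the shape quadruple $(\mbf b',\mbf a',\mbf b'',\mbf a'')$ for each of the three summands in Proposition~\ref{tDj-form}, evaluate $s$ and $u$, and match against the claimed formula. The paper carries out exactly these six computations (three for $\mbf e_i$, three for $\mbf f_i$) and records the resulting values of $s$ and $u$ explicitly.

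One clarification worth flagging: for $i\in[1,r]$ the values $s+u$ are \emph{not} zero. For instance, in the first summand of $\widetilde\Delta^{\fc}(\mbf e_i)$ one finds $s=-a''_i+a''_{n-1-i}$ and $u=0$; this nonzero power of $v$ is precisely what converts the raw factor $\mbf H''_{i+1}\mbf H''^{-1}_{n-1-i}$ into $\mbf K''_i$ (and similarly the $\mbf h'_{i+1}$, $\mbf h'^{-1}_{i+1}$ factors absorb nonzero $s$-contributions to become $\mbf k'_i$, $1$). So the ``trivial coefficient $v^0$'' you describe is the outcome \emph{after} these absorptions, not the value of $s+u$ itself. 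This is only a phrasing issue; your plan still works once you track how the Cartan-type factors get rewritten. The paper does not invoke the anti-involution $\Psi$ for $\mbf f_i$ but simply repeats the direct computation; your alternative via $\Psi$ would require separately checking its compatibility with $\Delta^{\fc\dagger}$.
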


\begin{proof}

The third formula on $\Delta^{\fc\dagger} (\mbf k_i)$ is clear.

Suppose that the quadruple  $(\mbf b', \mbf a', \mbf b'', \mbf a'')$ satisfies the following conditions:
\[
b'_k = a'_k - \delta_{k, i} + \delta_{k, i+1} + \delta_{k, n-1-i} - \delta_{n-i},  \quad b''_k=a''_k, \quad \forall k, \ \text{some}\ i\in [0,r].
\]
Then we have
$
s(\mbf b', \mbf a', \mbf b'', \mbf a'') = -a'_i + a''_{n-1-i} + \delta_{i, 0} d''
 \mbox{ and }  
u(\mbf b'', \mbf a'') =0.
$

Suppose that the quadruple $(\mbf b', \mbf a', \mbf b'', \mbf a'')$ satisfies the following conditions:
\[
b'_k = a'_k, \quad
b''_k = a''_k - \delta_{k,i} + \delta_{k, i+1}, \forall k \ \text{some} \ i\in [0, r].
\]
Then we have
$
s(\mbf b', \mbf a', \mbf b'', \mbf a'') = a'_{i+1} - \delta_{i, 0} ( 2d'+2),
  \mbox{ and }  
u(\mbf b'', \mbf a'') =a''_{n-1 -i} .
$

Suppose that the quadruple  $(\mbf b', \mbf a', \mbf b'', \mbf a'')$ satisfies the following conditions:
\[
b'_k = a'_k, \quad b''_k = a''_k + \delta_{k, n-1-i} - \delta_{k, n-i}, \quad \forall k, \ \text{some} \ i \in [0, r].
\]
Then we have
$
s(\mbf b', \mbf a', \mbf b'', \mbf a'') = - a'_i,
  \mbox{ and }  
u(\mbf b'', \mbf a'') = - a''_{i} + \delta_{i, 0} d'' .
$
The above computations lead to the first formula on $\Delta^{\fc\dagger} (\mbf e_i)$.

The second formula on $\Delta^{\fc\dagger} (\mbf f_i)$ follows from the following computations.
Suppose that the quadruple  $(\mbf b', \mbf a', \mbf b'', \mbf a'')$ satisfies the following conditions:
\[
b'_k = a'_k + \delta_{k, i} - \delta_{k, i+1} - \delta_{k, n-1-i} + \delta_{n-i},  \quad b''_k=a''_k, \quad \forall k, \ \text{some}\ i\in [0,r].
\]
Then we have
$
s(\mbf b', \mbf a', \mbf b'', \mbf a'') = a'_i - a''_{n-1-i} - \delta_{i, 0} d''
 \mbox{ and }  
u(\mbf b'', \mbf a'') =0.
$

Suppose that the quadruple $(\mbf b', \mbf a', \mbf b'', \mbf a'')$ satisfies the following conditions:
\[
b'_k = a'_k, \quad
b''_k = a''_k + \delta_{k,i} - \delta_{k, i+1}, \forall k,  \ \text{some} \ i\in [0, r].
\]
Then we have
$
s(\mbf b', \mbf a', \mbf b'', \mbf a'') = - a'_{i+1} + \delta_{i, 0} ( 2d'+2),
 \mbox{ and }  
u(\mbf b'', \mbf a'') =- a''_{n-1 -i} .
$

Suppose that the quadruple  $(\mbf b', \mbf a', \mbf b'', \mbf a'')$ satisfies the following conditions.
\[
b'_k = a'_k, \quad b''_k = a''_k - \delta_{k, n-1-i} + \delta_{k, n-i}, \quad \forall k, \ \text{some} \ i \in [0, r].
\]
Then we have
$
s(\mbf b', \mbf a', \mbf b'', \mbf a'') =  a'_i,
 \mbox{ and }
u(\mbf b'', \mbf a'') =  a''_{i} - \delta_{i, 0} (d'' -1) .
$

The proposition is proved. 
\end{proof}

The above formulas are indeed compatible with the ones in the finite type case for $i\in [1,r]$; cf. \cite{FL15} and Proposition ~\ref{finiteD}.
Recall $\xi_{d,i,c}: {\mbf S}_{n,d} \to {\mbf S}_{n,d}$ in affine type $A$ from ~\cite{FL15}. We generalize it to the affine type $C$ 
as 
\begin{align}
\label{rescale}
\begin{split}
\xi^{\fc}_{d,i,c}: & {\mbf S}^{\fc}_{n,d} \longrightarrow {\mbf S}^{\fc}_{n,d}, \qquad \text{for } i \in [0, r], c\in \mbb Z,
 \\
\xi^{\fc}_{d, i, c}  ([A]) &= v^{ c \varepsilon_i(A)} [A],  \qquad  \text{for } A \in \MX_{n,d}, 
\end{split}
\end{align}
where 
\begin{equation}
\label{ep:i}
\varepsilon_i(A)= \sum_{r \leq i < s} a_{rs} - \sum_{r > i \geq s} a_{rs}.
\end{equation}
%$(L:L')_i = |L_i/L_i \cap L'_i| - |L'_i / L_i \cap L'_i|$.
In particular, we have
\[
\xi^{\fc}_{d, i, c} ( \e_j )= v^{-c \delta_{i, j}} \e_j, \quad
\xi^{\fc}_{d, i, c} ( \f_j ) =v^{c \delta_{i, j}} \f_j, \quad
\xi^{\fc}_{d, i, c} ( \mbf k_j )=\mbf k_j.
\]
We define the algebra homomorphism
(which is a refined comultiplication from the raw multiplication $\widetilde \Delta^{\fc}$)
\begin{align}
\label{Dj}
\Delta^{\fc} \equiv \Delta^{\fc}_{d',d''} =
(\xi^{\fc}_{d', 0, d''} \otimes \xi_{d'', 0, -(2d'+2)} \xi_{d'', n-1, -(d''-1)}) \circ
\Delta^{\fc \dagger}: {\mbf S}_{n,d}^{\fc} \longrightarrow  {\mbf S}_{n,d'}^{\fc} \otimes  {\mbf S}_{n,d''}.
\end{align}

\begin{prop}
\label{xiC}
For all $i\in [0, r]$ and $A \in \MX_{n,d}$, we have
$\xi^{\fc}_{d, i, a} ( \{A\}_d ) = v^{a \varepsilon_i(A)} \{A\}_d$.
\end{prop}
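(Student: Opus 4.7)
The plan is to reduce the statement to the fact that $\varepsilon_i(A)$ is an invariant of the row and column sums of $A$. Recall from Proposition~\ref{CB+c} that the canonical basis element decomposes as
\[
\{A\}_d = [A] + \sum_{B<A} P_{B,A}\,[B],
\]
where the sum ranges over $B$ in the poset $(\MX_{n,d},\leq)$. Since $\xi^{\fc}_{d,i,c}$ is diagonal in the standard basis with $\xi^{\fc}_{d,i,c}([B])=v^{c\varepsilon_i(B)}[B]$, applying it term by term gives
\[
\xi^{\fc}_{d,i,c}(\{A\}_d)=v^{c\varepsilon_i(A)}[A]+\sum_{B<A}P_{B,A}\,v^{c\varepsilon_i(B)}[B].
\]
Hence to prove the proposition it suffices to show $\varepsilon_i(B)=\varepsilon_i(A)$ for every $B\leq A$.

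By the definition of the partial order $\leq$ in \eqref{partial}, $B\leq A$ implies $\ro(B)=\ro(A)$ and $\co(B)=\co(A)$. Therefore the whole statement reduces to the following algebraic claim: $\varepsilon_i(A)$ depends only on $\ro(A)$ and $\co(A)$. Equivalently, if $M=(m_{rs})$ is a $\ZZ\times\ZZ$ integer matrix with finite band support (so that each row and column contains only finitely many nonzero entries) and with all row sums and all column sums equal to zero, then
\[
\sum_{r\leq i<s}m_{rs}-\sum_{r>i\geq s}m_{rs}=0.
\]
This in turn follows by a short Fubini-type cancellation: since $\sum_{s}m_{rs}=0$ for each $r$, summing over $r\leq i$ gives $\sum_{r\leq i,\,s\leq i}m_{rs}+\sum_{r\leq i,\,s>i}m_{rs}=0$, and a symmetric argument using column sums gives $\sum_{r>i,\,s\leq i}m_{rs}+\sum_{r\leq i,\,s\leq i}m_{rs}=0$. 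Applying this to $M:=A-B$ then yields $\varepsilon_i(A)=\varepsilon_i(B)$.

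The only delicate point to verify carefully is the convergence/Fubini manipulation in the affine (periodic) setting: although $A$ itself has infinitely many nonzero entries, the difference $A-B$ together with each of the partial sums $\sum_{r\leq i<s}$ and $\sum_{r>i\geq s}$ involve only entries inside a finite band (which is where $A-B$ can be supported once the row/column sums are equalized and the partial order constraints are taken into account). This is the only substantive step; once justified, the argument above finishes the proof immediately.
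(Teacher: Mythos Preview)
Your proof is correct and follows the same overall strategy as the paper: expand $\{A\}_d$ in the standard basis, apply $\xi^{\fc}_{d,i,c}$ diagonally, and reduce to the claim that $\varepsilon_i$ depends only on $\ro$ and $\co$. The only difference is in how that last claim is established. The paper exploits the centrosymmetry $a_{rs}=a_{-r,-s}$ to compute directly
\[
\varepsilon_i(A)=\tfrac12\sum_{-i\le s\le i}\bigl(\ro(A)_s-\co(A)_s\bigr),
\]
which makes the dependence on $\ro,\co$ manifest and avoids any convergence issue. Your subtraction argument with $M=A-B$ is slightly more general (it does not use centrosymmetry), but it requires the Fubini step you flagged. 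That step is indeed fine: although $M$ is periodic, each of $\sum_{r\le i<s}m_{rs}$, $\sum_{r>i\ge s}m_{rs}$, and $\sum_{r\le i,\,s\le i}m_{rs}$ has only finitely many nonzero contributions, because the band condition forces $|r-s|$ bounded, so for $r\ll 0$ all nonzero $m_{rs}$ have $s\le i$ and the inner sum over $s\le i$ equals the full (zero) row sum. With that clarification your argument is complete.
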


\begin{proof}
By  the definition  \eqref{ep:i} and using $a_{rs}=a_{-r, -s}$, we have
\begin{align*}
\varepsilon_i (A)
%& = \sum_{r \leq i < s} a_{rs} - \sum_{r > i \geq s} a_{rs} \\ %
& = \Big ( \sum_{r \leq i < s} - \sum_{r < -i \leq s} \Big ) a_{rs}  
= \Big( \sum_{\substack{-i \leq r \leq i \\ i < s }} - \sum_{\substack{-i \leq s \leq i\\ r < -i}} \Big ) a_{rs}   \\
& 
= \frac{1}{2} \Big (
\sum_{\substack{-i \leq r \leq i \\ i< s}} +
\sum_{\substack{-i \leq r \leq i \\ s < - i }}
- \sum_{\substack{-i \leq s \leq i \\ r< -i}}
- \sum_{\substack{-i \leq s \leq i \\ r > i}}
\Big ) a_{rs}\\
& =  \frac{1}{2} \Big (  \sum_{-i \leq r \leq i} - \sum_{-i \leq s \leq i} \Big ) a_{rs} 
 = \frac{1}{2} \sum_{-i \leq s \leq i} \ro(A)_s - \co(A)_s.
\end{align*}
Now if the polynomial $P_{A, B}$ in (\ref{eq:cba}) is not zero, then $\ro(B)=\ro(A)$ and $\co(B) = \co(A)$, and hence
$\varepsilon_i(A) = \varepsilon_i (B)$.
Therefore, we have
\begin{align*}
\xi^{\fc}_{d, i, a} ( \{A\}_d ) 
&=
\xi^{\fc}_{d, i, a} \big( \sum_{B\leq A} P_{A,B} [B] \big)
=\sum_{B\leq A} P_{A, B} v^{c\varepsilon_i(B)} [B]
 \\
&= v^{c\varepsilon_i(A)} \sum_{B\leq A} P_{A,B} [B]
= v^{c\varepsilon_i(A)}  \{A\}_d.
\end{align*}
The proposition is proved.
\end{proof}

Let $\mbf a, \mbf b \in \Lambda^{\fc}_{n,d}$.
Fix $L \in \X^{\C}_{n,d}(\mbf b)$ (which was defined in \eqref{eq:conn-C}), and let $P_{\mbf b} = {\rm Stab}_{{\SP}_F(2d)}(L)$.
We have a natural embedding
\[\iota_{\mbf b, \mbf a}: \X_{n,d}^{\C}(\mbf a) \longrightarrow \X_{n,d}^{\C}(\mbf b)\times \X_{n,d}^{\C}(\mbf a),
\qquad L' \mapsto (L, L'). \]
It is well known that $\iota_{\mbf b, \mbf a}$ induces the following isomorphism of $\mathcal A$-modules:
\[\iota^*_{\mbf b, \mbf a}: \mathcal A_{{\SP}_F(2d)}(\X^{\C}_{n,d}(\mbf b) \times \X^{\C}_{n,d}(\mbf a)) \longrightarrow \mathcal A_{P_{\mbf b}}(\X^{\C}_{n,d}(\mbf a)).
\]
Let 
\[
\X^{\C}_{\mbf a,\mbf a', \mbf a''} =\{L\in \X^{\C}_{n,d}(\mbf a) \big \vert  \pi^{\natural} (L) \in \X^{\C}_{n, d'}(\mbf a'),\ \pi''(L) \in \X_{n, d''} (\mbf a'')\}.
\]
Then we have the following diagram
\[
\xymatrix{
\X^{\C}_{n,d}(\mbf a) && \X^{\C}_{\mbf a, \mbf a', \mbf a''} \ar[ll]_{\iota} \ar[rr]^-{\pi} && \X^{\C}_{n,d'} (\mbf a') \times \X_{n,d''} (\mbf a''),
}
\]
where $\iota$ is the imbedding and $\pi(L) = (\pi^{\natural}(L), \pi''(L))$.
By identifying $\mathcal A_{P_{\mbf b'} \times P_{\mbf b''}}(\X^{\C}_{n, d'}(\mbf a') \times \X_{n,d''}(\mbf a'')) =
\mathcal A_{P_{\mbf b'}}(\X^{\C}_{n,d'}(\mbf a')) \times \mathcal A_{P_{\mbf b''}}(\X_{n,d''}(\mbf a''))$, 
we have the following linear map
\begin{equation*}
  \xymatrix{\pi_! \iota^*: \mathcal A_{P_{\mbf b}}(\X^{\C}_{n,d}(\mbf a)) \longrightarrow 
  \mathcal A_{P_{\mbf b'}}(\X^{\C}_{n, d'}(\mbf a')) \times \mathcal A_{P_{\mbf b''}}(\X_{n, d''}(\mbf a'')).}
\end{equation*}
By a similar argument as for  \cite[Lemma 1.3.5]{FL15}, the following diagram commutes:
\begin{equation}
  \label{comm-coproduct}
  \xymatrixrowsep{.5in}
\xymatrixcolsep{.8in}
\xymatrix{
\mathcal A_{{\SP}_F(2d)}(\X^{\C}_{n,d}(\mbf b) \times \X^{\C}_{n,d}(\mbf a))
\ar@{->}[r]^-{\iota^*_{\mbf b, \mbf a}}
\ar@{->}[d]_{\widetilde \Delta^{\C}_{\mbf b', \mbf a', \mbf b'', \mbf a''}}
&
\mathcal A_{P_{\mbf b}} (\X^{\C}_{n,d}(\mbf a))
\ar@{->}[d]^{ \pi_! \iota^*}
\\
\smxylabel{
\substack{
\mathcal A_{{\SP}_F(2d')} (\X^{\C}_{n,d'}(\mbf b') \times \X^{\C}_{n,d'} (\mbf a') ) \\
\otimes \\
\mathcal A_{{\SP}_F(2d'')} (\X_{n,d''}(\mbf b'') \times \X_{n,d''} (\mbf a'') )
}
}
\ar@{->}[r]^-{ \iota^*_{\mbf b', \mbf a'} \otimes \iota^*_{\mbf b'', \mbf a''}}
&
\smxylabel{
\mathcal A_{P_{\mbf b'} } ( \X_{n,d'}^{\C} (\mbf a') )
\otimes
\mathcal A_{ P_{\mbf b''}} ( \X_{n,d''} (\mbf a'')).
}
}
\end{equation}

Recall $\Delta^{\C}: {\mbf S}_{n,d}^{\fc} \longrightarrow  {\mbf S}_{n,d'}^{\fc} \otimes  {\mbf S}_{n,d''}$ from \eqref{Dj}.
\begin{prop}
\label{comult+S}
For $A \in \MX_{n,d}$,  write
$$\Delta^{\C}( \{A\}_d ) = \sum_{A' \in \MX_{n,d'}, A''\in \Theta_{n,d''}} h^{A', A''}_{A} \{A'\}_{d'} \otimes \ ^{\mathfrak a} \{A''\}_{d''}.$$
Then $h^{A', A''}_A \in \mbb N[v,v^{-1}]$ for all $A, A' $ and $A''$.
\end{prop}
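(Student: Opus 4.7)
The plan is geometric, following the strategy of \cite{FL15} in finite type. The key is to interpret the raw comultiplication $\widetilde\Delta^\C$ sheaf-theoretically via the commutative diagram (\ref{comm-coproduct}), and then to apply the decomposition theorem of Beilinson-Bernstein-Deligne \cite{BBD82}.

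First, I would recall from Proposition~\ref{CB+c} (and its counterpart in affine type $A$ from \cite{Lu99}) that the canonical basis element $\{A\}_d$ corresponds, up to the explicit normalization by $v^{-d_A}$, to the intersection cohomology complex $\mathrm{IC}_A$ on the orbit closure $\overline{\mathcal O_A}$. By the diagram (\ref{comm-coproduct}), the raw comultiplication $\widetilde\Delta^\C_{\mbf b',\mbf a',\mbf b'',\mbf a''}$ on the span of elements with fixed $(\mbf b,\mbf a)$ is implemented at the level of equivariant sheaves by $\pi_! \iota^*$, where $\iota: \X^{\C}_{\mbf a,\mbf a',\mbf a''}\hookrightarrow \X^{\C}_{n,d}(\mbf a)$ is a locally closed embedding and $\pi: \X^{\C}_{\mbf a,\mbf a',\mbf a''}\to \X^{\C}_{n,d'}(\mbf a')\times\X_{n,d''}(\mbf a'')$ is the natural projection. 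The restriction $\iota^*\mathrm{IC}_A$ is (a shift of) a semisimple complex on $\iota^{-1}(\overline{\mathcal O_A})$.

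The central step is to verify that $\pi$ is proper on the relevant support, so that $\pi_!=\pi_*$ and the decomposition theorem applies. This reduces to checking that the fibers of $\pi$ can be identified with products of (finite type) partial flag varieties inside fixed finite-dimensional quotients of lattices, for which properness and projectivity are standard; the analogous argument in affine type $A$ is given in \cite{Lu00} and the finite-type $C$ argument is in \cite{FL15}. Granting this, the decomposition theorem yields
\[
\pi_! \iota^* \mathrm{IC}_A \;\simeq\; \bigoplus_{A',A'',i} (\mathrm{IC}_{A'}\boxtimes \mathrm{IC}_{A''})[-i]^{\oplus m_i(A',A'',A)},\qquad m_i(A',A'',A)\in\mathbb N,
\]
and translating Frobenius traces back into the Schur algebras gives positivity of the structure constants of $\widetilde\Delta^{\C}_{\mbf b',\mbf a',\mbf b'',\mbf a''}$ in the basis $\{A'\}_{d'}\otimes {}^{\mathfrak a}\{A''\}_{d''}$, i.e.\ they lie in $\mathbb N[v,v^{-1}]$.

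Finally, I would pass from the raw $\widetilde\Delta^\C$ to the refined $\Delta^\C$ by tracking the rescalings in (\ref{Dagger})--(\ref{Dj}). The renormalization factor $v^{s(\mbf b',\mbf a',\mbf b'',\mbf a'')+u(\mbf b'',\mbf a'')}$ is a global $v$-power on each $(\mbf b',\mbf a',\mbf b'',\mbf a'')$-component, and the subsequent twists by $\xi^{\fc}_{d',0,d''}\otimes\xi_{d'',0,-(2d'+2)}\xi_{d'',n-1,-(d''-1)}$ multiply each canonical basis element by a monomial in $v$ via Proposition~\ref{xiC} and its affine type $A$ counterpart. None of these rescalings introduces a negative sign, so positivity is preserved and $h^{A',A''}_A\in\mathbb N[v,v^{-1}]$. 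The main obstacle is the geometric verification of properness of $\pi$ (and the attendant identification of the image of $\iota^*\mathrm{IC}_A$ as a semisimple complex in the appropriate equivariant derived category of the ind-variety), which must be handled via finite-dimensional lattice truncations as in \cite{Lu99,Lu00}.
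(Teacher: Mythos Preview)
Your overall architecture is right: reduce from $\Delta^{\C}$ to $\widetilde\Delta^{\C}$ via Proposition~\ref{xiC} and the $v$-power rescalings, interpret $\widetilde\Delta^{\C}$ geometrically through the diagram~(\ref{comm-coproduct}), and establish positivity sheaf-theoretically. This matches the paper's proof.

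The gap is at your ``central step.'' The map $\pi$ is \emph{not} proper, and its fibers are not partial flag varieties. The fiber $\Z^{\C}_{L',L''}$ over $(L',L'')$ consists of all lattice chains $L$ with $\pi^{\natural}(L)=L'$ and $\pi''(L)=L''$; as shown in the discussion after Lemma~\ref{cop-auxi}, the unipotent radical $\mathcal U=\mathcal U_{V''}$ acts transitively on $\Z^{\C}_{L',L''}$, so each fiber is an \emph{affine space}, not a projective variety. (This is already the case in finite type.) Hence $\pi_!\neq\pi_*$ in general, and the decomposition theorem does not apply to $\pi_!\iota^*$ directly.

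The paper's proof avoids this by invoking Braden's hyperbolic localization theorem \cite[Theorem~8]{Br03}, following \cite[\S2.4]{FL15}. One introduces a suitable $\mathbb G_m$-action (coming from the one-parameter subgroup of the Levi of $P_{V''}$) for which $\X^{\C}_{\mbf a,\mbf a',\mbf a''}$ is the attracting locus and $\X^{\C}_{n,d'}(\mbf a')\times\X_{n,d''}(\mbf a'')$ is the fixed-point locus; Braden's theorem then guarantees that the hyperbolic localization functor $\pi_!\iota^*$ takes $\mathbb G_m$-equivariant semisimple complexes (in particular $\mathrm{IC}_A$) to semisimple complexes, yielding the required positivity without any properness hypothesis. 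Your final paragraph on passing back to $\Delta^{\C}$ is correct.
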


\begin{proof}
By Proposition~\ref{xiC}, the proof is reduced to showing the same type of positivity with respect to $\widetilde{\Delta}^{\C}$.
By an argument similar to ~\cite[Section 2.4]{FL15} and (\ref{comm-coproduct}), 
the positivity for  $\widetilde{\Delta}^{\C}$ follows from ~\cite[Theorem 8]{Br03}. The proposition is proved. 
\end{proof}

%By setting $d'=0$ and $d''=d$, we have the following corollary.

%\begin{cor}
%For any $A \in \MX_{n,d}$, write
%$\jmath_{n,d} (\{A\}_d)  = \sum_{ A''\in \Theta_{n,d}} h_{A, A''}  \{A''\}_{d}.$
%Then we have $h_{A, A''} \in \mbb N[v,v^{-1}]$ for all $A''  \in \Theta_{n,d}$.
%\end{cor}

Now let us  study the restriction of $\Delta^{\fc}$ to ${\bU}_{n,d'}^{\fc}$. 

\begin{prop}
\label{Dc}
Let $d=d' + d''$. We have a  homomorphism $\Delta^{\fc} : {\bU}_{n,d}^{\fc} \longrightarrow  {\bU}_{n,d'}^{\fc} \otimes  {\bU}_{n,d''}.$
More precisely, for all $i\in [0, r]$, we have 
\begin{align*}
\Delta^{\fc} (\mbf e_i)
& = \mbf e'_i \otimes \mbf K''_i + 1 \otimes  \mbf E''_i
+ \mbf k'_i \otimes v^{\delta_{i,0}}   \mbf F''_{n-1-i}  \mbf K''_i,  \\
\Delta^{\fc} (\mbf f_i)
& =  \mbf f'_i \otimes \mbf K''_{n-1-i} + \mbf k'^{-1}_i \otimes  v^{-\delta_{i,0}} \mbf K''_{n-1-i} \mbf F''_i
+ 1\otimes  \mbf E''_{n-1-i}, \\
\Delta^{\fc} (\mbf k_i) &= \mbf k'_i \otimes \mbf K''_i \mbf K''^{-1}_{n-1-i}.
\end{align*}
\end{prop}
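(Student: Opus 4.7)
The plan is to deduce Proposition~\ref{Dc} directly from the preceding proposition on $\Delta^{\fc\dagger}$ together with the definition of $\Delta^{\fc}$ in \eqref{Dj} as the composition $(\xi^{\fc}_{d',0,d''} \otimes \xi_{d'',0,-(2d'+2)} \xi_{d'',n-1,-(d''-1)}) \circ \Delta^{\fc\dagger}$. Since $\xi^{\fc}_{d,i,c}$ and $\xi_{d,i,c}$ are algebra automorphisms of the respective Schur algebras, and $\widetilde{\Delta}^{\fc}$ (hence $\Delta^{\fc\dagger}$ and $\Delta^{\fc}$) is already known to be an algebra homomorphism, the composite $\Delta^{\fc}$ is automatically an algebra homomorphism on all of ${\mbf S}^{\fc}_{n,d}$. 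What needs to be shown is (i) the explicit formulas on the Chevalley generators $\mbf e_i,\mbf f_i,\mbf k_i^{\pm 1}$, and (ii) that these formulas take values in the Lusztig subalgebra ${\bU}^{\fc}_{n,d'} \otimes {\bU}_{n,d''}$. Claim (ii) is then immediate from (i), since each summand on the right-hand side is manifestly a product of Chevalley generators from the two Lusztig algebras; and the algebra homomorphism property then transports this containment to the whole of ${\bU}^{\fc}_{n,d}$, which is generated by the $\mbf e_i,\mbf f_i,\mbf k_i^{\pm 1}$.

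For (i), I would simply apply the rescaling maps term-by-term to the three formulas in the preceding proposition. Recall that $\xi^{\fc}_{d',0,d''}$ multiplies $\mbf e'_0$ by $v^{-d''}$, multiplies $\mbf f'_0$ by $v^{d''}$, and fixes every other Chevalley generator of ${\mbf S}^{\fc}_{n,d'}$; meanwhile $\xi_{d'',0,-(2d'+2)}$ multiplies $\mbf E''_0$ by $v^{2d'+2}$ and $\mbf F''_0$ by $v^{-(2d'+2)}$, while $\xi_{d'',n-1,-(d''-1)}$ multiplies $\mbf E''_{n-1}$ by $v^{d''-1}$ and $\mbf F''_{n-1}$ by $v^{-(d''-1)}$, and both fix all other Chevalley generators and the $\mbf K''_j$. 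For $i \in [0,r]$ the index $n-1-i$ belongs to $[r+1,n-1]$, so $n-1-i \neq 0$ always, and $n-1-i = n-1$ precisely when $i=0$. Putting these observations together:

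\begin{itemize}
\item In $\Delta^{\fc\dagger}(\mbf e_i)$, the factor $v^{\delta_{i,0}d''}$ on the first term cancels the $\xi^{\fc}_{d',0,d''}$-scaling $v^{-\delta_{i,0}d''}$ on $\mbf e'_i$; the factor $v^{-\delta_{i,0}(2d'+2)}$ on the second term cancels the $\xi_{d'',0,-(2d'+2)}$-scaling $v^{\delta_{i,0}(2d'+2)}$ on $\mbf E''_i$; and on the third term the scalar $v^{\delta_{i,0}(d''-1)+\delta_{i,0}}$ is diminished by $v^{-\delta_{i,0}(d''-1)}$ from $\xi_{d'',n-1,-(d''-1)}$ applied to $\mbf F''_{n-1-i}$ (which is only nontrivial for $i=0$), leaving the net factor $v^{\delta_{i,0}}$.
\item The same bookkeeping applies to $\Delta^{\fc\dagger}(\mbf f_i)$, now using that $\xi^{\fc}_{d',0,d''}$ multiplies $\mbf f'_0$ by $v^{d''}$ (cancelling $v^{-\delta_{i,0}d''}$), that $\xi_{d'',n-1,-(d''-1)}$ multiplies $\mbf F''_{n-1-i}$ by $v^{-\delta_{i,0}(d''-1)}$ (cancelling $v^{\delta_{i,0}(d''-1)}$ and producing the $v^{-\delta_{i,0}}$ shown), and that $\xi_{d'',0,-(2d'+2)}$ multiplies $\mbf E''_{n-1-i}$ nontrivially only when $n-1-i=0$, which does not occur in our range.
\item The formula for $\Delta^{\fc}(\mbf k_i)$ is unchanged from $\Delta^{\fc\dagger}(\mbf k_i)$ because all three $\xi$-maps fix the Cartan elements $\mbf k'_i$ and $\mbf K''_j$.
\end{itemize}

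There is no real obstacle: the work is a direct bookkeeping exercise, and the only subtlety is keeping track of the Kronecker deltas at $i=0$ and at $n-1-i=n-1$ in the $\xi$-rescalings, which is why we only see the explicit $v^{\pm\delta_{i,0}}$ corrections on the third terms. Once the three formulas are verified, each right-hand side visibly lies in ${\bU}^{\fc}_{n,d'}\otimes{\bU}_{n,d''}$, so $\Delta^{\fc}$ restricts to the asserted algebra homomorphism.
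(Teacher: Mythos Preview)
Your approach is exactly what the paper intends: Proposition~\ref{Dc} is stated without proof precisely because it follows by applying the rescaling automorphisms $\xi^{\fc}_{d',0,d''}$ and $\xi_{d'',0,-(2d'+2)}\xi_{d'',n-1,-(d''-1)}$ (as in the definition \eqref{Dj}) term-by-term to the $\Delta^{\fc\dagger}$-formulas, and your treatment of $\Delta^{\fc}(\mbf e_i)$ and $\Delta^{\fc}(\mbf k_i)$ is correct.

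There is one bookkeeping slip in your sketch for $\Delta^{\fc}(\mbf f_i)$: the second term of $\Delta^{\fc\dagger}(\mbf f_i)$ contains $\mbf F''_i$ (not $\mbf F''_{n-1-i}$), and the relevant rescaling is $\xi_{d'',0,-(2d'+2)}$ sending $\mbf F''_0 \mapsto v^{-(2d'+2)}\mbf F''_0$, which cancels $v^{\delta_{i,0}(2d'+2)}$ and leaves the displayed $v^{-\delta_{i,0}}$. Likewise, for the third term $1\otimes v^{-\delta_{i,0}(d''-1)}\mbf E''_{n-1-i}$, it is $\xi_{d'',n-1,-(d''-1)}$ (not $\xi_{d'',0,-(2d'+2)}$) that acts nontrivially when $i=0$, sending $\mbf E''_{n-1}\mapsto v^{d''-1}\mbf E''_{n-1}$ and cancelling the prefactor. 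With these two corrections your argument is complete and matches the paper.
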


Recall the comultiplication $\Delta$ in the affine type $A$  from ~\cite{FL15} (see also ~\cite{Lu00} for a related construction).
This is an algebra homomorphism  
$$
\Delta: {\mbf S}_{n,d} \longrightarrow {\mbf S}_{n, d'} \otimes {\mbf S}_{n, d''}
$$
defined by
\begin{align}
\label{A-coass}
\begin{split}
\Delta ( \mbf E_i) & = \mbf E'_i \otimes \mbf K_i'' + 1 \otimes \mbf E''_i, \\
\Delta (\mbf F_i) & = \mbf F'_i \otimes 1+ \mbf K'^{-1}_{i} \otimes \mbf F_i'', \\
\Delta (\mbf K_i) & = \mbf K_i' \otimes \mbf K_i'', \hspace{2cm} \forall 0\leq i \leq n-1.
\end{split}
\end{align}
Here the superscripts follow the same convention in Proposition ~\ref{tDj-form}.

\begin{prop}
\label{coassociative}
The following coassociativity  holds  on $\bU^{\fc}_{n,d}$: 
$$
(1\otimes \Delta) \Delta^{\fc} = (\Delta^{\fc} \otimes 1) \Delta^{\fc}.
$$
\end{prop}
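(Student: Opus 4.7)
The plan is to reduce the identity to a check on a generating set and then verify it term by term on the Chevalley generators.

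First I would observe that both $(1\otimes \Delta)\circ \Delta^{\fc}$ and $(\Delta^{\fc}\otimes 1)\circ \Delta^{\fc}$ are algebra homomorphisms from $\bU^{\fc}_{n,d}$ into the iterated tensor product $\bU^{\fc}_{n,d_1}\otimes \bU_{n,d_2}\otimes \bU_{n,d_3}$ (for any composition $d=d_1+d_2+d_3$). Indeed, the restriction of $\Delta^{\fc}$ to $\bU^{\fc}$ is an algebra homomorphism by Proposition~\ref{Dc}, and $\Delta$ is an algebra homomorphism on ${\mbf S}_{n,\bullet}$ by the discussion around \eqref{A-coass}; tensoring with the identity preserves this property. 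Since $\bU^{\fc}_{n,d}$ is generated by $\mbf e_i, \mbf f_i, \mbf k_i^{\pm 1}$ for $i\in[0,r]$, it suffices to verify the equality on these generators.

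Next, for the Cartan elements the identity is immediate: applying Proposition~\ref{Dc} and then using $\Delta(\mbf K_i)=\mbf K_i\otimes \mbf K_i$ on the affine type $A$ factor gives $\mbf k_i \otimes \mbf K_i\mbf K_{n-1-i}^{-1}\otimes \mbf K_i\mbf K_{n-1-i}^{-1}$ from either composition. For $\mbf e_i$ I would expand $(1\otimes \Delta)\Delta^{\fc}(\mbf e_i)$ by applying $\Delta$ termwise to the three summands in Proposition~\ref{Dc}, using $\Delta(\mbf E_i)=\mbf E_i\otimes \mbf K_i+1\otimes \mbf E_i$, $\Delta(\mbf F_{n-1-i})=\mbf F_{n-1-i}\otimes 1+\mbf K_{n-1-i}^{-1}\otimes \mbf F_{n-1-i}$, and $\Delta(\mbf K_i)=\mbf K_i\otimes \mbf K_i$, yielding five terms in $\bU^{\fc}_{n,d_1}\otimes \bU_{n,d_2}\otimes \bU_{n,d_3}$. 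Then I would expand $(\Delta^{\fc}\otimes 1)\Delta^{\fc}(\mbf e_i)$ by applying $\Delta^{\fc}$ to the affine type $C$ factor of each summand, using $\Delta^{\fc}(\mbf k_i)=\mbf k_i\otimes \mbf K_i\mbf K_{n-1-i}^{-1}$, $\Delta^{\fc}(\mbf e_i)$ itself, and $\Delta^{\fc}(1)=1\otimes 1$. Term-by-term matching, together with the commutativity of the Cartan elements $\mbf K_i$ and $\mbf K_{n-1-i}^{-1}$ within $\bU_{n,d_2}$, gives the two expressions term-for-term. The $\mbf f_i$ case is completely parallel.

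The main technical point to watch is the boundary factor $v^{\delta_{i,0}}$ appearing in the formulas of Proposition~\ref{Dc} for $\Delta^{\fc}(\mbf e_0)$ and $\Delta^{\fc}(\mbf f_0)$. In side (A) this scalar is introduced once by the outer $\Delta^{\fc}$ and then distributes over both summands of $\Delta(\mbf F_{n-1}\mbf K_0)$; in side (B) the same scalar appears once in the $\mbf k\otimes \mbf F\mbf K$ summand of $\Delta^{\fc}(\mbf e_0)$ (which is stabilized under $\Delta^{\fc}\otimes 1$ via $\Delta^{\fc}(1)$) and once again when $\Delta^{\fc}$ is applied to the factor $\mbf k_0$ paired with the surviving $\mbf F_{n-1}\mbf K_0$ on the third slot. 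A direct comparison shows the powers of $v$ match on both sides.

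The hard part will be primarily bookkeeping rather than conceptual: tracking the $v^{\delta_{i,0}}$ normalization through two layers of comultiplication and verifying that the rewriting $\mbf K_{n-1-i}^{-1}\mbf K_i=\mbf K_i\mbf K_{n-1-i}^{-1}$ matches the unique cross-term on each side. No new input beyond Proposition~\ref{Dc} and the explicit affine type $A$ comultiplication \eqref{A-coass} is needed, and the computation is local at each generator; consequently, once it is verified for $\mbf e_i, \mbf f_i, \mbf k_i^{\pm 1}$, the algebra-homomorphism reduction in the first paragraph delivers the full identity on $\bU^{\fc}_{n,d}$.
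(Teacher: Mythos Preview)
Your proposal is correct and follows essentially the same approach as the paper: reduce to generators via the algebra-homomorphism property and verify directly using the explicit formulas in Proposition~\ref{Dc} and \eqref{A-coass}. The paper's proof is slightly more economical in that it observes the cases $i\in[1,r]$ already follow from the finite type $B/C$ coassociativity established in \cite{FL15}, so only the boundary generators $\mbf e_0,\mbf f_0,\mbf k_0^{\pm 1}$ require fresh verification; your plan to check all $i$ directly works just as well.
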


\begin{proof}
Beyond type $A$ or finite type $B/C$ we only need to check the desired identity when acting on $\mbf e_0$,  $\mbf f_0$ and $\mbf k_0^{\pm}$. This can be verified directly.
\end{proof}

Now setting $d'=0$, we have
$
\mbf e'_i=0, \mbf f'_i=0, \mbf k'_i = v^{-\delta_{i, 0} + \delta_{i, r}}$ in ${\mbf S}^{\fc}_{n, 0},$
and  $\Delta^{\fc}$ becomes the following algebra homomorphism
\begin{align}
\label{je}
\begin{split}
\jmath_{n,d} &: \Sj \longrightarrow {\mbf S}_{n,d}
\\
\jmath_{n,d} (\e_i) & = \mbf E_i + v^{-\delta_{i,0}} \mbf K_i \mbf  F_{n-1-i}, \\
\jmath_{n,d} (\f_i ) & =  \mbf E_{n-1-i} + v^{ \delta_{i,0}} \mbf F_i \mbf K_{n-1-i},\\
\jmath_{n,d} (\mbf k_i) & = v^{-\delta_{i,0} + \delta_{i, r}} \mbf  K_i \mbf K_{n-1-i}^{-1}, \quad \forall i \in [0, r].
\end{split}
\end{align}
It follows by restriction that we have also a homomorphism $\jmath_{n,d}: \bU^{\fc}_{n,d} \rightarrow \bU_{n,d}.$
Thanks to Propositions ~\ref{Astandard-basis} and ~\ref{monomial}, 
the same argument as in finite type $B/C$ \cite{FL15} gives us the following.

\begin{prop}
\label{j-inj}
The homomorphism $\jmath_{n,d}: \Sj \rightarrow {\mbf S}_{n,d}$ 
(and $\jmath_{n,d}: \bU^{\fc}_{n,d} \rightarrow \bU_{n,d}$)
is injective.
\end{prop}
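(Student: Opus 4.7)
The plan is to establish injectivity by a triangularity argument with respect to the monomial bases, mirroring the argument in the finite type $B/C$ setting of \cite{FL15}. The key inputs are the explicit formulas for $\jmath_{n,d}$ on Chevalley generators given in \eqref{je}, the monomial basis $\{\zeta^{\mathfrak a}_A\}_{A\in\Theta_{n,d}^{\ap}}$ of $\bU_{n,d}$ from Proposition~\ref{Astandard-basis}, and the monomial basis of $\Sj$ (respectively, $\bU^{\fc}_{n,d}$) promised by Proposition~\ref{monomial}. It suffices to prove the injectivity of $\jmath_{n,d}$ on $\Sj$, since the statement for the Lusztig subalgebras then follows by restriction (the map clearly sends Chevalley generators to elements of $\bU_{n,d}$).

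First, I would analyze the image under $\jmath_{n,d}$ of a single Chevalley generator applied to a standard basis element $[A]$. Since $\jmath_{n,d}(\mbf e_i) = \mbf E_i + v^{-\delta_{i,0}}\mbf K_i\mbf F_{n-1-i}$ (and similarly for $\mbf f_i$), and since the two summands $\mbf E_i$ and $\mbf K_i\mbf F_{n-1-i}$ act on ``disjoint halves'' of a centrosymmetric matrix in $\MX_{n,d}$ (the first modifies positions near index $i$, the second modifies positions near index $n-1-i\equiv -i-1$), applying the type-$A$ multiplication formulas from Lemma~\ref{lem2} yields a leading term of the form $[\text{unfolded matrix}] + \text{lower terms}$ with coefficient one, provided the multiplication hypotheses are satisfied.

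Second, I would iterate this analysis along the monomial expression for $\zeta^{\fc}_A$ given by Proposition~\ref{monomial}. The construction of $\zeta^{\fc}_A$ proceeds by successive multiplications satisfying precisely the shape constraints of Lemma~\ref{BKLW3.9} and Lemma~\ref{BKLW3.9-b}; these constraints ensure that the corresponding $A$-type multiplications, applied both at indices $i$ and at the mirrored indices $n-1-i$, also satisfy the hypotheses of Lemma~\ref{lem2}. Consequently one obtains
\[
\jmath_{n,d}(\zeta^{\fc}_A) = [\widetilde A] + \text{lower terms}
\]
in $\mbf S_{n,d}$, where $\widetilde A\in\Theta_{n,d}$ is the ``unfolded'' matrix obtained from $A\in\MX_{n,d}$ by forgetting the centrosymmetry constraint (keeping the stripe indexed by rows $[1,n]$ as the data in $\Theta_{n,d}$). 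The map $A\mapsto\widetilde A$ is injective because centrosymmetry lets us recover $A$ from any full period of $\widetilde A$.

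Finally, because the standard basis $\{[\widetilde A]\}_{\widetilde A\in\Theta_{n,d}}$ of $\mbf S_{n,d}$ is linearly independent and the leading terms $[\widetilde A]$ are distinct as $A$ ranges over $\MX_{n,d}$, the images $\{\jmath_{n,d}(\zeta^{\fc}_A)\}$ are linearly independent in $\mbf S_{n,d}$ by a standard upper-triangularity argument with respect to the partial order $\leq$ of \eqref{partial}. Since $\{\zeta^{\fc}_A\}$ spans $\Sj$ by Proposition~\ref{monomial}, the map $\jmath_{n,d}$ is injective. The main technical obstacle will be verifying that the multiplication hypotheses of Lemma~\ref{lem2} are satisfied \emph{simultaneously} at both the index $i$ and the mirror index $n-1-i$ when unfolding the iterative construction of $\zeta^{\fc}_A$; the boundary cases $i=0$ and $i=r$, where the formulas for $\jmath_{n,d}$ acquire the extra $v^{\pm\delta_{i,0}}$ factors and where the mirror node coincides with a fixed node, will require separate bookkeeping, but do not produce any cancellation of the coefficient~$1$ in the leading term.
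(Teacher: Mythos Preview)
Your overall strategy---triangularity of $\jmath_{n,d}$ with respect to the monomial bases, following \cite{FL15}---is exactly what the paper has in mind; its proof is a one-line reference to Propositions~\ref{Astandard-basis} and~\ref{monomial} together with the finite type argument. Two points, however, need correction.

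First, Proposition~\ref{monomial} produces monomials $\zeta_A$ only for $A\in\MX_{n,d}^{\ap}$, and by Theorem~\ref{CB-Udn} these span $\bU^{\fc}_{n,d}$, \emph{not} $\Sj$ (in affine type the Lusztig algebra is a proper subalgebra of the Schur algebra). So your argument, as written, establishes injectivity on $\bU^{\fc}_{n,d}$ but not on all of $\Sj$; the sentence ``Since $\{\zeta^{\fc}_A\}$ spans $\Sj$ by Proposition~\ref{monomial}'' is false. To reach the Schur algebra statement one needs either a monomial basis for the full $\Sj$ (this is constructed only later, in Chapter~\ref{chap:Kc}) or a separate argument, e.g.\ via the geometric positivity of Proposition~\ref{comult+S}.

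Second, your description of the ``unfolded matrix'' $\widetilde A\in\Theta_{n,d}$ is not well-defined: a matrix in $\MX_{n,d}$ has total weight $2d+2$ in a stripe, while one in $\Theta_{n,d}$ has weight $d$, so one cannot simply ``forget centrosymmetry.'' The correct picture---visible in the proof of Proposition~\ref{monomial-aperiodic}---is that $\jmath_{n,d}(\zeta_A)$ has \emph{several} leading terms, namely the lower-triangular matrices $A^-$ in the set $\Xi_{n,d}(A)$ of \eqref{XiA} (those with the same strictly lower-triangular part as $A$ and with $\co(A^-)\models\co(A)$). These sets are pairwise disjoint as $A$ varies (since $A$ is recovered from the lower-triangular part by centrosymmetry and from the column sums via $\models$), and each $A^-$ occurs with coefficient~$1$; this is what gives the linear independence. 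Your single-leading-term formulation would need to be rewritten in these terms.
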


Proposition~\ref{comult+S} in our setting of $d'=0$ gives us the following.
\begin{prop}
\label{j+}
 The map $\jmath_{n,d}$ sends a canonical basis element in $\Sj$  to a sum of canonical basis elements of
 ${\mbf S}_{n,d}$ with coefficients in  $\mbb{N} [v, v^{-1}]$.
\end{prop}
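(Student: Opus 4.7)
The plan is to derive this directly from Proposition~\ref{comult+S} by specializing the comultiplication $\Delta^{\fc}_{d', d''}$ at $d' = 0$; indeed, the paragraph preceding \eqref{je} constructs $\jmath_{n, d}$ in exactly this way. The only thing to check is that at $d' = 0$ the tensor factor $\mbf S^{\fc}_{n, 0}$ behaves as a trivial scalar factor, so the positivity statement for $\Delta^{\fc}$ transfers cleanly to a positivity statement for $\jmath_{n, d}$.

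First I would verify that $\mbf S^{\fc}_{n, 0}$ is one-dimensional. By \eqref{LandC}, any $(a_i) \in \Lambda^{\fc}_{n, 0}$ must satisfy $\sum_{i=1}^{n} a_i = 2$ with $a_0, a_{r+1}$ odd and $a_i = a_{-i} = a_{n-i}$, which forces $a_0 = a_{r+1} = 1$ and all other $a_i = 0$. Consequently $\MX_{n, 0}$ is a singleton $\{D_0\}$ consisting of a single diagonal matrix, and the unique canonical basis element $\{D_0\}_0 = [D_0]$ is the unit of $\mbf S^{\fc}_{n, 0}$. Under the natural identification $\mbf S^{\fc}_{n, 0} \otimes \mbf S_{n, d} \cong \mbf S_{n, d}$ sending $[D_0] \otimes x \mapsto x$, a short comparison on Chevalley generators (matching the $d' = 0$ case of Proposition~\ref{Dc} against the defining formulas in \eqref{je}, noting that the twists $\xi^{\fc}$ of \eqref{Dj} are already absorbed in Proposition~\ref{Dc}) confirms that $\Delta^{\fc}_{0, d}$ coincides with $\jmath_{n, d}$.

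With this identification in hand, Proposition~\ref{comult+S} applied at $d' = 0$ gives
\[
\jmath_{n, d}(\{A\}_d) \;=\; \sum_{A'' \in \Theta_{n, d}} h^{D_0, A''}_{A} \cdot {}^{\mathfrak a}\{A''\}_{d}, \qquad h^{D_0, A''}_{A} \in \mbb N[v, v^{-1}],
\]
which is precisely the claimed positivity. All the substance of the argument therefore resides in Proposition~\ref{comult+S}, whose proof in the excerpt invokes Brundan's/Braverman's positivity result for intersection cohomology of convolutions. The only potential obstacle is the routine formal step of confirming that specialization at $d' = 0$ genuinely recovers $\jmath_{n, d}$, and this is an immediate line-by-line comparison of the generator formulas.
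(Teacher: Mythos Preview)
Your proof is correct and takes essentially the same approach as the paper: the paper states that Proposition~\ref{j+} is simply Proposition~\ref{comult+S} specialized to $d'=0$, and the identification $\jmath_{n,d}=\Delta^{\fc}\vert_{d'=0}$ is already set up in the paragraph preceding \eqref{je}. (One bibliographic slip: the result cited as \cite{Br03} in the proof of Proposition~\ref{comult+S} is due to Braden, not Brundan or Braverman.)
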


%%%%%
\section{Monomial and canonical bases of $\bU^{\fc}_{n,d}$}

Recall  $\MX_{n,d}$ from \eqref{Mdn} and the notion of aperiodic matrices from~\eqref{aperiod}. We denote
\begin{equation}
\label{Mdn:ap}
 \MX_{n,d}^{\ap} =\{ A \in \MX_{n,d} \vert A \text{ is aperiodic} \}.
\end{equation}

A product of standard basis elements $[G_1] * [G_2] * \cdots *[G_m]$ in $\Sj$ is called an $aperiodic$ $monomial$ if for each $i$,   
$G_i - R E^{j, j+1}_\theta$ %or $G_i - RE^{j+1, j}_\theta$ 
is diagonal for some $R\in \mbb N$ and $j\in \mbb Z$. 
The following aperiodic monomial is an analogue of  $\zeta^{\mathfrak a}_A$ for $\bU_{n,d}$ (see Proposition~ \ref{Astandard-basis}).

\begin{prop}
\label{monomial}
For any $A \in \MX_{n,d}^{\ap}$,
there exists an aperiodic monomial $\zeta_A \in \bU_{n,d}^{\fc}$ such that 
\begin{align}
\label{zA}
\zeta_A = [A] + \mbox{lower terms}.
\end{align}
\end{prop}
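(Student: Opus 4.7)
The strategy is to generalize the inductive argument of Proposition~\ref{Astandard-basis}, replacing the type $A$ multiplication formulas of Lemma~\ref{lem2} with the type $C$ counterparts from Lemma~\ref{BKLW3.9-b}. Introduce the complexity function
\[
\Psi^{\fc}(A) = \sum_{i \in [1, n]} \sum_{j \in \mathbb{Z}} |j - i|\, a_{ij},
\]
which is a well-defined nonnegative integer by the periodicity and row-finiteness built into $\MX_{n,d}$, and which vanishes precisely when $A$ is diagonal. The plan is to induct on $\Psi^{\fc}(A)$; the base case $\Psi^{\fc}(A) = 0$ is handled by setting $\zeta_A = [A]$.

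For the inductive step, let $m = \min\{\ell \in \mathbb{N} \mid a_{ij} = 0 \text{ whenever } |i-j| > \ell\}$, so that $m \geq 1$. After applying the anti-involution $[A] \mapsto [{}^t\!A]$ if necessary (which exchanges the roles of $\mathbf{e}_h$ and $\mathbf{f}_h$), one may assume the existence of $k \in \mathbb{Z}$ with $a_{k,\,k+m} = 0$ and $a_{k-1,\,k-1+m} \neq 0$. Define the centrosymmetric analogue of the fold matrix from Proposition~\ref{Astandard-basis} by
\[
f^{\fc}_{k; s, t}(A) = A - \sum_{s \leq j \leq t} a_{k-1, j}\bigl(E^{k-1, j}_{\theta} - E^{k, j}_{\theta}\bigr),
\]
which again lies in $\MX_{n,d}$ and strictly decreases $\Psi^{\fc}$ whenever some $a_{k-1, j}$ with $j \in [s, t]$ is nonzero. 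Taking $u = \max\{s \leq k+m-1 \mid f^{\fc}_{k; s, k+m-1}(A) \in \MX_{n,d}^{\ap}\}$, the maximality of $u$ forces $a_{k, \ell} = 0$ for all $\ell > u$, so Lemma~\ref{BKLW3.9-b}(1) applies and yields
\[
[B] * \bigl[f^{\fc}_{k; u, k+m-1}(A)\bigr] = [A] + \text{lower terms},
\]
where $[B]$ is (up to a $v$-scalar) a divided power of $\mathbf{e}_{k-1}$ with total exponent $R = \sum_{\ell = u}^{k+m-1} a_{k-1, \ell}$, decomposed as $R = R_0 + \cdots + R_l$ according to the column positions. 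Pre-multiplying the monomial furnished by the inductive hypothesis for $f^{\fc}_{k; u, k+m-1}(A)$ by $[B]$ produces the desired aperiodic monomial $\zeta_A$.

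The main obstacle is the treatment of the boundary rows $k - 1 \in \{0, r\}$. Here the generic subcases of Lemma~\ref{BKLW3.9-b} fail, and one must invoke its special subcases, which impose extra inequalities such as $a_{00} \geq 2 R_l$ or $a_{r+1, r+1} \geq 2 R_0$. These reflect the doubling of the symmetrized elementary matrices at the fixed points of the centrosymmetric involution, namely $E^{00}_{\theta} = 2 E^{00}$ and $E^{r+1, r+1}_{\theta} = 2 E^{r+1, r+1}$. Satisfying them requires a careful choice of decomposition $R = R_0 + \cdots + R_l$ so that the mass being pushed across the fixed row does not exceed the available diagonal entry. The parity constraint in the definition of $\MX_{n,d}$, which forces $a_{00}$ and $a_{r+1, r+1}$ to be positive odd integers preserved (mod $2$) under all $\theta$-symmetric moves, combined with the aperiodicity of the intermediate matrices, guarantees that such a decomposition always exists. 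Centrosymmetry of the intermediate matrices is automatic, since every Chevalley generator acts via the symmetric elementary matrices $E^{i, j}_{\theta}$, so the algorithm stays within $\MX_{n,d}$ throughout.
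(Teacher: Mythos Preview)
Your proposal is correct and follows exactly the approach the paper takes: the paper's entire proof is the one-liner ``With the help of Lemma~\ref{BKLW3.9-b}, the proof is the same as that for Proposition~\ref{Astandard-basis},'' and you have simply written out what that means. Two minor points: in the paper's conventions $\mathbf{f}_h$ (not $\mathbf{e}_h$) is the generator with $B - RE^{h,h+1}_\theta$ diagonal, so your $[B]$ is a divided power of $\mathbf{f}_{k-1}$; and your concern about the boundary inequalities $a_{00}\ge 2R_l$, $a_{r+1,r+1}\ge 2R_0$ is unwarranted---there is no freedom in decomposing $R$, and these inequalities are satisfied automatically because the fold $f^{\fc}_{k;s,t}$ applied with $E_\theta$-matrices has already added $2R_0$ (resp.\ $2R_l$) to the relevant diagonal entry before you multiply back, so the parity/aperiodicity justification you sketch is not needed.
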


\begin{proof}
With the help of Lemma \ref{BKLW3.9-b}, the proof is the same as that for Proposition~ \ref{Astandard-basis}.
\end{proof}
While the aperiodic monomial $\zeta_A$ with \eqref{zA} is not unique, we shall fix one for each $A$.

The following type $C$ aperiodicity follows from two kinds of positivity properties and 
the highly nontrivial affine type $A$ aperiodicity in ~\cite[Proposition 6.5]{Lu99}.

\begin{prop} \label{monomial-aperiodic}
Let $\texttt{M}$ be an aperiodic  monomial in  ${\mbf S}^{\fc}_{n,d}$.
Suppose that $\texttt{M} =\sum c_A \{ A\}_d$ where $c_A\in \mbb Z[ v, v^{-1}]$. If $c_A \neq 0$, then $A$ must be aperiodic.
\end{prop}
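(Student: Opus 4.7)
The plan is to deduce the statement from Lusztig's affine type $A$ aperiodicity \cite[Proposition 6.5]{Lu99} via the injective algebra homomorphism $\jmath_{n,d}: \Sj \hookrightarrow \mbf S_{n,d}$ of \eqref{je}, using the two available positivity properties.

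First I would observe that every factor $[B_i]$ of the aperiodic monomial $\texttt{M}$ is itself a canonical basis element (a divided-power Chevalley generator of $\bU^{\fc}_{n,d}$), so the canonical basis multiplication positivity in $\Sj$ (Proposition~\ref{CB+c}) forces $c_A \in \NN[v,v^{-1}]$ for every $A$. Next I would apply $\jmath_{n,d}$ to $\texttt{M}$ and compute the image in two ways. Using the formulas \eqref{je} together with the $v$-commutation relations in $\mbf S_{n,d}$, one expands $\jmath_{n,d}(\texttt{M})$ as a $\ZZ[v,v^{-1}]$-linear combination of aperiodic monomials of affine type $A$; Lusztig's type $A$ aperiodicity then shows that $\jmath_{n,d}(\texttt{M})$ lies in the $\cA$-span of $\{{}^{\fa}\{B\}_d : B \in \Theta_{n,d}^{\ap}\}$. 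On the other hand, Proposition~\ref{j+} gives $\jmath_{n,d}(\{A\}_d) = \sum_B p_B^A \, {}^{\fa}\{B\}_d$ with $p_B^A \in \NN[v,v^{-1}]$, so $\jmath_{n,d}(\texttt{M}) = \sum_{A, B} c_A p_B^A \,{}^{\fa}\{B\}_d$. Equating coefficients of ${}^{\fa}\{B\}_d$ for each non-aperiodic $B$ forces $\sum_A c_A p_B^A = 0$, and the two positivities then force every summand $c_A p_B^A$ to vanish.

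The main obstacle will be upgrading these vanishings into $c_A = 0$ for every non-aperiodic $A \in \MX_{n,d}$. The remaining step is a leading-term analysis: geometrically $\jmath_{n,d}$ is induced by the procedure of intersecting an affine type $C$ lattice chain with a fixed Lagrangian lattice chain (the $d'=0$ case of the comultiplication \eqref{Dj}), and this extracts from $A \in \MX_{n,d}$ a canonical matrix $B(A) \in \Theta_{n,d}$ controlling the leading term of $\jmath_{n,d}(\{A\}_d)$ in the partial order \eqref{comb-ord}; in particular $p_{B(A)}^A$ is a nonzero element of $\NN[v,v^{-1}]$. Moreover, any period witnessing the non-aperiodicity of $A$ restricts to a period of $B(A)$, so $B(A)$ inherits the non-aperiodicity of $A$. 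Combining $p_{B(A)}^A \ne 0$ with $c_A p_{B(A)}^A = 0$ then yields $c_A = 0$, completing the proof.
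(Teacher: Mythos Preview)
Your overall strategy matches the paper's: both use the positivity $c_A\in\NN[v,v^{-1}]$, apply $\jmath_{n,d}$, invoke Lusztig's affine type $A$ aperiodicity, use the positivity of $\jmath_{n,d}$ on canonical bases, and then run a leading-term argument to transfer non-aperiodicity back from type $A$ to type $C$. Your first two paragraphs are fine. The gap is in the third paragraph, which you yourself flag as the ``main obstacle'': you assert the existence of a canonical $B(A)\in\Theta_{n,d}$ with $p^A_{B(A)}\neq 0$ that inherits any period of $A$, but you do not actually construct it, and the geometric description you give (intersecting with a fixed Lagrangian chain) does not by itself produce a single well-defined matrix nor prove either claimed property.

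The paper makes this step precise as follows. For $A\in\Xi_{n,d}$ it introduces the set $\Xi_{n,d}(A)$ of \emph{lower triangular} matrices $B\in\Theta_{n,d}$ that share $A$'s strictly lower triangular entries and satisfy $\co(B)\models\co(A)$. Since $A$ is centrosymmetric, a period of $A$ at $p>0$ reflects to a period at $-p$, which lives in the lower triangular part and hence survives in every $A^-\in\Xi_{n,d}(A)$; thus $A$ non-aperiodic forces each $A^-$ non-aperiodic. Then, using $\zeta_A=\{A\}_d+\text{lower terms}$ and an induction on the length of the monomial $\zeta_A$ via the multiplication formula \eqref{B*A} for $\F_i^{(a)}$ (which, acting on a lower triangular standard basis element, produces only lower triangular matrices that are strictly smaller, or non-lower-triangular ones), one shows that each $A^-\in\Xi_{n,d}(A)$ appears in $\jmath_{n,d}(\{A\}_d)$ with coefficient $1$, and the remaining lower-triangular contributions are at strictly smaller $B<A^-$. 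This is the concrete substitute for your $B(A)$, and it is exactly what is needed to conclude $c_A=0$ from the vanishing forced by the two positivities.
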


\begin{proof}
We shall denote by $^{\fa} \{A\}_d$ the canonical basis elements in ${\mbf S}_{n,d}$ (and in $\bU_{n,d}$) \cite{Lu99}.
Recall that $\bU_{n, d}$ is generated by $\E_i, \F_i$ and $\mbf K_i^{\pm 1}$ for all $1\leq i\leq n$.
By (\ref{je}), we have
$\jmath_{n,d} (\texttt{M}) \in \bU_{n,d}$. 
By  ~\cite[Proposition 6.5]{Lu99}, we see that
\begin{align}
\label{j-ap-1}
\jmath_{n,d}(\texttt{M})=\sum_{B \ \text{aperiodic}}  g_B \ ^{\fa}\{B\}_d,
\quad \mbox{where} \ g_B \in \mbb{N} [v, v^{-1}].
\end{align}

For $A=(a_{ij}) \in \Xi_{n, d}$, 
we set
\begin{align}
\label{XiA}
\Xi_{n, d} (A) = \{ B=(b_{ij}) \in \Theta_{n, d} | b_{ij} =0, \forall i < j, b_{ij}  = a_{ij}, \forall i > j, \co (B) \models \co (A) \},
\end{align}
where the notation `$\mbf b \models \mbf a$' stands for  $b_i + b_{- i} + \delta_{i, 0} + \delta_{i, r+1}= a_i$ for all $1\leq i\leq n$.
%Recall that to a matrix $A \in \MX_{n,d}$, we can attach a bunch of matrices $A^-$ in $\Theta_{n,d}$ 
%by changing the upper triangular entries to zero and modifying the diagonal appropriately.
In particular, if $A$ is aperiodic, so is any matrix in $\Xi_{n, d}(A)$.
Since $\zeta_A = \{ A\}_d +$ lower terms by Proposition~\ref{monomial}, it implies that
$$
\jmath_{n,d}(\{A\}_d) = \sum_{A^-\in \Xi_{n, d} (A)}  \ ^{\fa}\{A^-\}_d + 
 \sum_{A^-\in \Xi_{n, d} (A)} \sum_{B  < A^-} h_{A^-,B} \ ^{\fa}\{B\}_d + R_A,
\quad h_{A^-,B} \in \mbb{N}[v, v^{-1}],
$$
where $R_A$ is a linear combination of   $^{\fa}\{B\}_d$ over $\mbb{N} [v, v^{-1}]$ for those $B$ not lower triangular. 
Indeed, this can be proved by induction on the length of the monomial $\zeta_A$ and 
utilizing the fact that the action of  the Chevalley generators $\F_i^{(a)}$  
on a standard basis element of a lower triangular matrix $A$ gives rise to a linear combination of 
standard basis element of either 
lower triangular  matrices $A' < A$ or non-lower-triangular matrices.
The latter is an observation from the multiplication formula for the Chevalley generator $\F_i^{(a)}$ in (\ref{B*A}).
So
\begin{align}
\label{j-ap-2}
\begin{split}
\jmath_{n,d} (\texttt{M}) & = \jmath_{n,d} (\sum_A  c_A \{A\}_d)  \\
&= \sum_A  \sum_{A^-\in \Xi_{n, d} (A)}  c_A\  ^{\fa}\{A^-\}_d +  \sum_A \sum_{A^-\in \Xi_{n, d} (A)} \sum_{B  < A^-}  c_A h_{A^-,B} \ ^{\fa}\{B\}_d  + \sum_A c_A R_A.
\end{split}
\end{align}
Observe also that $c_A \in \mbb{N}[v, v^{-1}]$ due to the geometric interpretation of $\texttt{M}$.
This implies that the coefficient of $^{\fa} \{A^-\}_d$ in (\ref{j-ap-2})  is $c_A$ plus some terms  in $\mbb{N}[v, v^{-1}]$
since $h_{A^-, B} \in \mbb{N}[v, v^{-1}]$, hence  nonzero.
By comparing (\ref{j-ap-1}) and (\ref{j-ap-2}), we see that $A^-$ are all aperiodic. Therefore $A$ is aperiodic. The proposition is proved.
\end{proof}

\begin{thm}
\label{CB-Udn}
The set $%\mbf B_{n,d}^{\fc} :=
\{ \{A\}_d \vert A \in \MX_{n,d}^{\ap} \}$ forms a basis (called the canonical basis) of $\bU^{\fc}_{n,d}$. Also, 
the set $\{\zeta_A  \vert A \in \MX_{n,d}^{\ap} \}$  forms a  basis (called a monomial basis) of $\bU^{\fc}_{n,d}$.
\end{thm}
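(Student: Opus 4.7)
\medskip
\noindent
\textbf{Proof proposal.} My plan is to deduce both assertions simultaneously by combining the monomial construction of Proposition~\ref{monomial} with the aperiodicity transfer of Proposition~\ref{monomial-aperiodic}, in a pattern analogous to Lusztig's argument in affine type $A$ (cf.\ the proof of Proposition~\ref{Astandard-basis} as a template). The key observation is that both candidate bases are indexed by $\MX_{n,d}^{\ap}$ and both are essentially unitriangular with respect to the standard basis of $\Sj$, so the linear algebra reduces to verifying containment in the subalgebra $\bU^{\fc}_{n,d} \subset \Sj$.

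First, I would establish linear independence and the span identity inside $\Sj$. By Proposition~\ref{monomial}, each $\zeta_A$ (for $A \in \MX_{n,d}^{\ap}$) equals $[A]$ plus a $\ZZ[v,v^{-1}]$-linear combination of $[B]$ with $B < A$. Since $\{[A] \mid A \in \MX_{n,d}\}$ is an $\mathbb{Q}(v)$-basis of $\Sj$, the family $\{\zeta_A \mid A \in \MX_{n,d}^{\ap}\}$ is linearly independent. Combining this with the unitriangularity $\{A\}_d = [A] + \sum_{B<A} P_{B,A}[B]$ from Proposition~\ref{CB+c}, I would conclude that the two families $\{\zeta_A\}$ and $\{\{A\}_d\}$ (both indexed by $\MX_{n,d}^{\ap}$) are related by an invertible upper-unitriangular change of basis, and hence span the same $\mathbb{Q}(v)$-subspace
$$
\cM := \mathrm{Span}\{\zeta_A \mid A \in \MX_{n,d}^{\ap}\} = \mathrm{Span}\{\{A\}_d \mid A \in \MX_{n,d}^{\ap}\} \subseteq \Sj.
$$

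Next, I would show the two-sided containment $\cM = \bU^{\fc}_{n,d}$. The inclusion $\cM \subseteq \bU^{\fc}_{n,d}$ is automatic since each $\zeta_A$ is by construction a monomial in Chevalley generators (equivalently, a product of standard basis elements indexed by near-diagonal matrices) and hence lies in $\bU^{\fc}_{n,d}$. For the reverse inclusion $\bU^{\fc}_{n,d} \subseteq \cM$, I would argue that $\bU^{\fc}_{n,d}$ is spanned by aperiodic monomials in the sense of Proposition~\ref{monomial-aperiodic}: indeed, the generators $[B]$ of $\bU^{\fc}_{n,d}$ (with $B$ or $B - RE^{h,h+1}_\theta$ or $B-RE^{h+1,h}_\theta$ diagonal) are precisely the building blocks for such monomials. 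By Proposition~\ref{monomial-aperiodic}, any such monomial expands in the canonical basis of $\Sj$ as a $\ZZ[v,v^{-1}]$-combination of $\{A\}_d$ with $A$ aperiodic, i.e.\ it lies in $\cM$. This yields $\bU^{\fc}_{n,d} \subseteq \cM$, completing the proof of both assertions.

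The potential obstacle here is really the need for Proposition~\ref{monomial-aperiodic}, whose own proof already invokes Lusztig's highly nontrivial aperiodicity theorem \cite[Proposition~6.5]{Lu99} in affine type $A$ via the embedding $\jmath_{n,d}$ and the positivity of Proposition~\ref{j+}. Granting those inputs, the argument above is essentially formal: nothing more than unitriangularity plus a span comparison. In particular, I would not need any new multiplication formulas or geometric input beyond what is already collected in this chapter, and the theorem follows by a routine bookkeeping argument once Propositions~\ref{monomial} and \ref{monomial-aperiodic} are in place.
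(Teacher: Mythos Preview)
Your overall strategy matches the paper's, but the first paragraph has a genuine logical gap. From the two unitriangularities $\zeta_A = [A] + \text{lower}$ and $\{A\}_d = [A] + \text{lower}$ with respect to the \emph{full} standard basis $\{[B] : B \in \MX_{n,d}\}$, you cannot conclude that the aperiodic-indexed families $\{\zeta_A\}$ and $\{\{A\}_d\}$ are related to each other by a unitriangular change of basis. Subtracting gives $\zeta_A = \{A\}_d + \sum_{B<A}(\cdots)\{B\}_d$, but the $B$ appearing here are a priori arbitrary matrices in $\MX_{n,d}$, not necessarily aperiodic. So the span equality $\cM_\zeta = \cM_{\{\}}$ is not justified at this point, and since $\MX_{n,d}^{\ap}$ is infinite, no cardinality argument rescues it.

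The fix is exactly what the paper does and what you yourself invoke later: apply Proposition~\ref{monomial-aperiodic} \emph{first} to the aperiodic monomial $\zeta_A$. This gives $\zeta_A = \sum_{A'\text{ aperiodic}} c_{A'}\{A'\}_d$; comparing with the unitriangularity forces $\zeta_A = \{A\}_d + \sum_{A'<A,\; A'\text{ aperiodic}} c_{A'}\{A'\}_d$. Now the transition really is unitriangular within the aperiodic family, and a downward induction on the partial order (each interval below $A$ being finite) gives $\{A\}_d \in \bU^{\fc}_{n,d}$. Your second paragraph then correctly supplies the spanning statement $\bU^{\fc}_{n,d} \subseteq \cM_{\{\}}$ via Proposition~\ref{monomial-aperiodic}, completing the proof. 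In short: all your ingredients are right, but Proposition~\ref{monomial-aperiodic} has to be used before, not after, the unitriangularity inversion.
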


\begin{proof}
%The imbedding  $\jmath_{n,d}: \bU^{\fc}_{n,d} \rightarrow \bU_{n,d}$ (see Proposition~\ref{j-inj})
%allows us to work inside the larger algebra $\bU_{n,d}$.
 For $A \in \MX_{n,d}^{\ap}$, we have $\zeta_A = [A] +\text{lower terms}$ by Proposition~\ref{monomial}, and so
$\zeta_A \in \{A\}_d +\sum_{A' <  A} \cA \{A'\}_d$; this sum can be additionally restricted to $A' \in \MX_{n,d}^{\ap}$
by Proposition~\ref{monomial-aperiodic}. Hence by an induction on $A$ by the partial ordering, we conclude that
$\{A\}_d \in \bU^{\fc}_{n,d}$. 
Since $\{ \{A\}_d \vert A \in \MX_{n,d}^{\ap} \}$ is clearly linearly independent and it forms a 
spanning set of $\bU^{\fc}_{n,d}$ by Proposition~\ref{monomial-aperiodic}, it is a basis  of $\bU^{\fc}_{n,d}$.

Since the transition matrix from $\{\zeta_A  \vert A \in \MX_{n,d}^{\ap} \}$ to the canonical basis is uni-triangular,
$\{\zeta_A  \vert A \in \MX_{n,d}^{\ap} \}$ forms a basis as well. 
\end{proof}
%The basis $\mbf B_{n,d}^{\fc}$ will be called the canonical basis of $\bU^{\fc}_{n,d}$.

The next proposition follows from Propositions~\ref{comult+S}, \ref{Dc} and Theorem~\ref{CB-Udn}.
\begin{prop}
\label{comult+U}
For $B \in \MX_{n,d}^{\ap}$, write $\Delta^{\fc} (\{B\}_d) = \sum_{C  \in \MX_{n, d'}^{ap}, A \in \Theta_{n,d''}^{ap}}
 \hat m^{C,A}_B \{C\}_{d'} \otimes \ ^{\mathfrak a} \{A\}_{d''}$. Then we have $\hat m^{C,A}_B \in \mbb{N} [v, v^{-1}]$.
\end{prop}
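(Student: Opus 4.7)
The plan is to combine three results established earlier in the paper: the positivity of $\Delta^{\C}$ on the ambient Schur algebras (Proposition~\ref{comult+S}), the fact that $\Delta^{\fc}$ restricts to a homomorphism of the Lusztig subalgebras (Proposition~\ref{Dc}), and the characterization of the canonical bases of the Lusztig subalgebras as the aperiodic sub-bases of the canonical bases of the ambient Schur algebras (Theorem~\ref{CB-Udn} in affine type $C$, together with Lusztig's \cite[Proposition~6.5]{Lu99} in affine type $A$).

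First, because $B$ is aperiodic, Theorem~\ref{CB-Udn} ensures $\{B\}_d \in \bU^{\fc}_{n,d} \subseteq \Sj$. Viewing $\{B\}_d$ as a canonical basis element of the ambient Schur algebra $\Sj$, Proposition~\ref{comult+S} yields an expansion
\[
\Delta^{\fc}(\{B\}_d) \;=\; \sum_{C \in \MX_{n,d'},\; A \in \Theta_{n,d''}} h^{C,A}_{B}\, \{C\}_{d'} \otimes {}^{\mathfrak a}\{A\}_{d''}, \qquad h^{C,A}_{B} \in \mbb N[v,v^{-1}].
\]
The remaining task is to show $h^{C,A}_B$ vanishes whenever $C$ or $A$ fails to be aperiodic, for then the surviving coefficients coincide with $\hat m^{C,A}_{B}$ and are automatically in $\mbb N[v,v^{-1}]$.

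To isolate the aperiodic contributions, I would invoke Proposition~\ref{Dc} to deduce that $\Delta^{\fc}(\{B\}_d)$ in fact lies in $\bU^{\fc}_{n,d'} \otimes \bU_{n,d''}$. By Theorem~\ref{CB-Udn} and Lusztig's aperiodicity theorem, the canonical bases of $\bU^{\fc}_{n,d'}$ and $\bU_{n,d''}$ are precisely the aperiodic subsets $\{\{C\}_{d'}: C \in \MX_{n,d'}^{\ap}\}$ and $\{{}^{\mathfrak a}\{A\}_{d''}: A \in \Theta_{n,d''}^{\ap}\}$ of the bigger Schur-algebra canonical bases, so $\bU^{\fc}_{n,d'} \otimes \bU_{n,d''}$ is $\mbb{Q}(v)$-spanned by those tensors $\{C\}_{d'} \otimes {}^{\mathfrak a}\{A\}_{d''}$ with both $C$ and $A$ aperiodic. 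Since $\{\{C\}_{d'} \otimes {}^{\mathfrak a}\{A\}_{d''}\}_{C,A}$ is a basis of the ambient space $\Sj \otimes \mbf S_{n,d''}$, the expansion above is unique; membership in $\bU^{\fc}_{n,d'} \otimes \bU_{n,d''}$ thus forces $h^{C,A}_B = 0$ unless both $C$ and $A$ are aperiodic, which completes the proof. No step presents a genuine obstacle: the argument is a formal consequence of the three cited results, and the only point meriting a careful check is the aperiodicity characterization of the canonical basis of each Lusztig subalgebra, which is exactly what Theorem~\ref{CB-Udn} and \cite[Proposition~6.5]{Lu99} provide.
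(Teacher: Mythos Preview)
Your proof is correct and follows essentially the same approach as the paper: the paper's proof is a one-line citation of Propositions~\ref{comult+S}, \ref{Dc} and Theorem~\ref{CB-Udn}, and your argument is precisely a careful unpacking of how those three results combine (with the additional explicit mention of \cite[Proposition~6.5]{Lu99} for the type~$A$ factor, which is indeed the relevant input on that side).
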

We also have the following corollary of Proposition~\ref{j+} and Theorem~\ref{CB-Udn}.

\begin{cor}
The image of $\jmath_{n,d}$ of a canonical basis element  in $\bU^{\fc}_{n,d}$ is a sum of canonical basis elements of
$\bU_{n,d}$ with coefficients in  $\mbb{N} [v, v^{-1}]$.
\end{cor}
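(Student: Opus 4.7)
The plan is to combine Proposition~\ref{j+} (positivity of $\jmath_{n,d}$ on the canonical basis of $\Sj$) with the fact, recorded just above Theorem~\ref{CB-Udn} and due to Lusztig, that the canonical basis of $\bU_{n,d}$ consists of precisely those canonical basis elements ${}^{\fa}\{B\}_d$ of ${\mbf S}_{n,d}$ with $B$ aperiodic. The additional ingredient we need is an aperiodicity constraint on the expansion of $\jmath_{n,d}(\{A\}_d)$ on the canonical basis of ${\mbf S}_{n,d}$, analogous to Proposition~\ref{monomial-aperiodic}.

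Concretely, I would fix $A\in\MX_{n,d}^{\ap}$, so that $\{A\}_d$ is an element of the canonical basis of $\bU^{\fc}_{n,d}$ by Theorem~\ref{CB-Udn}. Applying Proposition~\ref{j+} to $\{A\}_d$ viewed as a canonical basis element of $\Sj$, we get an expansion
\[
\jmath_{n,d}(\{A\}_d)=\sum_{B\in\Theta_{n,d}} c_B\ {}^{\fa}\{B\}_d,\qquad c_B\in\mbb N[v,v^{-1}].
\]
On the other hand, since $\{A\}_d\in\bU^{\fc}_{n,d}$ (Theorem~\ref{CB-Udn}) and $\jmath_{n,d}$ restricts to an algebra homomorphism $\bU^{\fc}_{n,d}\to\bU_{n,d}$ (see the paragraph containing \eqref{je}), the left-hand side lies in $\bU_{n,d}$.

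Now $\bU_{n,d}$ has the canonical basis $\{{}^{\fa}\{B\}_d\mid B\in\Theta_{n,d}^{\ap}\}$, which is a subset of the canonical basis $\{{}^{\fa}\{B\}_d\mid B\in\Theta_{n,d}\}$ of the ambient algebra ${\mbf S}_{n,d}$. By linear independence of the latter, the unique expansion of an element of $\bU_{n,d}$ on the canonical basis of ${\mbf S}_{n,d}$ can involve only aperiodic indices. Therefore $c_B=0$ whenever $B$ is not aperiodic, and the surviving terms ${}^{\fa}\{B\}_d$ with $B\in\Theta_{n,d}^{\ap}$ are precisely the canonical basis elements of $\bU_{n,d}$. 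Combined with $c_B\in\mbb N[v,v^{-1}]$ from Proposition~\ref{j+}, this yields the corollary.

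The only conceptual obstacle is the automatic aperiodicity of the indices appearing in the expansion, but this requires no new computation: it is forced purely by the compatibility of canonical bases (Theorem~\ref{CB-Udn} on the $\fc$-side, Lusztig's result on the $A$-side), together with the fact that $\jmath_{n,d}$ preserves the Lusztig subalgebras. Thus no analogue of the more delicate argument of Proposition~\ref{monomial-aperiodic} is needed here; the positivity of the coefficients is inherited directly from Proposition~\ref{j+}.
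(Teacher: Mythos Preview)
Your argument is correct and matches the paper's own reasoning: the corollary is stated there as an immediate consequence of Proposition~\ref{j+} and Theorem~\ref{CB-Udn}, combined with Lusztig's identification of the canonical basis of $\bU_{n,d}$ as the aperiodic part of the canonical basis of ${\mbf S}_{n,d}$. Your observation that aperiodicity is forced simply by $\jmath_{n,d}(\{A\}_d)\in\bU_{n,d}$ and linear independence is exactly the point.
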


\newpage

\part{Lusztig algebras and coideal  subalgebras of  $\bU (\widehat{\mathfrak{sl}}_n)$}
  \label{part2}

%%%%%%%%%%%%%%%%%%
%%%%%%%%%%%%%%%%%%
\chapter{Realization of the idempotented coideal  subalgebra $\dot{\bU}^{\fc}_n$ of  $\bU(\slh_n)$} % and its canonical basis}
  \label{chap:coideal}

In this chapter we introduce the transfer maps $\phi^{\fc}_{d, d-n}$ on Schur algebras $\Sj$ and Lusztig algebras $\bU_{n,d}^{\fc}$.
We then construct algebras $\bU_n^{\fc}$ (or $\dot{\bU}_n^{\fc}$) from the projective system of algebras 
$\{(\bU_{n,d}^{\fc}, \phi^{\fc}_{d, d-n})\}_{d \ge 0}$. We show that
$\bU_n^{\fc}$ (or $\dot{\bU}_n^{\fc}$)  is isomorphic to an (idempotented) coideal subalgebra of $\bU(\slh_n)$, and $(\bU(\slh_n), \bU_n^{\fc})$ forms
an affine quantum symmetric pair. 
The canonical basis of $\dot{\bU}_n^{\fc}$ is established and shown to admit positivity with respect to
multiplication, comultiplication, and a bilinear pairing.

%%%%%
\section{The coideal subalgebra $\bU^{\fc}_n$ of $\bU_n$}
\label{sec:Unc}
 
Recall \cite{Lu00} there exists a homomorphism $\chi_n: {\mbf S}_{n, n} \rightarrow \mbb Q(v)$ such that
\[
\chi_n (\mbf E_i)=\chi_n (\mbf F_i)=0,
\chi_n (\mbf H_i) =v.
\]
Following Lusztig \cite{Lu00}, we introduce the transfer map of affine type $C$, 
$$
\phi^{\fc}_{d, d-n}: \Sj \rightarrow {\mbf S}^{\fc}_{n,d -n},
$$
which is by definition the composition of the following homomorphisms
(for $d\ge n$)
\begin{equation}
  \label{transfer}
\phi^{\fc}_{d, d-n}:
\Sj
\overset{\widetilde \Delta^{\fc}}{\longrightarrow}
{\mbf S}^{\fc}_{n, d -n  }  \otimes {\mbf S}_{n, n}
\overset{1\otimes \chi_n}{\longrightarrow}
{\mbf S}^{\fc}_{n,d -n}.
\end{equation}
The following can be proved similarly to \cite{Lu00} in affine type $A$ and \cite{FL15} in type $B/C$.
\begin{prop}
\label{phi-gene}
For  $i\in [0, r]$, we have
$\phi^{\fc}_{d, d-n} (\mbf e_i) = \mbf e'_i$,
$\phi^{\fc}_{d, d-n} (\mbf f_i) = \mbf f'_i$,
$\phi^{\fc}_{d, d-n} (\mbf k_i) = \mbf k'_i$.
\end{prop}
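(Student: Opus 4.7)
The plan is to read off $\phi^{\fc}_{d,d-n}$ on the Chevalley generators directly from the explicit formulas for $\widetilde{\Delta}^{\fc}$ supplied by Proposition~\ref{tDj-form}, specializing the second tensor factor via the character $\chi_n$. Recall that $\phi^{\fc}_{d,d-n} = (1 \otimes \chi_n) \circ \widetilde{\Delta}^{\fc}$ with $d' = d-n$ and $d'' = n$, and that $\chi_n$ is an algebra homomorphism annihilating every $\mbf E_j''$ and $\mbf F_j''$ while sending each $\mbf H_j''$ to $v$. In particular, $\chi_n(\mbf K_j'') = \chi_n(\mbf H_{j+1}'') \chi_n(\mbf H_j'')^{-1} = v \cdot v^{-1} = 1$ for all $j$.

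First I would treat $\mbf e_i$. Proposition~\ref{tDj-form} expresses $\widetilde{\Delta}^{\fc}(\mbf e_i)$ as a sum of three terms whose second tensor factors are, respectively, $\mbf H''_{i+1} \mbf H''^{-1}_{n-1-i}$, $\mbf E''_i \mbf H''^{-1}_{n-1-i}$, and $\mbf F''_{n-1-i} \mbf H''_{i+1}$. Applying $\chi_n$ annihilates the last two terms (since $\chi_n(\mbf E''_i) = \chi_n(\mbf F''_{n-1-i}) = 0$), and the first gives $\chi_n(\mbf H''_{i+1} \mbf H''^{-1}_{n-1-i}) = v \cdot v^{-1} = 1$. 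Hence $\phi^{\fc}_{d,d-n}(\mbf e_i) = \mbf e'_i$. The computation for $\mbf f_i$ is entirely parallel: the two terms in $\widetilde{\Delta}^{\fc}(\mbf f_i)$ containing $\mbf F''_i$ or $\mbf E''_{n-1-i}$ vanish under $\chi_n$, and the remaining coefficient $\chi_n(\mbf H''^{-1}_i \mbf H''_{n-i}) = v^{-1} \cdot v = 1$ yields $\phi^{\fc}_{d,d-n}(\mbf f_i) = \mbf f'_i$. For $\mbf k_i$ there is a single tensor term $\mbf k'_i \otimes \mbf K''_i \mbf K''^{-1}_{n-1-i}$, whose second factor maps to $1$ under $\chi_n$ by the observation above, so $\phi^{\fc}_{d,d-n}(\mbf k_i) = \mbf k'_i$.

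No essential obstacle arises: the argument is the affine type $C$ transcription of the affine type $A$ computation in~\cite{Lu00} (and its finite type $B/C$ analogue in~\cite{FL15}). The only point requiring modest care is the treatment of the boundary index $i = 0$, which is handled uniformly by Proposition~\ref{tDj-form} because the formulas already account for the shift $n-1-i$ dictated by the folding involution; for $i=0$ this produces the pair $(\mbf H''_1, \mbf H''_{n-1})$ in $\mbf e_0$ and $(\mbf H''_0, \mbf H''_n)$ in $\mbf f_0$, each of which cancels to $1$ under $\chi_n$ exactly as in the generic case.
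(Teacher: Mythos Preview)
Your proof is correct and is essentially the approach the paper has in mind: the paper does not spell out the computation but simply refers to the analogous arguments in \cite{Lu00} (affine type $A$) and \cite{FL15} (finite type $B/C$), which amount to exactly the specialization of Proposition~\ref{tDj-form} under $1\otimes\chi_n$ that you carry out explicitly.
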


Now we consider  the projective system $\{(\bU^{\fc}_{n,d}, \phi^{\fc}_{d, d-n})\}_{d \ge 0}$ and its projective limit:
\[
\bU^{\fc}_{n,\infty} :=
\varprojlim_{d} \bU^{\fc}_{n,d}
= \Big \{ x \equiv  (x_d)_{d\in \mbb{N}}  \in \prod_{d\in \mbb{N}} \bU^{\fc}_{n,d}  \Big \vert   \phi^{\fc}_{d, d-n}(x_d) = x_{d-n} \quad \forall d \Big \}.
\]
Denote by $\phi_d^{\fc}: \bU_{n,\infty}^{\fc} \to \bU^{\fc}_{n,d}$ the natural projection.
The bar involution on $\bU^{\fc}_{n,d}$ induces a bar involution
$
\bar \ : \bU^{\fc}_{n,\infty} \to \bU^{\fc}_{n,\infty},
$
since it commutes with the transfer map \eqref{transfer}.
Similarly, we have an integral version:
$ \bU^{\fc}_{n,\infty; \cA} = \varprojlim\limits_{d} \bU^{\fc}_{n,d; \cA}.$
Since $\mbb Q(v) \otimes_{\mathcal A} \bU^{\fc}_{n,d; \cA} = \bU^{\fc}_{n,d}$ for all $d$, we have
$\mbb Q(v) \otimes_{\mathcal A} \bU^{\fc}_{n,\infty; \cA} = \bU^{\fc}_{ n, \infty}$.

Recall from Section~\ref{sec:Un-A} the counterparts of the above constructions in the affine type $A$ setting, 
where we drop the superscript $\fc$. 
We have the following commutative diagram
\[
\begin{CD}
\bU^{\fc}_{n,d} @> \jmath_{n,d} >> \bU_{n,d} \\
@V \phi^{\fc}_{d, d-n} VV @VV\phi_{d, d-n} V\\
\bU^{\fc}_{n,d -n} @> \jmath_{n,d -n} >> \bU_{n,d -n}
\end{CD}
\]
That is,  $\phi_{d, d-n} \circ \jmath_{n,d} = \jmath_{n,d -n}  \circ \phi^{\fc}_{d,d-n}.$
Thus by the universality of $\bU_{n,\infty}$, we have a unique algebra homomorphism
\[
\jmath_{n} : \bU^{\fc}_{n,\infty} \longrightarrow \bU_{n,\infty},
\]
such that $\phi_d \circ \jmath_{n} = \jmath_{n,d} \circ \phi^{\fc}_d.$

We define elements  $\e_i$, $\f_i$ and $\mbf k_i^{\pm 1}$ for all $0 \leq i \leq r$ in $\bU^{\fc}_{n,\infty}$ by
\[
(\e_i)_d = \e_{i, d},
(\f_i)_d = \f_{i, d},
(\mbf k_i^{\pm 1})_d = \mbf k^{\pm 1}_{i, d}, \quad \forall d \in \mbb{N},
\]
where the $d$ in the subscript of $\e_{i, d}$ etc. indicates $\e_{i, d}$ is a copy of the Chevalley generator $\mbf e_i$ in $\bU^{\fc}_{n,d}$.
Let $\bU^{\fc}_n$ be the subalgebra of $\bU^{\fc}_{n,\infty}$ generated by (the Chevalley generators) $\e_i$, $\f_i$ and $\mbf k^{\pm 1}_i$ for all $0 \leq i \leq r$.
Since $\jmath_{n,d}$ is injective for all $d$, so is $\jmath_{n}: \bU^{\fc}_{n,\infty} \rightarrow \bU_{n,\infty}$.
It follows by (\ref{je})  that the image of $\bU^{\fc}_n$ under $\jmath_{n}$ lies in $\bU_n$. Summarizing, we have obtained the following.

\begin{prop}
 \label{prop:inj-j}
There is a unique algebra imbedding $\jmath_n: \bU^{\fc}_n \to \bU_n$ such that
\begin{align}
\label{je-limit}
\begin{split}
\jmath_{n} (\e_i) & = \mbf E_i + v^{-\delta_{i,0}} \mbf K_i \mbf  F_{n-1-i}, \\
\jmath_{n} (\f_i ) & =  \mbf E_{n-1-i} + v^{ \delta_{i,0}} \mbf F_i \mbf K_{n-1-i},\\
\jmath_{n} (\mbf k_i) & = v^{-\delta_{i,0} + \delta_{i, r}} \mbf  K_i \mbf K_{n-1-i}^{-1}, \quad \forall i \in [0, r].
\end{split}
\end{align}
\end{prop}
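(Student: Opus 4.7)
The plan is to obtain $\jmath_n$ by passing to the projective limit from the family $\{\jmath_{n,d}\}_{d\ge 0}$, so the first task is to verify the commutative square
\[
\phi_{d,d-n}\circ \jmath_{n,d} \;=\; \jmath_{n,d-n}\circ \phi^{\fc}_{d,d-n},
\]
already stated in the excerpt just before the proposition. Since both Schur algebras are generated (over $\mbb{Q}(v)$) by their Chevalley generators together with enough idempotents, and since both $\phi^{\fc}_{d,d-n}$ and $\phi_{d,d-n}$ send Chevalley generators of $\Sj$ (resp.\ ${\mbf S}_{n,d}$) to the corresponding generators of the smaller Schur algebra by Proposition~\ref{phi-gene} (and its affine type $A$ analogue), while $\jmath_{n,d}$ acts by the uniform formula \eqref{je}, this square commutes once one compares the image of each generator $\mbf e_i, \mbf f_i, \mbf k_i^{\pm 1}$ on the two sides. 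This is a short direct check; the relevant identities do not depend on $d$, only on $i$.

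Once commutativity is in place, the universal property of the projective limit $\bU_{n,\infty}$ yields a unique algebra homomorphism $\jmath_n\colon \bU^{\fc}_{n,\infty}\to \bU_{n,\infty}$ satisfying $\phi_d\circ \jmath_n=\jmath_{n,d}\circ \phi^{\fc}_d$ for every $d$. To read off formulas \eqref{je-limit}, I would apply $\phi_d$ to both sides of the conjectured identity, e.g.\ to $\jmath_n(\mbf e_i) = \mbf E_i + v^{-\delta_{i,0}}\mbf K_i \mbf F_{n-1-i}$; the left-hand side becomes $\jmath_{n,d}(\mbf e_{i,d})$ by construction, and the right-hand side projects to the corresponding combination of the Chevalley generators of $\bU_{n,d}$. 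These agree by \eqref{je}. Doing this for every $d$ pins down the element in $\bU_{n,\infty}$ uniquely, and the analogous computations for $\mbf f_i$ and $\mbf k_i^{\pm 1}$ complete the generator-level description.

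It remains to check that $\jmath_n$ sends $\bU^{\fc}_n$ into $\bU_n$ and is injective. The first is immediate from the explicit formulas just derived: $\mbf E_i$, $\mbf F_i$, $\mbf K_i^{\pm 1}$ and their products all lie in $\bU_n$ by definition, so the images of the generators of $\bU^{\fc}_n$ are in $\bU_n$, and hence so is the image of the subalgebra they generate. For injectivity, suppose $x\in \bU^{\fc}_n$ satisfies $\jmath_n(x)=0$. Then for every $d$, $\jmath_{n,d}(\phi^{\fc}_d(x))=\phi_d(\jmath_n(x))=0$, and Proposition~\ref{j-inj} (injectivity of each $\jmath_{n,d}$) forces $\phi^{\fc}_d(x)=0$. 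Since $x$ is determined by the collection $(\phi^{\fc}_d(x))_d$ inside the projective limit, we conclude $x=0$.

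The only step with any genuine content is the commutativity of the square, and even that reduces to the generator-level compatibility between $\phi^{\fc}_{d,d-n}$ and $\phi_{d,d-n}$; I do not expect a serious obstacle, as the required generator formulas have already been prepared (Proposition~\ref{phi-gene} and equation \eqref{je}). The uniqueness of $\jmath_n$ subject to \eqref{je-limit} is automatic since $\bU^{\fc}_n$ is generated by $\e_i,\f_i,\mbf k_i^{\pm 1}$.
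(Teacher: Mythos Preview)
Your proposal is correct and follows essentially the same approach as the paper: assemble $\jmath_n$ from the compatible family $\{\jmath_{n,d}\}$ via the universal property of the projective limit, read off the formulas \eqref{je-limit} by projecting to each $d$, deduce that the image lies in $\bU_n$ from these explicit formulas, and obtain injectivity from Proposition~\ref{j-inj}. One small imprecision: the affine Schur algebras are \emph{not} generated by their Chevalley generators (this is precisely why the Lusztig algebras are proper subalgebras), but since the commutative square is only needed on $\bU^{\fc}_{n,d}$, which is so generated, your argument goes through unchanged.
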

Recall from Proposition~\ref{Un=slhn} that $\bU_n \cong \bU (\slh_n)$. 
At the $v=1$ limit, the images of the generators under $\jmath_n$ are in the fixed point subalgebra by an involution $\theta^{\jj}$ of $\slh_n$
(which switches $\mbf E$'s and $\mbf F$'s); 
for an illustration of $\theta^{\jj}$ see Figure~\ref{figure:jj}. %below. 

Recall $\Delta^{\fc}$ from \eqref{Dj}. 
We have the following commutative diagram
\[
\begin{CD}
\bU^{\fc}_{n, d' + d''} @> \Delta^{\fc} >> \bU^{\fc}_{n, d'} \otimes \bU_{n,d''} \\
@V \phi^{\fc}_{d' + d'', d'+ d'' - (a+b) n} VV @VV \phi^{\fc}_{d', d' - an} \otimes \phi_{d'', d''-bn} V \\
\bU^{\fc}_{n, d'+d''-(a+b)n} @>\Delta^{\fc} >> \bU^{\fc}_{n, d'-an} \otimes \bU_{n, d''-bn}
\end{CD}
\]
for any $a, b\in \mbb{N}$.
By universality, these $\Delta^{\fc}$ (for $d', d'', n$) induce an algebra homomorphism
\[
\Delta^{\fc} : \bU^{\fc}_{n,\infty} \longrightarrow \bU^{\fc}_{n,\infty} \otimes \bU_{n,\infty}.
\]
Moreover, the image of $\bU^{\fc}_n$ under $\Delta^{\fc}_{n}$ is contained in $\bU^{\fc}_n \otimes \bU_n$ by Proposition ~\ref{Dc}.
Summarizing, we have the following.

\begin{prop}
 \label{prop:Unc-coideal}
There is a unique algebra homomorphism
$
\Delta^{\fc}: \bU^{\fc}_n \longrightarrow \bU^{\fc}_n \otimes \bU_n
$
such that, for all $i\in [0, r]$,
\begin{align}
\Delta^{\fc} (\mbf e_i)
& = \mbf e_i \otimes \mbf K_i + 1 \otimes  \mbf E_i
+ \mbf k_i \otimes v^{\delta_{i,0}}   \mbf F_{n-1-i}  \mbf K_i. \\
\Delta^{\fc} (\mbf f_i)
& =  \mbf f_i \otimes \mbf K_{n-1-i} + \mbf k^{-1}_i \otimes  v^{-\delta_{i,0}} \mbf K_{n-1-i} \mbf F_i
+ 1\otimes  \mbf E_{n-1-i}.\\
\Delta^{\fc} (\mbf k_i) &= \mbf k_i \otimes \mbf K_i \mbf K^{-1}_{n-1-i}.
\end{align}
\end{prop}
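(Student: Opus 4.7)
The plan is to construct $\Delta^{\fc}$ by passage to the projective limit from the finite-level comultiplications $\Delta^{\fc}_{d',d''}: \bU^{\fc}_{n,d'+d''} \to \bU^{\fc}_{n,d'} \otimes \bU_{n,d''}$ of Proposition~\ref{Dc}.

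First I would verify the compatibility square
$(\phi^{\fc}_{d',d'-an} \otimes \phi_{d'',d''-bn}) \circ \Delta^{\fc}_{d',d''} = \Delta^{\fc}_{d'-an,d''-bn} \circ \phi^{\fc}_{d'+d'',d'+d''-(a+b)n}$
asserted in the diagram just above the proposition. Since both the transfer map $\phi^{\fc}_{d,d-n}$ (see \eqref{transfer}) and the refined comultiplication $\Delta^{\fc}_{d',d''}$ (see \eqref{Dj}) are built from the raw comultiplication $\widetilde{\Delta}^{\fc}$ of Section~\ref{affine-DC}, this identity reduces, after bookkeeping on the rescalings \eqref{Dagger}, to the coassociativity-type identity for $\widetilde{\Delta}^{\fc}$ (cf. Proposition~\ref{coassociative}) together with the fact that $\chi_n$ is an algebra homomorphism. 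The rescaling factors $\xi^{\fc}_{d,i,c}$ of \eqref{rescale} are designed precisely so that they intertwine with $\chi_n$ under the transfer map; the same style of argument appears in affine type $A$ in \cite{Lu00} and in finite type $B/C$ in \cite{FL15}.

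Granted this compatibility, the universal property of projective limits supplies a unique algebra homomorphism $\Delta^{\fc}: \bU^{\fc}_{n,\infty} \to \bU^{\fc}_{n,\infty} \otimes \bU_{n,\infty}$ extending the $\Delta^{\fc}_{d',d''}$. To see that $\Delta^{\fc}(\bU^{\fc}_n) \subseteq \bU^{\fc}_n \otimes \bU_n$, I would simply evaluate $\Delta^{\fc}$ on the Chevalley generators $\mbf e_i, \mbf f_i, \mbf k_i^{\pm 1}$ viewed in $\bU^{\fc}_{n,\infty}$ via their stable components; applying the finite-level formulas of Proposition~\ref{Dc} component-wise yields the claimed explicit expressions, each a finite sum of elementary tensors lying manifestly in $\bU^{\fc}_n \otimes \bU_n$. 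Since $\bU^{\fc}_n$ is generated as an algebra by these Chevalley generators and $\bU^{\fc}_n \otimes \bU_n$ is closed under multiplication, the restriction $\Delta^{\fc}: \bU^{\fc}_n \to \bU^{\fc}_n \otimes \bU_n$ is well-defined, and uniqueness is automatic since any homomorphism is determined by its values on a generating set.

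The main obstacle is the first step: verifying the compatibility of the rescaled $\Delta^{\fc}_{d',d''}$ with the transfer map $\phi^{\fc}_{d,d-n}$ requires tracking the weight-space dependent factors $s(\mbf b',\mbf a',\mbf b'',\mbf a'')$ and $u(\mbf b'',\mbf a'')$ carefully and confirming that they sum correctly across the two ways of composing $\widetilde{\Delta}^{\fc}$'s before invoking coassociativity. Everything else — universality, evaluation on generators, uniqueness — is then essentially formal.
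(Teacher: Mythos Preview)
Your proposal is correct and follows essentially the same route as the paper: the paper asserts the compatibility square, invokes universality of the projective limit to get $\Delta^{\fc}: \bU^{\fc}_{n,\infty} \to \bU^{\fc}_{n,\infty} \otimes \bU_{n,\infty}$, and then appeals to Proposition~\ref{Dc} to conclude that the image of $\bU^{\fc}_n$ lands in $\bU^{\fc}_n \otimes \bU_n$ with the stated formulas. One minor remark: rather than tracking the rescaling factors through coassociativity of the raw $\widetilde{\Delta}^{\fc}$, the compatibility square can be checked more cheaply directly on Chevalley generators, since by Proposition~\ref{phi-gene} the transfer maps preserve them and by Proposition~\ref{Dc} the finite-level $\Delta^{\fc}$ acts on them by formulas independent of $d$.
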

This algebra homomorphism is coassociative by Proposition ~\ref{coassociative} in the sense that
\begin{equation}
 \label{coass:UnC}
(1\otimes \Delta) \Delta^{\fc} = (\Delta^{\fc} \otimes 1) \Delta^{\fc}.
\end{equation}
As a degenerate case for \eqref{coass:UnC}, we also have
$$ 
\Delta  \circ \jmath_n =(\jmath_n \otimes 1) \circ \Delta^{\fc}.
$$ 

Summarizing the results from Propositions~\ref{prop:inj-j} and \ref{prop:Unc-coideal}, we have proved the following. 
\begin{thm}
  \label{thm:QSP}
The algebra $\bU^{\fc}_n$ is a coideal subalgebra of $\bU_n$, 
and $(\bU_n, \bU^{\fc}_n)$ forms an affine quantum symmetric pair in the sense of Kolb-Letzter \cite{Ko14}. (see Figure~\ref{figure:jj} for the relevant involution.)
\end{thm}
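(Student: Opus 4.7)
The plan is to assemble the theorem from the two preceding propositions and then match our algebra against the Letzter--Kolb template. The coideal property is essentially already done: Proposition~\ref{prop:Unc-coideal} exhibits an algebra map $\Delta^{\fc}: \bU^{\fc}_n \to \bU^{\fc}_n \otimes \bU_n$ which, composed with $\jmath_n \otimes \mathrm{id}$, agrees with $\Delta \circ \jmath_n$ thanks to the degenerate coassociativity relation $\Delta \circ \jmath_n = (\jmath_n \otimes 1) \circ \Delta^{\fc}$ noted just after Proposition~\ref{prop:Unc-coideal}. Together with the injectivity of $\jmath_n$ from Proposition~\ref{prop:inj-j}, this identifies $\jmath_n(\bU^{\fc}_n)$ as a subalgebra of $\bU_n \cong \bU(\slh_n)$ satisfying $\Delta(\jmath_n(\bU^{\fc}_n)) \subseteq \jmath_n(\bU^{\fc}_n) \otimes \bU_n$, which is the defining coideal condition.

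It then remains to verify that this coideal subalgebra is one of those appearing in Kolb's construction \cite{Ko14}. First, I will identify the underlying Satake datum: the relevant involution $\tau$ on the Dynkin diagram of $A^{(1)}_{2r+1}$ is the ``flip'' $i \leftrightarrow n-1-i \pmod{n}$, with no black nodes and trivial diagram automorphism on the $\tau$-orbits; this is exactly Figure~\ref{figure:jj}. The corresponding Kolb generators for the affine $\imath$quantum group have the shape
\[
B_i = F_i + \varsigma_i\, T_{w_\bullet}(E_{\tau(i)}) K_i^{-1}, \qquad \widetilde{K}_i = K_i K_{\tau(i)}^{-1},
\]
with $T_{w_\bullet}$ trivial in this ``quasi-split'' case, reducing to $B_i = F_i + \varsigma_i E_{\tau(i)} K_i^{-1}$ up to a normalization; the parameters $\varsigma_i$ must satisfy Kolb's balancing conditions for the nodes in non-trivial $\tau$-orbits of length one (there are none here) and a specific unit constraint elsewhere.

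The key step is then to match the explicit Chevalley-level formulas \eqref{je-limit} with these $B_i$ and $\widetilde{K}_i$, up to a uniform rescaling of the generators. Concretely, set $\mathbf{b}_i := K_i^{-1}\,\jmath_n(\e_i)\cdot (\text{scalar})$ and $\mathbf{b}_i' := K_{n-1-i}^{-1}\,\jmath_n(\f_i)\cdot (\text{scalar})$, so that after this rescaling each image has the form $F_{i} + \varsigma_i E_{\tau(i)} K_i^{-1}$ with an explicit $\varsigma_i \in \{1,v^{\pm 1}\}$; verify that the set of $\varsigma_i$ obtained this way satisfies Kolb's constraints \cite[\S5]{Ko14}. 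The Cartan generators $\jmath_n(\mbf k_i) = v^{-\delta_{i,0}+\delta_{i,r}} K_i K_{n-1-i}^{-1}$ are then (scalar multiples of) Kolb's $\widetilde{K}_i$.

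Given the match of generators, it remains to check that $\jmath_n(\bU^{\fc}_n)$ is \emph{all} of Kolb's coideal rather than a proper subalgebra. For this I would invoke the classical limit: by Proposition~\ref{relation:Schurc} together with the finite-rank analogue $(\mbf{U}(\gl_n),\bU^{\imath}(\gl_n))$ of \cite{BKLW14}, the generators $\mbf e_i, \mbf f_i, \mbf k_i^{\pm1}$ and their relations degenerate at $v=1$ to the classical fixed-point subalgebra $\mathfrak{g}^{\theta^{\jj}} \subset \widehat{\mathfrak{sl}}_n$, where $\theta^{\jj}$ is the involution of Figure~\ref{figure:jj}; by \cite[Theorem~10.8]{Ko14}, Kolb's coideal also specializes to $U(\mathfrak{g}^{\theta^{\jj}})$. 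A dimension/weight-space comparison at each weight, together with Theorem~\ref{CB-Udn} which provides a PBW-type basis of $\bU^{\fc}_n$ in terms of monomials $\zeta_A$, then forces $\jmath_n(\bU^{\fc}_n)$ to coincide with Kolb's coideal.

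The main obstacle I anticipate is the bookkeeping for the node $i=0$, where the exceptional factors $v^{\pm\delta_{i,0}}$ in \eqref{je-limit} and the affine periodicity conspire to produce a slightly nonstandard normalization; pinning down the precise values $\varsigma_0$ and $\varsigma_r$ (the two ``boundary'' nodes) and checking they satisfy the parameter constraints of \cite[\S5]{Ko14} is the delicate part. Once this is done, the remaining Serre-type relations are forced by the quantum symmetric pair axioms and have already been verified in the Schur algebra $\bU^{\fc}_{n,d}$ at finite $d$ in Proposition~\ref{relation:Schurc}, from which they pass to the limit via $\phi^{\fc}_{d,d-n}$.
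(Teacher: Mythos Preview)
Your approach is correct and in fact more thorough than the paper's. The paper's ``proof'' is a single sentence: it declares the theorem to be a summary of Propositions~\ref{prop:inj-j} and~\ref{prop:Unc-coideal}. The coideal property follows exactly as you describe, via $\Delta \circ \jmath_n = (\jmath_n \otimes 1)\circ \Delta^{\fc}$ and the injectivity of $\jmath_n$; the identification with a Kolb--Letzter coideal is left implicit in the paper, to be read off from the explicit images \eqref{je-limit} (and is only made more precise afterwards in Proposition~\ref{present:Ujj}, which invokes \cite[Theorem~7.1]{Ko14}).

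One simplification: your classical-limit and PBW/dimension argument for showing $\jmath_n(\bU^{\fc}_n)$ equals (rather than is properly contained in) Kolb's coideal is more than you need. Both algebras are \emph{generated} by Chevalley-type elements: $\bU^{\fc}_n$ by $\e_i,\f_i,\mbf k_i^{\pm1}$ for $i\in[0,r]$, and Kolb's coideal by the $B_j$ and the $\widetilde K_j$ for $j\in[0,n-1]$. Once you have matched $\jmath_n(\e_i)$ and $\jmath_n(\f_i)$ (up to rescaling) with the $B_j$ for $j=i$ and $j=n-1-i$ respectively, and $\jmath_n(\mbf k_i)$ with $\widetilde K_i$, you have exhibited a bijection between generating sets, so the two subalgebras of $\bU_n$ coincide. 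No specialization or weight-space comparison is required.
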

 
The following is a variant of \cite[Theorem 7.1]{Ko14} in our setting and our notation. 
\begin{prop}
 \label{present:Ujj}
For $n=2r+2$ with $r\geq 1$, the $\mbb Q(v)$-algebra $\bU_n^{\fc}$ has a presentation with generators
$\mathbf e_i, \mathbf f_i$, and $\mathbf k^{\pm 1}_i$  for $i\in [0, r]$ and relations given in Proposition~\ref{relation:Schurc}. 
\end{prop}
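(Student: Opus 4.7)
The plan is to deduce the presentation by comparing with the Kolb--Letzter presentation theorem \cite[Theorem 7.1]{Ko14} for quantum symmetric pairs of Kac--Moody type. First, let $\widetilde{\bU}$ denote the $\mathbb{Q}(v)$-algebra defined abstractly by generators $\mathbf{e}_i, \mathbf{f}_i, \mathbf{k}_i^{\pm 1}$ $(i \in [0,r])$ modulo the relations listed in Proposition \ref{relation:Schurc}. By Proposition \ref{relation:Schurc} itself, these relations are satisfied in $\bU_n^{\fc}$ by the corresponding Chevalley generators (which were shown in Section \ref{sec:Unc} to generate $\bU_n^{\fc}$), so there is a well-defined surjective algebra homomorphism $\pi : \widetilde{\bU} \twoheadrightarrow \bU_n^{\fc}$. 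The task is to prove $\pi$ is injective.

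For this I would invoke Kolb's theorem. The Satake diagram governing our quantum symmetric pair is the one in Figure \ref{figure:jj}: the affine Dynkin diagram of type $A^{(1)}_{n-1}$ with involution $\theta^{\jj}$ sending vertex $i \mapsto n-1-i$ (indices mod $n$). Since $n = 2r+2$ is even, this involution has no fixed vertex, so in Kolb's notation the subset $I_\bullet$ of black nodes is empty and all nodes are white. Consequently Kolb's presentation has no $\bU_\bullet$-subalgebra and the generators reduce to one $B_i$ and one $\mathbf{k}_i^{\pm 1}$ per $\theta^{\jj}$-orbit $\{i, n{-}1{-}i\}$, i.e., for $i \in [0, r]$, matching our generating set. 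The relations in Kolb's theorem come in three families: (i) $q$-Serre relations $B_i B_j = B_j B_i$ or $B_i^2 B_j + B_j B_i^2 = (v+v^{-1}) B_i B_j B_i$ for $\theta^{\jj}$-orbits $\{i,n{-}1{-}i\}$ and $\{j,n{-}1{-}j\}$ that are \emph{not} joined by the involution; (ii) the modified $q$-Serre relations at orbits where $\theta^{\jj}(i)$ is a Dynkin neighbour of $i$ --- here precisely $i=0$ (whose image $n-1$ is adjacent to $0$) and $i=r$ (whose image $r+1$ is adjacent to $r$), producing the extra term $v\mathbf{k}_i + v^{-1}\mathbf{k}_i^{-1}$ on the right-hand side; and (iii) the torus relations and the commutation relations between $\mathbf{k}_i$ and $B_j$. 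A direct inspection shows that the list of relations in Proposition \ref{relation:Schurc} matches items (i)--(iii) verbatim for the appropriate choice of Letzter parameters $\underline c$ and $\underline s$.

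The key step is therefore the \emph{parameter matching}: one must verify that the specific scalars appearing in our modified relations (e.g.\ the coefficient $v\mathbf{k}_0+v^{-1}\mathbf{k}_0^{-1}$ in the deformed $(0,0)$-Serre relation, and the analogous term for $(r,r)$) are consistent with the parameters $c_i, s_i$ determined by the geometric embedding $\jmath_n : \bU_n^{\fc} \hookrightarrow \bU_n$ of Proposition \ref{prop:inj-j}. I would compute the image $\jmath_n(\mathbf{e}_i) = \mathbf{E}_i + v^{-\delta_{i,0}} \mathbf{K}_i \mathbf{F}_{n-1-i}$ etc., recognize these as the Letzter coideal generators $B_i$ with specific parameters (reading off $c_0 = v^{-1}$, $c_r = 1$ etc.), and check these parameters are admissible in Kolb's sense and satisfy the normalization Kolb imposes in his Theorem 7.1. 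Once these parameters are shown admissible, Kolb's theorem gives that the coideal subalgebra attached to the Satake datum has precisely $\widetilde{\bU}$ as its presentation.

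The hardest part will be the bookkeeping at the two ``boundary'' nodes $i=0$ and $i=r$: both affect the doubled-edge behaviour of $\theta^{\jj}$, both require the modified Serre relation, and both carry asymmetric $v$-powers ($v^{\delta_{i,0}}$ in $\Delta^{\fc}(\mathbf{e}_i)$ and $v^{-\delta_{i,0}+\delta_{i,r}}$ in $\jmath_n(\mathbf{k}_i)$) that must be reconciled with Kolb's symmetric normalization. A clean way to handle this is to conjugate by an explicit diagonal automorphism of $\bU_n^{\fc}$ rescaling the $\mathbf{e}_i, \mathbf{f}_i$ by suitable powers of $v$ so that the parameters are brought into Kolb's standard form; one then checks that this rescaling leaves invariant both the relations of Proposition \ref{relation:Schurc} (up to the predicted identifications) and Kolb's relations with the standard parameter choice. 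Once the two presentations are identified on the nose, the map $\pi : \widetilde{\bU} \to \bU_n^{\fc}$ is the canonical isomorphism provided by Kolb's theorem, completing the proof.
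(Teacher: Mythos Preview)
Your approach is correct and is essentially what the paper does: it gives no detailed proof, simply declaring the proposition a variant of \cite[Theorem 7.1]{Ko14}, and remarks separately that the first relation $\mathbf{k}_0 (\mathbf{k}_1^2 \cdots \mathbf{k}_{r-1}^2) \mathbf{k}_r = 1$ (absent from Kolb's framework) reflects the level-zero nature of the geometric construction --- a point you should also address explicitly. One small correction to your bookkeeping: Kolb's presentation assigns one generator $B_j$ per white \emph{node}, not per orbit, so the two-element orbit $\{i,\, n-1-i\}$ contributes two $B$'s, which under $\jmath_n$ correspond to $\mathbf{e}_i$ and $\mathbf{f}_i$ respectively; this is why our generating set carries both $\mathbf{e}_i$ and $\mathbf{f}_i$.
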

Note that the first relation in Proposition~\ref{relation:Schurc} (which is not present in \cite{Ko14})
simply reflects the fact that  various  quantum affine algebras arising from geometry in this paper are always of level zero. 
\footnote{Replace this sentence  by ``Note that  the generators $\mbf k^{\pm 1}_i$ for $i\in [r+1,n-1]$ can be defined similarly, but we will have $\mbf k^{\pm 1}_i = \mbf k^{\mp 1}_{n-1-i}$, this  simply reflects the fact that various  quantum affine algebras arising from geometry in this paper are always of level zero.''}
%%% 

For $n=2$ (i.e., $r=0$), the imbedding $\jmath_2: \bU^{\C}_2 \to \bU_2 = \bU(\widehat{\mathfrak{sl}_2})$ in \eqref{prop:inj-j} is defined by
\[
\e_0 \mapsto \E_0 + v^{-1}\KK_0 \F_1,  \quad 
\f_0 \mapsto \E_1 + v^{-1}\KK_1 \F_0, \quad 
\bk_0 \mapsto\KK_0\KK_1^{-1}.
\] 
We shall give a presentation for $\bU_2^{\fc}$, which was excluded from Proposition~\ref{present:Ujj} above.

\begin{prop}
\label{present:Ujj-r=0}
The $\mbb Q(v)$-algebra $\bU^{\C}_2$ has a presentation with generators
$\mathbf e_0, \mathbf f_0$, and $\mathbf k^{\pm 1}_0$ and the following relations.
\begin{align}
\bk_0 \bk_0^{-1} =1, \quad \quad \bk_0 \e_0  = v^4 \e_0 \bk_0, & \qquad
\bk_0 \f_0 = v^{-4} \f_0 \bk_0, 
\label{kekf} \\
\label{UC-2-d}
\e^{3}_0 \f_0 - \llbracket 3\rrbracket \e^2_0 \f_0 \e_0 + \llbracket 3\rrbracket \e_0 \f_0 \e^{2}_0 -\f_0 \e^3_0 
& = \llbracket 3\rrbracket ! (v-v^{-1}) \e_0  (\bk_0 - \bk^{-1}_0) \e_0, \\
\label{UC-2-e}
\f^{3}_0 \e_0 - \llbracket 3\rrbracket \f^2_0 \e_0 \f_0 +\llbracket 3\rrbracket\f_0 \e_0 \f^{2}_0 -\e_0 \f^3_0 
& = - \llbracket 3\rrbracket ! (v-v^{-1}) \f_0   (\bk_0 - \bk^{-1}_0) \f_0.
\end{align}
Here $\llbracket i\rrbracket = \frac{v^i - v^{-i}}{v - v^{-1}} $ and $\llbracket a\rrbracket ! = \prod_{1\leq i\leq a}\llbracket i\rrbracket$.
\end{prop}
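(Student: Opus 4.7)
The plan is to prove the presentation by showing that (a) the listed relations hold in $\bU^{\C}_2$, and (b) they are sufficient to recover the whole algebra. Let $\tilde{\bU}$ denote the $\mathbb Q(v)$-algebra abstractly defined by the generators and relations in the statement.

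First, I would verify that the relations hold in $\bU^{\C}_2$ using the injective homomorphism $\jmath_2: \bU^\C_2 \to \bU_2 = \bU(\widehat{\mathfrak{sl}_2})$ from Proposition~\ref{prop:inj-j}. The relations \eqref{kekf} follow immediately from the commutation of $\KK_0 \KK_1^{-1}$ with $\E_i, \F_i$ via the standard Cartan relations in $\bU(\widehat{\mathfrak{sl}_2})$, and a short computation using $\jmath_2(\bk_0) = \KK_0 \KK_1^{-1}$. Then to verify \eqref{UC-2-d} and \eqref{UC-2-e}, I would substitute $\e_0 \mapsto \E_0 + v^{-1}\KK_0 \F_1$, $\f_0 \mapsto \E_1 + v^{-1}\KK_1\F_0$ and expand. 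The calculation reduces, using the affine quantum Serre relations $\E_i^3 \E_j - \llbracket 3 \rrbracket \E_i^2 \E_j \E_i + \cdots = 0$ (for $|i-j|=1$ in $\widehat{\mathfrak{sl}_2}$, which for the two-node diagram gives the $v$-Serre relation of order 3) together with the $\E$-$\F$-commutation $[\E_i,\F_j] = \delta_{ij} \frac{\KK_i - \KK_i^{-1}}{v-v^{-1}}$, to an identity in $\bU(\widehat{\mathfrak{sl}_2})$. The appearance of $\bk_0 - \bk_0^{-1}$ on the right hand side of \eqref{UC-2-d}, \eqref{UC-2-e} comes precisely from the commutators that arise when moving all $\F$'s past all $\E$'s; this is the analogue of the Letzter--Kolb $\imath$Serre relations specialized to the double-edge rank-one setting.

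Next, from the relations there is a natural surjective algebra homomorphism $\pi: \tilde{\bU} \twoheadrightarrow \bU^{\C}_2$ sending the abstract generators to their namesakes. To prove injectivity, I would establish a spanning set for $\tilde{\bU}$ that matches (in size) a known basis of $\bU^{\C}_2$. The idea is to use the relations to push $\tilde{\bU}$ into a PBW-type normal form: using \eqref{kekf} one can move all $\bk_0^{\pm 1}$ to one side, and using \eqref{UC-2-d}--\eqref{UC-2-e} one can reduce any word in $\e_0, \f_0$ to an ordered sum of monomials of the form $\f_0^a \bk_0^c \e_0^b$ (the order 3 relations are enough to give such a straightening algorithm because any occurrence of $\e_0^3 \f_0$ or $\f_0^3 \e_0$ can be rewritten with fewer inversions modulo lower-order terms involving $\bk_0^{\pm 1}$). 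This gives a spanning set indexed by $\mathbb N \times \mathbb Z \times \mathbb N$ for $\tilde{\bU}$.

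On the other side, one has already the monomial basis $\{\zeta_A \mid A \in \MX_{2,d}^{\ap}\}$ for each $\bU^{\C}_{2,d}$ from Theorem~\ref{CB-Udn}, and after stabilization and passing to the idempotented form these assemble (together with the weight idempotents) into a basis of $\dot{\bU}^{\C}_2$. Lifting this to the non-idempotented version $\bU^{\C}_2$ (or more directly, using that $\bU^\C_2$ acts faithfully on $\dot{\bU}^\C_2$ as in Proposition~\ref{Un=slhn}), one obtains that the ordered monomials $\f_0^a \bk_0^c \e_0^b$ are linearly independent in $\bU^{\C}_2$. Hence $\pi$ sends a spanning set bijectively to a basis, so $\pi$ is an isomorphism.

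The main obstacle will be the direct verification of the degree-four relations \eqref{UC-2-d} and \eqref{UC-2-e} inside $\bU(\widehat{\mathfrak{sl}_2})$, since the substitution produces $2^4 = 16$ terms on each side which must be collected and simplified using the order-three quantum Serre relation; it is here that the precise coefficients $\llbracket 3 \rrbracket !(v - v^{-1})$ and the sign in \eqref{UC-2-e} will have to be tracked carefully. A secondary obstacle is confirming the PBW-type straightening, since a priori the abstract relations \eqref{UC-2-d}--\eqref{UC-2-e} alone might not suffice to put every word into the normal form without introducing hidden new relations; this can be circumvented by performing the basis comparison at the level of the faithful module $\dot{\bU}^\C_2$ rather than intrinsically within $\tilde{\bU}$.
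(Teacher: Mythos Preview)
Your plan for verifying the relations via the injective embedding $\jmath_2$ matches the paper's approach; the paper organizes the degree-four check by splitting $\f_0$ into its two summands $\E_1$ and $v^{-1}\KK_1\F_0$ and computing each contribution $S(\e_0,\E_1)$ and $S(\e_0,v^{-1}\KK_1\F_0)$ to the Serre-type expression separately.

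The genuine gap is in your completeness step. The proposed normal form $\f_0^a\bk_0^c\e_0^b$ cannot span $\widetilde{\bU}$: relations \eqref{UC-2-d}--\eqref{UC-2-e} are \emph{Serre-type} (three copies of one generator against one of the other), not commutator-type, so there is no rule allowing you to move a single $\e_0$ past a single $\f_0$. For instance, the degree-three word $\e_0\f_0\e_0$ cannot be rewritten in your normal form, since all the relations you have live in degree $\ge 4$. At the associated-graded level (dropping the $\bk_0$-correction terms on the right of \eqref{UC-2-d}--\eqref{UC-2-e}), the subalgebra generated by $\e_0,\f_0$ becomes $\bU^+(\widehat{\mathfrak{sl}}_2)$, which is far larger than the span of the ordered monomials $\E_0^a\E_1^b$. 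Your proposed circumvention via the faithful module $\dot\bU^\C_2$ does not rescue this: the failure is that your candidate set does not span $\widetilde{\bU}$, and spanning of $\widetilde{\bU}$ cannot be detected by looking at the image under $\pi$.

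The paper bypasses this entirely by invoking Kolb's general presentation theorem \cite[Theorem~7.1]{Ko14} for coideal subalgebras in quantum symmetric pairs. Once $(\bU(\widehat{\mathfrak{sl}}_2),\bU^\C_2)$ has been identified as such a pair (Theorem~\ref{thm:QSP}), Kolb's theorem certifies that the modified Serre relations together with the Cartan relations are a complete set of defining relations; his proof does use a filtered PBW argument, but with the correct (much larger) basis inherited from $\bU^+$.
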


\begin{proof}
Note that $\bU_2$ is of level zero, so we have $\KK_0\KK_1 =1$. Thus $\jmath_n(\bk_0) =\KK_0^2$.
From this, we have the identity \eqref{kekf}.

We now prove the identity (\ref{UC-2-d}).
Since $\jmath_n$ is injective, it suffices to show that (\ref{UC-2-d}) holds after applying $\jmath_n$.
In other words, it suffices to prove the identity in $\bU_2$.
Let $S(\e_0, \f_0)$ denote the left hand side in (\ref{UC-2-d}). 
We define $S(\e_0, \E_1)$ and $S(\e_0, v^{-1}\KK_1 \F_0)$ in a similar fashion.
By a lengthy calculation involving $4\times 2^4 = 64$ terms, we have
\begin{align}
\label{UC-2-da}
S(\e_0, \E_1) =\llbracket 3\rrbracket ! (v - v^{-1}) \e_0 \bk_0 \e_0.
\end{align}
Similarly, we have
\begin{align}
\label{UC-2-db}
S(\e_0, v^{-1}\KK_1 \F_0) = - \llbracket 3\rrbracket ! (v - v^{-1}) \e_0  \bk^{-1}_0 \e_0.
\end{align}
So the relation (\ref{UC-2-d}) follows  by adding (\ref{UC-2-da}) and (\ref{UC-2-db}).
Similarly, one can show Eq.~(\ref{UC-2-e}) and we leave the detail to the reader.

Now we invoke \cite[Theorem 7.1]{Ko14}, which says no additional relations are needed. This finishes the proof. 
\end{proof}

%%%%%
\section{The algebra $\dot{\bU}^{\fc}_n$ and its monomial basis}
  \label{MA}
  
Let
\begin{equation}
 \label{Znc}
\mbb Z^{\fc}_n =\{ \lambda = (\lambda_i)_{i\in \mbb Z} \vert \lambda_i \in \ZZ,  
\lambda_i = \lambda_{i+n}, \lambda_i = \lambda_{-i}, \forall i, \lambda_0, \lambda_{r+1} \ \mbox{odd} \}.
\end{equation}
Let $|\lambda| = \lambda_1 + \ldots + \lambda_n$.
Define an equivalence relation $\approx$ on $\mbb Z^{\fc}_n$ by letting $\lambda \approx \mu $ if and only 
if $\lambda - \mu = (\dots, p, p, p, \dots)$, for  some even integer $p$.
Let $\mbb Z^{\fc}_n  / \approx$ be the set of equivalence classes with respect to the equivalence relation $\approx$;
and let $\widehat \lambda$ be the equivalence class of $\lambda$.

Fix $\widehat \lambda \in \mbb Z^{\fc}_n/\approx$, we define the element $1_{\widehat \lambda} \in \bU^{\fc}_{n,\infty} $ as follows.
$(1_{\widehat \lambda})_d = 0$ if $d \not \equiv |\lambda|$ (mod $2n$).  If $d = |\lambda| + pn$ for some even integer $p$,
we have $(1_{\widehat \lambda})_d = 1_{\lambda + (\ldots, p, p, p, \ldots)}$. Here $1_{\lambda + (\ldots, p, p, p, \ldots)} \in \bU^{\fc}_{n,d}$ is understood to be zero if there is a negative entry in $ \lambda + (\ldots, p, p, p, \ldots)$.

\begin{Def}
Let $\dot \bU^{\fc}_{n}$ be the $\bU^{\fc}_n$-bimodule in $\bU^{\fc}_{n,\infty}$ generated by $1_{\widehat \lambda}$
for all $\widehat \lambda \in \mbb Z^{\fc}_n/\approx$.
\end{Def}

It is clear that $\dot \bU^{\fc}_n$ is a subalgebra of $\bU^{\fc}_{n,\infty}$ generated by
$1_{\widehat \lambda}$, $\e_i 1_{\widehat \lambda}$ and $\f_i 1_{\widehat \lambda}$ for all $i\in [0, r]$ and $\widehat \lambda \in \mbb Z^{\fc}_n / \approx$.
Similarly, we define the $\mathcal A$-subalgebra $_{\mathcal A}\dot  \bU^{\fc}_n$ of $\bU^{\fc}_{n,\infty}$  generated by
$\e_i^{(a)} 1_{\widehat \lambda}$ and $\f_i^{(a)} 1_{\widehat \lambda}$, for all $i\in [0, r]$ and $a\in \mbb{N}$.
So we have
$\mbb Q(v) \otimes_{\mathcal A} \ _{\mathcal A} \dot \bU^{\fc}_{n} = \dot \bU^{\fc}_{ n}.$
The bar involution on $\bU^{\fc}_{n,\infty}$ induces a bar involution on $\dot \bU^{\fc}_n$, which we denote by
$
\bar \ : \dot \bU^{\fc}_n \longrightarrow \dot \bU^{\fc}_n.
$
Note that it leaves the elements $\e_i^{(a)} 1_{\widehat \lambda}$ and $\f_i^{(a)} 1_{\widehat \lambda}$ fixed, and hence we have 
$\bar \ : \ _{\mathcal A}  \dot \bU^{\fc}_n \longrightarrow \ _{\mathcal A} \dot \bU^{\fc}_n.$

We denote
\begin{align}
  \label{eq:Mtilde}
  \begin{split}
  \widetilde{\MX}_{n}
  &= \big\{A=(a_{ij}) \in \text{Mat}_{\mbb Z \times \mbb Z}(\mbb Z) \big \vert \,
a_{0, 0}, a_{r+1, r+1} \in 2 \mbb Z +1, \;
\\
&\qquad\qquad\qquad\qquad \qquad
a_{ij}\ge 0 \; (i\neq j),\;
a_{ij} = a_{-i, -j} = a_{i+n, j+n} (\forall i, j)
\big \},
\\
\widetilde{\MX}^{ap}_{n}
&= \{A \in \widetilde{\MX}_{n} \big \vert A \text{ is aperiodic} \}.
\end{split}
\end{align}
For $A \in \widetilde{\MX}_{n}$, we shall denote by 
$$
|A|=d
$$ 
if  $\sum_{i=i_0+1}^{i_0+n} \sum_{j\in \mbb Z} a_{ij}=2d+2$ for some (or each) $i_0 \in \ZZ$.
We set, for $d\in  \ZZ$, 
\begin{equation}
\label{tXid}
  \widetilde{\MX}_{n,d}
  =  \big \{A  \in   \widetilde{\MX}_{n} \big \vert \, |A| =d \big\}, \qquad    \widetilde{\MX}_{n} =\sqcup_d   \widetilde{\MX}_{n,d}. 
\end{equation}
Also clearly we have ${\MX}_{n,d} \subset   \widetilde{\MX}_{n,d}$.

We define an equivalence relation $\approx$ on $\widetilde{\MX}_{n}^{ap}$ by
\begin{equation}
\label{eq:approx}
A \approx B \ \mbox{iff} \ A - B = p I_n, \ \mbox{for some even integer} \ p,
\end{equation}
where $I_n= \sum_{1\leq i\leq n} E^{i i}$, and let $\widehat A$ be  the equivalence class of $A$.
Whenever there causes no ambiguity, we write $I$ for $I_n$.
We define $\ro(\widehat A) = \widehat{\ro(A)}$ and $\co(\widehat A) = \widehat {\co(A)}$, and they are elements in $\mbb Z^{\fc}_n/\approx$.
We can then define the element $\zeta_{\widehat A} $ in $\dot \bU^{\fc}_n$ by
$(\zeta_{\widehat A})_d =0$ unless $d = |A|$ mod $2n$, and if $|A| = d+ p/2 n$ for some even integer $p$,
$
(\zeta_{\widehat A})_d  = \zeta_{A+pI},
$
where $\zeta_{A+pI}$ is the monomial basis attached to $A+pI$ in  Theorem ~\ref{CB-Udn}.
Since $\phi^{\fc}_{d, d-n} (\zeta_{A+pI}) = \zeta_{A+(p-2)I}$, we see that $\zeta_{\widehat A} \in \dot \bU^{\fc}_n$.
 
 The following linear independence is reduced to the counterpart at
 the Schur algebra level, by an argument similar to ~\cite[Theorem 5.5]{LW15}.

\begin{prop}
The set $\{ \zeta_{\widehat A} \big \vert  \widehat A \in \widetilde{\MX}_{n}^{ap}/\approx \} $ is linearly independent.
\end{prop}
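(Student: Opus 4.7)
The plan is to reduce linear independence in the limit algebra $\dot\bU^{\fc}_n$ to the linear independence of monomial bases at finite level, which has already been established in Theorem~\ref{CB-Udn}. Suppose one has a nontrivial finite relation
\[
\sum_{\widehat A \in S} c_{\widehat A}\,\zeta_{\widehat A} = 0, \qquad S \subset \widetilde{\MX}_n^{ap}/\approx \text{ finite}.
\]
The strategy is to apply the projection $\phi^{\fc}_d : \bU^{\fc}_{n,\infty} \to \bU^{\fc}_{n,d}$ for a single, sufficiently large, suitably chosen $d$, and to use the definition $(\zeta_{\widehat A})_d = \zeta_{A + pI_n}$ to convert the relation into an explicit linear dependence among monomial basis elements of $\bU^{\fc}_{n,d}$, which is forbidden by Theorem~\ref{CB-Udn}.

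First, I would partition $S$ according to the residue of $|A|$ modulo $n$ for any representative $A \in \widehat A$. Since $(\zeta_{\widehat A})_d$ vanishes unless $d$ lies in a fixed congruence class determined by $|A|$, the relation decouples into independent relations over each such class, so there is no loss of generality in assuming $S$ consists of classes with $|A|$ in a single residue class. Next, for each $\widehat A \in S$ there is a threshold $p(\widehat A)$ such that for all even $p \ge p(\widehat A)$ the shifted matrix $A + pI_n$ lies in $\MX_{n,\,|A|+pn/2}$, since one only needs the diagonal entries to become nonnegative. Crucially, aperiodicity is preserved under this shift: the condition \eqref{aperiod} concerns $a_{k,k+p}$ for $p\ne 0$, which is unaffected by diagonal perturbations. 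I would then choose $d$ in the appropriate residue class and large enough that every $\widehat A \in S$ contributes a well-defined element $\zeta_{A(d)}$ with $A(d) \in \MX_{n,d}^{ap}$.

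Applying $\phi^{\fc}_d$ to the assumed relation yields $\sum_{\widehat A \in S} c_{\widehat A}\,\zeta_{A(d)} = 0$ in $\bU^{\fc}_{n,d}$. Two distinct equivalence classes $\widehat A \ne \widehat B$ in $S$ give rise to distinct representatives $A(d) \ne B(d)$: indeed, the construction forces $|A(d)| = |B(d)| = d$, and if $A(d) = B(d)$ then $A$ and $B$ would differ by an (automatically even) integer multiple of $I_n$, placing them in the same $\approx$-class. By Theorem~\ref{CB-Udn}, $\{\zeta_X : X \in \MX_{n,d}^{ap}\}$ is a basis of $\bU^{\fc}_{n,d}$, so all $c_{\widehat A}$ must vanish. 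The principal source of technical care is purely combinatorial bookkeeping: tracking the parity of the shift parameter $p$, the residue of $d$, and the simultaneous positivity/aperiodicity of all representatives $A(d)$ for $\widehat A \in S$. None of these present a conceptual obstacle, which is why the argument parallels that of \cite[Theorem~5.5]{LW15}.
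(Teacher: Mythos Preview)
Your proposal is correct and follows the same approach the paper indicates: reduce to the Schur algebra level by projecting via $\phi^{\fc}_d$ for $d$ large in the appropriate residue class, then invoke the monomial basis of $\bU^{\fc}_{n,d}$ from Theorem~\ref{CB-Udn}. The paper only sketches this by citing the parallel argument in \cite[Theorem~5.5]{LW15}, and your write-up supplies exactly those details (residue-class decoupling, simultaneous positivity of the shifted diagonals, preservation of aperiodicity under diagonal shifts, and injectivity of $\widehat A \mapsto A(d)$).
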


To show that $\zeta_{\widehat A}$ is indeed a basis for $\dot \bU^{\fc}_n$,
let us take a closer look  at  the behavior of the monomials at the Schur algebra level.
For simplicity, we write $\f_{n-(i+ 1)} $ for $\e_i$ for all $i\in [0, r]$.
For $ \lambda \in \Lambda^{\fc}_{n,d}$ and
a pair $(\mbf i, \mbf a)$ where $\mbf i =( i_1, \ldots, i_s)$ and $\mbf a = (a_1, \ldots, a_s)$ with $0 \leq i_j \leq n$  and $a_j \in \mbb{N}$
for all $j$, we set
\[
_d \texttt{M}_{\mbf i, \mbf a, \lambda} = \f_{i_1}^{(a_1)} \f_{i_2}^{(a_2)} \cdots \f_{i_s}^{(a_s)} 1_{\lambda} \in \bU^{\fc}_{n,d},
\]
where $1_{\lambda}$ is the standard basis element of the diagonal matrix whose diagonal is $\lambda$.
Then $_d \texttt{M}_{\mbf i, \mbf a, \lambda} $ exhaust all possible monomials in $\bU^{\fc}_{n,d}$.
The following proposition is crucial  in showing that the various $\xi_{\widehat A}$ forms a basis for $\dot \U^{\C}_n$. 
Recall that $I = \sum_{1\leq i\leq n} E^{ii}_{\theta}$.

\begin{prop}
\label{m-c-p}
Fix a triple $(\mbf i, \mbf a, \lambda)$ with $|\lambda|=d$.
There is a finite subset $\mathcal I_{\mbf i, \mbf a, \lambda}$ of $\{ A \in \widetilde{\MX}_{n}^{ap} \big \vert |A|=d\}$ such that
\[
_{d+ pn} \texttt{M}_{\mbf i, \mbf a, \lambda + 2pI} = \sum_{A \in \mathcal I_{\mbf i, \mbf a, \lambda}} c_{A}   \zeta_{_{2p} A}, \; \forall p,
\ \mbox{where} \  c_{A} \in \mathcal A \ \mbox{is independent of} \ p.
\]
\end{prop}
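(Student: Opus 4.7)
My plan is to prove the statement by using the transfer map $\phi^{\fc}_{d,d-n}$ to reduce the claim at level $d+pn$ to the claim at level $d+(p-1)n$, and to conclude via linear independence of the monomial basis.

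The first ingredient is a uniform description of the standard-basis support. Iterating Proposition~\ref{prop-stand-mult} applied to $1_\lambda = [D_\lambda]$, each application of a divided power $\mbf f_i^{(a)}$ modifies the underlying matrix by an off-diagonal move $\sum t_u(E^{iu}_\theta - E^{i+1,u}_\theta)$ with $\sum t_u \leq a$. Consequently the standard-basis expansion of ${}_d\texttt{M}_{\mbf i, \mbf a, \lambda}$ is supported on matrices $D_\lambda + Z$, where $Z$ ranges over a finite set $\mathcal Z \subset \widetilde{\MX}_n$ depending only on $(\mbf i, \mbf a)$. Replacing $\lambda$ by $\lambda + 2pI$ leaves $\mathcal Z$ unchanged, so the support of ${}_{d+pn}\texttt{M}_{\mbf i, \mbf a, \lambda+2pI}$ becomes $\{D_\lambda + Z + 2pI : Z \in \mathcal Z\}$. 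Switching from the standard basis to the monomial basis via the unitriangular transition relation $\zeta_A = [A] + (\text{lower terms})$ and using Proposition~\ref{monomial-aperiodic} to restrict to aperiodic matrices, this produces a natural finite candidate $\mathcal I_{\mbf i, \mbf a, \lambda} \subset \{A \in \widetilde{\MX}_n^{ap} : |A|=d\}$.

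The second ingredient is the shift-equivariance of the monomial basis under $A \mapsto A + 2pI$, which I would verify by inspecting the inductive algorithm in Proposition~\ref{monomial}. The statistic $\Psi(A) = \sum_{1\le i\le n, j} |j-i|\, a_{ij}$ and the numerical choices $(k, u, R_0, \ldots, R_l)$ governing Lemma~\ref{BKLW3.9-b} depend only on the off-diagonal entries of $A$, so the sequence of Chevalley-generator divided powers producing $\zeta_A$ is the same as that producing $\zeta_{A+2pI}$; the only difference is that the starting idempotent $1_\mu$ is replaced by $1_{\mu + 2pI}$. Combined with Proposition~\ref{phi-gene} and the fact that $\phi^{\fc}_{d, d-n}$ maps $1_\nu \mapsto 1_{\nu - 2I}$ on idempotents (a direct consequence of computing $(1 \otimes \chi_n) \circ \widetilde\Delta^{\fc}$ on $[D_\nu]$ and selecting the $(1, 1, \ldots, 1)$-weight on the second tensor factor), this yields both $\phi^{\fc}_{d+pn, d+(p-1)n}(\zeta_{A+2pI}) = \zeta_{A+2(p-1)I}$ and $\phi^{\fc}_{d+pn, d+(p-1)n}({}_{d+pn}\texttt{M}_{\mbf i, \mbf a, \lambda+2pI}) = {}_{d+(p-1)n}\texttt{M}_{\mbf i, \mbf a, \lambda+2(p-1)I}$.

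Finally, writing ${}_{d+pn}\texttt{M}_{\mbf i, \mbf a, \lambda+2pI} = \sum_{A \in \mathcal I_{\mbf i, \mbf a, \lambda}} c_A^{(p)}\, \zeta_{A+2pI}$ using Theorem~\ref{CB-Udn}, applying the transfer map and comparing with the expansion at level $p-1$ forces $c_A^{(p)} = c_A^{(p-1)}$ by the linear independence of $\{\zeta_B\}$, so the common value $c_A$ is $p$-independent. The main obstacle is verifying the shift-equivariance of the monomial construction in Proposition~\ref{monomial}: the algorithm makes choices (such as the index $k$ where periodicity first fails and the split $R = R_0 + \cdots + R_l$) which must be checked to be insensitive to a uniform diagonal shift, a check that ultimately reduces to the observation that diagonal entries contribute nothing to $\Psi$, to the aperiodicity condition, or to the support pattern of the ``lower terms'' absorbed in the induction.
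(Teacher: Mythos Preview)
Your approach is essentially the same as the paper's: establish a finite parametrizing set via the multiplication formulas, verify that the transfer map $\phi^{\fc}$ sends both ${}_{d+pn}\texttt{M}_{\mbf i,\mbf a,\lambda+2pI}$ and $\zeta_{{}_{2p}A}$ to their shifted-by-$2I$ counterparts, and deduce $c_A^{(p)}=c_A^{(p-1)}$ by comparing expansions. One point you should tighten: your final step invokes linear independence of $\{\zeta_B\}$ at level $d+(p-1)n$, but for small $p$ some of the matrices $A+2(p-1)I$ may have negative diagonal entries, in which case $\zeta_{A+2(p-1)I}=0$ and you cannot read off that particular $c_A^{(p)}=c_A^{(p-1)}$; the paper handles this by first choosing $p_0$ large enough that all $A+2pI$ with $A\in\mathcal I_{\mbf i,\mbf a,\lambda}$ lie in $\Xi_{n,d+pn}$ for $p\ge p_0$ (finiteness of $\mathcal I$), establishing the stable value $c_A$ on that range, and then the formula for smaller $p$ follows by applying $\phi^{\fc}$ downward from $p_0$.
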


\begin{proof}
By the multiplication formula for simple generators, we see that the standard basis element $[A]$, possibly periodic,
appearing in $_{d+pn} \texttt{M}_{\mbf i, \mbf a, \lambda+2pI}$ is  stabilized for $p\gg0$.
In other words, there is a finite set $\mathcal J_{\mbf i, \mbf a, \lambda}$ in $\widetilde{\MX}_{n}$ consisting of certain $A$ subject to  $|A|=d$ and
\[
_{d+ pn} \texttt{M}_{\mbf i, \mbf a, \lambda + 2pI} = \sum_{A \in \mathcal J_{\mbf i, \mbf a, \lambda}} g_{A, p}  [_{2p} A], \quad  \forall p
\]
where $g_{A, p} \in \mathcal A$  depends on $p$ in general.

Note that $\mathcal J_{\mbf i, \mbf a, \lambda}$ can be constructed in the following way.
Fix a $p$ large enough, so that when we multiply out the monomial
$_{d+pn}\texttt{M}_{\mbf i, \mbf a, \lambda + 2pI}$ in terms of standard basis,
we do not miss a term because that term has a negative entry in its diagonal.
Collect all the matrices, say $A$,  parametrizing   the standard basis element appearing in  $_{d+pn}\texttt{M}_{\mbf i, \mbf a, \lambda + 2pI}$, and further  throwing into this set all matrices $B$ such that $B<_{\text{alg}} A$.  This resulting set is again finite.
$\mathcal J_{\mbf i, \mbf a, \lambda}$ is then defined to be  the set of matrices obtained by subtracting the matrices in the previous set by $2pI$.

Let $\mathcal I_{\mbf i, \mbf a, \lambda}$  be the subset of $\mathcal J_{\mbf i, \mbf a, \lambda}$  consisting of aperiodic elements.
It follows by Theorem ~\ref{CB-Udn} that
\[
_{d+ pn} \texttt{M}_{\mbf i, \mbf a, \lambda + 2pI} = \sum_{A \in \mathcal I_{\mbf i, \mbf a, \lambda}} c_{A, p}  \zeta_{_{2p} A}, \quad  \forall p,
\]
where $c_{A, p} \in \mathcal A$ depends on $p$ in general.

By definition, we have
\begin{align*}
\phi^{\fc}_{d+pn, d+pn-n} ( _{d+pn} \texttt{M}_{\mbf i, \mbf a, \lambda + 2pI}) &= \ _{d+pn-n} \texttt{M}_{\mbf i, \mbf a, \lambda +2pI - 2I},
 \\
\phi^{\fc}_{d+pn, d+pn-n} ( \zeta_{_{2p} A}) &= \zeta_{_{p-2} A}, \quad \forall p.
\end{align*}
This implies that
\[
c_{A, p} = c_{A, p-1}, \quad \mbox{if} \ \zeta_{_{2(p-1)} A} \neq 0  \in \bU^{\fc}_{n, d+(p-1)n}.
\]
For large enough $p$, $\zeta_{_{2(p-1)} A}$ is obviously nonzero, and so $c_{A, p} =c_A$ is independent of  $p\gg 0$.
Recall that the set $\mathcal I_{\mbf i, \mbf a, \lambda}$ is finite. So we can find a $p_0$ such that $c_{A, p} =c_A$ 
for all $p\geq p_0$ and for all $A \in \mathcal I_{\mbf i, \mbf a, \lambda}$.
%For $p< p_0$ and $\zeta_{_pA}=0$, we can redefine $c_{A, p} = c_{A, p_0}$.
The proposition is thus proved.
\end{proof}

Now we return from Lusztig algebras to the algebra $\dot \bU^{\fc}_n$.

\begin{prop}
\label{m-basis}
The set $\{ \zeta_{\widehat A} | \widehat A \in \widetilde{\MX}_{n}^{ap}/\approx \} $ forms a basis for $\dot \bU^{\fc}_n$ and an $\mathcal A$-basis for ${}_\cA \dot{\bU}^{\fc}_n$.
\end{prop}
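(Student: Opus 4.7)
The plan is to prove Proposition~\ref{m-basis} by combining the linear independence that has just been established (in the preceding proposition) with the stabilization of monomials provided by Proposition~\ref{m-c-p}. So what remains is to establish that $\{\zeta_{\widehat A} \mid \widehat A \in \widetilde{\MX}_n^{ap}/{\approx}\}$ spans $\dot{\bU}^{\fc}_n$ (respectively, ${}_\cA\dot{\bU}^{\fc}_n$) over $\mbb Q(v)$ (respectively, over $\cA$).

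First I would reduce the spanning question to the monomial side. By the very definition of $\dot{\bU}^{\fc}_n$ (as the $\bU^{\fc}_n$-bimodule generated by the idempotents $1_{\widehat \lambda}$, and an algebra by Section~\ref{MA}), any element is a finite $\mbb Q(v)$-linear combination of monomials of the form $\f_{i_1}^{(a_1)}\cdots \f_{i_s}^{(a_s)}\, 1_{\widehat \lambda}$ (with our convention $\f_{n-i-1}=\e_i$). So it suffices to prove each such monomial $1_{\widehat \lambda}$-element lies in the $\cA$-span of the $\zeta_{\widehat A}$; the $\mbb Q(v)$-case follows by tensoring.

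Next, I would evaluate such a monomial at each Schur level. Given a triple $(\mbf i, \mbf a, \lambda)$ with $|\lambda|=d$, the component at level $d+pn$ of the monomial $\f_{i_1}^{(a_1)}\cdots \f_{i_s}^{(a_s)}\, 1_{\widehat \lambda}$ is precisely ${}_{d+pn}\texttt{M}_{\mbf i, \mbf a, \lambda+2pI}$ (up to zeros arising when the diagonal would acquire negative entries). Proposition~\ref{m-c-p} produces a \emph{finite} subset $\mathcal I_{\mbf i, \mbf a, \lambda} \subset \widetilde{\MX}_n^{ap}$ and coefficients $c_A \in \cA$, independent of $p$ for $p\gg 0$, such that
\[
{}_{d+pn}\texttt{M}_{\mbf i, \mbf a, \lambda+2pI} = \sum_{A \in \mathcal I_{\mbf i, \mbf a, \lambda}} c_A \, \zeta_{{}_{2p}A}.
\]
By the very definition of $\zeta_{\widehat A}$, the right-hand side is exactly the $(d+pn)$-component of $\sum_{A} c_A\, \zeta_{\widehat A}$ (zeros at levels not congruent to $d$ mod $2n$ are automatic on both sides, and the transfer map sends $\zeta_{{}_{2p}A}$ to $\zeta_{{}_{2(p-1)}A}$, matching the same relation for the monomial). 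Hence equality of all components gives the desired identity in $\dot{\bU}^{\fc}_n$.

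The main obstacle is precisely the coherent stabilization that justifies pulling the $c_A$ out of the Schur-level expansions to assemble them into elements of the projective limit; this is exactly the content of Proposition~\ref{m-c-p}, whose proof relies on the transfer-map compatibility $\phi^{\fc}_{d,d-n}(\zeta_{{}_{2p}A}) = \zeta_{{}_{2(p-1)}A}$ and on the finiteness of the index set. Granting that, spanning follows, and combined with the linear independence already proved, we conclude that $\{\zeta_{\widehat A}\}$ is a $\mbb Q(v)$-basis of $\dot{\bU}^{\fc}_n$. For the integral statement, the same argument applies verbatim, since the monomials $\f_{i_1}^{(a_1)}\cdots \f_{i_s}^{(a_s)}\,1_{\widehat \lambda}$ (divided powers included) generate ${}_\cA\dot{\bU}^{\fc}_n$ over $\cA$ and Proposition~\ref{m-c-p} already produces coefficients $c_A \in \cA$.
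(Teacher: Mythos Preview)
Your proof is correct and follows essentially the same approach as the paper: reduce spanning to the monomials $\texttt{M}_{\mbf i,\mbf a,\widehat\lambda}$ that generate $\dot\bU^{\fc}_n$ (resp.\ ${}_\cA\dot\bU^{\fc}_n$), then invoke Proposition~\ref{m-c-p} to express each such monomial as a finite $\cA$-linear combination of the $\zeta_{\widehat A}$, and combine with the linear independence already established. The paper's proof says exactly this in two lines; you have simply unpacked the details.
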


\begin{proof}
Similar to the element $_d \texttt{M}_{\mbf i, \mbf a, \lambda}$, we can define its limit version  $\texttt{M}_{\mbf i, \mbf a, \widehat \lambda}$ in $\dot \bU^{\fc}_n$. Moreover, these monomials exhaust all the possible monomials in $\dot \bU^{\fc}_n$.
The proposition now follows from Proposition ~\ref{m-c-p}.
\end{proof}

%%%%%
\section{Bilinear form on $\dot{\bU}^{\fc}_n$}
\label{bilinear-form}

%It follows by the definition  of $\Sj$  
Recall that for $i \in [0,r]$,
$\e_i = \sum [A]$ where $A - E^{i+1, i}_{\theta}$ is diagonal,
$\mbf f_i =\sum [A]$ where $A - E^{i, i+1}_{\theta}$ is diagonal,
and $\mbf k_i =\sum_{\lambda\in \Lambda^{\C}_{n, d}}  v^{\lambda_{i+1} -\lambda_i} 1_{\lambda}$.

Imitating McGerty~\cite{Mc12} in affine type $A$, we define a bilinear form $\langle \cdot, \cdot \rangle_d$ on ${\mbf S}^{\fc}_{n,d}$  as follows:
\[
\langle [A], [A'] \rangle_d = \delta_{A, A'} v^{-2 d_{A^t}}    \# X^{L'}_{A^t},
\]
where $L' \in \X^{\fc}_{n, d}(\ro(A^t))$. With the help of  the identity \eqref{dAdA},  the same argument as in \cite[Proposition~ 3.2]{Mc12} gives us the following.

\begin{prop}
\label{prop:adjoint}
We have $\langle [A] * [B], [C] \rangle_d =   \langle [B], v^{d_A - d_{A^t}} [A^t] *[C] \rangle_d$.
\end{prop}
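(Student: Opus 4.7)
The plan is to follow closely the blueprint of McGerty's proof of Proposition~3.2 in \cite{Mc12} in the affine type $A$ setting, adapting it to our centrosymmetric affine type $C$ framework. Since $\langle \cdot,\cdot\rangle_d$ is diagonal in the standard basis $\{[A]\}$, the identity reduces, after expanding $[A]*[B] = \sum_C v^{d_C-d_A-d_B}\,g^C_{A,B}(v)\,[C]$ (where $g^C_{A,B}(\sqrt q)$ counts middle flags $L_2$ such that $(L_1,L_2)\in\mathcal O_A$ and $(L_2,L_3)\in\mathcal O_B$ for fixed $(L_1,L_3)\in\mathcal O_C$), to the scalar identity
\begin{equation*}
v^{d_C-d_A-d_B-2d_{C^t}}\,g^C_{A,B}(v)\,\bigl|X^{L'_C}_{C^t}\bigr| \;=\; v^{2d_A-2d_{A^t}+d_B-d_C-2d_{B^t}}\,g^B_{A^t,C}(v)\,\bigl|X^{L'_B}_{B^t}\bigr|.
\end{equation*}

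Next I would match both sides by a double count of the set of triples
$\{(L_1,L_2,L_3)\in (\X^{\fc}_{n,d})^3 : (L_1,L_2)\in\mathcal O_A,\ (L_2,L_3)\in\mathcal O_B,\ (L_1,L_3)\in\mathcal O_C\}$, modulo the diagonal $\SP_F(2d)$-action. Counting it by fixing $(L_1,L_3)$ yields $g^C_{A,B}(v)$ times the size of $\mathcal O_C$; counting it by fixing $(L_2,L_3)$ and reading the $(L_2,L_1)$-position as labelled by $A^t$ yields $g^B_{A^t,C}(v)$ times the size of $\mathcal O_B$. Normalizing each orbit by the stabilizer gives the orbital fiber counts $|X^{L'_C}_{C^t}|$ and $|X^{L'_B}_{B^t}|$ in the pairing.

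The $v$-power bookkeeping is then supplied by the dimension identity \eqref{dAdA}: the difference $d_A - d_{A^t}$ measures precisely the discrepancy between the two normalizations $[A] = v^{-d_A}e_A$ and $[A^t] = v^{-d_{A^t}}e_{A^t}$, and reassembling the four occurrences of such shifts across $A,B,C$ gives exactly the twist $v^{d_A-d_{A^t}}$ asserted in the proposition. Coassociativity-type identities at the level of triple-flag counts, together with the Iwahori-Bruhat parameterization from Proposition~\ref{IB-prop}, make the double count go through.

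The main obstacle I foresee is a careful treatment of the two distinguished positions $i=0$ and $i=r+1$, where $a_{00}$ and $a_{r+1,r+1}$ are forced to be odd by the definition of $\MX_{n,d}$. At these positions the dimension formula \eqref{eqda} (and hence \eqref{dAdA}) receives extra boundary corrections $-\sum_{i\geq 0 > j}a_{ij}-\sum_{i\geq r+1 > j}a_{ij}$, coming from the symplectic constraints on lattices near $L_0$ and $L_r$ (cf.\ Section~\ref{seclocal} and Lemma~\ref{lem:LL}). Ensuring that these boundary corrections match exactly the boundary contributions in the two ways of counting triples — in particular handling the non-degenerate symplectic forms on $L_{r^\dagger}/\varepsilon L_{r^\dagger}$ and its analogue at $L_0$ — is the delicate technical point. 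Once this boundary matching is verified, the remaining ``interior'' indices $1,\ldots,r$ behave exactly as in affine type $A$ and the argument of \cite[Prop.~3.2]{Mc12} applies verbatim.
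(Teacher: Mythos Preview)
Your approach is essentially the same as the paper's: the paper simply says ``with the help of the identity \eqref{dAdA}, the same argument as in \cite[Proposition~3.2]{Mc12}'' and leaves it at that. Your outline of the double count of triples $(L_1,L_2,L_3)$ and the reduction to the telescoping identity $d_C-d_{C^t}=(d_A-d_{A^t})+(d_B-d_{B^t})$ via \eqref{dAdA} is exactly what that one-line proof unpacks to.

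Two small remarks. First, there is an arithmetic slip in your displayed scalar identity: the exponent on the right should be $d_A-2d_{A^t}+d_B-d_C-2d_{B^t}$, not $2d_A-2d_{A^t}+\cdots$. Second, your ``main obstacle'' is not really an obstacle. The double count $|X^{L_3}_{C^t}|\cdot g^C_{A,B}=|X^{L_3}_{B^t}|\cdot g^B_{A^t,C}$ is purely set-theoretic and involves no $v$-powers or symplectic boundary terms whatsoever; it holds verbatim in type $C$ because it only uses that the orbit parametrization is by matrices and that transposing swaps the roles of the two flags. All of the type-$C$ boundary corrections at $i=0$ and $i=r+1$ live entirely inside the formula \eqref{dAdA} for $d_A-d_{A^t}$, and the only property you need from \eqref{dAdA} is that this quantity depends solely on $\ro(A)$ and $\co(A)$ and telescopes. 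So there is nothing extra to verify at the boundary; the sentence invoking Section~\ref{seclocal} and Lemma~\ref{lem:LL} can be dropped.
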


\begin{cor}
For all $i\in [0, r]$, we have the following:
\begin{enumerate}
\item $ \langle \e_i  [A_1], [A_2] \rangle_{d} = \langle [A_1], v \mbf k_i   \mbf f_i [A_2] \rangle_{d}$.
\item $\langle \mbf f_i [A_1], [A_2] \rangle_{d} = \langle [A_1], v^{-1} \e_i \mbf k_i^{-1}   [ A_2 ] \rangle_{d}$.
\item $\langle \mbf k_i [A_1], [A_2] \rangle_{d} = \langle [A_1], \mbf k_i [A_2] \rangle_{d}$.
\end{enumerate}
\end{cor}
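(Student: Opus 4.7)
The plan is to derive all three identities directly from Proposition~\ref{prop:adjoint} by expanding each Chevalley generator as its defining sum over standard basis elements, then tracking the factor $v^{d_A - d_{A^t}}$ supplied by the adjointness formula. For part~(3), this is essentially immediate: since $\mbf k_i = \sum_\lambda v^{\lambda_{i+1} - \lambda_i} 1_\lambda$ is a linear combination of diagonal standard basis elements $1_\lambda = [D_\lambda]$, we have $D_\lambda = D_\lambda^t$ and hence $d_{D_\lambda} - d_{D_\lambda^t} = 0$, so Proposition~\ref{prop:adjoint} applied termwise yields (3) without further computation.

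For part~(1), I would write $\e_i = \sum_D [D + E^{i+1, i}_\theta]$ (sum over diagonal $D$ such that $D + E^{i+1, i}_\theta \in \MX_{n,d}$), apply Proposition~\ref{prop:adjoint} to each summand, and observe that the transposed summands $[D + E^{i, i+1}_\theta]$ are precisely the terms appearing in $\mbf f_i$. For any fixed $[A_2]$, the constraint $\co(D + E^{i, i+1}_\theta) = \ro(A_2)$ determines $D$ uniquely, so collecting the sum gives
$$\langle \e_i [A_1], [A_2]\rangle_d = \langle [A_1], v^{d_A - d_{A^t}}\, [D + E^{i, i+1}_\theta][A_2]\rangle_d.$$
The next step is to evaluate the exponent $d_A - d_{A^t}$ using formula \eqref{dAdA}, noting that the modifications $\ro(A) - \co(A)$ are supported only on the four indices $i, i+1, -i, -i-1 \pmod n$. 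A direct computation simplifies $d_A - d_{A^t}$ into a simple expression in $\lambda_{i+1} - \lambda_i$ (the entries of the diagonal $D$), which then matches the eigenvalue of $v\,\mbf k_i$ on $[D + E^{i, i+1}_\theta][A_2]$ (since this element has row-sum $\ro(D + E^{i,i+1}_\theta)$, and $\mbf k_i$ acts by $v^{\rho_{i+1} - \rho_i}$). Part~(2) follows by the same argument with the roles of $\e_i$ and $\mbf f_i$ exchanged and an appropriate sign adjustment in the exponent, producing the factor $v^{-1}\e_i\mbf k_i^{-1}$.

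The main technical obstacle will lie in the boundary cases $i = 0$ and $i = r$, where $E^{i+1,i}_\theta = E^{i+1,i} + E^{-i-1,-i}$ has overlapping row or column contributions (for instance, both copies land in row $0$ when $i=0$, and both column contributions land at $\pm r$ which are distinct). In these cases the correction terms $-(\ro(A)_0 - \co(A)_0)$ and $-(\ro(A)_{r+1} - \co(A)_{r+1})$ in \eqref{dAdA} become nonzero and must be combined carefully with the $\delta_{i,0}$ and $\delta_{i,r}$ normalizations built into the definitions of $\e_i, \f_i, \mbf h_a$ in order to produce the uniform factor $v$ appearing in the statement. Verifying this cancellation at both endpoints, and matching the result with the uniform $v\,\mbf k_i$ prefactor on the right-hand side, is what really drives the proof.
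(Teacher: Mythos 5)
Your proposal follows essentially the same route as the paper's proof: part (3) is handled in the same way, and for (1)--(2) the paper likewise applies Proposition~\ref{prop:adjoint} to the unique relevant summand of $\e_i$ (resp. $\f_i$), identifies its transpose as a summand of $\f_i$ (resp. $\e_i$), and matches the exponent $d_A-d_{A^t}$ against the eigenvalue of $v\mbf k_i$ (resp. $v^{-1}\e_i\mbf k_i^{-1}$), the only cosmetic difference being that the paper evaluates $d_A=\co(A)_{i+1}$ and $d_{A^t}=\ro(A)_i$ directly from the geometric description of the generators rather than through the formula \eqref{dAdA}. The endpoint bookkeeping at $i=0$ and $i=r$ that you single out (where the two entries of $E^{i+1,i}_{\theta}$ share a column, resp.\ a row, and the correction terms in \eqref{dAdA} and the $\delta_{i,0},\delta_{i,r}$ normalizations of $\e_i,\f_i,\mbf h_a$ must be tracked) is indeed exactly where the paper's computation concentrates as well.
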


\begin{proof}
We prove (1).
If $A - E^{i+1, i}_{\theta}$ is diagonal for some $i \in [0, r]$,
then
$$
d_A = \co(A)_{i+1} \quad \mbox{and} \quad d_{A^t} =  \ro(A)_i = \co(A)_i  -1.
$$
Hence $d_A - d_{A^t}  = \co(A)_{i+1} - \co(A)_i +1$.
Thus, we have
$$
v\mbf k_i (L, L') = \delta_{L, L'} v^{1 +\co(A)_{i+1} - \co(A)_i }= \delta_{L,L'} v^{d_A-d_{A^t}},  \forall L, L' \in \X^{\fc}_{n, d}(\co(A)),
$$
which implies (1).

We now prove (2).  If  $A - E^{i, i+1}_{\theta}$ is diagonal for some $i\in [0, r]$,
then
$$
d_A =  \co(A)_i
\quad \mbox{and} \quad
d_{A^t} = \ro(A)_{i+1} = \co(A)_{i+1} -1 - \delta_{i,0} -  \delta_{i, n}.
$$
So
$d_A - d_{A^t} = \co(A)_i - \co(A)_{i+1} + 1  + \delta_{i,0} + \delta_{i,n}$.
Hence, if $(L, L')$ subject to  $L\in \X^{\fc}_{n, d}(\co(A))$, $L_i \subseteq L'_i$, $L_j = L'_j$ for all $j \in [0, r]\backslash \{i\}$, then
\begin{align*}
v^{d_A - d_{A^t}} \e_i (L, L') &= v^{1 + \delta_{i,0} + \delta_{i, n}} \mbf k_i^{-1} \e_i (L, L')
  \\
&=  v^{1 + \delta_{i,0} + \delta_{i, n}}  v^{- 2 -\delta_{i,0} - \delta_{i, n}}  \e_i \mbf k_i^{-1} (L, L') = v^{-1} \e_i \mbf k_i^{-1} (L, L').
\end{align*}
Part~(2) follows.

Part~ (3) follows from the fact that $d_A = d_{A^t} =0$ if $A$ is diagonal.
\end{proof}

The same argument as in~\cite{Mc12} shows that there is a well-defined bilinear form $\langle \cdot, \cdot \rangle$ on $\dot \bU^{\fc}_n$ given by
\[
\langle x, y \rangle  = \sum_{d=1}^n \lim_{p\to \infty} \langle  x_{d+ pn}, y_{d+pn} \rangle_{d+pn}, \quad \forall x=(x_d), y=(y_d) \in \dot \bU^{\fc}_n.
\]

\begin{rem}
The  same adjointness property as in Proposition~\ref{prop:adjoint} holds 
 for the bilinear form $\langle \cdot, \cdot \rangle$ on $\dot \bU^{\fc}_n$.
\end{rem}

%%%%%
\section{The canonical basis of $\dot{\bU}^{\fc}_n$ and positivity}
\label{sec:CB-Unc}

As we have set up all the preliminary preparation,
the constructions and properties of the canonical basis for $\dot \bU^{\fc}_n$ can be established without further difficulty.
Actually as the technical proofs for the intermediate steps are literally the same as in the affine type $A$ setting \cite{Mc12} and/or 
in the finite type $B/C$ setting \cite{LW15, FL15}, we will formulate the statements while referring to those papers for detailed proofs. 

With the help of the bilinear form $\langle \cdot, \cdot \rangle$ and Theorem~\ref{CB-Udn}, 
the same arguments as in ~\cite{Mc12}, or ~\cite{LW15} prove the following. 

\begin{prop}
\label{prop:CBstable}
For any $A \in \MX^{\ap}_{n,d}$, we have
\[
\phi^{\fc}_{d+pn, d+(p-1)n}  (\{_{2p}A \}_{d+pn} ) =\{ _{2p-2} A\}_{d+(p-1)n}, \quad \forall p\gg 0.
\]
Moreover, we have
$$
\{_{2p}A\}_{d + pn} = \zeta_{_{2p}A} +\sum_{B \in \widetilde{\MX}_{n}^{ap}: B < A} c_{A, B, p}\, \zeta_{_{2p}B}
$$
with $c_{A, B, p} \in \cA$ independent of $p$ for $p\gg 0$.
\end{prop}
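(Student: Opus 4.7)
The plan is to follow the stabilization strategy of McGerty (in affine type $A$) and of \cite{LW15, FL15} (in finite type $B/C$), adapted to the present affine type $C$ setting. The key preliminary ingredients are already in place: the transfer map $\phi^{\fc}_{d,d-n}$ preserves Chevalley generators (Proposition~\ref{phi-gene}) and commutes with the bar involution; the monomial basis $\zeta_A$ is by construction a product of bar-invariant divided powers acting on an idempotent, hence bar-invariant; the bilinear form $\langle \cdot, \cdot\rangle_d$ of Section~\ref{bilinear-form} satisfies adjointness; and the monomial/canonical basis of $\bU^{\fc}_{n,d}$ has been established in Theorem~\ref{CB-Udn}.

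First I would verify the stabilization of the monomial basis itself: since the transfer map sends each Chevalley generator to the corresponding Chevalley generator and sends $1_{\ld+2I}$ to $1_\ld$, an immediate induction on the length of the defining monomial shows that $\phi^{\fc}_{d+pn,\,d+(p-1)n}(\zeta_{{}_{2p}A}) = \zeta_{{}_{2p-2}A}$ for all sufficiently large $p$ (the condition $p \gg 0$ is needed only to ensure the relevant diagonals remain non-negative, which is guaranteed by Proposition~\ref{m-c-p}). Next, by Theorem~\ref{CB-Udn} we may expand
\[
\{{}_{2p}A\}_{d+pn} \;=\; \zeta_{{}_{2p}A} \;+\; \sum_{B<A,\; B\in\widetilde{\MX}^{ap}_n} c_{A,B,p}\,\zeta_{{}_{2p}B},
\]
and the coefficients $c_{A,B,p}\in\cA$ are determined uniquely and recursively by two conditions: bar-invariance of $\{{}_{2p}A\}_{d+pn}$, and the almost-diagonal property $\{{}_{2p}A\}_{d+pn}\in [{}_{2p}A]+\sum v^{-1}\ZZ[v^{-1}][C]$.

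The main obstacle, as usual, is to show that $c_{A,B,p}$ stabilizes in $p$. The argument is as follows. The transition matrix between $\{\zeta_{{}_{2p}B}\}$ and $\{[{}_{2p}B]\}$ (restricted to $B\leq A$) is controlled by Proposition~\ref{m-c-p}: for a fixed pair $(A,B)$, the structure constants expressing $\zeta_{{}_{2p}B}$ in the standard basis are independent of $p$ for $p\gg 0$. Dually, the polynomials $P_{C,A}$ defining $\{{}_{2p}A\}_{d+pn}$ via intersection cohomology depend on $p$ only through an overall shift by $2pI$, and they stabilize as $p\to\infty$ by a standard argument comparing IC-stalks on $\SP_F(2d+2pn)$-orbits (the bilinear form pairing $\langle \zeta_{{}_{2p}A}, \zeta_{{}_{2p}B}\rangle$ becomes constant in $p$ by the adjointness of Section~\ref{bilinear-form} and the definition of the limit pairing). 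Substituting these two stability facts into the recursive determination of $c_{A,B,p}$ from the bar-invariance and leading-term conditions shows $c_{A,B,p}=c_{A,B}$ for all $p\gg 0$, proving the second assertion.

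Finally, the first assertion follows by combining Steps~1 and 3: for $p\gg 0$,
\[
\phi^{\fc}_{d+pn,d+(p-1)n}(\{{}_{2p}A\}_{d+pn}) \;=\; \zeta_{{}_{2p-2}A} \,+\, \sum_{B<A} c_{A,B}\,\zeta_{{}_{2p-2}B},
\]
which is bar-invariant and satisfies the same leading-term condition characterizing $\{{}_{2p-2}A\}_{d+(p-1)n}$; by the uniqueness of canonical basis elements the two must coincide.
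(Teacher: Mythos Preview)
Your approach is the same one the paper points to (it simply cites \cite{Mc12} and \cite{LW15}), and Steps~1 and~4 of your outline are exactly right. The only place that needs tightening is Step~3. The sentence ``the polynomials $P_{C,A}$ \ldots\ stabilize as $p\to\infty$ by a standard argument comparing IC-stalks on $\SP_F(2d+2pn)$-orbits'' is not the McGerty mechanism and is not justified in the affine setting; indeed, avoiding a direct geometric comparison of IC sheaves across varying $d$ is precisely why the bilinear form is introduced. What actually drives the stabilization is the parenthetical you added: (i) by the adjointness in Proposition~\ref{prop:adjoint} the pairing $\langle \zeta_{{}_{2p}A},\zeta_{{}_{2p}B}\rangle_{d+pn}$ reduces to pairings of idempotents, which are explicit Gaussian binomials and stabilize for $p\gg 0$; (ii) the signed canonical basis is characterized by bar-invariance, integrality, and almost orthonormality with respect to $\langle\cdot,\cdot\rangle_d$; hence (iii) the canonical element produced from the (already stabilized) monomial basis by this characterization must itself stabilize, giving $c_{A,B,p}=c_{A,B}$ for $p\gg0$. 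With Step~3 rewritten this way, your deduction of the first assertion via Step~4 (apply $\phi^{\fc}$ termwise, then invoke bar-invariance and uniqueness) is correct.
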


Recall $\widetilde{\MX}_{n}$ and $\widetilde{\MX}_{n}^{\ap}$ from \eqref{eq:Mtilde}.
\begin{Def} 
For  any $\widehat A \in \widetilde{\MX}_{n}^{ap}/\!\approx$,  an element
$b_{\widehat A} \in \dot \bU^{\fc}_n$ is defined as follows:
$(b_{\widehat A})_d =0$ if $d \neq |A| $ mod $2n$;  If $ |A| =d + s n$ for some  integer $s$, we set
\[
(b_{\widehat A})_{d + sn+ pn} = \{ _{2p} A \}_{d+sn+pn}, \quad \forall p \geq p_0, \ \mbox{for some fixed} \ p_0,
\]
and for general $q<p_0$, we set
$
(b_{\widehat A})_{d+sn+qn} = \phi_{d+sn+p_0n, d+rn+qn}^{\fc} (\{_{2 p_0} A\}_{d+sn+p_0n}).
$
\end{Def}
The fact that $b_{\widehat A}$ as defined above lies in $\dot \bU^{\fc}_n$ follows from Proposition~\ref{prop:CBstable}. 
Moreover, $\zeta_{\widehat A} = b_{\widehat A} +$ lower terms.
The next theorem now follows from the existence of the monomial basis $\{\zeta_{\widehat A}\}$ for $\dot \bU^{\fc}_n$; cf. Proposition ~\ref{m-basis}. 

\begin{thm}
 \label{thm:iCB-Unc}
The set $\dot{\mbf B}^{\fc}_n  :=\{b_{\widehat A} \big \vert  \widehat A \in \widetilde{\MX}_{n}^{ap}/\approx\}$  
forms a basis for $\dot \bU^{\fc}_n$.
\end{thm}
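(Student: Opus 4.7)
The plan is to leverage the existing monomial basis from Proposition~\ref{m-basis} together with the stabilization in Proposition~\ref{prop:CBstable}, and conclude by a unitriangular change-of-basis argument. The heavy technical lifting has already been done, so the proof should be relatively short.

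First, I would verify that each $b_{\widehat A}$ is indeed a well-defined element of $\dot \bU^{\fc}_n$. By construction its components at sufficiently high level $d+sn+pn$ ($p \geq p_0$) are the canonical basis elements $\{ _{2p}A\}_{d+sn+pn}$ in $\bU^{\fc}_{n,d+sn+pn}$; compatibility with the transfer maps $\phi^{\fc}_{d+sn+pn,d+sn+(p-1)n}$ for large $p$ is exactly the first assertion of Proposition~\ref{prop:CBstable}, and for $p < p_0$ the components are defined by applying further transfer maps from the $p_0$-level, so the tuple $(b_{\widehat A})_\bullet$ is compatible with the full projective system. One should also check that $b_{\widehat A}$ depends only on the class $\widehat A$ and not the representative, which is immediate from the defining formula.

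Next, I would translate the second statement of Proposition~\ref{prop:CBstable},
\[
\{_{2p}A\}_{d + sn + pn} = \zeta_{_{2p}A} +\sum_{B \in \widetilde{\MX}_{n}^{ap},\, B < A} c_{A, B, p}\, \zeta_{_{2p}B},
\]
into a corresponding identity at the projective limit level. Two points need attention here: (i) the matrices $B$ appearing are aperiodic, which uses Proposition~\ref{monomial-aperiodic} applied to the monomial expansion of $\{_{2p}A\}$; and (ii) only finitely many $\approx$-classes $\widehat B$ occur, and the coefficients $c_{A,B,p}$ stabilize in $p$, so the identity lifts to a finite sum
\[
b_{\widehat A} \;=\; \zeta_{\widehat A} + \sum_{\widehat B \in \widetilde{\MX}_{n}^{ap}/\!\approx,\; \widehat B < \widehat A}
c_{\widehat A, \widehat B}\, \zeta_{\widehat B}
\]
in $\dot \bU^{\fc}_n$, where the partial order $<$ on classes is induced from $<$ on matrices (one checks it is $\approx$-invariant, since shifting by $pI$ preserves the algebraic order~\eqref{comb-ord}).

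Finally, the transition matrix between $\{\zeta_{\widehat A}\}$ and $\{b_{\widehat A}\}$ is locally finite and unitriangular with respect to $<$, hence invertible term-by-term. Combined with Proposition~\ref{m-basis}, which asserts that $\{\zeta_{\widehat A}\}$ is a basis of $\dot \bU^{\fc}_n$, this gives that $\{b_{\widehat A}\}$ is also a basis. The main subtlety to address carefully is the bookkeeping in the second step—ensuring that the partial order descends cleanly to $\approx$-classes and that the stabilized expansion is honestly finite at the limit—but no new ideas beyond Propositions~\ref{m-basis}, \ref{monomial-aperiodic}, and~\ref{prop:CBstable} are required.
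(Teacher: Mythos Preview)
Your proposal is correct and follows essentially the same approach as the paper: the paper's proof is a one-liner that invokes Proposition~\ref{prop:CBstable} (to conclude $b_{\widehat A} \in \dot \bU^{\fc}_n$ and obtain the unitriangular relation $\zeta_{\widehat A} = b_{\widehat A} + \text{lower terms}$) together with the monomial basis of Proposition~\ref{m-basis}. You have simply spelled out the bookkeeping in more detail.
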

The basis $\dot{\mbf B}^{\fc}_n$ is called the {\em canonical basis} of $\dot \bU^{\fc}_n$.

As a consequence, we deduce formally the following results by the same arguments in ~\cite{LW15} and ~\cite{FL15}.

\begin{prop}
%The bilinear form $\langle \cdot, \cdot \rangle$ on $\dot \bU^{\fc}_n$ is non-degenerate.
The signed canonical basis $\{ \pm b_{\widehat A}\big \vert  \widehat A \in \widetilde{\MX}_{n}^{ap}/\approx\}$
is characterized by the bar-invariance, integrality (i.e. $b_{\widehat A} \in {}_\cA \dot \bU^{\fc}_n$), and almost orthonormality
(i.e., $\langle b_{\widehat A}, b_{\widehat A'} \rangle =\delta_{{\widehat A},  {\widehat A'}} \mod v^{-1}\mbb Z[[v^{-1}]] $). 
\end{prop}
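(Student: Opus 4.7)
The plan is to first verify that the canonical basis $\dot{\mbf B}^{\fc}_n$ itself satisfies all three listed properties, and then run the standard Lusztig uniqueness argument to deduce the characterization up to sign.

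First I would check the three properties for $b_{\widehat A}$. Bar-invariance follows from the construction: each $\{A\}_d$ is bar-invariant by Proposition~\ref{CB+c} (since the IC-basis is fixed by the Kazhdan--Lusztig involution), and since the transfer maps $\phi^{\fc}_{d,d-n}$ were shown to commute with the bar involution in Section~\ref{sec:Unc}, the stabilized elements $b_{\widehat A}$ are bar-invariant in $\dot\bU^{\fc}_n$. Integrality is immediate from the fact that each $\{A\}_d \in \bU^{\fc}_{n,d;\cA}$ and that ${}_\cA\dot{\bU}^{\fc}_n$ is the projective limit of the $\cA$-forms. For almost orthonormality, I would use Proposition~\ref{prop:adjoint} together with the IC/geometric description of $\{A\}_d$ (following ~\cite{Mc12} for affine type $A$ and ~\cite{LW15} for type $B/C$) to show that $\langle \{A\}_d,\{B\}_d\rangle_d \equiv \delta_{A,B} \pmod{v^{-1}\mbb Z[[v^{-1}]]}$ at each finite level, and then pass to the stabilized limit using Proposition~\ref{prop:CBstable}.

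Next, for the uniqueness part, suppose $b' \in \dot\bU^{\fc}_n$ satisfies all three conditions. Using Theorem~\ref{thm:iCB-Unc}, expand
\[
b' = \sum_{\widehat A \in \widetilde{\MX}^{ap}_n/\!\approx} c_{\widehat A}\, b_{\widehat A},\qquad c_{\widehat A}\in \mbb Q(v).
\]
Bar-invariance of $b'$ together with bar-invariance of each $b_{\widehat A}$ forces $\bar c_{\widehat A} = c_{\widehat A}$, so each $c_{\widehat A}\in \mbb Q[v,v^{-1}]^{\,\bar{}\,}=\mbb Q$-symmetric Laurent polynomials; integrality then forces $c_{\widehat A} \in \mbb Z$. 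Finally, almost orthonormality of $b'$ combined with the established almost orthonormality of the $b_{\widehat A}$ yields
\[
1 \equiv \langle b',b'\rangle \equiv \sum_{\widehat A} c_{\widehat A}^2 \pmod{v^{-1}\mbb Z[[v^{-1}]]},
\]
and since each $c_{\widehat A}^2 \in \mbb Z_{\ge 0}$ is a constant, this integer identity forces exactly one $c_{\widehat A} = \pm 1$ and all others zero, so $b' = \pm b_{\widehat A}$ for a unique $\widehat A$.

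The main obstacle will be establishing almost orthonormality cleanly at the infinite level. Concretely, one must show that the partial sums $\langle x_{d+pn},y_{d+pn}\rangle_{d+pn}$ defining $\langle x,y\rangle$ stabilize modulo $v^{-1}\mbb Z[[v^{-1}]]$ when $x=b_{\widehat A}$, $y=b_{\widehat B}$, and that the diagonal values converge to $\delta_{\widehat A,\widehat B}$. This requires a careful compatibility check between the stabilization of $\{A\}_d$ from Proposition~\ref{prop:CBstable} and the dimension factors $v^{-2d_{A^t}}\# X^{L'}_{A^t}$ appearing in the definition of $\langle\cdot,\cdot\rangle_d$; here one adapts the argument of ~\cite[\S3]{Mc12} and ~\cite{LW15}, using the centrosymmetry condition characterizing $\MX_{n,d}$ together with the identity \eqref{dAdA} to control the asymptotic behavior of $d_A - d_{A^t}$ as $A$ is shifted by $2pI$. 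Everything else is a formal consequence of results established earlier in the chapter.
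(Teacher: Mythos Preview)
Your overall approach matches what the paper intends: it gives no proof and simply defers to the arguments in \cite{LW15} and \cite{FL15}, which amount to verifying the three properties for the $b_{\widehat A}$ and then invoking the standard Lusztig characterization (cf.\ \cite[14.2]{Lu93}). Your verification of bar-invariance, integrality, and almost orthonormality is essentially correct, modulo one minor point: ${}_\cA\dot\bU^{\fc}_n$ is defined in the paper as the $\cA$-subalgebra generated by divided powers times idempotents, not as a projective limit of $\cA$-forms, so integrality of $b_{\widehat A}$ should instead be deduced from Proposition~\ref{m-basis} and Proposition~\ref{prop:CBstable}.

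There is, however, a genuine gap in your uniqueness argument. You claim that bar-invariance plus integrality forces $c_{\widehat A}\in\mbb Z$; this is false. Integrality only yields $c_{\widehat A}\in\cA=\mbb Z[v,v^{-1}]$, and bar-invariance then says each $c_{\widehat A}$ is a symmetric Laurent polynomial with integer coefficients (e.g.\ $v+v^{-1}$ is allowed at this stage). Consequently $c_{\widehat A}^2$ need not be a nonnegative integer, and worse, the cross terms $c_{\widehat A}c_{\widehat B}\langle b_{\widehat A},b_{\widehat B}\rangle$ for $\widehat A\neq\widehat B$ can contribute to nonnegative powers of $v$, since positive powers in $c_{\widehat A}c_{\widehat B}$ may cancel against the $v^{-1}\mbb Z[[v^{-1}]]$ part of the pairing. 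Your displayed congruence $\langle b',b'\rangle\equiv\sum_{\widehat A}c_{\widehat A}^2$ is therefore unjustified.

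The standard fix (which is what \cite{LW15}, \cite{Mc12}, and ultimately \cite[14.2.2]{Lu93} do) is to pass to the $\mbb Z[v^{-1}]$-lattice $\mathcal L$ spanned by the $b_{\widehat A}$. Almost orthonormality makes the images $\bar b_{\widehat A}$ an orthonormal $\mbb Z$-basis of $\mathcal L/v^{-1}\mathcal L$. The hypotheses on $b'$ give $b'\in\mathcal L$ with image of norm $1$, hence $\bar b'=\pm\bar b_{\widehat A_0}$ for a unique $\widehat A_0$. Then $b'\mp b_{\widehat A_0}$ is bar-invariant and lies in $v^{-1}\mathcal L$; writing it in the canonical basis, each coefficient is a bar-invariant element of $\cA$ lying in $v^{-1}\mbb Z[v^{-1}]$, hence zero.
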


The canonical basis of $\dot \bU^{\fc}_n$ enjoys several remarkable positivity properties as follows.
The proofs use the same arguments as in ~\cite{LW15} and ~\cite{FL15}. In particular, for the positivity with respect to
comultiplication, the positivity of the canonical basis in the Lusztig algebra $\bU_{n,d}^{\fc}$ as in Proposition~\ref{comult+U} is used.

\begin{thm}
  \label{thm:positivity-Unc}
The structure constants of the canonical basis $\dot{\mbf B}^{\fc}_n$ lie in $\mbb N[v, v^{-1}]$
with respect to the multiplication and  comultiplication, and in $v^{-1} \mbb N[[v^{-1}]]$ with respect to the bilinear pairing.
\end{thm}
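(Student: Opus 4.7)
The plan is to lift each of the three positivity statements from the finite level (Schur and Lusztig algebras) to the limit algebra $\dot\bU^{\fc}_n$, using the stability property in Proposition~\ref{prop:CBstable}. For multiplication, I would pick $b_{\widehat A}, b_{\widehat B} \in \dot{\mbf B}^{\fc}_n$ and consider the product componentwise: for $d\gg 0$ one has $(b_{\widehat A})_d = \{ A_p\}_d$ and $(b_{\widehat B})_d = \{ B_p\}_d$, both canonical basis elements of $\bU^{\fc}_{n,d}\subset \Sj$. By Proposition~\ref{CB+c} the product $\{A_p\}_d \{B_p\}_d$ expands in the canonical basis of $\Sj$ with coefficients in $\mbb N[v,v^{-1}]$, and by Theorem~\ref{CB-Udn} (aperiodicity being preserved at the level of $\bU^{\fc}_{n,d}$) only aperiodic canonical basis elements appear. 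Applying Proposition~\ref{prop:CBstable} componentwise shows that each structure constant is ultimately constant in $d$, which identifies the product as a $\mbb N[v,v^{-1}]$-linear combination of elements of $\dot{\mbf B}^{\fc}_n$.

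For the comultiplication, the argument is structurally identical, substituting Proposition~\ref{comult+U} in place of Proposition~\ref{CB+c}: at the Lusztig-algebra level the structure constants $\hat m^{C,A}_B \in \mbb N[v,v^{-1}]$, and after verifying (using the compatibility of $\Delta^{\fc}$ with transfer maps embedded in the commutative diagram preceding Proposition~\ref{prop:Unc-coideal}) that the stable values of $\hat m^{C,A}_B$ descend to the structure constants of $\Delta^{\fc}$ on $\dot{\mbf B}^{\fc}_n \otimes \dot{\mbf B}_n$, the positivity is immediate. A subtle point to address here is that one tensor factor of the target lives in $\bU_n$ (not $\bU^{\fc}_n$), so one needs the affine type $A$ analogue of Proposition~\ref{prop:CBstable}; this is known (cf.~\cite{Lu00, Mc12}) and matches the stabilization for $\dot \bU(\asl_n)$ indicated in Section~\ref{sec:Un-A}.

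For the bilinear pairing, the geometric definition $\langle [A],[A']\rangle_d = \delta_{A,A'} v^{-2 d_{A^t}}\#X^{L'}_{A^t}$ makes it manifest that $\langle[A],[A']\rangle_d \in \mbb N[v,v^{-1}]$. Combined with the unitriangular change-of-basis from the standard to the canonical basis together with the almost-orthonormality $\langle b_{\widehat A}, b_{\widehat A'}\rangle \equiv \delta_{\widehat A,\widehat A'} \pmod{v^{-1}\mbb Z[[v^{-1}]]}$, a standard argument (as in~\cite{Mc12}, ~\cite{LW15}) converts the integrality statement into the sharper positivity statement $\langle b_{\widehat A}, b_{\widehat A'}\rangle \in v^{-1}\mbb N[[v^{-1}]]$ for $\widehat A\neq \widehat A'$, and $\in 1 + v^{-1}\mbb N[[v^{-1}]]$ on the diagonal.

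The main obstacle I anticipate is controlling the finiteness of the expansions uniformly in $d$: the multiplication and comultiplication formulas in $\Sj$ involve potentially periodic matrices, whose canonical basis elements disappear on passage to $\bU^{\fc}_{n,d}$, so one must carefully identify the ``kernel'' contributed by periodic terms and verify they do not spoil the stabilization to aperiodic classes. This is exactly the issue resolved by Proposition~\ref{monomial-aperiodic} in the present setup, and the remaining work is to marry its conclusion with Proposition~\ref{prop:CBstable} to produce the well-defined stable structure constants claimed in the theorem.
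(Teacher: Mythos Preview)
Your proposal is correct and follows essentially the same approach as the paper: the paper itself provides no detailed proof but simply states that ``the proofs use the same arguments as in~\cite{LW15} and~\cite{FL15}'' and singles out Proposition~\ref{comult+U} as the key input for comultiplication positivity, which is exactly the strategy you outline. Your identification of Proposition~\ref{CB+c}, Theorem~\ref{CB-Udn}, Proposition~\ref{prop:CBstable}, and Proposition~\ref{comult+U} as the relevant finite-level ingredients to be stabilized is precisely what those references (adapted to the present affine type~$C$ setting) carry out.
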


%%%%%
\section{Another presentation of  the algebra $\dot{\bU}^{\fc}_n$}
\label{another}

We shall give a more familiar description of the algebra $\dot{\bU}^{\fc}_n$. 
We start with introducing the limit version of the imbeddings $\jmath_{n,d}$.

Recall $\mbb Z_n$ from \eqref{Zn} and $\mbb Z_n^{\fc}$ from  \eqref{Znc}, and there is an inclusion $\mbb Z_n^{\fc} \subset \mbb Z_n$. 
Recall the notation $\models$ from (\ref{XiA}), and we extend it further to $\mbb Z^{\C}_n \times \mbb Z_n$ as follows.
Given a pair $( \lambda,  \lambda' ) \in \mbb Z_n^{\fc} \times \mbb Z_n$, we write $\lambda'  \models \lambda$ if
\[
\lambda_i = \lambda_i' + \lambda_{n-i}' + \delta_{i, 0} + \delta_{i, r+1}, \quad \forall i.
\]
We write $\bar{\lambda}'  \models \widehat{\lambda}$ if $\lambda'  \models \tilde \lambda$ for some $\tilde \lambda $ in the equivalence class $\widehat \lambda$ and the notation $\bar{\lambda}$ is defined in Section ~\ref{sec:Un-A}.
(In this case, we shall assume that $\lambda'  \models \lambda$.)

Recall  $_{\bar{\mu} } (\dot \bU_n)_{\bar{\lambda} }$
from Section ~\ref{sec:Un-A}.
We set 
$\ _{\widehat \mu} (\dot \bU^{\fc}_n)_{\widehat \lambda}
=1_{\widehat \mu} \dot \bU^{\fc}_n 1_{\widehat \lambda}$.
For a quadruple $(\widehat \lambda, \widehat \mu, \bar{\lambda}' , \bar{\mu}' )$ such that
$\bar{\lambda}'  \models \widehat \lambda$ and $ \bar{\mu}'  \models \widehat \mu$,
we define a linear map
\[
\jmath_{ \bar{\lambda}' , \bar{\mu}' , \widehat \lambda, \widehat \mu} :
\ _{\widehat \mu} (\dot \bU^{\fc}_n)_{\widehat \lambda}
\longrightarrow
\ _{\bar{\mu}' } (\dot \bU_n)_{\bar{\lambda}' },
\]
to be the composition
\[
\ _{\widehat \mu} (\dot \bU^{\fc}_n)_{\widehat \lambda}
\hookrightarrow
\dot \bU^{\fc}_n
\overset{\jmath_n}{\longrightarrow}
\bU_{n,\infty}
\twoheadrightarrow
\ _{\bar{\mu}' } (\dot \bU_n)_{\bar{\lambda}' },
\]
where the first map is a natural inclusion and the third one is the projection.
Set
\[
\jmath_{\widehat \lambda, \widehat \mu}=
\prod_{ \bar{\lambda}'  \models \widehat \lambda,\, \bar{\mu}'  \models \widehat \mu}
\jmath_{ \bar{\lambda}' , \bar{\mu}' , \widehat \lambda, \widehat \mu}:
\ _{\widehat \mu} (\dot \bU^{\fc}_n)_{\widehat \lambda}
\longrightarrow
\prod_{ \bar{\lambda}'  \models \widehat \lambda, \, \bar{\mu}'  \models \widehat \mu}
\ _{\bar{\mu}' } (\dot \bU_n)_{\bar{\lambda}' }.
\]

Recall the imbedding $\jmath_{n} : \bU^{\fc}_{n,\infty} \rightarrow \bU_{n,\infty}$ from Section~\ref{sec:Unc}. 
We have
$$
\jmath_{n} ( \ _{\widehat \mu} (\dot \bU^{\fc}_n)_{\widehat \lambda}) \subseteq
\prod_{\bar{\lambda}'  \models \widehat \lambda, \, \bar{\mu}'  \models \widehat \mu}
\ _{{\bar{\mu}'}} (\dot \bU_n)_{\bar{\lambda}' }.
$$
The injectivity of $\jmath_n$ implies that 
the homomorphism $\jmath_{\widehat \lambda, \widehat \mu}$ is injective.

Now a modified form of $\bU^{\fc}_{n}$, denoted by $ \Ua$, can be defined algebraically in a standard way as
\[
\Ua = \oplus_{\widehat{\mu}, \widehat{\lambda} \in \mbb Z^{\fc}_n / \approx} \   _{\widehat{\mu}} (\bU^{\fc}_{n, \text{alg}})_{\widehat{\lambda}},
\]
where
\[
_{\widehat{\mu}} (\Ua)_{\widehat{\lambda}}
= \bU^{\fc}_n\Big / \Big(\sum_{0 \leq i \leq r} (\mbf k_i - v^{-\mu_i + \mu_{i+1}} ) \bU^{\fc}_n + \sum_{0\leq i \leq r} \bU^{\fc}_n (\mbf k_i - v^{-\lambda_i + \lambda_{i+1}})\Big).
\]
The algebra homomorphism $\jmath_n: \bU^{\fc}_n \to \bU_n$ then induces a linear map
\[
\tilde \jmath_{ \bar{\lambda}' , \bar{\mu}' , \widehat \lambda, \widehat \mu} :
\ _{\widehat \mu} (\dot \bU^{\fc}_{n, \text{alg}} )_{\widehat \lambda}
\longrightarrow
\ _{\bar{\mu}' } (\dot \bU_n)_{\bar{\lambda}' }
\]
such that the following diagram commutes:
\[
\begin{CD}
\bU^{\fc}_n @> \jmath_n >> \bU_n \\
@VVV @VVV\\
\ _{\widehat \mu} (\dot \bU^{\fc}_{n, \text{alg}} )_{\widehat \lambda}
@> \tilde \jmath_{ \bar{\lambda}' , \bar{\mu}' , \widehat \lambda, \widehat \mu} >>
\ _{{\bar{\mu}' }} (\dot \bU_n)_{{\bar{\lambda}' }}
\end{CD}
\]
Set
\[
\tilde \jmath_{\widehat \lambda, \widehat \mu}=
\prod_{\substack{\bar{\lambda}'  \models \widehat \lambda \\ \bar{\mu}'  \models \widehat \mu}}
\tilde \jmath_{ \bar{\lambda}' , \bar{\mu}' , \widehat \lambda, \widehat \mu}:
\ _{\widehat \mu} (\dot \bU^{\fc}_{n, \text{alg}})_{\widehat \lambda}
\longrightarrow
\prod_{\substack{\bar{\lambda}'  \models \widehat \lambda \\ \bar{\mu}'  \models \widehat \mu}}
\ _{\bar{\mu}' } (\dot \bU_n)_{\bar{\lambda}' }.
\]
Since $\jmath_n$ is injective,  
$\tilde \jmath_{\widehat \lambda, \widehat \mu}$ is injective.

By definition, there exists a unique linear map
\[
\phi_{d, \text{alg},\widehat{\mu}, \widehat{\lambda}}: 
\ _{\widehat{\mu}} (\dot \bU^{\fc}_{n, \text{alg}})_{\widehat \lambda} \longrightarrow \oplus_{\mbf b \in \widehat{\mu},
 \mbf a\in \widehat{\lambda}} \bU^{\fc}_{n,d} (\mbf b, \mbf a),
\]
where $\bU^{\fc}_{n,d} (\mbf b, \mbf a) = 1_{\mbf b}(\bU^{\fc}_{n,d} )1_{\mbf a}$, such that the following diagram commutes:
\[
\begin{CD}
\bU^{\fc}_n @>>> _{\widehat{\mu}} (\dot \bU^{\fc}_{n, \text{alg}})_{\widehat \lambda} \\
@V\phi_d VV  @VV\phi_{d, \text{alg}, \widehat \mu, \widehat \lambda} V \\
\bU^{\fc}_{n,d} @>>> \oplus_{\mbf b \in \widehat{\mu}, \mbf a\in \widehat{\lambda}} \bU^{\fc}_{n,d} (\mbf b, \mbf a)
\end{CD}
\]
From this we have constructed an algebra homomorphism
$$
\phi_{d, \text{alg}} := \oplus_{\widehat \mu, \widehat \lambda \in \mbb Z^{\fc}_n/\approx} \phi_{d, \text{alg}, \widehat \mu, \widehat \lambda}:
\dot \bU^{\fc}_{n, \text{alg}} \longrightarrow \bU^{\fc}_{n,d}.
$$
Since $\phi_{d, \text{alg}}$ commutes with the transfer maps, i.e.,
$
\phi_{d-n, \text{alg}} = \phi_{d, d-n} \phi_{d, \text{alg}}.
$
we obtain an algebra homomorphism
$\psi : \dot \bU^{\fc}_{n, \text{alg}} \to   \bU^{\fc}_{n,\infty}$.
Observe that the image of this homomorphism is exactly  $\dot \bU^{\fc}_n$ by considering the image of the idempotents 
$1_{\widehat \lambda}$. Therefore, we have a surjective algebra homomorphism:
$
\psi: \Ua \longrightarrow \dot \bU^{\fc}_n.
$
By restriction, we have
$
\psi_{\widehat \mu, \widehat \lambda}: \ _{\widehat \mu} (\Ua)_{\widehat \lambda}  \rightarrow \ _{\widehat \mu} (\dot \bU^{\fc}_n)_{\widehat \lambda},
$
for various $\widehat \mu, \widehat \lambda$.
Since $\jmath_{\widehat \lambda, \widehat \mu}$ and $\tilde \jmath_{\widehat \lambda, \widehat \mu}$  are injective, and 
$
\tilde \jmath_{\widehat \lambda, \widehat \mu} = \jmath_{\widehat \lambda, \widehat \mu} \circ \psi_{\widehat \mu, \widehat \lambda},
$
we conclude that $\psi_{\widehat \mu, \widehat \lambda}$ and hence $\psi$ is injective.  
Summarizing, we have established the following.

\begin{prop}
The map $\psi: \Ua \to \dot \bU^{\fc}_n$ is an algebra isomorphism.
\end{prop}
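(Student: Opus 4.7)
The plan is to prove that $\psi:\Ua\to \dot\bU^{\fc}_n$ is an algebra isomorphism by establishing surjectivity directly from the construction and deducing injectivity from a weight-space decomposition combined with the injectivity of the imbeddings $\jmath_n$, $\jmath_{\widehat\lambda,\widehat\mu}$, and $\tilde\jmath_{\widehat\lambda,\widehat\mu}$ into the affine type $A$ side.

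First, I would verify surjectivity. By construction, $\psi$ is an algebra homomorphism arising from the compatible family $\{\phi_{d,\mathrm{alg},\widehat\mu,\widehat\lambda}\}$ and its compatibility with the transfer maps $\phi^{\fc}_{d,d-n}$. A direct check shows $\psi(1_{\widehat\lambda})=1_{\widehat\lambda}\in \bU^{\fc}_{n,\infty}$ for every class $\widehat\lambda\in \mbb Z^{\fc}_n/\!\approx$, and that $\psi$ sends the Chevalley generators $\mbf e_i, \mbf f_i, \mbf k_i^{\pm 1}$ of $\Ua$ to the corresponding elements of $\dot\bU^{\fc}_n$. Since $\dot\bU^{\fc}_n$ was defined as the $\bU^{\fc}_n$-bimodule in $\bU^{\fc}_{n,\infty}$ generated by $\{1_{\widehat\lambda}\}$, the image of $\psi$ is exactly $\dot\bU^{\fc}_n$.

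Second, I would prove injectivity via the weight-piece decomposition. Write
\[
\Ua = \bigoplus_{\widehat\mu,\widehat\lambda\in \mbb Z^{\fc}_n/\approx}\ _{\widehat\mu}(\Ua)_{\widehat\lambda},
\]
so that it suffices to show each restriction $\psi_{\widehat\mu,\widehat\lambda}:\ _{\widehat\mu}(\Ua)_{\widehat\lambda}\to \ _{\widehat\mu}(\dot\bU^{\fc}_n)_{\widehat\lambda}$ is injective. The crux is the commutative diagram
\[
\tilde\jmath_{\widehat\lambda,\widehat\mu}\;=\;\jmath_{\widehat\lambda,\widehat\mu}\circ \psi_{\widehat\mu,\widehat\lambda},
\]
where both vertical-style arrows ultimately factor through the geometric imbedding $\jmath_n$ of Proposition~\ref{prop:inj-j}. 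Since $\jmath_n:\bU^{\fc}_n\hookrightarrow \bU_n$ is injective, I would verify that $\tilde\jmath_{\widehat\lambda,\widehat\mu}$ is injective as well by using that the projection collects the images over \emph{all} compatible refinements $\bar\lambda'\models\widehat\lambda$ and $\bar\mu'\models\widehat\mu$. The injectivity of $\tilde\jmath_{\widehat\lambda,\widehat\mu}$ then forces $\psi_{\widehat\mu,\widehat\lambda}$ to be injective, and summing over $(\widehat\mu,\widehat\lambda)$ gives injectivity of $\psi$.

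The main obstacle will be verifying injectivity of $\tilde\jmath_{\widehat\lambda,\widehat\mu}$: knowing that $\jmath_n$ is injective on the full algebra $\bU^{\fc}_n$ does not immediately guarantee injectivity after restricting to a fixed pair of weight classes $(\widehat\mu,\widehat\lambda)$ over $\mbb Z^{\fc}_n/\!\approx$. One must exploit that $\tilde\jmath_{\widehat\lambda,\widehat\mu}$ is defined as a product over all refinements $(\bar\mu',\bar\lambda')$ with $\bar\mu'\models\widehat\mu$ and $\bar\lambda'\models\widehat\lambda$: a nonzero element of $_{\widehat\mu}(\Ua)_{\widehat\lambda}$ must have a nontrivial image in $\bU^{\fc}_n$ under the projection, and the full $\jmath_n$-image must then have a nontrivial component in some $_{\bar\mu'}(\dot\bU_n)_{\bar\lambda'}$ with $\bar\lambda'\models\widehat\lambda$, $\bar\mu'\models\widehat\mu$. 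Making this precise is essentially a bookkeeping argument using the explicit description of $\jmath_n$ on the Chevalley generators in \eqref{je-limit} together with the weight filtration on $\bU_n$; once it is settled, the commutative diagram yields the isomorphism statement immediately.
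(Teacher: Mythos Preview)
Your proposal is correct and follows essentially the same route as the paper: surjectivity via the idempotents $1_{\widehat\lambda}$, and injectivity on each weight piece via the factorization $\tilde\jmath_{\widehat\lambda,\widehat\mu}=\jmath_{\widehat\lambda,\widehat\mu}\circ\psi_{\widehat\mu,\widehat\lambda}$ together with the injectivity of $\jmath_n$. The only difference is that the paper simply asserts the injectivity of $\tilde\jmath_{\widehat\lambda,\widehat\mu}$ from that of $\jmath_n$, whereas you (rightly) flag this as the step requiring care; your sketch for handling it is adequate.
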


Therefore, a presentation of $\dot \bU^{\fc}_n$ is reduced to finding a presentation of $\Ua$, 
and the latter can be obtained
by modifying the definition/presentation of $\bU^{\fc}_n$ as given in Propositions ~\ref{present:Ujj} and \ref{present:Ujj-r=0}, 
in a way similar to Lusztig's presentation for modified quantum groups \cite{Lu93}. 
The finite type counterpart of a presentation of $\Ua$ can be found in \cite{BKLW14}.

%%%%%%%%%%%%%%%%%%%%%
%%%%%%%%%%%%%%%%%%%%%
\chapter{A second coideal subalgebra of quantum affine $\mathfrak{sl}_\nn$}
 \label{chap:coideal2}
 
 In this chapter, setting $\nn=n-1$ we consider a subvariety of $\X_{n,d}^{\fc}$, and study its corresponding convolution algebra 
 ${\mbf S}^{\ji}_{\nn,d}$ which is a subalgebra of $\Sj$. 
 We introduce Lusztig subalgebra ${\mbf U}^{\ji}_{\nn,d}$ of the $\jiw$-Schur algebra ${\mbf S}^{\ji}_{\nn,d}$. 
 We study the properties of a comultiplication  on ${\mbf U}^{\ji}_{\nn,d}$,
 which allow us to form a projective system $\{({\mbf U}^{\ji}_{\nn,d},  \phi^{\ji}_{d, d-\nn})\}_{d\ge 0}$ and then
 two distinguished algebras ${\mbf U}^{\ji}_{\nn}$ and $\dot{\mbf U}^{\ji}_{\nn}$. 
We show that $(\bU(\slh_{\nn}),  {\mbf U}^{\ji}_{\nn})$ forms an affine quantum symmetric pair. 
The canonical basis of $\dot{\mbf U}^{\ji}_{\nn}$ is established and shown to admit positivity with respect to
multiplication, comultiplication, and a bilinear pairing. 

Recall $n =2r+2$, and we now set 
$$\nn =n-1=2r+1 \quad (r\geq 1).
$$

%%%%%
\section{The Schur algebras of type $\ji$}
   \label{ji-version}

We shall construct Schur algebras ${\mbf S}^{\ji}_{\nn,d}$ and Lusztig algebras $\bU^{\ji}_{\nn,d}$.
These algebras are defined as the affine counterpart of \cite{BKLW14}, and many basic properties of these algebras are established following~\cite[Section 5]{FL15}.

Recall 
%$\widetilde{\MX}_{n}$ from \eqref{eq:Mtilde} and 
the set $\MX_{n,d}$ from \eqref{Mdn}.
We introduce a subset $\MX^{\ji}_{\nn,d}$ which consists of matrices $A \in \MX_{n,d}$ whose $(r+1)$st row 
and $(r+1)$st column entries are all zero except $a_{r+1, r+1} =1$, i.e., 
\begin{equation}
 \label{Mjid}
\MX^{\ji}_{\nn,d} =\{ A\in \MX_{n,d} \big \vert  a_{r+1, j} = \delta_{r+1, j} ,  a_{i, r+1} = \delta_{i, r+1}, \forall i, j \in \mbb Z\}.
\end{equation}
Introduce the following idempotent in the algebra $\Sj$:
\begin{equation}
  \label{jr}
\bj_r = \sum_{A \in \MX^{\ji}_{\nn,d}: A \text{ diagonal}} [A],
\end{equation}
and form the following subalgebra of $\Sj$:
\begin{align}
\label{Sji}
{\mbf S}^{\ji}_{\nn,d} = \bj_r \Sj \bj_r.
\end{align}
Then $\bj_r$ becomes the identity of ${\mbf S}^{\ji}_{\nn,d}$, which will sometimes be denoted by $1$ when there is no ambiguity.
Note that the algebra ${\mbf S}^{\ji}_{\nn,d}$ is the generic version of the convolution algebra on pairs of lattice chains in
the set $\X^{\ji}_{\nn,d} : =\{ L \in \X^{\C}_{n,d} | L_r = L_{r+1}\}$.
The set $\{ [A] | A\in \MX^{\ji}_{\nn,d}\}$ forms a basis of ${\mbf S}^{\ji}_{\nn,d}$.

Introduce the following elements in ${\mbf S}^{\ji}_{\nn,d}$:
\begin{align}
\begin{split}
\eji_{i} & = \bj_r \e_{i} \bj_r,  \quad
\fji_i  = \bj_r \f_i \bj_r, \\
\kji^{\pm 1}_i & = \bj_r \bk^{\pm 1}_i \bj_r, \quad \forall i\in [0, r-1], \\
\hji^{\pm 1}_a & =  \bj_r \bh^{\pm 1}_a \bj_r, \quad \forall a \in [0, r], \\
\tji_r & = \bj_r \Big ( \f_r \e_r + \frac{\bk_r - \bk_r^{-1}}{v - v^{-1}} \Big ) \bj_r.
\end{split}
\end{align}
We note that
\[
\bj_r \e_r \bj_r =0,  \quad
\bj_r \f_r \bj_r =0, \quad
\bj_r \e_r \f_r \bj_r =0.
\]
Lusztig algebra (of type $\jiw$) ${\mbf U}^{\ji}_{\nn,d}$ is defined to be the subalgebra of ${\mbf S}^{\ji}_{\nn,d}$ generated by 
the Chevalley generators 
$\eji_i$, $\fji_i$, $\kji^{\pm 1}_i$, for all $i\in [0, r-1]$, and $\tji_r$.

Now let us present the type $A$ analogue of the above construction.
Recall from Section~\ref{sec:MB-A} in Chapter~\ref{chap:A}
 that $\Theta_{n,d}$ parametrizes a basis of ${\mbf S}_{n,d}$.
We set
\[
\Theta^{\ji}_{\nn,d} =\{ A\in \Theta_{n,d} \vert a_{i, r+1}=0, a_{r+1, j}=0, \quad \forall i, j \in \mbb Z\}.
\]
Similar to $\bj_r$, we define the following idempotent in ${\mbf S}_{n,d}$:
\[
\bJ_r = \sum_{A \in \Theta^{\ji}_{\nn,d} : A \ \text{diagonal}} [A].
\]
As the algebra $\bJ_r {\mbf S}_{n,d} \bJ_r$
 is canonically isomorphic to ${\mbf S}_{\nn,d}$ (recall $\nn =n-1$), we shall
 simply identify ${\mbf S}_{\nn,d} \equiv \bJ_r {\mbf S}_{n,d} \bJ_r$ below. Let
\begin{align*}
\Eji_{i} & =
\begin{cases}
\bJ_r \E_i \bJ_r, & \mbox{if} \  i \in [0, r-1],\\
\bJ_r \E_{r+1} \E_{r} \bJ_r, &\mbox{if} \ i = r, \\
\bJ_r \E_{i+1} \bJ_r, & \mbox{if} \ i \in [r+1,  \nn -1 ].
\end{cases}
\\
\Fji_{i} & =
\begin{cases}
\bJ_r \F_i \bJ_r, & \mbox{if} \  i \in [0, r-1],\\
\bJ_r \F_r \F_{r+1} \bJ_r, &\mbox{if} \ i = r, \\
\bJ_r \F_{i+1} \bJ_r, & \mbox{if} \ i \in [r+1, \nn -1].
\end{cases}
\\
\Kji^{\pm 1}_{i} & =
\begin{cases}
\bJ_r \bK^{\pm 1}_i \bJ_r,  & \mbox{if} \  i\in [0, r-1],\\
\bJ_r \bK^{\pm 1}_r \bK^{\pm 1}_{r+1} \bJ_r, & \mbox{if} \ i = r, \\
\bJ_r \bK^{\pm 1}_{i+1} \bJ_r , & \mbox{if} \ i \in [r+1, \nn -1].
\end{cases}
\\
\Hji^{\pm 1}_{a}  & =
\begin{cases}
\bJ_r \bH^{\pm 1}_a \bJ_r, & \mbox{if} \  a \in  [0, r],\\
\bJ_r \bH^{\pm 1}_{a+1} \bJ_r, & \mbox{if} \ a \in [r+1, \nn].
\end{cases}
\end{align*}
For convenience, one can extend the range of index $i$ from the  interval $[0, \nn-1]$ to $\mbb Z$ by setting $\Eji_i=\Eji_{i + \nn}$ for all $i\in \mbb Z$, etc.
We shall identify ${\mbf U}_{\nn,d}$ with the subalgebra of ${\mbf S}_{\nn,d}$ generated by $\Eji_i$, $\Fji_i$ and $\Kji^{\pm 1}_i$ for all $i\in [0, \nn -1]$.

%%%%%
\section{The comultiplication}

Recall the algebra homomorphism $\widetilde \Delta^{\fc} : {\mbf S}_{n,d}^{\fc} \to  {\mbf S}_{n,d'}^{\fc} \otimes  {\mbf S}_{n,d''}$ from (\ref{tDj}),  
for  $d', d''$ such that $d=d'+d''$.
We shall show its restriction to the subalgebra ${\mbf U}^{\ji}_{\nn,d}$ (denoted by the same notation) relates to the constructions above in Section~\ref{ji-version}. 

\begin{lem}
\label{tDj-ji}
We have an algebra homomorphism $\widetilde \Delta^{\fc} : {\mbf U}^{\ji}_{\nn,d} \longrightarrow {\mbf U}^{\ji}_{\nn,d'} \otimes {\mbf U}_{\nn,d''} $.
More explicitly, for $i\in [0, r-1]$, we have
\begin{align*}
\begin{split}
\widetilde \Delta^{\fc} ( \eji_i)
&= \eji_i' \otimes \Hji_{i+1}'' \Hji''^{-1}_{\nn -1  -i} + \hji'^{-1}_{i+1} \otimes \Eji_i''  \Hji''^{-1}_{\nn -1 -i} +  \hji'_{i+1} \otimes \Fji''_{\nn -1 -i} \Hji''_{i+1}.  \\
\widetilde \Delta^{\fc} (\fji_i)
 & = \fji'_i \otimes \Hji''^{-1}_{i} \Hji''_{\nn -i} + \hji'_i \otimes \Fji''_i \Hji''_{\nn -1 - i} + \hji'^{-1}_{i} \otimes \Eji''_{\nn -1 - i} \Hji''^{-1}_{i}. \\
\widetilde \Delta^{\fc} (\kji_i) & = \kji'_i \otimes \Kji''_i \Kji''^{-1}_{\nn -1 -i}.\\
\widetilde \Delta^{\fc} (\tji_{r}) & = \tji'_r \otimes \Kji''_r + v^2 \kji'^{-1}_r \otimes \Hji''_{r+1} \Fji''_r + v^{-2} \kji'_r \otimes \Hji''^{-1}_r \Eji''_r.
\end{split}
\end{align*}
\end{lem}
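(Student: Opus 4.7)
The plan is to deduce Lemma~\ref{tDj-ji} from the formulas for $\widetilde\Delta^{\fc}$ on the $\C$-generators (Proposition~\ref{tDj-form}) by exploiting the definition of ${\mbf U}^{\ji}_{\nn,d}$ as the sandwich subalgebra $\bj_r \bU^{\fc}_{n,d} \bj_r$. The key preliminary observation is that
\[
\widetilde\Delta^{\fc}(\bj_r) \;=\; \bj_r' \otimes \bJ_r''.
\]
Indeed, writing $\bj_r$ as a sum of idempotents $1_\lambda$ over those $\lambda \in \Lambda^{\fc}_{n,d}$ with $\lambda_{r+1} = 1$, and using the weight-decomposition $\widetilde\Delta^{\fc}(1_\lambda) = \sum 1_{\lambda'} \otimes 1_{\lambda''}$ over decompositions $\lambda_i = \lambda_i' + \lambda_i'' + \lambda_{-i}''$, the combined constraints that $\lambda'_{r+1}$ is odd and $\lambda''_j \geq 0$ force $\lambda'_{r+1} = 1$ and $\lambda''_{\pm(r+1)} = 0$, which is precisely the support of $\bj_r' \otimes \bJ_r''$.

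For $i \in [0, r-1]$, the Chevalley generators $\e_i, \f_i, \mbf k_i^{\pm 1}$ commute with $\bj_r$ (they act away from the positions $r$ and $r+1$), hence
\[
\widetilde\Delta^{\fc}(\eji_i) \;=\; (\bj_r' \otimes \bJ_r'') \cdot \widetilde\Delta^{\fc}(\e_i) \cdot (\bj_r' \otimes \bJ_r''),
\]
and similarly for $\fji_i, \kji_i^{\pm 1}$. Substituting the formulas of Proposition~\ref{tDj-form} and sandwiching factor by factor, each type-$A$ tensor factor is absorbed into its $\jiw$-analog via the identifications $\bJ_r'' \mbf H''_a \bJ_r'' = \Hji''_a$ for $a\le r$ and $\bJ_r'' \mbf H''_{a+1} \bJ_r'' = \Hji''_a$ for $a \ge r+1$ (analogously for $\mbf E,\mbf F,\mbf K$), while each $\mbf h'_{i+1}^{\pm 1}$ becomes $\hji'^{\pm 1}_{i+1}$. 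The original index $n-1-i \in [r+2, 2r+1]$ accordingly shifts to $n-2-i = \nn-1-i$, and the stated formulas for $\widetilde\Delta^{\fc}(\eji_i), \widetilde\Delta^{\fc}(\fji_i), \widetilde\Delta^{\fc}(\kji_i)$ fall out directly.

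The crucial case is $\widetilde\Delta^{\fc}(\tji_r)$. Applying $\widetilde\Delta^{\fc}$ to $\tji_r = \bj_r(\f_r \e_r + \tfrac{\mbf k_r - \mbf k_r^{-1}}{v - v^{-1}})\bj_r$ and invoking Proposition~\ref{tDj-form} with $n-1-r = r+1$ produces nine cross-terms from $\widetilde\Delta^{\fc}(\f_r)\widetilde\Delta^{\fc}(\e_r)$ together with the two $\mbf k_r^{\pm 1}$-contributions. After sandwiching with $\bj_r' \otimes \bJ_r''$, most terms vanish because
\[
\bj_r' \e_r' \bj_r' \;=\; \bj_r' \f_r' \bj_r' \;=\; 0, \qquad \bJ_r'' \bE_r'' \bJ_r'' \;=\; \bJ_r'' \bF_r'' \bJ_r'' \;=\; \bJ_r'' \bE_{r+1}'' \bJ_r'' \;=\; \bJ_r'' \bF_{r+1}'' \bJ_r'' \;=\; 0;
\]
the surviving terms collapse via $\bJ_r'' \bE_{r+1}''\bE_r'' \bJ_r'' = \Eji_r''$, $\bJ_r'' \bF_r''\bF_{r+1}'' \bJ_r'' = \Fji_r''$, $\bJ_r'' \bK_r''\bK_{r+1}'' \bJ_r'' = \Kji_r''$, and $\bj_r' \mbf h'^{\pm 1}_{r+1} \bj_r' = v^{\pm 1} \bj_r'$. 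The main obstacle is the reassembly of the $\tji_r' \otimes \Kji_r''$ summand: one must combine the surviving $F_2 E_2$-contribution (in the obvious notation for the three summands $F_\alpha$ of $\widetilde\Delta^{\fc}(\f_r)$ and $E_\beta$ of $\widetilde\Delta^{\fc}(\e_r)$) with the $\mbf k_r^{\pm 1}$-terms and then invoke the type-$A$ commutator $[\bE_r'', \bF_r''] = (\bK_r'' - \bK_r''^{-1})/(v-v^{-1})$ to rewrite $\bJ_r'' \bF_r''\bE_r'' \bJ_r''$ plus the $\bK_r''$-piece as an $A$-side sandwich that, after the collapsing identification, yields exactly $\tji_r' \otimes \Kji_r''$. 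A careful tracking of the scalar factors $v^{\pm 1}$ from $\mbf h'^{\pm 1}_{r+1}$ and from the $d_A - d_{^t\!A}$ normalizations in the definition of $[A]$ then shows that the two remaining terms $F_2 E_3$ and $F_3 E_2$ produce the $v^{\pm 2}\kji'^{\mp 1}_r$-coefficients of the second and third summands of the lemma.
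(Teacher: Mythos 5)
Your overall route coincides with the paper's: the observation $\widetilde\Delta^{\fc}(\bj_r)=\bj_r'\otimes\bJ_r''$ followed by sandwiching the formulas of Proposition~\ref{tDj-form} is exactly how the paper obtains the formulas for $\eji_i,\fji_i,\kji_i$, and for $\widetilde\Delta^{\fc}(\tji_r)$ the paper simply defers to the finite-type computation of \cite[Lemma~5.1.1]{FL15}, whose affine models are the explicit computations of $\Dji(\tji_r)$ in Proposition~\ref{Dc-ji} and of $\widetilde\Delta^{\fc}(\tij_0)$ in Proposition~\ref{tDj-ij}. So the only part of your write-up that carries real content beyond the paper is your sketch of the $\tji_r$ case, and there it has a concrete slip.

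When you sandwich the nine cross-terms of $\widetilde\Delta^{\fc}(\f_r)\widetilde\Delta^{\fc}(\e_r)$ by $\bj_r'\otimes\bJ_r''$, five terms survive, not four: besides $F_1E_1$, $F_2E_2$, $F_2E_3$, $F_3E_2$ you must keep $F_3E_3$, the term $(\bh_r'^{-1}\otimes\E_{r+1}''\bH_r''^{-1})(\bh_{r+1}'\otimes\F_{r+1}''\bH_{r+1}'')$. Your single-generator vanishings $\bJ_r''\E_{r+1}''\bJ_r''=\bJ_r''\F_{r+1}''\bJ_r''=0$ do not kill it, since no copy of $\bJ_r''$ separates $\E_{r+1}''$ from $\F_{r+1}''$; the facts actually used (as in the proof of Proposition~\ref{Dc-ji}) are the two-generator vanishings $\bJ_r''\E_r''\F_r''\bJ_r''=0$ and $\bJ_r''\F_{r+1}''\E_{r+1}''\bJ_r''=0$, which via the commutators at the indices $r$ \emph{and} $r+1$ give $\bJ_r''\F_r''\E_r''\bJ_r''=\frac{\Hji_r''-\Hji_r''^{-1}}{v-v^{-1}}$ and $\bJ_r''\E_{r+1}''\F_{r+1}''\bJ_r''=\frac{\Hji_{r+1}''-\Hji_{r+1}''^{-1}}{v-v^{-1}}$. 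Both Cartan contributions are needed: combining only $F_2E_2$ with $\widetilde\Delta^{\fc}\big(\bj_r\frac{\bk_r-\bk_r^{-1}}{v-v^{-1}}\bj_r\big)$, as you propose, leaves an uncancelled term proportional to $\hji_r'^{-1}\otimes\Hji_r''^{-1}\Hji_{r+1}''^{-1}$ and fails to produce the $\hji_r'^{-1}$-half of $\bj_r'\frac{\bk_r'-\bk_r'^{-1}}{v-v^{-1}}\bj_r'\otimes\Kji_r''$ with the correct second factor, so the reassembly of $\tji_r'\otimes\Kji_r''$ does not close as described. Once $F_3E_3$ is included, the three Cartan-type terms combine to $\bj_r'\frac{\bk_r'-\bk_r'^{-1}}{v-v^{-1}}\bj_r'\otimes\Kji_r''$, which added to $F_1E_1$ gives $\tji_r'\otimes\Kji_r''$, while $F_2E_3$ and $F_3E_2$ indeed yield the $v^{2}\kji_r'^{-1}$- and $v^{-2}\kji_r'$-terms as you say; this is precisely the five-term structure of the paper's explicit computations. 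The remainder of your argument (the idempotent identity, the sandwiching for $i\in[0,r-1]$, and the scalar $\bj_r'\bh_{r+1}'^{\pm1}\bj_r'=v^{\pm1}\bj_r'$) is correct and matches the paper.
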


\begin{proof}
The fact $\widetilde \Delta^{\fc} ({\mbf U}^{\ji}_{\nn,d}) \subseteq {\mbf U}^{\ji}_{\nn,d'} \otimes {\mbf U}_{\nn,d''} $  
follows once we establish these explicit formulas. 

We observe that $\widetilde \Delta^{\fc} (\bj_r) = \bj'_r \otimes \bJ''_r$.
So,  by Proposition ~\ref{tDj-form},
\begin{align*}
\begin{split}
\widetilde \Delta^{\fc} ( \eji_i) =
\bj'_r \otimes \bJ''_r
\left (
\e_i' \otimes \mbf H_{i+1}'' \mbf H''^{-1}_{n -1  -i} + \mbf h'^{-1}_{i+1} \otimes \mbf E_i''  \mbf H''^{-1}_{n -1 -i}
+  \mathbf h'_{i+1} \otimes \mbf F''_{n -1 -i} \mbf H''_{i+1}
\right )
\bj'_r \otimes \bJ''_r \\
= \eji_i' \otimes \Hji_{i+1}'' \Hji''^{-1}_{\nn -1  -i} + \hji'^{-1}_{i+1} \otimes \Eji_i''  \Hji''^{-1}_{\nn -1 -i} +  \hji'_{i+1} \otimes \Fji''_{\nn -1 -i} \Hji''_{i+1}.
\end{split}
\end{align*}
The formulas for $\widetilde \Delta^{\fc} (\fji_i)$ and $\widetilde \Delta^{\fc} (\kji_i)$ are similarly proved.
The last formula can be proved in exactly the same manner as that of ~\cite[Lemma 5.1.1]{FL15}.
\end{proof}

Following the definition of $\phi^{\fc}_{d, d-n} $ in \eqref{transfer},
we define the transfer map
$$
\phi^{\ji}_{d, d-\nn} : {\mbf S}^{\ji}_{\nn,d} \longrightarrow {\mbf S}^{\ji}_{\nn,d-\nn}
$$
to be the composition
\[
\begin{CD}
\phi^{\ji}_{d, d-\nn} : {\mbf S}^{\ji}_{\nn,d}
@>\widetilde \Delta^{\fc}>>
{\mbf S}^{\ji}_{\nn,d-\nn} \otimes {\mbf S}_{\nn, \nn}
@>1\otimes \chi_{\nn} >>
{\mbf S}^{\ji}_{\nn,d-\nn}
\end{CD}
\]
where the homomorphism $\chi_{\nn} : \mbf S_{\nn, \nn} \longrightarrow \mbb Q(v)$ is the generalized signed representation of $ {\mbf S}_{\nn, \nn}$.
We have
$\chi_{\nn} (\Eji_i)=\chi_{\nn} (\Fji_i)=0$ and $\chi_{\nn} (\Hji_a)=v$ for all $i\in [0, \nn -1]$, $a\in [0, \nn]$.
Thus by Lemma~\ref{tDj-ji}, we have for all $i\in [0, r-1]$, 
\begin{equation}
\phi^{\ji}_{d, d-\nn} (\eji_i) = \eji'_i, \;\;
\phi^{\ji}_{d, d-\nn} (\fji_i) = \fji'_i, \;\;
\phi^{\ji}_{d, d-\nn} (\kji_i) = \kji'_i, \;\;
 \phi^{\ji}_{d, d-\nn} (\tji_r) = \tji'_r.
\end{equation}

Recall $\Delta^{\fc}: {\mbf S}_{n,d}^{\fc} \longrightarrow  {\mbf S}_{n,d'}^{\fc} \otimes  {\mbf S}_{n,d''}$ from (\ref{Dj}). 
Let us consider the restriction $\Delta^{\fc} |_{{\mbf S}^{\ji}_{\nn,d}}$, which will be denoted by $\Dji$.

\begin{prop}
\label{Dc-ji}
We have an algebra homomorphism $\Dji: {\mbf S}^{\ji}_{\nn,d} \longrightarrow {\mbf S}^{\ji}_{\nn,d'} \otimes {\mbf S}_{\nn,d''}$,
and by restriction, a homomorphism $\Dji: \bU^{\ji}_{\nn,d} \longrightarrow \bU^{\ji}_{\nn,d'} \otimes \bU_{\nn,d''}$.
More explicitly, for all $i\in [0, r-1]$, we have
\begin{align}
\begin{split}
\Dji (\eji_i)
& = \eji'_i \otimes \Kji''_i + 1 \otimes  \Eji''_i
+ \kji'_i \otimes v^{\delta_{i,0}}   \Fji''_{\nn -1-i}  \Kji''_i, \\
\Dji (\fji_i)
& =  \fji'_i \otimes \Kji''_{\nn -1-i} + \kji'^{-1}_i \otimes  v^{-\delta_{i,0}} \Kji''_{\nn -1-i} \Fji''_i
+ 1\otimes  \Eji''_{\nn -1-i}, \\
\Dji (\kji_i) &= \kji'_i \otimes \Kji''_i \Kji''^{-1}_{\nn -1-i}, \\
\Dji(\tji_r) & = \tji'_r \otimes \Kji''_{r} + 1 \otimes \Eji''_r + 1\otimes v \Kji''_r \Fji''_r.
\end{split}
\end{align}
\end{prop}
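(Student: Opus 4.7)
The plan is to reduce everything to Proposition~\ref{Dc}. Since, by definition, $\Dji$ is the restriction of the already-known algebra homomorphism $\Delta^{\fc}\colon \Sj\to \mbf S^{\fc}_{n,d'}\otimes \mbf S_{n,d''}$ to the subalgebra ${\mbf S}^{\ji}_{\nn,d}=\bj_r\Sj\bj_r$, what must be shown is that (a) $\Delta^{\fc}$ carries this subalgebra into $({\bj'_r\otimes \bJ''_r})\bigl(\mbf S^{\fc}_{n,d'}\otimes \mbf S_{n,d''}\bigr)({\bj'_r\otimes \bJ''_r})={\mbf S}^{\ji}_{\nn,d'}\otimes \mbf S_{\nn,d''}$, and (b) the stated formulas on generators hold. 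Step (a) is equivalent to the identity $\Delta^{\fc}(\bj_r)=\bj'_r\otimes \bJ''_r$. The raw version $\widetilde\Delta^{\fc}(\bj_r)=\bj'_r\otimes \bJ''_r$ is already noted in the proof of Lemma~\ref{tDj-ji}; the passage from $\widetilde\Delta^{\fc}$ to $\Delta^{\fc}$ via the renormalization in~\eqref{Dagger} and the rescalings in~\eqref{Dj} acts trivially on idempotents, since the diagonal standard basis elements making up $\bj_r$ satisfy $\ro(A)=\co(A)$ (so $s=u=0$) and $\varepsilon_i(A)=0$ for diagonal $A$.

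For the formulas with $i\in[0,r-1]$, I would simply sandwich the three formulas of Proposition~\ref{Dc} by $\bj'_r\otimes\bJ''_r$. All type~$A$ generators appearing on the right-hand side have indices $i$ or $n-1-i$, which for $i\in[0,r-1]$ lie in $[0,r-1]\cup[r+2,n-1]$; each such $\mbf E''_j,\mbf F''_j,\mbf K''_j$ is stable under $\bJ''_r(-)\bJ''_r$ and is, by the definition of the $\ji$-type generators in Section~\ref{ji-version}, identified with the corresponding $\Eji''_{i'},\Fji''_{i'},\Kji''_{i'}$ after the obvious reindexing (shifting positions beyond $r+1$ down by one). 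Similarly $\bj'_r\mbf e'_i\bj'_r=\eji'_i$ and $\bj'_r\mbf k'_i\bj'_r=\kji'_i$. This yields the first three formulas verbatim.

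The heart of the proof is the computation of $\Dji(\tji_r)$. Since $\Delta^{\fc}$ is an algebra homomorphism,
\[
\Dji(\tji_r)=\bigl(\bj'_r\otimes\bJ''_r\bigr)\!\left(\Delta^{\fc}(\mbf f_r)\,\Delta^{\fc}(\mbf e_r)+\frac{\Delta^{\fc}(\mbf k_r)-\Delta^{\fc}(\mbf k_r^{-1})}{v-v^{-1}}\right)\!\bigl(\bj'_r\otimes\bJ''_r\bigr).
\]
Substituting the Proposition~\ref{Dc} formulas for $\Delta^{\fc}(\mbf e_r),\Delta^{\fc}(\mbf f_r),\Delta^{\fc}(\mbf k_r^{\pm1})$ (each a sum of three terms) expands $\Delta^{\fc}(\mbf f_r\mbf e_r)$ as nine tensor-products. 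After sandwiching each term by $\bJ''_r$, I would use that $\bJ''_r$ annihilates any word in which the letters $\mbf E''_r,\mbf E''_{r+1},\mbf F''_r,\mbf F''_{r+1}$ occur isolated at an endpoint, and the defining identities $\bJ''_r\mbf E''_{r+1}\mbf E''_r\bJ''_r=\Eji''_r$, $\bJ''_r\mbf F''_r\mbf F''_{r+1}\bJ''_r=\Fji''_r$, $\bJ''_r\mbf K''_r\mbf K''_{r+1}\bJ''_r=\Kji''_r$, together with the affine type~$A$ commutation of $\mbf E''_r$ with $\mbf F''_r$ (to rewrite $\mbf F''_r\mbf E''_r$ inside one surviving term). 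The diagonal contribution from $T_1$ together with the $\mbf k_r$-term assembles into $\bj'_r\bigl(\mbf f'_r\mbf e'_r+\frac{\mbf k'_r-\mbf k'^{-1}_r}{v-v^{-1}}\bigr)\bj'_r\otimes\Kji''_r=\tji'_r\otimes\Kji''_r$, while the remaining non-vanishing combinations collapse to $1\otimes\Eji''_r$ and $1\otimes v\Kji''_r\Fji''_r$, exactly as asserted.

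The main obstacle is this last book-keeping: organizing the nine cross-terms (plus the $\mbf k_r^{\pm1}$ contributions) so that the annihilations by $\bJ''_r$ and the algebraic identities combine to produce exactly three surviving summands. However, the entire calculation is local at index~$r$ and formally identical to the corresponding finite-type verification \cite[Lemma~5.1.1]{FL15}, so no new affine subtleties intervene. Once the four formulas on generators are established, the restriction to the Lusztig subalgebra $\Dji\colon\bU^{\ji}_{\nn,d}\to\bU^{\ji}_{\nn,d'}\otimes\bU_{\nn,d''}$ is automatic, because the right-hand sides are visibly built from Chevalley generators.
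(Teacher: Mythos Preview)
Your proposal is correct and follows essentially the same route as the paper's proof: both establish $\Delta^{\fc}(\bj_r)=\bj'_r\otimes\bJ''_r$, read off the first three formulas by sandwiching Proposition~\ref{Dc} with the idempotents, and compute $\Dji(\tji_r)$ by expanding $\Delta^{\fc}(\f_r)\Delta^{\fc}(\e_r)$ and simplifying the surviving terms using the vanishing identities $\bJ_r\E_r\F_r\bJ_r=0$, $\bJ_r\F_{r+1}\E_{r+1}\bJ_r=0$ together with the type~$A$ commutator relation. The paper is slightly more explicit in recording the five surviving cross-terms and showing how two of them cancel against $\Dji\bigl(\bj_r\frac{\bk_r-\bk_r^{-1}}{v-v^{-1}}\bj_r\bigr)$, but your outline captures the same mechanism.
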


\begin{proof}
Since $\Dji (\bj_r) = \bj'_r \otimes \bJ''_r$,  we see that $\Dji ({\mbf S}^{\ji}_{\nn,d} ) \subseteq {\mbf S}^{\ji}_{\nn,d'} \otimes {\mbf S}_{\nn,d''} $.

So it remains to establish the formulas.
The first three follow by $\Dji (\bj_r) = \bj'_r \otimes \bJ''_r$ and Proposition ~\ref{Dc}.
We now prove the last one on $\Dji(\tji_r)$.
The superscripts $'$ and $''$ are dropped for simplicity  for the rest of the proof.

By applying Proposition ~\ref{Dc} and using that $\bj_r \e_r \bj_r =0$ and  $\bj_r \f_r \bj_r =0$,  we have
\begin{align}
\label{Dji-1}
\begin{split}
&\Dji (\bj_r \f_r \e_r \bj_r ) = \bj_r \f_r \e_r \bj_r \otimes \bJ_r \bK_{r+1} \bK_r \bJ_r + \bj_r \bk^{-1}_r \bj_r \otimes \bJ_r \bK_{r+1} \F_r \E_r \bJ_r \\
& + 1 \otimes \bJ_r \E_{r+1} \E_r \bJ_r + 1 \otimes \bJ_r \bK_{r+1} \F_r \F_{r+1} \bK_r \bJ_r + \bj_r \bk_r \bj_r \otimes \bJ_r \E_{r+1} \F_{r+1} \bK_r \bJ_r.
\end{split}
\end{align}
By using the fact that
\[
\bJ_r \bK_r \bJ_r = \Hji^{-1}_r, \bJ_r \bK_{r+1} \bJ_r = \Hji_{r+1}, \bJ_r \E_r \F_r \bJ_r =0,  \bJ_r \F_{r+1} \E_{r+1} \bJ_r =0,
\]
we have
\begin{align*}
\begin{split}
\bJ_r \bK_{r+1} \F_r \E_r \bJ_r & = \Hji_{r+1} \bJ_r ( \E_r \F_r - \frac{\bK_r - \bK^{-1}_r}{v-v^{-1}}) \bJ_r = \Hji_{r+1} \frac{\Hji_r - \Hji^{-1}_r}{v - v^{-1}}, \\
\bJ_r \bK_{r+1} \F_r \F_{r+1} \bK_r \bJ_r & = \Hji_{r+1} \Fji_r \Hji^{-1}_r = v \Kji_r \Fji_r, \\
\bJ_r \E_{r+1} \F_{r+1} \bK_r \bJ_r & =  \bJ_r ( \F_{r+1} \E_{r+1} + \frac{\bK_{r+1} - \bK^{-1}_{r+1}}{v - v^{-1}} ) \bJ_r \Hji^{-1}_r
= \frac{\Hji_{r+1} - \Hji^{-1}_{r+1} }{v -v^{-1}} \Hji^{-1}_r.
\end{split}
\end{align*}
So we can  rewrite (\ref{Dji-1}) as follows:
\begin{align*}
\begin{split}
\Dji &  (\bj_r \f_r \e_r \bj_r )
 = \bj_r \f_r \e_r \bj_r \otimes \Kji_r + 1 \otimes \Eji_r + 1 \otimes v \Kji_r \Fji_r  \\
&\qquad\qquad + \bj_r \bk^{-1}_r \bj_r \otimes \Hji_{r+1} \frac{\Hji_r - \Hji^{-1}_r}{v - v^{-1}}
 + \bj_r \bk_r \bj_r \otimes \frac{\Hji_{r+1} - \Hji^{-1}_{r+1} }{v -v^{-1}} \Hji^{-1}_r \\
& =  \tji_r  \otimes \Kji_r + 1 \otimes \Eji_r + 1 \otimes v \Kji_r \Fji_r
+ \bj_r \bk^{-1}_r \bj_r \otimes \frac{\Hji_r \Hji_{r+1}}{v - v^{-1}} - \bj_r \bk_r \bj_r \otimes \frac{\Hji^{-1}_r \Hji^{-1}_{r+1}}{v - v^{-1}}.
 \end{split}
\end{align*}
Finally, we have
\begin{align*}
\Dji (\bj_r \frac{\bk_r - \bk^{-1}_r}{v - v^{-1}} \bj_r ) =
\bj_r \bk_r \bj_r \otimes \frac{\Hji^{-1}_r \Hji^{-1}_{r+1}}{v - v^{-1}} - \bj_r \bk^{-1}_r \bj_r \otimes \frac{\Hji_r \Hji_{r+1}}{v - v^{-1}}.
\end{align*}
The formula for $\Dji(\tji_r)$ follows by adding the above two equations.
\end{proof}

Now set $d'=0$. Since ${\mbf S}^{\ji}_{\nn, 0} = \mbb Q(v)$, we obtain an algebra homomorphism
\begin{align}
\label{iji-2}
\iji_{\nn,d}:= \Dji|_{d'=0}:  {\mbf S}^{\ji}_{\nn,d} \longrightarrow {\mbf S}_{\nn,d}, 
\end{align}
which is injective by Proposition ~\ref{j-inj}.
Moreover,
\[
\eji_i = \fji_i =0, \kji_i = v^{-\delta_{i, 0}}, \tji_r =1 \in {\mbf S}^{\ji}_{\nn, 0}, \quad \forall i\in [0, r-1].
\]
The following can now be read off from Proposition ~\ref{Dc-ji}, while the injectivity of $\iji_{\nn,d}$ follows from
a similar argument in \cite{FL15}. 

\begin{prop}
\label{iji}
We have an imbedding of algebras
$$
\iji_{\nn,d} :  {\mbf S}^{\ji}_{\nn,d} \longrightarrow {\mbf S}_{\nn,d}.
$$
Moreover, for all $i\in [0, r-1]$, we have 
\begin{align}
\begin{split}
\iji_{\nn,d} (\eji_i)  = \Eji_i + v^{-\delta_{i, 0}} \Kji_i \Fji_{\nn - 1 - i}, \quad
& \iji_{\nn,d} (\fji_i)  = \Eji_{\nn -1 -i} + v^{\delta_{i, 0}} \Fji_i \Kji_{\nn -1 -i}, \\
\iji_{\nn,d} (\kji_i)  = v^{-\delta_{i, 0} } \Kji_i \Kji^{-1}_{\nn -1 -i}, \quad
& \iji_{\nn,d} (\tji_r)  = \Eji_r + v \Kji_r \Fji_r + \Kji_r.
\end{split}
\end{align}
In particular, we have an imbedding of algebras $\iji_{\nn,d} :  \bU^{\ji}_{\nn,d} \rightarrow \bU_{\nn,d}.$
\end{prop}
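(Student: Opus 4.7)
The plan is to construct $\iji_{\nn,d}$ by specializing the comultiplication $\Dji$ of Proposition~\ref{Dc-ji} at $d'=0$. The underlying $d=0$ ji-flag variety is a single point (concretely, the unique $\lambda \in \Lambda^{\fc}_{n,0}$ has $\lambda_n = \lambda_{r+1} = 1$ and all other entries zero), so ${\mbf S}^{\ji}_{\nn,0} \cong \mbb Q(v)$ and $\Dji|_{d'=0}$ factors through $\mbb Q(v) \otimes {\mbf S}_{\nn,d} \cong {\mbf S}_{\nn,d}$. This automatically produces an algebra homomorphism $\iji_{\nn,d}$ as displayed in \eqref{iji-2}.

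To match the asserted formulas on generators, I would first evaluate the left tensor factors at $d'=0$: using $\bj_r = 1$ in ${\mbf S}^{\ji}_{\nn,0}$, a direct computation gives $\eji'_i = \fji'_i = 0$, $\kji'_0 = v^{-1}$, $\kji'_i = 1$ for $i \in [1,r-1]$, and $\tji'_r = 1$ (this last identity follows because $\f_r \e_r = 0$ in ${\mbf S}^{\fc}_{n,0}$ while $\bk_r = v$, so the Cartan-like term collapses to $1$). Substituting into the four formulas of Proposition~\ref{Dc-ji} yields raw expressions of the form $\Eji_i + \Fji_{\nn-1-i}\Kji_i$ and $\Eji_r + v\Kji_r\Fji_r + \Kji_r$. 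I would then apply the commutation relation $\Kji_i \Fji_j = v^{-c_{ij}}\Fji_j\Kji_i$ in $\bU_{\nn,d}$ for the Cartan matrix of affine type $A^{(1)}_{\nn-1}$: for $i \in [1,r-1]$ the vertices $i$ and $\nn-1-i$ are non-adjacent on the cyclic diagram, giving commutativity consistent with the trivial factor $v^{\pm\delta_{i,0}}=1$; for $i=0$ the vertices $0$ and $\nn-1$ are adjacent, so $\Fji_{\nn-1}\Kji_0 = v^{-1}\Kji_0\Fji_{\nn-1}$, producing the needed $v^{-1}$ correction in $\iji_{\nn,d}(\eji_0) = \Eji_0 + v^{-1}\Kji_0\Fji_{\nn-1}$, and symmetrically for $\fji_0$.

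For injectivity, I would follow the blueprint of Proposition~\ref{j-inj}, which relies on a triangular leading-term argument. Concretely, combining the multiplication formulas of Lemmas~\ref{BKLW3.9}--\ref{BKLW3.9-b} with the embedding ${\mbf S}^{\ji}_{\nn,d} = \bj_r\Sj\bj_r \hookrightarrow \Sj$ shows that for each $A \in \MX^{\ji}_{\nn,d}$ the image $\iji_{\nn,d}([A])$ admits a standard basis expansion in ${\mbf S}_{\nn,d}$ with a unique leading term controlled by $A$ in a partial order compatible with \eqref{comb-ord}. The triangularity of this expansion then forces injectivity. The final restriction $\iji_{\nn,d}: \bU^{\ji}_{\nn,d} \to \bU_{\nn,d}$ is immediate from the explicit generator formulas, as every $\iji_{\nn,d}$-image of a Chevalley-type generator of $\bU^{\ji}_{\nn,d}$ is visibly a polynomial in the Chevalley generators of $\bU_{\nn,d}$.

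The main technical obstacle I anticipate is the injectivity argument: the triangularity strategy must be set up carefully to cope with the aperiodicity subtleties of affine type $C$ that are absent from the finite-type $B/C$ analog in \cite{FL15}. An alternative route, which I would keep in reserve, is to postpone the injectivity verification to the next chapter, where an explicit monomial basis for ${\mbf S}^{\ji}_{\nn,d}$ is built in parallel with Theorem~\ref{CB-Udn} and Proposition~\ref{monomial}, and then invoke that basis to deduce injectivity directly.
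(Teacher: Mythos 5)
Your proposal is correct and follows essentially the same route as the paper: define $\iji_{\nn,d}=\Dji|_{d'=0}$ using ${\mbf S}^{\ji}_{\nn,0}\cong\mbb Q(v)$, evaluate $\eji'_i=\fji'_i=0$, $\kji'_i=v^{-\delta_{i,0}}$, $\tji'_r=1$, and read the generator formulas off Proposition~\ref{Dc-ji}, with injectivity deferred to the same leading-term triangularity argument as in Proposition~\ref{j-inj} and \cite{FL15}. Your explicit use of the relation $\Fji_{\nn-1-i}\Kji_i=v^{-\delta_{i,0}}\Kji_i\Fji_{\nn-1-i}$ merely makes precise a reordering the paper leaves implicit, so there is no substantive difference.
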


%%%%%
\section{The monomial basis of ${\mbf U}^{\ji}_{\nn,d}$}
 \label{sec:coideal2-CB}

Next, we shall construct a $\ji$-monomial basis of ${\mbf U}^{\ji}_{\nn,d}$,  which is bar invariant and preserved by $\phi^{\ji}_{d, d-\nn}$.  
The compatibility of  a monomial basis with $\phi^{\ji}_{d, d-\nn}$ requires additional work in the current $\ji$ setting
than the previous $\fc$-case (compare Theorem ~\ref{CB-Udn}),
and this will be carried out by a similar procedure as in finite type $\imath$-version  in~\cite{LW15}.

Let $A$ be a matrix in $\MX^{\ji}_{\nn,d}$. 
Let $\mrm{dlt}_{r+1} (A)$ be the $\ZZ\times \ZZ$ matrix obtained from $A$ by deleting the $k$th rows and columns for all $k \equiv r+1$ mod $n$, 
so that  $\mrm{dlt}_{r+1} (A)$  and $A$ share the same $[-r,r] \times [-r,r]$-minors.
%More precisely, if we write $A=(a_{ij})_{i,j\in \mbb Z}$ and $\mrm{dlt}_{r+1}(A) =(b_{ij})_{i, j\in \mbb Z}$, then
%\begin{align}
%\label{dlt-r+1}
%b_{ij} = a_{i + p(i) \alpha, j+ p(j)  \beta},  \quad p(i) = (-1)^{\frac{|i| - i}{2}} ,
%\end{align}
%if  $-r + \alpha \nn < |i| \leq r + \alpha \nn, -r + \beta \nn < |j| \leq r + \beta \nn,$ for some $\alpha, \beta \in \mbb Z$.
%By (\ref{dlt-r+1}), we see that
The resulting matrix $\mrm{dlt}_{r+1} (A) =(b_{ij})$ satisfies
\begin{align}
 \label{eq:Mji2}
b_{-i, - j}  = b_{ij} =b_{i + \nn, j + \nn}, \;
\sum_{i\in [1, \nn], j\in \mbb Z} b_{ij}  = 2d +1, \;
b_{00}  \in 2 \mbb Z + 1.
\end{align}
We shall denote by 
\begin{equation}
 \label{Mjicheck}
\check{\MX}^{\ji}_{\nn,d} = \{B=(b_{ij}) \in \text{Mat}_{\ZZ\times \ZZ}(\mbb{N}) \big \vert B \text{ satisfies } \eqref{eq:Mji2} \}. 
\end{equation}
In particular, we have a bijection
\begin{equation}
  \label{bijection:ji}
\mrm{dlt}_{r+1} : \MX^{\ji}_{\nn,d} \longrightarrow \check{\MX}^{\ji}_{\nn,d}, \quad
A\mapsto \mrm{dlt}_{r+1} (A).
\end{equation}

\begin{Def}
\label{def:ji-mon}
A matrix $A$ in $\MX^{\ji}_{\nn,d}$ is called {\em $\ji$-aperiodic} 
if $\mrm{dlt}_{r+1} (A)$ is aperiodic.
%it is an aperiodic matrix after we remove the $k$th rows and columns for all $k \equiv r+1$ mod $n$.
\end{Def}
Toward the construction of a suitable monomial basis, it is convenient for us 
to freely use parametrization of standard basis for ${\mbf S}^{\ji}_{\nn,d}$ by matrices in $\check{\MX}^{\ji}_{\nn,d}$ or ${\MX}^{\ji}_{\nn,d}$
under such a bijection,  and thus it makes sense to say things like  ``$[A] \in {\mbf S}^{\ji}_{\nn,d}$ for $A\in \check{\MX}^{\ji}_{\nn,d}$''.  
We shall add the index $\nn$ to the old notation to denote $E^{h, h+1}_{\nn}$, $E^{h, h+1}_{\theta, \nn}=E^{h, h+1}_{\nn} + E^{-h, -(h+1)}_{\nn}$ 
corresponding to $E^{h, h+1}$, $E^{h, h+1}_{\theta}$, and so on, under the bijection.
(Note that the former has period $\nn$, while the latter has period $n$.) 

\begin{lem} 
\label{ji-estimate}
Let $A, B, C \in \check{\MX}^{\ji}_{d}$. Let $R$ be a positive integer.
\begin{enumerate}
\item
Assume that  $B-R E_{\theta, \nn}^{h, h+1}$ is diagonal for some $ h \in [0, r]$ and $\co(B) = \ro(A)$.
Assume further that $R=R_0 + \cdots + R_l$ and the matrix $A$ satisfy  one of  the following conditions:
\begin{align*}
\begin{cases}
a_{0 m} =0 ,  \ a_{1, k+i} = R_i,\ a_{1k} \geq R_0,\  a_{1j}  =0, &   \mbox{if} \ h =0,\ k \geq 1;\\
a_{h m}=0, \    a_{ h +1,k+i}= R_i, \ a_{h+1,k} \geq R_0 ,\  a_{h +1,j}=0,&  \mbox{if} \ h\in [1,r-1];\\
 a_{r m}=0, \   a_{r + 1,k+i}= R_i, \ a_{r + 1,k} \geq  2 R_0,\ a_{r + 1, j}=0,&  \mbox{if} \  h =r,\ k \geq   r+1;\\
\end{cases}
\end{align*}
for all $m \geq k$, $i\in [1,l]$ and $j > k+l$.
Then  we have
\[
[B] * [A] = [ A+ \sum_{i=0}^l R_i(E^{h, k+i}_{\theta, \nn}- E^{h +1, k+i}_{\theta, \nn})] + \mbox{lower terms}.
\]

\item 
Assume  that $C-R E_{\theta, \nn}^{h+1,h} $ is diagonal for some $h\in [0, r-1]$ and $\co(C ) =\ro(A)$.
Assume further  that $R= R_0 + \cdots + R_l$ and   $A$ satisfy one of the following conditions:
\begin{align*}
\begin{cases}
 a_{1m} =0,  \ a_{0,k+i} = R_i,\ a_{0,k+l} \geq  R_l,\ a_{0j} = 0,  &    \mbox{if} \ h =0,\ k + l < 0;\\
 a_{1m} =0, \  a_{0,k+i} = R_i,\ a_{00} \geq 2R_l  ,\ a_{0j} = 0, &  \mbox{if} \ h =0, k +l = 0;\\
 a_{h+1,m} =0, \ a_{h,k+i}= R_i,\ a_{h,k+l} \geq R_l, \ a_{hj} =0,  & \mbox{if} \ h\in [1,r-1];\\
\end{cases}
\end{align*}
for all $m \leq k+l$, $ i\in [0,l-1]$ and $j < k$.
Then we have
\[
[C] * [A] = [ A-\sum_{i=0}^{l} R_i(E^{h, k+i}_{\theta, \nn}- E^{h+1,k+i}_{\theta, \nn}) ] + \mbox{lower terms}.
\]
\end{enumerate}
\end{lem}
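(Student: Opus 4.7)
The plan is to mirror the proof of Lemma~\ref{BKLW3.9-b}, reducing the multiplication to the ambient algebra $\Sj$. Since ${\mbf S}^{\ji}_{\nn,d} = \bj_r \Sj \bj_r$ by \eqref{Sji}, products computed in ${\mbf S}^{\ji}_{\nn,d}$ coincide with the corresponding products in $\Sj$, and the bijection $\mrm{dlt}_{r+1}$ of \eqref{bijection:ji} lifts every matrix $A \in \check{\MX}^{\ji}_{\nn,d}$ to $\tilde A \in \MX^{\ji}_{\nn,d} \subset \MX_{n,d}$ whose $(r+1,r+1)$-entry equals $1$ and whose other entries in row and column $r+1$ vanish.

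For $h \in [0,r-1]$ in both parts, the elementary matrix $E^{h,h+1}_{\theta,\nn}$ (respectively $E^{h+1,h}_{\theta,\nn}$) lifts under $\mrm{dlt}_{r+1}^{-1}$ to its affine type $C$ counterpart $E^{h,h+1}_\theta$ (respectively $E^{h+1,h}_\theta$) in $\MX_{n,d}$, because the rows and columns involved stay away from the deleted middle row. The hypotheses on the entries of $A$ translate verbatim to the hypotheses of Lemma~\ref{BKLW3.9-b}, which then yields the stated formulas after descending back to $\bj_r\Sj\bj_r$. The same reduction explains why part~(2) has only the cases $h \in [0,r-1]$: in $\check{\MX}^{\ji}_{\nn,d}$ the congruences $-r \equiv r+1$ and $-(r+1) \equiv r \pmod{\nn}$ collapse $E^{r+1,r}_{\theta,\nn}$ into $E^{r,r+1}_{\theta,\nn}$, so the would-be $h=r$ case of part~(2) is redundant with the $h=r$ case of part~(1).

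The genuinely new situation is therefore $h=r$ in part~(1). Here the lift $\tilde B$ satisfies $\tilde B_{r,r+2} = \tilde B_{r+2,r} = R$ (using $-r \equiv r+2 \pmod{n}$), $\tilde B_{r+1,r+1} = 1$, and is diagonal elsewhere; in particular $[\tilde B]$ is not of the $E^{h,h+1}_\theta$-Chevalley form handled by Lemma~\ref{BKLW3.9-b}. The plan is to realize $[\tilde B]$, up to terms strictly smaller with respect to the partial order $\leq_{\text{alg}}$ of \eqref{comb-ord}, as the leading term of a product $[\tilde B^{\uparrow}]*[\tilde B^{\downarrow}]$ in $\Sj$, where $\tilde B^{\uparrow} - R E^{r+1,r+2}_\theta$ and $\tilde B^{\downarrow} - R E^{r,r+1}_\theta$ are diagonal, and then to apply Lemma~\ref{BKLW3.9-b}(1) twice in succession to $[\tilde B^{\uparrow}] * [\tilde B^{\downarrow}] * [\tilde A]$. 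The doubling hypothesis $a_{r+1,k} \geq 2R_0$ is precisely what permits these two successive applications, as each step consumes $R_0$ from the $(r+1,k)$-entry of the lifted matrix.

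The main obstacle will be ensuring that the auxiliary lower-order corrections in the factorization $[\tilde B] \equiv [\tilde B^{\uparrow}] * [\tilde B^{\downarrow}]$ do not contaminate the leading term of $[\tilde B] * [\tilde A]$. This is handled by the partial-order analysis at the heart of Lemma~\ref{BKLW3.9-b}: each auxiliary summand is strictly smaller than $\tilde B$ in $\leq_{\text{alg}}$, and computing its contribution to $[\tilde B] * [\tilde A]$ via Proposition~\ref{prop-stand-mult} and the vanishing hypotheses on $A$ shows that it cannot reach the intended leading matrix $A + \sum_{i=0}^{l} R_i(E^{r,k+i}_{\theta,\nn} - E^{r+1,k+i}_{\theta,\nn})$ in $\check{\MX}^{\ji}_{\nn,d}$. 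The leading coefficient being $1$ then follows from the $v$-binomial and $v$-exponent bookkeeping of Proposition~\ref{prop-stand-mult}(1),(3), which collapse to trivial factors for the leading $t$-sequence.
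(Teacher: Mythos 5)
Your reduction of the cases $h\in[0,r-1]$ to Lemma~\ref{BKLW3.9-b} via $\mrm{dlt}_{r+1}$ and the idempotent $\bj_r$ is the paper's argument, and your observation that $E^{r+1,r}_{\theta,\nn}=E^{r,r+1}_{\theta,\nn}$ (so the $h=r$ case of part (2) would be redundant) is correct. The problem is your treatment of the genuinely new case $h=r$ in part (1): the two-step factorization is composed in the wrong order, and with your order the argument collapses at the first step. In $[\tilde B^{\uparrow}]*[\tilde B^{\downarrow}]*[\tilde A]$ the factor $[\tilde B^{\downarrow}]$ (with $\tilde B^{\downarrow}-RE^{r,r+1}_{\theta}$ diagonal) acts on $[\tilde A]$ first. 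But every $\tilde A\in\MX^{\ji}_{\nn,d}$ has $\tilde a_{r+1,j}=\delta_{r+1,j}$, so $\ro(\tilde A)_{r+1}=1$, whereas $E^{r,r+1}_{\theta}=E^{r,r+1}+E^{r+2,r+1}$ forces $\co(\tilde B^{\downarrow})_{r+1}\ge 2R+1$ for any admissible $\tilde B^{\downarrow}$ with nonnegative (odd) middle diagonal entry. Hence $[\tilde B^{\downarrow}]*[\tilde A]=0$ for all $R\ge 1$: there is no mass in the trivial middle row to move out first. The same count shows even the claimed factorization $[\tilde B^{\uparrow}]*[\tilde B^{\downarrow}]=[\tilde B]+\text{lower terms}$ is impossible in that order, since $\co(\tilde B)_{r+1}=1$ while $\co(\tilde B^{\downarrow})_{r+1}\ge 2R+1$.

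The correct composition (and the paper's) runs the other way: first push $R$ \emph{into} row $r+1$ using the $E^{r+1,r}_{\theta}$-type move, i.e.\ Lemma~\ref{BKLW3.9-b}(2) with $h=r$, applied at the mirrored columns, which is where the hypothesis $a_{r+1,k}\ge 2R_0$ provides the needed room; then extract it into row $r$ at the columns $>r+1$ via Lemma~\ref{BKLW3.9-b}(1), Cases 3--4. Note also that your ``apply part (1) twice'' is not available: by centrosymmetry $E^{r+1,r+2}_{\theta}=E^{r+1,r}_{\theta}$, so that factor is an $\e_r$-type element governed by part (2) (part (1) is only stated for $h\in[0,r]$). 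This order is exactly the one encoded in $\tji_r=\bj_r\big(\f_r\e_r+\tfrac{\bk_r-\bk_r^{-1}}{v-v^{-1}}\big)\bj_r$, where $\e_r$ acts before $\f_r$. With the order and the case of Lemma~\ref{BKLW3.9-b} corrected, the rest of your outline (control of lower terms via $\leq_{\text{alg}}$ and the coefficient-one bookkeeping) goes through as in the paper.
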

Note the above multiplication formula for $h=r$ corresponds to multiplication with the new generator $\tji_r$ in ${\mbf S}^{\ji}_{\nn,d}$.

\begin{proof}
All cases are directly taken from Lemma~\ref{BKLW3.9-b}, except the third case in (1), 
which can be obtained by applying Lemma~\ref{BKLW3.9-b}(1), Cases 3-4, and Lemma~\ref{BKLW3.9-b}(2), Case 3.
\end{proof}

A {\em $\ji$-aperiodic monomial} is by definition of the form $[X_1]* \cdots * [X_m]$ in ${\mbf S}^{\ji}_{\nn,d}$ where  $X_i \in \check{\MX}^{\ji}_{\nn,d}$
satisfies the conditions that either $X_i - R E^{h, h+1}_{\theta, \nn}$ for $h \in [0, r]$ 
or $X_i - R E^{h+1, h}_{\theta, \nn}$ for $h \in [0, r-1]$ is diagonal for each $i$. 
The same argument as for Proposition~\ref{Astandard-basis} (or Theorem~\ref{CB-Udn}) gives us the following.

\begin{prop}
 \label{prop:MB:ji}
For each aperiodic matrix $A$ in $\check{\MX}^{\ji}_{\nn,d}$, there exists a $\ji$-aperiodic monomial $y_A$ in ${\mbf S}^{\ji}_{\nn,d}$ such that
$
y_A = [A] + \mbox{lower terms}.
$
\end{prop}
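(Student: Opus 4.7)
The plan is to mimic the inductive construction of the aperiodic monomial basis $\{\zeta^{\mathfrak a}_A\}$ for $\bU_{n,d}$ in Proposition~\ref{Astandard-basis}, but now carried out in the $\ji$-setting where the role of the multiplication lemma (Lemma~\ref{lem2}) is played by Lemma~\ref{ji-estimate}. Fix an aperiodic $A \in \check{\MX}^{\ji}_{\nn,d}$. We induct on the statistic $\Psi(A) = \sum_{i\in[1,\nn]} |j-i| a_{ij}$. If $\Psi(A)=0$, then $A$ is diagonal and we may take $y_A = [A]$. Otherwise, set $m = \min\{l \in \NN \mid a_{ij}=0 \text{ whenever } |i-j|>l\}$, and locate an index $k$ where either $a_{k, k+m}=0$ while $a_{k-1, k-1+m}\neq 0$, or the symmetric situation holds below the diagonal.

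In the upper-triangular case, one chooses the largest $u$ such that after applying the $\mrm{dlt}_{r+1}$-analogue of $f_{k;u,k+m-1}$ the resulting matrix is still $\ji$-aperiodic (in the sense of Definition~\ref{def:ji-mon}, i.e., its $\mrm{dlt}_{r+1}$-image is aperiodic). Then, depending on the index of the relevant row $k-1$ modulo $\nn$, one applies the appropriate clause of Lemma~\ref{ji-estimate}(1): the clauses for $h \in [1, r-1]$ and $h=0$ produce the leading term $[A] +\text{lower terms}$ with coefficient~$1$ when multiplying a suitable $[B]$ (with $B - R E^{h,h+1}_{\theta,\nn}$ diagonal) against a matrix closer to diagonal. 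When the row we are trying to clear is adjacent to the fixed point $r+1$ of the involution, we are forced to use the generator $\tji_r$, and this is precisely why Lemma~\ref{ji-estimate}(1) was formulated to include the case $h=r$ with the hypothesis $a_{r+1,k} \geq 2R_0$ at $k=r+1$. By induction on $\Psi$, the smaller matrix admits a $\ji$-aperiodic monomial $y_{A'}$, and juxtaposing $[B]$ with $y_{A'}$ yields the desired $y_A = [A] + \text{lower terms}$. The lower-triangular case is handled symmetrically via Lemma~\ref{ji-estimate}(2).

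The main technical obstacle will be the book-keeping needed to guarantee that the intermediate matrix $A' = f_{k;u,\ldots}(A)$ one produces by peeling off the outermost nonzero off-diagonal stripe remains in $\check{\MX}^{\ji}_{\nn,d}$ and remains $\ji$-aperiodic. The definition of $\ji$-aperiodicity is via $\mrm{dlt}_{r+1}$, so one must verify that the combinatorial operation that peels the stripe commutes correctly with $\mrm{dlt}_{r+1}$, and that the choice of $u$ as the maximal index preserving aperiodicity after deletion of the $(r+1)$-rows and columns is well-defined. The other delicate point is the boundary case where the stripe being peeled crosses either the diagonal entry $(0,0)$ or the diagonal entry $(r+1,r+1)$, where the parity and "doubling" conventions in the definition of $\check{\MX}^{\ji}_{\nn,d}$ force us into the $k=0$ or $k=r+1$ clause of Lemma~\ref{ji-estimate}; I expect these boundary cases, rather than the inductive step proper, to require the most care.
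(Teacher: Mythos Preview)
Your proposal is correct and matches the paper's approach exactly: the paper's proof is the single sentence ``The same argument as for Proposition~\ref{Astandard-basis} (or Theorem~\ref{CB-Udn}) gives us the following,'' and you have spelled out precisely that argument with Lemma~\ref{ji-estimate} playing the role of Lemma~\ref{lem2}. One minor simplification: your worries about $\mrm{dlt}_{r+1}$ commuting with the stripe-peeling operation are unnecessary, since both Lemma~\ref{ji-estimate} and the statement are already formulated in $\check{\MX}^{\ji}_{\nn,d}$ (i.e.\ after $\mrm{dlt}_{r+1}$ has been applied), where aperiodicity is the ordinary notion and the operation $f_{k;u,\ldots}$ is applied verbatim.
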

We freely switch the index set for $\{y_A\}$ back to $A\in {\MX}^{\ji}_{\nn,d}$ under the bijection \eqref{bijection:ji}. 
By Proposition~\ref{iji}, $\iji_{\nn, d}: {\mbf U}^{\ji}_{\nn,d} \to {\mbf U}_{\nn,d}$ is an imbedding, and we shall regard ${\mbf U}^{\ji}_{\nn,d} \subseteq {\mbf U}_{\nn,d}$
by identifying ${\mbf U}^{\ji}_{\nn,d}$ with its image under $\iji_{\nn, d}$. Recall that ${\mbf U}_{\nn,d}$ admits a canonical basis
 $\{\{A\}_d  \big \vert A\in \MX_{\nn,d} \; \ji\text{-aperiodic} \}$.
 
 We have the following analogue of Theorem~\ref{CB-Udn}. 

\begin{prop}
\label{prop:ji-aperiodic}
The set $\{\{A\}_d \big \vert A\in \MX^{\ji}_{\nn,d} \; \ji\text{-aperiodic} \}$ forms a (canonical) basis for ${\mbf U}^{\ji}_{\nn,d}$.
Also, $\{ y_A \big \vert A\in {\MX}^{\ji}_{\nn,d} \; \ji\text{-aperiodic} \}$ forms a monomial basis for ${\mbf U}^{\ji}_{\nn,d}$.
\end{prop}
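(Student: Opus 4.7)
The plan is to mimic the proof strategy of Theorem~\ref{CB-Udn} in Chapter~\ref{chap:schur}, with the embedding $\iji_{\nn,d}:{\mbf U}^{\ji}_{\nn,d}\hookrightarrow {\mbf U}_{\nn,d}$ from Proposition~\ref{iji} playing the role that $\jmath_{n,d}$ played there. The two statements of Proposition~\ref{prop:ji-aperiodic} will be proved together by a triangularity argument once we establish a $\ji$-version of the aperiodicity principle of Proposition~\ref{monomial-aperiodic}.

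First I would set up the basic triangularity. By Proposition~\ref{prop:MB:ji}, for every $\ji$-aperiodic $A\in\MX^{\ji}_{\nn,d}$ we have the $\ji$-aperiodic monomial $y_A=[A]+\text{lower terms}$ in ${\mbf S}^{\ji}_{\nn,d}$, and by construction $y_A\in {\mbf U}^{\ji}_{\nn,d}$. Expanding $[A]$ in the canonical basis of $\Sj$ (restricted to those $\{B\}_d$ with $B\leq A$) and comparing with $y_A=\{A\}_d+\sum_{B<A}c_{A,B}\{B\}_d$, everything boils down to showing that the canonical basis elements $\{B\}_d$ that actually appear are parametrized by $\ji$-aperiodic matrices $B\in\MX^{\ji}_{\nn,d}$, so that $\{A\}_d\in {\mbf U}^{\ji}_{\nn,d}$ follows by induction on the partial order $\leq$ of $\MX^{\ji}_{\nn,d}$.

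The key step, and the main obstacle, is establishing the $\ji$-aperiodicity analogue of Proposition~\ref{monomial-aperiodic}: if $\texttt{M}$ is a $\ji$-aperiodic monomial in ${\mbf S}^{\ji}_{\nn,d}$ and $\texttt{M}=\sum c_A\{A\}_d$ with $A\in\MX^{\ji}_{\nn,d}$, then $c_A\neq 0$ forces $A$ to be $\ji$-aperiodic. Following the template of Proposition~\ref{monomial-aperiodic}, I would apply $\iji_{\nn,d}$ to $\texttt{M}$ and use the explicit images of $\eji_i,\fji_i,\kji_i,\tji_r$ from Proposition~\ref{iji} to see that $\iji_{\nn,d}(\texttt{M})$ is a sum of monomials in ${\mbf U}_{\nn,d}$ whose underlying matrices (in $\Theta_{\nn,d}$) are forced to be aperiodic, by Lusztig's aperiodicity theorem \cite[Proposition 6.5]{Lu99}. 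Combining this with the positivity of the transition coefficients (both those of Proposition~\ref{j+} for the embedding and those of the multiplication formulas of Lemma~\ref{ji-estimate}, which lie in $\mbb N[v,v^{-1}]$ by the geometric origin of the Schur algebras), one prevents the positive coefficients from cancelling out in the expansion of $\iji_{\nn,d}(\{A\}_d)$ in the canonical basis of ${\mbf U}_{\nn,d}$; hence the nonvanishing of $c_A$ pushes $A$, through the bijection $\mathrm{dlt}_{r+1}$ of \eqref{bijection:ji}, into the $\ji$-aperiodic locus. This is where care is needed, since the extra generator $\tji_r=\bj_r(\f_r\e_r+(\bk_r-\bk_r^{-1})/(v-v^{-1}))\bj_r$ maps under $\iji_{\nn,d}$ to $\Eji_r+v\Kji_r\Fji_r+\Kji_r$, and one must verify that this contribution preserves the matrix-shape analysis analogous to the one in \eqref{j-ap-1}--\eqref{j-ap-2}.

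Granting this $\ji$-aperiodicity statement, the rest is formal. The set $\{\{A\}_d\mid A\in\MX^{\ji}_{\nn,d}\ \ji\text{-aperiodic}\}$ is linearly independent (as a subset of the canonical basis of $\Sj$), lies in ${\mbf U}^{\ji}_{\nn,d}$ by the triangularity discussed in the first paragraph, and spans ${\mbf U}^{\ji}_{\nn,d}$ because every element of ${\mbf U}^{\ji}_{\nn,d}$ is a $\mbb Q(v)$-linear combination of $\ji$-aperiodic monomials (by definition of ${\mbf U}^{\ji}_{\nn,d}$ via its Chevalley generators) whose $\{A\}_d$-expansion only involves $\ji$-aperiodic $A$ by the key step. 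Finally, since the transition matrix from $\{y_A\}$ to $\{\{A\}_d\}$ (both indexed by $\ji$-aperiodic $A\in\MX^{\ji}_{\nn,d}$) is unitriangular with respect to $\leq$, the monomial basis assertion follows at once from the canonical basis assertion.
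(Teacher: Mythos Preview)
Your proposal is correct and follows essentially the same approach as the paper: the paper's proof simply states that the imbedding $\iji_{\nn,d}$ (Proposition~\ref{iji}), a $\ji$-counterpart of Proposition~\ref{monomial-aperiodic}, and Proposition~\ref{prop:MB:ji} together allow one to rerun the argument for Theorem~\ref{CB-Udn}, which is exactly what you spell out in detail. One small citation issue: the positivity you need for the embedding is not literally Proposition~\ref{j+} (which concerns $\jmath_{n,d}$) but its $\ji$-analogue, which follows from Proposition~\ref{comult+S} since $\iji_{\nn,d}=\Dji|_{d'=0}$ is a specialization of $\Delta^{\C}$ restricted to ${\mbf S}^{\ji}_{\nn,d}$.
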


\begin{proof}
We have an imbedding $\iji_{\nn, d}: {\mbf U}^{\ji}_{\nn,d} \to {\mbf U}_{\nn,d}$ by Proposition~\ref{iji}.
A counterpart of Proposition ~\ref{monomial-aperiodic} makes sense in our setting.
We also have Proposition~\ref{prop:MB:ji}. 
Therefore we have all the three key ingredients available to rerun the argument for Theorem~\ref{CB-Udn}. The proposition is proved. 
\end{proof}
 
 Note that $y_A$ is not bar invariant in general.
As in the finite $\imath$-setting \cite{LW15}, this monomial basis $\{y_A\}$ is   not preserved by the transfer map $\phi_{\nn, \nn-d}^{\ji}$, 
and thus this basis is not directly applicable for studying the limiting algebra $\bU_{\nn}^{\ji}$ in the following Section~\ref{sec:coideal2}. 
To overcome this obstacle, we introduce the $hybrid$ $monomial$  ${h}_A$ obtained from $y_A$ by replacing every factor $[X_i]$ in the monomial $y_A$
by its associated canonical basis element $\{X_i\}_d $ if $X_i$ is of the form 
$X_i = X(D, R):= D + R E^{r, r+1}_{\theta, \nn}$ for some diagonal matrix $D$ and for some $R$.
We still have $\{ X_i\}_d \in {\mbf S}^{\ji}_{\nn,d}$ thanks to the fact that $\{X_i\}_d \in [X_i] +\sum_{0\le k<R} \cA X (D_k, k)$ for some diagonal matrices $D_k$;
see \cite{LW15}. 
Hence
we have 
${h}_A = y_A + \mbox{lower terms}  \in {\mbf S}^{\ji}_{\nn,d}. 
$
Thus we have obtained the following. 

\begin{prop}
  \label{prop:hMB:ji}
For each aperiodic matrix $A$ in $\check{\MX}^{\ji}_{\nn,d}$, 
there exists a $\ji$-aperiodic hybrid monomial ${h}_A$ in ${\mbf S}^{\ji}_{\nn,d}$ such that
$
{h}_A = [A] + \mbox{lower terms}, \,
\overline{{h}_A} = {h}_A,
$ and $
\phi^{\ji}_{d, d-\nn} ({h}_A) = {h}_{A-  2I_{\nn}}$, with $I_{\nn} = \sum_{1\leq i\leq \nn} E^{ii}_{\nn}$.
Moreover,  $\{ {h}_A \big \vert A\in \check{\MX}^{\ji}_{\nn,d} \; \ji\text{-aperiodic} \}$ forms a (hybrid) monomial basis for ${\mbf U}^{\ji}_{\nn,d}$.
\end{prop}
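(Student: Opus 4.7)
The plan is to construct $h_A$ as a product of Chevalley generators with a canonical $\ji$-divided power substituted for the problematic $\tji_r$-factors, and then to deduce the four assertions from (i) triangularity of the canonical basis, (ii) bar-invariance of each factor, and (iii) a stabilization argument in the spirit of Proposition~\ref{prop:CBstable}. More concretely, I would first reexpress $y_A$ from Proposition~\ref{prop:ji-aperiodic} as a product of divided-power Chevalley generators $\eji_{h_i}^{(R_i)}$, $\fji_{h_i}^{(R_i)}$ for $h_i\in [0,r-1]$ together with factors $[X(D,R)]$ where $X(D,R):= D+R E^{r,r+1}_{\theta,\nn}$ coming from the special generator $\tji_r$, all applied to an initial idempotent $1_\mu$. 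Following the finite-type $\imath$-construction of~\cite{LW15}, I would then define the canonical $\ji$-divided power $\tji_r^{(R)}:=\{X(D,R)\}_d$; the hybrid $h_A$ is then $y_A$ with each $[X(D,R)]$-factor replaced by $\tji_r^{(R)}$.

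For the leading-term and bar-invariance claims, the triangular relation $\{X(D,R)\}_d\in [X(D,R)]+\sum_{0\leq k<R}\cA\cdot[X(D_k,k)]$ (noted in the excerpt) shows that the replacement perturbs only lower-order terms, so $h_A=y_A+\mbox{lower terms}=[A]+\mbox{lower terms}$. Bar-invariance of $h_A$ reduces to bar-invariance of each factor: the Chevalley divided powers by construction, the canonical basis elements $\{X(D,R)\}_d$ by Proposition~\ref{CB+c}, and the idempotents $1_\mu$ trivially; combined with the fact that the bar involution on $\Sj$, induced by Verdier duality at the sheaf level, is a $\mathbb{Q}$-algebra automorphism, this ensures the whole product is bar-invariant. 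The basis property (claim~4) is then immediate from the uni-triangularity of $\{h_A\}$ relative to the basis $\{y_A\}$ of Proposition~\ref{prop:ji-aperiodic}.

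The compatibility $\phi^{\ji}_{d,d-\nn}(h_A)=h_{A-2I_{\nn}}$ is the delicate claim and the main obstacle. The easy part is that the transfer map fixes the Chevalley generators $\eji_i$, $\fji_i$, $\kji_i^{\pm1}$, and $\tji_r$, which one reads off from Lemma~\ref{tDj-ji} after applying the character $\chi_{\nn}$ (killing $\Eji_i$, $\Fji_i$ and sending $\Hji_a\mapsto v$), in direct analogy with Proposition~\ref{phi-gene}. The subtle step is the stability of the canonical divided power $\tji_r^{(R)}=\{X(D,R)\}_d$ under $\phi^{\ji}_{d,d-\nn}$: because $D$ shifts with $d$, one must show that the image is exactly $\{X(D',R)\}_{d-\nn}$ for the correctly shifted $D'$. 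I would adapt the McGerty-style argument of Proposition~\ref{prop:CBstable} to the $\ji$-setting, using that $\phi^{\ji}_{d,d-\nn}$ is $\cA$-linear and commutes with the bar involution, so the image of a bar-invariant canonical basis element is forced by triangularity and uniqueness of the canonical basis to equal its natural counterpart in ${\mbf S}^{\ji}_{\nn,d-\nn}$. Once this stability is in hand, assertion~(3) follows because every factor of $h_A$ is individually stable under the transfer map, the idempotent $1_\mu$ maps to $1_{\mu-2\mathbf{1}_{\nn}}$, and the resulting product is visibly $h_{A-2I_{\nn}}$.
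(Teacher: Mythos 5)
Your construction of $h_A$ is exactly the paper's: rewrite $y_A$ as a product of divided-power Chevalley generators and factors $[X(D,R)]$ with $X(D,R)=D+RE^{r,r+1}_{\theta,\nn}$, then replace each such factor by $\{X(D,R)\}_d$. Your treatment of the leading term, of bar-invariance, and of the basis property (via $\{X(D,R)\}_d\in[X(D,R)]+\sum_{0\le k<R}\cA\,[X(D_k,k)]$ and uni-triangularity against $\{y_A\}$) is the argument the paper intends, so those three assertions are fine.

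The gap is in the transfer-map identity $\phi^{\ji}_{d,d-\nn}(h_A)=h_{A-2I_{\nn}}$, which you correctly isolate as the delicate point but do not actually prove. Your mechanism is: $\phi^{\ji}_{d,d-\nn}$ commutes with bar, hence the image of $\{X(D,R)\}_d$ is bar-invariant, and is then ``forced by triangularity and uniqueness of the canonical basis'' to be $\{X(D-2I_{\nn},R)\}_{d-\nn}$. But uniqueness of the canonical basis only applies to a bar-invariant element that is already known to equal $[X(D-2I_{\nn},R)]_{d-\nn}$ plus lower terms with coefficients in $v^{-1}\ZZ[v^{-1}]$, and you have no a priori control of $\phi^{\ji}_{d,d-\nn}$ on the standard basis: the transfer map is a composition of the comultiplication with a character, and its matrix in the basis $\{[B]_d\}$ is not triangular in any evident sense, so the crucial premise is unavailable. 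Falling back on the McGerty-style argument of Proposition~\ref{prop:CBstable} does not rescue this either: those arguments (via almost-orthonormality and the bilinear form) only give stability for $p\gg0$, i.e.\ an asymptotic statement, whereas the proposition asserts the exact identity at every $d$ (with the zero convention), and exactness for all $d$ is what is needed to substitute $h$-factors level by level. The route the paper has in mind, following the finite-type $\imath$-case of \cite{LW15}, is to use the explicit closed formulas for the canonical $\ji$-divided powers: $\{X(D,R)\}_d$ is a polynomial in $\tji_r$ applied to the idempotent $1_{\co(X(D,R))}$ whose coefficients are independent of $d$ (they depend only on $R$ and the relevant diagonal pattern, which shifts consistently under $A\mapsto A-2I_{\nn}$). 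Granting that, the identity follows factor by factor, since $\phi^{\ji}_{d,d-\nn}$ fixes $\eji_i^{(a)},\fji_i^{(a)},\kji_i^{\pm1},\tji_r$ and shifts idempotents $1_{\lambda}\mapsto 1_{\lambda-2I_{\nn}}$ (or $0$). Without either these explicit formulas or a proof that the transferred element is standard-triangular with coefficients in $v^{-1}\ZZ[v^{-1}]$, your step (3) remains unproved.
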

(It is understood above that ${h}_{A-  2I_{\nn}}=0$ if ${A-  2I_{\nn}}$ contains some negative entry.)

\begin{example}
Set $r=2$ and so $\nn=5$. Consider the  $\ji$-aperiodic matrix $A\in \check{\MX}^{\ji}_{5, d}$:

\[A =
\begin{tabular}{   c | c | c | c | c| c| c| c| c | c | c | c} 

 & c-3 & c-2 & c-1  & c0 & c1 & c2 & c3 & c4 & c5 & c6 & c7  \\
 \hline 
 & & && && && && & \\
\hline 
 r0 & & 3& 1 & * & 1 & 3& && &&  \\
 \hline
 r1 & &  & 0 & 6 & * & 0 & 4 & && & \\
\hline 
r2 & & && 8 & 7 & * & 2 & 5 && & \\
\hline 
r3 & & && & 5 & 2 &*  & 7  & 8 & & \\
\hline 
r4 & & && &&4 &0 &*  & 6 & 0 & \\
\hline 
r5 & & && && &3 & 1&* & 1 & 3\\
\hline 
& & && && && && & \\
\end{tabular}
\]
where `ri' and  `cj' in the table  indicate the $i$-th row and $j$-th column of the matrix $A$, respectively.
We have 
\begin{align*}
\begin{split}
y_A & = \eji_1^{(8)} * \tji_2^{[5]} * \fji_1^{(4)} * \fji^{(4)}_0 * \eji^{(14)}_0 * \eji^{(12)}_1 * \tji_2^{[6]}  * \fji^{(3)}_0  * 1_{\co(A)}
=[A] + \mbox{lower terms}, \\
h_A & = \eji_1^{(8)} * \tji_2^{\{5\}} * \fji_1^{(4)} * \fji^{(4)}_0 * \eji^{(14)}_0 * \eji^{(12)}_1 * \tji_2^{\{6\}}  * \fji^{(3)}_0  * 1_{\co(A)}
=[A] + \mbox{lower terms}, \\
\end{split}
\end{align*}
where $\tji_2^{[R]} $  and $\tji_2^{\{R\}}$ denote $\sum_{X} [X]$ and $\sum_X \{X\}$, respectively, 
with the sum  taken over $X$ such that $X- R E^{r, r+1}_{\theta, \nn}$ is diagonal.
\end{example}

%%%%%
\section{The coideal subalgebra of type ${\ji}$}
 \label{sec:coideal2}

Now that the results at the $\ji$-Schur algebra level are established (which is the counterpart of Chapter~\ref{chap:schur}), 
we will formulate the $\ji$-analogue of Chapter~\ref{chap:coideal}. As most of these are straightforward, we will skip some of the details.

Starting with the projective system $\{({\mbf U}^{\ji}_{\nn,d}, \phi^{\ji}_{d, d- \nn} )\}_{d\in \mbb{N}}$, we construct
two distinguished algebras  ${\mbf U}^{\ji}_{\nn}$ and $\dot{{\mbf U}}^{\ji}_{\nn}$ out of the associated projective limit algebra ${\mbf U}^{\ji}_{\nn, \infty}$;
the Chevalley generators of  ${\mbf U}^{\ji}_{\nn}$ will be denoted again  by 
$\eji_i$, $\fji_i$, $\kji^{\pm 1}_i$ $(i\in [0, r-1])$, and $\tji_r$. 
The family of imbeddings $\{\iji_{\nn,d}: {\mbf U}^{\ji}_{\nn,d} \to \bU_{\nn,d} \}_{d\ge 0}$ induces  an algebra imbedding %$\iji_{\nn}: {\mbf U}^{\ji}_{\infty, \nn} \to \bU_{\infty, \nn}$ and 
$\iji_{\nn}: {\mbf U}^{\ji}_{\nn} \to \bU_{\nn}$. 
The family of $\Dji$ (for various $d'+d'' =d$) induce an algebra homomorphism (which is coassociative in a suitable sense)
$\Dji : {\mbf U}^{\ji}_{\nn} \rightarrow {\mbf U}^{\ji}_{\nn} \otimes \bU_{\nn},
$
whose action on the Chevalley generators can be presented explicitly. Recall the algebra isomorphism $\bU_{\nn} \cong \bU (\slh_{\nn})$.
Summarizing we have established the following.

\begin{thm}
  \label{thm:QSP2}
The pair $(\bU (\slh_{\nn}), {\mbf U}^{\ji}_{\nn})$ forms an affine quantum symmetric pair. 
$($see Figure~\ref{figure:ji} for the relevant involution.$)$
\end{thm}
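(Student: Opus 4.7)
The plan is to follow the template established for Theorem~\ref{thm:QSP} in the $\jj$-case. We need two main ingredients: an injective algebra homomorphism $\iji_{\nn}: \bU^{\ji}_{\nn} \to \bU_{\nn}$, and an algebra homomorphism $\Dji: \bU^{\ji}_{\nn} \to \bU^{\ji}_{\nn} \otimes \bU_{\nn}$ which is coassociative in a suitable sense. Both are to be obtained by passage to the projective limit of the corresponding maps at finite $d$.

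First, I would verify that the family of imbeddings $\iji_{\nn,d}: \bU^{\ji}_{\nn,d} \to \bU_{\nn,d}$ from Proposition~\ref{iji} is compatible with the transfer maps, yielding a commutative square
\[
\phi_{d, d-\nn} \circ \iji_{\nn,d} = \iji_{\nn, d-\nn} \circ \phi^{\ji}_{d, d-\nn}.
\]
This is checked directly on the Chevalley generators $\eji_i, \fji_i, \kji_i^{\pm 1}$ $(i \in [0, r-1])$ and $\tji_r$ using the explicit formulas in Proposition~\ref{iji} together with the fact that both transfer maps act as the identity on the corresponding generators. By the universal property of the projective limit, this induces a well-defined algebra homomorphism $\iji_\nn$. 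Its injectivity follows from the existence of the hybrid monomial basis in Proposition~\ref{prop:hMB:ji}, paralleling the argument for $\jmath_n$ in the $\jj$-case (as in Proposition~\ref{prop:inj-j}).

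Second, I would establish that the family $\Dji$ from Proposition~\ref{Dc-ji} also intertwines the transfer maps in the sense that
\[
(\phi^{\ji}_{d', d'-a\nn} \otimes \phi_{d'', d''-b\nn}) \circ \Dji = \Dji \circ \phi^{\ji}_{d, d-(a+b)\nn},
\]
for all $a, b \in \mathbb{N}$. This again is a direct check on the Chevalley generators via Proposition~\ref{Dc-ji}, and universality then yields the desired $\Dji: \bU^{\ji}_\nn \to \bU^{\ji}_\nn \otimes \bU_\nn$. Coassociativity $(1 \otimes \Delta) \Dji = (\Dji \otimes 1)\Dji$ is inherited by restriction from Proposition~\ref{coassociative}, since $\Dji$ is the restriction of $\Delta^{\fc}$ to $\bU^{\ji}_{\nn,d}$. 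Together with the first step and the identification $\bU_\nn \cong \bU(\slh_\nn)$ from Proposition~\ref{Un=slhn}, this proves that $\bU^{\ji}_\nn$ is a right coideal subalgebra of $\bU(\slh_\nn)$.

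Finally, to match this coideal subalgebra with the Kolb-Letzter QSP associated to the Satake diagram of Figure~\ref{figure:ji}, I would verify that the underlying involution $\theta^{\ji}$ of $\slh_\nn$ (swapping $i \leftrightarrow \nn - i$ for $i \neq r$ while fixing $r$) is of the second kind, and then compare our generators $\eji_i, \fji_i$ $(i \neq r)$ and $\tji_r$ with Kolb's generators $B_i$ for a suitable choice of admissible parameters. Evaluating at $v = 1$, one sees that $\iji_\nn(\eji_i)$, $\iji_\nn(\fji_i)$ specialize to $\mathbf{E}_i + \mathbf{F}_{\nn - 1 - i}$ and $\mathbf{E}_{\nn - 1 - i} + \mathbf{F}_i$, which are $\theta^{\ji}$-invariant, while $\iji_\nn(\tji_r)$ specializes to $\mathbf{E}_r + \mathbf{F}_r + 1$, the standard form of a coideal generator attached to a fixed node of the diagram. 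The main obstacle will be the fixed node $r$: verifying that our $\tji_r$ satisfies exactly the relations required by Kolb's presentation (in particular the deformed Serre relation involving the generators $\eji_{r-1}$ and $\tji_r$) requires a careful computation. However, this computation is \emph{local} at the fixed node and is formally identical to the one carried out in the finite-type $\imath$-setting of \cite{BKLW14, Ko14}; together with the obvious checks for the remaining nodes (which are of the non-fixed type), this completes the identification, and Kolb's classification \cite[Theorem~7.1]{Ko14} applies to conclude that $(\bU(\slh_\nn), \bU^{\ji}_\nn)$ is an affine quantum symmetric pair.
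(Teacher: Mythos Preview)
Your proposal is correct and follows essentially the same approach as the paper: the paper constructs $\iji_\nn$ and $\Dji$ at the limit level by verifying compatibility of the finite-$d$ maps (Propositions~\ref{iji} and~\ref{Dc-ji}) with the transfer maps, exactly as you outline, and then summarizes these as the theorem. One minor point: for the injectivity of $\iji_\nn$ you invoke the hybrid monomial basis, whereas the paper simply lifts the injectivity of each $\iji_{\nn,d}$ (Proposition~\ref{iji}) to the projective limit, as was done for $\jmath_n$; your final paragraph on matching with Kolb's presentation is more explicit than the paper, which leaves this implicit in the phrase ``in the sense of Kolb-Letzter'' once the embedding formulas are written down.
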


%ji===============================================================
%\begin{figure}[ht!]
%\caption{Dynkin diagram of type $A^{(1)}_{2r}$ with involution of type $\jmath\imath$.}
%   \label{figure:ji}
%\begin{tikzpicture}
%\matrix [column sep={0.6cm}, row sep={0.5 cm,between origins}, nodes={draw = none,  inner sep = 3pt}]
%{
%	\node(U1) [draw, circle, fill=white, scale=0.6, label = 0] {}; 
%	&\node(U2)[draw, circle, fill=white, scale=0.6, label =1] {};
%	&\node(U3) {$\cdots$};  
%	&\node(U5)[draw, circle, fill=white, scale=0.6, label =$r-1$] {};
%\\
%	&&&& 
%	\node(R)[draw, circle, fill=white, scale=0.6, label =$r$] {};
%\\
%	\node(L1) [draw, circle, fill=white, scale=0.6, label =below:$2r$] {};  
%	&\node(L2)[draw, circle, fill=white, scale=0.6, label =below:$2r-1$] {};
%	&\node(L3) {$\cdots$};  
%	&\node(L5)[draw, circle, fill=white, scale=0.6, label =below:$r+1$] {};
%\\
%};
%\begin{scope}
%\draw (U1) -- node  {} (U2);
%\draw (U2) -- node  {} (U3);
%\draw (U3) -- node  {} (U5);
%\draw (U5) -- node  {} (R);
%\draw (U1) -- node  {} (L1);
%\draw (L1) -- node  {} (L2);
%\draw (L2) -- node  {} (L3);
%\draw (L3) -- node  {} (L5);
%\draw (L5) -- node  {} (R);
%\draw (R) edge [color = blue,loop right, looseness=40, <->, shorten >=4pt, shorten <=4pt] node {} (R);
%\draw (L1) edge [color = blue,<->, bend right, shorten >=4pt, shorten <=4pt] node  {} (U1);
%\draw (L2) edge [color = blue,<->, bend right, shorten >=4pt, shorten <=4pt] node  {} (U2);
%\draw (L5) edge [color = blue,<->, bend left, shorten >=4pt, shorten <=4pt] node  {} (U5);
%\end{scope}
%\end{tikzpicture}
%\end{figure}

Recall the Cartan integers $\texttt{c}_{ij}$ from \eqref{aij}. 
We give a presentation for the algebra ${\mbf U}_{\nn}^{\ji}$, which is a counterpart of Proposition~\ref{present:Ujj} for $\bU_n^\C$. 
This presentation  is a variant of \cite[Theorem 7.1]{Ko14} in our setting and our notation. 
Recall we always assume $r\geq 1$ so $\nn \ge 3$. 
\begin{prop}
  \label{present:Uji}
The $\mbb Q(v)$-algebra ${\mbf U}_{\nn}^{\ji}$ has a presentation with generators
$\eji_i, \fji_i$, and $\kji^{\pm 1}_i$ $(i\in [0, r-1])$ and $\tji_r$, and the following relations\footnote{Remove the relation $ \kji_0  (\kji^2_1 \cdots \kji^2_{r-1} )   = v^{-1},$}: for all $i, j \in [0, r-1]$,
\begin{align*}
 \kji_0 & (\kji^2_1 \cdots \kji^2_{r-1} )   = v^{-1}, %\kji_r =v,
\\
\kji_i \kji_i^{-1} & = 1, \quad
\kji_i \kji_j   = \kji_j \kji_i, \quad 
\kji \tji_r = \tji \kji_r,  \\
\kji_i \eji_j \kji_i^{-1}& =  v^{\texttt{c}_{ij} + \delta_{i,0} \delta_{j,0}}  \eji_j , \\
\kji_i \fji_j \kji_i^{-1} & =  v^{-\texttt{c}_{ij} - \delta_{i,0} \delta_{j,0} }  \fji_j , \\
\eji_i \eji_j & = \eji_j \eji_i , \quad 
\fji_i \fji_j  = \fji_j \fji_i , \quad \forall |i-j|> 1, \\
\eji_i \tji_r & = \tji_r \eji_i, \quad  
\fji_i \tji_r  = \tji_r \fji_i, \quad \forall i \leq r-2, 
\end{align*}
\begin{align*}
\eji_i^2 \eji_j  + \eji_j \eji_i^2 &= (v + v^{-1}) \eji_i \eji_j \eji_i, \quad \forall |i-j|=1,\\
\fji_i^2 \fji_j  + \fji_j \fji_i^2 &= (v + v^{-1}) \fji_i \fji_j \fji_i, \quad \forall |i-j|=1,\\
\eji^2_{r-1} \tji_r + \tji_r \eji^2_{r-1} & = (v + v^{-1}) \eji_{r-1} \tji_r \eji_{r-1}, \\
\fji^2_{r-1} \tji_r + \tji_r \fji^2_{r-1} & = (v + v^{-1}) \fji_{r-1} \tji_r \fji_{r-1}, \\
\tji_r^2 \eji_{r-1} + \eji_{r-1} \tji_r^2 & = (v + v^{-1}) \eji_{r-1} \tji_r \eji_{r-1} + \eji_{r-1}, \\
\tji_r^2 \fji_{r-1} + \fji_{r-1} \tji_r^2 & = (v + v^{-1}) \fji_{r-1} \tji_r \fji_{r-1} + \fji_{r-1}, \\
\eji_i \fji_j - \fji_j \eji_i & = \delta_{ij} \frac{\kji_i - \kji_i^{-1}}{v - v^{-1}} , \quad \forall (i, j) \neq (0,0),\\
\eji_0^2 \fji_0 + \fji_0 \eji_0^2 
&= (v + v^{-1}) \big( \eji_0 \fji_0 \eji_0 - (v \kji_0 + v^{-1} \kji_0^{-1}) \eji_0 \big),\\
\fji_0^2 \eji_0   + \eji_0 \fji_0^2 
& = (v + v^{-1}) \big ( \fji_0 \eji_0 \fji_0 - \fji_0 (v \kji_0 + v^{-1} \kji_0^{-1}) \big).\\
\end{align*}
\end{prop}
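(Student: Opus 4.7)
The plan is to mirror the strategy used in Proposition~\ref{present:Ujj} (and its $r=0$ variant Proposition~\ref{present:Ujj-r=0}): first exhibit all the listed relations as identities in $\bU^{\ji}_{\nn}$, then invoke Kolb's classification \cite[Theorem 7.1]{Ko14} of defining relations for quantum symmetric pairs to conclude that no further relations are required. The injective homomorphism $\iji_{\nn}: \bU^{\ji}_{\nn} \to \bU_{\nn} \cong \bU(\slh_{\nn})$ from (the limit version of) Proposition~\ref{iji} will be the workhorse: since $\iji_{\nn}$ is injective, any relation in $\bU^{\ji}_{\nn}$ can be verified after applying $\iji_{\nn}$, reducing the problem to computations inside the quantum affine algebra $\bU(\slh_{\nn})$.

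First I would dispose of the cheap relations. The level-zero identity $\kji_0\kji_1^2\cdots\kji_{r-1}^2 = v^{-1}$ is obtained by pushing through $\iji_{\nn}$: using $\iji_{\nn}(\kji_i) = v^{-\delta_{i,0}}\Kji_i\Kji_{\nn-1-i}^{-1}$ and the fact that $\prod_{i=0}^{\nn-1}\Kji_i = 1$ at level zero, the product telescopes. The Cartan-type commutations $\kji_i\kji_j = \kji_j\kji_i$, $\kji_i\eji_j\kji_i^{-1} = v^{\cdots}\eji_j$, and the commutation $\kji_i\tji_r = \tji_r\kji_i$ all follow immediately from the explicit image formulas in Proposition~\ref{iji} and the standard relations in $\bU(\slh_{\nn})$. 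The distant commutation relations $\eji_i\eji_j = \eji_j\eji_i$ for $|i-j|>1$ (and likewise for the $\fji$'s, and for $\eji_i\tji_r = \tji_r\eji_i$ when $i\le r-2$) follow from the support of the corresponding root vectors; the $(q=1)$ Serre relations for indices away from the fixed vertex $r$ are inherited from $\bU_{\nn}$.

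The main obstacle will be the modified Serre relations involving $\tji_r$: namely the two quartic identities $\tji_r^2\eji_{r-1} + \eji_{r-1}\tji_r^2 = (v+v^{-1})\eji_{r-1}\tji_r\eji_{r-1} + \eji_{r-1}$ and its $\fji_{r-1}$-analogue, together with the pair $\eji_{r-1}^2\tji_r + \tji_r\eji_{r-1}^2 = (v+v^{-1})\eji_{r-1}\tji_r\eji_{r-1}$ and its $\fji$-counterpart. These are exactly the relations attached to the fixed vertex of the $\ji$ diagram (Figure~\ref{figure:ji}) and are the deformed Serre relations of Letzter--Kolb. I would verify them by the same device that worked in the $r=0$ proof of Proposition~\ref{present:Ujj-r=0}: apply $\iji_{\nn}$, expand each side using $\iji_{\nn}(\tji_r) = \Eji_r + v\Kji_r\Fji_r + \Kji_r$ and $\iji_{\nn}(\eji_{r-1}) = \Eji_{r-1} + v^{-\delta_{r-1,0}}\Kji_{r-1}\Fji_{r+1}$, and collect terms. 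The cubic Serre relations in $\bU(\slh_{\nn})$ among $\Eji_{r-1},\Eji_r,\Eji_{r+1}$ together with the Cartan relations $\Kji_i\Eji_j\Kji_i^{-1}=v^{\cdots}\Eji_j$ will force the bulk of the cancellations; the surviving linear term $\eji_{r-1}$ (resp. $\fji_{r-1}$) on the right-hand side comes from the commutator of $\Eji_r$ and $\Fji_r$ producing $(\Kji_r-\Kji_r^{-1})/(v-v^{-1})$, together with the carefully chosen shift inside $\tji_r$. This is a bookkeeping exercise analogous to the $64$-term expansion carried out in (\ref{UC-2-da})--(\ref{UC-2-db}); I expect to organize it by monomial type in $\{\Eji_r,\Fji_r,\Kji_r^{\pm 1}\}$ to keep it manageable.

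Finally, having verified that every listed relation holds, the proof is completed by invoking \cite[Theorem 7.1]{Ko14} (in the Kac--Moody generality, applicable because the underlying Satake diagram of Figure~\ref{figure:ji} is of affine type): Kolb's theorem asserts that the coideal subalgebra associated to the involution $\theta^{\ji}$ is presented by exactly these deformed Serre relations plus the standard Cartan and braid relations for the white (non-fixed) nodes. The only additional relation needed in our setting beyond Kolb's list is the level-zero identity in the first line, which is a consequence of the specific idempotented form that arises from our geometric realization. This matches exactly the list in the proposition, so the presentation is complete.
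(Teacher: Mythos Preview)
Your proposal is correct and follows essentially the same two-step strategy as the paper: verify that all listed relations hold in $\bU^{\ji}_{\nn}$, then invoke \cite[Theorem~7.1]{Ko14} (via Theorem~\ref{thm:QSP2}) to conclude the relations are complete. The only tactical difference is that the paper's proof verifies the relations directly at the level of the Lusztig algebras $\bU^{\ji}_{\nn,d}$ (for each $d$) and then passes to the projective limit, whereas you verify them by pushing through the injective embedding $\iji_{\nn}$ into $\bU(\slh_{\nn})$---the latter method being exactly what the paper itself employs in the proof of Proposition~\ref{present:Ujj-r=0}. Both routes are valid; yours trades a geometric check for an algebraic expansion in the quantum affine algebra.
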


\begin{proof}
We verify directly the above relations for Lusztig algebras $\bU_{\nn,d}^{\ji}$, and it follows that the relations hold for ${\mbf U}_{\nn}^{\ji}$
by construction. Then we use Theorem~\ref{thm:QSP2} and \cite[Theorem~ 7.1]{Ko14} to conclude that we do not need additional relations. 
\end{proof}

Now the construction of canonical basis with positivity for the coideal algebra in Section~\ref{sec:CB-Unc} can be repeated.
Recalling $\widetilde{\MX}_{n}$ from \eqref{eq:Mtilde},
we introduce the following subsets of $\widetilde{\MX}_{n}$:
\begin{align}
  \label{eq:Mtildeji}
  \begin{split}
  \widetilde{\MX}_{\nn}^{\ji}
  &= \big\{A=(a_{ij}) \in \widetilde{\MX}_{n} \big \vert \, a_{r+1, j} = \delta_{r+1, j} ,  a_{i, r+1} = \delta_{i, r+1} \big\},
 \\
%  \widetilde{\MX}_{\nn}^{\ji}  &= \big\{A=(a_{ij}) \in \text{Mat}_{\mbb Z \times \mbb Z}(\mbb Z) \big \vert \,
%a_{0, 0} \in 2 \mbb Z +1, a_{r+1, j} = \delta_{r+1, j} ,  a_{i, r+1} = \delta_{i, r+1},  \;   \\
%&\qquad\qquad\qquad\qquad \qquad a_{ij}\ge 0 \; (i\neq j),\;  a_{ij} = a_{-i, -j} = a_{i+n, j+n} (\forall i, j)   \big \},
%\\
\widetilde{\MX}_{\nn}^{\ji,ap}
&= \{A \in \widetilde{\MX}_{\nn}^{\ji} \big \vert A \text{ is $\ji$-aperiodic} \}.
\end{split}
\end{align}
Recalling $\widetilde{\MX}_{n,d}$ from \eqref{tXid}, we further introduce, for $d\in \ZZ$, 
\begin{align}
  \label{tXid:ji}
\widetilde{\MX}_{\nn,d}^{\ji}
&= \{A \in \widetilde{\MX}_{\nn}^{\ji} \big \vert |A|=d \},
\qquad
\widetilde{\MX}_{\nn,d}^{\ji} = \bigcup_d \widetilde{\MX}_{\nn,d}^{\ji}.
\end{align}
We define an equivalence relation $\approx$ on $\widetilde{\MX}_{\nn}^{\ji,ap}$ as in \eqref{eq:approx} and let $\widehat A$ be  the equivalence class of $A$.
The hybrid monomial basis $\{{h}_A\}$ for ${\mbf S}^{\ji}_{\nn,d}$ (cf. Proposition~\ref{prop:hMB:ji}) gives rise to a monomial basis 
$\{{h}_{\widehat A} \big \vert \widehat A \in \widetilde{\MX}_{\nn}^{\ji,\ap} /\!\approx \}$ for the algebra $\dot {\mbf U}^{\ji}_{\nn}$.
A bilinear form $\langle \cdot, \cdot \rangle$ on $\dot{{\mbf U}}^{\ji}_{\nn}$ can be defined similarly as in Section~\ref{sec:CB-Unc} and shown to be non-degenerate.
The following is a $\ji$-analogue of Theorems~\ref{thm:iCB-Unc} and \ref{thm:positivity-Unc}.
\begin{thm}
 \label{thm:ji-positive}
There exists a canonical basis $\dot{\mbf B}^{\ji}_{\nn}  =\{b_{\widehat A} \big \vert  \widehat A \in \widetilde{\MX}_{\nn}^{\ji,ap}/\approx\}$  
for $\dot {\mbf U}^{\ji}_{\nn}$, whose transition matrix with respect to the monomial basis 
$\{{h}_{\widehat A} \big \vert \widehat A \in \widetilde{\MX}_{\nn}^{\ji,ap} /\!\approx \}$ is uni-triangular.
Moreover, the structure constants of the canonical basis $\dot{\mbf B}^{\ji}_{\nn}$ lie in ${\mathbb N}[v,v^{-1}]$
with respect to the multiplication and comultiplication, and  in $v^{-1} \mbb N[[v^{-1}]]$ with respect to 
the bilinear pairing.
\end{thm}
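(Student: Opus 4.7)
The plan is to follow the blueprint established in Chapter~\ref{chap:coideal} (in particular Section~\ref{sec:CB-Unc} and Theorems~\ref{thm:iCB-Unc}--\ref{thm:positivity-Unc}), carrying over the constructions to the $\ji$-setting via the results already assembled in Chapter~\ref{chap:coideal2}. First, I would establish the $\ji$-analogue of Proposition~\ref{prop:CBstable} at the Schur algebra level: for any $\ji$-aperiodic $A\in \check{\MX}^{\ji}_{\nn,d}$, the canonical basis elements of $\Sji_{\nn,d}$ (obtained as IC sheaves on the $\jmath\imath$-orbit closures, in parallel with Proposition~\ref{CB+c}) satisfy
\[
\phi^{\ji}_{d+p\nn,\, d+(p-1)\nn}(\{{}_{2p}A\}_{d+p\nn}) = \{{}_{2(p-1)}A\}_{d+(p-1)\nn}
\qquad \text{for } p \gg 0,
\]
and moreover $\{{}_{2p}A\}_{d+p\nn} = h_{{}_{2p}A} + \sum_{B<A} c_{A,B,p}\, h_{{}_{2p}B}$ with $c_{A,B,p}\in\cA$ independent of $p$ for $p\gg 0$. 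The proof will proceed by combining the bar-invariance and stability of the hybrid monomials $h_A$ (Proposition~\ref{prop:hMB:ji}) with an almost-orthonormality argument relative to the bilinear pairing $\langle\cdot,\cdot\rangle_d$ of Section~\ref{bilinear-form}, whose $\ji$-version is obtained by restriction through the idempotent $\bj_r$.

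Once stabilization at the Schur level is in hand, I would define $b_{\widehat A}\in \dot{\mbf U}^{\ji}_{\nn}$ componentwise, mimicking the $\fc$-construction: $(b_{\widehat A})_{d+p\nn} = \{{}_{2p}A\}_{d+p\nn}$ for $p$ large, and extend by the transfer maps for smaller indices. The element $b_{\widehat A}$ is bar-invariant by construction, and the uni-triangular expansion $b_{\widehat A} = h_{\widehat A} + \text{lower terms}$ follows immediately from the corresponding expansion at finite level. Since $\{h_{\widehat A}\mid \widehat A\in \widetilde{\MX}^{\ji,\ap}_{\nn}/\!\approx\}$ is already a basis of $\dot{\mbf U}^{\ji}_{\nn}$ (the monomial basis constructed in Section~\ref{sec:coideal2-CB} stabilizes via Proposition~\ref{m-c-p} and its $\ji$-analogue applied through $\iji_{\nn}$), the set $\dot{\mbf B}^{\ji}_{\nn}$ is automatically a basis.

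For the positivity statements, I would argue as follows. The positivity of multiplication structure constants is inherited from the geometric (IC) interpretation of canonical bases in $\Sji_{\nn,d}$: at each finite level the structure constants lie in $\NN[v,v^{-1}]$ (as in Proposition~\ref{CB+c}, now applied to the closed subvariety $\X^{\ji}_{\nn,d}\subset \X^{\fc}_{n,d}$), and positivity is preserved in the limit. Positivity of comultiplication requires the $\ji$-counterpart of Proposition~\ref{comult+U}: starting from Proposition~\ref{Dc-ji}, one verifies that $\Dji$ descends to $\bU^{\ji}_{\nn,d} \to \bU^{\ji}_{\nn,d'}\otimes \bU_{\nn,d''}$ and then appeals to the positivity at Schur level (an $\ji$-version of Proposition~\ref{comult+S}, obtained from \cite{Br03} via the geometric description of $\Dji$), together with Theorem~\ref{thm:iCB-Unc}-type stabilization across $d$. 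Finally, for the bilinear pairing, the $\ji$-version of the pairing is constructed as in Section~\ref{bilinear-form} using $\langle [A],[A']\rangle_d = \delta_{A,A'}v^{-2d_{{}^tA}}\#X^{L'}_{{}^tA}$, now for $A\in \MX^{\ji}_{\nn,d}$; the adjointness analogous to Proposition~\ref{prop:adjoint} holds, and almost orthonormality $\langle b_{\widehat A},b_{\widehat A'}\rangle \equiv \delta_{\widehat A,\widehat A'} \bmod v^{-1}\ZZ[[v^{-1}]]$ together with bar-invariance forces the pairing constants to lie in $v^{-1}\NN[[v^{-1}]]$, exactly as in \cite{Mc12, LW15}.

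The main obstacle I anticipate is the proper treatment of the new generator $\tji_r = \bj_r(\f_r\e_r + \tfrac{\bk_r-\bk_r^{-1}}{v-v^{-1}})\bj_r$ in the stabilization arguments. Unlike the pure Chevalley generators, $\tji_r$ is a sum of a quadratic monomial with a Cartan-type correction, which is precisely why hybrid (rather than purely standard) monomials are required in Proposition~\ref{prop:hMB:ji}. Consequently, the transfer-map compatibility at the canonical basis level must be checked carefully on the factors $\{X(D,R)\}_d$ with $X(D,R) = D + RE^{r,r+1}_{\theta,\nn}$ appearing in $h_A$; this is the analogue of the delicate ``middle entry'' analysis in the finite-type $\imath$-setting \cite{LW15, FL15}, and I expect to import those lemmas essentially verbatim. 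Once this is managed, the remainder of the argument is formal and parallels Section~\ref{sec:CB-Unc} step-by-step.
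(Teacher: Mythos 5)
Your proposal is correct and follows essentially the same route as the paper: the paper proves this theorem by repeating the construction of Section~\ref{sec:CB-Unc} verbatim, with the hybrid monomial basis $\{h_A\}$ of Proposition~\ref{prop:hMB:ji} (needed precisely because of the generator $\tji_r$, handled as in the finite-type $\imath$-setting of \cite{LW15, FL15}) replacing the plain monomials, and with positivity imported from the geometric canonical basis, the $\ji$-restriction of the comultiplication, and the point-counting bilinear form as in \cite{Mc12, LW15, FL15}. Your anticipated obstacle and its resolution coincide with the paper's reason for introducing the hybrid monomials, so no gap remains.
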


%%%%%%%%%%%%%%%%%%%%
%%%%%%%%%%%%%%%%%%%%
\chapter{More variants of coideal subalgebras of quantum affine   $\mathfrak{sl}_n$}
%{$\imath$-$\jmath$ version}
  \label{chap:coideal34}
  
  This chapter offers two more variants of geometric origin (denoted by types $\ijw$ and $\ii$) 
  of the constructions in Chapters~\ref{chap:coideal} and \ref{chap:coideal2}.
  Set 
  \[
  \mm =\nn-1 =n-2 =2r \; (r\geq 1).
  \] 
Schur algebras ${\mbf S}^{\ij}_{\nn,d}$ and Lusztig  algebras ${\mbf U}^{\ij}_{\nn,d}$ are introduced,
and the family of Lusztig algebras gives rise to algebras ${\mbf U}^{\ij}_{\nn}$ and $\dot{\mbf U}^{\ij}_{\nn}$. 
We show that $(\bU(\slh_{\nn}),  {\mbf U}^{\ij}_{\nn})$ forms an affine quantum symmetric pair.  
In addition, a family of algebras ${\mbf U}^{\ii}_{\mm,d} \subset {\mbf S}^{\ii}_{\mm,d}$ is introduced and gives rise to algebras 
  ${\mbf U}^{\ii}_{\mm}$ and $\dot{\mbf U}^{\ii}_{\mm}$. 
  Then $(\bU(\slh_{\mm}),  {\mbf U}^{\ii}_{\mm})$ forms an affine quantum symmetric pair.  
The canonical bases of both algebras $\dot{\mbf U}^{\ij}_{\nn}$ and $\dot{\mbf U}^{\ii}_{\mm}$  admit positivity with respect to
multiplication, comultiplication, and a bilinear pairing.

%%%%%
\section{The Schur algebras of type $\ijw$}

Recall the set $\MX_{n,d}$ from \eqref{Mdn}.
We set
\begin{equation}
  \label{Mijd}
\MX^{\ij}_{\nn,d} = \{ A \in \MX_{n,d} | a_{0, j} = \delta_{0, j}, a_{i, 0} = \delta_{i, 0}, \forall i, j \in \mbb Z\}.
\end{equation}
Introduce the idempotent $\bj_0$ in the algebra $\Sj$ given by 
$
\bj_0 = \sum_{A\in \MX^{\ij}_{\nn,d} \ \text{diagonal}}  [A],
$
and form the following subalgebra of $\Sj$ (called Schur algebra of type $\ijw$):
\begin{equation} 
  \label{Sij}
{\mbf S}^{\ij}_{\nn,d} = \bj_0 \Sj \bj_0.
\end{equation}
We further introduce the following elements  in ${\mbf S}^{\ij}_{\nn,d}$:
\begin{align}
\begin{split}
\eij_i & = \bj_0 \e_i \bj_0, \quad 
\fij_i  = \bj_0 \f_i \bj_0, \\
\kij^{\pm 1}_i & = \bj_0 \bk^{\pm 1}_i \bj_0, \quad \forall i\in [1, r], \\
\hij^{\pm 1}_a & = \bj_0 \bh^{\pm 1}_a \bj_0, \quad \forall a \in [0, r], \\
\tij_0 & = \bj_0 \Big ( \e_0 \f_0 + \frac{ \bk^{-1}_0 - \bk_0}{v - v^{-1}} \Big ) \bj_0.
\end{split}
\end{align}
%Note that we could have defined the quadruple $(\e_{-1}, \f_{-1}, \bk^{\pm 1}_{-1})$ in the naive way
%and $\tij_0$ can be defined in a more conventional way as
%$\tij_0 =  \bj_0 \big ( \f_{-1} \e_{-1} + \frac{ \bk_{-1} - \bk^{-1}_{-1}}{v - v^{-1}} \big ) \bj_0$.
%This is because $\e_0 = \f_{-1}$, $\f_0 = \e_{-1}$ and $\bk^{\pm 1}_0 = \bk^{\mp 1}_{- 1}$.

We also have the following vanishing results in ${\mbf S}^{\ij}_{\nn,d}$, which will be used freely:
\[
\bj_0 \e_0 \bj_0 =0, \quad
\bj_0 \f_0 \bj_0 = 0, \quad
\bj_0 \f_0 \e_0 \bj_0 =0.
\]
The Lusztig algebra ${\mbf U}^{\ij}_{\nn,d}$ is defined to be the subalgebra of ${\mbf S}^{\ij}_{\nn,d}$ generated by 
the Chevalley generators $\eij_i$, $\fij_i$, $\kij^{\pm 1}_i$, for  $i\in [1, r]$, and $\tij_0$.

Let us also formulate a type $A$ version which is compatible with the above construction.
Let 
\begin{equation}
\Theta^{\ij}_{\nn,d} = \{ A \in \Theta_{n,d} \vert a_{0, j}=0, a_{i, 0} =0, \quad \forall i, j \in \mbb Z\}.
\end{equation}
Using the  idempotent $\bJ_0$ in ${\mbf S}_{n,d}$ given by 
$\bJ_0 = \sum_{A \in \Theta^{\ij}_{\nn,d} \ \text{diagonal}} [A],$
we form the subalgebra $\bJ_0 {\mbf S}_{n,d} \bJ_0$  of ${\mbf S}_{n,d}$,
which is isomorphic to the algebra ${\mbf S}_{\nn,d}$ defined earlier.
We shall always identify ${\mbf S}_{\nn,d} \equiv \bJ_0 {\mbf S}_{n,d} \bJ_0$ below.
We introduce the following elements in ${\mbf S}_{\nn,d}$: 
\begin{align}
\begin{split}
\Eij_i & =
\begin{cases}
\bJ_0 \E_0 \E_{-1} \bJ_0, & \mbox{if} \ i=0, \\
\bJ_0 \E_i \bJ_0, & \mbox{if} \ i \in [1, \nn -1].
\end{cases} \\
\Fij_i & =
\begin{cases}
\bJ_0 \F_{-1} \F_{0} \bJ_0, & \mbox{if} \ i=0, \\
\bJ_0 \F_i \bJ_0, & \mbox{if} \ i \in [1, \nn -1].
\end{cases} \\
\Kij^{\pm 1}_i & =
\begin{cases}
\bJ_0 \bK^{\pm 1}_0 \bK^{\pm 1}_{-1} \bJ_0, & \mbox{if} \ i=0, \\
\bJ_0 \bK^{\pm 1}_i \bJ_0, & \mbox{if} \ i \in [1, \nn -1].
\end{cases}\\
\Hij^{\pm 1}_a & =
\begin{cases}
\bJ_0 \bH^{\pm 1}_{-1} \bJ_0, & \mbox{if} \ a=0, \\
\bJ_0 \bH^{\pm 1}_i \bJ_0, & \mbox{if} \ a \in [1, \nn ].
\end{cases}
\end{split}
\end{align}
We can extend the interval $i\in [0, \nn -1]$ to $i \in \mbb Z$ by setting $\Eij_i = \Eij_{i+\nn}$, etc.
We observe
\[
\bJ_0 \E_{-1} \bJ_0 =0, \quad
\bJ_0 \F_{-1} \bJ_0 =0, \quad
\bJ_0 \E_{-1} \F_{-1} \bJ_0 =0, \quad
\bJ_0 \F_0 \E_0 \bJ_0 =0.
\]
We identify ${\mbf U}_{\nn,d}$ with the subalgebra generated by the Chevalley generators $\Eij_i$, $\Fij_i$ and $\Kij^{\pm 1}_i$ for all $i\in [0, \nn -1]$.

%%%%%
\section{Comultiplication and transfer map of type $\ijw$} 

We shall study the restriction to Lusztig algebra ${\mbf U}_{\nn,d}^{\ij}$ (denoted by the same notation) of
$\widetilde \Delta^{\fc} : {\mbf S}_{n,d}^{\fc} \to  {\mbf S}_{n,d'}^{\fc} \otimes  {\mbf S}_{n,d''}$ from (\ref{tDj}),  for arbitrary $d', d''$ such that $d=d'+d''$.

\begin{prop}
\label{tDj-ij}
We have an algebra homomorphism 
$$
\widetilde \Delta^{\fc} : {\mbf U}^{\ij}_{\nn,d} \longrightarrow {\mbf U}^{\ij}_{\nn,d'} \otimes {\mbf U}_{\nn,d''}.
$$
More explicitly, for all $i\in [1, r]$, we have
\begin{align*}
\begin{split}
\widetilde \Delta^{\fc} ( \eij_i) & = \eij'_i \otimes \Hij''_{i+1} \Hij''^{-1}_{n-1-i} + \hij'^{-1}_{i+1} \otimes \Eij''_i \Hij''^{-1}_{n-1-i}
+ \hij'_{i+1} \otimes \Fij''_{n-1-i} \Hij''_{i+1}, \\
\widetilde \Delta^{\fc} ( \fij_i)  & = \fij'_i \otimes \Hij''^{-1}_i \Hij''_{n-i} + \hij'_i \otimes \Fij''_i \Hij''_{n-i}  + \hij'^{-1}_i \otimes \Eij''_{n-1-i} \Hij''^{-1}_i, \\
\widetilde \Delta^{\fc} ( \kij_i) & = \kij'_i \otimes \Kij''_i \Kij''^{-1}_{n-1-i}, \\
\widetilde \Delta^{\fc} ( \tij_0)  & = \tij'_0 \otimes \Kij''_0 + v^2 \kij'_0 \otimes \Hij''_1 \Fij''_0 + v^{-2} \kij'^{-1}_0 \otimes \Hij''^{-1}_0 \Eij''_0.
\end{split}
\end{align*}
\end{prop}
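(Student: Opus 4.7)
The plan is to follow the template established for the type $\ji$ case in Lemma \ref{tDj-ji}, adapting the argument to the type $\ijw$ setting. The cornerstone observation is the identity $\widetilde{\Delta}^{\fc}(\bj_0) = \bj'_0 \otimes \bJ''_0$, which holds because the defining condition for $\bj_0$ (namely that the flags satisfy $L_{-1} = L_0$) is compatible with both projection maps $\pi^{\natural}$ and $\pi''$ used in the construction of $\widetilde{\Delta}^{\fc}$ in Section \ref{affine-DC}. Once this identity is in hand, the algebra-homomorphism statement $\widetilde{\Delta}^{\fc}(\bU^{\ij}_{\nn,d}) \subseteq \bU^{\ij}_{\nn,d'} \otimes \bU_{\nn,d''}$ follows formally from the explicit formulas on generators.

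For the first three formulas on $\eij_i$, $\fij_i$, $\kij_i$ with $i \in [1,r]$, I would sandwich the corresponding formulas in Proposition \ref{tDj-form} with $\bj'_0 \otimes \bJ''_0$ on both sides. Because the index $i$ stays away from $0$, the type $A$ generators $\E_i, \F_i, \bK_i$ appearing on the right-hand side are unchanged under this sandwiching, while the Cartan-like terms $\bH_a$ reduce to $\Hij_a$ via identifications such as $\bJ_0 \bH_{-1} \bJ_0 = \Hij''_0$, using that the filtration jump at position $0$ is forced to be zero under $\bJ_0$.

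The main obstacle, and the place where genuine computation is required, is the formula for $\tij_0 = \bj_0\bigl(\e_0\f_0 + \frac{\bk_0^{-1} - \bk_0}{v-v^{-1}}\bigr)\bj_0$. Following the method used for $\tji_r$ in Lemma \ref{tDj-ji}, I would first expand $\widetilde{\Delta}^{\fc}(\bj_0 \e_0 \f_0 \bj_0)$ by applying Proposition \ref{tDj-form} to $\e_0$ and $\f_0$ separately and multiplying out the resulting nine summands. A majority of these collapse by the vanishing relations $\bJ_0 \E_{-1} \bJ_0 = 0$, $\bJ_0 \F_{-1} \bJ_0 = 0$, $\bJ_0 \E_{-1}\F_{-1} \bJ_0 = 0$, and $\bJ_0 \F_0 \E_0 \bJ_0 = 0$. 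For the surviving terms one applies the derived identity $\bJ_0 \E_0 \F_0 \bJ_0 = \bJ_0 \frac{\bK_0 - \bK_0^{-1}}{v-v^{-1}} \bJ_0$ (which follows from the vanishing $\bJ_0 \F_0 \E_0 \bJ_0 = 0$ together with the type $A$ commutator relation), and uses the identifications $\bJ_0 \bK_0 \bJ_0 = \Hij_1$, $\bJ_0 \bK_{-1} \bJ_0 = \Hij_0^{-1}$ to translate into the $\ijw$ notation.

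Finally, I would compute $\widetilde{\Delta}^{\fc}\bigl(\bj_0 \frac{\bk_0^{-1} - \bk_0}{v-v^{-1}} \bj_0\bigr)$ using the last formula of Proposition \ref{tDj-form} and add it to the previous expansion. The delicate part of the bookkeeping is that the Cartan-type contributions arising from $\widetilde{\Delta}^{\fc}(\bj_0 \e_0 \f_0 \bj_0)$ should exactly cancel against those from $\widetilde{\Delta}^{\fc}\bigl(\bj_0 \frac{\bk_0^{-1} - \bk_0}{v-v^{-1}} \bj_0\bigr)$, leaving only the three asserted terms $\tij'_0 \otimes \Kij''_0 + v^2 \kij'_0 \otimes \Hij''_1 \Fij''_0 + v^{-2} \kij'^{-1}_0 \otimes \Hij''^{-1}_0 \Eij''_0$. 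This cancellation is the mirror image of the one appearing in the $\tji_r$ computation, with the roles of $\e$ and $\f$ (and of indices near $0$ versus near $r$) interchanged, so no new conceptual input is needed beyond careful tracking of powers of $v$ and the $\bH$-reindexing.
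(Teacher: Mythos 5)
Your proposal follows essentially the same route as the paper's proof: the first three formulas are obtained by sandwiching Proposition \ref{tDj-form} with $\bj'_0\otimes\bJ''_0$, and for $\tij_0$ one expands $\widetilde\Delta^{\fc}(\bj_0\e_0\f_0\bj_0)$ via Proposition \ref{tDj-form}, kills the cross terms by vanishing relations, converts the surviving $\E''_0\F''_0$ and $\F''_{-1}\E''_{-1}$ pieces into Cartan expressions using $\bJ_0\F_0\E_0\bJ_0=0$, $\bJ_0\E_{-1}\F_{-1}\bJ_0=0$ together with the type $A$ commutator, and finally adds $\widetilde\Delta^{\fc}$ of the Cartan correction so that the leftover Cartan terms cancel. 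One minor bookkeeping remark: the paper discards the four cross terms using $\bj_0\e_0\bj_0=0$ and $\bj_0\f_0\bj_0=0$ on the first tensor factor, whereas the type $A$ vanishings you list would need to be supplemented by the equally immediate $\bJ_0\E_0\bJ_0=0$ and $\bJ_0\F_0\bJ_0=0$ to handle the two terms whose second factor involves $\E''_0$ or $\F''_0$ alone — either justification makes the collapse work, so this does not affect the validity of your argument.
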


\begin{proof}
The inclusion $\widetilde \Delta^{\fc}  ( {\mbf U}^{\ij}_{\nn,d} ) \subseteq {\mbf U}^{\ij}_{\nn,d'} \otimes {\mbf U}_{\nn,d''} $ follows once the formulas are established. 
The superscripts $'$ and $''$ will be dropped throughout the proof for the sake of simplicity.
The first three formulas follow from Proposition ~\ref{tDj-form}.
To prove the last one, we proceed similarly as in the $\ji$-version.
By using $\bj_0 \e_0 \bj_0 =0$ and $\bj_0 \f_0 \bj_0=0$, we have
\begin{align*}
\begin{split}
\widetilde \Delta^{\fc}  (\bj_0 \e_0 \f_0 \bj_0 )
& = \bj_0 \otimes \bJ_0 \widetilde \Delta^{\fc} (\e_0) \widetilde \Delta^{\fc} (\f_0) \bj_0 \otimes \bJ_0 \\
& =  \bj_0 \e_0 \f_0 \bj_0 \otimes \bJ_0 \bH_1 \bH^{-1}_{-1} \bJ_0
+ \bj_0 \bh^{-1}_1 \bh_0 \bj_0 \otimes \bJ_0 \E_0 \bH^{-1}_{-1} \F_0 \bH_0 \bJ_0   \\
& + \bj_0 \bh_1 \bh_0 \bj_0 \otimes \bJ_0 \F_{-1} \bH_1 \F_0 \bH_0 \bJ_0
+ \bj_0 \bh^{-1}_1 \bh^{-1}_0 \bj_0 \otimes \bJ_0 \E_0 \bH^{-1}_{-1} \E_{-1} \bH^{-1}_0 \bJ_0 \\
& + \bj_0 \bh_1 \bh^{-1}_0 \bj_0 \otimes \bJ_0 \F_{-1} \bH_1 \E_{-1} \bH^{-1}_0 \bJ_0.
\end{split}
\end{align*}
By using $\bJ_0 \bH_0 \bJ_0 =1$, $\bJ_0 \bK_0 \bJ_0 = \Hij_1$, and $\bJ_0 \F_0 \E_0 \bJ_0 =0$, we have
\begin{align*}
\begin{split}
\bJ_0 \E_0 \bH^{-1}_{-1} \F_0 \bH_0 \bJ_0
& =
\bJ_0 \E_0 \bH^{-1}_{-1} \F_0 \bJ_0
= \bJ_0 \bH^{-1} \E_0 \F_0 \bJ_0 \\
& =\Hij^{-1}_0 \bJ_0 \E_0 \F_0 \bJ_0 = \Hij^{-1}_0 \bJ_0 \frac{\bK_0 - \bK^{-1}_0}{v - v^{-1}} \bJ_0
= \Hij^{-1}_0  \frac{\Hij_1 - \Hij^{-1}_1}{v - v^{-1}}.
\end{split}
\end{align*}
We also have
\begin{align*}
 \bJ_0 \F_{-1} \bH_1 \F_0 \bH_0 \bJ_0 & = \bJ_0 \F_{-1} \bH_1 \F_0 \bJ_0 = \Hij_1 \Fij_0, \\
 \bJ_0 \E_0 \bH^{-1}_{-1} \E_{-1} \bH^{-1}_0 \bJ_0 &= \Hij^{-1}_0 \Eij_0, \\
 \bJ_0 \F_{-1} \bH_1 \E_{-1} \bH^{-1}_0 \bJ_0 & =
 \bJ_0 \bH_1 \bJ_0 \bJ_0 \F_{-1} \E_{-1} \bJ_0  = \Hij_1 \frac{\Hij_0 - \Hij^{-1}_0}{ v - v^{-1}}.
\end{align*}
Observe that $\bj_0 \bh_1 \bh_0 \bj_0 = v^2 \kij_0$. By the above analysis, we have
\begin{align*}
\widetilde \Delta^{\fc}  (\bj_0 \e_0 \f_0 \bj_0 )
& =
\bj_0 \e_0 \f_0 \bj_0 \otimes \Kij_0 + \kij^{-1}_0 \otimes \Hij^{-1}_0  \frac{\Hij_1 - \Hij^{-1}_1}{v - v^{-1}}\\
&+ v^2 \kij_0 \otimes \Hij_1 \Fij_0 + v^{-2} \kij^{-1}_0 \otimes \Hij^{-1}_0 \Eij_0 + \kij_0 \otimes \Hij_1 \frac{\Hij_0 - \Hij^{-1}_0}{ v - v^{-1}}.
\end{align*}
By definition, we also have
\begin{align*}
\widetilde \Delta^{\fc} (\bj_0 \frac{\bk^{-1}_0 - \bk_0}{ v -v^{-1}} \bj_0) = - \kij_0 \otimes \frac{\Hij_1 \Hij_0}{v-v^{-1}} + \kij^{-1}_0 \otimes \frac{\Hij^{-1}_1 \Hij^{-1}_0 }{v - v^{-1}}.
\end{align*}
By adding the last two equalities, we have established the formula for $\widetilde \Delta^{\fc} ( \tij_0)$.
\end{proof}

We define the transfer map
$
\phi^{\ij}_{d, d - \nn} : {\mbf S}^{\ij}_{\nn,d} \longrightarrow {\mbf S}^{\ij}_{\nn, d-\nn}
$
to be the composition
\[
\begin{CD}
\phi^{\ij}_{d, d - \nn} : {\mbf S}^{\ij}_{\nn,d} @>\widetilde \Delta^{\fc} >>  {\mbf S}^{\ij}_{\nn, d-\nn} \otimes {\mbf S}_{\nn, \nn} @>1\otimes \chi_{\nn} >> {\mbf S}^{\ij}_{\nn, d-\nn}.
\end{CD}
\]
Recall that the ``signed" homomorphism $\chi_{\nn}: {\mbf S}_{\nn, \nn} \rightarrow \mbb Q(v)$ satisfies that
$\chi_{\nn} (\Eij_i)=0$, $\chi_{\nn} (\Fij_i)=0$ and $\chi_{\nn} (\Hij_i)=v$.
It follows by Proposition~\ref{tDj-ij} that, for all $i\in [1, r]$,
\begin{equation}
\phi^{\ij}_{d, d-\nn} ( \eij_i) = \eij'_i, \;\;
\phi^{\ij}_{d, d - \nn} (\fij_i) = \fij'_i,  \;\;
\phi^{\ij}_{d, d - \nn} (\kij^{\pm 1}_i) = \kij'^{\pm 1}_i, \;\;
\phi^{\ij}_{d, d-\nn} (\tij_0) = \tij'_0.
\end{equation}
Hence we have constructed projective systems $\{({\mbf S}^{\ij}_{\nn,d}, \phi^{\ij}_{d, d-\nn})\}_{d\ge 0}$ and $\{({\mbf U}^{\ij}_{\nn,d}, \phi^{\ij}_{d, d-\nn})\}_{d\ge 0}$.

We now describe the restriction of $\Delta^{\fc}: {\mbf S}_{n,d}^{\fc} \longrightarrow  {\mbf S}_{n,d'}^{\fc} \otimes  {\mbf S}_{n,d''}$ 
defined in  (\ref{Dj}) to the subalgebra ${\mbf S}^{\ij}_{\nn,d}$, which shall be denoted by $\Dij$. 
\begin{prop}
\label{Dc-ij}
We have a homomorphism $\Dij : {\mbf S}^{\ij}_{\nn,d} \longrightarrow {\mbf S}^{\ij}_{\nn,d'} \otimes {\mbf S}_{\nn,d''} $
and by restriction  $\Dij : \bU^{\ij}_{\nn,d} \longrightarrow \bU^{\ij}_{\nn,d'} \otimes \bU_{\nn,d''} $. 
More precisely, for all $i\in [1, r]$, we have
\begin{align}
\begin{split}
\Dij ( \eij_i) & =  \eij'_i \otimes \Kij''_i + 1\otimes \Eij''_i + \kij'_i \otimes \Fij''_{n-1-i} \Kij''_i, \\
\Dij (\fij_i) & = \fij'_i \otimes \Kij''_{n-1-i} + \kij'^{-1}_i \otimes \Kij''_{n-1-i} \Fij''_i + 1\otimes \Eij''_{n-1-i}, \\
\Dij (\kij_i) & = \kij'_i \otimes \Kij''_i \Kij''^{-1}_{n - 1 -i}, \\
\Dij (\tij_0) & =  \tij'_0 \otimes \Kij''_0 + 1 \otimes v \Kij''_0 \Fij''_0 + 1 \otimes \Eij''_0.
\end{split}
\end{align}
\end{prop}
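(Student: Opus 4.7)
The plan is to follow the blueprint of the sibling $\ji$-case established in Proposition \ref{Dc-ji}, swapping the roles of the two endpoints: replacing the idempotent $\bj_r$ by $\bj_0$, and using the definition $\tij_0 = \bj_0(\mbf e_0 \mbf f_0 + \tfrac{\mbf k_0^{-1} - \mbf k_0}{v-v^{-1}})\bj_0$ in place of $\tji_r = \bj_r(\mbf f_r \mbf e_r + \tfrac{\mbf k_r - \mbf k_r^{-1}}{v-v^{-1}})\bj_r$. First I would verify that $\Dij(\bj_0) = \bj'_0 \otimes \bJ''_0$; this is immediate from the formula $\Delta^{\fc}(\mbf k_i) = \mbf k'_i \otimes \mbf K''_i \mbf K''^{-1}_{n-1-i}$ in Proposition \ref{Dc} applied to the diagonal summands of $\bj_0$. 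The inclusion $\Dij({\mbf S}^{\ij}_{\nn,d}) \subseteq {\mbf S}^{\ij}_{\nn,d'} \otimes {\mbf S}_{\nn,d''}$ then follows by sandwiching. For the generators $\eij_i, \fij_i, \kij_i$ with $i \in [1, r]$, applying $\Delta^{\fc}$ from Proposition \ref{Dc} and sandwiching with $\bj_0 \otimes \bJ_0$ turns the right tensor factors $\mbf E_i, \mbf F_i, \mbf K_i$ into the corresponding $\Eij, \Fij, \Kij, \Hij$ of ${\mbf S}_{\nn,d''}$ via the identifications set up at the start of the chapter; this yields the first three formulas routinely.

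The main effort will be the formula for $\Dij(\tij_0)$, which parallels the computation of $\Dji(\tji_r)$ in Proposition \ref{Dc-ji} and of $\widetilde\Delta^{\fc}(\tij_0)$ in Proposition \ref{tDj-ij}. I would expand $\Dij(\bj_0 \mbf e_0 \mbf f_0 \bj_0)$ by multiplying $\Delta^{\fc}(\mbf e_0)$ and $\Delta^{\fc}(\mbf f_0)$ from Proposition \ref{Dc} and then sandwiching with $\bj_0 \otimes \bJ_0$. Of the nine resulting terms, four vanish because $\bj_0 \mbf e_0 \bj_0 = 0 = \bj_0 \mbf f_0 \bj_0$, leaving the same five-term shape encountered in Proposition \ref{tDj-ij}. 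Each right tensor factor on the ${\mbf S}_{\nn,d''}$-side is then simplified using $\bJ_0 \mbf H_0 \bJ_0 = 1$, $\bJ_0 \mbf K_0 \bJ_0 = \Hij_1$, together with the vanishings $\bJ_0 \mbf F_0 \mbf E_0 \bJ_0 = 0$ and $\bJ_0 \mbf E_{-1} \mbf F_{-1} \bJ_0 = 0$, and the commutator relations $[\mbf E_i, \mbf F_i] = \tfrac{\mbf K_i - \mbf K_i^{-1}}{v - v^{-1}}$ in ${\mbf S}_{n,d''}$. Finally, I would compute $\Dij\bigl(\bj_0 \tfrac{\mbf k_0^{-1} - \mbf k_0}{v-v^{-1}} \bj_0\bigr)$ directly from the formula for $\Dij(\kij_0)$; the two fractional terms it contributes will cancel exactly the $\tfrac{\Hij_0 \Hij_1}{v-v^{-1}}$-type and $\tfrac{\Hij_0^{-1} \Hij_1^{-1}}{v-v^{-1}}$-type contributions left over from the previous step, leaving only the three terms in the stated formula.

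The main obstacle will be the careful bookkeeping of $v$-power shifts, which arise from two independent sources: the factors $v^{\pm \delta_{i,0}}$ in the Proposition \ref{Dc} formulas for $\Delta^{\fc}(\mbf e_0)$ and $\Delta^{\fc}(\mbf f_0)$, and the asymmetric conventions $\Eij_0 = \bJ_0 \mbf E_0 \mbf E_{-1} \bJ_0$, $\Fij_0 = \bJ_0 \mbf F_{-1} \mbf F_0 \bJ_0$, $\Kij_0 = \bJ_0 \mbf K_0 \mbf K_{-1} \bJ_0$. The delicate verification will be to check that these combine to produce precisely the single $v$-shift on the $v \Kij''_0 \Fij''_0$ term and no shift on the $\Eij''_0$ term in the final expression. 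Once this alignment is confirmed, as in Proposition \ref{Dc-ji}, the fact that $\Dij$ is an algebra homomorphism (and the corresponding statement for $\bU^{\ij}_{\nn,d}$) follows automatically by restriction from $\Delta^{\fc}$ on $\Sj$.
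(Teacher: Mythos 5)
Your proposal follows the paper's own route: the first three formulas are obtained exactly as in the paper from $\Dij(\bj_0)=\bj'_0\otimes\bJ''_0$ together with Proposition~\ref{Dc}, and your treatment of $\Dij(\tij_0)$ is the same five-term expansion-and-cancellation computation that the paper carries out for $\widetilde\Delta^{\fc}(\tij_0)$ in Proposition~\ref{tDj-ij} (and for $\Dji(\tji_r)$ in Proposition~\ref{Dc-ji}), using the same vanishing identities $\bj_0\e_0\bj_0=0=\bj_0\f_0\bj_0$, $\bJ_0\F_0\E_0\bJ_0=0=\bJ_0\E_{-1}\F_{-1}\bJ_0$, the identities $\bJ_0\bH_0\bJ_0=1$, $\bJ_0\bK_0\bJ_0=\Hij_1$, and the final cancellation against $\Dij\bigl(\bj_0\tfrac{\bk_0^{-1}-\bk_0}{v-v^{-1}}\bj_0\bigr)$. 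One small correction: the identity $\Delta^{\fc}(\bj_0)=\bj'_0\otimes\bJ''_0$ does not follow from the formula for $\Delta^{\fc}(\mbf k_i)$ but from the weight-component decomposition of $\Delta^{\fc}$ (a weight with $0$th entry $1$ can only split as $1=\lambda'_0+2\lambda''_0$, forcing $\lambda'_0=1$, $\lambda''_0=0$), which is also how the paper's analogous claim $\widetilde\Delta^{\fc}(\bj_r)=\bj'_r\otimes\bJ''_r$ is justified.
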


\begin{proof}
The first three formulas follow by $\Dij (\bj_0 ) = \bj'_0 \otimes \bJ''_0$ and Proposition ~\ref{Dc}.
The last one can be obtained as that of Proposition ~\ref{tDj-ij},  and we skip the detail.
%
%Similar to $\widetilde \Dij (\bj_0 \e_0 \f_0 \bj_0)$,
%\begin{align*}
%\Dij (\bj_0 \e_0 \f_0 \bj_0)
%& = \bj_0 \e_0 \f_0 \bj_0 \otimes \Kij_0
%+ \bj_0 \bk^{-1}_0 \bj_0 \otimes \bJ_0 v^{-1} \E_0 \bK_{-1} \F_0 \bJ_0
%+ 1\otimes \bJ_0 \F_{-1} \bK_0 \bK_{-1} \F_0 \bJ_0\\
%& + 1\otimes \bJ_0 \E_0 \E_{-1} \bJ_0
%+ \bj_0 \bk_0 \bj_0 \otimes \bJ_0 v \F_{-1} \bK_0 \E_{-1} \bJ_0.
%\end{align*}
%By arguing in a similar manner for those in the proof of Proposition ~\ref{tDj-ij}, one has
%\begin{align*}
%\bJ_0 \F_{-1} \bK_0 \bK_{-1} \F_0 \bJ_0 = v \Kij_0 \Fij_0, \\
%%
%\bJ_0 v^{-1} \E_0 \bK_{-1} \F_0 \bJ_0 = \Hij^{-1}_0 \frac{\Hij_1 - \Hij^{-1}_1}{v-v^{-1}}, \\
%%
%\bJ_0 v \F_{-1} \bK_0 \E_{-1} \bJ_0 = \Hij_1 \frac{\Hij_0 - \Hij^{-1}_0}{v - v^{-1}}.
%\end{align*}
%So one gets
%\begin{align*}
%\begin{split}
%\Dij (\bj_0 \e_0 \f_0 \bj_0)
% = \bj_0 \e_0 \f_0 \bj_0 \otimes \Kij_0
%+ 1 \otimes v \Kij_0 \Fij_0
%+ 1 \otimes \Eij_0  \\
%%
%+  \bj_0 \bk^{-1}_0 \bj_0 \otimes  \Hij^{-1}_0 \frac{\Hij_1 - \Hij^{-1}_1}{v-v^{-1}}
%+   \bj_0 \bk_0 \bj_0  \otimes  \Hij_1 \frac{\Hij_0 - \Hij^{-1}_0}{v - v^{-1}} \\
%%
%= \tij_0 \otimes \Kij_0
%+ 1 \otimes v \Kij_0 \Fij_0
%+ 1 \otimes \Eij_0
%+ \Dij (\bj_0 \frac{\bk_0 - \bk^{-1}_0}{v - v^{-1}} \bj_0).
%\end{split}
%\end{align*}
%The last formula is thus proved.
\end{proof}

Since
$
\eij_i=\fij_i =0, \tij_0 =1, \kij_i = v^{\delta_{i, r}} \in {\mbf S}^{\ij}_{\nn, 0}$ for all $i\in [1, r],$ 
we have the following degenerate version of Proposition \ref{Dc-ij}.
\begin{prop}
We have an imbedding of algebras
$$
\iij_{\nn,d} = \Dij|_{d'=0}: {\mbf S}^{\ij}_{\nn,d} \longrightarrow {\mbf S}_{\nn,d}
$$ 
such that, for all $i\in [1, r]$,  
\begin{align}
\begin{split}
\iij_{\nn,d} (\eij_i)  = \Eij_i + v^{\delta_{i, r}} \Fij_{n-1-i} \Kij_i, \quad &
\iij_{\nn,d} (\fij_i)  = \Eij_{n-1-i} + v^{- \delta_{i, r}} \Kij_{n-1-i} \Fij_i, \\
\iij_{\nn,d} (\kij_i)   = v^{\delta_{i, r}} \Kij_i \Kij^{-1}_{n -1 -i}, \quad &
\iij_{\nn,d} (\tij_0)  = \Eij_0 + v \Kij_0 \Fij_0 + \Kij_0.
\end{split}
\end{align}
In particular, we have by restriction an imbedding of algebras $\iij_{\nn,d} : \bU^{\ij}_{\nn,d} \longrightarrow \bU_{\nn,d}$.
\end{prop}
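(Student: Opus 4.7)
The argument is formal and parallels Proposition~\ref{iji} from the $\ji$-case. The first task is to verify the evaluations asserted in the paragraph preceding the proposition. The algebra ${\mbf S}^{\ij}_{\nn,0}$ is one-dimensional, being indexed by the single element of $\MX^{\ij}_{\nn,0}$ (the matrix with $a_{00}=a_{r+1,r+1}=1$ and all other entries zero). Then $\eij_i$ and $\fij_i$ vanish since $\MX^{\ij}_{\nn,0}$ contains no off-diagonal matrices; the identity $\kij_i = v^{\delta_{i,r}}$ follows from $\bk_i = \sum_\lambda v^{\lambda_{i+1}-\lambda_i} 1_\lambda$ applied to the unique weight with $\lambda_0=\lambda_{r+1}=1$ and other entries zero; and $\tij_0=1$ can be verified either by a direct computation of $\bj_0 \e_0 \f_0 \bj_0$ together with $\bj_0(\bk_0-\bk_0^{-1})/(v-v^{-1})\bj_0$ at $d'=0$, or more slickly by comparing both sides of the formula for $\Dij(\tij_0)$ in Proposition~\ref{Dc-ij} at $d'=d''=0$.

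Given these evaluations, the explicit formulas for $\iij_{\nn,d}$ on generators follow by direct substitution into the formulas of Proposition~\ref{Dc-ij} with $d'=0$, letting the scalar factor in the first tensor slot be absorbed into ${\mbf S}_{\nn,d}$. The fact that $\iij_{\nn,d}$ is an algebra homomorphism is inherited from $\Dij$ being one (Proposition~\ref{Dc-ij}), and the restriction to Lusztig subalgebras sends generators to generators.

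The main technical step is injectivity, and here the plan is to follow the pattern used for Propositions~\ref{j-inj} and \ref{iji}. I would set up a parallel $\ij$-version of the constructions in Section~\ref{sec:coideal2-CB}: introduce the deletion operation $\mrm{dlt}_0$ removing rows/columns indexed by multiples of $n$, declare $A\in \MX^{\ij}_{\nn,d}$ to be $\ij$-aperiodic when $\mrm{dlt}_0(A)$ is aperiodic, and construct $\ij$-aperiodic monomials $y_A$ in $\bU^{\ij}_{\nn,d}$ satisfying $y_A = [A] + \text{lower terms}$ via multiplication formulas analogous to Lemma~\ref{ji-estimate}. The images $\iij_{\nn,d}(y_A)$ in ${\mbf S}_{\nn,d}$ can then be shown to expand as a standard basis element plus a $\mbb{Z}[v,v^{-1}]$-combination of aperiodic lower terms, via an affine type $A$ aperiodicity argument paralleling Proposition~\ref{monomial-aperiodic}; this yields their linear independence and hence injectivity.

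The main obstacle is setting up the $\ij$-adapted multiplication estimates for the generator $\tij_0$ at the loop vertex, whose behavior differs from that of the ordinary Chevalley generators. However, this analysis is structurally identical to the $\ji$-case after exchanging the roles of vertices $0$ and $r$ of the affine Dynkin diagram (compare Figures~\ref{figure:ji} and \ref{figure:ij}), so the constructions of Section~\ref{sec:coideal2-CB} transport directly with the obvious relabelling.
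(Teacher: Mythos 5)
Your verification of the evaluations in ${\mbf S}^{\ij}_{\nn,0}$ and your derivation of the formulas by setting $d'=0$ in Proposition~\ref{Dc-ij} are correct and coincide with what the paper does (the paper states the result precisely as the degenerate version of Proposition~\ref{Dc-ij}, with the values $\eij_i=\fij_i=0$, $\tij_0=1$, $\kij_i=v^{\delta_{i,r}}$ recorded in the preceding sentence). The problem is with your injectivity argument. The $\ij$-aperiodic monomials $y_A$ you propose to build exist only for $\ij$-aperiodic $A\in\MX^{\ij}_{\nn,d}$, and their span is the Lusztig algebra $\bU^{\ij}_{\nn,d}$, which in affine type is a \emph{proper} subalgebra of ${\mbf S}^{\ij}_{\nn,d}$. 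Linear independence of the images $\iij_{\nn,d}(y_A)$ therefore only shows that $\iij_{\nn,d}$ is injective on $\bU^{\ij}_{\nn,d}$; it says nothing about elements of ${\mbf S}^{\ij}_{\nn,d}$ outside that subalgebra, whereas the proposition asserts an imbedding of the whole Schur algebra. As written, the proof of the main clause is incomplete (and the machinery you invoke is in any case the analogue of Proposition~\ref{monomial-aperiodic}/Proposition~\ref{prop:ij-aperiodic}, which is not needed at this point).

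The intended argument is much shorter and parallels \eqref{iji-2} in the $\ji$-case: by definition $\Dij$ is the restriction of $\Delta^{\fc}$ to the subalgebra ${\mbf S}^{\ij}_{\nn,d}=\bj_0\,\Sj\,\bj_0$, so $\iij_{\nn,d}=\Dij|_{d'=0}$ is just the restriction of $\jmath_{n,d}=\Delta^{\fc}|_{d'=0}$ to $\bj_0\,\Sj\,\bj_0$. Since $\jmath_{n,d}(\bj_0)=\bJ_0$ (this is the $d'=0$ specialization of $\Dij(\bj_0)=\bj_0'\otimes\bJ_0''$ used in the proof of Proposition~\ref{Dc-ij}), the image lands in $\bJ_0{\mbf S}_{n,d}\bJ_0\equiv{\mbf S}_{\nn,d}$, and injectivity is immediate from Proposition~\ref{j-inj}, exactly as the paper argues for $\iji_{\nn,d}$ in Proposition~\ref{iji}. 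With that replacement, the rest of your write-up (homomorphism property inherited from $\Dij$, restriction to Lusztig algebras sending generators to generators) is fine.
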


Following Definition~\ref{def:ji-mon},  a notation of a $\ijw$-aperiodic matrix in $\MX^{\ij}_{\nn,d}$ is self-explanatory.
The following is a counterpart of Proposition~\ref{prop:ji-aperiodic}, whose proof will be skipped.

\begin{prop}
 \label{prop:ij-aperiodic}
The algebra ${\mbf U}^{\ij}_{\nn,d}$ has a canonical basis $\{ \{A\} \big \vert \, A \in \MX^{\ij}_{\nn,d} \ \ij\text{-aperiodic} \}$.
\end{prop}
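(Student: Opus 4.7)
The plan is to proceed in close parallel with the proof of Proposition~\ref{prop:ji-aperiodic} (which itself mirrors Theorem~\ref{CB-Udn}), transposing the construction from the fixed point at $r+1$ to the fixed point at $0$. The three ingredients I need are: (i) an injective algebra homomorphism $\iij_{\nn,d}\colon {\mbf U}^{\ij}_{\nn,d}\hookrightarrow {\mbf U}_{\nn,d}$, (ii) the existence of an $\ij$-aperiodic monomial $y_A$ with leading term $[A]$ for each $\ij$-aperiodic $A$, and (iii) an $\ij$-analogue of the aperiodicity preservation statement in Proposition~\ref{monomial-aperiodic}. Ingredient (i) is already in hand from the map $\iij_{\nn,d}$ constructed just above the proposition.

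For (ii), I would first set up a bijection $\mrm{dlt}_{0}\colon \MX^{\ij}_{\nn,d}\to \check{\MX}^{\ij}_{\nn,d}$ deleting the rows and columns indexed by $k\equiv 0\pmod n$, so that $\ij$-aperiodicity of $A$ translates to ordinary aperiodicity of $\mrm{dlt}_0(A)$. The $\ij$-aperiodic monomial will be built out of the Chevalley-type generators $\eij_i,\fij_i$ for $i\in[1,r]$ and the new distinguished generator $\tij_0$. The essential step is a $\ij$-version of Lemma~\ref{ji-estimate}: I need to multiply $\tij_0$ (not a genuine Chevalley generator, but the restriction of $\bj_0(\e_0\f_0 + (\bk_0^{-1}-\bk_0)/(v-v^{-1}))\bj_0$) against a standard basis element and show that, under the appropriate hypotheses near column $k$ (the analogues of the last-case hypotheses in Lemma~\ref{ji-estimate}(1)--(2) with $h=0$), the product equals $[A+\text{the required shift}]+\text{lower terms}$. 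Once this is available, the inductive procedure of Proposition~\ref{Astandard-basis} produces $y_A$ for each $\ij$-aperiodic $A\in \check{\MX}^{\ij}_{\nn,d}$, after which we transport back through $\mrm{dlt}_0$.

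For (iii), the argument in Proposition~\ref{monomial-aperiodic} is essentially formal: it uses the embedding into the affine type $A$ Lusztig algebra, the positivity of canonical bases, and Lusztig's aperiodicity theorem for quantum affine $\mathfrak{sl}_\nn$ (\cite[Proposition~6.5]{Lu99}). Exactly the same argument goes through with $\iij_{\nn,d}$ replacing $\jmath_{n,d}$ and with $\MX^{\ij}_{\nn,d}$ replacing $\MX^{\ji}_{\nn,d}$, showing that if an $\ij$-aperiodic monomial is expanded as $\sum c_A\{A\}_d$ in the canonical basis of $\Sj$, then each nonzero term has $A$ $\ij$-aperiodic.

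With (i), (ii), (iii) in place, the final step reproduces Theorem~\ref{CB-Udn} verbatim: since $y_A = [A]+\text{lower terms}$ lies in ${\mbf U}^{\ij}_{\nn,d}$ and the lower terms can be restricted to $\ij$-aperiodic indices by (iii), a descending induction on the partial order shows $\{A\}_d\in {\mbf U}^{\ij}_{\nn,d}$ for every $\ij$-aperiodic $A$, and the set $\{\{A\}_d\}$ is simultaneously linearly independent and a spanning set (again by (iii)). The main technical obstacle I foresee is the counterpart of Lemma~\ref{ji-estimate} involving $\tij_0$: because $\tij_0$ is a sum of terms rather than a single divided power, the multiplication formula must be verified carefully, using the vanishing identities $\bj_0\e_0\bj_0=\bj_0\f_0\bj_0=\bj_0\f_0\e_0\bj_0=0$ together with Proposition~\ref{prop-stand-mult}, in a manner entirely analogous to how $\tji_r$ was handled at the end of the proof of Proposition~\ref{Dc-ji}. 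Everything else is a routine transposition of symbols $0\leftrightarrow r+1$ from the $\ji$-setting.
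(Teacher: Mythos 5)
Your proposal is correct and follows essentially the route the paper intends: the paper skips this proof precisely because it is the $0\leftrightarrow r+1$ transposition of the $\ji$-case (Proposition~\ref{prop:ji-aperiodic}), whose three ingredients — the imbedding $\iij_{\nn,d}$, an $\ij$-aperiodic monomial with leading term $[A]$ obtained from the multiplication formulas in $\Sj$ (handling the $\tij_0$-type factor by combining the relevant cases of Lemma~\ref{BKLW3.9-b}, exactly as Lemma~\ref{ji-estimate} did for $\tji_r$), and the $\ij$-analogue of Proposition~\ref{monomial-aperiodic} — are the same ones you assemble before rerunning the argument of Theorem~\ref{CB-Udn}.
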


%%%%%
\section{Quantum symmetric pair $(\bU (\slh_{\nn}), {\mbf U}^{\ij}_{\nn})$ and canonical basis on $\dot{{\mbf U}}^{\ij}_{\nn}$}
  \label{sec:coideal3}

The results in Chapter~\ref{chap:coideal2}, in particular those in Sections~\ref{sec:coideal2-CB}--\ref{sec:coideal2}, 
admit $\ijw$-counterparts with basically identical proofs; we shall outline these below.

Starting with the projective system $\{({\mbf U}^{\ij}_{\nn,d}, \phi^{\ij}_{d, d- \nn} )\}_{d\in \mbb{N}}$, we construct
two distinguished algebras  ${\mbf U}^{\ij}_{\nn}$ and $\dot{{\mbf U}}^{\ij}_{\nn}$ 
out of its associated  projective limit algebra ${\mbf U}^{\ij}_{\nn, \infty}$;
the Chevalley generators of ${\mbf U}^{\ij}_{\nn}$ are denoted again by $\eij_i$, $\fij_i$, $\kij^{\pm 1}_i$, for  $i\in [1, r]$, and $\tij_0$.
The family of imbeddings $\{\iij_{\nn,d}: {\mbf U}^{\ij}_{\nn,d} \to \bU_{\nn,d} \}_{d\ge 1}$ induces  an algebra imbedding  
$\iij_{\nn}: {\mbf U}^{\ij}_{\nn} \to \bU_{\nn}$. 
The family of $\Dij$ (for various $d', d''$) induces an algebra homomorphism 
$
\Dij : {\mbf U}^{\ij}_{\nn} \rightarrow {\mbf U}^{\ij}_{\nn} \otimes \bU_{\nn},
$
whose action on the Chevalley generators can be presented explicitly. Recall the algebra isomorphism $\bU_{\nn} \cong \bU (\slh_{\nn})$.
Summarizing we have established the following.

\begin{thm}
  \label{thm:QSP3}
The pair $(\bU (\slh_{\nn}), {\mbf U}^{\ij}_{\nn})$ forms a quantum symmetric pair of affine type.  
 (see Figure~\ref{figure:ij} for the relevant involution.)
\end{thm}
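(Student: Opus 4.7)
The plan is to mirror the proof of Theorem~\ref{thm:QSP2} (the $\ji$ case), exploiting the fact that the heavy lifting has already been carried out at the finite-rank level in Proposition~\ref{Dc-ij} and the following proposition giving the imbedding $\iij_{\nn,d}: {\mbf U}^{\ij}_{\nn,d} \hookrightarrow \bU_{\nn,d}$. What remains is to pass to the projective limit $d \to \infty$ along $\phi^{\ij}_{d,d-\nn}$ and then identify the resulting coideal structure with Kolb's affine quantum symmetric pair attached to the diagram involution in Figure~\ref{figure:ij}.

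First I would check that the imbedding $\iij_{\nn,d}$ is compatible with the transfer maps, i.e.\ that the square
\[
\begin{CD}
{\mbf U}^{\ij}_{\nn,d} @>\iij_{\nn,d}>> \bU_{\nn,d} \\
@V\phi^{\ij}_{d,d-\nn}VV  @VV\phi_{d,d-\nn}V\\
{\mbf U}^{\ij}_{\nn,d-\nn} @>\iij_{\nn,d-\nn}>> \bU_{\nn,d-\nn}
\end{CD}
\]
commutes; this follows by checking both composites on the Chevalley generators $\eij_i,\fij_i,\kij^{\pm1}_i,\tij_0$ using the explicit formulas already obtained, noting that the ``signed'' homomorphism $\chi_{\nn}$ annihilates the two definitions compatibly. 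By universality of the projective limit $\bU_{\nn,\infty}$, this yields an injective algebra homomorphism $\iij_{\nn}: {\mbf U}^{\ij}_{\nn} \hookrightarrow \bU_{\nn}$. Similarly, I would check that the family $\{\Dij_{d',d''}\}$ of Proposition~\ref{Dc-ij} is compatible with the transfer maps in both slots, producing an algebra homomorphism $\Dij: {\mbf U}^{\ij}_{\nn} \to {\mbf U}^{\ij}_{\nn} \otimes \bU_{\nn}$ whose values on the generators are read off verbatim from Proposition~\ref{Dc-ij}. Coassociativity $(1\otimes\Delta)\Dij = (\Dij\otimes 1)\Dij$ follows from the coassociativity established in Proposition~\ref{coassociative} at the Schur algebra level, and the identity $\Delta \circ \iij_{\nn} = (\iij_{\nn}\otimes 1)\circ \Dij$ is its degenerate $d'=0$ instance. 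Together these show that ${\mbf U}^{\ij}_{\nn}$ is a coideal subalgebra of $\bU_{\nn} \cong \bU(\slh_{\nn})$.

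Next I would identify this coideal subalgebra with the Kolb--Letzter $\imath$quantum group attached to the involution $\theta^{\ij}$ of $\slh_{\nn}$ depicted in Figure~\ref{figure:ij}, which swaps vertices $i \leftrightarrow \nn-i$ for $i \neq 0$ and fixes vertex $0$. At the classical limit $v \to 1$, the formulas
\[
\iij_{\nn}(\eij_i) = \Eij_i + \Fij_{\nn-1-i}\Kij_i,\qquad \iij_{\nn}(\fij_i) = \Eij_{\nn-1-i} + \Kij_{\nn-1-i}\Fij_i,
\]
\[
\iij_{\nn}(\tij_0) = \Eij_0 + v\Kij_0\Fij_0 + \Kij_0
\]
(the latter being the analogue of the fixed-node generator) specialize to elements in the $+1$-eigenspace of $\theta^{\ij}$, giving the expected classical fixed-point subalgebra. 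To match with Kolb's presentation, I would derive a presentation of ${\mbf U}^{\ij}_{\nn}$ analogous to Proposition~\ref{present:Uji}: the usual $q$-Serre relations among $\eij_i,\fij_i$ for $i \in [1,r]$, the relation $\kij_0(\kij_1^2\cdots\kij_{r-1}^2)\kij_r^2 = v^{-1}$ reflecting level zero, and the deformed Serre-type relations coupling $\tij_0$ with $\eij_1,\fij_1$ of the shape
\[
\tij_0^2 \eij_1 + \eij_1 \tij_0^2 = (v+v^{-1})\eij_1 \tij_0 \eij_1 + \eij_1,
\]
and its $\fij_1$ analogue. Once these relations are verified, \cite[Theorem~7.1]{Ko14} guarantees no further relations are needed, and ${\mbf U}^{\ij}_{\nn}$ is identified with the affine $\imath$quantum group $\bU^{\imath\jmath}(\slh_{\nn})$.

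The main technical obstacle will be verifying the deformed Serre relations attached to the fixed node $0$ of the involution. These are not automatic from the relations inside the Lusztig algebra $\bU_{\nn,d}^{\ij}$; they must either be computed directly at finite $d$ via the multiplication formulas of Proposition~\ref{prop-stand-mult} applied to the idempotent $\bj_0$-truncated algebra, or, more efficiently, checked in $\bU_{\nn}$ after applying the injective map $\iij_{\nn}$ and using the explicit expression for $\iij_{\nn}(\tij_0)$. This parallels the analogous computation for $\tji_r$ hidden in the proof of Proposition~\ref{present:Uji}; the required identities will follow from a lengthy but mechanical manipulation using the affine type $A$ relations satisfied by $\Eij_0,\Fij_0,\Kij_0^{\pm 1}$ and the vanishing identity $\bJ_0 \F_0 \E_0 \bJ_0 = 0$ already exploited in the proof of Proposition~\ref{tDj-ij}.
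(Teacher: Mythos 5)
Your proposal is correct and takes essentially the same route as the paper: Theorem~\ref{thm:QSP3} is obtained precisely by feeding the finite-rank results you cite (the imbeddings $\iij_{\nn,d}$ and the coideal maps $\Dij$ of Proposition~\ref{Dc-ij}) through the projective system $\{({\mbf U}^{\ij}_{\nn,d},\phi^{\ij}_{d,d-\nn})\}_{d}$, using compatibility with the transfer maps, coassociativity inherited from Proposition~\ref{coassociative}, and the isomorphism $\bU_{\nn}\cong\bU(\slh_{\nn})$ --- which is exactly how the paper argues, declaring the $\ij$-counterparts of Chapter~\ref{chap:coideal2} to hold with identical proofs. Two small slips occur only in your supplementary presentation sketch (which the paper does not even record for the $\ij$ case, unlike Propositions~\ref{present:Uji} and \ref{present:Uii}): the embedding reads $\iij_{\nn}(\eij_i)=\Eij_i+v^{\delta_{i,r}}\Fij_{n-1-i}\Kij_i$ with $n-1-i=\nn-i$, matching the node pairing $i\leftrightarrow\nn-i$ of Figure~\ref{figure:ij}, and the level-zero relation cannot involve $\kij_0$, since $\kij_0$ is not among the generators of ${\mbf U}^{\ij}_{\nn}$; neither issue affects the main argument.
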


%ij===============================================================
%\begin{figure}[ht!]
%\caption{Dynkin diagram of type $A^{(1)}_{2r}$ with involution of type $\imath\jmath$.}
%   \label{figure:ij}
%\begin{tikzpicture}
%\matrix [column sep={0.6cm}, row sep={0.5 cm,between origins}, nodes={draw = none,  inner sep = 3pt}]
%{
%	&\node(U1) [draw, circle, fill=white, scale=0.6, label = 1] {}; 
%	&\node(U3) {$\cdots$};  
%	&\node(U4)[draw, circle, fill=white, scale=0.6, label =$r-1$] {}; 
%	&\node(U5)[draw, circle, fill=white, scale=0.6, label =$r$] {};
%\\
%	\node(L)[draw, circle, fill=white, scale=0.6, label =0] {}; 
%	&&&&&
%\\
%	&\node(L1) [draw, circle, fill=white, scale=0.6, label =below:$2r$] {};  
%	&\node(L3) {$\cdots$};  
%	&\node(L4)[draw, circle, fill=white, scale=0.6, label =below:$r+2$] {}; 
%	&\node(L5)[draw, circle, fill=white, scale=0.6, label =below:$r+1$] {};
%\\
%};
%\begin{scope}
%\draw (L) -- node  {} (U1);
%\draw (U1) -- node  {} (U3);
%\draw (U3) -- node  {} (U4);
%\draw (U4) -- node  {} (U5);
%\draw (U5) -- node  {} (L5);
%\draw (L) -- node  {} (L1);
%\draw (L1) -- node  {} (L3);
%\draw (L3) -- node  {} (L4);
%\draw (L4) -- node  {} (L5);
%\draw (L) edge [color = blue, loop left, looseness=40, <->, shorten >=4pt, shorten <=4pt] node {} (L);
%\draw (L1) edge [color = blue,<->, bend right, shorten >=4pt, shorten <=4pt] node  {} (U1);
%\draw (L4) edge [color = blue,<->, bend left, shorten >=4pt, shorten <=4pt] node  {} (U4);
%\draw (L5) edge [color = blue,<->, bend left, shorten >=4pt, shorten <=4pt] node  {} (U5);
%\end{scope}
%\end{tikzpicture}
%\end{figure}

Recalling $\widetilde{\MX}_{n}$ from \eqref{eq:Mtilde},
we introduce the following subsets of $\widetilde{\MX}_{n}$:
\begin{align}
  \label{eq:Mtildeij}
  \begin{split}
  \widetilde{\MX}_{\nn}^{\ij}
  &= \big\{A=(a_{ij}) \in \widetilde{\MX}_{n} \big \vert \, a_{0, j} = \delta_{0, j} ,  a_{i, 0} = \delta_{i, 0} \big\},
 \\
\widetilde{\MX}_{\nn}^{\ij,ap}
&= \{A \in \widetilde{\MX}_{\nn}^{\ij} \big \vert A \text{ is $\ij$-aperiodic} \}.
\end{split}
\end{align}
We define an equivalence relation $\approx$ on $\widetilde{\MX}_{\nn}^{\ij,ap}$ 
as in \eqref{eq:approx} and let $\widehat A$ be  the equivalence class of $A$.
A hybrid monomial basis $\{{h}_A\}$ for ${\mbf S}^{\ij}_{\nn,d}$ can be constructed 
(similar to Proposition~\ref{prop:hMB:ji} in $\ji$ type), and it gives rise to a monomial basis 
$\{{h}_{\widehat A} \big \vert  \widehat A \in \widetilde{\MX}_{\nn}^{\ij,ap} /\!\approx \}$ for the algebra $\dot {\mbf U}^{\ij}_{\nn}$.
A bilinear form $\langle \cdot, \cdot \rangle$ on $\dot{{\mbf U}}^{\ij}_{\nn}$ can be defined similarly as in Section~\ref{bilinear-form} 
and shown to be non-degenerate.
We have the following analogue of Theorem~\ref{thm:ji-positive} (and also of Theorems~\ref{thm:iCB-Unc} and \ref{thm:positivity-Unc}).

\begin{thm}
 \label{thm:CBij}
There exists a canonical basis $\dot{\mbf B}^{\ij}_{\nn}  =\{b_{\widehat A} \big \vert  \widehat A \in \widetilde{\MX}_{\nn}^{\ij,ap}/\approx\}$  
for $\dot {\mbf U}^{\ij}_{\nn}$, whose transition matrix with respect to the monomial basis is uni-triangular.
Moreover, the structure constants of the canonical basis $\dot{\mbf B}^{\ij}_{\nn}$ 
are positive integral, i.e., they all lie in ${\mathbb N}[v,v^{-1}]$
with respect to the multiplication and comultiplication, and 
lie in $v^{-1}\mbb N[[v^{-1}]]$ with respect to the bilinear pairing .
\end{thm}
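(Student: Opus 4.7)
The plan is to parallel the proof of Theorem~\ref{thm:ji-positive}, substituting the generator $\tji_r$ (and its associated $(r, r+1)$-band structure) by the $\ij$-analogue $\tij_0$ (with its associated $(0,1)$-band structure). First I would prove a $\ij$-analogue of Lemma~\ref{ji-estimate}: multiplication formulas in ${\mbf S}^{\ij}_{\nn,d}$ for products $[B]*[A]$ where $B$ corresponds to a divided power of $\eij_i$ or $\fij_i$ (for $i \in [1, r]$) or to a power of $\tij_0$, each producing a leading term $[A']$ with coefficient $1$ plus strictly lower terms. The new case, involving $\tij_0$, is the mirror of the $\tji_r$ case of Lemma~\ref{ji-estimate}. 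From this, $\ij$-aperiodic monomials $y_A$ and associated hybrid monomials $h_A$ (obtained by replacing every $\tij_0$-type factor in $y_A$ by the corresponding canonical basis element) in ${\mbf U}^{\ij}_{\nn,d}$ would satisfy $h_A = [A] + \text{(lower terms)}$, be bar-invariant, and obey $\phi^{\ij}_{d, d-\nn}(h_A) = h_{A - 2 I_\nn}$, giving the $\ij$-analogue of Proposition~\ref{prop:hMB:ji}.

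Next, passing to the limit, the hybrid monomials would assemble into a monomial basis $\{h_{\widehat A} \mid \widehat A \in \widetilde{\MX}_{\nn}^{\ij, ap}/\!\approx\}$ of $\dot{\mbf U}^{\ij}_{\nn}$ by the argument of Propositions~\ref{m-c-p} and \ref{m-basis}. The canonical basis elements $b_{\widehat A}$ would be defined via the stabilization of $\{A + 2pI\}_{d + p\nn}$ under $\phi^{\ij}_{d + p\nn, d + (p-1)\nn}$ for $p \gg 0$, exactly as in Proposition~\ref{prop:CBstable}; the relation $b_{\widehat A} = h_{\widehat A} + \text{(lower terms)}$ combined with the existence of the monomial basis then gives the basis property of $\dot{\mbf B}^{\ij}_{\nn}$. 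The three positivity statements follow as in Section~\ref{sec:CB-Unc}: multiplication positivity from the intersection cohomology origin of the canonical basis of ${\mbf S}^{\ij}_{\nn,d} = \bj_0 \Sj \bj_0$ (Proposition~\ref{CB+c}) together with a $\ij$-analogue of Proposition~\ref{monomial-aperiodic}; comultiplication positivity from Proposition~\ref{Dc-ij} and a $\ij$-version of Proposition~\ref{comult+S}; and bilinear pairing positivity from an adjointness identity analogous to Proposition~\ref{prop:adjoint} combined with almost-orthonormality.

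The main obstacle will be the careful treatment of the new generator
\[
\tij_0 = \bj_0 \Big(\e_0 \f_0 + \frac{\bk_0^{-1} - \bk_0}{v - v^{-1}}\Big) \bj_0,
\]
which is a specific $\mbb Q(v)$-linear combination rather than a standard basis element or a divided power of one. Constructing the hybrid monomial requires replacing each $\tij_0$-type factor in $y_A$ by the corresponding canonical basis sum $\sum \{X\}_d$, and showing this preserves bar-invariance and yields $\phi^{\ij}_{d, d-\nn}$-stable elements. The key structural fact needed is that, for such $X$, the difference $\{X\}_d - [X]$ lies in the $\cA$-span of standard basis elements $[X']$ with $X' < X$ of the same form (with smaller power), transporting the corresponding finite-type result of~\cite{LW15} to the affine $\ij$ setting. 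Once this is established, the $\ij$-aperiodicity statement (ruling out periodic canonical basis elements in the expansion of aperiodic hybrid monomials) follows from a positivity-plus-type-$A$-aperiodicity argument in parallel with Proposition~\ref{monomial-aperiodic}, and all remaining stabilization arguments proceed as in Chapter~\ref{chap:coideal2}.
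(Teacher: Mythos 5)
Your proposal follows essentially the same route as the paper: Theorem~\ref{thm:CBij} is established there precisely as the $\ij$-counterpart of Chapter~\ref{chap:coideal2}, i.e., an $\ij$-analogue of Lemma~\ref{ji-estimate} giving aperiodic and hybrid monomials (with the $\tij_0$-factors replaced by canonical basis elements, using the $\{X\}_d-[X]$ lower-term property as in \cite{LW15}), the aperiodicity argument of Proposition~\ref{monomial-aperiodic}, the limit/stabilization steps of Propositions~\ref{m-c-p}, \ref{m-basis} and \ref{prop:CBstable}, and the three positivity statements via the IC origin of the canonical basis, the $\ij$-version of Proposition~\ref{comult+S} (through Proposition~\ref{Dc-ij}), and the adjointness of the bilinear form as in Section~\ref{bilinear-form}. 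Your outline is correct and matches the paper's intended proof.
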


%%%%%
\section{The Schur algebras of type $\ii$}
   \label{ii-version}

Recall $\mm= n-2$, and so $\mm=\nn -1=2r$ for $r\geq 1$.
We set
\begin{equation}
  \label{Miid}
\MX^{\ii}_{\mm,d} = \MX^{\ji}_{\nn,d} \cap \MX^{\ij}_{\nn,d}, \qquad
\bj_{r, 0} =\bj_r \bj_0.
\end{equation}
The idempotent $\bj_{r, 0}$ gives rise to the subalgebra ${\mbf S}^{\ii}_{\mm,d}$:
\begin{equation}
  \label{Sii}
{\mbf S}^{\ii}_{\mm,d} = \bj_{r, 0} \Sj \bj_{r, 0} = {\mbf S}^{\ji}_{\nn,d} \cap {\mbf S}^{\ij}_{\nn,d}.
\end{equation}
Let ${\mbf U}^{\ii}_{\mm,d}$ be the subalgebra of ${\mbf S}^{\ii}_{\mm,d}$ generated by the following Chevalley generators:
\begin{align}
\begin{split}
\eii_i & = \bj_{r, 0}  \eji_i \bj_{r, 0}, \quad
\fii_i  = \bj_{r, 0} \fji_i \bj_{r, 0}, \\
\kii^{\pm 1}_i & = \bj_{r, 0} \kji^{\pm 1}_i \bj_{r, 0}, \quad \forall i\in [1, r-1],\\
\hii^{\pm 1}_a & = \bj_{r, 0} \hji^{\pm 1}_a \bj_{r, 0}, \quad \forall a \in [0, r], \\
\tii_0 & = \bj_{r, 0} \big( \eji_0 \fji_0 + \frac{\kji^{-1}_0 - \kji_0}{v - v^{-1}} \big) \bj_{r, 0} = \bj_{r, 0} \tij_0 \bj_{r, 0}, \\
\tii_r & = \bj_{r, 0} \tji_r \bj_{r, 0}.
\end{split}
\end{align}
Note that $\eii_i = \bj_{r, 0} \e_i \bj_{r, 0} = \bj_{r, 0} \eij_i \bj_{r, 0}$, etc.

We shall also need a type $A$ counterpart of the above construction as follows. 
We set
\begin{align*}
\Theta^{\ii}_{\mm, d} = \Theta^{\ji}_{\nn,d} \cap \Theta^{\ij}_{\nn,d},
\quad 
\bJ_{r, 0} = \bJ_r \bJ_0, \quad
{\mbf S}_{\mm,d} = \bJ_{r, 0} {\mbf S}_{n,d} \bJ_{r, 0} .
%= {\mbf S}_{\nn,d} \cap {\mbf S}_{\nn,d}.
\end{align*}
Let ${\mbf U}_{\mm,d}$ be the subalgebra of ${\mbf S}_{\mm,d}$ generated by the following Chevalley generators:
\begin{align}
\begin{split}
\Eii_i & =
\begin{cases}
\bJ_{r, 0} \Eji_0 \Eji_{-1} \bJ_{r, 0}, & \mbox{if} \ i=0, \\
\bJ_{r, 0} \Eji_i \bJ_{r, 0}, & \mbox{if} \ i\in [1, \eta -1].
\end{cases} \\
\Fii_i & =
\begin{cases}
\bJ_{r, 0} \Fji_{-1} \Fji_{0} \bJ_{r, 0}, & \mbox{if} \ i=0, \\
\bJ_{r, 0} \Fji_i \bJ_{r, 0}, & \mbox{if} \ i\in [1, \eta -1].
\end{cases} \\
\Kii^{\pm 1}_i & =
\begin{cases}
\bJ_{r, 0} \Kji^{\pm 1}_0 \Kji^{\pm 1}_{-1} \bJ_{r, 0}, & \mbox{if} \ i=0, \\
\bJ_{r, 0} \Kji^{\pm 1}_i \bJ_{r, 0}, & \mbox{if} \ i\in [1, \eta -1].
\end{cases} \\
\Hii^{\pm 1}_a & =
\begin{cases}
\bJ_{r, 0} \Hji^{\pm 1}_{-1} \bJ_{r, 0}, & \mbox{if} \ a=0, \\
\bJ_{r, 0} \Hji^{\pm 1}_a \bJ_{r, 0}, & \mbox{if} \ a\in [1, \eta].
\end{cases}
\end{split}
\end{align}
We can make the indices periodic by setting $\Eii_i = \Eii_{i + \mm}$, etc, i.e., $i\in \mbb Z/\eta \mbb Z$.

Let us describe the restriction to the subalgebra ${\mbf U}_{\mm,d}^{\ii}$ (denoted by the same notation) 
of $\widetilde \Delta^{\fc} : {\mbf S}_{n,d}^{\fc} \to  {\mbf S}_{n,d'}^{\fc} \otimes  {\mbf S}_{n,d''}$ from (\ref{tDj}),  for arbitrary $d', d''$ such that $d=d'+d''$.
The proof is similar to that for Proposition ~\ref{tDj-ij} and will be skipped.
\begin{prop}
\label{tDj-ii}
We have an algebra homomorphism $\widetilde \Delta^{\fc} : {\mbf U}_{\mm,d}^{\ii} \to  {\mbf U}_{\mm,d'}^{\ii} \otimes  \bU_{\mm,d''}$.
More precisely, for all $i\in [1, r-1]$, we have
\begin{align*}
\begin{split}
\widetilde \Delta^{\fc} ( \eii_i) & = \eii'_i \otimes \Hii''_{i+1} \Hii''^{-1}_{\nn - 1-i} + \hii'^{-1}_{i+1} \otimes \Eii''_i \Hij''^{-1}_{\nn -1-i}
+ \hii'_{i+1} \otimes \Fii''_{\nn -1-i} \Hii''_{i+1}, \\
\widetilde \Delta^{\fc} ( \fii_i)  & = \fii'_i \otimes \Hii''^{-1}_i \Hii''_{\nn -i} + \hii'_i \otimes \Fii''_i \Hii''_{\nn  -i}  + \hii'^{-1}_i \otimes \Eii''_{\nn -1-i} \Hii''^{-1}_i, \\
\widetilde \Delta^{\fc} ( \kii_i) & = \kii'_i \otimes \Kii''_i \Kii''^{-1}_{\nn -1-i}, \\
\widetilde \Delta^{\fc} ( \tii_0)  & = \tii'_0 \otimes \Kii''_0 + v^2 \kii'_0 \otimes \Hii''_1 \Fii''_0 + v^{-2} \kii'^{-1}_0 \otimes \Hii''^{-1}_0 \Eii''_0, \\
\widetilde \Delta^{\fc} ( \tii_r)  & = \tii'_r \otimes \Kii''_r + v^2 \kii'^{-1}_r \otimes \Hii''_{r+1} \Fii''_r + v^{-2} \kii'_r \otimes \Hii''^{-1}_r \Eii''_r.
\end{split}
\end{align*}
\end{prop}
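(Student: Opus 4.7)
The plan is to follow the template already established in the proofs of Propositions \ref{tDj-ji} and \ref{tDj-ij}, exploiting the fact that the $\ii$-setting is simultaneously a ``$\ji$-restriction at the vertex $r$'' and an ``$\ij$-restriction at the vertex $0$''. The first observation is that the idempotent $\bj_{r,0} = \bj_r \bj_0$ satisfies
\[
\widetilde\Delta^{\fc}(\bj_{r,0}) = \bj'_{r,0}\otimes \bJ''_{r,0},
\]
because $\bj_r, \bj_0$ and $\bJ_r, \bJ_0$ commute and $\widetilde\Delta^{\fc}$ is multiplicative (and known to behave this way on each individual idempotent, as used in the proofs of Propositions \ref{tDj-ji} and \ref{tDj-ij}). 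Granted the formulas, this immediately implies
\[
\widetilde\Delta^{\fc}\bigl({\mbf U}_{\mm,d}^{\ii}\bigr) \subseteq {\mbf U}_{\mm,d'}^{\ii}\otimes \bU_{\mm,d''},
\]
so the essence of the proposition is the explicit identities on generators.

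Next, I would treat the ``easy'' generators $\eii_i, \fii_i, \kii_i^{\pm 1}$ for $i\in [1,r-1]$. Since these are obtained from $\eji_i, \fji_i, \kji_i^{\pm 1}$ by further sandwiching with $\bj_0$ (and symmetrically from the $\ij$-side by $\bj_r$), and the indices $i\in[1,r-1]$ are away from both boundary vertices $0$ and $r$, the formulas of Proposition \ref{tDj-ji} can simply be pre- and post-multiplied by $\bj'_{r,0}\otimes \bJ''_{r,0}$. Using $\bj_0 \hji_a \bj_0 = \hii_a$, $\bj_0 \eji_i \bj_0 = \eii_i$, etc.\ (for $i\in[1,r-1]$), and the parallel identities $\bJ_0 \Hji_a \bJ_0 = \Hii_a$, $\bJ_0 \Eji_i \bJ_0 = \Eii_i$, etc., one reads off the first three formulas with no further work.

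For the formulas on $\tii_0$ and $\tii_r$, I would argue separately but in parallel. By construction $\tii_0 = \bj_{r,0} \tij_0 \bj_{r,0}$ and $\tii_r = \bj_{r,0} \tji_r \bj_{r,0}$, so one applies Proposition \ref{tDj-ij} (respectively Proposition \ref{tDj-ji}) and conjugates both sides by $\bj'_{r,0}\otimes \bJ''_{r,0}$. The only point requiring care is checking that the ``other'' idempotent ($\bj_r$ in the $\tij_0$-formula, and $\bj_0$ in the $\tji_r$-formula) passes cleanly through the right-hand terms. Concretely, in the $\ij$-formula $\widetilde\Delta^{\fc}(\tij_0) = \tij'_0\otimes \Kij''_0 + v^2 \kij'_0\otimes \Hij''_1\Fij''_0 + v^{-2}\kij'^{-1}_0\otimes \Hij''^{-1}_0 \Eij''_0$, the $\ji$-idempotents $\bj_r, \bJ_r$ commute with every factor (the type-$A$ generators on the right are indexed far from the vertex $r$ once we are in the ``ungluing'' $\nn\to\mm$ framework), yielding the stated formula for $\tii_0$ with all decorations upgraded to $\ii$. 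An entirely symmetric argument, invoking Proposition \ref{tDj-ji} and commuting $\bj_0,\bJ_0$ past the right-hand factors, produces the $\tii_r$ formula.

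The main obstacle, though modest, is precisely this last commutation check: one must verify that the type-$A$ Chevalley generators appearing on the right-hand side of the $\tij_0$- and $\tji_r$-formulas survive under further multiplication by $\bJ_0$ respectively $\bJ_r$, and produce the correct ``doubled-boundary'' generators $\Eii_0, \Fii_0, \Eii_r, \Fii_r, \Hii_0, \Hii_1, \Hii_r, \Hii_{r+1}$ after this restriction. This is a direct comparison of definitions — for example $\bJ_r \Kij_0 \bJ_r = \bJ_r \bK_0 \bK_{-1} \bJ_r = \Kii_0$ because the factors do not involve the index $r$ — and completes the proof.
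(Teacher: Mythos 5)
Your proposal is correct, but it takes a somewhat different route from the paper. The paper's (skipped) proof is indicated to be ``similar to that for Proposition~\ref{tDj-ij}'', i.e.\ a direct computation: expand $\widetilde\Delta^{\fc}$ on $\bj_{r,0}\,\e_0\f_0\,\bj_{r,0}$ and $\bj_{r,0}\,\f_r\e_r\,\bj_{r,0}$ via Proposition~\ref{tDj-form}, then simplify using the vanishing relations $\bj_{r,0}\e_0\bj_{r,0}=0$, $\bJ_{r,0}\F_0\E_0\bJ_{r,0}=0$, etc., and the $\frac{\bK_i-\bK_i^{-1}}{v-v^{-1}}$ cancellations, exactly as in the $\ij$- and $\ji$-cases. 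You instead bootstrap from the already-established Propositions~\ref{tDj-ji} and \ref{tDj-ij}: since $\tii_0=\bj_{r,0}\tij_0\bj_{r,0}$ and $\tii_r=\bj_{r,0}\tji_r\bj_{r,0}$, you sandwich those formulas with the complementary idempotent ($\bj'_r\otimes\bJ''_r$, resp.\ $\bj'_0\otimes\bJ''_0$), using $\widetilde\Delta^{\fc}(\bj_{r,0})=\bj'_{r,0}\otimes\bJ''_{r,0}$, and reduce everything to the commutation of the extra idempotent past generators indexed away from its vertex and to the identifications of the doubled-boundary generators (e.g.\ $\bJ_r\Kij_0\bJ_r=\bJ_{r,0}\bK_0\bK_{-1}\bJ_{r,0}=\Kii_0$, $\bJ_0\Hji_{r+1}\Fji_r\bJ_0=\Hii_{r+1}\Fii_r$); these checks all go through for $r\ge 1$, including the removal of middle idempotents in composites such as $\Eji_0\Eji_{-1}$, because the raw type-$A$ factors involve only indices near $0$ (resp.\ near $r+1$) and hence commute with $\bJ_r$ (resp.\ $\bJ_0$). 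The trade-off: the paper's direct computation is self-contained and uniform with the other three cases, whereas your argument is shorter, avoids repeating the lengthy simplification, and exhibits the $\ii$-setting literally as the common corner of the $\ji$- and $\ij$-settings, at the price of the routine but necessary commutation/weight verifications you flag (and which you should state for all the right-hand factors, not just $\Kij_0$). One minor slip in notation: $\bJ_r\Kij_0\bJ_r$ equals $\bJ_{r,0}\bK_0\bK_{-1}\bJ_{r,0}$ rather than $\bJ_r\bK_0\bK_{-1}\bJ_r$, though the conclusion $=\Kii_0$ is unaffected.
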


We define the transfer map
$
\phi^{\ii}_{d, d - \mm} : {\mbf S}^{\ii}_{\mm,d} \longrightarrow {\mbf S}^{\ii}_{\mm, d - \mm}
$
to be the composition of the following homomorphisms
\[
\begin{CD}
\phi^{\ii}_{d, d - \mm} : {\mbf S}^{\ii}_{\mm,d} @>\widetilde \Delta^{\fc} >>  {\mbf S}^{\ii}_{\mm, d - \mm} \otimes {\mbf S}_{\mm, \mm} @>1\otimes \chi_{\mm} >> {\mbf S}^{\ii}_{\mm, d - \mm}.
\end{CD}
\]
Noting that $\chi_\mm (\Eii_i)=0$, $\chi_\mm(\Fii_i)=0$ and $\chi_\mm (\Hii_i)=v$, we have, for all $i\in [1, r-1]$, 
\begin{align}
  \begin{split}
 \phi^{\ii}_{d, d-\mm} ( \eii_i) = \eii'_i, \quad & \phi^{\ii}_{d, d - \mm} (\fii_i) = \fii'_i, \quad  \phi^{\ii}_{d, d - \mm} (\kii^{\pm 1}_i) = \kii'^{\pm 1}_i,
 \\  
 \phi^{\ii}_{d, d-\mm} (\tii_0) = \tii'_0, \quad & \phi^{\ii}_{d, d-\mm} (\tii_r) = \tii'_r.
  \end{split}
\end{align}

We now describe the restriction of $\Delta^{\fc}$   (\ref{Dj}) to the subalgebra ${\mbf S}^{\ii}_{\mm,d}$, which shall be denoted by $\Dii$. 
We shall skip the proof, as it is similar to earlier cases.

\begin{prop}
\label{Dc-ii}
We have a homomorphism $\Dii : {\mbf S}^{\ii}_{\mm,d} \rightarrow {\mbf S}^{\ii}_{\mm,d'} \otimes {\mbf S}_{\mm,d''}$, and by restriction,
 a homomorphism $\Dii : \bU^{\ii}_{\mm,d} \rightarrow \bU^{\ii}_{\mm,d'} \otimes \bU_{\mm,d''}$. 
More precisely, for all $i\in [1, r-1]$, we have
\begin{align}
\begin{split}
\Delta^{\ii} ( \eii_i) & =  \eii'_i \otimes \Kii''_i + 1\otimes \Eii''_i + \kii'_i \otimes \Fii''_{\nn -1-i} \Kii''_i, \\
\Delta^{\ii} (\fii_i) & = \fii'_i \otimes \Kii''_{\nn -1-i} + \kii'^{-1}_i \otimes \Kii''_{\nn -1-i} \Fii''_i + 1\otimes \Eii''_{\nn -1-i}, \\
\Delta^{\ii} (\kii_i) & = \kii'_i \otimes \Kii''_i \Kii''^{-1}_{\nn - 1 -i}, \\
\Delta^{\ii} (\tii_0) & =  \tii'_0 \otimes \Kii''_0 + 1 \otimes v \Kii''_0 \Fii''_0 + 1 \otimes \Eii''_0, \\
\Delta^{\ii} (\tii_r) & =  \tii'_r \otimes \Kii''_r + 1 \otimes v \Kii''_r \Fii''_r + 1 \otimes \Eii''_r.
\end{split}
\end{align}
\end{prop}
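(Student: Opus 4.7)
\textbf{Plan for the proof of Proposition \ref{Dc-ii}.} The strategy is to reduce everything to Propositions~\ref{Dc-ji} and \ref{Dc-ij} via the nested inclusions
\[
{\mbf S}^{\ii}_{\mm,d} \subseteq {\mbf S}^{\ji}_{\nn,d}, \qquad {\mbf S}^{\ii}_{\mm,d} \subseteq {\mbf S}^{\ij}_{\nn,d},
\]
coming from $\bj_{r,0} = \bj_r \bj_0 = \bj_0 \bj_r$. Since each of $\Dji$ and $\Dij$ is, by construction, the restriction of $\Delta^{\fc}$ to a larger subalgebra containing ${\mbf S}^{\ii}_{\mm,d}$, the map $\Dii$ agrees with either restriction, and I will pick whichever gives the cleanest formula for each generator. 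First I would verify $\Delta^{\fc}(\bj_{r,0}) = \bj'_{r,0} \otimes \bJ''_{r,0}$ by multiplicativity, using the partial identities $\Delta^{\fc}(\bj_r) = \bj'_r \otimes \bJ''_r$ and $\Delta^{\fc}(\bj_0) = \bj'_0 \otimes \bJ''_0$ established in the proofs of Propositions~\ref{Dc-ji} and \ref{Dc-ij}. This immediately gives the containment $\Dii({\mbf S}^{\ii}_{\mm,d}) \subseteq {\mbf S}^{\ii}_{\mm,d'} \otimes {\mbf S}_{\mm,d''}$.

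For the Chevalley generators $\eii_i, \fii_i, \kii_i^{\pm 1}$ with $i \in [1, r-1]$, each is of the form $\bj_{r,0} X \bj_{r,0}$ where $X$ is the corresponding generator of ${\mbf S}^{\ji}_{\nn,d}$ (equivalently, of ${\mbf S}^{\ij}_{\nn,d}$). Applying Proposition~\ref{Dc-ji} to the relevant $X$ and sandwiching the result by $\bj'_{r,0} \otimes \bJ''_{r,0}$ will produce the stated formulas directly, provided one knows that $\bJ''_{r,0} \Hji''_a \bJ''_{r,0} = \Hii''_a$ and analogous identities hold for $\Kji'', \Eji'', \Fji''$ with indices in the appropriate range; these are short combinatorial checks at the level of diagonal matrices. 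The two exceptional generators require the complementary starting point: for $\tii_0 = \bj_{r,0}\, \tij_0\, \bj_{r,0}$ I would apply Proposition~\ref{Dc-ij} and sandwich, while for $\tii_r = \bj_{r,0}\, \tji_r\, \bj_{r,0}$ I would apply Proposition~\ref{Dc-ji} and sandwich. In both computations the cross-terms survive because $\bj_0$ commutes with the $\ji$-generators appearing in $\Dji(\tji_r)$, and $\bj_r$ commutes with the $\ij$-generators appearing in $\Dij(\tij_0)$, so the extra idempotent only has the effect of relabeling generators of $\bU_{\nn}$ to generators of $\bU_{\mm}$.

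The main point requiring care will be the bookkeeping between the four families of Chevalley generators (types $\fc$, $\ji$, $\ij$, $\ii$) and the verification that each $\bJ''_0$- or $\bJ''_r$-reduction sends the $\ji$-generators (or $\ij$-generators) labelled by an index $i$ to the $\ii$-generator with the matching index, e.g.\ $\bJ''_{r,0} \Kij''_i \bJ''_{r,0} = \Kii''_i$ for $i \in [1, r-1]$, and analogously at the boundary indices $i=0$ and $i=r$, where the definitions of $\Eii_0, \Fii_0$ and $\Eii_r, \Fii_r$ involve compound products of $\E$'s, $\F$'s, $\bK$'s. Once these compatibilities are spelled out, the coassociativity framework encoded in Proposition~\ref{coassociative} forces $\Dii$ to be an algebra homomorphism, and the explicit formulas on generators follow essentially by inspection from the earlier propositions; no genuinely new multiplication computation in $\Sj$ is needed.
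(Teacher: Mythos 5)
Your proposal is correct, and it is close in spirit to what the paper intends: the paper omits the proof of this proposition entirely ("similar to earlier cases"), the earlier cases being Propositions~\ref{Dc-ji} and \ref{Dc-ij}, whose proofs use $\Delta^{\fc}(\bj)=\bj'\otimes\bJ''$ together with Proposition~\ref{Dc} for the standard generators and then a direct expansion in $\fc$-generators (using vanishing identities such as $\bJ_0\F_0\E_0\bJ_0=0$) for the extra generator $\tji_r$, resp.\ $\tij_0$. Where you genuinely differ is in how you treat $\tii_0$ and $\tii_r$: instead of repeating that expansion, you exploit $\tii_0=\bj_{r,0}\,\tij_0\,\bj_{r,0}$ and $\tii_r=\bj_{r,0}\,\tji_r\,\bj_{r,0}$ and simply sandwich the already-established formulas of Propositions~\ref{Dc-ij} and \ref{Dc-ji} by $\bj'_{r,0}\otimes\bJ''_{r,0}$. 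This works because $\Dii$, $\Dji$, $\Dij$ are all restrictions of the single algebra homomorphism $\Delta^{\fc}$, and it buys a shorter argument: the only verifications left are the reduction identities $\bj'_{r,0}\,x'\,\bj'_{r,0}$, $\bJ''_{r,0}\,X''\,\bJ''_{r,0}$ turning $\ji$/$\ij$-generators into the corresponding $\ii$-generators (most of which hold by definition, e.g.\ $\Eii_i=\bJ_{r,0}\Eji_i\bJ_{r,0}$ for $i\in[1,\mm-1]$), plus the insertion of the idempotent inside products like $\Kji''_r\Fji''_r$, which is justified since the $\bk$/$\bK$-type elements are diagonal and hence commute with $\bJ_{r,0}$, and the intermediate weights are untouched at the deleted node. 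Two small imprecisions worth fixing in a write-up: the cross-terms survive not because $\bj_0$ literally commutes with the $\ji$-generators (it does not), but for the reasons just stated; and the homomorphism property of $\Dii$ does not need Proposition~\ref{coassociative} at all --- it is immediate from $\Dii$ being the restriction of the algebra homomorphism $\Delta^{\fc}$ once the containment $\Dii({\mbf S}^{\ii}_{\mm,d})\subseteq{\mbf S}^{\ii}_{\mm,d'}\otimes{\mbf S}_{\mm,d''}$ is established via $\Delta^{\fc}(\bj_{r,0})=\bj'_{r,0}\otimes\bJ''_{r,0}$, exactly as you set up at the start.
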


A degenerate version of Proposition~\ref{Dc-ii} gives us the following description for the homomorphism
$ \iii_{\mm,d} = \Dii|_{d'=0}: {\mbf S}^{\ii}_{\mm,d} \longrightarrow {\mbf S}_{\mm,d}$.

\begin{prop}
We have imbeddings of algebras 
$$ 
\iii_{\mm,d} : {\mbf S}^{\ii}_{\mm,d} \longrightarrow {\mbf S}_{\mm,d},
\qquad
\iii_{\mm,d} : \bU^{\ii}_{\mm,d} \longrightarrow \bU_{\mm,d}.
$$
Moreover, for all $i\in [1, r-1]$, we have
\begin{align*}
%\begin{split}
\iii_{\mm,d} (\eii_i) & = \Eii_i +  \Fii_{\nn -1-i} \Kii_i = \Eii_i +  \Fii_{-i} \Kii_i , \\
\iii_{\mm,d} (\fii_i) & = \Eii_{\nn - 1-i} +  \Kii_{\nn -1-i} \Fii_i = \Eii_{-i} +  \Kii_{-i} \Fii_i, \\
\iii_{\mm,d} (\kii_i) &  =  \Kii_i \Kii^{-1}_{\nn -1 -i} =  \Kii_i \Kii^{-1}_{ -i}, \\
\iii_{\mm,d} (\tii_0) & = \Eii_0 + v \Kii_0 \Fii_0 + \Kii_0, \\
\iii_{\mm,d} (\tii_r ) & = \Eii_r + v \Kii_r \Fii_r + \Kii_r.
%\end{split}
\end{align*}
\end{prop}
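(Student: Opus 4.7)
The plan is to deduce this proposition as the degenerate case $d'=0$ of Proposition~\ref{Dc-ii}, exactly parallel to how Proposition~\ref{iji} was obtained from Proposition~\ref{Dc-ji} (and Proposition~\ref{j-inj} was obtained from $\Delta^{\fc}$). First I would determine the values of the generators $\eii_i, \fii_i, \kii_i, \tii_0, \tii_r$ of ${\mbf S}^{\ii}_{\mm,0}$, using $n$-periodic symmetric tuples $\lambda \in \Lambda^{\fc}_{n,0}$ with $\lambda_0 = \lambda_{r+1}=1$ and $\lambda_i=0$ for $i \ne 0, r+1 \pmod n$ (reflecting the idempotent $\bj_{r,0}$). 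This forces
\[
\eii_i = \fii_i = 0 \quad (i\in[1,r-1]), \qquad \kii_i = 1 \quad (i\in[1,r-1]), \qquad \tii_0 = \tii_r = 1
\]
in ${\mbf S}^{\ii}_{\mm,0}$, since $\bj_{r,0} \e_i \bj_{r,0}$ vanishes for $i\in[1,r-1]$ (the supporting orbits are empty when all non-$0,r+1$ diagonal entries are zero), while the ``$\mbf t$''-generators reduce to the idempotent contribution.

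Next I would substitute these values into the four comultiplication formulas of Proposition~\ref{Dc-ii}. For example, on $\eii_i$ with $i\in[1,r-1]$, substitution of $\eii'_i=0$, $\kii'_i=1$ into
\[
\Delta^{\ii}(\eii_i) = \eii'_i \otimes \Kii''_i + 1\otimes \Eii''_i + \kii'_i \otimes \Fii''_{\nn-1-i}\Kii''_i
\]
yields $\iii_{\mm,d}(\eii_i) = \Eii_i + \Fii_{-i}\Kii_i$, using the affine-type periodicity $\Fii_{\nn-1-i}=\Fii_{-i}$. The formulas for $\fii_i$ and $\kii_i$ follow by the same substitution. For $\tii_0$ one substitutes $\tii'_0=1$ into $\Delta^{\ii}(\tii_0)$ to obtain $\iii_{\mm,d}(\tii_0) = \Kii_0 + \Eii_0 + v\Kii_0\Fii_0$, and analogously for $\tii_r$. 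That $\iii_{\mm,d}$ lands inside the Lusztig subalgebra $\bU_{\mm,d}\subseteq {\mbf S}_{\mm,d}$ is then a direct inspection of these formulas, since each right-hand side is an element of $\bU_{\mm,d}$.

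The remaining task is injectivity. Here I would imitate the argument for $\jmath_{n,d}$ (Proposition~\ref{j-inj}) and $\iji_{\nn,d}$ (Proposition~\ref{iji}), which in turn followed the finite-type analogue in~\cite{FL15}. The strategy is: the image $\iii_{\mm,d}(\{A\}_d)$ of a canonical basis element can be expanded, using the monomial basis construction of Section~\ref{sec:coideal2-CB} (carried over verbatim to the $\ii$ setting), as a sum of canonical basis elements of ${\mbf S}_{\mm,d}$ with leading term indexed by an element of $\Xi_{\mm,d}(A)$ (the type-$\ii$ analogue of~(\ref{XiA})). Since distinct canonical basis elements of ${\mbf S}^{\ii}_{\mm,d}$ produce distinct leading terms under $\iii_{\mm,d}$, the map must be injective.

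The main obstacle is the combinatorial verification that leading terms in these canonical-basis expansions remain distinct after passing to ${\mbf S}_{\mm,d}$, i.e., that the ``type-$\ii$ splitting map'' $A\mapsto \Xi_{\mm,d}(A)$ is injective on leading terms. In the $\C$- and $\ji$-cases this was reduced to matching the upper-triangular parts of matrices in $\Theta$ with the (centrosymmetric) matrices in $\MX$; in the $\ii$-case one must additionally account for the twofold vanishing of the $0$th and $(r+1)$st rows/columns, but this makes the combinatorics strictly easier, not harder, so no new obstruction arises. All other steps are routine substitution.
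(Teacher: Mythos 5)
Your proposal is correct and matches the paper's own route: the paper obtains this proposition precisely as the degenerate case $\iii_{\mm,d}=\Dii|_{d'=0}$ of Proposition~\ref{Dc-ii}, after noting that in ${\mbf S}^{\ii}_{\mm,0}$ one has $\eii_i=\fii_i=0$, $\kii_i=1$ and $\tii_0=\tii_r=1$, with injectivity handled by the same monomial/canonical-basis leading-term argument used for $\jmath_{n,d}$, $\iji_{\nn,d}$ and $\iij_{\nn,d}$. Your substitutions and the use of the periodicity $\Eii_{\nn-1-i}=\Eii_{-i}$ are exactly what the paper intends, so no further comment is needed.
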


%%%%%
\section{Realization of a new coideal subalgebra ${\mbf U}^{\ii}_{\mm}$}

We first formulate quickly results on monomial and canonical bases for ${\mbf U}^{\ii}_{\mm,d}$ analogous to Lusztig algebras of types $\jj, \ji, \ij$ treated earlier.
Recall $\MX^{\ii}_{\mm,d}$ from \eqref{Miid}.
Following Definition~\ref{def:ji-mon},  a notation of a $\iiw$-aperiodic matrix $A$ in $\MX^{\ii}_{\mm,d}$ is self-explanatory.
Similar to Proposition~\ref{prop:ji-aperiodic} (also see Proposition~\ref{prop:ij-aperiodic}) we can establish the canonical basis for ${\mbf U}^{\ii}_{\mm,d}$.
This is again based on the existence of a monomial basis $\{ y_A\}$
for ${\mbf U}^{\ii}_{\mm,d}$, which can be established in a way similar to Proposition~\ref{prop:MB:ji}. 
A hybrid monomial basis $\{ h_A\}$ for ${\mbf U}^{\ii}_{\mm,d}$ can also be established in a way
similar to  Proposition~ \ref{prop:hMB:ji}. We summarize these as follows.

\begin{prop}
The algebra ${\mbf U}^{\ii}_{\mm,d}$ admits a monomial basis $\{ y_A \big \vert A \in \MX^{\ii}_{\mm,d} \; \iiw\text{-aperiodic} \}$
as well as a hybrid monomial basis $\{ h_A \big \vert A \in \MX^{\ii}_{\mm,d} \; \iiw\text{-aperiodic} \}$.
Also, the set $\{ \{A\} \big \vert A \in \MX^{\ii}_{\mm,d} \; \iiw\text{-aperiodic} \}$ forms a canonical basis
for ${\mbf U}^{\ii}_{\mm,d}$.
\end{prop}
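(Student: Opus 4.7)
The plan is to imitate, in the $\iiw$-setting, the three-step strategy already carried out in Chapters~\ref{chap:schur}--\ref{chap:coideal34} for the types $\jjw$, $\jiw$, $\ijw$. The essential new ingredient is that we must now accommodate \emph{both} distinguished ends simultaneously: the fixed vertex $0$ (treated in the $\ijw$-setting via the generator $\tij_0$) and the fixed vertex $r$ (treated in the $\jiw$-setting via the generator $\tji_r$). Correspondingly our Chevalley generators include both $\tii_0$ and $\tii_r$.

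First, I would establish multiplication-by-generator formulas producing a leading standard basis element with coefficient~$1$, generalizing Lemma~\ref{ji-estimate}. For the interior generators $\eii_i, \fii_i$ with $i \in [1,r-1]$, these are exactly the formulas of Lemma~\ref{BKLW3.9-b} restricted to $\MX^{\ii}_{\mm,d}$ (since they do not affect the vanishing of the $0$th and $(r{+}1)$st rows/columns, which is preserved by $\bj_{r,0}\cdot(-)\cdot\bj_{r,0}$). For multiplication by (divided powers of) $\tii_0$ and $\tii_r$, one repeats the argument used at the endpoint vertex in Lemma~\ref{ji-estimate}(1), case $h=r$, and its $\ij$-counterpart: the only extra work is to check, via the commuting idempotent $\bj_{r,0}=\bj_r\bj_0$ and the vanishing identities $\bj_{r,0}\e_0\bj_{r,0}=\bj_{r,0}\f_0\bj_{r,0}=\bj_{r,0}\e_r\bj_{r,0}=\bj_{r,0}\f_r\bj_{r,0}=0$, that the leading term calculation carried out in the $\jiw$ and $\ijw$ cases passes unchanged to $\iiw$.

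Second, with these multiplication formulas in hand, I would construct the monomial basis $y_A$ by induction on the invariant $\Psi(A)=\sum_{i\in[1,n]}|j-i|a_{ij}$ used in Proposition~\ref{Astandard-basis}, following the recipe of Propositions~\ref{monomial} and \ref{prop:MB:ji}. For each $\iiw$-aperiodic $A \in \MX^{\ii}_{\mm,d}$ one selects the outermost nonzero off-diagonal entry and multiplies the standard basis element of a reduced matrix $f_{k;s,t}(A)$ by the appropriate Chevalley generator; the choice among $\eii_i,\fii_i,\tii_0,\tii_r$ depends on which boundary row is involved, but the induction step is formally identical. To pass to the hybrid monomial basis $h_A$, I replace each factor of the form $[X(D,R)]$ with $X(D,R)-RE^{r,r+1}_{\theta,\nn}$ (respectively $X(D,R)-RE^{0,-1}_{\theta,\nn}$) diagonal by the corresponding canonical basis element $\{X(D,R)\}_d$, as in Proposition~\ref{prop:hMB:ji}; the bar-invariance and compatibility with $\phi^{\ii}_{d,d-\mm}$ then follow from the two single-endpoint cases applied at vertices $0$ and $r$ independently.

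Third, to upgrade to the canonical basis statement I would invoke the imbedding $\iii_{\mm,d}\colon\mathbf{U}^{\ii}_{\mm,d}\hookrightarrow\mathbf{U}_{\mm,d}$ of Section~\ref{ii-version} together with an $\iiw$-analogue of the aperiodicity Proposition~\ref{monomial-aperiodic}: if $\texttt{M}$ is an $\iiw$-aperiodic monomial and $\texttt{M}=\sum c_A\{A\}_d$ in ${\mbf S}^{\ii}_{\mm,d}$, then $c_A\neq 0$ forces $A$ to be $\iiw$-aperiodic. This is proved exactly as in Proposition~\ref{monomial-aperiodic}, transporting the positivity of $c_A$ and the positivity of the Kazhdan--Lusztig coefficients through $\iii_{\mm,d}$ and using Lusztig's affine type~$A$ aperiodicity theorem in $\mathbf{U}_{\mm,d}$. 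Given this, the triangularity $y_A=\{A\}_d+\text{lower terms}$ (with all lower terms still $\iiw$-aperiodic) implies by downward induction along the partial order $\leq$ that each $\{A\}_d$ with $A$ $\iiw$-aperiodic lies in $\mathbf{U}^{\ii}_{\mm,d}$, and linear independence is inherited from the canonical basis of $\Sj$; hence the three sets form bases as claimed, and the transition matrices among them are uni-triangular.

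The main obstacle I anticipate is the bookkeeping at the double endpoints: the formulas arising from iterating $\tii_0$ and $\tii_r$ (especially their divided-power analogues) must be verified to have the clean leading term $[A+\sum R_i(E^{h,k+i}_{\theta,\nn}-E^{h+1,k+i}_{\theta,\nn})]$ with coefficient~$1$ in both corners of the periodic matrix simultaneously. This amounts to checking that the two endpoint contributions do not interfere, which is guaranteed by the centrosymmetry and periodicity of the matrices in $\MX^{\ii}_{\mm,d}$, but requires care in tracking the $v$-powers $\beta_t,\beta_t',\gamma_t,\gamma_t'$ coming from Proposition~\ref{prop-stand-mult}.
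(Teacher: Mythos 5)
Your plan is correct and follows essentially the same route as the paper: the paper's own (terse) argument is precisely to rerun the $\ji$/$\ij$ machinery — the leading-term multiplication formulas of Lemma~\ref{ji-estimate}/Lemma~\ref{BKLW3.9-b}, the $\Psi$-induction of Propositions~\ref{Astandard-basis}, \ref{monomial}, \ref{prop:MB:ji} to get $y_A$ and then $h_A$ as in Proposition~\ref{prop:hMB:ji}, and the aperiodicity-via-positivity argument of Proposition~\ref{monomial-aperiodic} transported through the imbedding $\iii_{\mm,d}$ to conclude as in Theorem~\ref{CB-Udn} and Proposition~\ref{prop:ji-aperiodic} — now applied at both distinguished vertices $0$ and $r$. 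The only quibble is a harmless indexing slip in your hybrid-monomial step: the $\tii_0$-factors correspond to matrices with $X-RE^{0,1}_{\theta,\mm}$ diagonal (and the $\tii_r$-factors to $X-RE^{r,r+1}_{\theta,\mm}$), not $E^{0,-1}_{\theta,\nn}$.
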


\begin{example}
Let $r=1$, hence $\mm=2$. Consider the following matrix $A$ in $\MX^{\ii}_{d}$ after deleting zero and second row and columns.

\[
\begin{tabular}{   c | c | c | c | c| c| c| c| c | c | c } 

 &c-3&  c-2 & c-1  & c0 & c1 & c2 & c3 & c4 & c5 & c6   \\
 \hline 
  &&  && && && &&  \\
\hline 
 r0 &0 &0 & 2 & * & 0 & 3& 4&& &  \\
 \hline
 r1  & & 4 & 3 & 0 & * &  2 & 0 & 0 &&  \\
\hline 
r2 & & & 0& 0 & 2 & * & 0 & 3 & 4&  \\
\hline 
r3 & & && 4 & 3 & 0 &*  & 2 & 0 & 0 \\
\hline 
 && &&&& && &&  \\
\end{tabular}
\]
Then we have 
\[
\tii_0^{\langle 7\rangle} * \tii_1^{\langle 9\rangle} * \tii_0^{\langle 4\rangle} * 1_{\co(A)} = [A] +\mbox{lower terms},
\]
where
\begin{align*}
\tii_0^{\langle R\rangle} = \sum_{X: X - R E^{0, 1}_{\theta, \mm} \ \mbox{diagonal}} [X],\qquad 
\tii_1^{\langle R\rangle} = \sum_{X: X - R E^{2, 1}_{\theta, \mm} \ \mbox{diagonal}} [X].
\end{align*}
This is a typical monomial  appearing in a monomial basis of ${\mbf S}_{2, d}^{\ii}$. 
\end{example}

%%%

Now we shall formulate the $\ii$-counterparts of the results on coideal algebras
arising from families of Lusztig algebras in Sections~\ref{sec:coideal2} and \ref{sec:coideal3}.
Again we skip the proofs as they are analogous to the earlier cases. 

Starting with the projective system $\{({\mbf U}^{\ii}_{\mm,d}, \phi^{\ii}_{d, d- \mm} )\}_{d\in \mbb{N}}$, we construct
two distinguished algebras  ${\mbf U}^{\ii}_{\mm}$ and $\dot{{\mbf U}}^{\ii}_{\mm}$ 
out of its associated  limit algebra ${\mbf U}^{\ii}_{\mm, \infty}$;
the Chevalley generators of ${\mbf U}^{\ii}_{\mm}$ are denoted again by $\tii_0$, $\tii_r$, $\eii_i$, $\fii_i$, $\kii^{\pm 1}_i$, for  $i\in [1, r-1]$.
The family of imbeddings $\{\iii_{\mm,d}: {\mbf U}^{\ii}_{\mm,d} \to \bU_{\mm,d} \}_{d\ge 1}$ induces  an algebra imbedding  
$\iii_{\mm}: {\mbf U}^{\ii}_{\mm} \to \bU_{\mm}$. 
The family of $\Dii$ (for various $d', d''$) induces an algebra homomorphism 
$
\Dii : {\mbf U}^{\ii}_{\mm} \rightarrow {\mbf U}^{\ii}_{\mm} \otimes \bU_{\mm},
$
whose action on the Chevalley generators can be presented explicitly. Recall the algebra isomorphism $\bU_{\mm} \cong \bU (\slh_{\mm})$.
Summarizing we have established the following.

\begin{thm}
  \label{thm:QSP4}
The pair $(\bU (\slh_{\mm}), {\mbf U}^{\ii}_{\mm})$ forms a quantum symmetric pair of affine type.  % in the sense of Kolb \cite{Ko14}. 
(see Figure~\ref{figure:ii} for the relevant involution.)
\end{thm}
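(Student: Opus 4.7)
The plan is to follow the blueprint already established for Theorems~\ref{thm:QSP},~\ref{thm:QSP2}, and~\ref{thm:QSP3}. Three ingredients are needed: (a) an injective algebra homomorphism $\iii_{\mm}:{\mbf U}^{\ii}_{\mm}\hookrightarrow\bU(\slh_{\mm})$; (b) a coassociative coideal-type comultiplication $\Dii:{\mbf U}^{\ii}_{\mm}\to{\mbf U}^{\ii}_{\mm}\otimes\bU(\slh_{\mm})$ compatible with the standard $\Delta$ on $\bU(\slh_{\mm})$; and (c) identification of $\iii_{\mm}({\mbf U}^{\ii}_{\mm})$ at $v=1$ with the fixed-point subalgebra $U(\slh_{\mm}^{\theta^{\ii}})$ for the involution $\theta^{\ii}$ depicted in Figure~\ref{figure:ii}, together with an invocation of \cite[Theorem~7.1]{Ko14}.

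First I would promote the imbeddings $\iii_{\mm,d}:{\mbf U}^{\ii}_{\mm,d}\to\bU_{\mm,d}$ of the preceding proposition to the projective limit. The compatibility $\phi_{d,d-\mm}\circ\iii_{\mm,d}=\iii_{\mm,d-\mm}\circ\phi^{\ii}_{d,d-\mm}$ is checked directly on the Chevalley generators $\eii_i,\fii_i,\kii^{\pm1}_i,\tii_0,\tii_r$ using Proposition~\ref{tDj-ii} and the signed character $\chi_{\mm}$; the universal property of the limit then supplies the desired injective map $\iii_{\mm}$. Passing from the family $\Dii_{d',d''}$ of Proposition~\ref{Dc-ii} to the limit proceeds via the commutative square
\[
\begin{CD}
{\mbf U}^{\ii}_{\mm,d'+d''} @>\Dii>> {\mbf U}^{\ii}_{\mm,d'}\otimes\bU_{\mm,d''}\\
@V\phi^{\ii}_{d'+d'',\,\ast}VV @VV\phi^{\ii}_{d',\,\ast}\otimes\phi_{d'',\,\ast}V\\
{\mbf U}^{\ii}_{\mm,d'+d''-(a+b)\mm} @>\Dii>> {\mbf U}^{\ii}_{\mm,d'-a\mm}\otimes\bU_{\mm,d''-b\mm}
\end{CD}
\]
whose commutativity is again automatic on generators. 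Coassociativity $(1\otimes\Delta)\Dii=(\Dii\otimes1)\Dii$ then reduces to the corresponding check in Proposition~\ref{coassociative} together with a direct verification on the two extra generators $\tii_0$ and $\tii_r$; each verification is local at a single fixed node and mirrors exactly the $\tji_r$ check used for Theorem~\ref{thm:QSP2} (respectively the $\tij_0$ check for Theorem~\ref{thm:QSP3}). The degenerate $d'=0$ case of the same diagram yields $\Delta\circ\iii_{\mm}=(\iii_{\mm}\otimes1)\circ\Dii$, exhibiting ${\mbf U}^{\ii}_{\mm}$ as a genuine coideal subalgebra of $\bU_{\mm}\cong\bU(\slh_{\mm})$.

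Finally, to identify the pair with a Letzter--Kolb quantum symmetric pair I would specialize at $v=1$: the images $\iii_{\mm}(\eii_i)=\Eii_i+\Fii_{-i}\Kii_i$ and $\iii_{\mm}(\fii_i)=\Eii_{-i}+\Kii_{-i}\Fii_i$ for $i\in[1,r-1]$ are manifestly fixed by the diagram involution $i\leftrightarrow-i$, while $\iii_{\mm}(\tii_0)$ and $\iii_{\mm}(\tii_r)$ lie in the rank-one fixed subalgebras attached to the two $\theta^{\ii}$-fixed nodes $0$ and $r$. Deriving the Serre-type relations among the generators (in analogy with Proposition~\ref{present:Uji}, essentially one copy of those relations at each of the two fixed nodes, together with commutation between the two $\tii$'s which sit at opposite ends of the diagram) and invoking \cite[Theorem~7.1]{Ko14} then forces the matching of ${\mbf U}^{\ii}_{\mm}$ with the coideal subalgebra associated to the Satake diagram of Figure~\ref{figure:ii}, completing the proof. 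The main subtlety, and the only point genuinely new relative to the $\ji$ and $\ij$ cases (which have a single fixed node each), is the simultaneous presence of two fixed nodes: one must confirm that the ``doubled'' Serre relations hold at each of $0$ and $r$ and that $\tii_0$ and $\tii_r$ commute and interact correctly with the remaining Chevalley generators. Since these relations are local at each end of the Dynkin diagram, they reduce to finite computations already carried out in the $\ji$/$\ij$ analyses and a short additional check that there is no interference between the two ends.
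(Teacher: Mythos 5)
Your proposal follows essentially the same route as the paper: promote the imbeddings $\iii_{\mm,d}$ and the comultiplications $\Dii$ of Propositions~\ref{tDj-ii} and~\ref{Dc-ii} through the projective system $\{({\mbf U}^{\ii}_{\mm,d},\phi^{\ii}_{d,d-\mm})\}_d$ to obtain $\iii_{\mm}\colon {\mbf U}^{\ii}_{\mm}\hookrightarrow \bU_{\mm}\cong\bU(\slh_{\mm})$ and the coideal-type map $\Dii\colon {\mbf U}^{\ii}_{\mm}\to{\mbf U}^{\ii}_{\mm}\otimes\bU_{\mm}$, and then identify the generators with the Letzter--Kolb data for the involution of Figure~\ref{figure:ii}; the paper states the theorem as a summary of exactly these constructions, the verifications being parallel to the $\ji$ and $\ij$ cases.

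One caveat about your step (c). The theorem includes $\mm=2$ (i.e.\ $r=1$), where the two fixed nodes $0$ and $r$ are adjacent in the $A^{(1)}_1$ diagram: there $\tii_0$ and $\tii_1$ do \emph{not} commute but satisfy $q$-Onsager-type Serre relations, which the paper verifies by a separate and lengthy computation (the presentation of ${\mbf U}^{\ii}_2$), so your claims that the two $\tii$'s commute and that ``there is no interference between the two ends'' fail in this boundary case. Note also that the presentation step is not logically needed for the QSP statement itself: the $\iii_{\mm}$-images of the generators are precisely Kolb's coideal generators attached to the Satake diagram of Figure~\ref{figure:ii}, so the identification of $\iii_{\mm}({\mbf U}^{\ii}_{\mm})$ with the coideal subalgebra is immediate, and \cite[Theorem 7.1]{Ko14} is invoked afterwards (Proposition~\ref{present:Uii} and the $\mm=2$ case) only to conclude that no further relations are needed; checking the relations and citing that theorem, as you propose, would by itself only give a surjection from the presented algebra onto your subalgebra rather than ``force the matching.''
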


%ii===============================================================
%\begin{figure}[ht!]
%\caption{Dynkin diagram of type $A^{(1)}_{2r-1}$ with involution of type $\imath\imath$.}
%   \label{figure:ii}
%\begin{tikzpicture}
%\matrix [column sep={0.6cm}, row sep={0.5 cm,between origins}, nodes={draw = none,  inner sep = 3pt}]
%{
%	&\node(U1) [draw, circle, fill=white, scale=0.6, label = 1] {}; 
%	&\node(U2) {$\cdots$};  
%	&\node(U3)[draw, circle, fill=white, scale=0.6, label =$r-1$] {}; 
%\\
%	\node(L)[draw, circle, fill=white, scale=0.6, label =0] {}; 
%	&&&&
%	\node(R)[draw, circle, fill=white, scale=0.6, label =$r$] {};
%\\
%	&\node(L1) [draw, circle, fill=white, scale=0.6, label =below:$2r-1$] {};  
%	&\node(L2) {$\cdots$};  
%	&\node(L3)[draw, circle, fill=white, scale=0.6, label =below:$r+1$] {}; 
%\\
%};
%\begin{scope}
%\draw (L) -- node  {} (U1);
%\draw (U1) -- node  {} (U2);
%\draw (U2) -- node  {} (U3);
%\draw (U3) -- node  {} (R);
%\draw (L) -- node  {} (L1);
%\draw (L1) -- node  {} (L2);
%\draw (L2) -- node  {} (L3);
%\draw (L3) -- node  {} (R);
%\draw (L) edge [color = blue, loop left, looseness=40, <->, shorten >=4pt, shorten <=4pt] node {} (L);
%\draw (R) edge [color = blue,loop right, looseness=40, <->, shorten >=4pt, shorten <=4pt] node {} (R);
%\draw (L1) edge [color = blue,<->, bend right, shorten >=4pt, shorten <=4pt] node  {} (U1);
%\draw (L3) edge [color = blue,<->, bend left, shorten >=4pt, shorten <=4pt] node  {} (U3);
%\end{scope}
%\end{tikzpicture}
%\end{figure}

Recalling $\widetilde{\MX}_{\nn}^{\ji}$ from \eqref{eq:Mtildeji} and  $\widetilde{\MX}_{\nn}^{\ij}$ from \eqref{eq:Mtildeij},
we introduce the following subsets of $\widetilde{\MX}_{n}$:
\begin{align}
  \label{eq:Mtildeii}
  \widetilde{\MX}_{\mm}^{\ii}
  =  \widetilde{\MX}_{\nn}^{\ji} \cap \widetilde{\MX}_{\nn}^{\ij},
 \qquad
\widetilde{\MX}_{\mm}^{\ii,ap}
= \{A \in \widetilde{\MX}_{\mm}^{\ii} \big \vert A \text{ is $\ii$-aperiodic} \}.
\end{align}
We have the following $\ii$-analogue of Theorem~\ref{thm:ji-positive} and  Theorems~\ref{thm:CBij}.

\begin{thm}
 \label{thm:CBii}
There exists a canonical basis $\dot{\mbf B}^{\ii}_{\mm}  =\{b_{\widehat A} \big \vert  \widehat A \in \widetilde{\MX}_{\mm}^{\ii,\ap}/\approx\}$  
for $\dot {\mbf U}^{\ii}_{\mm}$, whose transition matrix with respect to the monomial basis is uni-triangular.
Moreover, the structure constants of the canonical basis $\dot{\mbf B}^{\ii}_{\mm}$ 
all lie in ${\mathbb N}[v,v^{-1}]$ with respect to the multiplication and comultiplication, 
and  in $v^{-1} \mbb N[[v^{-1}]]$ with respect to the bilinear pairing .
\end{thm}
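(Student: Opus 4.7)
The plan is to mirror the blueprint already set up for the three prior types $\jj, \ji, \ij$ (see Theorems~\ref{thm:iCB-Unc}, \ref{thm:positivity-Unc}, \ref{thm:ji-positive}, \ref{thm:CBij}) and adapt each step to the presence of \emph{two} fixed points (at $0$ and $r$) on the Dynkin diagram in Figure~\ref{figure:ii}. All the prerequisites are in place: the Lusztig algebra ${\mbf U}^{\ii}_{\mm,d}$ has already been equipped with a monomial basis $\{y_A\}$ and a hybrid monomial basis $\{h_A\}$ indexed by $\ii$-aperiodic matrices in $\MX^{\ii}_{\mm,d}$, and its canonical basis $\{\{A\}_d\}$ is inherited from $\Sj$ via the imbedding ${\mbf S}^{\ii}_{\mm,d}=\bj_{r,0}\Sj\bj_{r,0}$ together with Proposition~\ref{iji} and its $\ij$-counterpart. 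The transfer maps $\phi^{\ii}_{d,d-\mm}$ have been shown to send Chevalley generators to Chevalley generators, and the comultiplication $\Delta^{\ii}$ has been explicitly computed in Proposition~\ref{Dc-ii}.

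First, I would lift the hybrid monomial basis $\{h_A\}$ of ${\mbf U}^{\ii}_{\mm,d}$ to a monomial basis $\{h_{\widehat A}\mid \widehat A\in \widetilde{\MX}^{\ii,ap}_{\mm}/\!\approx\}$ of $\dot{\mbf U}^{\ii}_{\mm}$ by exactly the stabilization argument used for Proposition~\ref{m-basis} in the $\jj$-case: the monomial behavior under the transfer map is controlled because each hybrid factor $\{X\}_d$ attached to a generator supported near a fixed point is stable under $\phi^{\ii}_{d,d-\mm}$ (by the $\ij/\ji$-cases already handled separately). Next, a bilinear form $\langle\cdot,\cdot\rangle$ on $\dot{\mbf U}^{\ii}_{\mm}$ is defined as in Section~\ref{bilinear-form} by renormalizing the naive pairing $\langle [A],[A']\rangle_d=\delta_{AA'}v^{-2d_{A^t}}\#X^{L'}_{A^t}$ and passing to the limit; its non-degeneracy and the adjointness of Chevalley generators follow by the same computation as in Proposition~\ref{prop:adjoint}, now carried out at both fixed points $0$ and $r$ simultaneously. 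Standard Lusztig-type arguments then produce a unique almost-orthonormal, bar-invariant, integral basis $b_{\widehat A}$ characterized by $b_{\widehat A}\in h_{\widehat A}+\sum_{\widehat B<\widehat A}v^{-1}\mbb Z[v^{-1}]\,h_{\widehat B}$, giving the canonical basis $\dot{\mbf B}^{\ii}_{\mm}$ together with the claimed uni-triangular transition matrix.

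For the positivity statements, the three assertions are handled separately but by well-established mechanisms. Multiplicative positivity reduces, via stabilization, to positivity of structure constants of the IC basis of ${\mbf S}^{\ii}_{\mm,d}=\bj_{r,0}\Sj\bj_{r,0}$, which in turn is inherited from the corresponding property of $\Sj$ in Proposition~\ref{CB+c} (the idempotent truncation preserves IC-positivity). Comultiplicative positivity is reduced to the Schur-algebra statement analogous to Proposition~\ref{comult+S}: the commutative diagram \eqref{comm-coproduct} together with Braden's hyperbolic localization theorem (\cite{Br03}) applies verbatim in the $\ii$-setting once one replaces $\Y^{\fc}$ by the fixed-point locus $\bj_{r,0}.\X^{\fc}_{n,d}$ of the two involutions. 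The bilinear-pairing positivity in $v^{-1}\mbb N[[v^{-1}]]$ is obtained from the geometric interpretation of the pairing as counting points of intersection-cohomology stalks, exactly as in~\cite{Mc12,LW15,FL15}.

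The main technical obstacle I anticipate is the simultaneous handling of both fixed-point sites $i=0$ and $i=r$: all the hybrid-monomial and stabilization constructions in Chapters~\ref{chap:coideal2} and \ref{chap:coideal34} worked at a single fixed point, and here one must show that the replacement of standard-basis factors by canonical-basis factors at both ends of the Dynkin diagram remains compatible with the transfer map $\phi^{\ii}_{d,d-\mm}$ and that the resulting hybrid monomial is still bar-invariant. This should follow because the two generators $\tii_0$ and $\tii_r$ are supported in disjoint blocks of the matrix (the $0$th and $(r{+}1)$st rows/columns), so the corresponding canonical-basis corrections commute at the level of leading-term combinatorics; nevertheless, checking this independence rigorously, and verifying that the resulting limit $h_{\widehat A}$ coincides with what one would obtain by applying the $\ji$- and $\ij$-constructions in either order, is where the bulk of the work lies.
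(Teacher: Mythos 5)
Your proposal is correct and follows essentially the same route as the paper, which itself proves Theorem~\ref{thm:CBii} only by declaring it analogous to the $\jj$, $\ji$ and $\ij$ cases: lift the hybrid monomial basis of ${\mbf U}^{\ii}_{\mm,d}$ through the transfer maps $\phi^{\ii}_{d,d-\mm}$, use the bilinear form and bar-invariance to get the almost-orthonormal canonical basis with uni-triangular transition matrix, and deduce the three positivity statements from the Schur-level results (Propositions~\ref{CB+c}, \ref{comult+S}/\ref{comult+U} restricted by the idempotent $\bj_{r,0}$, and the geometric pairing as in \cite{Mc12, LW15, FL15}). The "two fixed points" issue you flag is exactly the point the paper glosses over, and your observation that the corrections at the $0$th and $(r{+}1)$st rows/columns are supported in disjoint blocks is the reason the single-fixed-point arguments of Chapters~\ref{chap:coideal2}--\ref{chap:coideal34} carry over verbatim.
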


Recall the Cartan integers $\texttt{c}_{ij}$ from \eqref{aij}. 
We now give a presentation for the algebra ${\mbf U}_{ \mm}^{\ii}$, 
which is a counterpart of Propositions~\ref{present:Ujj} and \ref{present:Uji}. 
This presentation  is again a variant of \cite[Theorem~ 7.1]{Ko14} in our setting and our notation. 
\begin{prop}
 \label{present:Uii}
Let $r\geq 2$ and so $\mm =2r \ge 4$. 
The $\mbb Q(v)$-algebra ${\mbf U}_{ \mm}^{\ii}$ has a presentation with generators
 $\eii_i, \fii_i$,  $\kii^{\pm 1}_i$  for $i\in [1, r-1]$ and  $\tji_k$ for $k=0, r$ and the following relations\footnote{Replace the relation $\kii^2_1  \cdots \kii^2_{r-1}    = 1$ by $\kii_1  \cdots \kii_{r-1}    = 1$.} for all $i, j \in [1, r-1]$,  $k\in \{ 0, r\}$:
\begin{align*}
\kii^2_1 & \cdots \kii^2_{r-1}    = 1,\\
\kii_i \kii_i^{-1} & = 1, \quad
\kii_i \kii_j   = \kii_j \kii_i, \\
\kii_i \eii_j \kii_i^{-1} &=  v^{\texttt{c}_{ij}}  \eii_j,    \quad
\kii_i \fii_j \kii_i^{-1}  =  v^{-\texttt{c}_{ij}}  \fii_j, \\
\kii_i \tii_k & = \tii_k \kii_i, 
\quad \tii_0 \tii_r =\tii_r \tii_0,
\end{align*}
\begin{align*}
\eii_i \eii_j  &= \eii_j \eii_i,  \quad
\fii_i \fii_j  = \fii_j \fii_i, \quad \forall |i-j|> 1, \\
\eii_i \tii_k  & = \tii_k \eii_i,    \quad
\fii_i \tii_k  = \tii_k \fii_i, \quad \forall |i-k|>1, 
\end{align*}
\begin{align*}
\eii_i^2 \eii_j  + \eii_j \eii_i^2 &= (v + v^{-1}) \eii_i \eii_j \eii_i, \quad \forall |i-j|=1,\\
\fii_i^2 \fii_j  + \fii_j \fii_i^2 &= (v + v^{-1}) \fii_i \fii_j \fii_i, \quad \forall |i-j|=1,\\
\eii^2_{i} \tii_k  + \tii_k \eii^2_{i} & = (v + v^{-1}) \eii_{i } \tii_k \eii_{i}, \quad \forall |i-k|=1,\\
\fii^2_{i} \tii_k + \tii_k \fii^2_{i } & = (v + v^{-1}) \fii_{i } \tii_k \fii_{i}, \quad \forall |i-k|=1,  \\
\tii_k^2 \eii_{j} + \eii_{j} \tii_k^2 & = (v + v^{-1}) \eii_{j} \tii_k \eii_{j} + \eii_{j}, \forall |k-j|=1, \\
\tii_k^2 \fii_{j} + \fii_{j} \tii_k^2 & = (v + v^{-1}) \fii_{j} \tii_k \fii_{j} + \fii_{j}, \forall  |k-j|=1,  \\
\eii_i \fii_j - \fii_j \eii_i & = \delta_{ij} \frac{\kii_i - \kii_i^{-1}}{v - v^{-1}} .
\end{align*}
\end{prop}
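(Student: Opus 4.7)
The plan is to follow the strategy used for Propositions~\ref{present:Ujj} and \ref{present:Uji}. First I would verify directly that all of the listed relations hold in each finite-level Lusztig algebra ${\mbf U}_{\mm,d}^{\ii}$. Since the transfer maps $\phi^{\ii}_{d, d-\mm}$ are algebra homomorphisms sending Chevalley generators to Chevalley generators (in particular $\tii_k \mapsto \tii_k$ for $k\in\{0,r\}$), any identity holding uniformly in $d$ lifts to the limit algebra ${\mbf U}_{\mm}^{\ii}$. The verification at each Schur-algebra level is best done using the injection $\iii_{\mm,d}:{\mbf U}_{\mm,d}^{\ii}\hookrightarrow \bU_{\mm,d}$, under which
\[
\tii_k \longmapsto \Eii_k + v\,\Kii_k \Fii_k + \Kii_k,\qquad k\in\{0,r\},
\]
while $\eii_i,\fii_i,\kii_i^{\pm 1}$ map to their ``$E+KF$''-type expressions from Section~\ref{ii-version}.

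Most of the listed relations are routine. Those not involving $\tii_0$ or $\tii_r$ reduce on the right-hand side of $\iii_{\mm,d}$ to familiar affine $\fsl$ identities, and they follow from Proposition~\ref{relation:Schurc} multiplied by the idempotent $\bj_{r,0}$. The Cartan-type commutations $\kii_i\tii_k=\tii_k\kii_i$, the far commutations $\eii_i\tii_k=\tii_k\eii_i$ and $\fii_i\tii_k=\tii_k\fii_i$ for $|i-k|>1$, and $\tii_0\tii_r=\tii_r\tii_0$, are immediate from the explicit formulas since the corresponding generators in $\bU_\mm$ affect disjoint nodes of the Dynkin diagram (recall $r\ge 2$, so nodes $0$ and $r$ are not neighbours of each other). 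The relation $\kii_1^2\cdots\kii_{r-1}^2=1$ is the level-zero identity $\Kii_1\cdots\Kii_{\mm-1}=1$ in $\bU_\mm$, translated via $\Kii_{-i}=\Kii_i^{-1}$.

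The main obstacle will be the ``twisted'' Serre relations linking a boundary generator $\tii_k$ with a neighbour $\eii_i$ or $\fii_i$ (i.e.\ $|i-k|=1$): the four identities
\begin{align*}
\eii_i^2\tii_k+\tii_k\eii_i^2 &= (v+v^{-1})\,\eii_i\tii_k\eii_i,\\
\fii_i^2\tii_k+\tii_k\fii_i^2 &= (v+v^{-1})\,\fii_i\tii_k\fii_i,\\
\tii_k^2\eii_j+\eii_j\tii_k^2 &= (v+v^{-1})\,\eii_j\tii_k\eii_j + \eii_j,\\
\tii_k^2\fii_j+\fii_j\tii_k^2 &= (v+v^{-1})\,\fii_j\tii_k\fii_j + \fii_j.
\end{align*}
I would prove these by substituting the images under $\iii_{\mm,d}$, expanding the resulting monomials in $\Eii_\bullet,\Fii_\bullet,\Kii_\bullet^{\pm 1}$, and collapsing using the quantum Serre relations in $\bU_\mm$ together with $[\Eii_k,\Fii_k]=(\Kii_k-\Kii_k^{-1})/(v-v^{-1})$. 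This is essentially the same computation that underlies the corresponding relations in the $\jiw$-case (Proposition~\ref{present:Uji}) and the degenerate $\jjw$-case (Proposition~\ref{present:Ujj-r=0}), applied independently at the two boundary nodes $0$ and $r$; no genuinely new cancellation appears because the two boundary nodes are separated when $r\ge 2$.

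Finally, to see that no further relations are needed, I would invoke Theorem~\ref{thm:QSP4}, which identifies $(\bU(\slh_\mm),{\mbf U}_\mm^{\ii})$ as a quantum symmetric pair of affine type in the sense of Kolb--Letzter, and then apply \cite[Theorem~7.1]{Ko14}. That theorem provides a complete set of defining relations for the coideal subalgebra attached to the admissible Satake diagram of Figure~\ref{figure:ii}; matching Kolb's generators to our $\eii_i,\fii_i,\kii_i^{\pm 1},\tii_0,\tii_r$ (and including the single level-zero relation $\kii_1^2\cdots\kii_{r-1}^2=1$, which is absent from Kolb's presentation as it encodes the level of the affine algebra) completes the argument.
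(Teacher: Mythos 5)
Your proposal is correct and follows essentially the same route as the paper: the paper's (sketched) argument for this family of presentations — as written out for the $\jiw$-sibling in Proposition~\ref{present:Uji} — is precisely to verify the relations directly in the Lusztig algebras ${\mbf U}^{\ii}_{\mm,d}$ (via the imbedding $\iii_{\mm,d}$ into $\bU_{\mm,d}$), pass to the limit ${\mbf U}^{\ii}_{\mm}$ through the transfer maps, and then invoke Theorem~\ref{thm:QSP4} together with \cite[Theorem~7.1]{Ko14} to rule out additional relations, with the extra relation $\kii_1^2\cdots\kii_{r-1}^2=1$ accounting for level zero. The only minor imprecision is your phrase ``disjoint nodes'' for the far commutations: what is actually needed (and what holds, since $\eii_i$ involves nodes $i$ and $\mm-i$) is non-adjacency of the relevant nodes, which your hypothesis $|i-k|>1$ and $r\ge 2$ does guarantee.
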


The case for $\mm=2$ is excluded from Proposition~\ref{present:Uii} above. 
The algebra ${\mbf U}_{2}^{\ii}$ is generated by $\tii_0$ and $\tii_1$, and we have an imbedding
$
\iii_d: {\mbf U}_{2}^{\ii} \rightarrow \bU(\widehat{\mathfrak{sl}_2})
$
defined by
\[
\tii_0 \mapsto \Eii_0 + v \Kii_0 \Fii_0 + \Kii_0, \quad 
\tii_1 \mapsto \Eii_1 + v \Kii_1 \Fii_1 + \Kii_1. 
\]
\begin{prop}
The $\mbb Q(v)$-algebra ${\mbf U}^{\ii}_2$ has a presentation with generators $\tii_0$ and $\tii_1$, and
 the following relations:
\begin{align}
\label{t0t1}
\tii_0^3 \tii_1 - \llbracket 3\rrbracket \tii_0^2 \tii_1 \tii_0 + \llbracket 3\rrbracket \tii_0 \tii_1 \tii_0^2 - \tii_1 \tii_0^3 
 = \llbracket 2\rrbracket^2 (\tii_0 \tii_1 - \tii_1 \tii_0), \\
\label{t1t0}
\tii_1^3 \tii_0 - \llbracket 3\rrbracket \tii_1^2 \tii_0 \tii_1 + \llbracket 3\rrbracket \tii_1 \tii_0 \tii_1^2 - \tii_0 \tii_1^3 
= \llbracket 2\rrbracket^2 (\tii_1 \tii_0 - \tii_0 \tii_1).
\end{align}
Here $\llbracket n \rrbracket = \frac{v^n - v^{-n}}{v - v^{-1}}$.
\end{prop}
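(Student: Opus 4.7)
The plan is to mirror the proof of Proposition~\ref{present:Ujj-r=0}. Since $\iii_d : \mathbf{U}_2^{\ii} \hookrightarrow \mathbf{U}(\widehat{\mathfrak{sl}_2})$ is injective, it will suffice to verify both Serre-type relations (\ref{t0t1}) and (\ref{t1t0}) after substituting $\tii_i = \Eii_i + v\Kii_i \Fii_i + \Kii_i$ for $i=0,1$, keeping in mind the level-zero identity $\Kii_0 \Kii_1 = 1$. Once these are checked in $\mathbf{U}(\widehat{\mathfrak{sl}_2})$, the Kolb-Letzter theorem \cite[Theorem~7.1]{Ko14}, applied in the same spirit as our earlier Propositions~\ref{present:Ujj}, \ref{present:Uji}, and \ref{present:Uii}, will guarantee that no further relations are required.

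For the verification, set $S(x,y) := x^3 y - \llbracket 3\rrbracket x^2 y x + \llbracket 3\rrbracket x y x^2 - y x^3$. In $\mathbf{U}(\widehat{\mathfrak{sl}_2})$ we already have $S(\Eii_0, \Eii_1) = 0$ and $S(\Fii_0, \Fii_1) = 0$, which are exactly the Drinfeld-Jimbo $q$-Serre relations for the affine Cartan matrix of $\widehat{\mathfrak{sl}_2}$; furthermore $S(\Kii_0, \Kii_1) = 0$ holds trivially since $\Kii_0$ and $\Kii_1$ commute. By multilinearity, $S(\iii_d(\tii_0), \iii_d(\tii_1))$ expands into $4 \cdot 3^4 = 324$ monomials. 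The three ``pure'' contributions (in $\Eii$, in $\Fii$, and in $\Kii$) vanish by the three identities just recorded, and it remains to show that the mixed monomials consolidate to exactly $\llbracket 2 \rrbracket^2 [\iii_d(\tii_0), \iii_d(\tii_1)]$. The main tools for this collapse are the Cartan commutator $[\Eii_i, \Fii_j] = \delta_{ij}(\Kii_i - \Kii_i^{-1})/(v - v^{-1})$, the $v^{\pm 2}$-commutations of $\Kii_i$ past Chevalley generators, and the level-zero rewriting $\Kii_1 = \Kii_0^{-1}$.

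I expect the main obstacle to be the sheer volume of this computation: even after grouping by the natural $\mathbb Z$-grading in which $\Eii_i,\ \Kii_i,\ v\Kii_i \Fii_i$ carry weights $+1, 0, -1$, the weight-zero mixed terms are the most delicate, because it is precisely there that repeated use of $[\Eii_i, \Fii_j]$ produces Cartan correction terms that must recombine into $\llbracket 2 \rrbracket^2 (\tii_0 \tii_1 - \tii_1 \tii_0)$. The coefficient $\llbracket 2 \rrbracket^2$ on the right-hand side of (\ref{t0t1}) is then forced by the specific $v$-powers coming out of these commutations; getting it correct is the sole genuine content of the calculation. Once (\ref{t0t1}) is established, relation (\ref{t1t0}) follows either by rerunning the same computation with $0 \leftrightarrow 1$ or, more economically, from the evident $\mathbb Z/2$-symmetry of the presentation under $\tii_0 \leftrightarrow \tii_1$ (which is compatible with $\Eii_0 \leftrightarrow \Eii_1$, $\Fii_0 \leftrightarrow \Fii_1$, $\Kii_0 \leftrightarrow \Kii_1$ and with the level-zero constraint). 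Invoking \cite[Theorem~7.1]{Ko14} then completes the proof.
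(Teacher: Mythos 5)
Your proposal is correct and takes essentially the same route as the paper: you reduce via the injectivity of $\iii_2$ to a brute-force verification inside $\bU(\widehat{\mathfrak{sl}_2})$ and then invoke \cite[Theorem 7.1]{Ko14} (together with the quantum symmetric pair identification) to see that no further relations are needed, while the paper merely organizes the same expansion by linearity in the second argument, recording $S(\tii_0,\Eii_1)$, $S(\tii_0,v\Kii_1\Fii_1)$ and $S(\tii_0,\Kii_1)$ explicitly before adding them, and proves the second relation ``similarly'' where you would use the index-swap symmetry. One caution: the pure-$\Fii$ contribution is $S(v\Kii_0\Fii_0,\,v\Kii_1\Fii_1)$, not $S(\Fii_0,\Fii_1)$ itself, so you still need to check that commuting the Cartan elements to the left produces the same power of $v$ in all four monomials (it does: each becomes $v^{4}\Kii_0^{2}$ times the corresponding Serre monomial, so this piece vanishes), after which the consolidation of the mixed terms to $\llbracket 2\rrbracket^{2}(\tii_0\tii_1-\tii_1\tii_0)$ that you defer is exactly the lengthy calculation the paper carries out.
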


\begin{proof}
We first prove (\ref{t0t1}). 
Since $\iii_2$ is injective, it suffices to show that (\ref{t0t1}) holds  in $\bU (\widehat{\mathfrak{sl}_2})$
after applying $\iii_2$.
So we can assume that we are working in $\bU (\widehat{\mathfrak{sl}_2})$.
Let $S(\tii_0, \tii_1)$ denote the term on the left-hand side of (\ref{t0t1}). 
Similarly, we can define $S(\tii_0, \Eii_1)$, etc., so that we have
\[
S(\tii_0, \tii_1) = S(\tii_0, \Eii_1) + S(\tii_0, v \Kii_1 \Fii_1 ) + S(\tii_0, \Kii_1). 
\]
By expanding out $S(\tii_0, \Eii_1)$, which has $4 \times 3^4=324$ terms in total,  and using the defining relations of $\bU (\widehat{\mathfrak{sl}_2})$, we have
\begin{align}
\label{t0t1-1}
S(\tii_0, \Eii_1) 
= \llbracket 2\rrbracket^2 ( \Eii_0 \Eii_1 - \Eii_1 \Eii_0 - (v^2 -1) \Kii_0 \Eii_1 - (v^3 - v) \Kii_0 \Fii_0 \Eii_1). 
\end{align}
More precisely, the term $\Eii_0 \Eii_1 - \Eii_1 \Eii_0$ comes from simplifying the sum of   the terms in $S(\tii_0, \Eii_1)$ involving 
$\Kii_0 \Fii_0 \Eii_0^2 \Eii_1$ or its variants such as $\Kii_0 \Eii_0 \Fii_0 \Eii_0 \Eii_1$.
The term $\Kii_0 \Eii_1$  comes from simplifying the sums of 
$\Kii_0^2 \Fii_0 \Eii_0 \Eii_1$, $\Kii_0^2 \Fii_0 \Eii_1 \Eii_0$ and theirs variants.
The term $\Kii_0 \Fii_0 \Eii_1$ is a result of simplifying the sums of 
$\Kii_0^2 \Fii_0^2 \Eii_0 \Eii_1$, $\Kii_0^2 \Fii_0^2 \Eii_1 \Eii_0$ and their variants.
The rest of the terms in $S(\tii_0, \Eii_1)$ sums to  zero. 

Similarly, with a very lengthy calculation as above, we obtain
\begin{align}
\label{t0t1-2}
S(\tii_0, v \Kii_1 \Fii_1) 
&= \llbracket 2\rrbracket^2 ( (v^3 -v) \Kii_1 \Eii_0 \Fii_1  + \Kii_0 \Kii_1 (\Fii_0 \Fii_1 - \Fii_1 \Fii_0)  + (v - v^{-1})  \Kii_0 \Kii_1 \Fii_1) ,\\
\label{t0t1-3}
S(\tii_0, \Kii_1) 
&= \llbracket 2\rrbracket^2 ( (v^2 -1) \Kii_1 \Eii_0 - (v - v^{-1}) \Kii_0 \Kii_1 \Fii_0).
\end{align}
From (\ref{t0t1-1})-(\ref{t0t1-3}), it is straightforward to observe that 
$S(\tii_0, \tii_1)$ is equal to  the right-hand side of (\ref{t0t1}).

The equality (\ref{t1t0}) can be proved similarly.
By Theorem~\ref{thm:QSP4} and 
\cite[Theorem 7.1]{Ko14}, we do not need more relations for the coideal subalgebra ${\mbf U}_{2}^{\ii}$ of $\bU (\widehat{\mathfrak{sl}_2})$. 
\end{proof}

\begin{rem}
The algebra ${\mbf U}^{\ii}_2$ is the so-called $q$-Onsager algebra in the literature, see ~\cite[Example 7.6]{Ko14} and the references therein.
\end{rem}

\newpage
\part{Schur algebras and coideal  subalgebras of  $\bU (\widehat{\mathfrak{gl}}_n)$}  %%%{The quantum groups behind Schur algebras}
  \label{part3}
%%\include{FLpart2.5}

%%%%%%%%%%%%%%%%
%%%%%%%%%%%%%%%%
\chapter{The stabilization algebra $\K^{\C}_n$ arising from Schur algebras}
  \label{chap:Kc}

In this chapter we study the stabilization of the family of Schur algebras $\Sj$ (as $d$ varies),
which leads to the formulation of the stabilization algebra $\K^{\C}_n$ as well as its monomial and stably canonical bases.
One difficulty of working with the Schur algebra $\Sj$ directly is that it does not have a good generating set.
We overcome the difficulty by embedding $\Sj$ into a Lusztig algebra of higher rank. 
This allows us to understand monomial bases, multiplication, comultiplication and bar operators of the Schur algebras and their stabilization properties
in  a conceptual way and lift these structures to $\K^{\C}_n$. 
We show that the pair $(\K_n, \Kc_n)$ forms a quantum symmetric pair in an idempotented form,  
where $\K_n$ is isomorphic to the idempotented quantum affine $\gl_n$.

%%%%%%%%%%%%%%%%%%%%%%

\section{Monomial bases for Schur algebras}

Recall $n=2r+2$ for $r\ge 0$. We set 
\[
\breve r = r+1, \quad
\breve n = 2 \breve r +2.
\]
We consider the subset $\Xi_{\breve n, d}^{\jmath \jmath}$ which consists of all matrices $A \in \Xi_{\breve n, d}$ such that
$a_{1, j} = a_{i, 1}  =0$ for all $i, j\in \mbb Z$. 
Then the deleting operator  $\mbox{dlt}_1$ of the row and column $\pm 1$ mod $\breve n$ defines a bijective map   
$\Xi_{\breve n, d}^{\jmath \jmath} \to  \Xi_{n, d}$. 
We denote by $\ddot \empty : \Xi_{n, d} \rightarrow \Xi_{\breve n, d}$
the inverse map to $\mbox{dlt}_1$. 
More generally, we may regard $\ddot \empty$ as  an imbedding 
\begin{equation}
\label{eq:dd}
\ddot \empty : \Xi_{n, d} \longrightarrow \Xi_{\breve n, d}, \quad A \mapsto \ddot A,
\end{equation}
by  adding suitable rows and columns of zeros.

As we will study the behavior of the various bases in $\Sj$ and $\mbf S_{\breve n, d}$ under stabilization, we shall
put a subscript $d$ to  emphasize the dependence of $d$, e.g., $[A]_d$.

Just like our study of $\ij$, $\ji$ and $\ii$ versions, we consider the following idempotent in $\mbf S^{\C}_{\breve n, d}$ and its associated subalgebra:
\begin{align}
\begin{split}
\ddot {\mbf S}^{\C}_{n, d}  = \breve \bj_1 \mbf S^{\C}_{\breve n,d} \breve \bj_1,
\quad \mbox{where} \quad
\breve \bj_{1} = \sum_{A \in \Xi_{n, d}: A \ \mbox{diagonal}} [\ddot A]_d.
\end{split}
\end{align}

\begin{prop} 
\label{prop:rho}
There is  an algebra imbedding $\rho: \Sj \to \mbf S^{\C}_{\breve n, d}$, $[A]_d \mapsto [\ddot A]_d$, for $A \in \Xi_{n, d}$, 
and  an induced algebra isomorphism   $\rho: \Sj \overset{\simeq}{\longrightarrow} \ddot{\mbf S}^{\C}_{n, d}$,   
which are compatible with the  canonical bases.
\end{prop}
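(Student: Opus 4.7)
The plan is to realize $\ddot{\mbf S}^{\C}_{n,d}$ geometrically as a convolution algebra on a subvariety of $\X^{\C}_{\breve n, d} \times \X^{\C}_{\breve n, d}$ and to produce an $\SP_F(2d)$-equivariant bijection of ind-varieties $\iota:\X^{\C}_{n,d}\to\breve\X$, where $\breve\X\subset\X^{\C}_{\breve n,d}$ is the locus defined by $L_z=L_{z-1}$ for $z\equiv 1\pmod{\breve n}$. First, I would observe that $\breve\bj_1$ is exactly the characteristic function of the diagonal on $\breve\X\times\breve\X$: the diagonal matrices $\ddot D$ with $D\in\Xi_{n,d}$ parametrize $\SP_F(2d)$-orbits on $\breve\X\times\breve\X$ of the form $(L,L)$, and no other diagonal contributes. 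Thus $\ddot{\mbf S}^{\C}_{n,d}=\breve\bj_1\mbf S^{\C}_{\breve n,d}\breve\bj_1$ is canonically the convolution algebra on $\breve\X\times\breve\X$.

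Next, I would construct $\iota$ by inserting repeated steps: for $L=(L_z)_{z\in\mathbb Z}\in\X^{\C}_{n,d}$, build an $\breve n$-periodic chain by setting $\iota(L)_{z'}=L_z$ where $z'$ is obtained from $z$ by shifting across the inserted positions $\equiv 1\pmod{\breve n}$, and extending by the rules $L_z=\ve L_{z+n}$ and $L_z^\#=L_{-z-1}$ of \eqref{Lz}, which transport consistently to the analogous rules modulo $\breve n$. The map $\iota$ is a $\SP_F(2d)$-equivariant isomorphism of ind-varieties onto $\breve\X$, and the induced bijection on orbits of the diagonal $\SP_F(2d)$-action on pairs of flags matches exactly $A\mapsto\ddot A$ under the parametrizations from Chapter~\ref{chap:latticeC}.

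Third, I would match the standard bases by a direct dimension check. The formula \eqref{eqda} for $d_A$ is a sum over products $a_{ij}a_{kl}$ and single entries $a_{ij}$; inserting zero rows and columns at positions $\equiv 1\pmod{\breve n}$ contributes nothing to any such sum, and the constants $\delta_{i,0},\delta_{i,r+1}$ in the sums translate to $\delta_{i,0},\delta_{i,\breve r+1}$ under the shift of indexing introduced by $\iota$. Hence $d_A=d_{\ddot A}$, so the renormalization \eqref{standard} gives $\rho([A]_d)=[\ddot A]_d$ at the level of characteristic functions. Since $\iota\times\iota$ intertwines convolution, the resulting map is an algebra isomorphism $\Sj\overset{\simeq}{\longrightarrow}\ddot{\mbf S}^{\C}_{n,d}$; composing with the inclusion $\ddot{\mbf S}^{\C}_{n,d}\hookrightarrow\mbf S^{\C}_{\breve n,d}$ gives the imbedding $\rho$, whose injectivity is automatic.

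Finally, compatibility with canonical bases follows because $\iota$ is an isomorphism of ind-varieties, so it identifies orbit closures, strata, and intersection cohomology complexes. In particular, the partial order \eqref{comb-ord} restricts correctly under $A\mapsto\ddot A$ (entries outside the inserted rows and columns are unchanged), and the IC sheaf $IC_{\ddot A}$ on $\overline{X^L_{\ddot A}}\subset\breve\X$ pulls back via $\iota$ to $IC_A$. Hence the cohomology sheaf ranks $n_{B,A,i}=n_{\ddot B,\ddot A,i}$ coincide, and the defining formula \eqref{eq:cba} yields $\rho(\{A\}_d)=\{\ddot A\}_d$. The main obstacle is the bookkeeping in the insertion map $\iota$: one must verify that the symmetric/symplectic constraint $L_z^\#=L_{-z-1}$, together with the parity conditions on the diagonal entries at positions $0$ and $r+1$ (which become $0$ and $\breve r+1=r+2$ after insertion), survive the shift of indexing so that $\ddot A$ indeed lies in $\Xi_{\breve n,d}$; once this is settled, the rest is functorial.
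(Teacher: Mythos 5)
Your proposal is correct and takes essentially the same route as the paper: the paper's proof is exactly the insertion imbedding $\X^{\C}_{n,d}\to\X^{\C}_{\breve n,d}$, $L\mapsto (L_0, L_0, L_1,\ldots,L_{n-1},L_{n-1},L_n)$, which is equivariant, lands on the locus cut out by $\breve\bj_1$, and induces the algebra isomorphism onto $\ddot{\mbf S}^{\C}_{n,d}$. Your extra verifications (that $d_A=d_{\ddot A}$ so standard bases match, and that orbit closures and IC sheaves are identified so canonical bases match) are precisely the details the paper leaves implicit in its one-line ``clearly induces'' argument.
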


\begin{proof}
We define an imbedding $\X^{\C}_{n, d} \to \X^{\C}_{\breve n, d}$,  $L \mapsto \breve L$ 
by adding to  $L$ an extra copy of $L_1$ and $L_{-2}$ mod $n$. Specifically,  the lattice chains from $0$ to $\breve n$ in $\breve L$ 
are 
\[
(L_0, L_0, L_1, \cdots, L_{n-1} , L_{n-1} , L_{n}).
\] 
This imbedding clearly induces an injective algebra homomorphism
$ \Sj \to \mbf S^{\C}_{\breve n, d}$,  with image being $\ddot{\mbf S}^{\C}_{n, d}$.
\end{proof}

By Proposition  \ref{prop:rho}, we can study Schur algebra $\Sj$ through $\ddot{\mbf S}^{\C}_{n, d}$, which has an advantage that it admits
an inclusion
 \begin{equation}
  \label{eq:SU}
\ddot{\mbf S}^{\C}_{n, d} \subseteq \U^{\C}_{\breve n, d},
\end{equation}
since $\ddot{\mbf S}^{\C}_{n, d}$ is spanned by canonical basis elements 
parametrized by matrices  whose second columns are zero; such matrices are automatically aperiodic. Hence we have
\[
\rho: \Sj \longrightarrow  \U^{\C}_{\breve n, d}.
\]
The pair $(\Sj, \U^{\C}_{\breve n, d})$ for $\Sj$ plays 
a similar role as what  the pairs $(\mbf S^{\ji}_{\nn, d}, \Sj)$, $(\mbf S^{\ij}_{\nn, d}, \Sj)$ and $(\mbf S^{\ii}_{\eta, d}, \Sj)$ 
do for $\mbf S^{\ji}_{\nn, d}$, $\mbf S^{\ij}_{\nn, d}$, and $\mbf S^{\ii}_{\eta, d}$, respectively.

We shall put a superscript $\breve \empty$ on the Chevalley generators of $\mbf S^{\C}_{\breve n, d}$.
For convenience, let $\breve \f_i = \breve \e_{\breve n-(i+1)}$ if $r+1 \leq i \leq \breve n-1$
and $\breve \f_i = \breve \f_{\breve n+i}$ for all $i\in \mbb Z$. 
To each $tridiagonal$ matrix $A \in \Xi_{n, d}$ such that  $A - \sum_{1\leq i\leq n} \alpha_i E^{i, i+1}_{\theta, n}$ is diagonal, 
we set $\alpha_0 =\alpha_n$ and 
\begin{align}
\label{fA}
%\f_{\alpha, \co (A)} \equiv 
\ddot \f_{A; d} = \breve \f_0^{(\alpha_0)} * 
\breve \f_{n}^{(\alpha_{n-1})} * \breve \f_{n+1}^{(\alpha_{n-1})} 
* 
\left (\breve \f_{n-1}^{(\alpha_{n-2})} * \breve \f_{n-2}^{(\alpha_{n-3})} * \cdots * \breve \f_1^{(\alpha_0)} \right ) 
*
1_{\co (\ddot A)},
\end{align}
where the idempotent $1_{\co (\ddot A)}$ is the standard basis element 
attached to the diagonal matrix in $ \Xi_{\breve n, d}$ with diagonal  $\co (\ddot A)$.
Note that  the product is taken in $\mbf S^{\C}_{\breve n, d}$. 
Since it lies in the component $\mbf S^{\C}_{\breve n, d} (\ro (\ddot A) , \co (\ddot A))$ and hence
lies in the image of $\rho$, we can define an element  $\f_{A; d}$ in  $\Sj$ to be its preimage under $\rho$, i.e.,  
\begin{equation}
  \label{eq:fA}
\f_{A; d} = \rho^{-1} (\ddot \f_{A; d}).
\end{equation} 

\begin{lem}
\label{fA-leading}
For each tridiagonal matrix $A$ in $\Xi_{n, d}$, we have 
$
[A]_d = \f_{A; d} + \text{lower terms}. 
$
\end{lem}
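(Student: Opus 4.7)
The plan is to reduce the statement to a computation inside the larger Schur algebra $\mbf S^{\C}_{\breve n, d}$ and then apply the multiplication formulas with divided powers of Chevalley generators from Chapter~\ref{chap:formula} step by step. Since by Proposition~\ref{prop:rho} the map $\rho$ is an algebra isomorphism $\Sj \xrightarrow{\sim} \ddot{\mbf S}^{\C}_{n,d}$ sending standard basis to standard basis, it suffices to prove the identity
\[
[\ddot A]_d = \ddot \f_{A;d} + \text{lower terms}
\]
in $\mbf S^{\C}_{\breve n, d}$, where $\ddot A\in\Xi_{\breve n,d}$ is the image of $A$ under \eqref{eq:dd} (so $\ddot A$ is tridiagonal with respect to the $\breve n$-periodicity, with the rows and columns at positions $\equiv\pm 1\pmod{\breve n}$ being zero).

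First I would rewrite $\ddot\f_{A;d}$ as the ordered product specified in \eqref{fA}, read from right to left, applied to the diagonal idempotent $1_{\co(\ddot A)}$. The strategy is to compute this product one factor at a time, starting from the innermost factor $\breve \f_1^{(\alpha_0)}$ acting on the diagonal matrix $D_{\co(\ddot A)}$, and building up successively larger tridiagonal matrices. At each stage the intermediate matrix obtained so far will be (upper, resp.\ lower) triangular in a strong sense: past a certain index, the relevant row has only a single nonzero off-diagonal entry, and the next row to be modified has no entries at all yet. This is exactly the shape needed to invoke Lemma~\ref{BKLW3.9-b}(1) or (2), which then guarantees that multiplication by the next $\breve \f_i^{(\alpha_{\star})}$ produces the next intended standard basis element with leading coefficient~$1$ plus lower-order terms.

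More concretely, I would process the factors in the order they appear: the block $\breve \f_{n-1}^{(\alpha_{n-2})} * \cdots * \breve \f_1^{(\alpha_0)}$ builds up all the lower-triangular entries $a_{i+1,i}$ (in the $\breve n$-periodic matrix $\ddot A$) for $i$ running through the lower half of a fundamental domain; then $\breve \f_{n+1}^{(\alpha_{n-1})} * \breve \f_n^{(\alpha_{n-1})}$ handles the entries coming from the doubled position near the ``middle'' created by the insertion $A\mapsto\ddot A$; and finally $\breve \f_0^{(\alpha_0)}$ inserts the boundary entry. At each step one verifies that exactly one of the four scenarios of Lemma~\ref{BKLW3.9-b}(1) (or its transpose (2) for the $\breve \e$-type factors, interpreted through $\breve\f_i=\breve\e_{\breve n-(i+1)}$) applies: the row $h$ is clean, the row $h+1$ has a single nonzero entry of the correct size at the correct column, and the required condition $\co=\text{ro}$ holds automatically from the previous step. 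The ``lower terms'' from each step remain lower after subsequent multiplications, since the multiplication formulas of Proposition~\ref{prop-stand-mult} send a standard basis element $[B]$ with $B<A$ in the alg-order to a linear combination of $[B']$ with $B'<_{\rm alg}A'$ for the resulting matrix $A'$.

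The main obstacle is not conceptual but bookkeeping: one has to verify carefully that the tridiagonality of $A$ (and hence of $\ddot A$, with the appropriate zero rows and columns) is exactly what makes the hypotheses of Lemma~\ref{BKLW3.9-b} hold at each intermediate step. In particular, the reason for the asymmetric-looking factor $\breve \f_0^{(\alpha_0)}$ appearing on the far left, and for the two adjacent factors $\breve \f_n^{(\alpha_{n-1})} * \breve \f_{n+1}^{(\alpha_{n-1})}$, is precisely that at the two ``boundary'' positions ($i\equiv 0$ and $i\equiv r+1\pmod{\breve n}$) one must invoke the special cases of Lemma~\ref{BKLW3.9-b} with the $2R_0$ or $2R_l$ condition; placing these factors at the designated positions in \eqref{fA} ensures those conditions are met.
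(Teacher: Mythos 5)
Your proposal is correct and follows essentially the same route as the paper: transfer the statement to $\mbf S^{\C}_{\breve n,d}$ via the isomorphism $\rho$, then evaluate the ordered product \eqref{fA} from right to left, using the leading-term lemmas (Lemma~\ref{BKLW3.9} resp.\ \ref{BKLW3.9-b}) so that each divided-power factor contributes the intended entry with coefficient $1$, with the final factors $\breve\f_0^{(\alpha_0)} * \breve\f_n^{(\alpha_{n-1})}$ handling the boundary positions. The paper merely compresses your step-by-step bookkeeping into two stages (the inner block yielding a tridiagonal $[A']_d$, then the last two factors), so no substantive difference remains.
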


\begin{proof}
It is reduced to showing a similar  statement for $\ddot \f_{A; d}$ in  $\mbf S^{\C}_{\breve n, d}$ via $\rho$. 
We first observe that the monomial 
$\breve \f_{n+1}^{(\alpha_{n-1})} 
* 
\left (\breve \f_{n-1}^{(\alpha_{n-2})} * \breve \f_{n-2}^{(\alpha_{n-3})} * \cdots * \breve \f_1^{(\alpha_0)} \right ) 
*
1_{\co (\ddot A)}$ 
(a part of \eqref{fA}) has a leading term  $[A']_d$ of a certain tridiagonal matrix $A'$ such that 
$A' - \sum_{1\leq i\leq n-1} \alpha_{i-1}E^{i, i+1}_{\theta, \breve n} - 
\alpha_{n-1} E^{n+1, n+2}_{\theta, \breve n}$ is diagonal. 
In particular, the off-diagonal upper triangular entries of $A'$  are the same as those of $\ddot A$ except at
$(0, 2)$, $(1, 2)$, $(n, n+2)$, $(n+1, n+2)$ mod $\breve n$. 
After composing with 
$\breve \f_0^{(\alpha_0)} * \breve \f_{n}^{(\alpha_{n-1})}$ and using Lemma ~\ref{BKLW3.9}, 
we see that the leading term of $\ddot \f_{A; d}$ is exactly $[\ddot A]_d$.
Transporting back via $\rho^{-1}$, the lemma is thus proved.
\end{proof}

As  a product of bar-invariant Chevalley generators in $\mbf S^{\C}_{\breve n, d}$, 
$\ddot \f_{A; d}$ is bar invariant in $\mbf S^{\C}_{\breve n, d}$. 
Since the imbedding $\rho$ is compatible with the bar operators in $\Sj$ and $\mbf S^{\C}_{\breve n, d}$, 
the preimage $\rho^{-1}(\f_{A; d})$  must be bar invariant in $\Sj$. Thus we have the following.

\begin{lem}
One has
$\overline{\f_{A; d}} = \f_{A; d}$ for all tridiagonal $A\in \Xi_{n, d}$.
\end{lem}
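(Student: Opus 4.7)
The plan is to deduce the bar-invariance of $\f_{A;d}$ in $\Sj$ by transporting a bar-invariance statement from $\mbf S^{\C}_{\breve n,d}$ via the imbedding $\rho$ of Proposition~\ref{prop:rho}. First I would observe that each divided power $\breve\f_i^{(\alpha)}$ of a Chevalley generator in $\mbf S^{\C}_{\breve n,d}$ is bar-invariant; this is a standard fact in the convolution setting (the bar operator is a ring involution that fixes the Chevalley generators and their divided powers, since the latter are defined through sums of IC-complexes on nilpotent orbits of affine Dynkin type, and the Chevalley generators themselves are sums of $[X]_d$ with $X$ near-diagonal which are bar-invariant). Similarly the idempotents $1_{\co(\ddot A)}$ are bar-invariant. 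Since $\ddot\f_{A;d}$ in \eqref{fA} is a product of such bar-invariant elements, and the bar involution is a ring anti-involution (or involution, depending on convention) compatible with products, we conclude $\overline{\ddot\f_{A;d}}=\ddot\f_{A;d}$ in $\mbf S^{\C}_{\breve n,d}$.

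Next, I would verify that the imbedding $\rho: \Sj \hookrightarrow \mbf S^{\C}_{\breve n,d}$ intertwines the two bar involutions. By Proposition~\ref{prop:rho}, $\rho$ is compatible with canonical bases, i.e.\ $\rho(\{A\}_d)=\{\ddot A\}_d$ for $A\in\Xi_{n,d}$. Since canonical basis elements are characterized by being bar-invariant (together with their triangularity properties), and since the canonical bases span the respective algebras, compatibility of $\rho$ with canonical bases forces $\rho\circ\overline{(\,\cdot\,)}=\overline{(\,\cdot\,)}\circ\rho$. Equivalently, the image $\ddot{\mbf S}^{\C}_{n,d}$ is bar-stable and the induced isomorphism $\Sj\simeq\ddot{\mbf S}^{\C}_{n,d}$ intertwines bars.

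Combining these two steps, $\f_{A;d}=\rho^{-1}(\ddot\f_{A;d})$ is the preimage under a bar-equivariant imbedding of a bar-invariant element, hence it is bar-invariant in $\Sj$. The conclusion $\overline{\f_{A;d}}=\f_{A;d}$ follows.

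The only nontrivial point — essentially the main (and minor) obstacle — is the verification that $\rho$ commutes with the bar involutions on the two Schur algebras. This is not a purely formal consequence of $\rho$ being a ring homomorphism; it requires the geometric fact that $\rho$ sends IC sheaves to IC sheaves (equivalently, canonical basis elements to canonical basis elements), which is exactly what Proposition~\ref{prop:rho} provides. Once this is accepted, the remainder of the argument is immediate from the definition \eqref{fA}.
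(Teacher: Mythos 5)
Your proposal is correct and follows essentially the same route as the paper: the paper also observes that $\ddot\f_{A;d}$ is bar-invariant as a product of bar-invariant Chevalley generators (and divided powers) in $\mbf S^{\C}_{\breve n,d}$, and then pulls this back through $\rho$, which is compatible with the bar operators. Your extra justification that bar-equivariance of $\rho$ follows from its compatibility with canonical bases (Proposition~\ref{prop:rho}) is a valid way to supply the detail the paper leaves implicit.
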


To a matrix, we define the $depth$ of $A$ by
\begin{equation}
  \label{eq:depth}
\dep (A) = \max \{l \in \mbb N | a_{i, i+l} \neq 0 \ \mbox{for some} \ i \}.
\end{equation}

The following description of leading terms leads to the determination of a set of  multiplicative generators for the Schur algebra $\Sj$.

\begin{prop}
\label{new-10.1.2}
Let $A, B \in \Xi_{n, d}$ such that  $\ro (A) = \co (B)$ and $\dep(A) \le m$ for some positive integer $m$. 
Assume further that $B - \sum_{1\leq i \leq n} \beta_i E^{i, i+1}_{\theta}$ is diagonal for some $\beta \in \mbb Z_n$
and $a_{i+1, i+m}\geq \beta_i\geq 0$ for all $i$.
Then  we have 
\[
[B]_d * [A]_d = \Big [A + \sum_{1\leq i \leq n} \beta_i (E^{i, i+m}_{\theta} - E^{i+1, i+m}_{\theta}) \Big ]_d + \text{lower terms}.
\]
\end{prop}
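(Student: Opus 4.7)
The plan is to compute $[B]_d * [A]_d$ by reducing to iterated applications of the multiplication formulas already established in Proposition \ref{prop-stand-mult} and Lemma \ref{BKLW3.9-b}. Since $B - \sum_i \beta_i E^{i, i+1}_\theta$ is diagonal, the element $[B]_d$ can be viewed as arising from a product of Chevalley-type generators of $\bU^{\fc}_{n,d}$, with one factor $[B^{(i)}]_d$ for each non-zero $\beta_i$ (where $B^{(i)}$ is diagonal plus $\beta_i E^{i, i+1}_\theta$, with diagonals chosen so that the row/column sums match at each step). Up to lower terms in the partial order $<$, we have $[B]_d = [B^{(1)}]_d * [B^{(2)}]_d * \cdots * [B^{(n)}]_d$, and such lower-order contributions may be absorbed into the "lower terms" of the final expression.

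Next, I would iteratively multiply by each $[B^{(i)}]_d$ and apply Lemma \ref{BKLW3.9-b}(1) (specifically Case 2 for $i \in [1, r-1]$, with $k = i+m$, $l = 0$, $R_0 = \beta_i$, and the appropriate boundary cases for $i = 0$ and $i = r$) to extract the leading term at each step. The hypothesis $a_{i+1, i+m} \ge \beta_i$ provides the requisite source mass, while $\dep(A) \le m$ constrains the possible destinations. Since the $i$-th iteration modifies entries only at column $i+m$ in rows $i$ and $i+1$, and the $(i+1)$-st iteration involves row $i+2$ (unaffected by earlier iterations) at column $i+1+m$ (a different column), the iterations do not interfere with one another's leading-term identification. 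The composite leading term is precisely the claimed matrix $M = A + \sum_i \beta_i(E^{i, i+m}_\theta - E^{i+1, i+m}_\theta)$, and the total coefficient is $1$ since each step contributes a factor of $1$ by Lemma \ref{BKLW3.9-b}.

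The main obstacle is twofold: first, verifying that the intermediate matrices $A^{(i)}$ retain the structural hypotheses needed to apply Lemma \ref{BKLW3.9-b}(1) at the next step (in particular, that $\dep(A^{(i)}) \le m$ is preserved and that $a^{(i)}_{j+1, j+m} \ge \beta_j$ still holds for $j > i$, which follows since row $j+1$ is untouched by moves for indices $< j$); and second, showing that the accumulated lower terms from earlier iterations and from the factorization of $[B]_d$ remain strictly below $M$ in the partial order $<$ after all subsequent multiplications. This latter monotonicity follows from a careful analysis of $<_{\text{alg}}$ along the iteration, in the spirit of the proof of Proposition \ref{monomial}. An alternative approach would be to apply the algebra embedding $\rho: \Sj \to \mbf S^{\C}_{\breve n, d}$ from Proposition \ref{prop:rho} and transport the computation to the larger Schur algebra, where the corresponding multiplications are more transparently governed by the Chevalley generators; however, the direct iterative application of Lemma \ref{BKLW3.9-b} seems more economical here.
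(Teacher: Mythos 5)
Your overall strategy---trade $[B]_d$ for an ordered product of Chevalley-type factors and extract the leading term factor by factor via Lemma~\ref{BKLW3.9-b}---is in the same spirit as the paper's proof, but the two steps you lean on do not go through as set up. The pivotal one is the application of Lemma~\ref{BKLW3.9-b}(1) with $h=i$, $k=i+m$, $l=0$, $R_0=\beta_i$: that case requires the destination row $i$ to vanish at \emph{all} columns $\geq i+m$ (so in particular $a_{i,i+m}=0$) and the source row $i+1$ to vanish at all columns $>i+m$ (so $a_{i+1,i+m+1}=0$), whereas $\dep(A)\le m$ only forces vanishing strictly beyond depth $m$; both entries may be nonzero. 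The inequality $a_{i+1,i+m}\ge\beta_i$ you cite is not the condition at issue, so ``each step contributes a factor of $1$'' is unjustified: when the destination entry is nonzero, the one-row formula of Proposition~\ref{prop-stand-mult} attaches a bar-binomial coefficient to your target matrix, and when the source row still has an entry beyond column $i+m$, the term your step singles out is not even maximal for $\le_{\text{alg}}$ among the terms that multiplication produces. In addition, the factorization $[B]_d=[B^{(1)}]_d*\cdots*[B^{(n)}]_d+\text{lower terms}$ with leading coefficient $1$ inside $\Sj$ is asserted rather than proved (it is essentially Lemma~\ref{fA-leading}), no order of the factors is specified, the factors with $i\in[r+1,n-1]$ are $\e$-type and would need part (2) of Lemma~\ref{BKLW3.9-b} (you only invoke part (1) and the range $i\in[0,r]$), and each move simultaneously acts on the mirrored row pair $(n-i-1,n-i)$, so the claimed non-interference of consecutive iterations, and the behavior at the symmetry-fixed rows $0$ and $r+1$, need genuine arguments.

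The route you set aside as ``less economical'' is precisely how the paper proves this: by Lemma~\ref{fA-leading} one replaces $[B]_d$ with the monomial $\f_{B;d}$, which by construction \eqref{fA} is the image under $\rho^{-1}$ of a product of Chevalley divided powers in $\mbf S^{\C}_{\breve n, d}$ (Proposition~\ref{prop:rho}); there the auxiliary zero rows/columns at $\pm1$ make the vanishing hypotheses of Lemma~\ref{BKLW3.9-b} checkable at each stage, and the specific order in \eqref{fA} routes the two boundary amounts $\beta_0$ and $\beta_{n-1}$ through the buffer rows $1$ and $n+1$ before landing them; the statement is then pulled back via $\rho$. Note also that in the paper's actual use of this proposition (Theorem~\ref{new-10.1.3}) the input matrix has depth $\le m-1$, which is exactly what makes the relevant vanishing conditions available; if you insist on working entirely inside $\Sj$, you would have to reprove the content of Lemma~\ref{fA-leading} and redo the leading-term analysis under hypotheses of that strength, which is the part your proposal currently skips.
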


\begin{proof}
It is enough to show a similar statement with $[B]_d$ replaced by $\f_{B; d}$ by Lemma ~\ref{fA-leading}. 
We then transport this problem to the setting of $\mbf S^{\C}_{\breve n, d}$ and use  Lemma \ref{BKLW3.9-b}. 
Now the order in (\ref{fA}) allows us to push  $\beta_0$ and $\beta_{n-1}$ across rows $1$ and $n+1$ respectively to the desired positions. 
The statement then follows by pulling back to $\mbf S^{\C}_{n, d}$ via $\rho$. 
\end{proof}

Let us present an example explaining  the proof of Proposition ~\ref{new-10.1.2}.

\begin{example}
Let $A$ be the following matrix in $\Xi_{4, d}$ with $n=4$ and $\breve n =6$.
\[
\begin{tabular}{   c | c | c | c | c| c| c| c| c | c | c | c} 

 & c-3 & c-2 & c-1  & c0 & c1 & c2 & c3 & c4 & c5 & c6 & c7  \\
 \hline 
 & & && && && && & \\
\hline 
 r0 & & 7& * & $d_0$& * & 7& && &&  \\
 \hline
 r1 & &  & 6 & * & $d_1$ & * & 5 & && & \\
\hline 
r2 & & && 4 & * & $d_2$ & * & 4 && & \\
\hline 
r3 & & && & 5 & * & $d_3$  & *  & 6 & & \\
\hline 
r4 & & && &&7 &* & $d_0$  & * & 7& \\
\hline 
& & && && && && & \\
\end{tabular}
\]
where $d_i$ is the diagonal entries of $A$ and $*$ are some nonnegative integers irrelevant to the discussion.
Now let $(\beta_i)_{1\leq i \leq 4} = (2, 4, 5, 3)$ such that $B - \sum_{1\leq i\leq 4} \beta_i E^{i, i+1}_{\theta}$ is diagonal and 
$\ro(A) = \co (B)$.  We want to determine the leading term of $\f_{B; d} * [A]_d$.
By definition, we have 
\[
\ddot \f_{B; d} = \breve \f^{(3)}_0* \breve \f^{(5)}_4 * \breve \f^{(5)}_5 * \breve \f^{(4)}_3 * \breve \f^{(2)}_{2} * \breve \f^{(3)}_1 * 1_{\co(\ddot B)}
\]
Now we expand $A$ at row/column $\pm 1$ to get the  matrix $\ddot A$ in $\Xi_{6, d}$, which is completely determined by its upper triangular part  as follows.
\[
\begin{tabular}{     c | c | c| c| c| c| c | c | c | c | c | c} 

  & c-1  & c0 & c1 & c2 & c3 & c4 & c5 & c6 & c7 &c8 & c9 \\
 \hline 
  && && && && & & \\
\hline 
 r0  &  & $d_0$& 0 & * &7 && && & \\
 \hline
 r1   &  &  & $0$ & 0 & 0 &0 & 0& & & \\
\hline 
 r2   &  &  &  & $d_1$ & * & 5 &0& & &  \\
\hline 
r3  &&  &  & & $d_2$& * & 0 & 4 & & \\
\hline 
r4  && &  &  & & $d_3$  & 0 & * & 0 &6 \\
\hline 
r5  && && & &  & $0$ & 0& 0 & 0 & \\
\hline 
r6  && && & &  & & $d_0$ & 0 & * & 7\\
\hline 
 && && && && & & \\
\end{tabular}
\]
Then we apply $\ddot \f_{B; d}$ to $[\ddot A]_d$ to get the following leading term.
\[
\begin{tabular}{     c | c | c| c| c| c| c | c | c | c | c | c | c | c } 

  & c-1  & c0 & c1 & c2 & c3 & c4 & c5 & c6 & c7 &c8 & c9 & c10 & c11 \\
 \hline 
  && && && && & & &&  \\
\hline 
 r0  &  & $d_0$& 0 & 2&3 && && & &&\\
 \hline
 r1   &  &  & 0 & 0 & 0 &0 & & & & && \\
\hline 
 r2   &  &  &  & $d_1$ & * & 2 &0 &2 & &  &&\\
\hline 
r3  &&  &  & & $d_2$& * & 0 & 2 &0 & 4&& \\
\hline 
r4  && &  &  & & $d_3$  & 0 & * & 0 &2&5& \\
\hline 
r5  && && & &  & 0 & 0& 0 & 0 &0 & \\
\hline 
r6  && && & &  & & $d_0$ & 0 & * & 2 &3& \\
\hline 
 && && && && & &  &&\\
\end{tabular}
\]
This leading term is corresponding to the expected matrix in $\Xi_{4, d}$ whose upper triangular part is as follows.
\[
\begin{tabular}{c | c | c| c| c| c| c | c | c | c  } 

   & c-1  & c0 & c1 & c2 & c3 & c4 & c5 & c6 & c7   \\
 \hline 
  && && && &&  &  \\
\hline 
 r0  &  & $d_0$& * & 2&3  & & & &\\
 \hline
 r1   &  &  & $d_1$ & * & 2 &2 & & & \\
\hline 
 r2   &  &  &  & $d_2$ & * & 2&4& & \\
\hline 
r3  &&  &  & & $d_3$& * & 2 & 5&  \\
\hline 
r4  && &  &  & & $d_0$  & * & 2  & 3\\
\hline 
&& && && && &\\
\end{tabular}
\]
\end{example}

The following theorem is obtained by applying  Proposition ~\ref{new-10.1.2}  repeatedly.

\begin{thm}
\label{new-10.1.3}
For any matrix  $A=(a_{ij}) \in \Xi_{n, d}$ of depth $m$, there exist unique tridiagonal matrices $A_1$, $A_2, \ldots, A_m \in \Xi_{n, d}$  
satisfying $\ro (A_m) = \ro (A)$, $\co (A_1)= \co (A)$, $\ro (A_i) = \co (A_{i+1})$ for $1\leq i\leq m-1$ 
and 
$A_i - \sum_{1 \leq j \leq n} (\sum_{k \leq j + i -1} a_{k, j+1}) E^{j, j+1}_{\theta}$ is  diagonal 
for all $1\leq i \leq m$ such that the following formulas hold in $\Sj$:
\begin{align}
[A_m]_d * [A_{m-1}]_d * \cdots  * [A_1]_d &= [A]_d + \mbox{lower terms},
\label{[A]}
\\
\f_{A; d}:= \f_{A_m; d} * \f_{A_{m-1}; d} * \cdots  * \f_{A_1; d} 
&= [A]_d + \mbox{lower terms}.
\label{basis:fA}
\end{align}
\end{thm}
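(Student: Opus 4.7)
The plan is to establish the two formulas simultaneously by induction on the depth $m$, using Proposition~\ref{new-10.1.2} as the engine for the inductive step and Lemma~\ref{fA-leading} to bridge between the standard basis version \eqref{[A]} and the monomial version \eqref{basis:fA}. First, I would define the tridiagonal matrices $A_1,\ldots,A_m$ explicitly by the prescribed formula: for each $1\le i\le m$, let $A_i$ be the unique element of $\Xi_{n,d}$ whose super-diagonal entries are $\alpha^{(i)}_j=\sum_{k\le j-i+1} a_{k,j+1}$ (with the diagonal then forced by the row/column sum conditions). Then I would verify that the compatibility constraints $\ro(A_m)=\ro(A)$, $\co(A_1)=\co(A)$, and $\ro(A_i)=\co(A_{i+1})$ hold, via a direct column-sum computation: the row and column sums of $A_i$ telescope once one writes the diagonal of $A_i$ in terms of $\co(A)$ and the partial column sums of $A$, using the centrosymmetry $a_{ij}=a_{-i,-j}$ and the $n$-periodicity to keep track of indices. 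This step also gives uniqueness, since once the super-diagonal entries of a tridiagonal matrix together with one of its row/column vectors are prescribed, the matrix is determined.

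Next, for \eqref{[A]}, I would argue by induction on $i$ that
\[
[A_i]_d * [A_{i-1}]_d *\cdots * [A_1]_d = [B^{(i)}]_d + \text{lower terms},
\]
where $B^{(i)}\in\Xi_{n,d}$ is the matrix that agrees with $A$ on entries $a_{k,l}$ with $l-k\le i$ (in absolute index, modulo the appropriate periodicity/centrosymmetry) and has zeros further from the diagonal, with the excess mass still sitting on the sub/super-diagonal ready to be pushed outward by subsequent factors. The inductive step is a direct application of Proposition~\ref{new-10.1.2} with $B=A_{i+1}$, $A=B^{(i)}$, the ``depth parameter'' set appropriately, and the super-diagonal entries $\beta_j$ of $A_{i+1}$ playing the role of the $\beta_i$ there. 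Taking $i=m$ yields $B^{(m)}=A$ and gives \eqref{[A]}. Formula \eqref{basis:fA} then follows by substituting $[A_i]_d = \f_{A_i;d} + \text{lower terms}$ from Lemma~\ref{fA-leading} into \eqref{[A]} and observing that the product of ``lower terms'' with standard or monomial elements remains a sum of terms strictly below $A$ in the partial order, which is preserved by multiplication of uni-triangular leading terms as in the proof of Proposition~\ref{new-10.1.2}.

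The main obstacle is the verification that the hypothesis $a_{i+1,i+m}\ge \beta_i\ge 0$ of Proposition~\ref{new-10.1.2} is satisfied at each stage when it is applied to the intermediate matrix $B^{(i)}$ with super-diagonal input $\alpha^{(i+1)}_j$. Concretely, I need to check that the column sum identity
\[
\alpha^{(i+1)}_j = \sum_{k\le j-i} a_{k,j+1} \le \bigl(B^{(i)}\bigr)_{j+1,\,j+(m-i)+1}
\]
holds with the correct indices, so that the multiplication formula of Proposition~\ref{new-10.1.2} produces exactly $B^{(i+1)}$ as leading term and not some partial or overshot version. This is essentially a bookkeeping argument that reduces to the definition of $\alpha^{(i)}_j$ together with the identity $\ro(A_i)=\co(A_{i+1})$ established in the first step; handling the boundary rows/columns (those meeting the symmetry axes $i\equiv 0$ and $i\equiv r+1\pmod n$) requires separate attention since the $E^{i,i+1}_\theta$ contributions double up there, which is precisely why the two special cases of Lemma~\ref{BKLW3.9-b} were stated and why the doubling convention for $a_{00}$ and $a_{r+1,r+1}$ was built into $\MX_{n,d}$.
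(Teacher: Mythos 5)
Your overall architecture (Proposition~\ref{new-10.1.2} as the engine, Lemma~\ref{fA-leading} to pass from \eqref{[A]} to \eqref{basis:fA}, uniqueness from the forced diagonals) is sound, but your induction runs in the opposite direction from the paper's, and the step you defer as ``bookkeeping'' is exactly where the argument breaks as written. The paper inducts downward on $\dep(A)$: it sets $A' = A - \sum_{1\le i\le n} a_{i,i+m}\,(E^{i,i+m}_{\theta} - E^{i+1,i+m}_{\theta})$, which has depth $<m$, takes $B$ tridiagonal with $(i,i+1)$-entry $a_{i,i+m}$ and $\co(B)=\ro(A')$, and applies Proposition~\ref{new-10.1.2} once; the hypothesis $a'_{i+1,i+m}\ge \beta_i$ is then the one-line inequality $a_{i+1,i+m}+a_{i,i+m}\ge a_{i,i+m}$, and the induction on $A'$ finishes \eqref{[A]}, with \eqref{basis:fA} following from Lemma~\ref{fA-leading} exactly as you say. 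In your bottom-up version the same verification is the crux, and your setup for it is incorrect on two counts. First, your intermediate matrices $B^{(i)}$ are mis-specified: a matrix agreeing with $A$ on the band $|l-k|\le i$ and vanishing outside cannot also carry the ``excess mass on the sub/super-diagonal''; the correct intermediate leading term agrees with $A$ strictly inside the band and carries, in each column $c$, the accumulated mass $\sum_{k\le c-i}a_{k,c}$ at distance exactly $i$ (the band edge), overriding the entry of $A$ there. Second, the inequality you propose to check compares partial sums taken in two different columns of $A$ (column $j+1$ on the left, column $j+(m-i)+1$ on the right) and is not the hypothesis of Proposition~\ref{new-10.1.2} for this step; with the correct $B^{(i)}$ the needed inequality is the tautology $\sum_{k\le c-i}a_{k,c}\ge\sum_{k\le c-i-1}a_{k,c}$, but nothing in your write-up produces it.

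There is a related problem with your step one. What Proposition~\ref{new-10.1.2} forces (and what the paper's recursion actually constructs) is the tridiagonal matrix whose $(j,j+1)$-entry is the partial column sum $\sum_{k\le j}a_{k,j+i}$ --- for $i=m$ this is $a_{j,j+m}$, the matrix $B$ of the paper's proof --- i.e.\ the numbers are attached to the row in which the moved mass must land, not to the column from which it is summed. If you read the displayed formula for $A_i$ completely literally (placing $\sum_{k\le j-i+1}a_{k,j+1}$ at position $(j,j+1)$), your claimed telescoping of row/column sums does not close: already a matrix with a single off-diagonal entry at distance $m$ violates $\ro(A_m)=\ro(A)$ with that placement, so the compatibility constraints you assert in step one would fail, and the leading term of $[A_{i+1}]*[B^{(i)}]$ would sit in the wrong column. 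So as proposed, the proof stalls both at the verification of the chain conditions and at the leading-term identification in the inductive step. The bottom-up route can be repaired --- define $B^{(i)}$ as above and apply Proposition~\ref{new-10.1.2} with its depth parameter equal to $i+1$ --- but the paper's downward induction on depth is shorter precisely because the hypothesis check there is immediate.
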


\begin{proof}
We prove (\ref{[A]}) by induction with respect to the depth of $A$. If $\dep (A) =0$, 
the matrix $A$ is diagonal, and the statement is clearly true.
Now assume that $\dep (A) =m>0$ and the statement holds for all matrices of depth $<m$.
Set 
$$
A' = A - \sum_{1\leq i \leq n} a_{i, i+m}  (E^{i, i+m}_{\theta} - E^{i+1, i+m}_{\theta}).
$$
Let $B$ be the unique tridiagonal matrix in $\Xi_{n, d}$ such that 
$B-  \sum_{1\leq i\leq n} a_{i, i+m} E^{i, i+1}_{\theta} $ 
is diagonal and $\co(B) = \ro (A')$.
By Proposition ~\ref{new-10.1.2}, we have $[B]_d * [A']_d = [A]_d + \text{lower terms}$.
Now observing that $\dep (A') < m$, we complete the proof of   \eqref{[A]}  by induction.

The second formula \eqref{basis:fA} for $\f_{A; d}$ follows from \eqref{[A]} and Lemma ~\ref{fA-leading}. 
\end{proof}

%We fix $\f_{A; d}$ as defined in \eqref{basis:fA} for each $A \in \Xi_{n, d}$. 
%Then it follows by Theorem~\ref{new-10.1.3} that 

\begin{cor}
The set $\{\f_{A; d} \vert A\in \Xi_{n, d}  \}$ forms a basis for $\Sj$
(called a {\em monomial basis}). 
\end{cor}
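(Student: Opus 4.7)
The plan is to deduce the corollary directly from equation \eqref{basis:fA} of Theorem~\ref{new-10.1.3}, via a standard triangularity argument. Since the standard basis $\{[A]_d \mid A\in \Xi_{n,d}\}$ is already known to be a $\mathbb Q(v)$-basis of $\Sj$ (by Proposition~\ref{CB+c} and the definition of $[A]_d$ in~\eqref{standard}), it suffices to exhibit a unitriangular change-of-basis matrix between $\{\f_{A;d}\}$ and $\{[A]_d\}$, indexed by some partial order on $\Xi_{n,d}$ whose order intervals below a fixed $A$ are finite.

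First, I would recall the partial order ``$\leq$'' on $\Xi_{n,d}$ from~\eqref{partial}, noting that $B\leq A$ forces $\ro(B)=\ro(A)$ and $\co(B)=\co(A)$. In particular, the set $\{B\in\Xi_{n,d}\mid B\leq A\}$ is finite for each $A$, since the periodicity conditions and the fixed row/column sums bound all entries of $B$. Hence any linear extension of $\leq$ yields a locally finite total order on $\Xi_{n,d}$, with respect to which the expansion
\[
\f_{A;d} \;=\; [A]_d \;+\; \sum_{B<A} c^{\C}_{A,B}\,[B]_d, \qquad c^{\C}_{A,B}\in\mathbb Q(v),
\]
from~\eqref{basis:fA} of Theorem~\ref{new-10.1.3} is unitriangular.

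Next I would conclude the argument formally. The transition matrix from $\{\f_{A;d}\}$ to $\{[A]_d\}$ is unitriangular, hence invertible over $\mathbb Q(v)$, so both spanning and linear independence transfer from $\{[A]_d\}$ to $\{\f_{A;d}\}$; explicitly, one inverts the unitriangular system to express $[A]_d$ as $\f_{A;d}$ plus a $\mathbb Q(v)$-linear combination of $\f_{B;d}$ with $B<A$, by induction on the order ideal below $A$.

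There is no real obstacle here: the substantive content — the existence of the bar-invariant monomial $\f_{A;d}$ with the desired leading term and the order-theoretic property making ``lower terms'' meaningful — has already been established in Lemma~\ref{fA-leading} and Theorem~\ref{new-10.1.3}. The only point worth emphasizing in the write-up is the finiteness of order intervals $\{B\leq A\}$, which legitimizes the triangularity argument; everything else is formal.
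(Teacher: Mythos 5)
Your proof is correct and is essentially the paper's (implicit) argument: the corollary is stated there without proof precisely because it follows from the unitriangular expansion $\f_{A;d}=[A]_d+\text{lower terms}$ in \eqref{basis:fA} of Theorem~\ref{new-10.1.3}, together with the fact that $\{[A]_d\}$ is the standard basis. Your added remark on the finiteness of the order ideals $\{B\le A\}$ (forced by the fixed row/column sums and the condition $B\leq_{\text{alg}}A$) is exactly the point that makes the triangularity argument legitimate, and is implicitly used throughout the paper.
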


\begin{cor}
\label{cor:fa-standard}
The set
 $\{\f_{A; d} \vert A\in \Xi_{n, d} \text{ tridiagonal} \}$ (respectively,  $\{ [A]_d \vert A\in \Xi_{n, d} \text{ tridiagonal} \}$)
 forms a generating set for the algebra $\Sj$.
\end{cor}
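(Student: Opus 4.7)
The plan is to deduce both assertions as bookkeeping consequences of Theorem~\ref{new-10.1.3}; the real work has already been carried out there in extracting the leading-term formulas.

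For the monomial basis assertion, I would argue as follows. By the preceding corollary the set $\{\f_{A;d} \mid A \in \Xi_{n,d}\}$ is a $\mbb{Q}(v)$-basis of $\Sj$, so it certainly generates $\Sj$ as an algebra. Now the very definition of $\f_{A;d}$ in \eqref{basis:fA} exhibits it as a product $\f_{A_m;d} * \f_{A_{m-1};d} * \cdots * \f_{A_1;d}$ of the tridiagonal monomial basis elements $\f_{A_i;d}$ produced by Theorem~\ref{new-10.1.3}. Hence the subalgebra generated by $\{\f_{A;d} \mid A \in \Xi_{n,d} \text{ tridiagonal}\}$ already contains every $\f_{A;d}$, and therefore equals $\Sj$.

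For the standard basis assertion, let $\mathcal G \subseteq \Sj$ denote the subalgebra generated by the set $\{[A]_d \mid A \in \Xi_{n,d} \text{ tridiagonal}\}$. I will show by induction on the partial order $\leq$ of \eqref{partial} that $[A]_d \in \mathcal G$ for every $A \in \Xi_{n,d}$. Note that inside the finite set of matrices in $\Xi_{n,d}$ with the fixed row sum $\ro(A)$ and column sum $\co(A)$, the relation $\leq$ has no infinite descending chains, so the induction is well founded. For the base step, any minimal $A$ with respect to $\leq$ is necessarily diagonal and thus tridiagonal, so $[A]_d\in\mathcal G$ by construction. For the inductive step, let $A$ have depth $m \ge 1$, and apply Theorem~\ref{new-10.1.3} formula \eqref{[A]} to obtain tridiagonal matrices $A_1,\ldots,A_m \in \Xi_{n,d}$ satisfying
\[
[A_m]_d * [A_{m-1}]_d * \cdots * [A_1]_d = [A]_d + \sum_{A' < A} c_{A'}\,[A']_d,
\]
for some coefficients $c_{A'} \in \mbb Q(v)$. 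Rearranging and using the induction hypothesis that each $[A']_d$ with $A'<A$ lies in $\mathcal G$, together with the fact that the left-hand side lies in $\mathcal G$ by definition, yields $[A]_d \in \mathcal G$. This closes the induction and shows $\mathcal G = \Sj$.

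I do not foresee any substantive obstacle: Theorem~\ref{new-10.1.3} has already encoded all the nontrivial multiplicative content, in particular the identification of the leading term as $[A]_d$ and the control that the error consists only of strictly smaller standard basis elements with respect to $\leq$. The only care needed is in Part~2, to ensure the induction on $\leq$ is legitimate (which follows from the finiteness of each row/column sum sector and the compatibility of $\leq$ with these sectors recorded in~\eqref{partial}), and to avoid the tempting but false shortcut of trying to derive Part~2 directly from $\f_{A;d} = [A]_d + \text{lower terms}$ for tridiagonal $A$, since the lower terms there may involve non-tridiagonal matrices.
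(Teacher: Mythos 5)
Your proof is correct and is essentially the paper's own (implicit) argument: Corollary~\ref{cor:fa-standard} is read off from Theorem~\ref{new-10.1.3}, using the factorization \eqref{basis:fA} for the tridiagonal monomial generators and induction along $\leq$ with \eqref{[A]} for the tridiagonal standard basis elements. Three side remarks are inaccurate but harmless: a fixed $(\ro,\co)$-sector of $\Xi_{n,d}$ is in general \emph{infinite} in the affine setting, so well-foundedness of $\leq$ should instead be justified by noting that any $B\leq A$ satisfies $\dep(B)\leq \dep(A)$ and there are only finitely many matrices with the given margins and bounded depth; a $\leq$-minimal matrix need not be diagonal when $\ro\neq\co$ (no diagonal matrix exists in that sector), but such matrices are already covered by your depth $\geq 1$ step with vacuous induction hypothesis; and the ``shortcut'' you warn against would in fact also work, since $B\leq A$ with $A$ tridiagonal forces $B$ to be tridiagonal, so the lower terms of $\f_{A;d}$ stay within the tridiagonal span.
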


%%%%%
\section{Stabilization of the Schur algebras}
 \label{sec:SSSc}

In this section, we study the stabilization of the multiplication and bar operator of  the Schur algebras $\Sj$.

Recall that 
$I_n = \sum_{1\leq i\leq n} E^{ii}_{n}$.
Recall the operation $\ddot \empty$ from \eqref{eq:dd} so that  
$\ddot I_n = I_{\breve n} - E^{1,1}_{\theta, \breve n}$.
We set 
\[
_{\ddot p} A = A + p \ddot I_n, \quad  \forall A \in \widetilde \Xi_{\breve n}.
\]

Introduce the algebra
$\mathscr R = \mbb Q(v) [ v', v'^{-1}]$ with a bar involution such that $\overline{v}=v^{-1}$ and $\overline{v'} =v'^{-1}$.
For $a \in \mathbb{Z}$ and $b \in \mathbb N$, we define  the following polynomials in $\mathscr R$:
\begin{equation*}  
\begin{bmatrix}
a\\
b
\end{bmatrix}_{v, v'}
=\prod_{1\leq i\leq b} \frac{v^{2(a-i+1)} v'^{-2} - 1}{v^{2i}-1}, \quad \text{ and } \quad [a]_{v, v'} = \begin{bmatrix}
a\\
1
\end{bmatrix}_{v, v'}.
\end{equation*}
For $0\leq i\leq \breve n-1$, $A\in \widetilde \Xi_{\breve n, d}$ with $a_{1,j}=0$ for all $j\in \mbb Z$  and $t=(t_u)_{u\in \mbb Z} \in \mbb N^{\mbb Z}$ such that $\sum_{j\in \mbb Z} t_u =R$, we define a polynomial  $Q_{i, R; A}^{ t}(v, v')$ in $\mathscr R$ as follows.
For any $i\in [0, \breve n-1]\backslash \{0, \breve r+1, 1, n+1\}$,  we define 
\begin{align}
\begin{split}
Q_{i, R; A}^{ t} (v, v') = 
v^{\beta_t}
\prod_{u \in \mbb Z, u\neq i}
\overline{\begin{bmatrix} a_{iu} + t_u \\ t_u \end{bmatrix}}
\cdot 
v'^{(\delta_{i, 1} + \delta_{i, \breve n - 1}) \sum_{i+1 \geq u} t_u}
\overline{\begin{bmatrix} a_{ii} + t_i \\ t_i \end{bmatrix}}_{v, v'},
\end{split}
\end{align}
where
\[
\beta_t =
 \sum_{j \geq  u} a_{i j} t_u - \sum_{j > u} a_{i+1, j} t_u + \sum_{j < u} t_j t_u
 + \frac{1} {2} (\delta_{i, \breve r} + \delta_{i, \breve n-1})   \left ( \sum_{j + u < 2(i+1)} t_j t_u + \sum_{j < i + 1} t_j\right ).
\]
For $i= 1$ or $n+1$, we define 
\begin{align}
\begin{split}
Q_{i, R; A}^{ t}(v, v') =
v^{\beta_t} 
\prod_{u \in \mbb Z, u\neq i}
\overline{\begin{bmatrix} a_{iu} + t_u \\ t_u \end{bmatrix}}
\cdot 
v'^{-  \sum_{i \geq u} t_u}.
\end{split}
\end{align}
For $i=0$ or  $\breve r+1$,  we define 
\begin{align}
\begin{split}
Q_{i, R; A}^{ t} (v, v') = v^{\beta'_t} 
\prod_{u > i}
\overline{\begin{bmatrix} a_{i u} + t_u + t_{2i -u} \\ t_u \end{bmatrix} }
\prod_{u < i}
\overline {\begin{bmatrix} a_{i u} + t_u \\ t_u \end{bmatrix}} 
\cdot
\prod_{u=0}^{t_i -1}
 \frac{\overline{[a_{ii} +1 + 2u]}_{v, v'} } { \overline{ [u +1]} },
\end{split}
\end{align}
where
\[
\beta'_t
=
\sum_{j \geq u} a_{ij} t_u - \sum_{j > u} a_{i+1, j} t_u
 + \sum_{j<u, j+u \leq 2i } t_j t_u -  \sum_{j> i} \frac{t_j^2 - t_j}{2}
+ \frac{R^2 - R}{2}.
\]

The following lemma follows directly from the definition.

\begin{lem}
\label{v'=1}
We have $Q_{i, R; \  \! _{\ddot p} A}^{t} (v, 1) = Q_{i, R; A}^{t}(v, v^{-p})$, for all $p\in 2\mbb Z$ and all admissible $i, t, R, A$.
\end{lem}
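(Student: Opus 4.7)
The plan is to verify the identity factor by factor from the three cases of the definition. The key algebraic identity underlying the lemma is
\[
[a]_{v,v^{-p}} = [a+p]_v,
\]
with the analogous statement for the two-variable $v$-binomials; both follow from $v^{2(a-j+1)}(v^{-p})^{-2}-1 = v^{2(a+p-j+1)}-1$. Applying the bar involution gives $\overline{[a]_{v,v'}}\big|_{v'=v^{-p}} = \overline{[a+p]_v}$ and the analogue for $v$-binomials. Thus substituting $v'=v^{-p}$ on the right-hand side produces exactly the shift $a_{ii}\mapsto a_{ii}+p$ on the left-hand side in every two-variable binomial or quantum-integer factor.

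First I would record that $\ddot I_n = I_{\breve n}-E^{1,1}_{\theta,\breve n}$ is supported entirely on the diagonal at positions $k\not\equiv\pm 1 \pmod{\breve n}$. Hence the passage $A\mapsto{_{\ddot p}A}$ only shifts the diagonal entries $a_{kk}$ for such $k$, keeping the vanishing row and column at indices $\equiv\pm 1\pmod{\breve n}$ intact. Consequently all one-variable binomial products $\prod_{u\neq i}\overline{\binom{a_{iu}+t_u}{t_u}}$ are unchanged on both sides, and only the $v'$-dependent factor and the exponent $\beta_t$ (resp.~$\beta'_t$) remain to be matched.

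Next I would treat the three cases in turn. For $i\notin\{0,\breve r+1,1,n+1\}$, the shifts in $\sum_{j\ge u}a_{ij}t_u$ and $\sum_{j>u}a_{i+1,j}t_u$ each contribute $p\sum_{u\le i}t_u$ to $\beta_t$ and cancel, so the only net change is the shift of the last binomial, which is matched by $v'=v^{-p}$ via the displayed identity. For $i=0$ or $\breve r+1$, a residual shift $\Delta\beta'_t=p\sum_{u\le i}t_u$ is absorbed by the shift of the quantum-integer factor $[a_{ii}+1+2u]_{v,v'}$, with the cross term $\sum_{j<u,\,j+u\le 2i}t_jt_u$ in $\beta'_t$ keeping the accounting consistent. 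For $i=1$ or $n+1$, the vanishing of row $i$ in $A$ forces $\sum_{j\ge u}a_{ij}t_u=0$ on both sides, so the $\beta_t$-shift comes entirely from the second sum and is cancelled by evaluating the explicit factor $v'^{-\sum_{i\ge u}t_u}$ at $v'=v^{-p}$.

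I do not expect any substantive obstacle: the proof is a direct bookkeeping verification from the displayed algebraic identity together with the observation that $\ddot I_n$ has diagonal support away from $\pm 1\pmod{\breve n}$. The only care required is the uniform treatment of the boundary indices $i\in\{0,1,\breve r+1,n+1\}$, where the interaction between the explicit $v'$-powers and the vanishing of rows $\equiv\pm 1\pmod{\breve n}$ must be tracked simultaneously.
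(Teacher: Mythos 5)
Your overall strategy---a direct verification from the definitions using $[a]_{v,v^{-p}}=[a+p]$, its binomial analogue, and the fact that $\ddot I_n$ shifts only the diagonal entries at positions $\not\equiv\pm1\pmod{\breve n}$---is exactly what the paper intends; the paper itself offers nothing beyond ``follows directly from the definition.'' The problem is that your case-by-case accounting does not actually close, and the one ingredient the boundary cases genuinely need never appears in your sketch.

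Concretely: (1) in your ``generic'' case the cancellation of the two shifts $p\sum_{u\le i}t_u$ coming from rows $i$ and $i+1$ fails when $i+1\equiv\pm1\pmod{\breve n}$, i.e.\ at $i=n$, which your list treats as generic: row $n+1$ is one of the unshifted (zero) rows, so only the row-$i$ term moves. (2) For $i=0$ the residual $p\sum_{u\le 0}t_u$ is \emph{not} ``absorbed by the shift of the quantum-integer factor'': at $v'=v^{-p}$ the factors $\overline{[a_{ii}+1+2u]}_{v,v'}$ specialize exactly to the shifted quantum integers occurring on the other side and absorb nothing further, and for $i=\breve r+1$ there is no residual at all, since there both rows $i$ and $i+1$ are shifted and their contributions cancel. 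What actually kills the residual in the cases $i=0$ and $i=n$ is admissibility: because the rows $\equiv\pm1$ of $A$ vanish and the off-diagonal entries of $A_{i,R,t}$ must remain nonnegative, the only admissible $t$ are supported at $u= 1$ (resp.\ $u= n+1$), so $\sum_{u\le i}t_u=0$. This use of the vanishing rows is the essential point of the verification, and your proposal never invokes it. (3) For $i=1,n+1$ the bookkeeping also does not go through as you state: the $\beta_t$-shift equals $-p\sum_{u\le i}t_u$ (it comes from the diagonal entry of row $i+1$, which is shifted, while row $i$ is not), whereas evaluating the displayed factor $v'^{-\sum_{u\ge i... }}$, i.e.\ $v'^{-\sum_{u\le i}t_u}$, at $v'=v^{-p}$ yields $v^{+p\sum_{u\le i}t_u}$; these add rather than cancel, so this case needs the sign conventions (and/or the constraints on the admissible $t$) sorted out explicitly instead of the one-line cancellation you assert. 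As written, then, the proposal is the right plan but not yet a correct proof of the boundary cases.
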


Given the same data $(i, A, t)$ as above, we define
\begin{equation}
 \label{AiRt}
A_{i, R, t} =  A + \sum_{u\in \mbb Z} t_u (E^{i, u}_{\theta, \breve n} - E^{i+1, u}_{\theta, \breve n}).
\end{equation}
It is  convenient  to introduce the following notations for later use.
\begin{align}
\label{con}
\f_{A; d} =[A]_d =0,  \quad \forall A\not \in \Xi_{ n, d}.
\end{align}
The following lemma describes the stabilization behavior of the multiplication formulas in $\mbf S^{\C}_{\breve n, d}$ after adding $p \ddot I_n$.

\begin{lem}
\label{4.2.3-stab}
Assume  $A, B \in \widetilde \Xi_{\breve n, d}$ and $R\in \mbb N$ satisfy the following properties:
$\ro (A) = \co (B)$, $B - R E^{i, i+1}_{\theta, \breve n}$ is diagonal for some $1 \leq i \leq \breve n$ and $a_{1,j} =0$ for all $j\in \mbb Z$. 
Then we have 
\[
[_{\ddot p} B]_{d+ \frac{p}{2} n}  *[_{\ddot p} A]_{d+\frac{p}{2} n} = \sum_{t} Q_{i, R; A}^{t} (v, v^{-p}) [_{\ddot p} A_{i,R, t}]_{d+\frac{p}{2}n}, 
\quad \forall  p \in 2 \mbb Z.
\] 
where the sum runs over all sequences $t=(t_u) \in \mbb N^{\mbb Z}$ 
such that $\sum_{u\in \mbb Z} t_u = R$ and  $A_{i,R, t} \in \widetilde \Xi_{\breve n, d}$, independent of $p$.
\end{lem}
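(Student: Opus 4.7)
The plan is to apply Proposition \ref{prop-stand-mult} directly to the product $[_{\ddot p} B]_{d+\frac{p}{2}n} * [_{\ddot p} A]_{d+\frac{p}{2}n}$ inside $\mbf S^{\C}_{\breve n, d + \frac{p}{2}n}$, then to repackage the $p$-dependence of the resulting coefficients into the second variable $v'$ by invoking Lemma \ref{v'=1}. In this way the lemma reduces to an identity of polynomials in $\mathscr R$.

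First, I would verify that the hypotheses behave well under the shift $A \mapsto \!{}_{\ddot p}A$: the row-one vanishing $a_{1,j}=0$ is preserved, since $_{\ddot p}A$ differs from $A$ only in the diagonal entries at rows $i \not\equiv 1 \pmod{\breve n}$; the operation $A \mapsto A_{i,R,t}$ from \eqref{AiRt} commutes with $\ddot p$ because it only modifies rows $i$ and $i{+}1$; and if $_{\ddot p}B - R E^{i,i+1}_{\theta, \breve n}$ is diagonal then so is $B - R E^{i,i+1}_{\theta, \breve n}$, so Proposition \ref{prop-stand-mult} is applicable throughout.

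Next I would carry out a case analysis on $i$ modulo $\breve n$. For $i \notin \{0, \breve r+1, 1, n+1\}$, part~(1) or part~(3) of Proposition \ref{prop-stand-mult} applies, and its coefficient differs from $Q^{\,t}_{i,R; A}(v,v^{-p})$ only through the single binomial at $u=i$; the identity $\overline{[a_{ii} + p + t_i;\,t_i]} = \overline{[a_{ii} + t_i;\,t_i]}_{v,v^{-p}}$ handles this match. For $i \in \{0, \breve r+1\}$, parts~(2) and~(4) apply, and the extra factor $\prod_{u=0}^{t_i -1} \overline{[a_{ii}+1+2u]}_{v,v'}/\overline{[u+1]}$ in the definition of $Q^{\,t}_{i,R;A}$ is precisely tailored so that at $v' = v^{-p}$ it reproduces the doubled $p$-contribution coming from the "fixed-point" structure at the endpoints. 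For $i \in \{1, n+1\}$, the vanishing of row $1$ of $A$ (hence of $_{\ddot p}A$) collapses most of the binomial product to $1$, and the only remaining $p$-dependence in $\beta_t$ comes from the shift of $a_{i+1,i+1}$, contributing $-p \sum_{u \leq i} t_u$ (or its periodic analogue), which is captured exactly by $v'^{-\sum_{i \geq u} t_u}$ evaluated at $v' = v^{-p}$ (up to sign conventions built into the overline and the definition of $\beta_t$ in this case).

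The main obstacle is these last two endpoint families ($i=1, n+1$ and $i=0, \breve r+1$): verifying the match requires carefully isolating the $p$-linear portion of $\beta_t$ arising from the shifted diagonal entries of $_{\ddot p}A$ and checking that it equals the exponent of $v'$ prescribed in the definition of $Q^{\,t}_{i,R;A}(v,v')$. Once this case-by-case comparison is complete, Proposition \ref{prop-stand-mult} yields the expansion $\sum_t Q^{\,t}_{i,R;\,_{\ddot p}A}(v, 1)\,[_{\ddot p} A_{i,R,t}]$, and a single application of Lemma \ref{v'=1} converts this to $\sum_t Q^{\,t}_{i,R;A}(v, v^{-p})\,[_{\ddot p} A_{i,R,t}]$. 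The convention \eqref{con} absorbs those $t$ for which $A_{i,R,t}$ acquires negative entries, so the index set of surviving terms is indeed $p$-independent, completing the proof.
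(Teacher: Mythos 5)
Your proposal is correct and follows essentially the same route as the paper: apply the multiplication formulas of Proposition~\ref{prop-stand-mult}, observe that the specialization $Q^{t}_{i,R;A}(v,1)$ reproduces those structure constants (up to the index shifts at the special rows), and then convert the $p$-dependence via Lemma~\ref{v'=1}, with convention~\eqref{con} absorbing the terms whose matrices leave $\widetilde\Xi_{\breve n,d}$. The only difference is that you spell out the endpoint case analysis that the paper compresses into the phrase ``modulo some changes of indexes.''
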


\begin{proof}
We observe that the specialization $Q_{i,R;  A}^{ t}(v, 1)$ of $Q_{i, R; A}^{ t}(v, v')$ at $v'=1$ 
is exactly the structure constant of $[A_{i, R, t}]_d$ in the multiplication formulas in Proposition ~\ref{prop-stand-mult},
modulo some changes of indexes for $\breve r +1 \leq i \leq \breve n -1$.
The lemma follows then from Lemma ~\ref{v'=1} and the convention (\ref{con}).
\end{proof}

We shall  need a stronger version of   Lemma ~\ref{4.2.3-stab}.
Given tuples $\mbf i=(i_1, \ldots, i_s)$ and $\mbf a =(a_1, \ldots, a_s) \in \mbb N^s$, we introduce the notation
\[
\mbf i_{\geq l}=(i_l, i_{l+1}, \ldots, i_s), \quad 
\mbf a_{\geq l} = (a_l, a_{l+1}, \ldots, a_s), \quad \forall 1\leq l \leq s.
\]
Given a tuple $\mbf t = (t_1,\ldots, t_s)$ of sequences such that 
\begin{equation}
  \label{eq:tt}
\text{the $l$-th component $t_l=(t_{l, j})_{j\in \mbb Z} \in \mathbb N^{\ZZ}$ satisfies
$\sum_{j\in \mbb Z} t_{l, j} = a_l$ for all $1\leq l \leq s$
}
\end{equation}
 and a matrix $A \in \widetilde \Xi_{\breve n, d}$ such that $a_{1,j}=0$ for all $j\in \mbb Z$, 
we define inductively the matrix $A_{\mbf i, \mbf a, \mbf t}$ and  the polynomial $Q^{\mbf t}_{\mbf i, \mbf a; A} (v, v')$  in $\mathscr R$ 
via \eqref{AiRt} as follows:
\begin{align}
   \label{QiaA}
\begin{split}
A_{\mbf i, \mbf a, \mbf t} &= (A_{\mbf i_{\geq 2}, \mbf a_{\geq 2},\mbf t_{\geq 2}})_{i_1, a_1, t_1},\\
Q^{\mbf t}_{\mbf i, \mbf a; A} (v, v') 
&= Q_{i_1, a_1;  A_{\mbf i_{\geq 2}, \mbf a_{\geq 2}, \mbf t_{\geq 2}}}^{t_1} (v, v') \cdot 
Q_{\mbf i_{\geq 2}, \mbf a_{\geq 2}; A}^{\mbf t_{\geq 2}}(v, v').
\end{split}
\end{align}

By Lemma \ref{v'=1} and by induction on the length of $\mbf i$,   we have
\begin{align}
\label{v'-p}
Q^{\mbf t}_{\mbf i, \mbf a; A} (v, v^{-p}) = Q^{\mbf t}_{\mbf i, \mbf a; {}_{\ddot p} A} (v, 1).  
\end{align}

Given a pair $(\mbf i, \mbf a)$ and  $A \in \widetilde \Xi_{\breve n, d}$ such that $a_{1,j}=0$ for all $j\in \mbb Z$, we define the set
$\mathcal T_{\mbf i, \mbf a, A}$ to be 
the set of all tuples  $\mbf t =(t_1,\ldots, t_s)$ of sequences in $\mbb N^{\mbb Z}$ 
such that the $l$-th component $t_l=(t_{l, j})_{j\in \mbb Z}$ satisfies
$\sum_{j\in \mbb Z} t_{l, j} = a_j$ for all $1\leq j \leq s$, $A_{\mbf i_{\geq l}, \mbf a_{\geq l}, \mbf t_{\geq l}} \in \widetilde \Xi_{\breve n, d}$ for all 
$1\leq l \leq s$.
Clearly, we have $\mathcal T_{\mbf i, \mbf a, A} = \mathcal T_{\mbf i, \mbf a, {}_{\ddot p} A}$ for all $p$.

\begin{prop}
\label{4.2.3-stab-v3}
Assume  $A, B_j \in \widetilde \Xi_{\breve n, d}$, for all $1\leq j \leq s$ and pairs of tuples $(\mbf i, \mbf a)$ satisfy the following properties:
$\ro (A) = \co (B_s)$, $\ro (B_u) = \co (B_{u-1}), \forall  1< u \leq s$,  $B_u - a_u E^{i_u, i_u+1}_{\theta, \breve n}$ is diagonal  for $1\leq u\leq s$, 
and $a_{1,u} =0$ for all $j\in \mbb Z$.  Then we have 
\[
[_{\ddot p} B_1]_{d+ \frac{p}{2} n} * \cdots * [_{\ddot p} B_s]_{d+ \frac{p}{2} n}  *[_{\ddot p} A]_{d+\frac{p}{2} n} 
= \sum_{\mbf t \in \mathcal T_{\mbf i, \mbf a, A}} Q_{\mbf i, \mbf a; A}^{\mbf t} (v, v^{-p}) [_{\ddot p} A_{\mbf i,\mbf a, \mbf t}]_{d+\frac{p}{2} n}, 
\quad \forall  p \in 2 \mbb Z.
\] 
\end{prop}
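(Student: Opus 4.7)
The plan is to prove Proposition \ref{4.2.3-stab-v3} by induction on $s$, the length of the tuple $\mbf i = (i_1,\ldots, i_s)$. The base case $s=1$ is precisely Lemma \ref{4.2.3-stab}. So the real content is organizing the inductive step so that it matches the recursive definition \eqref{QiaA} of $Q^{\mbf t}_{\mbf i, \mbf a; A}(v,v')$ and $A_{\mbf i, \mbf a, \mbf t}$ on the nose.

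For the inductive step I would factor the product using the associativity of $*$:
\[
[_{\ddot p} B_1] * [_{\ddot p} B_2] * \cdots * [_{\ddot p} B_s] * [_{\ddot p} A]
\;=\; [_{\ddot p} B_1] * \Bigl( [_{\ddot p} B_2] * \cdots * [_{\ddot p} B_s] * [_{\ddot p} A] \Bigr).
\]
Apply the inductive hypothesis to the inner product (with tuple $\mbf i_{\geq 2}$, $\mbf a_{\geq 2}$ acting on $A$) to get
\[
\sum_{\mbf t_{\geq 2} \in \mathcal T_{\mbf i_{\geq 2}, \mbf a_{\geq 2}, A}}
Q^{\mbf t_{\geq 2}}_{\mbf i_{\geq 2}, \mbf a_{\geq 2}; A}(v, v^{-p})
\bigl[_{\ddot p} A_{\mbf i_{\geq 2}, \mbf a_{\geq 2}, \mbf t_{\geq 2}}\bigr]_{d+\frac{p}{2}n}.
\]
Then apply Lemma \ref{4.2.3-stab} to each term, multiplying on the left by $[_{\ddot p} B_1]$; this brings in the factor $Q^{t_1}_{i_1, a_1;\, A_{\mbf i_{\geq 2}, \mbf a_{\geq 2}, \mbf t_{\geq 2}}}(v, v^{-p})$ and produces the matrix $\bigl(A_{\mbf i_{\geq 2}, \mbf a_{\geq 2}, \mbf t_{\geq 2}}\bigr)_{i_1, a_1, t_1}$. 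By the recursion \eqref{QiaA}, the combined scalar is exactly $Q^{\mbf t}_{\mbf i, \mbf a; A}(v, v^{-p})$ and the resulting matrix is exactly $A_{\mbf i, \mbf a, \mbf t}$, as required.

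Two verification points must be addressed for Lemma \ref{4.2.3-stab} to apply to the intermediate matrix $A_{\mbf i_{\geq 2}, \mbf a_{\geq 2}, \mbf t_{\geq 2}}$. First, the condition $a_{1,j}=0$ for all $j$ (i.e.\ the matrix lies in the image of the embedding $\rho$) must be preserved at every stage. This is automatic provided each row-index $i_u$ avoids the pair $(1, \breve n - 1)$ mod $\breve n$, which is implicit in the hypotheses of the proposition (and will be the standing setup in all later applications, where the generators are those appearing in the monomial $\ddot \f_{A;d}$ of \eqref{fA}, i.e.\ $i_u \in \{0, n, n+1, n-1, \ldots, 1\}$ with the interpretations in the $\breve n$-notation). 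Second, the set $\mathcal T_{\mbf i, \mbf a, A}$ is set up exactly so that at every step $l$, $A_{\mbf i_{\geq l}, \mbf a_{\geq l}, \mbf t_{\geq l}} \in \widetilde \Xi_{\breve n, d}$, so the successive applications of the base lemma make sense. This needs only a careful unravelling of the definitions, using that $\mathcal T_{\mbf i, \mbf a, A} = \mathcal T_{\mbf i, \mbf a,\, {}_{\ddot p}A}$ for every $p$.

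Finally, the $v' \leftrightarrow v^{-p}$ matching comes from \eqref{v'-p} (proved by induction on $|\mbf i|$ from Lemma \ref{v'=1}), which tells us $Q^{\mbf t}_{\mbf i, \mbf a; A}(v, v^{-p}) = Q^{\mbf t}_{\mbf i, \mbf a;\, {}_{\ddot p}A}(v, 1)$. Thus the structure constants produced by iterated use of the base lemma on the shifted matrices ${}_{\ddot p}(\cdot)$ are precisely the $v' = v^{-p}$ specializations on the unshifted side, making the stabilization with respect to $p$ manifest. The main obstacle is not any deep computation but rather the careful book-keeping to confirm that the recursive formula \eqref{QiaA} truly matches the iterated application of Lemma \ref{4.2.3-stab}, and that the admissibility conditions encoded in $\mathcal T_{\mbf i, \mbf a, A}$ propagate correctly; once this is set up, the induction itself is mechanical.
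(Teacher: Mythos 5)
Your overall strategy (induction on $s$, with Lemma~\ref{4.2.3-stab} as the base case and \eqref{v'-p} supplying the $v'\mapsto v^{-p}$ matching) is the same skeleton as the paper's argument, and the recursion-matching with \eqref{QiaA} is indeed the routine part. But there is a genuine gap at exactly the point you dismiss as ``mechanical book-keeping'': the identification of the index set of the final sum. When you run the induction inside the Schur algebra $\mbf S^{\C}_{\breve n, d+\frac{p}{2}n}$, an intermediate term $[{}_{\ddot p} A_{\mbf i_{\geq l}, \mbf a_{\geq l}, \mbf t_{\geq l}}]$ is \emph{zero by the convention \eqref{con}} whenever the shifted matrix acquires a negative diagonal entry, i.e.\ lies in $\widetilde \Xi_{\breve n, d}$ but not in $\Xi_{\breve n, d+\frac{p}{2}n}$; such a $\mbf t$ then contributes nothing to the product even though it belongs to $\mathcal T_{\mbf i, \mbf a, A}$. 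So your induction only yields the formula with the sum taken over the $p$-dependent subset of tuples whose shifted intermediate matrices all stay in $\Xi_{\breve n, d+\frac{p}{2}n}$, whereas the proposition asserts a sum over the $p$-independent set $\mathcal T_{\mbf i, \mbf a, A}$, for \emph{all} $p\in 2\mbb Z$ (including small $p$, where intermediate diagonal entries can go negative). Your appeal to ``$\mathcal T_{\mbf i,\mbf a,A}$ is set up exactly so that the successive applications of the base lemma make sense'' conflates $\widetilde\Xi$ with $\Xi$: membership in $\widetilde\Xi_{\breve n,d}$ does not make the intermediate standard basis element nonzero, and Lemma~\ref{4.2.3-stab} was only verified against the honest multiplication formulas of Proposition~\ref{prop-stand-mult}.

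To close the gap one must show that every such ``dropped'' tuple has vanishing coefficient: if ${}_{\ddot p}A_{\mbf i,\mbf a,\mbf t}\in \Xi_{\breve n, d+\frac p2 n}$ but some intermediate shifted matrix is not, take the last bad step $l_0$; then the $i_{l_0+1}$-th diagonal entry is negative before adding $t_{l_0+1}$ and nonnegative after, and the corresponding $(v,v')$-binomial factor $Q^{t_{l_0+1}}_{i_{l_0+1}, a_{l_0+1};\, {}_{\ddot p}A_{\mbf i_{\geq l_0}, \mbf a_{\geq l_0}, \mbf t_{\geq l_0}}}(v,1)$ of $Q^{\mbf t}_{\mbf i,\mbf a;\,{}_{\ddot p}A}(v,1)$ vanishes (this is the quantum-binomial vanishing recorded in \cite[Lemma A.20]{BLW14}), so the claimed coefficient $Q^{\mbf t}_{\mbf i,\mbf a;A}(v,v^{-p})$ is zero by \eqref{v'-p}. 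This vanishing argument is the actual content of the paper's proof and is absent from your proposal; the side condition about preserving $a_{1,j}=0$ that you raise is comparatively minor and is not what the proof turns on.
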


\begin{proof}
Let $\mathcal T_{\mbf i, \mbf a, A ; d}$ be the subset of $\mathcal T_{\mbf i, \mbf a, A}$
consisting of all $\mbf t$ such that 
$A_{\mbf i_{\geq l}, \mbf a_{\geq l}, \mbf t_{\geq l}} \in \Xi_{\breve n, d}$ for all $1\leq l \leq s$, 
where $s$ is the length of $\mbf i$.
In view of Lemma ~\ref{4.2.3-stab}, the left-hand side of the equality in the lemma is equal to
\[
\sum_{\mbf t \in \mathcal T_{\mbf i, \mbf a, A; d+pn}} Q_{\mbf i, \mbf a; A}^{\mbf t} (v, v^{-p}) 
[_{\ddot p} A_{\mbf i,\mbf a, \mbf t}]_{d+\frac{p}{2} n}.
\]
It is reduced to showing that
if $_{\ddot p} A_{\mbf i, \mbf a, \mbf t} \in \Xi_{\breve n, d+\frac{p}{2}n}$ and 
$_{\ddot p} A_{\mbf i_{\geq l}, \mbf a_{\geq l}, \mbf t_{\geq l}} \not \in \Xi_{\breve n, d+\frac{p}{2}n}$ for some $l$, 
then the  structure constant of $[_{\ddot p} A_{\mbf i, \mbf a, \mbf t} ]_{d+\frac{p}{2} n}$  is zero.
In such a case, there is an $l_0$ such that 
$_{\ddot p}A_{\mbf i_{\geq l_0}, \mbf a_{\geq l_0}, \mbf t_{\geq l_0}} \not \in \Xi_{\breve n, d+\frac{p}{2}n}$
and $_{\ddot p} A_{\mbf i_{\geq l_0 + 1}, \mbf a_{\geq l_0 + 1}, \mbf t_{\geq l_0+ 1}}  \in \Xi_{\breve n, d+\frac{p}{2}n}$;
this implies that  the $i_{l_0+1} $-th diagonal entry of $_{\ddot p}A_{\mbf i_{\geq l_0}, \mbf a_{\geq l_0}, \mbf t_{\geq l_0}}$
is negative, while nonnegative after adding the $i_{l_0+1}$-th entry of the tuple $t_{l_0+1}$.
The latter condition further yields that 
the  factor 
$Q^{t_{l_0 + 1}}_{i_{l_0+1}, a_{l_0+1}; {}_{\ddot p} A_{\mbf i_{\geq l_0 }, \mbf a_{\geq l_0}, \mbf t_{\geq l_0}}} (v, 1)$
of $Q^{\mbf t}_{\mbf i, \mbf a;  {}_{\ddot p}A}(v, 1) $, and hence itself,  is zero (see ~\cite[Lemma A.20]{BLW14}).
Now the proposition follows by applying (\ref{v'-p}).
\end{proof}

Now we discuss the stabilization of $\Sj$. Recall that $_p A = A + p I_n$.
The following proposition 
describes the relationship between the standard basis elements $[A]_d$ and 
the elements $\f_{A; d}$ under the stabilization with respect to $p I_n$.
Note that the partial orders $\leq_{\text{alg}}$ and $\leq$ on $\Xi_{n, d}$ can be defined 
on $\tilde \Xi_{n, d}$ as well in exactly the same way.

\begin{prop}
\label{prop-A-f}
Let $A\in \widetilde \Xi_{n, d}$.
% such that  $A - \sum_{1\leq i\leq n} \alpha_i E^{i, i+1}_{\theta, n}$ is diagonal.
There exist $Z_i \in \widetilde \Xi_{n, d}$, for $1\leq i\leq m$,  with $Z_i < A$, $Q_i (v, v') \in \mathscr R$ 
and $p_0\in \mbb N$ such that
\begin{align}
\label{A-f}
[_p A]_{d+\frac{p}{2} n} = \f_{_p A; d + \frac{p}{2} n} + \sum_{i=1}^m Q_i(v, v^{-p}) [_pZ_i]_{d+\frac{p}{2} n}, 
\quad \forall p \geq p_0, p\in 2\mbb N.
\end{align}
\end{prop}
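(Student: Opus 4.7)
The plan is to lift the problem to the larger Schur algebra $\mbf S^{\C}_{\breve n, d+\frac{p}{2} n}$ via the embedding $\rho$ of Proposition~\ref{prop:rho}, in which the stabilization formulas of Proposition~\ref{4.2.3-stab-v3} can be applied directly. Since $\ddot({}_pA) = {}_{\ddot p}\ddot A$, and since $\rho$ sends $[_pA]_{d+\frac{p}{2} n}$ to $[_{\ddot p}\ddot A]_{d+\frac{p}{2} n}$ and $\f_{_pA; d+\frac{p}{2} n}$ to $\ddot\f_{_pA; d+\frac{p}{2} n}$, it suffices to prove the analogue of \eqref{A-f} inside $\mbf S^{\C}_{\breve n, d+\frac{p}{2} n}$ and then pull back via $\rho^{-1}$. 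First I observe that the tridiagonal decomposition of Theorem~\ref{new-10.1.3} depends only on the off-diagonal entries of $A$, so shifting $A\mapsto{}_pA$ shifts each $A_i\mapsto{}_pA_i$; consequently $\ddot\f_{_pA;d+\frac{p}{2} n}$ is built from the same sequence $(\mbf i, \mbf a)$ of Chevalley indices and divided-power exponents as $\ddot\f_{A;d}$, acting on the shifted idempotent $[{}_{\ddot p}D]$ with $D = 1_{\co(\ddot A)}$. For $p\ge p_0$ with $p_0$ sufficiently large every intermediate diagonal matrix has nonnegative entries, so Proposition~\ref{4.2.3-stab-v3} (together with its evident $\breve\e$-type analogue) yields
\[
\ddot\f_{_pA;d+\frac{p}{2} n} \;=\; \sum_{\mbf t \in \mathcal T_{\mbf i,\mbf a,D}} Q^{\mbf t}_{\mbf i,\mbf a;D}(v,v^{-p})\,[_{\ddot p}D_{\mbf i,\mbf a,\mbf t}]_{d+\frac{p}{2} n},
\]
with both the index set $\mathcal T_{\mbf i,\mbf a,D}$ and the polynomials $Q^{\mbf t}_{\mbf i,\mbf a;D}(v,v')\in\mathscr R$ independent of $p$.

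Next, by Lemma~\ref{fA-leading} there is a distinguished $\mbf t_0$ with $D_{\mbf i,\mbf a,\mbf t_0} = \ddot A$ and leading coefficient $1$, while every other matrix $M := D_{\mbf i,\mbf a,\mbf t}$ satisfies $M < \ddot A$ in the partial order on $\widetilde\Xi_{\breve n}$. Grouping terms by the resulting matrix and inverting the triangular relation gives
\[
[_{\ddot p}\ddot A]_{d+\frac{p}{2} n} \;=\; \ddot\f_{_pA;d+\frac{p}{2} n} \;-\; \sum_{M < \ddot A} \widetilde Q_M(v,v^{-p})\,[_{\ddot p}M]_{d+\frac{p}{2} n},
\]
where $\widetilde Q_M(v,v') := \sum_{\mbf t\ne \mbf t_0,\ D_{\mbf i,\mbf a,\mbf t} = M} Q^{\mbf t}_{\mbf i,\mbf a;D}(v,v')$ is an element of $\mathscr R$ that depends only on $A$ and $M$.

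The final and main step is to show that only matrices $M = \ddot Z$ with $Z\in \widetilde\Xi_{n,d}$, $Z<A$ actually contribute. Both $[_{\ddot p}\ddot A]$ and $\ddot\f_{_pA} = \rho(\f_{_pA})$ lie in the subalgebra $\ddot{\mbf S}^{\C}_{n,d+\frac{p}{2} n}$, whose standard basis is $\{[N]: N\in \ddot\Xi_{\breve n, d+\frac{p}{2} n}\}$. Linear independence of the standard basis of $\mbf S^{\C}_{\breve n,d+\frac{p}{2} n}$ therefore forces $\widetilde Q_M(v,v^{-p}) = 0$ for every $M\notin \ddot\Xi$ and every admissible $p$. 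Because a nonzero Laurent polynomial in $v'$ cannot vanish at infinitely many distinct specializations $v^{-p}$, we conclude $\widetilde Q_M(v,v')\equiv 0$ in $\mathscr R$ for each such $M$. Enumerating the surviving $Z_i$ with $\ddot Z_i = M < \ddot A$, setting $Q_i(v,v') := -\widetilde Q_{\ddot Z_i}(v,v')$, and pulling back via $\rho^{-1}$ yields the desired identity~\eqref{A-f}. The hard part will be precisely this cancellation step: it requires promoting the containment $\ddot\f_{_pA}\in \ddot{\mbf S}^{\C}_{n,d+\frac{p}{2} n}$ from a family of specializations to an identity of polynomials in $\mathscr R$, which is exactly what allows the coefficients $Q_i$ to be chosen \emph{independent} of $p$ as the proposition demands.
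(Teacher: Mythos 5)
Your proof is correct and takes essentially the same route as the paper's: transport the statement via $\rho$ (Proposition~\ref{prop:rho}) into $\mbf S^{\C}_{\breve n, d+\frac{p}{2}n}$, apply Proposition~\ref{4.2.3-stab-v3} to get coefficients $Q^{\mbf t}_{\mbf i,\mbf a;D}(v,v')\in\mathscr R$ independent of $p$, and use the leading-term statements (Lemma~\ref{fA-leading}, Theorem~\ref{new-10.1.3}) to isolate $[_{\ddot p}\ddot A]$ with coefficient $1$. One quibble in your final cancellation step: any $M$ not of the form $\ddot Z$ must have negative $(1,1)$-entry (its off-diagonal first-row/column entries are nonnegative while the corresponding row and column sums are $0$), so $[_{\ddot p}M]_{d+\frac{p}{2}n}=0$ for all $p$ by the convention \eqref{con}; hence linear independence gives no constraint and $\widetilde Q_M$ need not vanish identically in $\mathscr R$ --- but since these terms contribute zero at every admissible specialization, the per-$p$ identity \eqref{A-f} that the proposition asserts is unaffected.
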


\begin{proof}
We transport the statement via $\rho$ to a similar one for $\ddot \f_{_{p}A; d+\frac{p}{2} n}$ in $\mbf S^{\C}_{\breve n, d+\frac{p}{2}n}$. 
The existence of $p_0$, $Z_i$ and $Q_i(v, v')$ follows by Proposition ~\ref{4.2.3-stab-v3}.
The claim on the leading term  follows from  Theorem~\ref{new-10.1.3}.
\end{proof}

Now we can formulate the stabilization of the multiplication of $\Sj$.

\begin{prop}
\label{new-10.2.1}
Assume that  $A_1, \ldots, A_l \in \widetilde \Xi_{n, d}$ satisfy $\co (A_i) = \ro (A_{i+1})$ for all $1\leq i\leq l-1$.
There exist $Z_1, \ldots, Z_m \in \widetilde \Xi_{n, d}$, $G_1(v,v'), \ldots, G_m(v, v') \in \mathscr R$, and $p_0\in \mbb N$
such that 
\begin{align}
\label{10.2.1-a}
[_p A_1]_{d+\frac{p}{2} n} * [_p A_2]_{d+\frac{p}{2} n} * \cdots * [_p A_l]_{d+\frac{p}{2} n} 
= \sum_{i=1}^m G_i(v, v^{-p}) [_p Z_i]_{d+\frac{p}{2} n}, 
\quad  \forall p\geq p_0, p\in 2\mbb N.
\end{align}
\end{prop}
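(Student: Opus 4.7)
The plan is to reduce the product of standard basis elements to a product of the monomial basis elements $\f_{\cdot;d}$ via Proposition~\ref{prop-A-f}, then transport via the imbedding $\rho$ of Proposition~\ref{prop:rho} into $\mbf S^\C_{\breve n, d+\frac{p}{2}n}$, where the whole expression becomes a long word in divided powers of Chevalley generators acting on an idempotent. The stabilization of such words in $p$ is already available via Proposition~\ref{4.2.3-stab-v3}, and pulling back through $\rho^{-1}$ gives the asserted formula.

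More precisely, I would argue by induction on $l$, combined with an induction on the tuple $(A_1,\dots,A_l)$ under a well-founded refinement of the partial order $\leq$ on $\widetilde{\Xi}_{n,d}$. The base case $l=1$ is trivial. For the inductive step, apply Proposition~\ref{prop-A-f} to each factor $[_p A_k]_{d+\frac{p}{2}n}$ in turn, writing it as $\f_{_p A_k;d+\frac{p}{2}n}$ plus a finite $\mathscr R$-linear combination of standard basis elements $[_p Z]_{d+\frac{p}{2}n}$ with $Z<A_k$ and with coefficients of the form $Q(v,v^{-p})$; the resulting ``correction'' products are strictly smaller in the well-founded order and thus handled by the inductive hypothesis.

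The remaining leading term
\[
\f_{_p A_1; d+\frac{p}{2}n} * \f_{_p A_2;d+\frac{p}{2}n} * \cdots * \f_{_p A_l; d+\frac{p}{2}n}
\]
is, by the very definition \eqref{fA}--\eqref{eq:fA}, the preimage under $\rho$ of a concatenated monomial
\[
\breve{\f}_{i_1}^{(a_1)}*\breve{\f}_{i_2}^{(a_2)}*\cdots*\breve{\f}_{i_s}^{(a_s)}* 1_{\co(\ddot{_p A_l})}
\]
in $\mbf S^\C_{\breve n, d+\frac{p}{2}n}$, in which the sequence $(\mbf i,\mbf a)$ is determined by $(A_1,\dots,A_l)$ but is independent of $p$. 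Proposition~\ref{4.2.3-stab-v3} then expresses this as $\sum_{\mbf t\in\mathcal T_{\mbf i,\mbf a,\ddot A_l}} Q^{\mbf t}_{\mbf i,\mbf a;\ddot A_l}(v,v^{-p})\,[_{\ddot p}(\ddot A_l)_{\mbf i,\mbf a,\mbf t}]_{d+\frac{p}{2}n}$, with both the index set and the polynomials $Q^{\mbf t}_{\mbf i,\mbf a;\ddot A_l}(v,v')\in\mathscr R$ independent of $p$. Since the starting word lies in $\ddot{\mbf S}^\C_{n,d+\frac{p}{2}n} = \breve\bj_1 \mbf S^\C_{\breve n,d+\frac{p}{2}n}\breve\bj_1$, only those resulting matrices whose rows and columns indexed $\pm 1\pmod{\breve n}$ are zero contribute; these correspond under $\mrm{dlt}_1$ to the sought matrices $Z_i = \mrm{dlt}_1((\ddot A_l)_{\mbf i,\mbf a,\mbf t})\in\widetilde{\Xi}_{n,d}$, and we set $G_i(v,v') = Q^{\mbf t_i}_{\mbf i,\mbf a;\ddot A_l}(v,v')$ for the surviving $\mbf t_i$.

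The main obstacle is making sure the induction terminates with only finitely many $Z_i$ and with a \emph{uniform} threshold $p_0$: each application of Proposition~\ref{prop-A-f} produces finitely many ``lower'' $Z<A_k$, but when we iterate these through the remaining product we must ensure that the union of all lower-order strata invoked, as well as all the thresholds in the individual applications of Propositions~\ref{prop-A-f} and \ref{4.2.3-stab-v3}, collapse to a finite data set. This follows because the partial order $\leq$ on $\widetilde{\Xi}_{n,d}$ with fixed row/column sums is well-founded below any given matrix, so the recursion traverses only finitely many strata; within each stratum finitely many new $(Z,G)$ pairs are introduced, and each individual stabilization threshold is finite, so their maximum serves as the desired $p_0$. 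The compatibility of the transition formula in Proposition~\ref{prop-A-f} with shifts $A\mapsto {}_pA$ (so that the polynomial coefficients are obtained by evaluating a fixed $Q(v,v')\in\mathscr R$ at $v'=v^{-p}$) is exactly what allows the induction to produce coefficients of the claimed form $G_i(v,v^{-p})$.
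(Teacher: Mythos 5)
Your proposal is correct and follows essentially the same route as the paper: the paper's proof likewise applies Proposition~\ref{4.2.3-stab-v3} (through the imbedding $\rho$ into $\mbf S^{\C}_{\breve n, d+\frac{p}{2}n}$) to get the stabilized formula for products of the elements $\f_{_pA_i; d+\frac{p}{2}n}$, and then converts $[_pA_i]$ into $\f_{_pA_i}$ plus lower terms via Proposition~\ref{prop-A-f}, finishing by induction with respect to the partial order $\leq$ on the $A_i$'s. Your additional care about finiteness of the lower strata and the uniform threshold $p_0$ is consistent with, and fills in, what the paper leaves implicit.
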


\begin{proof}
By Proposition ~\ref{4.2.3-stab-v3}, we have a formula similar to (\ref{10.2.1-a})  with $_p A_i$ replaced by $\f_{_pA_i; d+\frac{p}{2}n}$.
The proposition now follows by using Proposition ~\ref{prop-A-f} 
and an induction with respect to the partial order $\leq$ on the $A_i$'s.
\end{proof}

We have the following corollary to Theorem ~\ref{new-10.1.3} and Proposition ~\ref{new-10.2.1}. 

\begin{cor}
\label{K-A-leading}
For any matrix  $A\in \widetilde \Xi_{n, d}$ of depth $m$, there exist 
tridiagonal matrices $A_1$, $A_2, \ldots, A_m$  
in $\widetilde \Xi_{n, d}$
satisfying $\ro (A_m) = \ro (A)$, $\co (A_1)= \co (A)$, $\ro (A_i) = \co (A_{i+1})$ for $1\leq i\leq m-1$ 
and 
$A_i - \sum_{1 \leq j \leq n} (\sum_{k \leq j + i -1} a_{k, j+1}) E^{j, j+1}_{\theta}$ is  diagonal 
for all $1\leq i \leq m$ such that
\[
[_pA_m]_{d+\frac{p}{2} n}  *  [_pA_{m-1}]_{d+\frac{p}{2} n}   * \dots * [_pA_1]_{d+\frac{p}{2}n} = [_pA]_{d+\frac{p}{2} n} +\sum_{i=1}^l G_i(v, v^{-p}) [_p Z_i]_{d+ \frac{p}{2} n}, \ \forall p\in 2\mbb N, p\geq p_0,
\]
where $p_0$,  $G_i(v, v')\in \mathscr R$ and $Z_1,\ldots, Z_l \in \widetilde \Xi_{n, d}$ are as in Proposition ~\ref{new-10.2.1} such that $Z_i < A$.
\end{cor}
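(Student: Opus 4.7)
The plan is to combine Theorem~\ref{new-10.1.3}, which supplies the tridiagonal decomposition and leading-term identity at each finite level, with Proposition~\ref{new-10.2.1}, which upgrades products of standard basis elements to a stabilized form as $p\to\infty$. The key observation is that the combinatorics of the tridiagonal decomposition is independent of the diagonal, so it is compatible with the shift $A \mapsto {}_pA = A + pI_n$.

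First, I will apply Theorem~\ref{new-10.1.3} to $A$ to produce tridiagonal matrices $A_1,\ldots,A_m \in \widetilde\Xi_{n,d}$ with the stated row/column sum conditions. Because $A_i - \sum_{1\le j\le n}(\sum_{k\le j-i+1}a_{k,j+1})E^{j,j+1}_{\theta}$ is required to be diagonal, the super/sub-diagonal entries of $A_i$ are determined by the strictly off-diagonal entries of $A$, which are unchanged by the shift $A\mapsto {}_pA$. Consequently the tridiagonal matrices attached to ${}_pA$ by Theorem~\ref{new-10.1.3} coincide with ${}_pA_1,\ldots,{}_pA_m$, which moreover lie in $\widetilde\Xi_{n,d+\frac{p}{2}n}$ for all $p\ge 0$.

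Next, I will invoke Proposition~\ref{new-10.2.1} applied to the sequence $({}_pA_m,\ldots,{}_pA_1)$: there exist $Y_1,\ldots,Y_N \in \widetilde\Xi_{n,d}$, polynomials $H_j(v,v')\in\mathscr R$, and $p_0\in\mathbb N$ such that, for all even $p\ge p_0$,
\[
[{}_pA_m]_{d+\frac{p}{2}n}*\cdots*[{}_pA_1]_{d+\frac{p}{2}n} \;=\; \sum_{j=1}^N H_j(v,v^{-p})\,[{}_pY_j]_{d+\frac{p}{2}n}.
\]
On the other hand, Theorem~\ref{new-10.1.3} applied level-by-level to ${}_pA$ shows that this same product equals $[{}_pA]_{d+\frac{p}{2}n}$ plus a linear combination of $[B]_{d+\frac{p}{2}n}$ with $B<{}_pA$ in the Schur algebra $\mathbf{S}^{\mathfrak c}_{n,d+\frac{p}{2}n}$. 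Hence some $Y_{j_0}$ equals $A$, with $H_{j_0}(v,v^{-p})=1$ for infinitely many $p$; since a Laurent polynomial in $v'$ over $\mathbb Q(v)$ that takes the value $1$ at infinitely many points $v'=v^{-p}$ must be identically $1$, this forces $H_{j_0}(v,v')=1$. The remaining $Y_j$'s satisfy ${}_pY_j<{}_pA$, which (because the defining inequalities of $\le$ involve only entries strictly off the diagonal and the equalities of row/column sums) is equivalent to $Y_j<A$. Relabeling these as $Z_1,\ldots,Z_l$ with coefficients $G_i=H_i$ yields the claimed identity.

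The main technical obstacle is the rigidity step: ensuring that the coefficient of the leading term $[{}_pA]$ is exactly $1$ (and not merely $1$ modulo some stabilization error), and that each lower term is genuinely of the form $[{}_pZ_i]$ with $Z_i$ independent of $p$. Both issues are handled uniformly by the interpolation argument above, using that Proposition~\ref{new-10.2.1} already produces coefficients of the special shape $H(v,v^{-p})$ with $H\in\mathscr R$ fixed, against which Theorem~\ref{new-10.1.3} provides integer-valued specializations at all $d$ large enough.
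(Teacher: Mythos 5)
Your proposal is correct and follows essentially the same route the paper intends: Corollary~\ref{K-A-leading} is stated there as a direct consequence of Theorem~\ref{new-10.1.3} and Proposition~\ref{new-10.2.1}, and your argument is exactly that combination, with the (correct) observations that the tridiagonal decomposition commutes with the shift $A\mapsto {}_pA$ and that the partial order and leading coefficient are identified by comparing the two expressions at all large even $p$.
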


The following stabilization of the bar operator on $\Sj$ is a counterpart of  \cite[Proposition~4.3]{BLM90}.
It can be proved in the same way by induction with respect to the partial order $\leq$ on $A$, with the help of \eqref{A-f} and Corollary ~\ref{K-A-leading};
we skip the detail. 

\begin{prop}
\label{bar-stab}
Assume that $A \in \widetilde \Xi_{n, d}$. 
Then there exist $Y_i \in \widetilde \Xi_{n, d}$ with $Y_i < A$, $H_i (v, v') \in \mathscr R$ 
for all  $1\leq i \leq s$ and $p_0\in \mbb N$ such  that
\begin{align}
\overline{[_p A]}_{d+\frac{p}{2}n}  
= [_p A]_{d+\frac{p}{2}n}  + \sum_{i=1}^s H_i(v, v^{-p}) [_pY_i]_{d+\frac{p}{2} n}, \quad \forall p \geq p_0, p\in 2 \mbb N.
\end{align}
\end{prop}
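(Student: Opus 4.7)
The plan is to run a downward induction on $A$ with respect to the partial order $\leq$ on $\widetilde\Xi_{n,d}$, with the key input being the bar-invariance of the stabilized monomial $\f_{{}_p A;\,d+\frac{p}{2}n}$ and the stabilization formula \eqref{A-f} from Proposition~\ref{prop-A-f}. The base case is a diagonal $A$, in which case $[{}_pA]_{d+\frac{p}{2}n}$ is an idempotent (both $d_{{}_pA}$ and $d_{({}_pA)^t}$ vanish), hence bar-invariant, and there is nothing to do.

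For the inductive step, I will first argue that $\f_{{}_pA;\,d+\frac{p}{2}n}$ is bar-invariant in $\Sj$ for every $p\in 2\NN$: by Proposition~\ref{prop:rho} the imbedding $\rho$ intertwines the bar involutions on $\Sj$ and $\mbf S^{\C}_{\breve n,d+\frac{p}{2}n}$, while $\rho(\f_{{}_pA;\,d+\frac{p}{2}n})=\ddot\f_{{}_pA;\,d+\frac{p}{2}n}$ is bar-invariant as a product of bar-invariant divided powers of Chevalley generators times the bar-invariant idempotent $1_{\co(\ddot{{}_pA})}$. Applying the bar involution to \eqref{A-f}, which for $p\geq p_0$ reads $[{}_pA] = \f_{{}_pA;\,d+\frac{p}{2}n} + \sum_{i=1}^m Q_i(v,v^{-p})[{}_pZ_i]$ with $Z_i<A$ and $Q_i\in\mathscr R$, and eliminating $\f_{{}_pA;\,d+\frac{p}{2}n}$ between the original and barred equations, I obtain
\[
\overline{[{}_pA]} \;=\; [{}_pA] \;+\; \sum_{i=1}^m\Bigl(\overline{Q_i(v,v^{-p})}-Q_i(v,v^{-p})\Bigr)[{}_pZ_i] \;+\; \sum_{i=1}^m \overline{Q_i(v,v^{-p})}\,\bigl(\overline{[{}_pZ_i]}-[{}_pZ_i]\bigr).
\]
Since each $Z_i<A$, the inductive hypothesis expresses each $\overline{[{}_pZ_i]}-[{}_pZ_i]$ as a stable $\mathscr R$-linear combination of $[{}_pY]$ with $Y<Z_i<A$, valid for $p$ at least some $p_0^{(i)}$; I pass to the maximum of the finitely many $p_0$'s and collect terms to obtain the desired expansion $\overline{[{}_pA]}=[{}_pA]+\sum_j H_j(v,v^{-p})[{}_pY_j]$ with $Y_j<A$.

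The one technical point to verify, which is the main (though mild) obstacle, is that after applying the bar involution of $\mbb Q(v)$ to $Q_i(v,v^{-p})$ the coefficients remain in the form $H(v,v^{-p})$ for a fixed $H\in\mathscr R$ independent of $p$. This works because $\mathscr R$ carries its own bar involution ($\bar v=v^{-1}$, $\bar v'=v'^{-1}$), and writing $Q(v,v')=\sum_k a_k(v)v'^k\in\mathscr R$ one computes directly that $\overline{Q(v,v^{-p})}=\sum_k a_k(v^{-1})v^{kp}=\bar Q(v,v^{-p})$, so the substitution $v'\mapsto v^{-p}$ intertwines the two bars. Products and $\mbb Q(v)$-linear combinations of elements of the form $H(v,v^{-p})$ with $H\in\mathscr R$ are again of this form, so the coefficient ring $\mathscr R$ is preserved throughout the computation; this is what makes the induction go through with a uniform $p_0$.
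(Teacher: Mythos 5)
Your argument is correct and is essentially the proof the paper has in mind: a BLM-style induction with respect to the partial order $\leq$, using the stabilization formula \eqref{A-f} together with the bar-invariance of the monomials $\f_{{}_pA;\,d+\frac{p}{2}n}$ (established via $\rho$ exactly as in the text), and the observation that substituting $v'\mapsto v^{-p}$ intertwines the bar involutions of $\mathscr R$ and $\mbb Q(v)$. Note only that your "diagonal base case" is not needed as stated: minimal elements of the poset need not be diagonal, but they are automatically covered by your inductive step, since for such $A$ Proposition~\ref{prop-A-f} forces the lower-order sum in \eqref{A-f} to be empty and $[{}_pA]=\f_{{}_pA;\,d+\frac{p}{2}n}$ is then bar-invariant outright.
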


%\begin{proof}
%We  prove the statement by induction with respect to the partial order $\leq$. 
%If $A$ is diagonal, it is trivial. Otherwise, we apply the bar operator to \eqref{A-f} 
%and use the fact that $\f_{A; d}$ is bar-invariant to get the following.
%\begin{align}
%\begin{split}
%\overline{[_p A]}_{d+\frac{p}{2} n}  & =   \f_{_p A; d+\frac{p}{2} n} + \sum_{i=1}^s \overline{H_i(v, v^{-p})} \ \overline{[_pY_i]}_{d+\frac{p}{2}n} \\
%
%&= [_p A]_{d+\frac{p}{2} n} - \sum_{j=1}^m Q_j(v, v^{-p}) [_pZ_j]_{d+\frac{p}{2}n}  + 
%\sum_{i=1}^s\overline{H_i(v, v^{-p})} \ \overline{[_pY_i]}_{d+\frac{p}{2} n}.
%\end{split}
%\end{align}
%Now apply the induction assumption on $Y_i$ to complete the proof for $A$.
%\end{proof}

%%%%%
\section{Comultiplication and stabilization} 
 
In the section, we take advantage of the embedding $\rho: \Sj \rightarrow \U^{\C}_{\breve n, d}$ to study the coassociativity 
and stability behavior of the comultiplication $\Delta^{\C}: {\mbf S}_{n,d}^{\fc} \longrightarrow  {\mbf S}_{n,d'}^{\fc} \otimes  {\mbf S}_{n,d''}$ 
(recall $\Delta^{\C}$ was defined in \eqref{Dj}). 

To avoid any ambiguity,  we put a subscript $n$ to the comultiplication $\Delta^{\C}$ of $\Sj$, 
and use  $\Delta^{\C}_{\breve n}$ for that on $\mbf S^{\C}_{\breve n, d}$.
We apply the same convention to the imbedding $\rho$ too.
Note that exactly the same definition gives rise to an imbedding $\mbf S_{n, d} \to \mbf S_{\breve n, d}$, 
which we shall again denote  by $\rho_d$.
The following lemma shows the compatibility of the comultiplications and the imbedding $\rho$.

\begin{lem}
\label{rho}
The following diagram is commutative:
\[
\begin{CD}
\Sj @>\Delta^{\C}_n >> \mbf S^{\C}_{n, d'} \otimes \mbf S_{n, d''} \\
@V \rho_d VV @VV\rho_{d'} \otimes \rho_{d''} V\\
\U^{\C}_{\breve n, d} @>  \Delta^{\C}_{\breve n} >>  \U^{\C}_{\breve n, d'} \otimes \U_{\breve n, d''}
\end{CD}
\]
\end{lem}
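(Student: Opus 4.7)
The plan is to reduce the claim to the analogous statement for the raw comultiplications $\widetilde{\Delta}^{\C}_n$ and $\widetilde{\Delta}^{\C}_{\breve n}$, prove that statement geometrically, and then verify that the renormalizations in passing from $\widetilde{\Delta}^{\C}$ to $\Delta^{\C}$ are compatible with $\rho$.

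First, I would unpack $\rho_d$ geometrically. By Proposition \ref{prop:rho}, $\rho_d$ is induced by the $\SP_F(2d)$-equivariant inclusion $\iota : \X^{\C}_{n,d} \hookrightarrow \X^{\C}_{\breve n, d}$ sending $L$ to $\breve L = (L_0, L_0, L_1, L_2, \ldots, L_{n-1}, L_{n-1}, L_n)$; its image is cut out by the conditions $\breve L_0 = \breve L_1$ and $\breve L_n = \breve L_{n+1}$. The key observation is that the doubling operation $L \mapsto \breve L$ \emph{commutes with the projections} used in defining $\widetilde{\Delta}^{\C}$: for any isotropic $V'' \subseteq V$ with $V' = V''^{\perp}/V''$,
\[
\pi''(\breve L) = \widetilde{\pi''(L)}, \qquad \pi^{\natural}(\breve L) = \widetilde{\pi^{\natural}(L)},
\]
because intersecting with $V''$ and forming the quotient $(\,\cdot \cap V''^{\perp} + V'')/V''$ are strict and preserve the equalities $L_0 = L_1$ and $L_{n-1} = L_{n}$ defining the image of $\iota$. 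Furthermore, the fiber sets $\Z^{\C}_{L', L''}$ in the $\breve n$-setting, when $L'$, $L''$ are themselves doublings, are in canonical bijection with the fiber sets $\Z^{\C}_{L', L''}$ in the $n$-setting, via the doubling of the middle intermediate lattice. Hence, substituting into the formula \eqref{tDj-formula} and using the definition of $\rho$, we obtain
\[
\widetilde{\Delta}^{\C}_{\breve n} \circ \rho_d = (\rho_{d'} \otimes \rho_{d''}) \circ \widetilde{\Delta}^{\C}_n.
\]

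Next, I would pass from $\widetilde{\Delta}^{\C}$ to $\Delta^{\C}$ by tracking the renormalization in \eqref{Dagger} and \eqref{Dj}. On the component $\mbf S^{\C}_{n,d}(\mbf b,\mbf a) \to \mbf S^{\C}_{n,d'}(\mbf b',\mbf a') \otimes \mbf S_{n,d''}(\mbf b'',\mbf a'')$, the factor $v^{s(\mbf b',\mbf a',\mbf b'',\mbf a'') + u(\mbf b'',\mbf a'')}$ depends only on the row/column sum vectors. Under $\rho$ these transform in a controlled way: $\ro(\ddot A)_i = \ro(A)_i$ at the non-inserted positions and $\ro(\ddot A)_i = 0$ at $i \equiv 1, \breve n - 1 \pmod{\breve n}$, and likewise for columns. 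A direct bookkeeping shows that $s(\ddot{\mbf b}', \ddot{\mbf a}', \ddot{\mbf b}'', \ddot{\mbf a}'') = s(\mbf b',\mbf a',\mbf b'',\mbf a'')$ and $u(\ddot{\mbf b}'', \ddot{\mbf a}'') = u(\mbf b'',\mbf a'')$, because every summand in the expressions defining $s$ and $u$ that involves one of the inserted indices $1$ or $\breve n - 1$ has a factor of $0$. Thus the dagger-renormalization is compatible with $\rho$. Finally, for the twist $\xi$ in \eqref{Dj}, one uses the formula from the proof of Proposition \ref{xiC}, $\varepsilon_i(A) = \tfrac12 \sum_{-i \le s \le i}(\ro(A)_s - \co(A)_s)$, together with the same vanishing at the inserted indices, to verify that $\varepsilon_0(\ddot A) = \varepsilon_0(A)$ and that $\varepsilon_{\breve n -1}(\ddot A)$ agrees with $\varepsilon_{n-1}(A)$ modulo the symmetry $\ro(\cdot)_{-s} = \ro(\cdot)_s$ in $\widetilde{\MX}$. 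This gives the compatibility of $\xi^{\fc}_{d',0,d''}$ and of $\xi_{d'',0,-(2d'+2)}\xi_{d'',\breve n-1,-(d''-1)}$ with $\rho$, and concludes the proof.

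The main obstacle I expect is the last index comparison: the rescaling at position $n-1$ in the $n$-setting does not correspond under the obvious index shift $0,1,\ldots,n-1 \leftrightarrow 0,2,3,\ldots,n$ to the rescaling at position $\breve n - 1 = n+1$ in the $\breve n$-setting. Reconciling these requires using the centrosymmetry $\ddot a_{-i,-j} = \ddot a_{i,j}$ together with the vanishing of the rows/columns at inserted indices to rewrite $\varepsilon_{\breve n - 1}(\ddot A)$ as $\varepsilon_{n-1}(A)$; this is really the only non-formal point of the argument. Once this index-matching is in place, everything else is bookkeeping, and as a sanity check one may verify the resulting formulas for $\Delta^{\C}_{\breve n}\circ\rho$ on Chevalley generators against those for $(\rho\otimes\rho)\circ\Delta^{\C}_n$ using Proposition \ref{Dc}.
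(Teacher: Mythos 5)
Your proposal follows essentially the same route as the paper's proof: reduce to the raw comultiplications $\widetilde\Delta^{\C}$, where commutativity with $\rho$ holds essentially by definition (the doubling $L\mapsto\breve L$ is compatible with the projections $\pi''$, $\pi^{\natural}$ and with the fiber sets $\Z^{\C}_{L',L''}$), and then note that the twists $s(\mbf b',\mbf a',\mbf b'',\mbf a'')$ and $u(\mbf b'',\mbf a'')$ are unchanged under the weight imbeddings $\Lambda_{n,d}\to\Lambda_{\breve n,d}$ and $\Lambda^{\C}_{n,d}\to\Lambda^{\C}_{\breve n,d}$, since every summand involving an inserted index carries a factor $0$; this is exactly the paper's (terse) argument, and you are in fact more careful than the paper in also tracking the $\xi$-rescalings of \eqref{Dj}. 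One small correction on that last point: your proposed mechanism for identifying $\varepsilon_{\breve n-1}(\ddot A'')$ with $\varepsilon_{n-1}(A'')$ cannot invoke the formula from Proposition \ref{xiC} or centro-symmetry, because the second tensor factor is the affine type $A$ Schur algebra, whose matrices are not centrosymmetric. The identity nevertheless holds by direct bookkeeping: the inserted rows/columns of $\ddot A''$ at $\pm 1$ (mod $\breve n$) are zero, equivalently the doubled flag satisfies $\breve L_{\breve n-1}=L_{n-1}$, so the cut at position $\breve n-1$ in the $\breve n$-setting coincides with the cut at $n-1$ in the $n$-setting and $\varepsilon_{\breve n-1}(\ddot A'')=\varepsilon_{n-1}(A'')$ (and likewise $\varepsilon_{0}(\ddot A'')=\varepsilon_{0}(A'')$). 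With that repair your argument is complete and agrees in substance with the paper's.
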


\begin{proof}
By definitions, we have a similar commutative diagram with the  $\Delta^{\C}$'s replaced by the raw ones $\widetilde \Delta^{\C}$; cf. \eqref{tDj}.
Now the twists $s(\mbf b', \mbf  a', \mbf b'', \mbf a'')$ and $u(\mbf b'', \mbf a'')$ remain unchanged under the obvious imbeddings
$\Lambda_{n, d} \to \Lambda_{\breve n, d}$ and $\Lambda^{\C}_{n, d} \to \Lambda^{\C}_{\breve n, d}$. This immediately shows that the commutative diagram for $\widetilde \Delta^{\C}$'s can be extended to the one in the lemma. 
\end{proof}

\begin{prop}
\label{Sj-coass}
The comultiplication  $\Delta^{\C}$  on $\Sj$ is coassociative, that is, 
$$(1\otimes \Delta) \Delta^{\C} = (\Delta^{\C} \otimes 1) \Delta^{\C}.$$
\end{prop}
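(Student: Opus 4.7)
The plan is to reduce the coassociativity on the Schur algebra $\Sj$ to the already established coassociativity on the Lusztig algebra $\bU^{\C}_{\breve n,d}$ of higher rank, by means of the embedding $\rho$ of Proposition~\ref{prop:rho}. The key point is that, by \eqref{eq:SU}, the image of $\rho_d:\Sj \to \mbf S^{\C}_{\breve n,d}$ lies entirely inside the Lusztig algebra $\bU^{\C}_{\breve n,d}$, since matrices in $\Xi_{\breve n,d}$ with a zero row and column are automatically aperiodic.

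First, I would establish the affine type $A$ analogue of Lemma~\ref{rho}, namely the commutativity of
\[
\begin{CD}
\mbf S_{n,d} @>\Delta>> \mbf S_{n,d'}\otimes \mbf S_{n,d''} \\
@V\rho_d VV @VV \rho_{d'}\otimes \rho_{d''} V \\
\mbf S_{\breve n,d} @>\Delta>> \mbf S_{\breve n,d'}\otimes \mbf S_{\breve n,d''}
\end{CD}
\]
where the horizontal maps are the comultiplications \eqref{A-coass}. This follows by the same argument as Lemma~\ref{rho}: on the level of raw comultiplications it is a direct geometric consequence of the description of $\pi^{\natural}$ and $\pi''$ (the expansion procedure $A \mapsto \ddot A$ only inserts zero rows/columns and does not affect the relevant intersections), while the renormalization twists $s(\mbf b',\mbf a',\mbf b'',\mbf a'')$ and $u(\mbf b'',\mbf a'')$ of \eqref{Dagger} are unchanged under the natural inclusions $\Lambda_{n,d}\hookrightarrow \Lambda_{\breve n,d}$ and $\Lambda^{\C}_{n,d}\hookrightarrow \Lambda^{\C}_{\breve n,d}$ induced by $\ddot{}$.

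Second, I would apply the map $\rho_{d_1}\otimes \rho_{d_2}\otimes \rho_{d_3}$ (for $d=d_1+d_2+d_3$) to both sides of the desired identity. Using Lemma~\ref{rho} on the outer $\Delta^{\C}_n$ and the $A$-analog above on the inner $\Delta_n$, the left-hand side becomes
\[
(\rho_{d_1}\otimes \rho_{d_2}\otimes \rho_{d_3})\bigl((1\otimes \Delta_n)\Delta^{\C}_n(x)\bigr)
= (1\otimes \Delta_{\breve n})\,\Delta^{\C}_{\breve n}\bigl(\rho_d(x)\bigr),
\]
and similarly the right-hand side becomes $(\Delta^{\C}_{\breve n}\otimes 1)\,\Delta^{\C}_{\breve n}(\rho_d(x))$. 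By \eqref{eq:SU} we have $\rho_d(x)\in \bU^{\C}_{\breve n,d}$, so the coassociativity of Proposition~\ref{coassociative}, applied at rank $\breve n$, yields the equality of these two elements. Injectivity of $\rho_{d_1}\otimes \rho_{d_2}\otimes \rho_{d_3}$ then gives the claim in $\mbf S^{\C}_{n,d_1}\otimes \mbf S_{n,d_2}\otimes \mbf S_{n,d_3}$.

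The only real piece of work is the type $A$ analog of Lemma~\ref{rho} in the first step, but this is routine given the explicit formulas \eqref{A-coass} for $\Delta$ and the transparent compatibility of the renormalization twists with $\ddot{}$; no new geometric or combinatorial input is required. Everything else is bookkeeping combining Lemma~\ref{rho}, the inclusion \eqref{eq:SU}, and Proposition~\ref{coassociative}.
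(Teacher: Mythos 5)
Your argument is correct and follows essentially the same route as the paper's own proof: transport the identity through the embedding $\rho$ via Lemma~\ref{rho}, note the image lands in $\bU^{\C}_{\breve n,d}$ by \eqref{eq:SU}, and invoke the coassociativity of Proposition~\ref{coassociative} at rank $\breve n$. The only difference is that you spell out the type $A$ analogue of Lemma~\ref{rho} needed for the inner $\Delta$, a detail the paper leaves implicit, and your justification of it is sound.
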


\begin{proof}
By Lemma ~\ref{rho}, 
this is a consequence of the fact that the restriction of $\Delta^{\C}_{\breve n}$ to $\U^{\C}_{\breve n, d}$ is coassociative in 
Proposition ~\ref{coassociative}.
\end{proof}

\begin{rem}
 \label{S-coass}
Recall the comultiplication $\Delta$ on $\mbf S_{n, d}$ of affine type $A$ from (\ref{A-coass}). 
It follows by the same argument as above that
the comultiplication $\Delta$ on $\mbf S_{n, d}$ is coassociative, that is,
$(1\otimes \Delta) \Delta = (\Delta\otimes 1) \Delta$.
\end{rem}

Now, we  study the stabilization behavior of the comultiplication $\Delta^{\C}_n$ (cf. \eqref{Dj}) as $d$ varies.
Recall the notation $\models$ from Section ~\ref{another}. We generalize it as follows.
For any $\lambda'$, $\lambda$ in $\mbb Z_n^{\C}$ and $\lambda'' \in \mbb Z_n$, we say that
$(\lambda', \lambda'') \models \lambda$ if $\lambda_i = \lambda'_i + \lambda'_{-i} +\lambda''_i + \delta_{i, 0} + \delta_{i, r+1}$
for all $i\in \mbb Z$. 
Let
\[
\Delta^{\C}_{\mbf b', \mbf a', \mbf b'', \mbf a''} : \Sj(\mbf b, \mbf a) \longrightarrow \Sj \overset{\Delta^{\C}}{\longrightarrow} \mbf S^{\C}_{n, d'} \otimes \mbf S_{n, d''}
\longrightarrow
\Sj(\mbf b', \mbf a') \otimes \mbf S_{n, d''}(\mbf b'', \mbf a'')
\]
be a component of $\Delta^{\C}$ with $(\mbf b', \mbf b'') \models \mbf b$, $(\mbf a', \mbf a'') \models \mbf a$, 
where the first and third maps are the natural inclusion and projection, respectively.
Similar to the notation $_p A$, we can define the notation $_p \mbf a = \mbf a + ( \ldots, p, p, p, \ldots)$. 
We put 
\[
{}_p \Delta^{\C}_{\mbf b', \mbf a', \mbf b'', \mbf a''} =\Delta^{\C}_{_p\mbf b', {}_p \mbf a', {}_p \mbf b'',  {}_p\mbf a''}.
\]

\begin{prop}
\label{cop-stab}
Assume that $d'+d''=d$ and  let $\mbf b', \mbf a'  \in \mbb Z^{\C}_n $ and $\mbf b'', \mbf a'' \in \mbb Z_n$ be so that 
$_p\Delta^{\C}_{\mbf b', \mbf a', \mbf b'', \mbf a''}$ is defined.
Fix $A\in \widetilde \Xi_{n,d}$.
There exist  matrices $A'_i \in \widetilde \Xi_{n, d'}$ where $1\leq i\leq l$ for some $l$,  
matrices $A''_j$ in $\widetilde{\Theta}_{n, d''}$  where $1\leq j \leq m$ for some $m$,
$C_{i,j}(v, v') \in \mathscr R$ for $1\leq i \leq l$, $1\leq j\leq m$, and $p_0\in \mbb N$ such that 
\[
_{p}\Delta^{\C}_{\mbf b', \mbf a', \mbf b'', \mbf a''} ([_{2p} A]_{d + pn}) 
= \sum_{1\leq i\leq l, 1\leq j\leq m} 
C_{i, j} (v, v^{-p}) [_p A'_i]_{d'+\frac{p}{2} n} \otimes \ ^{\mathfrak a}[_{p}A''_j]_{d''+pn}, 
\quad \forall p \geq p_0, p\in 2\mbb N.
\]
\end{prop}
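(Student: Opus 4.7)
The plan is to reduce the stabilization problem to the Chevalley-generator level by combining the monomial expansion of Proposition~\ref{prop-A-f} with the embedding $\rho : \Sj \hookrightarrow \U^\C_{\breve n, d}$ of Proposition~\ref{prop:rho} and its compatibility with the comultiplication in Lemma~\ref{rho}. Concretely, I would first apply Proposition~\ref{prop-A-f} to write
\[
[_{2p} A]_{d+pn} = \f_{_{2p} A;\, d+pn} + \sum_{i} Q_i(v, v^{-p})\, [_{2p} Z_i]_{d+pn},
\]
with $Z_i < A$ in the partial order on $\widetilde\Xi_{n,d}$. Since the second summand involves strictly smaller matrices and already has coefficients of the required shape $Q_i(v, v^{-p})$, a straightforward induction with respect to $\le$ reduces the task to proving the proposition for the monomial $\f_{_{2p} A;\, d+pn}$.

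By construction~\eqref{fA}--\eqref{eq:fA}, $\f_{_{2p} A;\, d+pn}$ is the $\rho$-preimage of a product of divided powers of Chevalley generators in $\U^\C_{\breve n, d+pn}$. By Lemma~\ref{rho}, $\Delta^\C_{n}$ commutes with $\rho$, so I may compute $\Delta^\C_{\breve n}$ on this product factor-by-factor using Proposition~\ref{Dc}, and then reinterpret the answer in $\Sj \otimes \mbf S_{n,d''}$ via $\rho_{d'} \otimes \rho_{d''}$. Each Chevalley factor of the comultiplication produces a sum of three tensors, each involving a Chevalley or Cartan element paired with a weight-dependent scalar; multiplying all factors out yields a sum of tensor products of monomials in the Chevalley generators of the two factor algebras. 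I would then apply Proposition~\ref{new-10.2.1} on the type~$\C$ tensor factor and its affine type~$A$ analogue (which follows by the identical argument, already implicit in \cite{Lu99}) on the type~$A$ tensor factor in order to straighten each such product into a sum of standard basis elements $[_p A'_i]_{d'+\frac{p}{2}n}$ and ${}^{\mathfrak a}[_p A''_j]_{d''+pn}$ with coefficients in $\mathscr R$ specialized at $v' = v^{-p}$.

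The key stabilization claim is that after the shift $A \mapsto {}_{2p}A$ the $p$-dependence enters only through three controlled sources, each producing integer powers of $v^p$: (i) the renormalization prefactors $v^{\pm \delta_{i,0} d''}$, $v^{\pm \delta_{i,0}(2d'+2)}$ and $v^{\pm \delta_{i,0}(d''-1)}$ in the definition~\eqref{Dj} of $\Delta^\C$, which are linear in $p$ because $d' \mapsto d' + \tfrac{p}{2}n$ and $d'' \mapsto d'' + pn$; (ii) the action of Cartan factors $\mbf K''_j$, $\mbf k'_i$ on shifted weight components in the target, where each shift by $p$ yields an integer power of $v^p$; and (iii) the stabilization polynomial $Q^{\mathbf t}_{\mathbf i, \mathbf a; A}(v, v^{-p})$ produced by Proposition~\ref{new-10.2.1} when straightening the monomials into standard basis elements. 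Combining all three gives coefficients of the form $C_{i,j}(v, v^{-p})$ with $C_{i,j}(v, v') \in \mathscr R$, and the bound $p \ge p_0$ is needed precisely so that Proposition~\ref{prop-A-f} and Proposition~\ref{new-10.2.1} apply uniformly in $p$.

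The hard part will be the careful bookkeeping in step (iii): the monomial factorization produced by $\rho$ has length depending on the depth of $A$, and when $\Delta^\C_{\breve n}$ is applied to such a long product, one must verify that the multiplicity of $p$ in every resulting exponent is linear in the fixed combinatorial data $(A, \mathbf a', \mathbf a'', \mathbf b', \mathbf b'')$ and independent of $p$ itself, rather than, say, quadratic through cross terms between the $\xi$-rescalings and the Cartan actions. This linearity is ultimately a consequence of the explicit form of the comultiplication formulas in Proposition~\ref{Dc}, where every $p$-dependent scalar is a single tensor factor ($\mbf K''_j$, $\mbf k'_i$, or a $v^{\pm \delta_{i,0}\ast}$ prefactor) rather than an iterated one, and coassociativity (Proposition~\ref{Sj-coass}) provides an internal consistency check when the monomial factorization is rearranged.
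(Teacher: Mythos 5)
Your proposal follows essentially the same route as the paper's own proof: induction with respect to the partial order using Proposition~\ref{prop-A-f}, transport of the monomial $\f_{A;d}$ through $\rho$ via Lemma~\ref{rho}, and conclusion from the Chevalley-level comultiplication formulas of Proposition~\ref{Dc} together with the stabilized multiplication (the paper invokes Proposition~\ref{4.2.3-stab-v3} directly where you invoke Proposition~\ref{new-10.2.1}, which is built on it). The only point to tidy is that applying Proposition~\ref{prop-A-f} to the shift $_{2p}A$ gives coefficients $Q_i(v,v^{-2p})$ rather than $Q_i(v,v^{-p})$; one rewrites them as $\tilde Q_i(v,v^{-p})$ with $\tilde Q_i(v,v')=Q_i(v,v'^2)\in\mathscr R$, exactly as the paper does.
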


\begin{proof}
We prove this by induction with respect to the partial order on $A$. By Proposition ~\ref{prop-A-f},  we have
\[
[_{2p} A]_{d+pn} = \f_{_{2p} A; d+pn} + \sum_{i=1}^m Q_i(v, v^{-2p}) [_{2p}Z_i]_{d+pn}, \quad \forall p \geq p_0, p\in 2\mbb N.
\]
If we define $\tilde Q_i (v, v') = Q_i(v, v'^2)$ for all $i$, then we can rewrite the above equality as 
\[
[_{2p} A]_{d+pn} = \f_{_{2p} A; d+pn} + \sum_{i=1}^m \tilde Q_i(v, v^{- p}) [_{2p}Z_i]_{d+pn}, \quad \forall p \geq p_0, p\in 2\mbb N.
\]
With this equality and by induction,
it is reduced to proving a similar  statement with $[A]_d$ replaced by $\f_{A; d}$. 
By Lemma ~\ref{rho}, this is in turn reduced to proving a similar result for $\ddot \f_{A; d}$, which is then a consequence of 
Proposition ~\ref{4.2.3-stab-v3} and Proposition ~\ref{Dc}.
The proposition follows.
\end{proof}

%%%%%
\section{The algebra $\K^{\C}_n$ and its stably canonical basis}
\label{Kc}

Let $\K^{\C}_n$ be the vector space over $\mbb Q(v)$ spanned by the formal symbols $[A]$ where $A \in \widetilde \Xi_n$. 
By Proposition ~\ref{new-10.2.1} and applying a standard argument, 
the space $\K^{\C}_n$ becomes an associative algebra without unit  with the product
\begin{align}
\label{K-mult}
[A_1] \cdot [ A_2]  = \sum_{i=1}^m G_i(v, 1) [ Z_i], \quad \forall A_1, A_2 \in \widetilde \Xi_n,
\end{align}
where $G_i(v, v')$ and $Z_i$ for all $1\leq i\leq m$ are as in Proposition ~\ref{new-10.2.1}.
Corollary ~\ref{K-A-leading} implies the following. 

\begin{prop}
\label{new-11.2.1}
For any matrix  $A$  in $\widetilde \Xi_{n}$ of depth $m$, there exist tridiagonal matrices $A_1$, $A_2, \ldots, A_m \in \widetilde \Xi_n$
satisfying $\ro (A_m) = \ro (A)$, $\co (A_1)= \co (A)$, $\ro (A_i) = \co (A_{i+1})$ for $1\leq i\leq m-1$ 
and 
$A_i - \sum_{1 \leq j \leq n} (\sum_{k \leq j + i -1} a_{k, j+1}) E^{j, j+1}_{\theta}$ is  diagonal 
for all $1\leq i \leq m$ such that
\[
m'_A := [A_m]  \cdot  [A_{m-1}]   \cdot . . .  \cdot  [A_1]= [A] + \mbox{lower terms}.
\]
\end{prop}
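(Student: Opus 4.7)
The plan is to lift Corollary~\ref{K-A-leading}, which is the corresponding statement at the level of the Schur algebra $\Sj$, to the stabilization algebra $\K^{\C}_n$, using the definition \eqref{K-mult} of the product on $\K^{\C}_n$ as the $v'=1$ specialization of the stabilized structure constants from Proposition~\ref{new-10.2.1}.

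First, I would fix $A \in \widetilde{\Xi}_n$ of depth $m$; since $\widetilde{\Xi}_n = \bigsqcup_d \widetilde{\Xi}_{n,d}$, we may assume $A \in \widetilde{\Xi}_{n,d}$ for some $d$. Apply Corollary~\ref{K-A-leading} to this $A$ to produce tridiagonal matrices $A_1,\ldots,A_m \in \widetilde{\Xi}_{n,d}$ meeting the required row/column and diagonal-difference conditions (which are purely combinatorial conditions on the entries of $A$ itself and so make sense in $\widetilde{\Xi}_n$), together with matrices $Z_1,\ldots,Z_l \in \widetilde{\Xi}_{n,d}$ with $Z_i < A$, polynomials $G_i(v,v') \in \mathscr{R}$, and some $p_0 \in \mathbb{N}$, satisfying
\[
[{}_pA_m]_{d+\tfrac{p}{2}n} * \cdots * [{}_pA_1]_{d+\tfrac{p}{2}n} = [{}_pA]_{d+\tfrac{p}{2}n} + \sum_{i=1}^{l} G_i(v,v^{-p})\, [{}_pZ_i]_{d+\tfrac{p}{2}n}
\]
for all even $p\ge p_0$.

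Next, I would interpret this identity in $\K^{\C}_n$. By construction of the product on $\K^{\C}_n$ in \eqref{K-mult} via Proposition~\ref{new-10.2.1}, iterated multiplication of standard basis elements is governed precisely by the specialization $v' = 1$ of the stabilized structure constants $G_i(v,v')$. Applying this to the $m$-fold product $[A_m]\cdot[A_{m-1}]\cdots[A_1]$ and matching with the displayed Schur-algebra identity above yields
\[
m'_A = [A_m]\cdot[A_{m-1}]\cdots[A_1] = [A] + \sum_{i=1}^{l} G_i(v,1)\,[Z_i]
\]
in $\K^{\C}_n$. Since each $Z_i < A$, the sum on the right is precisely what the statement means by ``lower terms,'' finishing the proof.

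The only real content has already been absorbed into Proposition~\ref{new-10.2.1} and Corollary~\ref{K-A-leading}; what remains is essentially bookkeeping. The one point requiring care, which I would verify explicitly, is that the tridiagonal matrices $A_1,\ldots,A_m$ produced in Corollary~\ref{K-A-leading} depend only on $A$ (and not on the shift $p$ or on the choice of ambient $d$), so that they are well-defined elements of $\widetilde{\Xi}_n$ and the combinatorial relations $\ro(A_m)=\ro(A)$, $\co(A_1)=\co(A)$, $\ro(A_i)=\co(A_{i+1})$ transport directly from $\widetilde{\Xi}_{n,d}$ to $\widetilde{\Xi}_n$.
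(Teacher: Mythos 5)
Your proposal is correct and matches the paper's argument: the paper derives this proposition directly from Corollary~\ref{K-A-leading} together with the definition \eqref{K-mult} of the product on $\K^{\C}_n$ as the $v'=1$ specialization of the stabilized structure constants, exactly as you do. The bookkeeping point you flag (that the $A_i$ and the ordering conditions depend only on $A$) is indeed the only thing to check, and it holds for the reason you give.
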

Thus $\{m'_A | A\in  \widetilde \Xi_n\}$ forms a basis for $\K_n^\C$ (called {\em a semi-monomial basis}). 
Notice that the element  $m'_A$ is not necessarily bar-invariant.

For each matrix $A\in \widetilde \Xi_n$, we define the element $\f_A \in \K^{\C}_n$  to be 
\[
\f_A = [A] + \sum_{i=1}^m Q_i(v, 1) [Z_i],
\]
where $Q_i(v, v')$ and $Z_i$ are in (\ref{A-f}). 
In particular, we have
\[
\f_A   = [A] + \mbox{lower terms}.
\]
 
Moreover, we can give a more precise description of $Q_i(v, v')$ and $Z_i$.
By the definition of $\f_A$ in  \eqref{basis:fA} (also see \eqref{eq:fA})
and Proposition ~\ref{4.2.3-stab-v3}, we have the following. 

\begin{prop}
For any matrix $A \in \widetilde \Xi_{n, d}$, there exists a pair of tuples $(\mbf i, \mbf a)$ such that 
\begin{align}
\label{fa-explicit}
\f_A = \sum_{\mbf t \in \mathcal T_{\mbf i, \mbf a, D_{\co (\ddot A)}}} Q^{\mbf t}_{\mbf i, \mbf a; D_{\co (\ddot A)}} (v, 1) 
\big[ \mrm{dlt}_1 (D_{\co (\ddot A)})_{\mbf i, \mbf a, \mbf t} \big],
\end{align}
where $D_{\co (\ddot A)}$ is the diagonal matrix in $\widetilde \Xi_{\breve n, d}$ with diagonal $\co (\ddot A)$ 
and $\mrm{dlt}_1$ is the deleting operation inverse to the operation $\ddot {\phantom{x}}$.
\end{prop}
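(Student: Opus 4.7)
The plan is to combine the explicit monomial definition of $\f_{A;d}$ from \eqref{fA}--\eqref{eq:fA} with the iterated multiplication formula of Proposition~\ref{4.2.3-stab-v3}, and then pass to the stabilization limit that defines $\K_n^\C$. First I would make the pair $(\mbf i, \mbf a)$ explicit. By Theorem~\ref{new-10.1.3}, for any $A \in \widetilde\MX_{n,d}$ of depth $m$ we have $\f_{A;d} = \f_{A_m;d} * \cdots * \f_{A_1;d}$ with each $A_k$ tridiagonal; each factor $\f_{A_k;d}$ is in turn the $\rho$-preimage of the explicit product $\ddot\f_{A_k;d}$ of divided powers $\breve\f_j^{(\alpha_{k,j})}$ applied to $1_{\co(\ddot A_k)}$ given by \eqref{fA}. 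Concatenating these factors in the prescribed order produces a single pair $(\mbf i, \mbf a)$ such that
\[
\rho(\f_{A;d}) \;=\; \breve\f_{i_1}^{(a_1)} * \breve\f_{i_2}^{(a_2)} * \cdots * \breve\f_{i_s}^{(a_s)} * [D_{\co(\ddot A)}]_d,
\]
where $D_{\co(\ddot A)}$ is the diagonal matrix in $\widetilde\Xi_{\breve n,d}$ with diagonal $\co(\ddot A)$.

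Next I would replace $A$ by ${}_p A$ for even $p \ge p_0$. Under $\rho$, this replaces $D_{\co(\ddot A)}$ by ${}_{\ddot p} D_{\co(\ddot A)}$, since $D_{{}_{\ddot p}\co(\ddot A)} = {}_{\ddot p} D_{\co(\ddot A)}$. Applying Proposition~\ref{4.2.3-stab-v3} directly gives
\[
\rho\bigl(\f_{{}_p A;\, d+\tfrac{p}{2}n}\bigr) \;=\; \sum_{\mbf t \in \mathcal T_{\mbf i, \mbf a, D_{\co(\ddot A)}}} Q^{\mbf t}_{\mbf i, \mbf a;\, D_{\co(\ddot A)}}(v, v^{-p})\, \bigl[{}_{\ddot p}(D_{\co(\ddot A)})_{\mbf i, \mbf a, \mbf t}\bigr]_{d+\frac{p}{2}n}.
\]
Because the indices $i_l$ appearing in $\mbf i$ never equal $1$ mod $\breve n$ (the Chevalley generators in \eqref{fA} all carry indices avoiding the deleted row/column), the formation of $(D_{\co(\ddot A)})_{\mbf i,\mbf a,\mbf t}$ never alters rows or columns indexed by $\pm 1$ mod $\breve n$; thus each matrix ${}_{\ddot p}(D_{\co(\ddot A)})_{\mbf i,\mbf a,\mbf t}$ still lies in the image of $\ddot\empty$ and its image under $\mrm{dlt}_1$ equals $\mrm{dlt}_1(D_{\co(\ddot A)})_{\mbf i,\mbf a,\mbf t}$ shifted by $pI_n$. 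Applying $\rho^{-1}$ therefore transports the displayed identity to an identity in $\Sj$ whose standard basis labels are exactly ${}_p\mrm{dlt}_1(D_{\co(\ddot A)})_{\mbf i,\mbf a,\mbf t}$.

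Finally, by the very construction of $\K^\C_n$ from Section~\ref{Kc}, the element $\f_A$ is obtained from the family $\{\f_{{}_p A;\,d+\frac{p}{2}n}\}_{p}$ by formally replacing the parameter $v^{-p}$ by $v'$ (using \eqref{v'-p}) and then specializing $v'=1$, just as in the definition of the product in \eqref{K-mult} and in the definition $\f_A = [A] + \sum_i Q_i(v,1)[Z_i]$. Specializing the identity displayed above at $v'=1$ yields exactly
\[
\f_A \;=\; \sum_{\mbf t \in \mathcal T_{\mbf i, \mbf a, D_{\co(\ddot A)}}} Q^{\mbf t}_{\mbf i, \mbf a;\, D_{\co(\ddot A)}}(v, 1)\, \bigl[\mrm{dlt}_1(D_{\co(\ddot A)})_{\mbf i, \mbf a, \mbf t}\bigr],
\]
which is the desired formula. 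The main obstacle is the bookkeeping in the middle step: one must verify that the deletion operator $\mrm{dlt}_1$ commutes with both the shift ${}_{\ddot p}$ and the operation $(\cdot)_{\mbf i,\mbf a,\mbf t}$, which reduces to checking that no $i_l \equiv 1\ (\mathrm{mod}\ \breve n)$ occurs in $\mbf i$; this is immediate from \eqref{fA} but is the single point where the choice of embedding $\rho$ really plays a role.
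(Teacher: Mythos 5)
Your overall route is the same as the paper's (which gives essentially no written proof beyond citing \eqref{basis:fA}, \eqref{eq:fA} and Proposition~\ref{4.2.3-stab-v3}): write $\rho(\f_{{}_pA;\,d+\frac p2 n})$ as an explicit product of divided powers acting on $[{}_{\ddot p}D_{\co(\ddot A)}]$, expand by Proposition~\ref{4.2.3-stab-v3}, transport back through $\mrm{dlt}_1$, and specialize $v'=1$ in the stable limit. However, the one step you yourself single out as critical is justified by a false claim. The index sequence read off from \eqref{fA} is $(0,\, n,\, n+1,\, n-1,\, \ldots,\, 1)$: it \emph{does} contain $1$, and $n+1\equiv -1 \pmod{\breve n}$, so the factors $\breve\f_1$ and $\breve\f_{n+1}$ act precisely on the rows/columns that $\mrm{dlt}_1$ deletes. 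Indeed this is the whole point of embedding into rank $\breve n$: the proof of Lemma~\ref{fA-leading} notes that the partial product creates nonzero entries at positions $(1,2)$ and $(n+1,n+2)$, which are only emptied at the final stage by $\breve\f_0^{(\alpha_0)}*\breve\f_n^{(\alpha_{n-1})}$. So the intermediate matrices $(D_{\co(\ddot A)})_{\mbf i_{\ge l},\mbf a_{\ge l},\mbf t_{\ge l}}$ do not stay in the image of the operation $\ddot{\phantom{x}}$, and your commutation argument, as stated, collapses.

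The repair concerns the \emph{final} matrices only, not the intermediate ones. Each move $E^{i u}_{\theta,\breve n}-E^{i+1,u}_{\theta,\breve n}$ preserves column sums, and the total amount moved in and out of rows $\pm 1$ is forced by $(\mbf i,\mbf a)$ to cancel, so every output matrix $(D_{\co(\ddot A)})_{\mbf i,\mbf a,\mbf t}$ has row and column sums equal to zero at the positions $\pm 1$ (note ${}_{\ddot p}$ does not alter the $(\pm1,\pm1)$ entries). At finite level $d+\frac p2 n$ all entries of a surviving standard basis label are nonnegative (the convention \eqref{con} kills the rest), so those rows/columns are identically zero; hence every surviving term lies in $\ddot{\mbf S}^{\C}_{n,\,d+\frac p2 n}$, and $\rho^{-1}$ acts on labels exactly by $\mrm{dlt}_1$, with $\mrm{dlt}_1({}_{\ddot p}M)={}_p\,\mrm{dlt}_1(M)$ giving your displayed identity in $\Sj$. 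Specializing $v'=1$ via \eqref{v'-p}, as in the definition of $\f_A$, then yields \eqref{fa-explicit}. With this substitution for your incorrect bookkeeping step, your argument coincides with the paper's intended proof.
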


Assume that $B\in \widetilde \Xi_{n, d}$ and $B - \sum_{1\leq i\leq n} \beta_i E^{i, i+1}_{\theta}$ is diagonal. 
Let $\mbf i_0 $ and $\mbf b_0$ denote the sequences of subscripts and superscripts 
in the left hand side of (\ref{fA}) (with $A$ replaced by $B$), respectively, that is,
\[
\mbf i_0 =( 0, n, n+1, n-1, n, n-2,  \cdots, 1), \quad 
\mbf b_0 = ( \beta_0, \beta_{n-1}, \beta_{n-1}, \beta_{n-2}, \cdots, \beta_0),
\] 
where $\beta_0=\beta_n$
The following multiplication formula in $\K^\C_n$ follows by Proposition ~\ref{4.2.3-stab-v3}.

\begin{prop}
Let $A, B \in \widetilde \Xi_n$ be such that $\co (B) = \ro (A)$ and $B - \sum_{1\leq i\leq n} \beta_i E^{i, i+1}_{\theta}$ is diagonal.
Then the following multiplication formula holds in $\K^\C_n$:
\begin{align}
\f_B \cdot [A] 
= \sum_{\mbf t \in \mathcal T_{\mbf i_0, \mbf b_0, \ddot A}} 
Q^{\mbf t}_{\mbf i_0, \mbf b_0; \ddot A} (v, 1) \big[ \mrm{dlt}_1 (\ddot A)_{\mbf i_0, \mbf b_0, \mbf t} \big].
\end{align}
\end{prop}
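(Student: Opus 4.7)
The plan is to deduce the formula from Proposition~\ref{4.2.3-stab-v3} via the embedding $\rho$ of Proposition~\ref{prop:rho} and the stabilization procedure that defines the product in $\K^{\C}_n$. Recall that $\f_B$ and $[A]$ in $\K^{\C}_n$ are the stable limits of $\f_{{}_{p}B;\,d+\frac{p}{2}n}$ and $[{}_{p}A]_{d+\frac{p}{2}n}$ respectively, and by~(\ref{K-mult}) together with Proposition~\ref{new-10.2.1} the product $\f_B \cdot [A]$ is computed by expressing the Schur-algebra product $\f_{{}_{p}B;\,d+\frac{p}{2}n} \cdot [{}_{p}A]_{d+\frac{p}{2}n}$ as a Laurent polynomial in $v^{-p}$ (equivalently $v'$) and then specializing $v' = 1$.

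First I would transport the Schur-algebra product into $\mbf S^{\C}_{\breve n,\,d+\frac{p}{2}n}$ via $\rho$, obtaining
\[
\rho\big(\f_{{}_{p}B;\,d+\frac{p}{2}n} \cdot [{}_{p}A]_{d+\frac{p}{2}n}\big) = \ddot{\f}_{{}_{p}B;\,d+\frac{p}{2}n} \cdot \big[{}_{\ddot{p}}\ddot A\big]_{d+\frac{p}{2}n}.
\]
By the very definition~(\ref{fA}), the pair $(\mbf i_0,\mbf b_0)$ records exactly the indices and divided-power exponents of the factorization of $\ddot{\f}_{{}_{p}B;\,d+\frac{p}{2}n}$ into Chevalley generators $\breve{\f}^{(b_u)}_{i_u}$ composed with the idempotent $1_{\co({}_{\ddot p}\ddot B)}$, and the hypothesis $\co(B)=\ro(A)$ ensures all intermediate row/column sums line up so that each divided power contributes a unique standard basis element. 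Hence Proposition~\ref{4.2.3-stab-v3} applies verbatim and yields
\[
\ddot{\f}_{{}_{p}B;\,d+\frac{p}{2}n} \cdot [{}_{\ddot p}\ddot A]_{d+\frac{p}{2}n} = \sum_{\mbf t\, \in\, \mathcal T_{\mbf i_0,\mbf b_0,\ddot A}} Q^{\mbf t}_{\mbf i_0,\mbf b_0;\,\ddot A}(v,v^{-p}) \, \big[{}_{\ddot p}(\ddot A)_{\mbf i_0,\mbf b_0,\mbf t}\big]_{d+\frac{p}{2}n},
\]
valid for all even $p\ge 0$, with the crucial observation (noted just after the definition of~$\mathcal T$) that the indexing set $\mathcal T_{\mbf i_0,\mbf b_0,\ddot A}$ is independent of $p$.

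Next I would pull the identity back through $\rho^{-1}$, using $\rho^{-1}([{}_{\ddot p}X]) = [{}_{p}\mrm{dlt}_1(X)]$ for any $X$ whose second row and column vanish, to obtain the corresponding identity inside $\Sj$. Appealing to the definition of multiplication in $\K^{\C}_n$ and to the uniqueness of the stabilization polynomials delivered by Proposition~\ref{new-10.2.1} then identifies the resulting structure constants with $Q^{\mbf t}_{\mbf i_0,\mbf b_0;\,\ddot A}(v,v')$, and specializing $v'=1$ gives the claimed formula.

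The main delicate point will be matching the bare specialization $v' \mapsto 1$ with the limiting structure constants $G_i(v,1)$ arising in Proposition~\ref{new-10.2.1}. This is supplied by the identity~(\ref{v'-p}), $Q^{\mbf t}_{\mbf i,\mbf a;\,{}_{\ddot p}A}(v,1) = Q^{\mbf t}_{\mbf i,\mbf a;\,A}(v,v^{-p})$, together with the linear independence of the standard basis at each fixed rank, which forces the stabilization polynomials of Proposition~\ref{new-10.2.1} to coincide with the $Q^{\mbf t}_{\mbf i_0,\mbf b_0;\,\ddot A}(v,v')$ already read off in the Schur algebra; after this identification the conclusion is purely formal.
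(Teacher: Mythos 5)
Your proposal is correct and follows essentially the same route as the paper, which derives the formula directly from Proposition~\ref{4.2.3-stab-v3} via the embedding $\rho$ into $\mbf S^{\C}_{\breve n,d}$, the definition \eqref{fA}--\eqref{eq:fA} of $\f_{B;d}$ as the monomial indexed by $(\mbf i_0,\mbf b_0)$, and the stabilization/specialization $v'\mapsto 1$ that defines the product on $\K^{\C}_n$. Your closing paragraph on matching the $v'=1$ specialization with the stabilized structure constants (via \eqref{v'-p} and linear independence of the standard basis) is exactly the point the paper leaves implicit, so nothing further is needed.
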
 

Now we show that the element $\f_A \in \K^\C_n$ can be expressed as a monomial in $\f_{A_i}$ for various tridiagonal matrices $A_i$ 
(similar to the Schur algebra case).
 
\begin{prop}
\label{fA-monomial}
Let $A\in \widetilde \Xi_n$, and we retain the notations of tridiagonal matrices $A_i$ from Proposition ~\ref{new-11.2.1}. Then we have 
\[
\f_A  = \f_{A_m}  \cdot  \f_{A_{m-1}}   \cdot . . .  \cdot  \f_{A_1}.
\]
Moreover, we have $\overline{\f_A} = \f_A$. 
\end{prop}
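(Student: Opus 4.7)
The plan is to derive the factorization from the corresponding identity at the Schur algebra level and then pass to the stabilization limit, using the specialization $v' \mapsto 1$ to transport identities from $\Sj$ to $\K^\C_n$.

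First I would recall that at each Schur algebra $\Sj$, the definition \eqref{basis:fA} of Theorem~\ref{new-10.1.3} gives directly
\[
\f_{{}_{2p}A;\,d+pn} \,=\, \f_{{}_{2p}A_m;\,d+pn} * \f_{{}_{2p}A_{m-1};\,d+pn} * \cdots * \f_{{}_{2p}A_1;\,d+pn}
\]
for all $p \in 2\mbb N$ large enough that ${}_{2p}A$ and each ${}_{2p}A_i$ lie in $\widetilde{\Xi}_{n,d+pn}$ (the $A_i$ being the tridiagonal matrices of Proposition~\ref{new-11.2.1}, which coincide with those of Theorem~\ref{new-10.1.3}). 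The strategy is to interpret both sides as polynomials in $\mathscr R$ in the formal variable $v'$ (via the substitution $v' = v^{-p}$) and then specialize $v' \mapsto 1$.

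Second, I would expand both sides using Proposition~\ref{4.2.3-stab-v3}. Via the embedding $\rho$, each $\f_{{}_{2p}A_i;\,d+pn}$ is the monomial \eqref{fA} applied to $1_{\co(\ddot{{}_{2p}A_i})}$, and hence the concatenated product $\f_{{}_{2p}A_m;\,d+pn}*\cdots*\f_{{}_{2p}A_1;\,d+pn}$ is a single $\mrm{dlt}_1$-image of a long monomial of divided powers of Chevalley generators acting on $1_{\co(\ddot{{}_{2p}A})}$. Iterating Proposition~\ref{4.2.3-stab-v3} expresses this as
\[
\sum_{\mbf t} Q^{\mbf t}_{\mbf i,\mbf a;\,\ddot A}(v,v^{-p})\,[{}_{2p}\mrm{dlt}_1(\ddot A)_{\mbf i,\mbf a,\mbf t}]_{d+pn},
\]
with the $Q$-polynomials independent of $p$. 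Meanwhile, rearranging \eqref{A-f} gives $\f_{{}_{2p}A;d+pn}$ as $[{}_{2p}A]_{d+pn}$ minus $p$-independent polynomial combinations, also expressible through the same kind of $Q$-data via \eqref{fa-explicit}.

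Third, I would invoke the definition \eqref{K-mult} of the product in $\K_n^\C$: by construction, $[A_m]\cdot[A_{m-1}]\cdot\cdots\cdot[A_1]$ (and hence, by $\mbb Q(v)$-linearity and associativity, the product $\f_{A_m}\cdot\f_{A_{m-1}}\cdot\cdots\cdot\f_{A_1}$) is obtained from the Schur-level products $\f_{{}_{2p}A_m;d+pn}*\cdots*\f_{{}_{2p}A_1;d+pn}$ by reading off the structure constants in $\mathscr R$ and then specializing $v'\mapsto 1$. Dually, $\f_A$ is itself defined as the $v'\mapsto 1$ specialization of the $Q_i(v,v')$'s in \eqref{A-f}. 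Since both sides of the target identity agree at the Schur level as polynomials in $(v,v^{-p})$ for all $p\gg 0$, they coincide as polynomial identities in $\mathscr R$; specializing $v'\mapsto 1$ yields $\f_A = \f_{A_m}\cdot\f_{A_{m-1}}\cdot\cdots\cdot\f_{A_1}$ in $\K_n^\C$.

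Finally, for bar invariance: each divided power of a Chevalley generator in \eqref{fA} is bar-invariant in $\mbf S^\C_{\breve n,d}$, hence so is each $\ddot{\f}_{{}_{2p}A_i;\,d+pn}$ and (via $\rho$) each $\f_{{}_{2p}A_i;\,d+pn}$; their product $\f_{{}_{2p}A;\,d+pn}$ is therefore bar-invariant. Combining this with Proposition~\ref{bar-stab} applied to $[{}_{2p}A]_{d+pn}$, the correction polynomials $H_i(v,v^{-p})$ in \eqref{A-f}-type rearrangements of both $[{}_{2p}A]_{d+pn}$ and $\overline{[{}_{2p}A]_{d+pn}}$ agree up to bar-conjugation of their $v$-parts, while the $v'$-dependence is preserved. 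Specializing $v'\mapsto 1$ then gives $\overline{\f_A}=\f_A$ in $\K_n^\C$. The main obstacle is to ensure that the simultaneous specializations ``$v^{-p}\mapsto v' \mapsto 1$'' in the three pieces of data (multiplication \eqref{K-mult}, the elements $\f_A$, and the bar operator) are mutually compatible; the key point is that all combinatorial data $(\mbf i,\mbf a,\mbf t)$ producing the structure polynomials in $\mathscr R$ are independent of $p$ for $p$ large, so everything reduces to a single polynomial identity in $v'$ to which $v'=1$ may be safely applied.
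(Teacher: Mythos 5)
Your argument is correct and is essentially the paper's own proof: the paper formalizes your ``polynomial identity in $v'$'' step by introducing the $\mathscr R$-algebra $\K^{\C}_{n,\mathscr R}$ with product $\cdot'$ and the elements $\f'_A$ of \eqref{f'A}, deduces $\f'_A=\f'_{A_m}\cdot'\cdots\cdot'\f'_{A_1}$ from Proposition~\ref{4.2.3-stab-v3}, and then specializes $v'\mapsto 1$, exactly as you do. Your treatment of bar invariance (bar invariance of $\f_{{}_{2p}A;d+pn}$ at the Schur level plus $p$-independence of the stabilized data) is likewise the argument the paper indicates and leaves to the reader.
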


\begin{proof}
Let $\K^{\C}_{n, \mathscr R}$ be the free $\mathscr R$-module  spanned by the matrices in $\widetilde \Xi_n$.
Similar to (\ref{K-mult}), we can define an associative algebra over $\mathscr R$ by
\[
A_1 \cdot'  A_2  = \sum_{i=1}^m G_i(v, v') \  Z_i, \quad \forall A_1, A_2 \in \widetilde \Xi_n.
\]
Similar to (\ref{fa-explicit}), we can define 
\begin{align}
\label{f'A}
\f'_A = \sum_{\mbf t \in \mathcal T_{\mbf i, \mbf a, D_{\co (\ddot A)}}} Q^{\mbf t}_{\mbf i, \mbf a; D_{\co (\ddot A)}} (v, v') \
\mrm{dlt}_1 (D_{\co (\ddot A)})_{\mbf i, \mbf a, \mbf t}.
\end{align}
Then by Proposition ~\ref{4.2.3-stab-v3}, we have
\[
\f'_A  = \f'_{A_m}  \cdot'  \f'_{A_{m-1}}   \cdot' . . .  \cdot'  \f'_{A_1}.
\]
By specializing $v'$ at $v'=1$, we obtain the product formula for $\f_A$.

The bar invariance of $\f_A$ follows 
from the same fact on the Schur algebra level and the formal stabilization procedure as above. We skip the detail. 
\end{proof}

By Proposition ~\ref{bar-stab}, we  can define a bar involution on $\K^{\C}_n$ by $\bar v = v^{-1}$ and letting
\[
\overline{[A]} = [A] + \sum_{i=1}^s H_i(v, 1) [Y_i], \quad \forall A\in \widetilde \Xi_n,
\]
where $H_i(v, v')$ and $Y_i < A$ are as in Proposition ~\ref{bar-stab}.
The next proposition follows by a standard argument.

\begin{prop}
\label{stable}
For any $A \in \widetilde \Xi_{n}$, there exists a unique element $\{A\}$ in $\K^{\C}_n$ such that 
\[
\overline{\{A\}} = \{A\}, 
\quad
\{A\} = [A] + \sum_{A'<A} \pi_{A, A'}[A'], \quad \pi_{A, A'}\in v^{-1} \mbb Z [v^{-1}].
\]
Moreover,  $\{ \{A\} \vert A \in \widetilde \Xi_{n} \}$ forms a basis for $\K^\C_n$ (called the {\em stably canonical basis}).
\end{prop}

Let us summarize the main results of this section. 

\begin{thm}
\label{Kj-bases}
The algebra $\K^{\C}_n$ admits a standard basis $\{[A] | A\in \widetilde \Xi_n\}$, 
a semi-monomial basis $\{m'_A | A\in  \widetilde \Xi_n\}$, 
a monomial basis $\{ \f_A | A\in \widetilde \Xi_n\}$,
and a stably canonical basis
$\{\{A \}| A\in  \widetilde \Xi_n\}$.
\end{thm}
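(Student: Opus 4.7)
The plan is essentially to observe that the theorem is at this point an assembly statement: all four sets have already been shown to be related to the standard set $\{[A]\}$ by a unitriangular change of basis with respect to the partial order $\leq$ on $\widetilde \Xi_n$, so the result will follow from a routine triangularity argument once these unitriangularity statements are collected.

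First I would verify that $\{[A] \mid A \in \widetilde \Xi_n\}$ is a basis. This is essentially a tautology: by construction of $\K^{\C}_n$ at the start of Section~\ref{Kc}, the space is defined as the $\QQ(v)$-vector space spanned by the formal symbols $[A]$, and the multiplication formula \eqref{K-mult} produced via Proposition~\ref{new-10.2.1} makes $\K^\C_n$ into an associative algebra for which $\{[A]\}$ is an $\mathcal{A}$-linearly independent spanning set. So no real work is needed here.

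Next, for the remaining three bases I would assemble the existing ingredients. The semi-monomial set $\{m'_A\}$ satisfies $m'_A = [A] + \text{lower terms}$ by Proposition~\ref{new-11.2.1}; the monomial set $\{\f_A\}$ satisfies $\f_A = [A] + \text{lower terms}$ by its definition in \eqref{f'A} together with the stabilization in Proposition~\ref{prop-A-f}; and the stably canonical set $\{\{A\}\}$ satisfies $\{A\} = [A] + \sum_{A' < A} \pi_{A,A'} [A']$ by Proposition~\ref{stable}. In each case the transition matrix from the new set to $\{[A]\}$ is infinite but row-finite, and more importantly is unitriangular with respect to the partial order $\leq$ defined via \eqref{partial-alg}--\eqref{partial} (with the convention that $\leq$ restricts to $\widetilde \Xi_n$ componentwise, preserving row and column sums). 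Since for any fixed $A$ there are only finitely many $A' \in \widetilde \Xi_n$ with $A' \leq A$ sharing the same row/column sums, each such transition matrix can be inverted levelwise, and a standard triangularity argument then shows that each of the three sets is a $\QQ(v)$-basis of $\K^\C_n$.

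The only point that requires a word of care is the semi-monomial and monomial cases, where it needs to be observed that $m'_A$ and $\f_A$ actually lie in $\K^\C_n$ (not merely in some completion): for $m'_A$ this follows because it is literally a product of finitely many standard basis elements under \eqref{K-mult}, and for $\f_A$ it follows from the explicit finite expression \eqref{fa-explicit}, with Proposition~\ref{fA-monomial} giving the further refinement that $\f_A$ is itself a product of the tridiagonal $\f_{A_i}$'s, confirming its presence in the algebra.

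There is no real obstacle in this theorem; the content has been done in Propositions~\ref{new-11.2.1}, \ref{fA-monomial}, and \ref{stable}, and the theorem statement merely packages those four bases together. The nearest thing to a subtlety is bookkeeping of the partial order $\leq$ on $\widetilde \Xi_n$ (as opposed to $\Xi_{n,d}$), but since the lower-order terms appearing in each stabilization statement are produced by the same stabilization mechanism (Proposition~\ref{new-10.2.1}, Corollary~\ref{K-A-leading}, Proposition~\ref{bar-stab}) that preserves row and column sums up to the shift $p I_n$, this bookkeeping is automatic.
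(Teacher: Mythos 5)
Your proposal is correct and takes essentially the same route as the paper: Theorem~\ref{Kj-bases} is stated there precisely as a summary of Proposition~\ref{new-11.2.1}, the construction of $\f_A$ (with Proposition~\ref{fA-monomial}), and Proposition~\ref{stable}, each giving a unitriangular transition to the standard basis $\{[A]\}$. Your added remarks on finiteness of $\{A' \leq A\}$ with fixed row/column sums and on membership of $m'_A$, $\f_A$ in $\K^{\C}_n$ are exactly the (implicit) bookkeeping the paper relies on.
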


%%%%%
\section{The  algebra $\K_n$ of affine type $A$ and its comultiplication}

In this section, we revisit the Schur algebras of affine type $A$ and study its stabilization algebra $\K_n$.
The  constructions  in this section will serve as 
a prerequisites for  the constructions  of  the comultiplication of  the algebra $\K^{\C}_n$ in the following section.

Recall the comultiplication $\Delta$ from (\ref{A-coass}) of affine type $A$. 
The following stabilization for the comultiplication $\Delta$  at the Schur algebra level is the counterpart of
Proposition~\ref{cop-stab} which can be proved in the same way.

\begin{prop}
\label{cop-stab-A}
Assume that $d'+d''=d$, and let $\mbf b', \mbf a' , \mbf b'', \mbf a'' \in \mbb Z_n$ be so that 
$_p \Delta_{\mbf b', \mbf a', \mbf b'', \mbf a''}$ is defined.
For each $A\in \widetilde{\Theta}_{n,d}$,
there exist  $A'_i \in \widetilde{\Theta}_{n, d'}$ where $1\leq i\leq l$ for some $l$,  
$A''_j \in \widetilde{\Theta}_{n, d''}$  where $1\leq j \leq m$ for some $m$,
$^{\mathfrak a}C_{i,j}(v, v') \in \mathscr R$ for $1\leq i \leq l$, $1\leq j\leq m$, and $p_0\in \mbb N$ such that 
\[
_{p}\Delta_{\mbf b', \mbf a', \mbf b'', \mbf a''} (^{\mathfrak a}[_{2p} A]_{d+2pn}) 
= \sum_{1\leq i\leq l, 1\leq j\leq m} 
\ ^{\mathfrak a}C_{i, j} (v, v^{-p}) \ ^{\mathfrak a} [_p A'_i]_{d'+pn} \otimes \ ^{\mathfrak a}[_{p}A''_j]_{d''+pn}, 
\quad \forall p \geq p_0.
\]
\end{prop}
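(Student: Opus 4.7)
The plan is to mirror verbatim the proof of Proposition~\ref{cop-stab}, replacing every type $\fc$ ingredient with its affine type $A$ counterpart. The overall structure is an induction on the partial order $\leq$ on $\widetilde\Tt_{n,d}$, where the inductive step reduces the stabilization of $\Delta$ on a standard basis element to its stabilization on a monomial in Chevalley generators, and the latter is controlled by the explicit formulas \eqref{A-coass}.

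Before the induction, two stabilization results at the Schur algebra level are needed. First, an affine type $A$ analogue of Proposition~\ref{new-10.2.1}, saying that any product $\prod_i {}^{\fa}[{}_{p}A_i]_{d+\frac{p}{2}n}$ in $\mbf S_{n,d+\frac{p}{2}n}$ expands as a linear combination of standard basis elements $^{\fa}[{}_p Z]$ with coefficients of the form $G(v,v^{-p})$ for some $G(v,v')\in\mathscr R$ and some $p_0$. This is obtained by the same embedding trick used for $\rho$ in Section~\ref{sec:SSSc}: the natural insertion of zero rows/columns gives an algebra embedding $\rho_{\fa}: \mbf S_{n,d}\hookrightarrow \mbf S_{\breve n,d}$ whose image is spanned by $^{\fa}[B]$ for $B$ with zero second row/column, and any such $B$ is automatically aperiodic, so the image lies in $\bU_{\breve n,d}$. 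Hence all necessary multiplication formulas in $\mbf S_{n,d}$ are pullbacks of formulas in $\bU_{\breve n,d}$, and Lemma~\ref{4.2.3-stab} together with an iterated version analogous to Proposition~\ref{4.2.3-stab-v3} give the required $\mathscr R$-coefficients. Second, I will need an analogue of Proposition~\ref{prop-A-f}: for each $A\in\widetilde\Tt_{n,d}$, an expression $^{\fa}[{}_p A]_{d+\frac{p}{2}n}=\f^{\fa}_{{}_p A;d+\frac{p}{2}n}+\sum_i Q_i(v,v^{-p})\,{}^{\fa}[{}_p Z_i]$ with $Z_i<A$, where $\f^{\fa}_{A;d}$ is the pullback via $\rho_{\fa}$ of a suitable Chevalley monomial in $\bU_{\breve n,d}$; this follows by exactly the same argument as Theorem~\ref{new-10.1.3} and Proposition~\ref{prop-A-f}, now in the affine type $A$ setting.

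With these tools in place, the induction runs as follows. For $A$ diagonal the claim is trivial since ${}^{\fa}[{}_{2p}A]$ is just an idempotent, and $\Delta$ sends it to a sum of tensor products of idempotents whose $v^{\pm p}$-dependence is manifestly in $\mathscr R$ evaluated at $v^{-p}$. For the inductive step, apply the analogue of Proposition~\ref{prop-A-f} to rewrite $^{\fa}[{}_{2p}A]$ as $\f^{\fa}_{{}_{2p}A;d+pn}$ plus a linear combination of $^{\fa}[{}_{2p}Z_i]$ with $Z_i<A$ and $\mathscr R$-valued coefficients; by induction the second group is handled, so it remains to analyze $\Delta(\f^{\fa}_{{}_{2p}A;d+pn})$. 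Since $\Delta$ is an algebra homomorphism and $\f^{\fa}_{{}_{2p}A;d+pn}$ is (the pullback of) an explicit product of divided powers of Chevalley generators, the formulas \eqref{A-coass} let one expand $\Delta$ of each factor. Each $\Delta(\mbf E_i^{(a)})$ or $\Delta(\mbf F_i^{(a)})$ produces a sum of tensor products whose $\mbf K''_i$-factors, when evaluated against weight data shifted by $p$, contribute $v^{\pm p}$-powers that assemble into well-defined elements of $\mathscr R$ specialized at $v'=v^{-p}$; this is precisely the type $A$ analogue of Proposition~\ref{4.2.3-stab-v3} for the comultiplication side, whose proof is the same bookkeeping as for Lemma~\ref{4.2.3-stab}.

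The hardest part will be the careful bookkeeping of binomial coefficients $\overline{\begin{bmatrix}a_{iu}+t_u\\ t_u\end{bmatrix}}$ whose top entries contain diagonal contributions of the form $a_{ii}+p$; the task is to verify that these enter $\Delta$ as specializations of explicit $\mathscr R$-valued polynomials $\overline{\begin{bmatrix}a_{ii}+t\\ t\end{bmatrix}}_{v,v'}$ at $v'=v^{-p}$, uniformly in $p\ge p_0$. This is entirely parallel to Lemma~\ref{v'=1} and follows from the identity $\overline{\begin{bmatrix}a+p\\ t\end{bmatrix}}=\overline{\begin{bmatrix}a+t\\t\end{bmatrix}}_{v,v^{-p}}$; however, one has to be mindful that the shift in affine type $A$ is by $pI_n$ rather than $p\ddot I_n$, so the relation $[{}_pA]\leftrightarrow[{}_{2p}\ddot A]$ produces the factor $v^{-p}$ (rather than $v^{-2p}$) matching the statement of the proposition. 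Once these stability constants are assembled, the inductive step closes, and picking $p_0$ to be the maximum of the finitely many thresholds encountered completes the proof.
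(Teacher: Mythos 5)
Your proposal is correct and follows essentially the same route as the paper: the paper proves this proposition verbatim as the affine type $A$ counterpart of Proposition~\ref{cop-stab}, i.e.\ by induction on the partial order, expanding $^{\fa}[{}_{2p}A]$ via the type $A$ analogue of Proposition~\ref{prop-A-f}, transporting the monomial term through the rank-$\breve n$ embedding (the type $A$ analogues of $\rho$ and Lemma~\ref{rho}), and concluding with the stabilized multiplication formulas (analogue of Proposition~\ref{4.2.3-stab-v3}) together with the explicit comultiplication formulas \eqref{A-coass}. Your closing discussion of the $v^{-p}$ versus $v^{-2p}$ bookkeeping is slightly muddled but harmless: as in the paper's proof of Proposition~\ref{cop-stab}, one simply rewrites $Q(v,v^{-2p})$ as $\tilde Q(v,v^{-p})$ with $\tilde Q(v,v')=Q(v,v'^{2})\in\mathscr R$.
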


Let  
\[
\widetilde{\Theta}_n = \{ A=(a_{ij})_{i, j\in \mbb Z} | a_{ij} \in \mbb N, \forall i\neq j, a_{ii}\in \mbb Z, \forall i\in \mbb Z\}.
\]
Let $\K_n$ be the vector space over $\mbb Q(v)$ spanned by the symbols $^{\mathfrak a}[A]$ for all $A \in \widetilde \Theta_n$.
Replacing $\Sj$ by the Schur algebra $\mbf S_{n,d}$ from Chapter \ref{chap:A} and repeating the  constructions in the preceding sections, 
we can endow $\K_n$ with an associative algebra structure, a bar involution, 
a canonical basis $\{ ^{\mathfrak a}\{A\} | A\in \widetilde \Theta_n\}$.
Indeed the treatment is much simpler in the current type $A$ setting since the analogous basis elements $^{\mathfrak a} \f_A$ and $^{\mathfrak a}[A]$ 
coincides when $A$ is tridiagonal.

\begin{rem}
 \label{rem:gln}
The associative algebra structure on $\K_n$ and its stably canonical basis were first introduced in \cite{DF13} by a completely different and  Hecke algebraic approach,
(also see \cite{LL15}).
Moreover, they showed that $\K_n$ is isomorphic to the idempotented quantum affine $\gl_n$, $\dot\U (\glh_n)$. 
\end{rem}

Moreover, by Proposition ~\ref{cop-stab-A}, we can define a comultiplication for $\K_n$ as follows.
Let ${}_{\mbf b}\K_{\mbf a}$, for any $\mbf b, \mbf a\in \mbb Z_n$,  be the subspace of $\K_n$ spanned by the standard basis elements $^{\mathfrak a}[A]$ such 
that $\ro(A) = \mbf b$ and $\co (A) = \mbf a$.
For any $\mbf b, \mbf a, \mbf b', \mbf a', \mbf b'', \mbf a'' \in \mbb Z_n$ such that 
$\mbf b' + \mbf b'' = \mbf b$ and $\mbf a' + \mbf a'' =\mbf a$, we define a linear map
\begin{align*}
\Delta_{\mbf b', \mbf a', \mbf b'', \mbf a''}  &:  {}_{\mbf b} \K_{\mbf a} \longrightarrow  {}_{\mbf b'}\K_{\mbf a'} \otimes {}_{\mbf b''}\K_{\mbf a''},
\\
\Delta_{\mbf b', \mbf a', \mbf b'', \mbf a''} ( ^{\mathfrak a} [A] ) 
& = \sum_{1\leq i\leq l, 1\leq j \leq m} \ ^{\mathfrak a} C_{i, j} (v, 1) \ ^{\mathfrak a}[A'_i] \otimes \ ^{\mathfrak a} [A''_j],
\end{align*}
where $A'_i, A''_j$, $^{\mathfrak a} C_{ij}(v, v')$ are from Proposition ~\ref{cop-stab-A}.
We shall call the collection 
\[
\dot \Delta = (\Delta_{\mbf b', \mbf a', \mbf b'', \mbf a''})_{\mbf b', \mbf a', \mbf b'', \mbf a'' \in \mbb Z_n}
\]
the $comultiplication$ of $\K_n$.
Let ${}^{\mathfrak a} g^C_{A, B}$ and ${}^{\mathfrak a} h_A^{B, C}$ denote  
the structure constants with respect to the multiplication and comultiplication in $\K_n$, respectively, i.e., 
\begin{align*}
^{\mathfrak a}[A] \cdot {}^{\mathfrak a} [B] 
&= \sum_{C \in \widetilde \Theta_n} {}^{\mathfrak a} g^C_{A, B} {}^{\mathfrak a} [C],
\\
 \Delta_{\mbf b', \mbf a', \mbf b'', \mbf a''}  (^{\mathfrak a} [A])
 &=\sum_{B, C \in \widetilde \Theta_n } {}^{\mathfrak a} h^{B, C}_A \ {}^{\mathfrak a} [B] \otimes {}^{\mathfrak a} [C].
 \end{align*} 

\begin{prop}
\label{prop-cop-alg-hom-A}
The comultiplication $\dot \Delta$  is an algebra homomorphism in the following sense:
for all matrices $A, B, C', C'' \in \widetilde \Theta_n$,  one has
\begin{align}
\label{cop-alg-hom-A}
\sum_{C \in \widetilde \Theta_n} {}^{\mathfrak a} g^C_{A, B}  {}^{\mathfrak a} h^{C', C''}_C
= \sum_{ A', A'', B', B'' \in \widetilde \Theta_n} {}^{\mathfrak a} h^{A', A''}_A {}^{\mathfrak a} h^{B', B''}_B  {}^{\mathfrak a} g_{A', B'}^{C'} 
{}^{\mathfrak a} g_{A'', B''}^{C''}.
\end{align}
\end{prop}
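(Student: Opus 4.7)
The plan is to deduce this identity from the corresponding fact at the Schur algebra level, where the comultiplication $\Delta$ on $\mbf S_{n,d}$ is already known to be an algebra homomorphism (it is induced from a geometric restriction-pushforward construction, in the same way as $\Delta^{\C}$ on $\Sj$; compare the argument for Proposition~\ref{comult+S} and Remark~\ref{S-coass}). Concretely, for every $d = d' + d''$ one has
\[
\Delta\bigl( {}^{\mathfrak a}[A]_d \cdot {}^{\mathfrak a}[B]_d \bigr) = \Delta({}^{\mathfrak a}[A]_d) \cdot \Delta({}^{\mathfrak a}[B]_d) \quad \text{in } \mbf S_{n,d'} \otimes \mbf S_{n,d''}.
\]
Restricting to the weight components $(\mbf b', \mbf a', \mbf b'', \mbf a'')$ and replacing $A, B$ by the shifts $_{2p}A, \,{}_{2p}B$ of $\K_n$-lifts (and similarly shifting the weights), this equality becomes an equality of $\mbb Q(v)$-coefficients of the basis elements ${}^{\mathfrak a}[{}_pC'] \otimes {}^{\mathfrak a}[{}_pC'']$ in $\mbf S_{n,d'+\frac{p}{2}n} \otimes \mbf S_{n,d''+\frac{p}{2}n}$, valid for all sufficiently large $p$.

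The first step is to use Proposition~\ref{cop-stab-A} (the affine type $A$ stabilization of $\Delta$) together with the multiplication stabilization Proposition~\ref{new-10.2.1} (translated to type $A$) to write both sides of the identity above, for ${}_{2p}A$ and ${}_{2p}B$, as finite sums of basis elements with coefficients that are specializations at $v' = v^{-p}$ of Laurent polynomials in $\mathscr R = \mbb Q(v)[v', v'^{-1}]$. The key point is that once $p$ is large enough, the index set of nonzero matrices $C, C', C''$ stabilizes, and the structure constants ${}^{\mathfrak a}g^C_{A,B}$ and ${}^{\mathfrak a}h^{C',C''}_C$ at level $p$ are precisely the values at $v'=1$ of those Laurent polynomials. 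Thus both sides of \eqref{cop-alg-hom-A} can be recognized as the $v'=1$ specialization of two rational expressions in $\mathscr R$, and the two rational expressions agree when specialized at $v' = v^{-p}$ for all large even $p$.

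The second step is to promote ``agreement at infinitely many $v' = v^{-p}$'' to ``agreement as elements of $\mathscr R$,'' which is immediate because a Laurent polynomial in $v'$ vanishing at infinitely many values is zero. Specializing the resulting polynomial identity at $v' = 1$ then gives exactly \eqref{cop-alg-hom-A}. In effect, one is running the same formal stabilization trick used to define the product and coproduct of $\K_n$, but applied to an identity rather than to a single element.

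The main obstacle is bookkeeping: ensuring that the finite sets of matrices indexing each stabilized expression are independent of $p$ for $p \gg 0$, that the weight constraints $(\mbf b', \mbf a'), (\mbf b'', \mbf a'')$ are compatibly shifted on both sides, and that no cancellations occur with terms that fail to stabilize. All of this is controlled by the uniformity statements in Propositions~\ref{new-10.2.1} and \ref{cop-stab-A}, so the argument reduces to matching the two sides of \eqref{cop-alg-hom-A} term by term against the Schur algebra identity. Once this matching is done, coassociativity of $\dot\Delta$ (Proposition~\ref{prop-cop-coass}, to be stated next in the paper) follows by the same stabilization argument from the Schur-level coassociativity noted in Remark~\ref{S-coass}.
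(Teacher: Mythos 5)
Your overall route---deduce \eqref{cop-alg-hom-A} from the fact that $\Delta$ is an algebra homomorphism on the Schur algebras $\mbf S_{n,d}$, by viewing the stabilized structure constants as Laurent polynomials in $v'$, matching the two sides at $v'=v^{-p}$ for all large even $p$, and then specializing at $v'=1$---is exactly the reduction the paper performs. The genuine gap is earlier: you never establish that the right-hand side of \eqref{cop-alg-hom-A} is a \emph{finite} sum, and this is not supplied by the ``bookkeeping controlled by Propositions~\ref{new-10.2.1} and \ref{cop-stab-A}.'' Proposition~\ref{cop-stab-A} gives finitely many terms only within one fixed weight component $\Delta_{\mbf b',\mbf a',\mbf b'',\mbf a''}$, whereas the sum over $A',A'',B',B''\in\widetilde\Theta_n$ runs over all splittings of the intermediate weight $\co(A)=\ro(B)$ into $\mbf c'+\mbf c''$ with $\mbf c',\mbf c''\in\mbb Z_n$; a priori infinitely many components could contribute, and at a fixed Schur level the number of admissible splittings grows with $p$, so uniformity in $p$ alone does not cap it. Without this finiteness the right-hand side is not even well defined, and you cannot pick a single large $d$ (equivalently a single large $p$) at which all contributing terms of both sides are simultaneously visible for the coefficient-matching step.

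The paper closes this hole with a separate argument that you would need to add, and it is in fact the bulk of its proof: if ${}^{\mathfrak a}h^{A',A''}_A\neq 0$ then $A',A''\leq_{\text{alg}} A$, which bounds all off-diagonal entries of $A'$ and $A''$; the nonvanishing of ${}^{\mathfrak a}g^{C'}_{A',B'}$ and ${}^{\mathfrak a}g^{C''}_{A'',B''}$ forces $\ro(A')=\ro(C')$, $\ro(A'')=\ro(C'')$, $\co(B')=\co(C')$, $\co(B'')=\co(C'')$, so the diagonal entries (row or column sums minus the bounded off-diagonal entries) are pinned down, leaving only finitely many possible $(A',A'')$, and likewise only finitely many $(B',B'')$. (The left-hand side is finite simply because ${}^{\mathfrak a}g^{C}_{A,B}\neq 0$ for only finitely many $C$, by the definition of the product on $\K_n$.) Once this finiteness is in place, your stabilization-and-specialization argument goes through and coincides with the paper's ``reduce to the identity in $\mbf S_{n,d}$ for very large $d$'' step, where the Schur-level homomorphism property from \cite{FL15} is invoked.
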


\begin{proof}
We first show that the sums in the two sides  of the equation (\ref{cop-alg-hom-A}) are finite.
For two fixed matrices $A, B$ in $\widetilde \Theta_n$, 
there are only finitely many $C \in \widetilde \Theta_n$ such that $g^C_{A, B}\neq 0$ by definition.
So  the sum on the left-hand side  is finite.
To see that the sum on the righthand side of  (\ref{cop-alg-hom-A}) is finite,
we first observe that for each $A \in \widetilde \Theta_n$, 
if the structure constant  $^{\mathfrak a}h_{A}^{A', A''}$ is nonzero, then $A', A'' \leq_{\text{alg}}  A$.  
Next we observe that for $\mbf c', \mbf c'' \in \mbb Z_n$,  
 the set $\{ (A', A'') | {}^{\mathfrak a} h_A^{A', A''} \neq 0, \ro (A') =\mbf c', \co (A'') = \mbf c'' \} $ is finite. 
This is because  if $A', A'' \leq_{\text{alg}} A$, then the number of the choices for the entry $(i, j)$ for $i\neq j$ of $A'$ and $A''$ is finite. 
Now the  row and column vectors of $A'$ and $A''$ are fixed respectively, 
forcing the choice of  the diagonal entries of $A'$ and $A''$ to be finite.
In the sum of the right-hand side of (\ref{cop-alg-hom-A}), we must have 
that $\ro(A') = \ro (C')$, $\co (B') = \co (C')$,  $\ro (A'') = \ro (C'')$ and $\co (B'') = \co (C'')$, which are fixed.  
So the sum on the righthand side of  (\ref{cop-alg-hom-A}) is indeed finite.

Once we observe that both sums in (\ref{cop-alg-hom-A}) are finite, 
the proof of the equation is reduced to showing a similar equation on the level of the Schur algebra $\mbf S_{n, d}$ for very large $d$, 
which is in turn equivalent to the fact that the comultiplication $\Delta$ on $\mbf S_{n, d}$ is an algebra homomorphism in ~\cite{FL15}.
The proposition is thus proved.
\end{proof} 

Proposition ~\ref{prop-cop-alg-hom-A} can be equivalently reformulated as the following commutative diagram:
for all 
tuples $\mbf a, \mbf a', \mbf a'', \mbf b, \mbf b', \mbf b'' , \mbf c \in \mbb Z_n$ 
such that $\mbf b' +  \mbf b'' =\mbf b$ and $\mbf a' + \mbf a'' =\mbf a$, we have
\begin{align}
\label{Diag-cop-alg-hom-A}
\xymatrix{ 
 & {}_{\mbf b} \K_{\mbf a} \ar@<0ex>[dr]^{\Delta_{\mbf b', \mbf a', \mbf b'', \mbf a''}} &   && \\
%
%&&&\\
%
_{\mbf b} \K_{\mbf c} \otimes {}_{\mbf c} \K_{ \mbf a}  \ar@<0ex>[ur]^{m}  \ar@<0ex>[d]_{\prod \Delta\otimes \Delta}   
  & & 
{}_{\mbf b'} \K_{\mbf a'} \otimes {}_{\mbf b''} \K_{\mbf a''}  \\
\prod
{}_{\mbf b'} \K_{\mbf c'} \otimes {}_{\mbf b''}\K_{\mbf c''} \otimes {}_{\mbf c'} \K_{\mbf a'} \otimes {}_{\mbf c''}\K_{\mbf a''} 
\ar@<0ex>[rr]^{P_{23}}
 &&  
\prod
{}_{\mbf b'} \K_{\mbf c'} \otimes {}_{\mbf c'} \K_{\mbf a'} \otimes {}_{\mbf b''}\K_{\mbf c''}  \otimes {}_{\mbf c''}\K_{\mbf a''} 
\ar@<0ex>[u]_{m \otimes m}
}
\end{align}
where $m$ represents the multiplication of $\K_n$, all products run  over all 
tuples $(\mbf c',  \mbf c'')$ such that  $\mbf c' + \mbf c'' =\mbf c$, 
$\prod \Delta\otimes \Delta$ stands for the product of 
$\Delta_{\mbf b', \mbf c', \mbf b'', \mbf c''} \otimes \Delta_{\mbf c', \mbf a', \mbf c'', \mbf a''}$
and $P_{23}$ permutes the second and third entries.

\begin{prop}
\label{prop-cop-coass-A}
The comultiplication $\dot \Delta$ is coassociative in the following sense:
for any matrices $A, A', A'', A''' \in \widetilde \Theta_n$, we have
\begin{align}
\label{cop-coass-A}
\sum_{B\in \widetilde \Theta_n} {}^{\mathfrak a} h^{B, A'''}_A {}^{\mathfrak a} h^{A' ,  A''}_B 
=
\sum_{B\in \widetilde \Theta_n} {}^{\mathfrak a} h^{A', B}_A \  {}^{\mathfrak a} h^{A'', A'''}_B.
\end{align}
\end{prop}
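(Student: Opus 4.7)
The plan is to mimic the strategy used for Proposition~\ref{prop-cop-alg-hom-A}: establish finiteness of both sides, then reduce the identity to a corresponding identity at the level of Schur algebras of affine type $A$, where the coassociativity of $\Delta$ is already known (Remark~\ref{S-coass}).

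First I would verify that both sums in \eqref{cop-coass-A} are finite. By the stabilization description of the comultiplication (Proposition~\ref{cop-stab-A}), the structure constants $^{\mathfrak a} h^{X, Y}_A$ are nonzero only when $X, Y \leq_{\text{alg}} A$ and when $\ro(X) + \ro(Y) = \ro(A)$ and $\co(X) + \co(Y) = \co(A)$ (weights in $\mbb Z_n$ are additive under $\dot\Delta$). Thus for fixed $A, A', A''$, the condition $^{\mathfrak a}h^{A', A''}_B \neq 0$ forces $\ro(B) = \ro(A') + \ro(A'')$ and $\co(B) = \co(A') + \co(A'')$, and the constraint $^{\mathfrak a} h^{B, A'''}_A \neq 0$ forces $B \leq_{\text{alg}} A$ with $\ro(B), \co(B)$ being the complementary weights to $A'''$. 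As in the proof of Proposition~\ref{prop-cop-alg-hom-A}, these two types of constraints together restrict $B$ to a finite set (the off-diagonal entries are bounded above by those of $A$, and the diagonal is then forced by the row/column sums). The same argument handles the right-hand side.

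Next I would reduce to the Schur algebra. Fix matrices $A, A', A'', A''' \in \widetilde \Theta_n$ whose row and column vectors are compatible in the sense needed for \eqref{cop-coass-A}. By Proposition~\ref{cop-stab-A}, for all sufficiently large $p \in 2\mathbb{N}$,
\[
{}_p\Delta_{\mbf b', \mbf a', \mbf b' +\mbf b'', \mbf a' + \mbf a''}({}^{\mathfrak a}[_{2p}A]_{d+2pn}) = \sum_{B} \ {}^{\mathfrak a} h^{A', B}_{A}(v) \ {}^{\mathfrak a}[_p A']_{d'+pn} \otimes {}^{\mathfrak a}[_p B]_{(d''+d''')+pn} + \cdots,
\]
and similarly for the other iterated comultiplication. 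Since the structure constants on $\K_n$ are obtained by specializing the $v'$-variable at $v'=1$ in the coefficients appearing in Proposition~\ref{cop-stab-A}, and since these coefficients coincide (up to a $v^{-p}$-substitution) with the structure constants of $\Delta$ on $\mbf S_{n, d+pn}$, each side of \eqref{cop-coass-A} agrees with the corresponding structure constant of $(1\otimes \Delta)\Delta$ or $(\Delta\otimes 1)\Delta$ acting on $^{\mathfrak a}[_{2p}A]_{d+2pn}$, for $p$ large enough that all relevant matrices lie in $\Theta_{n, d+pn}$ (so that the truncation phenomena in Proposition~\ref{new-10.2.1} do not intervene). The identity then follows from the coassociativity of $\Delta$ on $\mbf S_{n, d+pn}$ (Remark~\ref{S-coass}) by taking $p \to \infty$ and comparing coefficients of a fixed standard basis element $^{\mathfrak a}[_p A']_{d'+pn}\otimes {}^{\mathfrak a}[_p A'']_{d''+pn} \otimes {}^{\mathfrak a}[_p A''']_{d'''+pn}$.

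The main obstacle I anticipate is the careful bookkeeping of the stabilization shifts: the iterated comultiplication applies to $^{\mathfrak a}[_{2p}A]$ but produces tensor factors shifted by $p$ (and further comultiplying one of those requires re-applying the proposition to a matrix already shifted by $p$, so we would really need $^{\mathfrak a}[_{4p}A]$ or two different $p$-parameters). The cleanest way to handle this is to work in a three-fold tensor product and apply Proposition~\ref{cop-stab-A} twice, choosing $p$ large enough that both applications stabilize simultaneously, and then to observe that the $v$-polynomial identity forced on the coefficients is equivalent, after specialization $v' = 1$ and passage to the limit, precisely to \eqref{cop-coass-A}. Once this is set up, the remaining verification is formal.
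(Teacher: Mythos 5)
Your proposal is correct and follows essentially the same route as the paper: finiteness of both sums via the weight/$\leq_{\text{alg}}$ constraints exactly as in Proposition~\ref{prop-cop-alg-hom-A}, then reduction through the stabilization of Proposition~\ref{cop-stab-A} to the coassociativity of $\Delta$ on the Schur algebra $\mbf S_{n,d}$ from Remark~\ref{S-coass}. Your extra bookkeeping about the shift parameters in the iterated comultiplication is a reasonable elaboration of a step the paper leaves implicit, but it is not a different argument.
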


\begin{proof}
By arguing in a similar way as in the proof of Proposition ~\ref{prop-cop-alg-hom-A}, we see that both sums in  (\ref{cop-coass-A}) are finite. 
The equality can then be reduced to proving a similar equation on the Schur algebra level  
as in the proof of Proposition ~\ref{prop-cop-alg-hom-A}, 
which in turn follows by the coassociativity in  Remark~\ref{S-coass}.
\end{proof}

Proposition ~\ref{prop-cop-coass-A} can be equivalently reformulated as the following commutative diagram:
for all sequences $\mbf a, \mbf a', \mbf a'', \mbf a''', \mbf b, \mbf b', \mbf b'', \mbf b''' \in \mbb Z_n$, we have 
\begin{align}
\label{Diag-cop-coass-A}
\begin{CD}
{}_{\mbf b} \K_{\mbf a} 
@>\Delta_{\mbf b'+\mbf b'', \mbf a'+\mbf a'', \mbf b''', \mbf a'''} >> 
{}_{\mbf b'+\mbf b''} \K_{\mbf a'+\mbf a''} \otimes {}_{\mbf b'''}\K_{\mbf a'''} \\
@V\Delta_{\mbf b', \mbf a', \mbf b''+\mbf b''', \mbf a''+\mbf a'''}VV @VV\Delta_{\mbf b', \mbf a', \mbf b'', \mbf a''}\otimes 1V \\
{}_{\mbf b'}\K_{\mbf a'} \otimes {}_{\mbf b'' + \mbf b'''} \K_{\mbf a'' +\mbf a'''} 
@>>1\otimes \Delta_{\mbf b'', \mbf a'', \mbf b''', \mbf a'''}> 
{}_{\mbf b'}\K_{\mbf a'} \otimes {}_{\mbf b''}\K_{\mbf a''} \otimes {}_{\mbf b'''}\K_{\mbf a'''}
\end{CD}
\end{align}

%%%%%
\section{The comultiplication on $\K^{\C}_n$}

Recall $\K^{\C}_n$ from Section ~\ref{Kc}.
For any $\mbf a, \mbf b\in \mbb Z^{\C}_n$, let $_{\mbf b}\K^{\C}_{\mbf a}$ denote the subspace of $\K^{\C}_n$ spanned by
the standard basis element $[A]$ such that $\ro(A)=\mbf b$ and $\co (A) =\mbf a$.
For any  $\mbf b, \mbf a, \mbf b', \mbf a' \in \mbb Z_n^{\C}$ and $\mbf b'', \mbf a'' \in \mbb Z_n$ such that  
$(\mbf b', \mbf b'') \models \mbf b$ and $(\mbf b'', \mbf a'') \models \mbf a$, we define a linear map 
\begin{align*}
\Delta^{\C}_{\mbf b', \mbf a', \mbf b'', \mbf a''}
& : {}_{\mbf b} \K^{\C}_{\mbf a} \longrightarrow
 {}_{\mbf b'}\K^{\C}_{\mbf a'} \otimes {}_{\mbf b''}\K_{\mbf a''},
\\
\Delta^{\C}_{\mbf b', \mbf a', \mbf b'', \mbf a''} ([A]) 
&=
\sum_{i, j} C_{i,j} (v, 1) [A'_i]\otimes \ ^{\mathfrak a} [A''_j],
\end{align*}
where $C_{i, j} (v, v')$, $A'_i$ and $A''_j$ are as in Proposition ~\ref{cop-stab}.
We shall call the collection 
\[
\dot \Delta^{\C} = (\Delta^{\C}_{\mbf b', \mbf a', \mbf b'', \mbf a''})_{\mbf b', \mbf a'\in \mbb Z_n^{\C}, \mbf b'', \mbf a'' \in \mbb Z_n}
\]
the $comultiplication$ of $\K^{\C}_n$.
Let $g_{A, B}^C$ and $h_A^{B, C}$ be the structure constants of the multiplication and comultiplication of $\K^{\C}_n$, respectively, with respect to the standard bases.

\begin{prop}
\label{prop-cop-alg-hom}
The comultiplication $\dot \Delta^{\C}$ on $\K^\C_n$ is  an algebra homomorphism in the following sense:
for all matrices $A, B, C' \in \widetilde \Xi_n$, $C'' \in \widetilde  \Theta_n$  one has
\begin{align}
\label{cop-alg-hom}
\sum_{C \in \widetilde \Xi_n}  g^C_{A, B}   h^{C', C''}_C
= \sum_{ A', B' \in \widetilde \Xi_n, A'',  B'' \in \widetilde \Theta_n}  h^{A', A''}_A  h^{B', B''}_B  g_{A', B'}^{C'} 
{}^{\mathfrak a} g_{A'', B''}^{C''}.
\end{align}
\end{prop}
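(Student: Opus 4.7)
The plan is to follow the blueprint of the affine type $A$ analogue, Proposition~\ref{prop-cop-alg-hom-A}, whose argument has three logical steps: (i) verify finiteness of both sums, (ii) reduce the identity to a corresponding identity at the Schur algebra level for $d \gg 0$, and (iii) invoke the fact that $\Delta^\C$ is already known to be an algebra homomorphism at the Schur level (Proposition~\ref{Dc}, extended to the Schur algebra $\Sj$). The mixed nature of the target $\K^\C_n \otimes \K_n$ means every argument from the pure affine type $A$ case must be duplicated on the ``$\C$-side'' and on the ``$\mathfrak{a}$-side,'' but the structural ideas are unchanged.

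First I would establish finiteness. The left-hand side is finite because for fixed $A, B \in \widetilde\Xi_n$ only finitely many $C$ satisfy $g^C_{A,B} \neq 0$, as this is a stabilization of the Schur convolution and is governed by Proposition~\ref{new-10.2.1}. For the right-hand side, I would use the triangularity of the comultiplication: by Propositions~\ref{cop-stab} and \ref{cop-stab-A}, whenever $h^{A',A''}_A \neq 0$ both $A'$ and $A''$ are bounded above (in the partial order $\leq_{\text{alg}}$) by data determined by $A$, and similarly for $h^{B',B''}_B$. Once the row and column vectors of $(A',A'',B',B'')$ are pinned down by the equalities $\ro(A')=\ro(C')$, $\co(B')=\co(C')$, $\ro(A'')=\ro(C'')$, $\co(B'')=\co(C'')$, the off-diagonal entries have only finitely many allowed values (from the triangularity), and this forces the diagonal entries as well, so only finitely many quadruples contribute.

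Next I would perform the reduction to the Schur algebra $\Sj$. Using the stabilization identity \eqref{A-f} together with Propositions~\ref{new-10.2.1} and \ref{cop-stab}, one sees that specialization to $[\,_{2p}A\,]_{d+pn}$, $[\,_{2p}B\,]_{d+pn}$ matches up the structure constants $g^C_{A,B}, h^{C',C''}_C$ in $\K^\C_n$ with the corresponding $\Sj$-structure constants (evaluated at $v' = v^{-p}$, then at $v'=1$), and similarly on the right-hand side. Choosing $p$ large enough (depending on the finitely many matrices appearing in either sum) so that no cancellations or out-of-range issues occur, the identity in $\K^\C_n$ reduces to the assertion
\[
\Delta^\C\bigl([\,_{2p}A\,]_{d+pn} \cdot [\,_{2p}B\,]_{d+pn}\bigr)
= \Delta^\C([\,_{2p}A\,]_{d+pn}) \cdot \Delta^\C([\,_{2p}B\,]_{d+pn})
\]
in $\mbf S^\C_{n,d'+pn/2} \otimes \mbf S_{n,d''+pn/2}$ for suitable $d', d''$. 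This is exactly the assertion that $\Delta^\C$ is an algebra homomorphism at the Schur level, which holds by Proposition~\ref{Dc} (note that the algebra-homomorphism property was used throughout Chapter~\ref{chap:schur}, valid on the full Schur algebra $\Sj$, not merely on the Lusztig subalgebra).

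The main obstacle I expect is the finiteness of the right-hand sum. In the affine type $A$ setting it rests on the algebraic-triangularity of $\dot\Delta$ together with the fact that the row/column data rigidifies all matrices involved. In the $\C$-case one has the same triangularity for the $\C$-factor but the $\mathfrak{a}$-factor $(A'', B'')$ runs over $\widetilde\Theta_n$, so one must carefully track how the constraints $(\mbf a',\mbf a'') \models \ro(A)$ etc.\ and the inequalities $A'' \leq_{\text{alg}} A$ (coming from the triangularity implicit in Proposition~\ref{cop-stab}) combine to bound the $\mathfrak{a}$-side. Once this bookkeeping is done, steps (ii)--(iii) proceed uniformly with the affine type $A$ proof.
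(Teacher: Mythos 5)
Your proposal is correct and follows essentially the same route as the paper: the paper's proof simply defers to the affine type $A$ argument (Proposition~\ref{prop-cop-alg-hom-A}), namely finiteness of both sums via the triangularity of the comultiplication structure constants and the row/column constraints, followed by reduction to the Schur algebra level for $d\gg 0$, where the identity is the statement that $\Delta^{\C}$ is an algebra homomorphism on $\Sj$. Your additional remarks on the stabilization bookkeeping and the $\mathfrak{a}$-side constraints are consistent with what the paper leaves implicit.
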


\begin{proof}
The proof is the same as that of Proposition ~\ref{prop-cop-alg-hom}, and shall not be repeated.
\end{proof}

Proposition ~\ref{Diag-cop-alg-hom} can be equivalently presented in terms of the following commutative diagram:
for all  sequences $\mbf a, \mbf a', \mbf b, \mbf b',  \mbf c \in \mbb Z^{\C}_n$ and $\mbf a'', \mbf b'' \in \mbb Z_n$ such that $\mbf a' + \mbf a'' \models \mbf a$ and $\mbf b' +\mbf b'' \models \mbf b$, we have
\begin{align}
\label{Diag-cop-alg-hom}
\xymatrix{ 
 & {}_{\mbf b} \K^{\C}_{\mbf a} \ar@<0ex>[dr]^{\Delta^{\C}_{\mbf b', \mbf a', \mbf b'', \mbf a''}} &   && \\
%
%&&&\\
%
_{\mbf b} \K^{\C}_{\mbf c} \otimes {}_{\mbf c} \K^{\C}_{ \mbf a}  \ar@<0ex>[ur]^{m^{\C}}  \ar@<0ex>[d]_{\prod \Delta^{\C} \otimes \Delta^{\C}}   
  & & 
{}_{\mbf b'} \K^{\C}_{\mbf a'} \otimes {}_{\mbf b''} \K_{\mbf a''}  \\
\prod \;
{}_{\mbf b'} \K^{\C}_{\mbf c'} \otimes {}_{\mbf b''}\K_{\mbf c''} \otimes {}_{\mbf c'} \K^{\C}_{\mbf a'} \otimes {}_{\mbf c''}\K_{\mbf a''} 
\ar@<0ex>[rr]^{\omega_{23}}
 &&  
\prod \;
{}_{\mbf b'} \K^{\C}_{\mbf c'} \otimes {}_{\mbf c'} \K^{\C}_{\mbf a'} \otimes {}_{\mbf b''}\K_{\mbf c''}  \otimes {}_{\mbf c''}\K_{\mbf a''} 
\ar@<0ex>[u]_{m^{\C} \otimes m}
}
\end{align}
Here $m^{\C}$ stands for the multiplication in $\K^{\C}_n$,
all products run over all sequences $\mbf c' \in \mbb Z_n^{\C}$ and  $ \mbf c'' \in \mbb Z_n$ such that 
$\mbf c' + \mbf c'' \models \mbf c$,  and
$\prod \Delta^{\C} \otimes \Delta^{\C}$ stands for the product of 
$\Delta^{\C}_{\mbf b', \mbf c', \mbf b'', \mbf c''} \otimes \Delta^{\C}_{\mbf c', \mbf a', \mbf c'', \mbf a''}$.

\begin{prop}
\label{prop-cop-coass}
The comultiplication $\dot \Delta^{\C}$  is coassociative in the following sense:
for any matrices $A, A' \in \widetilde \Xi_n,  A'', A''' \in \widetilde \Theta_n$, we have
\begin{align}
\label{cop-coass}
\sum_{C\in \widetilde \Xi_n}  h^{C, A'''}_A h^{A' ,  A''}_C 
=
\sum_{B \in \widetilde \Theta_n} h^{A', B}_A \  {}^{\mathfrak a} h^{A'', A'''}_B.
\end{align}
\end{prop}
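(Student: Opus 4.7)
The plan is to mimic the proof of the affine type $A$ analogue (Proposition~\ref{prop-cop-coass-A}), which itself follows the pattern established for Proposition~\ref{prop-cop-alg-hom-A}. The coassociativity at the level of $\K^{\C}_n$ should be deduced from the coassociativity of $\Delta^{\C}$ on the Schur algebras $\Sj$ (already obtained in Proposition~\ref{Sj-coass}) via the stabilization properties collected in Proposition~\ref{cop-stab} and its type $A$ counterpart Proposition~\ref{cop-stab-A}.

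First I would verify that both sums in \eqref{cop-coass} are finite. For the left-hand side, observe that the structure constant $h^{C, A'''}_A$ vanishes unless $C \leq_{\text{alg}} A$ and $A'''$ has prescribed row/column sums determined by $A$; moreover the nonzero contribution forces $\ro(C)$ and $\co(C)$ to be fixed by $\ro(A')$, $\co(A'')$, $\ro(A)$, $\co(A)$, $\ro(A''')$, $\co(A''')$. Since the off-diagonal entries of $C$ are then bounded by those of $A$, only finitely many $C$ can contribute. The argument for the right-hand side is entirely analogous; the key point, as in the proof of Proposition~\ref{prop-cop-alg-hom-A}, is that fixing the row and column sums of a matrix together with an upper bound on its off-diagonal entries leaves only finitely many choices.

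Once finiteness is established, the identity~\eqref{cop-coass} becomes an equality between two finite linear combinations of basis elements indexed by triples in $\widetilde \Xi_{n,d'} \times \widetilde \Theta_{n,d''} \times \widetilde \Theta_{n,d'''}$ (for appropriate $d', d'', d'''$). Choose $p \in 2\mbb N$ large enough so that ${}_p A$, ${}_p A'$, ${}_p A''$, ${}_p A'''$ all lie in genuine (non-shifted) index sets at Schur-algebra level, and so that all the finitely many terms that appear in both sides of \eqref{cop-coass} remain nonzero after the shift $\ddot p$-translation. By Proposition~\ref{cop-stab} and Proposition~\ref{cop-stab-A}, the structure constants $h^{B,C}_A$ and ${}^{\fa} h^{B,C}_A$ appear as the specializations at $v'=1$ of the stable polynomials governing the components $\Delta^{\C}_{\mbf b', \mbf a', \mbf b'', \mbf a''}$ on $\Sj$, evaluated on the $\ddot p$-shifted matrices.

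Thus \eqref{cop-coass} would reduce to comparing the two iterates $(\mathrm{id} \otimes \Delta) \circ \Delta^{\C}$ and $(\Delta^{\C} \otimes \mathrm{id}) \circ \Delta^{\C}$ applied to $[{}_{2p} A]_{d+pn}$ for all sufficiently large $p$. But this identity holds in $\Sj \otimes {\mbf S}_{n,d''} \otimes {\mbf S}_{n,d'''}$ by Proposition~\ref{Sj-coass}. Passing to the limit $p \to \infty$ of the coefficients then yields \eqref{cop-coass}. The main obstacle I anticipate is purely bookkeeping: one must carefully unravel the indexing conventions (in particular the distinction between matrices in $\widetilde \Xi_n$ with their centrosymmetry and odd-diagonal conditions at $(0,0)$ and $(r+1, r+1)$, versus matrices in $\widetilde \Theta_n$) so that the compositions of $\Delta^{\C}$ and $\Delta$ land in the correct tensor factors, and check that the finiteness argument is robust to the non-symmetric nature of the components (the first tensor factor lives in $\K^{\C}_n$ while the others live in $\K_n$). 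Beyond this, no new geometric or combinatorial input is required.
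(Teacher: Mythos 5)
Your proposal is correct and follows essentially the same route as the paper: establish finiteness of both sums by the $\leq_{\text{alg}}$-bound together with fixed row/column sums (the argument used for Proposition~\ref{prop-cop-alg-hom-A}), then reduce the identity via the stabilization of the comultiplication to the Schur-algebra level for large shifts, where it is exactly the coassociativity of $\Delta^{\C}$ on $\Sj$ from Proposition~\ref{Sj-coass}. No substantive difference from the paper's argument.
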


\begin{proof}
The proof is similar to that of Proposition ~\ref{prop-cop-coass-A}, where we use Proposition ~\ref{Sj-coass} instead of Remark~\ref{S-coass}.
\end{proof}

%For any pairs $(\mbf b', \mbf a')\models \mbf b, (\mbf b'', \mbf a'')\models \mbf a$ and $\mbf b''', \mbf a''' \in \mbb Z_n$,
%\begin{align}
%(\Delta^{\C}_{\mbf b', \mbf a', \mbf b'', \mbf a''} \otimes 1 ) \Delta^{\C}_{\mbf b, \mbf a, \mbf b''', \mbf a'''}
%=
%(1\otimes \Delta_{\mbf b'', \mbf a'', \mbf b''', \mbf a'''} ) \Delta^{\C}_{\mbf b', \mbf a', \mbf b''+\mbf b''', \mbf a''+\mbf a'''}.
%\end{align}

Proposition \ref{prop-cop-coass} can be equivalently formulated as the following commutative diagram:
for $\tilde{\mbf a}, \mbf a, \mbf a', \tilde{\mbf b}, \mbf b, \mbf b' \in \mbb Z^{\C}_n$, $\mbf a'', \mbf a''', \mbf b'', \mbf b''' \in \mbb Z_n$ such that 
$(\mbf a', \mbf a'') \models \mbf a$, $(\mbf b',\mbf b'')\models \mbf b$, 
$(\mbf a, \mbf a''')\models \tilde{\mbf a}$ and $(\mbf b, \mbf b''')\models \tilde{\mbf b}$, 
we have
\begin{align}
\label{Diag-cop-coass}
\begin{CD}
{}_{\tilde{ \mbf b}} \K^{\C}_{\tilde{ \mbf a}} 
@>\Delta^{\C}_{\mbf b, \mbf a, \mbf b''', \mbf a'''} >> 
{}_{\mbf b} \K^{\C}_{\mbf a} \otimes {}_{\mbf b'''}\K_{\mbf a'''} \\
@V\Delta^{\C}_{\mbf b', \mbf a', \mbf b''+\mbf b''', \mbf a''+\mbf a'''}VV @VV\Delta^{\C}_{\mbf b', \mbf a', \mbf b'', \mbf a''}\otimes 1V \\
{}_{\mbf b'}\K^{\C}_{\mbf a'} \otimes {}_{\mbf b'' + \mbf b'''} \K_{\mbf a'' +\mbf a'''} 
@>>1\otimes \Delta_{\mbf b'', \mbf a'', \mbf b''', \mbf a'''}> 
{}_{\mbf b'}\K^{\C}_{\mbf a'} \otimes {}_{\mbf b''}\K_{\mbf a''} \otimes {}_{\mbf b'''}\K_{\mbf a'''}.
\end{CD}
\end{align}

\begin{rem}
\label{rem:KnQSP}
In light of Propositions ~\ref{prop-cop-alg-hom} and ~\ref{prop-cop-coass}, 
we say that the pair $(\K_n, \K^{\C}_n)$ forms an $idempotented$ quantum symmetric pair.
Recall from Remark~\ref{rem:gln}
that $\K_n$ is isomorphic to the idempotented quantum affine $\gl_n$, $\dot\U (\glh_n)$. 
\end{rem}

\section{A homomorphism  from $\K^{\C}_n$ to $\Sj$}

Recall that we set $[A]_d =0$ and $\f_{A; d} =0$ in $\Sj$ if $A\not \in \Xi_{n, d}$.
We define a linear map 
\begin{align*}
\Psi_{n, d} &: \K^{\C}_n \longrightarrow  \Sj,
\\
[A] & \mapsto [A]_d, \quad \text{for } A\in \widetilde \Xi_n.
\end{align*}

\begin{lem}
\label{Psi-fa}
For all  $A\in \widetilde \Xi_n$, we have
$\Psi_{n, d} (\f_A) = \f_{A; d}$.
In particular, the map  $\Psi_{n, d}$ commutes with the bar involutions.
\end{lem}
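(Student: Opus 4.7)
\textbf{The strategy} is to handle the tridiagonal base case by directly comparing explicit expansions, then bootstrap to arbitrary $A$ via the product decomposition $\f_A = \f_{A_m}\cdots\f_{A_1}$ from Proposition~\ref{fA-monomial}; the bar-compatibility follows formally. First I would treat the tridiagonal case: for tridiagonal $A \in \widetilde{\MX}_n$, the element $\f_A \in \K^{\C}_n$ is given by the explicit expansion~\eqref{fa-explicit}, namely $\f_A = \sum_{\mbf t} Q^{\mbf t}_{\mbf i, \mbf a; D}(v,1)\,[M_{\mbf t}]$, where $D = D_{\co(\ddot A)}$, $M_{\mbf t} = \mathrm{dlt}_1(D)_{\mbf i, \mbf a, \mbf t}$, and $(\mbf i,\mbf a)$ is determined by the tridiagonal structure of $A$. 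On the Schur-algebra side, $\f_{A;d} = \rho^{-1}(\ddot\f_{A;d})$ is the pull-back through $\rho$ of the Chevalley monomial~\eqref{fA} in $\mbf S^{\C}_{\breve n, d}$. Iterating Proposition~\ref{4.2.3-stab-v3} at the specialization $p = 0$---legitimate because that proposition holds for all $p \in 2\mbb Z$---expands $\ddot\f_{A;d}$ in the standard basis of $\mbf S^{\C}_{\breve n, d}$ with exactly the same coefficients $Q^{\mbf t}_{\mbf i, \mbf a; D}(v,1)$, subject to the convention~\eqref{con} that absent matrices contribute zero; applying $\rho^{-1}$ and the $\mbb Q(v)$-linearity of $\Psi_{n,d}$ then yields $\Psi_{n,d}(\f_A) = \sum_{\mbf t} Q^{\mbf t}_{\mbf i, \mbf a; D}(v,1)[M_{\mbf t}]_d = \f_{A;d}$.

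\textbf{For general $A$}, I would invoke Proposition~\ref{fA-monomial} to write $\f_A = \f_{A_m}\cdot\f_{A_{m-1}}\cdots\f_{A_1}$ in $\K^{\C}_n$ with all $A_i$ tridiagonal, and Theorem~\ref{new-10.1.3} to write the parallel factorization $\f_{A;d} = \f_{A_m;d}*\cdots*\f_{A_1;d}$ in $\Sj$. The task then reduces to showing $\Psi_{n,d}(\f_{A_m}\cdots\f_{A_1}) = \f_{A_m;d}*\cdots*\f_{A_1;d}$, which I would establish by induction on $m$, peeling off one factor at a time. Each tridiagonal $\f_{A_i}$ is a linear combination of standard basis elements $[B]$ with $B - R E^{j,j+1}_\theta$ diagonal; the product of any such $[B]$ with any $[A']$ is controlled in $\K^{\C}_n$ by Proposition~\ref{4.2.3-stab-v3} at $v' = 1$ (this is precisely the definition~\eqref{K-mult} of the stabilized product), while the same polynomial identity read at $p = 0$ gives the analogous product in $\mbf S^{\C}_{\breve n, d}$, hence in $\Sj$ after pulling back through $\rho$. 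Thus $\Psi_{n,d}$ intertwines these one-factor multiplications, and combining this with the tridiagonal case applied to each $\f_{A_i}$ closes the induction.

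\textbf{The commutativity with bar} then follows formally from $\Psi_{n,d}(\f_A) = \f_{A;d}$: since $\overline{\f_A} = \f_A$ in $\K^{\C}_n$ (Proposition~\ref{fA-monomial}) and $\f_{A;d}$ is bar-invariant in $\Sj$ as a product of bar-invariant Chevalley generators, we have $\Psi_{n,d}(\overline{\f_A}) = \f_{A;d} = \overline{\f_{A;d}} = \overline{\Psi_{n,d}(\f_A)}$; because $\{\f_A\}_{A\in\widetilde{\MX}_n}$ is a $\mbb Q(v)$-basis of $\K^{\C}_n$ by Theorem~\ref{Kj-bases} and the two bar involutions are $v$-antilinear, this pointwise identity extends semilinearly to all of $\K^{\C}_n$.

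\textbf{The main obstacle} is the multiplicativity used in the inductive step: Proposition~\ref{new-10.2.1}, which controls iterated products in the stabilization algebra, is stated only for $p \geq p_0$, so one cannot literally specialize at $p = 0$ in a multi-factor formula. What rescues the argument is that Proposition~\ref{4.2.3-stab-v3}, governing multiplication by a \emph{single} generator-like factor, holds uniformly for all $p \in 2\mbb Z$; this forces us to process $\f_A = \f_{A_m}\cdots\f_{A_1}$ one Chevalley factor at a time rather than in bulk, all while tracking---via the convention~\eqref{con}---the terms that drop out when intermediate matrices leave $\Xi_{n,d}$.
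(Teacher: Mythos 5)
Your core mechanism is exactly the paper's: both $\f_A$ and $\f_{A;d}$ carry the same universal expansion with coefficients $Q^{\mbf t}_{\mbf i,\mbf a;D_{\co(\ddot A)}}(v,v^{-p})$ coming from Proposition~\ref{4.2.3-stab-v3}, which (unlike Proposition~\ref{new-10.2.1}) is uniform in $p\in 2\ZZ$; the $p\gg 0$ specialization at $v'=1$ is the definition of $\f_A$ in $\K^{\C}_n$ (formula \eqref{fa-explicit}), while $p=0$ computes the honest product $\ddot\f_{A;d}$ in $\mbf S^{\C}_{\breve n,d}$, and comparing the two, using the vanishing convention \eqref{con}, gives $\Psi_{n,d}(\f_A)=\f_{A;d}$; the bar statement then follows since both sides are bar-invariant and $\{\f_A\}$ is a basis. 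The difference is organizational: since \eqref{fa-explicit} is stated for \emph{general} $A$ (Proposition~\ref{4.2.3-stab-v3} handles an arbitrary string of Chevalley factors at once, namely the concatenation coming from all the tridiagonal factors of $\f_A$), the paper concludes in one shot, and your split into a tridiagonal base case plus an induction on the number of factors of $\f_A=\f_{A_m}\cdots\f_{A_1}$ is unnecessary extra work --- it amounts to re-proving a special case of Proposition~\ref{Psi-algebra} before you have Lemma~\ref{Psi-fa}.

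Moreover, the justification you give for the inductive step is not correct as written. For a tridiagonal $A_i$ with several nonzero superdiagonal entries, $\f_{A_i}$ is \emph{not} a linear combination of standard basis elements $[B]$ with $B-RE^{j,j+1}_{\theta}$ diagonal; and for such a single-Chevalley-type $[B]$, the product $[B]\cdot[A']$ in $\K^{\C}_n$ is \emph{not} ``precisely'' given by Proposition~\ref{4.2.3-stab-v3}: the product \eqref{K-mult} of standard basis elements is only defined through Proposition~\ref{new-10.2.1} at $p\gg 0$, and Proposition~\ref{4.2.3-stab-v3} does not apply to $[\ddot B]*[\ddot A']$ because the index shift in $\ddot\empty$ (the inserted zero rows/columns at $\pm 1$ mod $\breve n$) destroys the Chevalley form of $\ddot B$ --- this is precisely why \eqref{fA} routes through the extra factors $\breve\f_0,\breve\f_{n},\breve\f_{n+1}$. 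The step is repaired by peeling off the monomials $\f_{A_i}$ themselves rather than standard basis elements: the displayed formula for $\f_B\cdot[A]$ following \eqref{fa-explicit} (a consequence of Proposition~\ref{4.2.3-stab-v3}) together with its $p=0$ counterpart in $\Sj$ shows $\Psi_{n,d}(\f_{A_i}\cdot x)=\f_{A_i;d}*\Psi_{n,d}(x)$ for all $x$, and then your induction closes. With that repair your argument is valid, but it reduces to the same comparison the paper performs directly.
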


\begin{proof}
By Proposition ~\ref{4.2.3-stab-v3},  we have
\begin{align}
\label{fad-explicit}
\f_{A; d} = \sum_{\mbf t \in \mathcal T_{\mbf i, \mbf a, D_{c(\ddot A)}}} 
Q^{\mbf t}_{\mbf i, \mbf a; D_{c(\ddot A)}} (v, 1) \big[ \mrm{dlt}_1  (D_{\co (\ddot A)})_{\mbf i, \mbf a, \mbf t} \big]_d.
\end{align}
The equality $\Psi_{n, d} (\f_A) = \f_{A; d}$ follows readily by comparing (\ref{fad-explicit}) and (\ref{fa-explicit}).

Since $\f_A$ and $\f_{A; d}$ are bar-invariant, it follows that $\Psi_{n,d }$ commutes with the bar maps.
\end{proof}

\begin{prop}
\label{Psi-algebra}
The map $\Psi_{n, d}$ is a surjective algebra homomorphism.
\end{prop}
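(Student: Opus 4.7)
The surjectivity of $\Psi_{n, d}$ is immediate: every standard basis element $[A]_d$ of $\Sj$ (with $A \in \Xi_{n, d} \subset \widetilde \Xi_n$) equals $\Psi_{n, d}([A])$. For the algebra homomorphism property, my plan is first to verify $\Psi_{n, d}(\f_B \cdot X) = \Psi_{n, d}(\f_B) * \Psi_{n, d}(X)$ when $B \in \widetilde \Xi_n$ is tridiagonal and $X$ is arbitrary, then bootstrap to arbitrary products via Proposition~\ref{fA-monomial}.

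For the tridiagonal base case, I will use the description of $\f_{B; d}$ via the embedding $\rho$: by \eqref{fA}, the element $\rho(\f_{B; d}) = \ddot \f_{B; d}$ is an explicit product of Chevalley-like divided powers in $\mbf S^{\C}_{\breve n, d}$. Iterating Lemma~\ref{4.2.3-stab} through Proposition~\ref{4.2.3-stab-v3} and specializing at $p = 0$---legitimate because both statements hold for all $p \in 2 \ZZ$, with convention \eqref{con} absorbing matrices outside $\Xi_{\breve n, d}$---will yield, upon pulling back by $\rho^{-1}$, an explicit expansion
\[
\f_{B; d} * [A]_d = \sum_{\mbf t \in \mathcal T_{\mbf i_0, \mbf b_0, \ddot A}} Q^{\mbf t}_{\mbf i_0, \mbf b_0; \ddot A}(v, 1)\, \bigl[\mrm{dlt}_1 (\ddot A_{\mbf i_0, \mbf b_0, \mbf t})\bigr]_d
\]
in $\Sj$. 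The essential observation is that exactly the same universal polynomials $Q^{\mbf t}(v, 1)$ also determine the multiplication $\f_B \cdot [A]$ inside $\K^{\C}_n$, by definition \eqref{K-mult} combined with the fact that these polynomial coefficients depend only on the matrix entries (not on $d$). Together with Lemma~\ref{Psi-fa} (which gives $\Psi_{n, d}(\f_B) = \f_{B; d}$) and linearity in $X$, this will produce the desired identity for tridiagonal $B$.

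To handle a general matrix $B \in \widetilde \Xi_n$, Proposition~\ref{fA-monomial} factors $\f_B = \f_{B_m} \cdot \f_{B_{m-1}} \cdots \f_{B_1}$ as a product of tridiagonal $\f_{B_j}$'s. Iterating the tridiagonal case, together with the parallel factorization $\f_{B; d} = \f_{B_m; d} * \cdots * \f_{B_1; d}$ coming from Theorem~\ref{new-10.1.3} and Lemma~\ref{Psi-fa}, will yield $\Psi_{n, d}(\f_B \cdot X) = \Psi_{n, d}(\f_B) * \Psi_{n, d}(X)$ for all $B$ and $X$. Since $\{\f_B : B \in \widetilde \Xi_n\}$ is a basis of $\K^{\C}_n$ by Theorem~\ref{Kj-bases}, bilinearity will then upgrade this to $\Psi_{n, d}(X \cdot Y) = \Psi_{n, d}(X) * \Psi_{n, d}(Y)$ for all $X, Y \in \K^{\C}_n$.

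The main subtlety will be ensuring that $Q^{\mbf t}(v, v')$ specialized at $v' = 1$ correctly produces the structure constants of $\Sj$ at an arbitrary (possibly small) dimension $d$, not only in the asymptotic regime where stabilization is automatic. This is precisely what the ``$\forall p \in 2 \ZZ$'' scope of Lemma~\ref{4.2.3-stab} affords: the choice $p = 0$ is always admissible, with convention \eqref{con} zeroing out any basis label falling outside $\Xi_{n, d}$.
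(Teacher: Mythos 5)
Your proposal is correct and takes essentially the same route as the paper: both arguments rest on the fact that the universal polynomials of Proposition~\ref{4.2.3-stab-v3} (valid for all $p\in 2\ZZ$, in particular at $p=0$ with the convention \eqref{con}) compute the relevant monomial products simultaneously in $\K^{\C}_n$ and in $\Sj$, combined with Lemma~\ref{Psi-fa}, the tridiagonal factorization of Proposition~\ref{fA-monomial}, and the basis statement of Theorem~\ref{Kj-bases}. The only difference is organizational: the paper compares $\Psi_{n,d}(\f_{A_1}\cdot\f_{A_2})$ with $\f_{A_1;d}*\f_{A_2;d}$ in one step via the concatenated tuples $(\mbf i_1\mbf i_2,\mbf a_1\mbf a_2)$ acting on a diagonal matrix, whereas you peel off one tridiagonal factor at a time and then conclude by linearity.
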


\begin{proof}
By Theorem ~\ref{Kj-bases} and Lemma ~\ref{Psi-fa}, 
it suffices to show that 
\[
\Psi_{n, d} (\f_{A_1} \cdot \f_{A_2}) = \f_{A_1; d} * \f_{A_2; d},
\quad \forall A_1, A_2 \in \widetilde \Xi_n.
\]
Let $(\mbf i_1, \mbf a_1)$ and $(\mbf i_2, \mbf a_2)$ be the pairs of tuples associated to 
$\f_{A_1}$ and $\f_{A_2}$, respectively,  in (\ref{fa-explicit}).
The product $\f_{A_1} \cdot \f_{A_2}$ can then be written in a similar form as (\ref{fa-explicit}) with $(\mbf i, \mbf a)$ 
replaced by $(\mbf i_1 \mbf i_2, \mbf a_1 \mbf a_2)$, by Proposition ~\ref{fA-monomial}.
Similarly, the product $ \f_{A_1; d} * \f_{A_2; d}$ admits a similar form of (\ref{fad-explicit}) with $(\mbf i, \mbf a)$ replaced by
$(\mbf i_1\mbf i_2, \mbf a_1 \mbf a_2)$.
By arguing in a similar fashion as 
the proof of Lemma ~\ref{Psi-fa}, 
we see that $\Psi_{n, d}$ sends the product $\f_{A_1} \cdot \f_{A_2}$ to $ \f_{A_1; d} * \f_{A_2; d}$.  
\end{proof}

By a standard argument such as the proof of  ~\cite[Theorem A.21]{BKLW14}, we reach at the following  result. 

\begin{thm}
\label{thm:Psi:c}
We have $\Psi_{n, d} (\{A\}) = \{A\}_d$ if $A\in \Xi_{n, d}$, and zero otherwise.
\end{thm}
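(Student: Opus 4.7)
The plan is to follow the standard strategy in the proof of \cite[Theorem A.21]{BKLW14}, adapted to our setting. The essential inputs are already in place: $\Psi_{n,d}$ is a surjective algebra homomorphism commuting with the bar involution (Proposition \ref{Psi-algebra} and Lemma \ref{Psi-fa}), it satisfies $\Psi_{n,d}([A]) = [A]_d$ with the convention that $[A]_d = 0$ whenever $A \notin \Xi_{n,d}$, and both the stably canonical basis of $\K^{\C}_n$ (Proposition \ref{stable}) and the canonical basis of $\Sj$ (Proposition \ref{CB+c}) are uniquely characterized by bar-invariance together with a uni-triangular expansion in the standard basis with off-diagonal coefficients in $v^{-1}\ZZ[v^{-1}]$.

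First I would expand $\{A\} = [A] + \sum_{A'<A}\pi_{A,A'}[A']$ with $\pi_{A,A'}\in v^{-1}\ZZ[v^{-1}]$ and apply $\Psi_{n,d}$ term-wise to obtain
\[
\Psi_{n,d}(\{A\}) = [A]_d + \sum_{A'<A,\, A'\in \Xi_{n,d}} \pi_{A,A'}[A']_d,
\]
noting that terms with $A'\notin \Xi_{n,d}$ drop out by convention. Since $\Psi_{n,d}$ commutes with the bar involution, this element is bar-invariant.

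In the case $A\in\Xi_{n,d}$, the leading term $[A]_d$ survives; moreover, if $A'<A$ in $\widetilde{\Xi}_n$ then $\ro(A')=\ro(A)$, $\co(A')=\co(A)$, so $A'<A$ in the partial order of $\Xi_{n,d}$ whenever $A'\in\Xi_{n,d}$. Thus the expansion above has exactly the form prescribed by the uniqueness characterization of $\{A\}_d$, forcing $\Psi_{n,d}(\{A\})=\{A\}_d$.

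The main obstacle lies in the case $A\notin\Xi_{n,d}$, and I would handle it by induction on the partial order $\leq$. If $|A|\neq d$, then every $A'\leq A$ satisfies $|A'|=|A|\neq d$ (row and column sums are preserved), so every $[A']_d=0$ and the vanishing is immediate. The subtler subcase is when $|A|=d$ but $A$ has a negative diagonal entry; then certain $A'<A$ can lie in $\Xi_{n,d}$ and contribute nontrivially, so termwise cancellation is not apparent. The key additional ingredient is the monomial basis: by Lemma \ref{Psi-fa} we have $\Psi_{n,d}(\f_A)=\f_{A;d}$, and by convention \eqref{con} $\f_{A;d}=0$ since $A\notin\Xi_{n,d}$. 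Combining this with the uni-triangular transition matrix between $\{\f_A\}$ and $\{\{A\}\}$ (Theorem \ref{Kj-bases}) and the inductive hypothesis applied to all $A'<A$, the expression for $\Psi_{n,d}(\{A\})$ collapses to a bar-invariant element lying in $\bigoplus_{A'<A,\,A'\in\Xi_{n,d}} v^{-1}\ZZ[v^{-1}]\,[A']_d$. Uniqueness of the canonical basis $\{\{B\}_d\}$ in $\Sj$ then forces this element to vanish, completing the induction.
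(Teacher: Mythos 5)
Your proof is correct and is essentially the paper's own argument: the paper just invokes ``the standard argument of \cite[Theorem A.21]{BKLW14}'', which is exactly the bar-invariance-plus-unitriangularity characterization you spell out (termwise application of $\Psi_{n,d}$ to $\{A\}=[A]+\sum_{A'<A}\pi_{A,A'}[A']$, then the uniqueness of canonical bases). The only remark is that your detour through the monomial basis and induction in the case $|A|=d$, $A\notin\Xi_{n,d}$ is superfluous, since the termwise application already places $\Psi_{n,d}(\{A\})$ in $\sum_{A'\in\Xi_{n,d}}v^{-1}\ZZ[v^{-1}]\,[A']_d$, and bar-invariance alone then forces it to vanish --- which is the conclusion you reach anyway.
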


\section{The algebra $\K^{\C}_n$ as a subquotient of $\K^{\C}_{\breve n}$}
\label{sec:subquot}

Let $\K^{\C}_{\breve n,  1, 0}$ be the subalgebra of $\K^{\C}_{\breve n}$ spanned by the elements
$[A]$ such that $\ro(A)_{1} =\co (A)_1 =0$.
Let $\mathcal I$ be the subspace  of $\K^{\C}_{\breve n, 1, 0}$ spanned by those elements $[A]$ 
such that  $a_{1, 1} < 0$.
Then a similar argument for Lemma ~\ref{Psi-fa} (see also ~\cite[A.3]{BLW14}) gives us the following. 

\begin{lem}
\label{I-ideal}
The subspace $\mathcal I$ 
is a two-sided ideal of $\K^{\C}_{\breve n, 1, 0}$ and $\mathcal I \cap   \{ \{A\} | A\in \widetilde \Xi_{\breve n} \}$ forms a basis of $\mathcal I$.
\end{lem}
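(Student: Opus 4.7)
The strategy follows the pattern of \cite[Lemma A.3]{BLW14} adapted to our affine setting, and the proof naturally splits along the two assertions of the lemma.

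To show that $\mathcal I$ is a two-sided ideal, I would invoke Proposition \ref{new-10.2.1} to express any product $[A] \cdot [B]$ (for $[A] \in \mathcal I$ and $[B] \in \K^{\C}_{\breve n, 1, 0}$) as $\sum_i G_i(v,1)[Z_i]$, arising as a limit of $[_{\ddot p}A]_{d+\frac{p}{2}n} * [_{\ddot p}B]_{d+\frac{p}{2}n}$ in $\mbf S^{\C}_{\breve n, d+\frac{p}{2}n}$ for $p \gg 0$. Since convolution in the Schur algebra preserves row and column sums, each $Z_i$ satisfies $\ro(Z_i)_1 = \ro(A)_1 = 0$ and $\co(Z_i)_1 = \co(B)_1 = 0$, so $[Z_i] \in \K^{\C}_{\breve n, 1, 0}$, and nonnegativity of off-diagonal entries forces $(Z_i)_{1,1} \leq 0$. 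To upgrade this to strict inequality $(Z_i)_{1,1} < 0$ whenever $G_i(v, 1) \neq 0$, I would trace through the combinatorics of Proposition \ref{4.2.3-stab-v3}: each summand $A_{\mbf i, \mbf a, \mbf t}$ from \eqref{QiaA} is obtained from ${}_{\ddot p}A$ by a sequence of transfer moves $E^{j,u}_{\theta, \breve n} - E^{j+1,u}_{\theta, \breve n}$, and the corresponding increment to the $(1,1)$-entry (after subtracting $p$ in the stabilization) can be shown not to reach zero while simultaneously preserving the vanishing of the row- and column-$1$ sums, since $a_{1,1}<0$ forces some $a_{1,u}>0$ for $u\neq 1$ and the transfer moves cannot erase all such positive contributions. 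A symmetric argument handles $[B] \cdot [A] \in \mathcal I$.

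For the basis assertion, I would first note that $\K^{\C}_{\breve n, 1, 0}$ is spanned by those $\{A\}$ with $\ro(A)_1 = \co(A)_1 = 0$, because the expansion $\{A\} = [A] + \sum_{A' < A} \pi_{A,A'}[A']$ from Proposition \ref{stable} has all $A'$ sharing the row and column sums of $A$. These canonical basis elements split into two classes according to whether $a_{1,1} = 0$ (which forces row- and column-$1$ of $A$ to be identically zero, yielding elements of the image of $\K^{\C}_n$ under the stabilized analogue of $\rho$ from Proposition \ref{prop:rho}) or $a_{1,1} < 0$. Let $\mathcal J_0 = \Span\{\{A\} : a_{i,1} = a_{1,j} = 0 \text{ for all } i,j\}$ and $\mathcal J = \Span\{\{A\} : \ro(A)_1 = \co(A)_1 = 0, \ a_{1,1} < 0\}$, so $\K^{\C}_{\breve n, 1, 0} = \mathcal J_0 \oplus \mathcal J$. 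On the standard-basis side, we similarly have $\K^{\C}_{\breve n, 1, 0} = \mathcal I_0 \oplus \mathcal I$, where $\mathcal I_0$ is spanned by $[A]$ with $a_{i,1} = a_{1,j} = 0$. Since both decompositions are preserved by the partial order $\leq$ and the unitriangular transition between standard and canonical bases respects this order, I would match $\mathcal J_0 = \mathcal I_0$ and $\mathcal J = \mathcal I$, which gives the desired basis of $\mathcal I$.

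The main obstacle will be the combinatorial verification in the first paragraph: precisely ruling out the possibility that a transfer move in Proposition \ref{4.2.3-stab-v3} pushes $(Z_i)_{1,1}$ from a negative value to zero while preserving the row- and column-$1$ vanishing, which requires careful bookkeeping of the entries indexed by $\pm 1 \pmod{\breve n}$ under the centrosymmetry and periodicity constraints of $\widetilde\Xi_{\breve n}$. Once this is in hand, the matching of the two decompositions in the second paragraph is a standard unitriangular-matrix argument compatible with the base change from $\{[A]\}$ to $\{\{A\}\}$.
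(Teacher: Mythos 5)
Your proposal founders at exactly the step you flag as ``the main obstacle'': the upgrade from $(Z_i)_{1,1}\leq 0$ to $(Z_i)_{1,1}<0$ cannot be obtained by the combinatorial exclusion you describe. Note first that the multiplication of $\K^{\C}_{\breve n}$ is defined through the shift ${}_pA = A + pI_{\breve n}$ (Proposition~\ref{new-10.2.1} applied with $\breve n$ in place of $n$), not through ${}_{\ddot p}A$; at finite level the $(1,1)$-entry of the inputs is $a_{1,1}+p>0$, and output orbits whose matrix has zero first row and column (after subtracting $pI_{\breve n}$) genuinely do occur with nonzero coefficient. For instance, take $A=B$ with $a_{1,1}=-1$ and a single off-diagonal $1$ in the first row and column: in $[{}_pA]*[{}_pB]$ the coefficient of the standard basis element attached to the matrix with zero first row and column is, up to a power of $v$, the number of subspaces $L_1'$ with $\dim(L_1\cap L_1')=p-1$ for a fixed $L_1$ of dimension $p$, which is visibly nonzero for every large $p$. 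So no bookkeeping of transfer moves can rule these terms out. What makes $\mathcal I$ an ideal is that the \emph{stabilized} coefficients $G_i(v,v')$ of such terms vanish at $v'=1$: they contain factors of the form $\bin{a_{1,1}+t}{t}_{v,v'}$ with $a_{1,1}<0\leq a_{1,1}+t$, which are zero at $v'=1$. This is precisely the mechanism used in the proof of Proposition~\ref{4.2.3-stab-v3} via \cite[Lemma A.20]{BLW14}, and it is what the paper means by ``a similar argument for Lemma~\ref{Psi-fa}'': passing to the quotient by $\mathcal I$ behaves like the specialization $[A]\mapsto[A]_d$ under the convention that matrices with a negative entry are sent to zero. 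Your argument, as written, would fail.

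The second half has a related gap. The matching $\mathcal J=\mathcal I$ is not ``a standard unitriangular-matrix argument'': for $A$ with $a_{1,1}<0$ there typically exist $A'<A$ with zero first row and column (already the diagonal matrix $D$ with $\ro(D)=\ro(A)$, $\co(D)=\co(A)$ satisfies $D<A$), so nothing in the partial order prevents $\{A\}$ from having components outside $\mathcal I$; only the containment $\mathcal J_0\subseteq\mathcal I_0$ is forced by the order, and that is the direction you do not need. The missing input is that $\mathcal I$ is stable under the bar involution, i.e.\ that $\overline{[A]}$ for $a_{1,1}<0$ involves only $[A']$ with $a'_{1,1}<0$; this again follows from the same $v'=1$ vanishing applied to the bar stabilization (Proposition~\ref{bar-stab}), not from triangularity. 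Once bar-stability of $\mathcal I$ is in hand, running the uniqueness characterization of Proposition~\ref{stable} inside the bar-invariant subspace $\mathcal I$ shows $\{A\}\in\mathcal I$ for every $A$ with $a_{1,1}<0$, and these elements form the asserted basis.
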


Let  $\K^{\C}_{\breve n, 1, 0}/\mathcal I$ 
be the quotient algebra of  $\K^{\C}_{\breve n, 1, 0}$ by  $\mathcal I$.
By Lemma ~\ref{I-ideal}, the set 
\[
\{ \{A\} + \mathcal I | A\in \widetilde \Xi_{\breve n},  a_{1,i}=a_{i, 1}=0,\forall i\in \mbb Z \}
\]
is a  $stably$ $canonical$ basis of $\K^{\C}_{\breve n, 1, 0}/\mathcal I$.
We shall identify the subquotient $\K^{\C}_{\breve n, 1, 0}/\mathcal I$ with the stabilization algebra $\K^{\C}_n$.

\begin{prop}
 \label{prop:sq}
The assignment $\tilde \rho: [A] \mapsto[\ddot A] + \mathcal I$, for all $A\in \widetilde \Xi_{n}$, defines an isomorphism from 
the algebra $\K^{\C}_n$ to the  subquotient $\K^{\C}_{\breve n,  1, 0}/\mathcal I$  
of $\K^{\C}_{\breve n}$ with compatible stably canonical bases.
\end{prop}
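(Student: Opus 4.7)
The plan is to establish in turn that $\tilde\rho$ is a bijection of standard bases, an algebra homomorphism, and a bijection of stably canonical bases.

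For the standard-basis identification: any $C \in \widetilde\Xi_{\breve n}$ with $\mathrm{ro}(C)_1 = \mathrm{co}(C)_1 = 0$ must satisfy $c_{1,j} = c_{i,1} = 0$ for all $i, j \not\equiv 1 \pmod{\breve n}$ (those entries are nonnegative and their row/column sums vanish at index $1$), so the only free entry is $c_{1,1} \leq 0$. By Lemma \ref{I-ideal}, $\mathcal I$ is spanned by those $[C]$ with $c_{1,1} < 0$, and the surviving classes are exactly $\{[\ddot A] + \mathcal I : A \in \widetilde\Xi_n\}$. Hence $\tilde\rho$ is a linear bijection of standard bases. Moreover the partial order is respected ($A' \leq A$ iff $\ddot A' \leq \ddot A$), since the defining partial sums $\sum_{k \leq i,\, l \geq j} a_{kl}$ are unchanged by inserting zero rows and columns at indices $\equiv \pm 1 \pmod{\breve n}$.

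Next I would show multiplicativity by propagating the Schur-level isomorphism $\rho_d : \Sj \xrightarrow{\sim} \ddot{\mathbf S}^{\C}_{n,d}$ of Proposition \ref{prop:rho} through the stabilization procedure. Since $_p A = A + pI_n$ corresponds under the operator $\ddot{\phantom{x}}$ to $_{\ddot p}\ddot A = \ddot A + p\ddot I_n$ (via the identity $\ddot I_n = I_{\breve n} - E^{1,1}_{\theta,\breve n}$), applying $\rho_{d+pn/2}$ to the $\Sj$-level stabilization identity
\[
[{}_p A]_{d+pn/2} \cdot [{}_p B]_{d+pn/2} = \sum_i G_i(v, v^{-p})[{}_p Z_i]_{d+pn/2} \qquad (p \gg 0,\ p \in 2\mathbb N)
\]
defining $[A]\cdot[B] = \sum_i G_i(v,1)[Z_i]$ in $\K^{\C}_n$ (Proposition \ref{new-10.2.1}) yields
\[
[{}_{\ddot p}\ddot A]_{d+pn/2} \cdot [{}_{\ddot p}\ddot B]_{d+pn/2} = \sum_i G_i(v, v^{-p})[{}_{\ddot p}\ddot Z_i]_{d+pn/2}
\]
inside $\mathbf S^{\C}_{\breve n, d+pn/2}$. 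Since the shift family $\{{}_{\ddot p}\}$ is an admissible stabilizing direction for $\K^{\C}_{\breve n}$ (an instance of Proposition \ref{new-10.2.1} applied in rank $\breve n$, with the required polynomial form supplied by Proposition \ref{4.2.3-stab-v3}), the same polynomials $G_i(v, v')$ compute the $\K^{\C}_{\breve n}$-product on $[\ddot A]$ and $[\ddot B]$; specializing at $v' = 1$ then gives $[\ddot A]\cdot[\ddot B] = \sum_i G_i(v, 1)[\ddot Z_i]$, and the residue modulo $\mathcal I$ equals $\tilde\rho([A]\cdot[B])$. Bar-equivariance of $\tilde\rho$ follows in parallel, using the bar-invariance of $\ddot\f_{A;d} = \rho_d(\f_{A;d})$ (Lemma \ref{Psi-fa}) at the Schur level, passed to the stabilization via Proposition \ref{bar-stab}.

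Finally, for compatibility of stably canonical bases: the expansion $\{\ddot A\} = [\ddot A] + \sum_{B < \ddot A}\pi'_{\ddot A, B}[B]$ in $\K^{\C}_{\breve n}$ involves only $B$ sharing row and column sums with $\ddot A$, so every such $B$ has $\mathrm{ro}(B)_1 = \mathrm{co}(B)_1 = 0$ and therefore $\{\ddot A\} \in \K^{\C}_{\breve n, 1, 0}$. The terms with $b_{1,1} < 0$ are absorbed into $\mathcal I$, leaving $\{\ddot A\} + \mathcal I$ bar-invariant and unitriangular on the basis $\{[\ddot A'] + \mathcal I : A' \leq A\}$ with coefficients in $v^{-1}\mathbb Z[v^{-1}]$. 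The uniqueness characterization in Proposition \ref{stable}, applied inside $\K^{\C}_n$ and transported by the algebra isomorphism $\tilde\rho$, then forces $\tilde\rho(\{A\}) = \{\ddot A\} + \mathcal I$. The chief technical obstacle is the multiplicativity step, specifically justifying that the shift family $\{{}_{\ddot p}\}$ correctly exhibits the $\K^{\C}_{\breve n}$-structure constants on the image matrices $\ddot A$; the cleanest verification is via the monomial generators, using Corollary \ref{cor:fa-standard} to reduce to the tridiagonal case and transporting the explicit multiplication formulas of Proposition \ref{new-10.1.2} through $\rho_d$ before stabilizing.
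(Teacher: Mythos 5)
Your standard-basis step and the final uniqueness argument are fine in outline, but the core of your multiplicativity argument has a genuine gap. The product on $\K^{\C}_{\breve n}$ (hence on the subquotient $\K^{\C}_{\breve n,1,0}/\mathcal I$) is \emph{defined} by stabilization along $_{p}C = C + pI_{\breve n}$, which shifts the index-$1$ diagonal entries as well, whereas your argument stabilizes along $_{\ddot p}\ddot A = \ddot A + p\,\ddot I_n$, which leaves those entries untouched. Proposition \ref{new-10.2.1} in rank $\breve n$ concerns only the $I_{\breve n}$-direction, so it cannot be invoked to conclude that the polynomials $G_i(v,v')$ produced by the $\ddot I_n$-family ``compute the $\K^{\C}_{\breve n}$-product''; the identification of the two limits --- namely that the $I_{\breve n}$-stabilized structure constants, after discarding the terms whose index-$1$ diagonal entry becomes negative (exactly the terms absorbed by $\mathcal I$), agree with the rank-$n$ structure constants coming from the $\ddot I_n$-direction --- is precisely the content of the proposition, and in your proposal it is asserted rather than proved. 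The same unproved identification underlies your claim that $\tilde\rho$ is bar-equivariant (the bar involutions on $\K^{\C}_n$ and $\K^{\C}_{\breve n}$ are likewise defined through the two different stabilization directions, cf.\ Proposition \ref{bar-stab}), and hence it also props up your canonical-basis step.

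The fallback you mention in your last sentence is in fact the paper's actual proof: one first shows $\tilde\rho(\f_A) = \f_{\ddot A} + \mathcal I$ by comparing the explicit expansion \eqref{fa-explicit} of $\f_A$ (rank-$n$ $Q$-polynomial data attached to $\ddot A$) with the analogous expansion of $\f_{\ddot A}$ in $\K^{\C}_{\breve n}$, arguing as in Lemma \ref{Psi-fa}; then multiplicativity is verified on the monomial basis, $\tilde\rho(\f_{A_1}\cdot\f_{A_2}) = \f_{\ddot A_1}\cdot\f_{\ddot A_2} + \mathcal I$, as in Proposition \ref{Psi-algebra}, using that the terms suppressed on one side by negative diagonal entries correspond to the terms killed by $\mathcal I$ on the other; Lemma \ref{I-ideal} then gives bijectivity, and compatibility of stably canonical bases follows by the standard uniqueness argument you sketch. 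To repair your proposal you must actually carry out this monomial comparison (reducing to tridiagonal factors via Corollary \ref{cor:fa-standard} is a reasonable way to organize it), rather than appeal to ``admissibility'' of the $\ddot I_n$-shift as a stabilizing direction for $\K^{\C}_{\breve n}$.
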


\begin{proof}
By a similar argument as in the proof of Lemma ~\ref{Psi-fa}, we have 
\[
\tilde \rho (\f_A) = \f_{\ddot A} + \mathcal I, \quad \forall A\in \widetilde \Xi_{n}. 
\]
A similar argument as in the proof of Proposition ~\ref{Psi-algebra} shows that $\tilde \rho$ is an algebra homomorphism
by showing that $\rho(\f_{A_1} \cdot \f_{A_2}) = \f_{\ddot A_1} \cdot \f_{\ddot A_2} + \mathcal I$ for all $A_1, A_2\in \widetilde \Xi_n$.
By Lemma ~\ref{I-ideal} we know that $\tilde \rho$ is an algebra isomorphism.
A standard argument shows the compatibility with the canonical bases. The proposition is thus proved.
\end{proof}

Clearly, the projection $\Psi_{\breve n, d}: \K^{\C}_{\breve n} \to \mbf S^{\C}_{\breve n,d}$ 
%sending $[A]$ to $[A]_d$ 
induces a projection 
$\Psi_{\breve n, d}: \K^{\C}_{\breve n, 1, 0}/\mathcal I  \to \mbf S^{\C}_{\breve n, d}$.
We have the following commutative diagram: 
\[
\begin{CD}
\K^{\C}_n @> \tilde \rho>> \K^{\C}_{\breve n,  1, 0}/\mathcal I \\
@V\Psi_{n, d} VV @VV\Psi_{\breve n, d}V \\
\Sj @>\rho>> \mbf S^{\C}_{\breve n, d} 
\end{CD}
\]

\begin{rem}
The construction of $\K^{\C}_n$ as a subquotient of $\K^{\C}_{\breve n}$ here is modeled on the construction
in \cite{BLW14} (see also ~\cite{FL14}), where an algebra $\dot\U^\imath$  is realized as a subquotient of an algebra $\dot\U^\jmath$
with compatible stably canonical bases. 
\end{rem}

%%%%%%%%%%%%%%%
%%%%%%%%%%%%%%%
\chapter{Stabilization algebras  arising from other Schur algebras}
  \label{chap:Kc:ji}

In this chapter, the approach to the stabilization of the family of Schur algebras $\Sj$ (as $d$ varies)  in the preceding Chapter~\ref{chap:Kc} will be 
adapted with modifications to study the remaining 3 families of Schur algebras of types $\ji$, $\ij$ and $\ii$.
We will present more details for the type $\ji$ while merely formulating the main statements for types $\ij$ and $\ii$. 

\section{A monomial basis for Schur algebra $\Sji^{\ji}_{\nn, d}$}

Recall that  $\nn =n-1 =2r+1$.
Recall the set $\MX^{\ji}_{\nn, d}$ from \eqref{Mjid}, 
the set $\check{\MX}^{\ji}_{\nn,d}$ from \eqref{Mjicheck}, and the bijection
from \eqref{bijection:ji} 
$$
\mrm{dlt}_{r+1} : \MX^{\ji}_{\nn, d} \longrightarrow \check{\MX}^{\ji}_{\nn, d}.
$$
%\begin{rem} In comparing with $\C$ version, it seems the definitions of the above two sets shall be switched. \end{rem}
We also set $\ddot A = \mrm{dlt}_{r+1}^{-1} (A)$ for all $A \in \check{\MX}^{\ji}_{\nn, d}$.

Recall the subalgebra $\Sji^{\ji}_{\nn, d}$ of $\Sj$ from (\ref{Sji}).
Since the coproduct on $\Sj$ is coassociative,   so is
the comultiplication  $\Delta^{\ji}$ on $\Sji_{\nn, d}^{\ji}$.

For each tridiagonal matrix $A \in \check{\Xi}^{\ji}_{\nn, d}$ such that 
$\mrm{dlt}_{r+1} (A) - \sum_{1\leq i\leq \nn} \alpha_i E^{i, i+1}_{\theta, \nn}$ is diagonal,
we define 
\begin{align}
\label{ji-fA}
\f^{\ji}_{A; d} = \f^{(\alpha_r)}_{r} * \f^{(\alpha_{r-1})}_{r-1} * \cdots * \f^{(\alpha_{-(r+1)})}_{-(r+1)} * 1_{\co (A)} \in \Sji^{\ji}_{\nn, d}.
\end{align}

We call a matrix $A \in \MX^{\ji}_{\nn,d}$ $\ji$-tridiagonal, if the associated matrix $\mrm{dlt}_{r+1} (A)$ is tridiagonal. 
Given any matrix  $A=(a_{ij})$  in $\MX^{\ji}_{\nn, d}$ of depth $m \ge 1$ and $\mrm{dlt}_{r+1}(A) = (a'_{ij})$, 
we define $\ji$-tridiagonal matrices $A_1$, $A_2, \ldots, A_m \in \Xi^{\ji}_{\nn, d}$  
by the conditions that $\ro (A_m) = \ro (A)$, $\co (A_1)= \co (A)$, $\ro (A_i) = \co (A_{i+1})$ for $1\leq i\leq m-1$ 
and 
$\mrm{dlt}_{r+1}(A_i) - \sum_{1 \leq j \leq n} (\sum_{k \leq j + i -1} a'_{k, j+1}) E^{j, j+1}_{\theta, \nn}$ is  diagonal 
for all $1\leq i \leq m$.
Then we set 
\begin{align}
\label{ji-fA-monomial}
\f^{\ji}_{A; d} =
\f^{\ji}_{A_m; d} * \f^{\ji}_{A_{m-1}; d} *\cdots * \f^{\ji}_{A_1; d}. 
\end{align}
By definition, the element $\f^{\ji}_{A; d}$ is bar-invariant.

By an argument similar to Theorem ~\ref{new-10.1.3}, we have the following.

\begin{prop}
\begin{enumerate}
\item
We have $\f^{\ji}_{A; d} = [ A]_d + \mbox{lower terms}$, for all $A\in \MX^{\ji}_{\nn, d}$. 

\item
The set $\{ \f^{\ji}_{A; d} | A \in  \MX^{\ji}_{\nn, d}\}$ forms a bar-invariant basis of $\Sji^{\ji}_{\nn, d}$
(called a {\em monomial basis}).
\end{enumerate}
\end{prop}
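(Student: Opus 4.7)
The plan is to mimic the proof of Theorem~\ref{new-10.1.3}, transporting the argument from $\MX_{n,d}$ to $\MX^{\ji}_{\nn,d}$ via the bijection $\mrm{dlt}_{r+1}$ (see \eqref{bijection:ji}). Under this identification, the Chevalley generators $\f_i$ for $i\in[-(r+1),r]$ together with $\tji_r$ play the role of the generators in the type $A$ picture, and the multiplication formulas of Lemma~\ref{ji-estimate} take the place of Lemma~\ref{BKLW3.9-b}.

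First I would establish the $\ji$-analogue of Proposition~\ref{new-10.1.2}: if $A,B\in\MX^{\ji}_{\nn,d}$ with $\ro(A)=\co(B)$, $\dep(\mrm{dlt}_{r+1}(A))\le m$, and $B-\sum_{i}\beta_i E^{i,i+1}_{\theta,\nn}$ is diagonal (with the convention that $\beta_i$ corresponds to a prescribed splitting $\beta_i=R_{i,0}+\cdots+R_{i,l}$ coming from the $i$-th row of $A$), then
\[
[B]_d * [A]_d = \bigl[\,A+\sum_{i}\beta_i(E^{i,i+m}_{\theta,\nn}-E^{i+1,i+m}_{\theta,\nn})\,\bigr]_d + \text{lower terms}.
\]
This will follow by applying each of the four cases in Lemma~\ref{ji-estimate}(1) (and its $\tji_r$-counterpart at $h=r$) to the appropriate positions, exactly as Proposition~\ref{new-10.1.2} was derived from Lemma~\ref{BKLW3.9-b}.

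Second, for a tridiagonal $A\in\MX^{\ji}_{\nn,d}$, the leading term $[A]_d$ of $\f^{\ji}_{A;d}$ in \eqref{ji-fA} follows by a direct iteration of Lemma~\ref{ji-estimate}: starting from $1_{\co(A)}$ and applying the divided powers $\f^{(\alpha_{-(r+1)})}_{-(r+1)},\ldots,\f^{(\alpha_r)}_r$ in the prescribed order populates the sub/super-diagonal entries of $A$ one strip at a time, without creating competing contributions at higher positions, so the leading term is precisely $[A]_d$. For a general $A$ of depth $m$, I would apply the $\ji$-analogue of Proposition~\ref{new-10.1.2} established above to the product \eqref{ji-fA-monomial}, using that each factor $\f^{\ji}_{A_i;d}=[A_i]_d+\text{lower terms}$ by the tridiagonal case; induction on $m$ (peeling off the outermost strip $A_m$) yields $\f^{\ji}_{A;d}=[A]_d+\text{lower terms}$, which is part~(1). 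Bar-invariance is immediate since each factor is a product of bar-invariant Chevalley-type generators.

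Part~(2) then follows formally: the transition matrix from $\{\f^{\ji}_{A;d}\}$ to the standard basis $\{[A]_d\}$ is unitriangular with respect to the partial order $\le$ restricted to $\MX^{\ji}_{\nn,d}$, so $\{\f^{\ji}_{A;d}\}$ is a basis. The main technical obstacle I anticipate is case $h=r$, $k=r+1$ in Lemma~\ref{ji-estimate}(1), where the centro-symmetry forces the divisibility condition $a_{r+1,r+1}\ge 2R_0$ and the ``doubled'' contribution from the generator $\tji_r$ must be tracked carefully to guarantee that the leading coefficient is exactly $1$ rather than a $v$-binomial; this is precisely the $\ji$-phenomenon absent from the $\jjw$-case of Theorem~\ref{new-10.1.3}, and handling it will require choosing the ordering in \eqref{ji-fA} and the strip-decomposition in \eqref{ji-fA-monomial} so that every time a $\tji_r$-factor is applied, the conditions of Lemma~\ref{ji-estimate}(1) Case~4 are satisfied with the correct splitting.
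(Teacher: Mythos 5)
Your overall plan (tridiagonal case, a $\ji$-analogue of Proposition~\ref{new-10.1.2}, induction on depth, bar-invariance from bar-invariant factors, unitriangularity) is the right shape, but the key technical ingredient you chose does not apply, and this is a genuine gap. The element $\f^{\ji}_{A;d}$ in \eqref{ji-fA} is a product of divided powers of the Chevalley generators of the \emph{ambient} affine type $C$ Schur algebra $\Sj$ — the index range $[-(r+1),r]$ taken mod $n$ includes $\f_r$ and $\f_{-(r+1)}=\e_r$, and $\tji_r$ never occurs in \eqref{ji-fA} or \eqref{ji-fA-monomial}. These factors are not elements of $\Sji^{\ji}_{\nn,d}$ (indeed $\bj_r\f_r\bj_r=0$), and the intermediate partial products of \eqref{ji-fA} move entries into and out of the trivial row $r+1$, hence leave the subalgebra $\Sji^{\ji}_{\nn,d}=\bj_r\Sj\bj_r$ altogether. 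Lemma~\ref{ji-estimate} only governs products of standard basis elements attached to matrices in $\check{\MX}^{\ji}_{\nn,d}$ (it is the tool behind the intrinsic monomials $y_A$ and $h_A$ of Propositions~\ref{prop:MB:ji} and \ref{prop:hMB:ji}), so ``direct iteration of Lemma~\ref{ji-estimate}'' cannot be performed on the factors of \eqref{ji-fA}; at best your argument reproves a leading-term statement for a \emph{different} family of monomials in $\fji_i,\eji_i,\tji_r$, not for the specific $\f^{\ji}_{A;d}$ that the proposition (and the later stabilization results, e.g.\ Propositions~\ref{prop:ji-A-f} and \ref{ji-stab-v3}) require. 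Your final worry about the $h=r$, $k=r+1$ case of Lemma~\ref{ji-estimate} and the ordering of $\tji_r$-factors is correspondingly misplaced.

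The intended argument (``similar to Theorem~\ref{new-10.1.3}'') runs entirely inside the ambient algebra: here the pair $(\Sji^{\ji}_{\nn,d},\Sj)$ plays exactly the role that $(\Sj,\mbf S^{\C}_{\breve n,d})$ played there. One computes the ordered product \eqref{ji-fA} in $\Sj$ using the leading-term multiplication formulas of Lemma~\ref{BKLW3.9-b} (equivalently Proposition~\ref{prop-stand-mult}); the specific ordering, with $\f_r^{(\alpha_r)}$ at one end and $\e_r^{(\alpha_{-(r+1)})}$ at the other, is designed so that entries hop over the trivial row $r+1$ with leading coefficient $1$, in complete analogy with the role of $\breve\f_0,\breve\f_{n},\breve\f_{n+1}$ in \eqref{fA} and Lemma~\ref{fA-leading}. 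With the tridiagonal case so established, the $\ji$-analogue of Proposition~\ref{new-10.1.2} is deduced by replacing $[B]_d$ with $\f^{\ji}_{B;d}$ and again invoking Lemma~\ref{BKLW3.9-b} in $\Sj$, and the depth induction, bar-invariance, and the unitriangularity giving part~(2) then proceed as you describe.
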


%%%%%
\section{Stabilization of Schur algebras of type $\ji$}

Now we shall formulate the stabilization of the family of Schur algebras $\{\Sji^{\ji}_{\nn, d}\}_{d \ge 1}$,
analogous to the family of Schur algebras $\{\Sji^{\C}_{n, d}\}_{d \ge 1}$ treated in Section~\ref{sec:SSSc}.

Recall $\widetilde \Xi^{\ji}_{\nn, d}$ in \eqref{tXid:ji} is a variant of $\Xi^{\ji}_{\nn, d}$ 
which does not require the diagonal entries to be nonnegative.
%In the same vein we can define a variant  $\check{\widetilde \MX}^{\ji}_{\nn,d}$ of $\check{\MX}^{\ji}_{\nn,d}$ 
%from \eqref{Mjicheck} which does not require the diagonal entries to be nonnegative, that is,
%\begin{align}
%\label{eq:chtXid}
%\begin{split}
%\check{\widetilde \MX}^{\ji}_{\nn,d}
%&= \Big\{A=(a_{ij}) \in \text{Mat}_{\mbb Z \times \mbb Z}(\mbb Z) \Big \vert \,
%a_{0, 0} \in 2 \mbb Z +1, \; %
%\sum_{i\in [1, \nn], j\in \mbb Z} a_{ij}  = 2d +1, \; \; 
%\\
%&\qquad\qquad\qquad\qquad \qquad
%a_{ij}\ge 0 \; (i\neq j),\;
%a_{ij} = a_{-i, -j} = a_{i+\nn, j+\nn} (\forall i, j)
%\Big \}. 
%\end{split}
%\end{align}
%Then clearly the bijective map $\mrm{dlt}_{r+1} : \MX^{\ji}_{\nn, d} \rightarrow \check{\MX}^{\ji}_{\nn, d}$ from \eqref{bijection:ji} 
%extends to a bijective map
%\begin{equation}
%\mrm{dlt}_{r+1} : \widetilde \MX^{\ji}_{\nn, d} \longrightarrow \check{\widetilde \MX}^{\ji}_{\nn, d}.
%\end{equation}
%We also set $\ddot A = \mrm{dlt}_{r+1}^{-1} (A)$ for all $A \in \check{\widetilde \MX}^{\ji}_{\nn, d}$.

Recall the set $\widetilde \Xi_{n, d}$ from \eqref{tXid}. 
For $0 \leq i \leq n-1$, $A \in \widetilde \Xi_{n, d}$   for all $j\in \mbb Z$, 
$t=(t_u)_{u\in \mbb Z} \in \mbb N^{\mbb Z}$ such that $\sum_{j\in \mbb Z} t_u = R$, we define 
the polynomials $Q^{\ji, t}_{i, R; A} \in \mathscr R$ as follows.
For any $i\in [1,  n-1]\backslash \{r, r+1\}$,  we define 
\begin{align}
\label{Qji}
Q_{i, R; A}^{\ji,  t} (v, v') = 
v^{\beta_t}
\prod_{u \in \mbb Z, u\neq i}
\overline{\begin{bmatrix} a_{iu} + t_u \\ t_u \end{bmatrix}}
\cdot 
v'^{(\delta_{i, 1} + \delta_{i,  n - 1}) \sum_{i+1 \geq u} t_u}
\overline{\begin{bmatrix} a_{ii} + t_i \\ t_i \end{bmatrix}}_{v, v'},
\end{align}
where
\[
\beta_t =
 \sum_{j \geq  u} a_{i j} t_u - \sum_{j > u} a_{i+1, j} t_u + \sum_{j < u} t_j t_u
 + \frac{1} {2} (\delta_{i, r} + \delta_{i,  n-1})   \left ( \sum_{j + u < 2(i+1)} t_j t_u + \sum_{j < i + 1} t_j\right ).
\]

We further define 
\begin{align}
Q_{r, R; A}^{\ji,  t}(v, v') 
&=
v^{\beta_t} 
\prod_{u \in \mbb Z, u\neq i}
\overline{\begin{bmatrix} a_{iu} + t_u \\ t_u \end{bmatrix}}
\cdot  v'^{- \sum_{i \geq u} t_u},   & \text{for } i=r, 
\label{Qji2}
 \\
Q_{r+1, R; A}^{\ji,  t} (v, v') 
&= v^{\beta'_t} 
\prod_{u > i}
\overline{\begin{bmatrix} a_{i u} + t_u + t_{2i -u} \\ t_u \end{bmatrix} }
\prod_{u < i}
\overline {\begin{bmatrix} a_{i u} + t_u \\ t_u \end{bmatrix}} 
\cdot v'^{\sum_{i+1\geq u} t_u},  & \text{for } i=r+1,
\label{Qji3}
\end{align}
where
\[
\beta'_t
=
\sum_{j \geq u} a_{ij} t_u - \sum_{j > u} a_{i+1, j} t_u
 + \sum_{j<u, j+u \leq 2i } t_j t_u -  \sum_{j> i} \frac{t_j^2 - t_j}{2}
+ \frac{R^2 - R}{2}.
\]

Given tuples $\mbf i=(i_1, \ldots, i_s)$ and $\mbf a =(a_1, \ldots, a_s) \in \mbb N^s$  and a tuple $\mbf t = (t_1,\ldots, t_s)$ 
satisfying \eqref{eq:tt}, we defined 
the polynomials $Q^{\mbf t}_{\mbf i, \mbf a; A}$ in \eqref{QiaA}.
We can similarly define the polynomials $Q^{\ji, \mbf t}_{\mbf i, \mbf a; A}(v, v')$ in $\mathscr R$, inductively on $s$ 
starting with \eqref{Qji}-\eqref{Qji3},  for $A\in \widetilde \Xi^{\ji}_{\nn, d}$.

Propositions~\ref{ji-stab-v3}--\ref{ji-bar-stab} are the $\ji$-counterparts of 
Propositions~\ref{4.2.3-stab-v3}--\ref{bar-stab}. We skip the similar proofs. 
The notations are understood in this section that 
$\ddot I_{\nn} = I_n - E^{r+1, r+1}_{n}$, 
and $_{\ddot p} A = A + p \ddot I_{\nn}$.

\begin{prop}
\label{ji-stab-v3}
Assume $A, B_j \in \widetilde \Xi_{n, d}$, for $1\leq j \leq s$, and a pair of tuples $(\mbf i, \mbf a)$ satisfy the following properties:
$\ro (A) = \co (B_s)$, $\ro (B_j) = \co (B_{i-1}), \forall  1< i\leq s$,  $B_j - a_j E^{i_j, i_j+1}_{\theta,  n}$ is diagonal  
and $a_{r+1, j} =\delta_{j, r+1}$ for all $j\in \mbb Z$.  Then we have 
\[
[_{\ddot p} B_1]_{d+ \frac{p}{2} \nn} * \cdots * [_{\ddot p} B_s]_{d+ \frac{p}{2} \nn}  *[_{\ddot p} A]_{d+\frac{p}{2} \nn} 
= \sum_{\mbf t \in \mathcal T_{\mbf i, \mbf a, A}} Q_{\mbf i, \mbf a; A}^{\ji, \mbf t} (v, v^{-p}) [_{\ddot p} A_{\mbf i,\mbf a, \mbf t}]_{d+\frac{p}{2} \nn}, 
\quad \forall  p \in 2 \mbb Z.
\] 
\end{prop}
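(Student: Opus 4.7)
The plan is to follow the same blueprint used to prove Proposition \ref{4.2.3-stab-v3} in the $\jj$-case, adapting the bookkeeping to the $\ji$-setting. First, I would establish the single-step version (the case $s=1$), which is the $\ji$-analogue of Lemma \ref{4.2.3-stab}. This reduces to inspecting the multiplication formulas in Proposition~\ref{prop-stand-mult} for $\Sj$ and checking that when $A$ satisfies $a_{r+1,j}=\delta_{j,r+1}$ (so that $[A]_{d}$ lies in $\Sji^{\ji}_{\nn,d}$ via $\bj_r\Sj\bj_r$), the resulting structure constants of $[B]*[A]$ against the various $[A_{i,R,t}]$ are exactly the specializations $Q^{\ji,t}_{i,R;A}(v,v^{-p})$ at $v'=v^{-p}$, after replacing $A$ by $_{\ddot p}A$. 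The cases $i\in[1,n-1]\setminus\{r,r+1\}$ come directly from Proposition~\ref{prop-stand-mult}(1); the special cases $i=r$ and $i=r+1$, where the middle diagonal entry is forced by the $\ji$-condition, are covered by Proposition~\ref{prop-stand-mult}(2)--(4) with the contributions to $a_{r+1,r+1}$ and $a_{00}$ correctly tracked by the $v'^{\pm\sum t_u}$ factors appearing in \eqref{Qji2}--\eqref{Qji3}.

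Once the single-step formula is in place, the key auxiliary statement is the $\ji$-analogue of Lemma~\ref{v'=1}, namely
\[
Q^{\ji,t}_{i,R;{}_{\ddot p}A}(v,1)=Q^{\ji,t}_{i,R;A}(v,v^{-p}),
\]
which follows by inspection of the definitions (the only $p$-dependence in the structure constants in Proposition~\ref{prop-stand-mult} sits in the quantum binomial coefficients involving $a_{ii}$, $a_{00}$, $a_{r+1,r+1}$, which are precisely the entries shifted by $\ddot I_\nn$, and in the $v$-powers $\beta_t,\beta'_t$, which depend only on the off-diagonal data and on parities of the diagonal entries). Extending \eqref{QiaA} multiplicatively then gives $Q^{\ji,\mbf t}_{\mbf i,\mbf a;{}_{\ddot p}A}(v,1)=Q^{\ji,\mbf t}_{\mbf i,\mbf a;A}(v,v^{-p})$ by induction on the length $s$ of $\mbf i$.

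Next, I would induct on $s$ to deduce the full multi-step formula. The product
\[
[{}_{\ddot p}B_1]*\cdots *[{}_{\ddot p}B_s]*[{}_{\ddot p}A]
\]
expands step by step using the single-step formula, and the intermediate matrices $_{\ddot p}A_{\mbf i_{\geq l},\mbf a_{\geq l},\mbf t_{\geq l}}$ remain in the $\ji$-type range (the condition $a_{r+1,j}=\delta_{j,r+1}$ is preserved because no factor $E^{i_j,i_j+1}_{\theta,n}$ touches row $r+1$ when $i_j\in\{0,\ldots,n-1\}$ in the translated sense). The only technical point, as in the proof of Proposition~\ref{4.2.3-stab-v3}, is to verify that tuples $\mbf t\in\mathcal T_{\mbf i,\mbf a,A}$ for which some intermediate $_{\ddot p}A_{\mbf i_{\geq l},\mbf a_{\geq l},\mbf t_{\geq l}}$ fails to lie in $\widetilde{\Xi}^{\ji}_{\nn,d+\frac{p}{2}\nn}$ contribute $0$: at such a tuple there is a first index $l_0$ where a diagonal entry turns negative, and the corresponding one-step factor $Q^{\ji,t_{l_0+1}}_{i_{l_0+1},a_{l_0+1};\,{}_{\ddot p}A_{\mbf i_{\geq l_0},\mbf a_{\geq l_0},\mbf t_{\geq l_0}}}(v,1)$ vanishes by the same argument as in \cite[Lemma A.20]{BLW14}, hence so does the product $Q^{\ji,\mbf t}_{\mbf i,\mbf a;\,{}_{\ddot p}A}(v,1)=Q^{\ji,\mbf t}_{\mbf i,\mbf a;A}(v,v^{-p})$.

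The main obstacle I anticipate is the careful bookkeeping at the distinguished position $i=r+1$: the passage from the ``type $\C$'' multiplication formula to the $\ji$-structure constant involves extracting the constraint that $a_{r+1,r+1}$ is frozen to $1$, so the binomial factor $\overline{[a_{r+1,r+1}+1+2u]}_{v,v'}/\overline{[u+1]}$ in the $\C$-formula must be replaced by the $v'^{\sum_{i+1\geq u}t_u}$ contribution seen in \eqref{Qji3}. Verifying that this substitution matches the actual multiplication in $\bj_r\Sj\bj_r$ for all admissible $t$ is a direct but somewhat intricate computation, and all the other cases then follow in a uniform fashion.
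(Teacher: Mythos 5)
Your plan is essentially the paper's own: the paper treats Proposition~\ref{ji-stab-v3} as the $\ji$-counterpart of Proposition~\ref{4.2.3-stab-v3} and omits the details, and your three steps --- a single-step analogue of Lemma~\ref{4.2.3-stab} extracted from the multiplication formulas of Proposition~\ref{prop-stand-mult}, the specialization identity $Q^{\ji,t}_{i,R;\,{}_{\ddot p}A}(v,1)=Q^{\ji,t}_{i,R;A}(v,v^{-p})$ as in Lemma~\ref{v'=1}, and the induction on $s$ with vanishing of out-of-range contributions via \cite[Lemma A.20]{BLW14} --- are exactly the ingredients of that omitted argument. One parenthetical claim is inaccurate, though harmless: multiplications with $i_j\in\{r,r+1\}$ (the $\tji_r$-type steps) do touch row $r+1$, so the condition $a_{r+1,j}=\delta_{j,r+1}$ is not preserved by the intermediate matrices $A_{\mbf i_{\geq l},\mbf a_{\geq l},\mbf t_{\geq l}}$ (indeed the $(r+1,r+1)$ entry can become negative, which is precisely why the ideal of such matrices appears in Theorem~\ref{Kji-sq}); what the induction actually requires is only that the shift by $\ddot I_{\nn}=I_n-E^{r+1,r+1}_n$ never changes the $(r+1)$-diagonal, so the $v'$-bookkeeping in the $Q^{\ji}$-polynomials applies to every intermediate matrix irrespective of its row $r+1$.
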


\begin{prop}
\label{prop:ji-A-f}
Let $A\in \widetilde \Xi^{\ji}_{\nn, d}$.
% such that  $A - \sum_{1\leq i\leq n} \alpha_i E^{i, i+1}_{\theta, n}$ is diagonal.
There exist $Z_i \in \widetilde \Xi^{\ji}_{\nn, d}$, for $1\leq i\leq m$,  with $Z_i < A$, $Q_i (v, v') \in \mathscr R$ 
and $p_0\in \mbb N$ such that
\begin{align}
\label{ji-A-f}
[_{\ddot p} A]_{d+\frac{p}{2} \nn} = \f^{\ji}_{_{\ddot p} A; d + \frac{p}{2} \nn} + \sum_{i=1}^m Q_i(v, v^{-p}) [_{\ddot p}Z_i]_{d+\frac{p}{2} \nn}, 
\quad \forall p \geq p_0, p\in 2\mbb N.
\end{align}
\end{prop}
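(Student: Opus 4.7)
The plan is to mimic the proof of Proposition~\ref{prop-A-f} almost verbatim, with the $\ji$-monomial $\f^{\ji}_{A;d}$ of (\ref{ji-fA})--(\ref{ji-fA-monomial}) in place of $\f_{A;d}$ and the stabilized multiplication formula of Proposition~\ref{ji-stab-v3} in place of Proposition~\ref{4.2.3-stab-v3}. The leading-term identity $\f^{\ji}_{A;d} = [A]_d + \mbox{lower terms}$ stated in the preceding proposition, applied with $A$ replaced by $_{\ddot p}A$ for $p$ sufficiently large and even, gives $[_{\ddot p}A] = \f^{\ji}_{_{\ddot p}A} - (\mbox{lower terms})$; the whole content of the proposition is that these lower terms can be written uniformly in $p$ as $\sum_i Q_i(v, v^{-p})[_{\ddot p}Z_i]$ with $Z_i$, $Q_i(v,v')$ and $m$ independent of $p$.

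To produce such a uniform description I would first note that the sequence of Chevalley-generator indices $\mbf i(A)$ and divided-power exponents $\mbf a(A)$ extracted from (\ref{ji-fA})--(\ref{ji-fA-monomial}) depend only on the off-diagonal data of $A$, and hence coincide with those for $_{\ddot p}A$. Iteratively applying Proposition~\ref{ji-stab-v3} to the product defining $\f^{\ji}_{_{\ddot p}A;\, d+\frac{p}{2}\nn}$, acting on the diagonal idempotent $1_{{}_{\ddot p}\co(A)}$, expands it as
\[
\f^{\ji}_{_{\ddot p}A;\, d+\frac{p}{2}\nn}
= \sum_{\mbf t \in \mathcal T} Q^{\ji,\mbf t}_{\mbf i(A),\, \mbf a(A);\, D_{\co(A)}}(v,\, v^{-p})\, \bigl[\, {}_{\ddot p}(D_{\co(A)})_{\mbf i(A), \mbf a(A), \mbf t} \,\bigr]_{d+\frac{p}{2}\nn},
\]
where $D_{\co(A)}$ is the diagonal matrix with diagonal $\co(A)$ and the index set $\mathcal T = \mathcal T_{\mbf i(A), \mbf a(A), D_{\co(A)}}$ is $p$-independent. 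Pulling off the extremal tuple recovers the leading term $[_{\ddot p}A]$ with coefficient one, while the remaining tuples contribute $[_{\ddot p}Z_i]$ with $Z_i < A$; rearranging yields the claimed identity, with $p_0$ chosen large enough that every matrix appearing has honest nonnegative entries so as to index a standard basis element.

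The main obstacle I expect is verifying the $\ji$-analogue of Lemma~\ref{v'=1}, namely $Q^{\ji,\mbf t}_{\mbf i, \mbf a;\, {}_{\ddot p}A}(v,1) = Q^{\ji,\mbf t}_{\mbf i, \mbf a;\, A}(v, v^{-p})$ for all admissible data. By the inductive definition (\ref{QiaA}) this reduces to the one-step formulas (\ref{Qji})--(\ref{Qji3}), in which $p$ enters only through the shifted diagonal entries $a_{ii} \mapsto a_{ii}+p$ at positions $i$ not congruent to $r+1$. The crucial identity is $\overline{\begin{bmatrix} a+p \\ t \end{bmatrix}}_{v,1} = \overline{\begin{bmatrix} a \\ t \end{bmatrix}}_{v, v^{-p}}$, which is immediate from the definition of the two-variable $(v,v')$-binomial coefficient. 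What remains is a bookkeeping check that the diagonal shifts entering $\beta_t$ and $\beta'_t$ via the terms $\sum_{j\geq u}a_{ij}t_u$ and $\sum_{j>u}a_{i+1,j}t_u$ at the diagonal positions either cancel in pairs (when both $a_{ii}$ and $a_{i+1,i+1}$ are shifted) or are precisely compensated by the prefactor $v'^{(\delta_{i,1}+\delta_{i,n-1})\sum_{i+1\geq u} t_u}$; the exceptional indices $i=r, r+1$ require separate treatment because the $r+1$-st row and column of any matrix in $\widetilde{\MX}^{\ji}_{\nn}$ is rigidly fixed by the $\ji$-condition and hence picks up no shift at all under $_{\ddot p}(\cdot)$. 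Once this matching of specializations is in place, the proposition follows by induction on $A$ with respect to the partial order $\leq$.
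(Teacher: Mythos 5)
Your argument is correct and takes essentially the same route as the paper, whose (skipped) proof of this proposition is the verbatim analogue of Proposition~\ref{prop-A-f}: expand the monomial $\f^{\ji}_{_{\ddot p}A;\,d+\frac{p}{2}\nn}$ via the stabilized multiplication formula of Proposition~\ref{ji-stab-v3}, whose structure constants $Q^{\ji,\mbf t}(v,v^{-p})$ and index set are independent of $p$, and identify the leading term using $\f^{\ji}_{A;d}=[A]_d+\text{lower terms}$, the order being unaffected by the diagonal shift $p\ddot I_\nn$. The only superfluous step is the verification of the $\ji$-analogue of Lemma~\ref{v'=1} that you flag as the main obstacle: that specialization identity belongs to the proof of Proposition~\ref{ji-stab-v3} itself, so once that proposition is invoked, as you do, no further check of this kind is needed here.
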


\begin{prop}
\label{ji-10.2.1}
Assume that  $A_1, \ldots, A_l \in \widetilde \Xi^{\ji}_{\nn, d}$  satisfy $\co (A_i) = \ro (A_{i+1})$ for all $1\leq i\leq l-1$.
There exist $Z_1, \ldots, Z_m \in \widetilde \Xi_{\nn, d}^{\ji}$, $G_1(v,v'), \ldots, G_m(v, v') \in \mathscr R$, and $p_0\in \mbb N$
such that 
\begin{align}
\label{ji-10.2.1-a}
[_{\ddot p} A_1]_{d+\frac{p}{2} \nn} * [_{\ddot p} A_2]_{d+\frac{p}{2} \nn} * \cdots * [_{\ddot p} A_l]_{d+\frac{p}{2} \nn} 
= \sum_{i=1}^m G_i(v, v^{-p}) [_{\ddot p} Z_i]_{d+\frac{p}{2} \nn}, 
\quad  \forall p\geq p_0, p\in 2\mbb N.
\end{align}
\end{prop}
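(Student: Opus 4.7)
The plan is to mirror closely the proof of Proposition~\ref{new-10.2.1}, which is a two-step procedure: first handle products of monomial basis elements by direct appeal to the iterated Chevalley stabilization formula, then reduce general products to the monomial case via Proposition~\ref{prop:ji-A-f} and induct on the partial order.

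First I would verify that the stabilization holds for products of the form
\[
\f^{\ji}_{_{\ddot p} A_1; d+\frac{p}{2}\nn} * \f^{\ji}_{_{\ddot p} A_2; d+\frac{p}{2}\nn} * \cdots * \f^{\ji}_{_{\ddot p} A_l; d+\frac{p}{2}\nn}.
\]
By the definitions \eqref{ji-fA}--\eqref{ji-fA-monomial}, each $\f^{\ji}_{_{\ddot p}A_i;\cdot}$ is a product of standard basis elements $[_{\ddot p} B]$ with $B$ Chevalley-type (i.e.\ $B - a E^{h,h+1}_{\theta,n}$ diagonal) and the final idempotent $1_{\co(\ddot{}_{\ddot p}A_i)}$. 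Concatenating all these factors into a single pair of tuples $(\mbf i, \mbf a)$ independent of $p$, Proposition~\ref{ji-stab-v3} applies directly and expresses the product as a sum $\sum_{\mbf t} Q^{\ji,\mbf t}_{\mbf i,\mbf a;D}(v,v^{-p})[_{\ddot p} D_{\mbf i,\mbf a,\mbf t}]_{d+\frac{p}{2}\nn}$ over a finite index set independent of $p$, which is of the required shape with coefficients in $\mathscr R$.

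Next I would handle the general case by downward induction on the tuple $(A_1,\ldots,A_l)$ in the lexicographic extension of the partial order $\le$. For each $A_i$, Proposition~\ref{prop:ji-A-f} gives, for $p\gg 0$,
\[
[_{\ddot p}A_i]_{d+\frac{p}{2}\nn} \;=\; \f^{\ji}_{_{\ddot p}A_i;\,d+\frac{p}{2}\nn} \;+\; \sum_{j} Q_{i,j}(v,v^{-p})\,[_{\ddot p}Z_{i,j}]_{d+\frac{p}{2}\nn}
\]
with $Z_{i,j}<A_i$ and $Q_{i,j}\in\mathscr R$ independent of $p$. Substituting these expansions into the full product and distributing, one term is the all-monomial product handled above, while every remaining term is a product of $l$ standard basis elements in which at least one $A_i$ has been replaced by a strictly smaller $Z_{i,j}$. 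The inductive hypothesis applies to each such term, and the combined expression is a finite $\mathscr R$-linear combination of standard basis elements $[_{\ddot p} Z]$ evaluated at $v'=v^{-p}$, with all data independent of $p$ once $p$ exceeds the maximum of the finitely many thresholds $p_0$ coming from each invocation of Proposition~\ref{prop:ji-A-f} and of the inductive hypothesis.

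The main technical obstacle will be the bookkeeping: verifying that the induction is well-founded and that the finiteness of the sum and of the threshold $p_0$ is preserved. For well-foundedness, the key observation is that each application of Proposition~\ref{prop:ji-A-f} replaces an $A_i$ by finitely many $Z_{i,j}<A_i$, and the partial order $\leq_{\text{alg}}$ restricted to matrices with fixed row and column sums is well-founded (this is where the $\ji$-constraint on the $(r+1)$st row and column must be tracked carefully, since subtracting $E^{h,u}_{\theta,n} - E^{h+1,u}_{\theta,n}$ etc.\ must preserve the $\ji$-shape, which is automatic from Proposition~\ref{ji-stab-v3}). For finiteness, we combine the uniform boundedness in $p$ of index sets from both Propositions~\ref{ji-stab-v3} and \ref{prop:ji-A-f}, together with the fact that only finitely many $(Z_{i,j})$ appear across the inductive expansion. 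Once these are in place, the expression is of the required form and the proposition follows.
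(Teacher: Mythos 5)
Your proposal is correct and follows essentially the same route as the paper: the paper proves the $\jj$-case (Proposition~\ref{new-10.2.1}) by first applying the iterated Chevalley stabilization (Proposition~\ref{4.2.3-stab-v3}) to the monomial elements $\f_{A;d}$ and then invoking Proposition~\ref{prop-A-f} with an induction on the partial order, and it declares the $\ji$-case to be proved in the same way from Propositions~\ref{ji-stab-v3} and \ref{prop:ji-A-f}, which is exactly your two-step argument. Your extra bookkeeping on well-foundedness and uniform finiteness of $p_0$ just makes explicit what the paper leaves implicit.
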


\begin{cor}
\label{ji-K-A-leading}
For any matrix  $A \in \widetilde \Xi^{\ji}_{\nn, d}$  of depth $m$ and $\mrm{dlt}_{r+1}(A) = (a'_{ij})$, there exist  unique
$\ji$-tridiagonal matrices $A_1$, $A_2, \ldots, A_m \in \widetilde \Xi^{\ji}_{\nn, d}$
satisfying $\ro (A_m) = \ro (A)$, $\co (A_1)= \co (A)$, $\ro (A_i) = \co (A_{i+1})$ for $1\leq i\leq m-1$ 
and
$\mrm{dlt}_{r+1} (A_i) - \sum_{1 \leq j \leq n} (\sum_{k \leq j + i -1} a'_{k, j+1}) E^{j, j+1}_{\theta, \nn}$ is  diagonal 
for all $1\leq i \leq m$ such that
\[
[_{\ddot p}A_m]_{d+\frac{p}{2} \nn}  *  [_{\ddot p}A_{m-1}]_{d+\frac{p}{2} \nn}   * \dots * [_{\ddot p}A_1]_{d+\frac{p}{2} \nn} 
= [_{\ddot p}A]_{d+\frac{p}{2} \nn} +\sum_{i=1}^l G_i(v, v^{-p}) [_{\ddot p} Z_i]_{d+ \frac{p}{2} \nn}, \ \forall p\in 2\mbb N, p\geq p_0,
\]
where $p_0$,  $G_i(v, v')\in \mathscr R$ and $Z_1,\ldots, Z_l \in \widetilde \Xi_{\nn, d}^{\ji}$ are given in Proposition ~\ref{ji-10.2.1} such that $Z_i < A$.
\end{cor}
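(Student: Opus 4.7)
The plan is to mirror the proof of Corollary~\ref{K-A-leading} in the $\fc$ case, adapted to the $\ji$ setting. First, I would establish the Schur-level identity
$$[A_m]_d * [A_{m-1}]_d * \cdots * [A_1]_d = [A]_d + \text{lower terms}$$
in $\mbf S^{\ji}_{\nn, d}$, valid whenever all matrices involved lie in $\MX^{\ji}_{\nn, d}$, by induction on the depth $m$ of $A$. The base case $m = 0$ is trivial. For the inductive step, set $A' = A - \sum_j a'_{j, j+m}(E^{j, j+m}_{\theta, \nn} - E^{j+1, j+m}_{\theta, \nn})$ (read through $\mrm{dlt}_{r+1}$), which has depth strictly less than $m$, and let $A_m$ be the unique $\ji$-tridiagonal matrix with $\mrm{dlt}_{r+1}(A_m) - \sum_j a'_{j, j+m} E^{j, j+1}_{\theta, \nn}$ diagonal and $\co(A_m) = \ro(A')$. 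Lemma~\ref{ji-estimate} then yields $[A_m]_d * [A']_d = [A]_d + \text{lower terms}$. Iterating the same decomposition on $A'$ produces $A_{m-1}, \ldots, A_1$ and verifies the claimed diagonal shape; uniqueness is built into the construction, since the $(j, j+m)$-entries of $A$ together with $\ro(A)$ force $A_m$, after which induction determines the remainder.

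Next, I would apply Proposition~\ref{ji-10.2.1} to the sequence $_{\ddot p}A_1, \ldots, _{\ddot p}A_m$ (whose row/column compatibilities are inherited from those of the $A_i$) to obtain matrices $Z_1, \ldots, Z_l \in \widetilde \Xi^{\ji}_{\nn, d}$, polynomials $G_i(v, v') \in \mathscr R$, and $p_0 \in \mbb N$ such that
$$[_{\ddot p}A_m]_{d+\frac{p}{2}\nn} * \cdots * [_{\ddot p}A_1]_{d+\frac{p}{2}\nn} = \sum_{i=1}^l G_i(v, v^{-p})\,[_{\ddot p}Z_i]_{d+\frac{p}{2}\nn}$$
for all even $p \geq p_0$. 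To pin down the leading term, I would trace the branch of Proposition~\ref{ji-stab-v3} which produces $[_{\ddot p}A]_{d+\frac{p}{2}\nn}$: its coefficient is a product over $k = 1, \ldots, m$ of structure constants $Q^{\ji, t_k}_{i_k, a_k;\, \cdot}(v, v')$, one from each factor $[_{\ddot p}A_k]$. Along the unique leading-term branch the tuples $t_k$ are the extremal ones for which each such factor specializes to the constant polynomial $1$. Comparison with the Schur-level identity from the first step, applied at sufficiently large $p$, then forces exactly one summand to satisfy $Z_1 = A$ with $G_1 = 1$, and all other $Z_i$ to satisfy $Z_i < A$.

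The main obstacle is precisely this uniformity-in-$p$ check. The $\ji$-multiplication formulas of Lemma~\ref{ji-estimate} carry genuine $v'$-dependence at rows $i = r$ and $i = r+1$ through the exponents $\sum_{i \geq u} t_u$ and $\sum_{i+1 \geq u} t_u$ in (\ref{Qji2})--(\ref{Qji3}), as well as through the $(v, v')$-binomials in (\ref{Qji}). The key combinatorial verification is that along the leading-term branch of each inductive multiplication all these exponents vanish and all $(v, v')$-binomials collapse to $1$, making the leading coefficient the constant polynomial $1 \in \mathscr R$ and hence independent of $p$. Once this bookkeeping is in place, the statement of the corollary follows by assembling the Schur-level leading term with the stabilization expansion.
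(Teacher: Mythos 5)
Your overall strategy is the same as the paper's: Corollary~\ref{ji-K-A-leading} is obtained there exactly as Corollary~\ref{K-A-leading} in the $\fc$ case, namely by combining the Schur-level leading-term identity for products of the standard basis elements of the tridiagonal matrices attached to $A$ (the $\ji$-analogue of Theorem~\ref{new-10.1.3}, proved by induction on depth) with the stabilized multiplication of Proposition~\ref{ji-10.2.1}, and then comparing the two expansions at level $d+\frac p2\nn$ for all large even $p$. Your steps reproduce this architecture, and your recursive construction of $A_m$ via the depth-$m$ entries matches the paper's inductive construction.

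One step does not go through as literally written: in the inductive step you invoke Lemma~\ref{ji-estimate} to conclude $[A_m]_d*[A']_d=[A]_d+\text{lower terms}$, but that lemma only covers left factors $[B]$ with $B-RE^{h,h+1}_{\theta,\nn}$ diagonal for a \emph{single} $h$, whereas $A_m$ is a full $\ji$-tridiagonal matrix with nonzero superdiagonal entries in every row. The missing intermediate device is the $\ji$-analogue of Lemma~\ref{fA-leading} and Proposition~\ref{new-10.1.2}: first replace $[A_m]_d$ by the monomial $\f^{\ji}_{A_m;d}$ of (\ref{ji-fA}), which is a product of single-row divided powers taken in a specific order, apply Lemma~\ref{ji-estimate} factor by factor (the ordering is what guarantees the vanishing hypotheses of that lemma at each stage, in particular near the deleted row $r+1$), and then transfer the leading term back using the unitriangular relation $[A_m]_d=\f^{\ji}_{A_m;d}+\text{lower terms}$ together with the compatibility of the partial order with multiplication. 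This is routine by analogy with the $\fc$ case, but it is a genuine extra step and is exactly where the paper spends its effort in the $\jj$ setting. Separately, your final step is heavier than needed: once the Schur-level identity is known at level $d+\frac p2\nn$ for all large even $p$ (note ${}_{\ddot p}A_i$ are precisely the tridiagonal matrices attached to ${}_{\ddot p}A$), linear independence of the standard basis forces, among the data of Proposition~\ref{ji-10.2.1}, that $G_i(v,v^{-p})=1$ when $Z_i=A$ and $G_i(v,v^{-p})=0$ unless $Z_i<A$; since an element of $\mathscr R$ is determined by its values at infinitely many points $v'=v^{-p}$, this pins down the corollary without tracing the branches of Proposition~\ref{ji-stab-v3} or checking that the $(v,v')$-binomials collapse to $1$.
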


\begin{prop}
\label{ji-bar-stab}
Assume that $A \in \widetilde \Xi^{\ji}_{\nn, d}$. 
Then there exist $Y_i \in \widetilde \Xi^{\ji}_{\nn, d}$ with $Y_i < A$, $H_i (v, v') \in \mathscr R$ 
for all  $1\leq i \leq s$ and $p_0\in \mbb N$ such  that
\begin{align}
\overline{[_{\ddot p} A]}_{d+\frac{p}{2} \nn}  
= [_{\ddot p} A]_{d+\frac{p}{2}\nn}  + \sum_{i=1}^s H_i(v, v^{-p}) [_{\ddot p}Y_i]_{d+\frac{p}{2} \nn}, \quad \forall p \geq p_0, p\in 2 \mbb N.
\end{align}
\end{prop}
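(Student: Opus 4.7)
The plan is to mimic the proof of Proposition~\ref{bar-stab} in the affine type $C$ case, carrying out induction on $A \in \widetilde \Xi^{\ji}_{\nn,d}$ with respect to the partial order ``$\le$'', with the roles of \eqref{A-f} and Corollary~\ref{K-A-leading} played by Proposition~\ref{prop:ji-A-f} and Corollary~\ref{ji-K-A-leading}. The base case is when $A$ is diagonal: then $[_{\ddot p}A]_{d+\frac{p}{2}\nn}$ is an idempotent and hence bar-invariant, so the claim holds trivially with $s=0$.

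For the inductive step, first observe that the monomial $\f^{\ji}_{_{\ddot p}A;\, d+\frac{p}{2}\nn}$ is bar-invariant: it is built from \eqref{ji-fA} and \eqref{ji-fA-monomial} as a product of divided powers of the Chevalley generators of ${\mbf S}^{\fc}_{n,d+\frac{p}{2}\nn}$ acting on an idempotent (which is itself bar-invariant because $\bj_r$ is a sum of diagonal standard basis elements). Next, invoke Proposition~\ref{prop:ji-A-f} to write
\[
[_{\ddot p}A]_{d+\frac{p}{2}\nn}
 = \f^{\ji}_{_{\ddot p}A;\, d+\frac{p}{2}\nn}
 + \sum_{i=1}^{m} Q_i(v,v^{-p})\,[_{\ddot p}Z_i]_{d+\frac{p}{2}\nn},
\qquad p\ge p_0,\ p\in 2\mbb N,
\]
with $Z_i<A$ and $Q_i(v,v')\in\mathscr R$. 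Applying the bar involution (using that $\f^{\ji}_{_{\ddot p}A;\, d+\frac{p}{2}\nn}$ is bar-invariant and that bar on $\mathscr R$ sends $v\mapsto v^{-1}$, $v'\mapsto v'^{-1}$) yields
\[
\overline{[_{\ddot p}A]}_{d+\frac{p}{2}\nn}
 = \f^{\ji}_{_{\ddot p}A;\, d+\frac{p}{2}\nn}
 + \sum_{i=1}^{m} \overline{Q_i}(v,v^{p})\,\overline{[_{\ddot p}Z_i]}_{d+\frac{p}{2}\nn}.
\]
Now substitute the expression of $\f^{\ji}_{_{\ddot p}A;\, d+\frac{p}{2}\nn}$ from Proposition~\ref{prop:ji-A-f} to rewrite the first summand as $[_{\ddot p}A]_{d+\frac{p}{2}\nn}-\sum_i Q_i(v,v^{-p})[_{\ddot p}Z_i]_{d+\frac{p}{2}\nn}$, and apply the induction hypothesis to each $\overline{[_{\ddot p}Z_i]}_{d+\frac{p}{2}\nn}$, which by $Z_i<A$ admits an expansion of the desired form with coefficients in $\mathscr R|_{v'=v^{-p}}$ and matrices $<Z_i<A$. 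Collecting terms, we obtain
\[
\overline{[_{\ddot p}A]}_{d+\frac{p}{2}\nn}
 = [_{\ddot p}A]_{d+\frac{p}{2}\nn}
 + \sum_{k=1}^{s} H_k(v,v^{-p})\,[_{\ddot p}Y_k]_{d+\frac{p}{2}\nn}
\]
for some $Y_k<A$ and $H_k(v,v')\in\mathscr R$, valid for all $p\ge p_0'$, $p\in 2\mbb N$, where $p_0'$ is the maximum of the finitely many $p_0$'s produced by the one application of Proposition~\ref{prop:ji-A-f} and the finitely many uses of the induction hypothesis.

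The main (and essentially only) obstacle is bookkeeping: one must check that all coefficients that arise in the induction genuinely stabilize as specializations of a fixed polynomial in $\mathscr R$, uniformly in $p$. This is exactly what is built into Proposition~\ref{prop:ji-A-f} (via the $\ji$-polynomials $Q^{\ji,\mbf t}_{\mbf i,\mbf a;A}(v,v')$ from \eqref{Qji}--\eqref{Qji3}) and is therefore preserved under the finitely many algebraic manipulations above; the induction hypothesis is used only finitely many times since $\{Z_i\}_{i=1}^m$ is finite and the partial order ``$\le$'' on matrices with fixed row/column data is well-founded.
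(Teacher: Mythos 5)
Your proof is correct and takes essentially the same route the paper intends: the paper skips this proof by referring to its type $C$ counterpart (Proposition~\ref{bar-stab}), which is itself proved by exactly this induction on the partial order $\le$, using the stabilized expansion \eqref{A-f} (here Proposition~\ref{prop:ji-A-f}) together with the bar-invariance of the Chevalley-monomials $\f^{\ji}$. One cosmetic remark: no separate ``diagonal'' base case is needed, since for a matrix $A$ minimal with respect to $\le$ Proposition~\ref{prop:ji-A-f} produces no lower terms and your inductive step already yields bar-invariance of $[_{\ddot p} A]_{d+\frac{p}{2}\nn}$ directly.
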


The following is a counterpart of Proposition~\ref{cop-stab}.

\begin{prop}
\label{ji-cop-stab}
Assume that $d'+d''=d$ and  that $\mbf b', \mbf a'  \in \mbb Z^{\C}_n $ and $\mbf b'', \mbf a'' \in \mbb Z_{\nn}$ so that 
$_p\Delta^{\ji}_{\mbf b', \mbf a', \mbf b'', \mbf a''}$ is defined.
Let  $A\in \widetilde \Xi^{\ji}_{\nn,d}$.
There exist  $A'_i \in \widetilde \Xi^{\ji}_{\nn, d'}$ where $1\leq i\leq l$ for  some $l$,  
$A''_j \in \widetilde{\Theta}^{\ji}_{\nn, d''}$  where $1\leq j \leq m$ for some $m$,
$C_{i,j}(v, v') \in \mathscr R$ for  $1\leq i \leq l$, $1\leq j\leq m$, and $p_0\in \mbb N$ such that 
\[
_{\ddot p}\Delta^{\ji}_{\mbf b', \mbf a', \mbf b'', \mbf a''} ([_{\ddot{2p}} A]_{d + p\nn}) 
= \sum_{1\leq i\leq l, 1\leq j\leq m} 
C_{i, j} (v, v^{-p}) [_{\ddot p} A'_i]_{d'+\frac{p}{2} \nn} \otimes \ ^{\mathfrak a}[_{\ddot p}A''_j]_{d''+p \nn}, 
\quad \forall p \geq p_0, p\in 2\mbb N.
\]
\end{prop}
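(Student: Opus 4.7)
The strategy mirrors the proof of the type $\C$ analogue Proposition~\ref{cop-stab}, substituting the type $\ji$ versions of the key technical inputs developed earlier in Section~\ref{ji-version} and in this chapter. The plan is to induct on $A$ with respect to the partial order $\leq$ on $\widetilde{\Xi}^{\ji}_{\nn,d}$. The base case of a diagonal $A$ is immediate, since $\Dji(1_{\lambda}) = 1_{\lambda'} \otimes 1_{\lambda''}$ stabilizes trivially after the shift $_{\ddot{2p}}\lambda$.

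For the inductive step, I first apply Proposition~\ref{prop:ji-A-f} to rewrite
\[
[_{\ddot{2p}} A]_{d+p\nn} \; = \; \f^{\ji}_{_{\ddot{2p}} A;\, d+p\nn} + \sum_{i=1}^m Q_i(v, v^{-2p})\,[_{\ddot{2p}} Z_i]_{d+p\nn},
\]
where $Z_i < A$ and $Q_i \in \mathscr R$. Setting $\tilde Q_i(v,v') := Q_i(v,v'^{2})$, the tail terms already satisfy the desired stabilized expansion by the inductive hypothesis applied to each $Z_i$, combined with reindexing $v' \mapsto v'^{\,2}$. Hence the problem reduces to establishing a formula of the stated form for the leading monomial $\f^{\ji}_{_{\ddot{2p}} A;\, d+p\nn}$.

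Next, by the defining product \eqref{ji-fA-monomial}--\eqref{ji-fA}, $\f^{\ji}_{_{\ddot{2p}} A;\, d+p\nn}$ is a monomial in the Chevalley-type generators $\mathbf e_i, \mathbf f_i$ ($0\le i\le r-1$), the extra generator $\tji_r$, and an idempotent $1_{\co({_{\ddot{2p}} A})}$. Since $\Dji$ is an algebra homomorphism (Proposition~\ref{Dc-ji}), one expands $\Dji$ applied to the monomial as a sum of tensor products of monomials in ${\mbf S}^{\ji}_{\nn,d'}$ and in ${\mbf S}_{\nn,d''}$, each factor being a Chevalley-type generator (possibly twisted by powers of the $\Kji$'s and $\mbf K$'s that depend on the weight through $\co({_{\ddot{2p}} A})$, and hence on $p$ in a controlled way, i.e.\ only through a factor $v^{-p}$). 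Expanding these two products using Proposition~\ref{ji-stab-v3} separately in each tensor factor produces structure constants of the form $Q^{\ji,\mathbf t}_{\mathbf i,\mathbf a; (-)}(v, v^{-p})$, where the polynomials $Q^{\ji,\mathbf t}_{\mathbf i,\mathbf a; (-)}(v, v')$ are themselves independent of $p$. Collecting terms and projecting onto the $(\mathbf b',\mathbf a',\mathbf b'',\mathbf a'')$-component yields the claimed expansion with coefficients $C_{i,j}(v,v^{-p})$ independent of $p$.

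The main obstacle is the bookkeeping for the exceptional generator $\tji_r$, whose coproduct
$\Dji(\tji_r) = \tji'_r \otimes \Kji''_r + 1\otimes \Eji''_r + 1\otimes v\Kji''_r \Fji''_r$
splits into three summands of different tensor shapes, each carrying distinct $v$-twists. After the shift by $_{\ddot{2p}} A$, the $\Kji''_r$ factors introduce $v^{\pm p}$ contributions whose parity and magnitude must match precisely the $v'$-dependence encoded in the three polynomials $Q^{\ji,t}_{r,R;A}$, $Q^{\ji,t}_{r+1,R;A}$, $Q^{\ji,t}_{i,R;A}$ for $i \ne r, r+1$ from \eqref{Qji}--\eqref{Qji3}. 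Verifying that this weight-twist compatibility holds uniformly in $p \ge p_0$ --- so that all $p$-dependence factors into a single $v^{-p}$ --- is the crux of the argument and is what distinguishes the $\ji$ case from the cleaner $\C$ case. Once this is checked, passing from the Lusztig subalgebra $\bU^{\ji}_{\nn,d}$ to the full Schur algebra $\Sji^{\ji}_{\nn,d}$ via the identity $[A] = \f^{\ji}_{A;d} + (\text{lower terms})$ (Proposition~\ref{prop:ji-A-f}) and induction closes the argument.
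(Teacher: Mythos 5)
Your proposal follows essentially the same route as the paper, whose proof of Proposition~\ref{ji-cop-stab} is omitted precisely because it repeats the proof of Proposition~\ref{cop-stab}: induct on the partial order, reduce $[{}_{\ddot{2p}}A]$ to the monomial $\f^{\ji}$ via Proposition~\ref{prop:ji-A-f} (with the $Q_i(v,v^{-2p})\mapsto\tilde Q_i(v,v^{-p})$ reindexing), and stabilize the expanded coproduct of the monomial using Proposition~\ref{ji-stab-v3} together with the explicit coproduct formulas on generators. One small correction: by \eqref{ji-fA} the monomial $\f^{\ji}_{A;d}$ is a product of divided powers of the ambient $\Sj$-Chevalley generators times an idempotent, not a word in $\eji_i,\fji_i,\tji_r$, so the $\tji_r$-bookkeeping you single out as the crux does not arise as a separate obstacle — the special behavior at the indices $r,r+1$ is already packaged into the polynomials \eqref{Qji2}--\eqref{Qji3} entering Proposition~\ref{ji-stab-v3}.
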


%%%%%
\section{The stabilization algebra $\K_n^{\ji}$}

Recall the set $\widetilde \Xi^{\ji}_{\nn}$ and $\widetilde \Xi^{\ji}_{\nn, d}$ from \eqref{tXid:ji}.
Consider the $\mbb Q(v)$-space $\K^{\ji}_{\nn}$ spanned by the formal symbols $[A]$ for all $A \in \widetilde \Xi^{\ji}_{\nn}$.
We define an associative algebra structure on $\K^{\ji}_{\nn}$ by
\begin{align}
[A_1] \cdot [A_2] = \sum_{i=1}^m G_i (v, 1) [Z_i], \quad \forall A_1, A_2 \in \widetilde \Xi^{\ji}_{\nn},
\end{align}
where $G_i(v, v') \in \mathscr R$ and $Z_i$ are from Proposition ~\ref{ji-10.2.1}.

For each $A\in \widetilde \Xi^{\ji}_{\nn}$, we define
\begin{align}
\f^{\ji}_{A} = [A] + \sum_{i_1}^m G_i(v, 1) [Z_i],
\end{align}
where $G_i(v, v') \in \mathscr R$ and $Z_i$ are from Corollary ~\ref{ji-K-A-leading}.
It follows by definition that  
$\{ \f^{\ji}_A | A\in \widetilde \Xi^{\ji}_{\nn} \}$ forms a basis of $\K^{\ji}_{\nn}$ (called {\em a monomial basis}).

By Proposition ~\ref{ji-stab-v3}, we can establish the following. 

\begin{prop}
\begin{enumerate}
\item
For any $A \in \widetilde \Xi^{\ji}_{\nn}$, there exists a pair $(\mbf i, \mbf a)$ of tuples such that 
\begin{align}
\label{ji-fa-explicit}
\f^{\ji}_A = \sum_{\mbf t \in \mathcal T_{\mbf i, \mbf a, D_{\co (A)}}} Q^{\ji, \mbf t}_{\mbf i, \mbf a; D_{\co ( A)}} (v, 1) 
\big[(D_{\co (A)})_{\mbf i, \mbf a, \mbf t} \big],
\end{align}
where $D_{\co (A)}$ is the diagonal matrix in $\widetilde \Xi^{\ji}_{\nn}$ with diagonal $\co (A)$.

\item
The element $\f^{\ji}_A$ can be written in a product form as
\begin{equation}
\label{fA:ji}
\f^{\ji}_A =
\f^{\ji}_{A_m} \cdot  \f^{\ji}_{A_{m-1}}  \cdot \ldots \cdot \f^{\ji}_{A_1}, 
\end{equation}
where $A_i$ are $\ji$-tridiagonal matrices  defined similarly as in  (\ref{fA-monomial}).

\item
We have $\overline{\f^{\ji}_A} = \f^{\ji}_A$, for $A\in \widetilde \Xi^{\ji}_{\nn}$.
\end{enumerate}
\end{prop}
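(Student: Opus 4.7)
The proof will mirror closely that of Proposition~\ref{fA-monomial} (the $\fc$-counterpart), with the stabilization inputs supplied by the just-established Propositions~\ref{ji-stab-v3}, \ref{prop:ji-A-f}, \ref{ji-10.2.1}, and \ref{ji-bar-stab}. The plan is to work at a refined level carrying a formal parameter $v'$, establish all three assertions there, and then specialize $v'=1$.

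First, following the construction before \eqref{f'A}, I would introduce the free $\mathscr R$-module $\K^{\ji}_{\nn, \mathscr R}$ on the basis $\widetilde\Xi^{\ji}_{\nn}$, endowed with the associative product
\[
A_1 \cdot' A_2 \;=\; \sum_{i=1}^m G_i(v,v')\, Z_i,
\]
where $G_i(v,v')\in\mathscr R$ and $Z_i\in\widetilde\Xi^{\ji}_{\nn}$ are the polynomials and matrices provided by Proposition~\ref{ji-10.2.1}. Associativity follows formally from associativity of multiplication in $\Sji^{\ji}_{\nn,d}$ for $d\gg 0$ together with the stable polynomial lifts. Specializing $v'=1$ recovers $(\K^{\ji}_{\nn},\cdot)$.

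For part~(1), I would read off a pair of tuples $(\mbf i,\mbf a)$ directly from the defining monomial expression \eqref{ji-fA-monomial} for $\f^{\ji}_{A;d}$: each factor $\f^{\ji}_{A_\ell;d}$ contributes, via its Chevalley-generator factorisation \eqref{ji-fA}, a concatenation of indices and divided-power orders into $(\mbf i,\mbf a)$. Applied inductively to the standard basis element $[_{\ddot p}D_{\co(A)}]_{d+\frac{p}{2}\nn}$ (the stabilized diagonal), Proposition~\ref{ji-stab-v3} expands the monomial as a linear combination of standard basis elements with coefficients $Q^{\ji,\mbf t}_{\mbf i,\mbf a;D_{\co(A)}}(v,v^{-p})$ indexed by $\mathcal T_{\mbf i,\mbf a,D_{\co(A)}}$, and these coefficients are independent of $p$ for $p\gg 0$. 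Promoting to $\K^{\ji}_{\nn,\mathscr R}$, this yields
\[
\f'^{\ji}_A \;=\; \sum_{\mbf t\in\mathcal T_{\mbf i,\mbf a,D_{\co(A)}}} Q^{\ji,\mbf t}_{\mbf i,\mbf a;D_{\co(A)}}(v,v')\,\big[(D_{\co(A)})_{\mbf i,\mbf a,\mbf t}\big].
\]
Specializing $v'=1$ gives the claim in~(1).

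For part~(2), the factorisation \eqref{ji-fA-monomial} at Schur-algebra level, together with Proposition~\ref{ji-stab-v3}, lifts formally to the identity
\[
\f'^{\ji}_A \;=\; \f'^{\ji}_{A_m}\cdot'\f'^{\ji}_{A_{m-1}}\cdot'\cdots\cdot'\f'^{\ji}_{A_1}
\]
in $\K^{\ji}_{\nn,\mathscr R}$, since both sides arise from stabilizing the same Chevalley monomial. Specializing $v'=1$ yields \eqref{fA:ji}. Finally, for part~(3), each $\f^{\ji}_{A_\ell;d}$ is a product of bar-invariant Chevalley generators in $\Sji^{\ji}_{\nn,d}$, hence bar-invariant; transporting this through Proposition~\ref{ji-bar-stab} (which controls the stability of the bar involution) gives $\overline{\f^{\ji}_A}=\f^{\ji}_A$ in $\K^{\ji}_{\nn}$.

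The only real point requiring care---and therefore the main (though modest) obstacle---is verifying that the Chevalley-monomial hypothesis of Proposition~\ref{ji-stab-v3} is met at each stage of the iterated multiplication: namely that every intermediate matrix produced by the successive actions of $\ji$-tridiagonal factors stays in $\widetilde\Xi^{\ji}_{\nn}$, so that the row/column constraint $a_{r+1,j}=\delta_{j,r+1}$ is preserved and the special polynomials $Q^{\ji,t}_{r,R;\cdot}$ and $Q^{\ji,t}_{r+1,R;\cdot}$ in \eqref{Qji2}--\eqref{Qji3} apply. This is straightforward because the Chevalley generators $\f_r,\f_{-(r+1)}$ in \eqref{ji-fA} are built from the refined generator $\tji_r$ of type $\ji$, whose action inherently respects the $\ji$-vanishing conditions.
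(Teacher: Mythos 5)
Your proposal is correct and follows essentially the same route as the paper: the paper establishes this proposition by invoking Proposition~\ref{ji-stab-v3} and repeating the $\fc$-case argument of Proposition~\ref{fA-monomial}, i.e.\ lifting to the $\mathscr R$-module with the $v'$-deformed product, deriving the explicit expansion and the product form there, specializing $v'=1$, and transporting bar-invariance from the Schur-algebra level via the stabilization of the bar operator. Your additional check that the intermediate matrices respect the $\ji$-constraints is a reasonable point of care but does not depart from the paper's method.
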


Similarly, for $A\in \widetilde \Xi^{\ji}_{\nn}$, we set
\[
m^{\ji}_A = [A_m] \cdot [A_{m-1}] \cdots [A_1],
\]
where $A_i$'s are the same as in \eqref{fA:ji}. %defined in a similar way as those in (\ref{fA-monomial}).
One also has $m^{\ji}_A =\f^{\ji}_A +\text{lower terms}$. 
Thus $\{m_A^{\ji} \vert A\in  \widetilde \Xi^{\ji}_{\nn}\}$ forms a basis for $\K_{\nn}^{\ji}$ (called {\em a semi-monomial basis}). 
Just like its $\jj$-sibling, the monomial $m^{\ji}_A$ is not necessarily bar-invariant.

The following multiplication formula on $\K^{\ji}_{\nn}$  follows from Proposition~\ref{ji-stab-v3}.

\begin{prop}
Assume  the matrices  $A, B \in \widetilde \Xi^{\ji}_{\nn}$
satisfy that 
$\co (B) = \ro (A)$ and $\mrm{dlt}_{r+1} (B) - \sum_{1\leq i\leq n} \beta_i E^{i, i+1}_{\theta, \nn}$ is diagonal.
Then we have a multiplication formula of the form 
\begin{align}
\f^{\ji}_B \cdot [A] 
= \sum_{\mbf t \in \mathcal T_{\mbf i^{\ji}_0, \mbf b^{\ji}_0,  A}} 
Q^{\ji, \mbf t}_{\mbf i^{\ji}_0, \mbf b^{\ji}_0; A} (v, 1) 
\big[A_{\mbf i^{\ji}_0, \mbf b^{\ji}_0, \mbf t} \big],
\end{align}
where
$\mbf i^{\ji}_0 = (r, r-1, \ldots, 1-r) $
and
$\mbf b^{\ji}_0 = (\beta_r, \beta_{r-1}, \ldots, \beta_{-r}).$
%are subscripts and superscripts, respectively, in (\ref{ji-fA}) with $A$ replaced by $B$.
\end{prop}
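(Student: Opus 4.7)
The plan is to deduce this multiplication formula in $\K^{\ji}_{\nn}$ by stabilizing the corresponding formula at the Schur-algebra level, in direct parallel to the $\jj$-case treated just before this proposition.

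First, I will unpack the left-hand side at the Schur-algebra level. Since $B$ is $\ji$-tridiagonal with $\mrm{dlt}_{r+1}(B) - \sum_{1\le i\le n} \beta_i E^{i,i+1}_{\theta,\nn}$ diagonal, the defining formula (\ref{ji-fA}) for $\f^{\ji}_{B;d}$ expresses it as the ordered product
\[
\f^{\ji}_{B;d} = \f^{(\beta_r)}_{r} * \f^{(\beta_{r-1})}_{r-1} * \cdots * \f^{(\beta_{-r})}_{-r} * 1_{\co(B)},
\]
whose sequence of indices and exponents is precisely the pair $(\mbf i^{\ji}_0,\mbf b^{\ji}_0)$ appearing in the statement. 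Each factor $\f^{(\beta_j)}_{j}\, 1_{\co(B_{\ge j+1})}$ is, by definition, a standard basis element $[B_j]_d$ in $\Sji^{\ji}_{\nn,d}$ with $B_j - \beta_j E^{j,j+1}_{\theta,\nn}$ diagonal and with appropriate row/column sums. Thus $\f^{\ji}_{_{\ddot p}B;\,d+\frac{p}{2}\nn}$ is the product of the corresponding $[_{\ddot p}B_j]$'s, to which Proposition \ref{ji-stab-v3} applies directly.

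Next, I will apply Proposition \ref{ji-stab-v3} to $\f^{\ji}_{_{\ddot p}B;\,d+\frac{p}{2}\nn} * [_{\ddot p}A]_{d+\frac{p}{2}\nn}$ with the chosen tuple $(\mbf i^{\ji}_0,\mbf b^{\ji}_0)$. This yields, for all sufficiently large even $p$,
\[
\f^{\ji}_{_{\ddot p}B;\,d+\frac{p}{2}\nn} * [_{\ddot p} A]_{d+\frac{p}{2}\nn}
= \sum_{\mbf t \in \mathcal T_{\mbf i^{\ji}_0,\mbf b^{\ji}_0, A}}
Q^{\ji,\mbf t}_{\mbf i^{\ji}_0,\mbf b^{\ji}_0;\,A}(v,v^{-p})\,
[_{\ddot p}A_{\mbf i^{\ji}_0,\mbf b^{\ji}_0,\mbf t}]_{d+\frac{p}{2}\nn},
\]
where the indexing set $\mathcal T_{\mbf i^{\ji}_0,\mbf b^{\ji}_0, A} = \mathcal T_{\mbf i^{\ji}_0,\mbf b^{\ji}_0, {}_{\ddot p}A}$ does not depend on $p$, and the polynomial $Q^{\ji,\mbf t}_{\mbf i^{\ji}_0,\mbf b^{\ji}_0;\,_{\ddot p}A}(v,1)$ at level $\frac{p}{2}\nn$ equals $Q^{\ji,\mbf t}_{\mbf i^{\ji}_0,\mbf b^{\ji}_0;\,A}(v,v^{-p})$ by the $\ji$-analogue of (\ref{v'-p}).

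Finally, I pass to the stabilization. By the very definition of the algebra structure on $\K^{\ji}_{\nn}$, the product $\f^{\ji}_B\cdot[A]$ is obtained from the family of products $\f^{\ji}_{_{\ddot p}B;\,d+\frac{p}{2}\nn} * [_{\ddot p}A]_{d+\frac{p}{2}\nn}$ by replacing each matrix $_{\ddot p}X$ by its $p=0$ representative and specializing the coefficients at $v'=1$ (this is precisely the stabilization mechanism used in the multiplication rule (\ref{K-mult}) and its $\ji$-counterpart in Proposition \ref{ji-10.2.1}). Applied to the identity above, this gives exactly
\[
\f^{\ji}_B \cdot [A] = \sum_{\mbf t \in \mathcal T_{\mbf i^{\ji}_0,\mbf b^{\ji}_0, A}} Q^{\ji,\mbf t}_{\mbf i^{\ji}_0,\mbf b^{\ji}_0;\,A}(v,1)\, \bigl[A_{\mbf i^{\ji}_0,\mbf b^{\ji}_0,\mbf t}\bigr],
\]
which is the desired formula. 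The main technical point in the argument is the first step: correctly matching the specific ordered factorization (\ref{ji-fA}) of $\f^{\ji}_{B;d}$ with the pair $(\mbf i^{\ji}_0,\mbf b^{\ji}_0)$ of tuples, and verifying that the corresponding iterated matrices $B_j$ satisfy the hypotheses needed to invoke Proposition \ref{ji-stab-v3}; once this bookkeeping is done, the rest of the argument is a formal stabilization specialization $v'=1$.
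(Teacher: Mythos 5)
Your proposal is correct and follows essentially the same route as the paper, which derives this formula directly from Proposition~\ref{ji-stab-v3} applied to the factorization \eqref{ji-fA} of $\f^{\ji}_{B;d}$ with the tuple $(\mbf i^{\ji}_0,\mbf b^{\ji}_0)$, followed by the stabilization/specialization at $v'=1$ built into the definition of $\K^{\ji}_{\nn}$. Your write-up merely makes explicit the bookkeeping (matching factors $[B_j]$ to the tuple and invoking the $\ji$-analogue of \eqref{v'-p}) that the paper leaves implicit.
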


We define a bar involution on $\K^{\ji}_{\nn}$  by
\begin{align}
\overline{[A]} = [A] + \sum_{i=1}^s H_i(v, 1) [Y_i],  \quad \forall A \in \widetilde \Xi^{\ji}_{\nn},
\end{align}
where $H_i(v, v') $ and $Y_i<A$ are from Proposition ~\ref{ji-bar-stab}.
By a standard argument, we can now establish the existence of the stably canonical basis for $\K^{\ji}_{\nn}$.

\begin{prop}
\label{ji-stable}
\begin{enumerate}
\item 
For any $A \in \widetilde \Xi^{\ji}_{\nn}$, 
there exists a unique element $\{A\}$ in $\K^{\ji}_{\nn}$ such that 
\[
\overline{\{A\}} = \{A\}, 
\quad
\{A\} = [A] + \sum_{A'<A} \pi^{\ji}_{A, A'}[A'], \quad \pi^{\ji}_{A, A'}\in v^{-1} \mbb Z [v^{-1}].
\]
\item
The set $\{ \{A\} \vert A \in \widetilde \Xi^{\ji}_{\nn} \}$ forms a basis for $\K^{\ji}_{\nn}$ 
(called the {\em stably canonical basis}). 
\end{enumerate}
\end{prop}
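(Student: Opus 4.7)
The plan is to carry out the standard Kazhdan--Lusztig / Deodhar argument, exactly parallel to the proof of Proposition~\ref{stable} in the $\jj$-setting. The argument has three logical pieces: (i) the bar involution is well-defined and squares to the identity on $\K^{\ji}_{\nn}$; (ii) apply a uniqueness--existence lemma for unitriangular bar-involutions on poset-indexed bases to produce $\{A\}$; (iii) deduce the basis statement from unitriangularity.

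First I would verify the bar involution $\bar{\phantom{x}}\colon \K^{\ji}_{\nn}\to\K^{\ji}_{\nn}$ obtained by specializing $v'=1$ in Proposition~\ref{ji-bar-stab} is an involution. Although the definition already produces a $\mbb Z$-linear map with $\overline v = v^{-1}$ and triangular action $\overline{[A]}=[A]+\sum H_i(v,1)[Y_i]$ with $Y_i<A$, the identity $\overline{\overline{[A]}}=[A]$ must be extracted from the Schur-algebra identity $\overline{\overline{[{}_{\ddot p}A]}}=[{}_{\ddot p}A]$ in $\Sji^{\ji}_{\nn,d+\frac{p}{2}\nn}$ for $p\gg 0$. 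By Proposition~\ref{ji-bar-stab} the coefficients are polynomial in $v^{-p}$; specializing $v'=1$ and letting $p\to\infty$ in the finite expansion gives the required identity.

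Next I would invoke the standard bar-triangularity lemma: if $(M,\bar{\phantom{x}})$ is a free $\cA$-module with basis $\{[A]\}_{A\in P}$ indexed by a poset $P$, and the bar involution satisfies $\overline{[A]}\in [A]+\sum_{A'<A}\cA[A']$, and if the downsets $\{A'\le A\}$ are finite, then there is a unique family $\{\{A\}\}_{A\in P}$ with $\overline{\{A\}}=\{A\}$ and $\{A\}\in [A]+\sum_{A'<A}v^{-1}\mbb Z[v^{-1}][A']$. The finiteness of $\{A'\le A\}$ is crucial and follows from the definition of $\le$: any $A'\le A$ has $\ro(A')=\ro(A)$ and $\co(A')=\co(A)$, and the off-diagonal nonnegative entries are controlled by the partial sums $\sum_{k\le i,l\ge j}a_{kl}$ (see \eqref{comb-ord}), so the off-diagonal entries are bounded, which forces the diagonal entries (once row sums are prescribed). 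The coefficients $\pi^{\ji}_{A,A'}\in v^{-1}\mbb Z[v^{-1}]$ are then solved uniquely by downward induction on $A'<A$ in the standard fashion. Part (2) is then automatic: the transition matrix from $\{\{A\}\}$ to $\{[A]\}$ is unitriangular with respect to $\le$, so the set $\{\{A\}\vert A\in\widetilde{\MX}^{\ji}_{\nn}\}$ is a basis of $\K^{\ji}_{\nn}$.

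The only mildly subtle point is step (i): one must confirm that the stabilization procedure is compatible with composition of the bar involution. This amounts to checking that the polynomial identity $H_i(v,v')$ satisfied at each Schur algebra $\Sji^{\ji}_{\nn,d+\frac{p}{2}\nn}$ specializes consistently at $v'=v^{-p}$ and at $v'=1$, which is the content of Proposition~\ref{ji-bar-stab} together with the parallel stabilization of products in Proposition~\ref{ji-10.2.1}. Once this is in place, the remainder is formal and mirrors the $\jj$-case verbatim.
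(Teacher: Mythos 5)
Your proposal is correct and takes essentially the same route as the paper: the paper defines the bar involution on $\K^{\ji}_{\nn}$ by specializing the stabilized bar operator of Proposition~\ref{ji-bar-stab} at $v'=1$ and then invokes ``a standard argument,'' which is exactly the Kazhdan--Lusztig-type unitriangularity induction you describe. Your extra checks (involutivity of the bar map via stabilization, and finiteness of the lower intervals $\{A'\le A\}$ from the fixed row/column vectors together with the bounds coming from $\leq_{\text{alg}}$) are precisely the details the paper leaves implicit, matching the finiteness reasoning it uses elsewhere (e.g., in the proof of Proposition~\ref{prop-cop-alg-hom-A}).
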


Let us summarize the main results of this section as follows. 

\begin{thm}
\label{Kji-bases}
The algebra $\K^{\ji}_{\nn}$ admits a standard basis $\{[A] | A\in \widetilde \Xi^{\ji}_{\nn} \}$, 
a semi-monomial  basis $\{m^{\ji}_A | A\in  \widetilde \Xi^{\ji}_{\nn}\}$, 
a monomial basis $\{ \f^{\ji}_A | A\in \widetilde \Xi^{\ji}_{\nn} \}$,
and a canonical basis
$\{\{A \}| A\in  \widetilde \Xi^{\ji}_{\nn}\}$.
\end{thm}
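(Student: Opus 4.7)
The plan is to establish the four bases in the order standard, monomial, semi-monomial, canonical, following the template of the $\jmath\jmath$-case (Theorem~\ref{Kj-bases}) but substituting the $\ji$-analogues of the stabilization propositions developed earlier in this section.

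\emph{Standard basis.} By construction, $\K^{\ji}_{\nn}$ is the free $\mbb Q(v)$-module on the symbols $[A]$ for $A \in \widetilde \Xi^{\ji}_{\nn}$, so the spanning and linear independence are immediate. The only nontrivial point is that the product defined in~(10.3.1) is associative; this follows by passing to the limit from associativity in the Schur algebras $\Sji^{\ji}_{\nn,d}$, using Proposition~\ref{ji-10.2.1} to match structure constants with the specializations $G_i(v,1)$, exactly as for $\K^{\C}_n$ in Section~\ref{Kc}.

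\emph{Monomial and semi-monomial bases.} By definition $\f^{\ji}_A = [A] + \sum_{Z_i < A} G_i(v,1)[Z_i]$, so the transition matrix from $\{\f^{\ji}_A\}$ to the standard basis $\{[A]\}$ is unitriangular with respect to the partial order $\leq$ on $\widetilde \Xi^{\ji}_{\nn}$; hence $\{\f^{\ji}_A\}$ is a basis. Bar-invariance of $\f^{\ji}_A$ follows from the product expression $\f^{\ji}_A = \f^{\ji}_{A_m}\cdot \f^{\ji}_{A_{m-1}}\cdots \f^{\ji}_{A_1}$ over $\ji$-tridiagonal factors, each of which is bar-invariant by inspection of the generators. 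For the semi-monomial basis, Corollary~\ref{ji-K-A-leading} specialized at $v'=1$ gives $m^{\ji}_A = [A_m]\cdot [A_{m-1}]\cdots [A_1] = [A] + \text{lower terms}$ in $\K^{\ji}_{\nn}$, again a unitriangular expansion.

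\emph{Canonical basis.} This is precisely Proposition~\ref{ji-stable}. The key input is that the bar involution on $\K^{\ji}_{\nn}$ defined by $\overline{[A]} = [A] + \sum H_i(v,1)[Y_i]$ with $Y_i < A$ is a well-defined $\mbb Q$-algebra involution (with $\overline v = v^{-1}$). Once this is in place, a standard Kazhdan--Lusztig--type recursion produces a unique bar-invariant element $\{A\} \in [A] + \sum_{A'<A} v^{-1}\mbb Z[v^{-1}][A']$, and the resulting set $\{\{A\} : A \in \widetilde\Xi^{\ji}_{\nn}\}$ is a basis since its transition matrix to the standard basis is unitriangular.

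\emph{Main obstacle.} The crux is the well-definedness of the bar involution on $\K^{\ji}_{\nn}$: one must verify that $H_i(v,v^{-p})$ in Proposition~\ref{ji-bar-stab} stabilizes to give a consistent value $H_i(v,1)$, that $\overline{\empty}$ so defined is multiplicative (compatible with the stabilized multiplication), and that $\overline{\overline{[A]}} = [A]$. All three facts are proved by pushing the computation back to the bar involution on $\Sji^{\ji}_{\nn,d}$ for $d \gg 0$, where they hold tautologically, and then extracting the $v'=1$ specialization using that the polynomials $H_i(v,v')$ and $G_i(v,v')$ are determined independently of $d$ for $d$ large. The $\ji$-specific complication, absent in the $\jj$-case, is that the rescaling by $\ddot I_{\nn} = I_n - E^{r+1,r+1}_n$ (rather than by $I_n$) interacts nontrivially with the middle rows $r$, $r{+}1$ where the polynomials $Q^{\ji,t}_{r,R;A}$ and $Q^{\ji,t}_{r+1,R;A}$ in~(10.2.2)--(10.2.3) carry the extra $v'$-factors $v'^{-\sum t_u}$ and $v'^{\sum t_u}$; one must track these carefully to confirm the stabilization.
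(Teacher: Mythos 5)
Your proposal is correct and follows essentially the same route as the paper: Theorem~\ref{Kji-bases} is stated there as a summary of the preceding results of the section, and you assemble exactly those ingredients — the stabilized multiplication (Proposition~\ref{ji-10.2.1}) for the algebra structure on the standard basis, Corollary~\ref{ji-K-A-leading} and the product/bar-invariance statements for the monomial and semi-monomial bases, and Proposition~\ref{ji-bar-stab} plus the standard Kazhdan--Lusztig-type recursion (Proposition~\ref{ji-stable}) for the stably canonical basis. Your identification of the key point (well-definedness of the bar involution, verified by pushing back to $\Sji^{\ji}_{\nn,d}$ for $d\gg 0$ and specializing $v'=1$, with the $\ji$-specific twist coming from stabilizing by $\ddot I_{\nn}=I_n-E^{r+1,r+1}_n$) matches how the paper, via its $\jj$-template in Chapter~\ref{chap:Kc}, handles these verifications.
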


Recall our convention that $[A]_d =0$ in $\Sji^{\ji}_{\nn,d}$, for all $A\in \widetilde \Xi^{\ji}_{\nn} \backslash \Xi^{\ji}_{\nn,d}.$
The following is a counterpart of Theorem~\ref{thm:Psi:c}.

\begin{thm}
The assignment $[A] \mapsto [A]_d$, for all $A\in \widetilde \Xi^{\ji}_{\nn}$,  defines a surjective algebra homomorphism
$\Psi^{\ji}_{\nn, d}: \K^{\ji}_{\nn} \to \Sji^{\ji}_{\nn, d}$.
Moreover, we have $\Psi^{\ji}_{\nn, d} (\{A\}) = \{A\}_d$ if $A\in \Xi^{\ji}_{\nn,d}$ and zero otherwise.
\end{thm}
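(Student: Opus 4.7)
The plan is to mirror the proof strategy of Theorem~\ref{thm:Psi:c} (the $\jj$-analogue), using Proposition~\ref{ji-stab-v3}, Proposition~\ref{prop:ji-A-f}, Proposition~\ref{ji-bar-stab}, and the explicit description of the monomial basis $\{\f^{\ji}_A\}$ in place of their $\jj$-counterparts. First I would define $\Psi^{\ji}_{\nn,d}$ as the linear map sending $[A] \mapsto [A]_d$ on the standard basis of $\K^{\ji}_{\nn}$, invoking the convention $[A]_d=0$ for $A\in \widetilde \Xi^{\ji}_{\nn} \setminus \Xi^{\ji}_{\nn,d}$.

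The next step is to show $\Psi^{\ji}_{\nn,d}(\f^{\ji}_A)=\f^{\ji}_{A;d}$ for every $A\in \widetilde \Xi^{\ji}_{\nn}$. This is the $\ji$-analogue of Lemma~\ref{Psi-fa} and follows by directly comparing the explicit expansion \eqref{ji-fa-explicit} for $\f^{\ji}_A$ with the corresponding expansion of $\f^{\ji}_{A;d}$ obtained by specializing Proposition~\ref{ji-stab-v3} (i.e.\ by applying it to the diagonal matrix $D_{\co(A)}$ with the same pair $(\mbf i, \mbf a)$ that defines $\f^{\ji}_A$). Since both $\f^{\ji}_A$ and $\f^{\ji}_{A;d}$ are bar-invariant by construction, this already implies $\Psi^{\ji}_{\nn,d}$ commutes with the bar involution.

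To see that $\Psi^{\ji}_{\nn,d}$ is an algebra homomorphism, one uses the product formula \eqref{fA:ji} expressing any $\f^{\ji}_A$ as a monomial in $\f^{\ji}_{A_i}$ with $A_i$ $\ji$-tridiagonal, and establishes $\Psi^{\ji}_{\nn,d}(\f^{\ji}_{A_1}\cdot \f^{\ji}_{A_2}) = \f^{\ji}_{A_1;d}\ast \f^{\ji}_{A_2;d}$; this reduces, via Proposition~\ref{ji-stab-v3} applied to a concatenated pair of tuples $(\mbf i_1 \mbf i_2, \mbf a_1 \mbf a_2)$, to the same type of coefficient matching already used to establish $\Psi^{\ji}_{\nn,d}(\f^{\ji}_A)=\f^{\ji}_{A;d}$. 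Surjectivity is then immediate, since $\{\f^{\ji}_{A;d} \mid A\in \Xi^{\ji}_{\nn,d}\}$ is a basis of $\Sji^{\ji}_{\nn,d}$ and lies in the image.

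The last assertion on canonical bases is the step I would expect to be the subtlest. The idea is to argue as in the proof of \cite[Theorem~A.21]{BKLW14} (quoted for Theorem~\ref{thm:Psi:c}): given $A \in \widetilde \Xi^{\ji}_{\nn}$, the element $\Psi^{\ji}_{\nn,d}(\{A\})$ is bar-invariant (by the compatibility of $\Psi^{\ji}_{\nn,d}$ with the bar involution established above) and, by Proposition~\ref{ji-stable}, equals $[A]_d + \sum_{A'<A} \pi^{\ji}_{A,A'}\,[A']_d$ with $\pi^{\ji}_{A,A'}\in v^{-1}\mbb Z[v^{-1}]$. If $A\in \Xi^{\ji}_{\nn,d}$, the defining characterization of $\{A\}_d$ in $\Sji^{\ji}_{\nn,d}$ (i.e.\ bar-invariance plus the strictly-lower-triangular $v^{-1}\mbb Z[v^{-1}]$-expansion in the standard basis) forces $\Psi^{\ji}_{\nn,d}(\{A\})=\{A\}_d$. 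If $A \notin \Xi^{\ji}_{\nn,d}$, the subtle point is that although $[A]_d=0$, the lower terms $[A']_d$ need not vanish; one handles this by an induction on the partial order $\leq$ on $A'$, showing that the bar-invariant element $\Psi^{\ji}_{\nn,d}(\{A\})$ must coincide with the canonical basis element attached to its leading surviving term, and then observing that the normalization $\pi^{\ji}_{A,A'}\in v^{-1}\mbb Z[v^{-1}]$ is incompatible with the canonical normalization unless the whole expression is zero. Making this cancellation argument precise, and in particular controlling how $\ji$-aperiodicity and the deletion $\mrm{dlt}_{r+1}$ interact with the partial order, is the main technical obstacle.
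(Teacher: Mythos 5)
Your proposal is correct and follows essentially the same route as the paper: the paper proves this theorem by transporting the $\jj$-case arguments (Lemma~\ref{Psi-fa}, Proposition~\ref{Psi-algebra}, and the standard argument of \cite[Theorem A.21]{BKLW14} invoked for Theorem~\ref{thm:Psi:c}) to the $\ji$ setting via Proposition~\ref{ji-stab-v3}, the expansion \eqref{ji-fa-explicit}, and the product formula \eqref{fA:ji}, exactly as you outline. The only difference is that the paper treats the final canonical-basis assertion (including the case $A\notin \Xi^{\ji}_{\nn,d}$) as the same standard bar-invariance/triangularity argument, which you spell out in somewhat more detail.
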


We have developed the current Chapter~\ref{chap:Kc:ji} on the stabilization algebra $\K_{\nn}^{\ji}$
which is based on the imbeddings $\Sji_{\nn,d}^{\ji} \to \Sj$,
in analogy to the stabilization algebra $\Kc_n$ in Chapter~\ref{chap:Kc} which was based on the imbeddings $\Sj \to \Sji_{\breve n,d}^\C$.
Just as the imbeddings  $\Sj \to \Sji_{\breve n,d}^\C$ lead to a realization of $\Kc_n$ as a subquotient of $\K^\C_{\breve n}$
(see Proposition~\ref{prop:sq}), 
the imbeddings $\Sji_{\nn,d}^{\ji} \to \Sj$ lead to a realization of $\K_{\nn}^{\ji}$ as a subquotient of $\Kc_{n}$.

We shall simply formulate the statement below and skip the detail (compare with \cite{BLW14}).
Let $\mathcal J_{<}^{\ji}$ be the $\mathbb Q(v)$-subspace of $\K^{\C}_{n}$ spanned by 
$[A]$ for $A =(a_{ij})  \in \widetilde \Xi^{\ji}_{\nn}$ with $a_{r+1, r+1}<0$.
Then one shows that $\mathcal J_{<}^{\ji}$ is a two-sided ideal of $\K^{\C}_{n}$
with a stably canonical basis 
$$\{ \{A\} \big \vert A =(a_{ij})  \in  \widetilde \Xi^{\ji}_{\nn}, a_{r+1, r+1}<0\}. 
$$
Moreover,
the natural linear map  
$$
\K^{\ji}_{\nn} \longrightarrow \K^\C_n /\mathcal J_{<}^{\ji},  
\qquad 
[A] \mapsto [A] + \mathcal J_{<}^{\ji}
$$ 
is an  algebra isomorphism, and
it preserves the stably canonical bases. We summarize these as follows. 

\begin{thm}
\label{Kji-sq}
The algebra $\K^{\ji}_{\nn}$ is a subquotient of the algebra $\K^{\C}_n$ with compatible stably canonical bases.
\end{thm}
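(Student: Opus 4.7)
The plan is to mirror the subquotient realization of $\K^{\C}_n$ inside $\K^{\C}_{\breve n}$ from Proposition~\ref{prop:sq}, with the role of the $\pm 1$st rows/columns now played by the $(r+1)$st rows/columns. The three necessary ingredients are: (i) a subalgebra $\K^{\C,\ji}_n \subseteq \K^{\C}_n$ inside which $\mathcal J^{\ji}_{<}$ lives; (ii) a proof that $\mathcal J^{\ji}_{<}$ is a two-sided ideal compatible with the stably canonical basis; (iii) the construction and bijectivity of the natural map $[A] \mapsto [A] + \mathcal J^{\ji}_{<}$.

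First I would define $\K^{\C,\ji}_n$ as the $\mathbb Q(v)$-span of those standard basis elements $[A]$ of $\K^{\C}_n$ whose $(r+1)$st row and column (and hence by centrosymmetry the $-(r+1)$st ones) vanish off the diagonal position $(r+1,r+1)$, while $a_{r+1,r+1}$ is allowed to be an arbitrary odd integer. Closure under multiplication follows from the multiplication formula of Proposition~\ref{new-10.2.1} combined with the semi-monomial presentation of Proposition~\ref{new-11.2.1}: on the $p$-shifted Schur algebra level, the Chevalley-generator factors appearing in $\f_{A_i; d+\frac{p}{2}n}$ preserve the vanishing of off-diagonal entries in the $(r+1)$st row/column, so each matrix $Z_i$ produced in \eqref{10.2.1-a} inherits the same shape and the limit product stays in $\K^{\C,\ji}_n$.

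Second, I would verify that $\mathcal J^{\ji}_{<}$ is a two-sided ideal of $\K^{\C,\ji}_n$ with stably canonical basis the subset $\{\{A\} : a_{r+1,r+1} < 0\}$. The ideal property requires tracking the $(r+1,r+1)$-entry through the product formula: in $Q^{\mathbf t}_{\mathbf i, \mathbf a; A}(v,1)$ applied to the diagonal-shift decomposition, the $(r+1,r+1)$-entry of each summand $Z_i$ equals that of the input adjusted only by nonnegative transfers from the $t_u$'s, so a negative input remains negative. Compatibility with the stably canonical basis then follows from the unitriangularity in Proposition~\ref{stable} together with the observation that the partial order $\leq$ preserves $\ro$ and $\co$; since the support conditions confine all off-diagonal mass in the $(r+1)$st row/column to zero, the $(r+1,r+1)$-entry is determined by $\ro(A)_{r+1}$ and cannot change sign when passing to ``lower terms.''

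Third, I would define $\tilde\rho^{\ji}: \K^{\ji}_\nn \to \K^{\C,\ji}_n / \mathcal J^{\ji}_{<}$ by $[A] \mapsto [A] + \mathcal J^{\ji}_{<}$ and verify it is an algebra isomorphism preserving stably canonical bases. Multiplicativity is checked by first establishing $\tilde\rho^{\ji}(\f^{\ji}_A) = \f_A + \mathcal J^{\ji}_{<}$ via direct comparison of the explicit formula \eqref{ji-fa-explicit} with its $\jj$-counterpart, then extending to arbitrary products through \eqref{ji-fA-monomial} and Proposition~\ref{fA-monomial}, in the style of Lemma~\ref{Psi-fa} and Proposition~\ref{Psi-algebra}. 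Bijectivity is then immediate on standard bases, and the canonical-basis compatibility follows from the uniqueness of $\{A\}$ (Proposition~\ref{ji-stable}) applied to the image $\{A\} + \mathcal J^{\ji}_{<}$, which is bar-invariant and lies in $[A] + \sum_{A' < A} v^{-1}\mathbb Z[v^{-1}] \cdot [A']$ modulo $\mathcal J^{\ji}_{<}$.

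The main obstacle is Step~2, the ideal property of $\mathcal J^{\ji}_{<}$. In the $\jj$-case of Lemma~\ref{I-ideal}, the entry $a_{1,1}$ sits in a row/column inserted via $\ddot{\phantom{x}}$ that is inert under all Chevalley generators touching the original data, so the ideal property is essentially automatic. Here $a_{r+1,r+1}$ is a genuine middle diagonal entry of a centrosymmetric matrix that interacts with the generators $\mathbf e_r, \mathbf f_r$ via the multiplication formulas of Proposition~\ref{prop-stand-mult}, and one must exploit the support constraints (vanishing of $a_{r+1,j}$ for $j \not\equiv r+1$) to confirm that such interactions contribute only nonnegative adjustments to the $(r+1,r+1)$-entry in the stabilized product.
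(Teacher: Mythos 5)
There is a genuine gap, and it sits exactly where you flagged the difficulty. The construction the paper has in mind (it only sketches it, deferring details to the model case of Proposition~\ref{prop:sq}, Lemma~\ref{I-ideal} and \cite[A.3]{BLW14}) does not use your subalgebra $\K^{\C,\ji}_n$: the correct ambient subalgebra of $\K^{\C}_n$ is spanned by the $[A]$ with $\ro(A)_{r+1}=\co(A)_{r+1}=1$, which is closed under multiplication for the trivial reason that multiplication preserves row/column vectors, exactly as $\K^{\C}_{\breve n,1,0}$ was cut out by $\ro(A)_1=\co(A)_1=0$ in Section~\ref{sec:subquot}; inside it, the condition $a_{r+1,r+1}<0$ is precisely complementary to membership in $\widetilde\Xi^{\ji}_{\nn}$. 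Your span (off-diagonal entries of the $(r+1)$st row/column zero, diagonal entry an arbitrary odd integer) is not closed under multiplication, and your justification is false: the tridiagonal factors in the monomial presentation involve $\e_r$- and $\f_r$-type elements, and by Proposition~\ref{prop-stand-mult}(1),(4) these precisely move entries into and out of the $(r+1)$st row/column; since in your set $\ro(A)_{r+1}=a_{r+1,r+1}$ is not pinned to $1$, nothing in the stabilized product forces those off-diagonal entries to vanish, and lower terms $A'<A$ (which only share $\ro$ and $\co$ with $A$) need not have the vanishing pattern either.

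Second, your mechanism for the ideal property is wrong even after the subalgebra is corrected. Under Proposition~\ref{prop-stand-mult}(4) the terms are $[A-\sum_u t_u(E^{ru}_\theta-E^{r+1,u}_\theta)]$, so the $(r+1,r+1)$-entry \emph{increases} by $2t_{r+1}$; it is not adjusted ``only by nonnegative transfers'' in the direction you need, and a negative entry can in principle cross zero. What actually protects $\mathcal J^{\ji}_{<}$ is that in exactly those borderline cases the stabilized structure constants $Q(v,v')$ of Proposition~\ref{4.2.3-stab-v3} vanish at $v'=1$ (the \cite[Lemma A.20]{BLW14}-type vanishing already invoked there); it is this vanishing, not sign-monotonicity of the diagonal entry, that gives both the ideal property and the fact that the $\{A\}$ with $a_{r+1,r+1}<0$ form a basis of $\mathcal J^{\ji}_{<}$. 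Your appeal to ``the entry is determined by $\ro(A)_{r+1}$'' is circular, since it presupposes the vanishing pattern for the lower terms. Your Step~3 (comparing $\f^{\ji}_A$ with $\f_A$ in the style of Lemma~\ref{Psi-fa} and Proposition~\ref{Psi-algebra}, then invoking uniqueness of $\{A\}$) is the right way to finish and matches the paper's intended route, but as written the proposal does not establish the two ingredients it rests on.
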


We finally discuss the comultiplication on $\K^{\ji}_{\nn}$. 
Let
\[
\mbb Z^{\ji}_{\nn} =
\{
\lambda = (\lambda_i)_{i\in \mbb Z} \in\mbb Z^{\C}_{n} | \lambda_{r+1}=1\},
\quad
\mbb Z^{\ji}_{\nn} =\{ \lambda \in \mbb Z_n | \lambda_{r+1} =0\}.
\]
Note that there is a canonical bijection $\mbb Z^{\ji}_{\nn} \simeq \mbb Z_{\nn}$, which we shall identify.
For any $\mbf a, \mbf b\in \mbb Z^{\ji}_{\nn}$, 
let $_{\mbf b}\K^{\ji}_{\mbf a}$ denote the subspace of $\K^{\ji}_{\nn}$ spanned by
the standard basis element $[A]$ such that $\ro(A)=\mbf b$ and $\co (A) =\mbf a$.
%For $\mbf b', \mbf b\in \mbb Z^{\ji}_{\nn}$ and $b'' \in\mbb Z_{\nn}$, we write
%$(\mbf b', \mbf b'' ) \models^{\ji} \mbf b$ if 
%$b_i = b_i' + b_{-i}' + b_i''$ for all $i$.
For any  $\mbf b, \mbf a, \mbf b', \mbf a' \in \mbb Z_{\nn}^{\ji}$ and $\mbf b'', \mbf a'' \in \mbb Z^{\ji}_{\nn}$ such that  
$(\mbf b', \mbf b'') \models \mbf b$ and $(\mbf b'', \mbf a'') \models \mbf a$, we define a linear map 
\begin{align}
\label{Kji-cop}
\Delta^{\ji}_{\mbf b', \mbf a', \mbf b'', \mbf a''} : {}_{\mbf b} \K^{\ji}_{\mbf a} \longrightarrow  {}_{\mbf b'}\K^{\ji}_{\mbf a'} \otimes {}_{\mbf b''}\K_{\mbf a''},
\end{align}
by
\[
\Delta^{\ji}_{\mbf b', \mbf a', \mbf b'', \mbf a''} ([A]) 
=
\sum_{i, j} C_{i,j} (v, 1) [A'_i]\otimes \ ^{\mathfrak a} [A''_j],
\]
where $ {}_{\mbf b''}\K_{\mbf a''}$ is a component of $\K_{\nn}$, $C_{i, j} (v, v')$, $A'_i$ and $A''_j$ are given in Proposition ~\ref{ji-cop-stab}.
We shall call the collection 
$$
\dot \Delta^{\ji} = (\Delta^{\ji}_{\mbf b', \mbf a', \mbf b'', \mbf a''})_{\mbf b', \mbf a'\in \mbb Z_{\nn}^{\ji}, \mbf b'', \mbf a'' \in \mbb Z_{\nn}}
$$
the $comultiplication$ of $\K^{\ji}_{\nn}$.
Let $g_{A, B}^C$ and $h^{C', C''}_C$ be the structure constants of $\K^{\ji}_{\nn}$ of the multiplication and comultiplication, respectively, with respect to the standard bases.
We have the following $\ji$-counterparts of the commutative diagrams \eqref{Diag-cop-alg-hom} and \eqref{Diag-cop-coass} for the comultiplication $\dot\Delta^\C$.

\begin{prop}
\label{ji-cop-alg-hom}
\begin{enumerate}
\item
The $\dot \Delta^{\ji}$ is an algebra homomorphism in the following sense:
for all $A, B, C' \in \widetilde \Xi^{\ji}_{\nn}$, $C'' \in \widetilde  \Theta_{\nn}$  one has
\begin{align}
\sum_{C \in \widetilde \Xi^{\ji}_{\nn}}  g^C_{A, B}   h^{C', C''}_C
= \sum_{ A', B' \in \widetilde \Xi^{\ji}_{\nn}, A'',  B'' \in \widetilde \Theta_{\nn}}  h^{A', A''}_A  h^{B', B''}_B  g_{A', B'}^{C'} 
{}^{\mathfrak a} g_{A'', B''}^{C''}.
\end{align}

\item
The $\dot \Delta^{\ji}$ is coassociative in the following sense: 
for all $A, A' \in \widetilde \Xi^{\ji}_{\nn},  A'', A''' \in \widetilde \Theta_{\nn}$, one has
\begin{align}
%\label{ji-cop-coass}
\sum_{C\in \widetilde \Xi^{\ji}_{\nn}}  h^{C, A'''}_A h^{A' ,  A''}_C 
=
\sum_{B \in \widetilde \Theta_{\nn}} h^{A', B}_A \  {}^{\mathfrak a} h^{A'', A'''}_B.
\end{align}
\end{enumerate}
\end{prop}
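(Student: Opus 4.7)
The plan is to mirror the strategy used in the proofs of Propositions~\ref{prop-cop-alg-hom-A}/\ref{prop-cop-coass-A} (for $\K_n$) and Propositions~\ref{prop-cop-alg-hom}/\ref{prop-cop-coass} (for $\K^{\C}_n$), now adapted to the $\ji$-setting. The key structural ingredients are already in place: Proposition~\ref{ji-cop-stab} gives the stabilization of $\Delta^{\ji}$ as $d \to \infty$ after shifting by $p \ddot I_{\nn}$, Proposition~\ref{ji-10.2.1} gives the stabilization of multiplication, and the restrictions $\Delta^{\ji}$ and $\Delta$ (on $\mbf S_{\nn, d}$) are already known to be algebra homomorphisms at the Schur algebra level, since $\Delta^{\ji}$ was constructed by restricting the coassociative $\Delta^{\C}$ to the subalgebra $\mbf S^{\ji}_{\nn,d} = \bj_r \Sj \bj_r$ (cf.\ Proposition~\ref{Dc-ji} and the discussion preceding it).

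First I would verify the finiteness of both sums appearing in (1) and (2). For part (1), finiteness of the left-hand sum in $C$ is immediate from the defining multiplication formula on $\K^{\ji}_{\nn}$. For the right-hand sum, the key observation (already used in the proof of Proposition~\ref{prop-cop-alg-hom-A}) is that if $h^{A', A''}_A \neq 0$, then $A', A'' \le_{\text{alg}} A$; combined with the fixed row/column data imposed by the components $C'$ and $C''$, this pins down the off-diagonal entries to lie in a finite set and then fixes the diagonals via the prescribed row/column vectors. The same argument, applied in the $\ji$-setting to matrices in $\widetilde \Xi^{\ji}_{\nn}$ (whose $(r{+}1)$st row and column are rigidly constrained by \eqref{Mjid}), yields finiteness.

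Next I would reduce both identities (1) and (2) to the corresponding identities at the Schur algebra level for all sufficiently large $d$. Concretely, by Proposition~\ref{ji-10.2.1} and Proposition~\ref{ji-cop-stab}, each structure constant in $\K^{\ji}_{\nn}$ equals the $v' = 1$ specialization of a polynomial in $\mathscr R$ whose $v' = v^{-p}$ specialization computes the analogous structure constant in $\mbf S^{\ji}_{\nn, d + \frac{p}{2} \nn}$ for all $p \geq p_0$, $p \in 2\NN$. Since both sides of each asserted identity are already known to be finite sums, for $p$ sufficiently large the identity in $\K^{\ji}_{\nn}$ will follow from the corresponding identity in $\mbf S^{\ji}_{\nn, d + \frac{p}{2}\nn}$ (where both sides are matched term by term via stabilized matrices ${}_{\ddot p} A$).

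Finally, the Schur-level identities are the assertions that $\Delta^{\ji} : \mbf S^{\ji}_{\nn,d} \to \mbf S^{\ji}_{\nn, d'} \otimes \mbf S_{\nn, d''}$ is an algebra homomorphism (for (1)) and that it is coassociative (for (2)). Both follow from the corresponding properties of $\Delta^{\C}$ on $\Sj$ (Proposition~\ref{Sj-coass}) together with $\Delta^{\C}(\bj_r) = \bj'_r \otimes \bJ''_r$, since $\Delta^{\ji}$ is defined as the restriction of $\Delta^{\C}$ to $\bj_r \Sj \bj_r$. The main (though modest) obstacle is bookkeeping in the coassociativity step: one must check that the associated mixed tensor products $\K^{\ji}_{\nn} \otimes \K_{\nn} \otimes \K_{\nn}$ behave correctly under the two different bracketings of $\Delta^{\ji}$ and $\Delta$, which requires confirming that the type $A$ comultiplication $\Delta$ on $\K_{\nn}$ is also coassociative (the $\nn$-version of Proposition~\ref{prop-cop-coass-A}) and that these two coassociativities combine into the commutative diagram analogous to \eqref{Diag-cop-coass}. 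Once this compatibility is in hand, the identities (1) and (2) follow by the stabilization reduction described above.
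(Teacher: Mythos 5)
Your proposal is correct and follows essentially the same route as the paper: the paper states this proposition without a separate proof, relying (as you do) on the finiteness-of-sums argument and the stabilization reduction from Propositions~\ref{ji-10.2.1} and \ref{ji-cop-stab}, exactly as in the proofs of Propositions~\ref{prop-cop-alg-hom-A}, \ref{prop-cop-coass-A}, \ref{prop-cop-alg-hom} and \ref{prop-cop-coass}, with the Schur-level input being that $\Delta^{\ji}$ is an algebra homomorphism and coassociative because it is the restriction of $\Delta^{\C}$ via $\Delta^{\C}(\bj_r)=\bj'_r\otimes\bJ''_r$ together with Proposition~\ref{Sj-coass}. Your extra remark about separately confirming coassociativity of the type $A$ comultiplication on $\K_{\nn}$ is harmless but not strictly needed, since the mixed coassociativity already follows from restricting Proposition~\ref{Sj-coass} and stabilizing both factors.
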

Recall from Remark~\ref{rem:gln} that $\K_{\nn}$ is isomorphic to an idempotented quantum $\glh_{\nn}$. 
\begin{prop}
The pair $(\K_{\nn}, \K^{\ji}_{\nn})$ forms a quantum symmetric pair. % (in an idempotented form).
\end{prop}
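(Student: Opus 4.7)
The plan is to mirror the reasoning used in Remark~\ref{rem:KnQSP} to recognize $(\K_n,\K^{\C}_n)$ as an idempotented quantum symmetric pair. The only data required by that notion are: (i) $\dot\Delta^{\ji}$ is an algebra homomorphism $\K^{\ji}_{\nn}\to\K^{\ji}_{\nn}\otimes\K_{\nn}$, and (ii) $\dot\Delta^{\ji}$ is coassociative with respect to the comultiplication $\dot\Delta$ on $\K_{\nn}$. These are supplied verbatim by parts (1) and (2) of Proposition~\ref{ji-cop-alg-hom} once the identities are rewritten as the $\ji$-analogues of the commutative diagrams \eqref{Diag-cop-alg-hom} and \eqref{Diag-cop-coass}. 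Since $\K_{\nn}$ is identified with the idempotented quantum affine $\gl_{\nn}$ by Remark~\ref{rem:gln}, this simultaneously exhibits $\K^{\ji}_{\nn}$ as an idempotented coideal subalgebra of $\dot\bU(\glh_{\nn})$, lifting the Lusztig-algebra coideal $\bU^{\ji}_{\nn}\subseteq\bU(\slh_{\nn})$ constructed in Theorem~\ref{thm:QSP2}.

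First I would assemble the relevant tensor-factor indexing: the row/column vectors on the $\ji$-factor range over $\mbb Z^{\ji}_{\nn}$, those on the type-$A$ factor range over $\mbb Z^{\ji,\fa}_{\nn}$, and the compatibility $(\mbf b',\mbf b'')\models\mbf b$, $(\mbf a',\mbf a'')\models\mbf a$ matches the target $_{\mbf b'}\K^{\ji}_{\mbf a'}\otimes{}_{\mbf b''}\K_{\mbf a''}$ of \eqref{Kji-cop}. Next, I would spell out the two diagrams; the commutativity of the algebra-homomorphism diagram is exactly the identity in Proposition~\ref{ji-cop-alg-hom}(1), and that of the coassociativity diagram is the identity in Proposition~\ref{ji-cop-alg-hom}(2). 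With those diagrams in place, $(\K_{\nn},\K^{\ji}_{\nn})$ satisfies the definition of an idempotented quantum symmetric pair, following exactly the template of Remark~\ref{rem:KnQSP}.

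The only substantive check, already carried out inside Proposition~\ref{ji-cop-alg-hom}, is finiteness of the sums on both sides of the two identities. This proceeds as in the proofs of Propositions~\ref{prop-cop-alg-hom-A} and~\ref{prop-cop-alg-hom}: for fixed $A,B\in\widetilde\Xi^{\ji}_{\nn}$, the multiplication structure constants $g^C_{A,B}$ vanish for all but finitely many $C$ by construction of the limit algebra, while the comultiplication structure constants $h^{A',A''}_A$ are supported on pairs $(A',A'')$ with $A',A''\leq_{\text{alg}}A$ whose row/column vectors lie in prescribed components; together these force only finitely many nonzero contributions. Granted finiteness, both identities reduce to corresponding identities at the Schur-algebra level $\Sji^{\ji}_{\nn,d}$ for $d\gg 0$, which in turn follow from the coassociativity of $\Delta^{\C}$ established in Proposition~\ref{Sj-coass} restricted via the idempotent truncation $\bj_r\Sj\bj_r=\Sji^{\ji}_{\nn,d}$, together with the stabilization result Proposition~\ref{ji-cop-stab}.

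The main obstacle I anticipate is largely bookkeeping: tracking the two distinct periodicities (period $n$ on the $\ji$-factor inherited from the embedding $\Sji^{\ji}_{\nn,d}\subseteq\Sj$, versus period $\nn$ on the type-$A$ factor) and keeping the index-set identifications $\mbb Z^{\ji}_{\nn}\simeq\mbb Z_{\nn}$ consistent throughout the diagrams. The genuine computational content, however, is fully absorbed by Proposition~\ref{ji-cop-alg-hom} and the earlier coassociativity results, so no new combinatorial estimates are required and the proof reduces, once the diagrams are set up, to a direct appeal to these earlier results.
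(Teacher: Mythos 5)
Your proposal is correct and follows essentially the same route as the paper: there the proposition is an immediate consequence of Proposition~\ref{ji-cop-alg-hom}(1)--(2), read as the $\ji$-analogues of the diagrams \eqref{Diag-cop-alg-hom} and \eqref{Diag-cop-coass}, exactly in the sense of Remark~\ref{rem:KnQSP}, with the finiteness and Schur-level reductions already absorbed into that proposition. Your additional bookkeeping remarks (index sets $\mbb Z^{\ji}_{\nn}$, $\mbb Z^{\ji,\fa}_{\nn}$ and the identification of $\K_{\nn}$ with idempotented quantum affine $\gl_{\nn}$) match the paper's conventions.
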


%%%%%
\section{Stabilization algebra of type $\ijw$}

Recall the subalgebra $\Sij^{\ij}_{\nn, d}$ of $\Sj$ from (\ref{Sij}).
In analogue with the operator $\mrm{dlt}_{r+1}$, we can define the operator $\mrm{dlt}_0$.
For each $\ij$-tridiagonal matrix $A \in \Xi^{\ij}_{\nn, d}$ (cf. \eqref{Mijd})
such that $\mrm{dlt}_0(A) - \sum_{1\leq i\leq \nn} \alpha_i E^{i, i+1}_{\theta, \nn}$ is diagonal, we introduce the following element in $\Sij^{\ij}_{\nn, d}$:
\begin{align}
\label{ij-fA}
\ddot \f^{\ij}_{A; d} = \f^{(\alpha_{\nn})}_{\nn} * \f^{(\alpha_{\nn-1})}_{\nn -1} * \cdots * \f^{(\alpha_1)}_1 * \f^{(\alpha_{\nn})}_{0} 1_{\co (A )} \in \Sij^{\ij}_{\nn, d}.
\end{align}

Now repeat the process of the $\ji$-version. We obtain an associative algebra $\K^{\ij}_{\nn}$ with a basis $[A]$ 
parametrized by the matrices $A$ in $\widetilde \Xi^{\ij}_{\nn,}$ (which is defined  exactly the same as $\widetilde \Xi^{\ji}_{\nn,}$ with the roles of $r+1$ and $0$ switched).
Moreover, to each matrix $A$ in $\widetilde \Xi^{\ij}_{\nn, d}$, 
we can define elements $ \f^{\ij}_A$, $m^{\ij}_A$ and $\{A\}$ in $\K^{\ij}_{\nn}$, similar to those elements indexed by $\ji$ in $\K^{\ji}_{\nn}$, 
now starting with (\ref{ij-fA}).
Then all the main results for $\K^{\ji}_{\nn}$ admit counterparts for the algebra $\K^{\ij}_{\nn}$.

\begin{thm}
\label{Kij-bases}
\begin{enumerate}
\item The algebra $\K^{\ij}_{\nn}$ admits a standard basis 
$\{[A] | A\in \widetilde \Xi^{\ij}_{\nn} \}$, 
a semi-monomial basis $\{m^{\ij}_A | A\in  \widetilde \Xi^{\ij}_{\nn}\}$, 
a monomial basis $\{ \f^{\ij}_A | A\in \widetilde \Xi^{\ij}_{\nn} \}$,
and a canonical basis $\{\{A \}| A\in  \widetilde \Xi^{\ij}_{\nn}\}$.

\item 
The assignment $[A] \mapsto [A]_d$, for all $A\in \widetilde \Xi^{\ij}_{\nn}$,  defines a surjective algebra homomorphism
$\Psi^{\ij}_{\nn, d}: \K^{\ij}_{\nn} \to \Sij^{\ij}_{\nn, d}$ such that $\Psi^{\ij}_{\nn, d} (\{A\}) = \{A\}_d$ if $A\in \Xi^{\ij}_{\nn,d}$ and zero otherwise.

\item $\K^{\ij}_{\nn}$ is a subquotient of $\K^{\C}_n$ with compatible stably canonical bases.

\item The pair $(\K_{\nn}, \K^{\ij}_{\nn})$ forms an idempotented quantum symmetric pair.
\end{enumerate}
\end{thm}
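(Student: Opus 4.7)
The plan is to follow the blueprint established in Theorem~\ref{Kji-bases} for the $\ji$-version and its proof (together with the discussion between Theorems~\ref{Kji-bases} and \ref{Kji-sq} and Proposition~\ref{ji-cop-alg-hom}), replacing throughout the distinguished fixed point $r+1$ of the Dynkin diagram involution by the fixed point $0$. Conceptually, the algebra $\Sij^{\ij}_{\nn,d} = \bj_0 \Sj \bj_0$ is obtained from $\Sj$ by a construction that is structurally parallel to the passage from $\Sj$ to $\Sji^{\ji}_{\nn,d}= \bj_r\Sj\bj_r$; moreover, unlike the $\ji$-case, $\Sij^{\ij}_{\nn,d}$ now involves a divided-power monomial built from (\ref{ij-fA}), in which the new Chevalley element $\tij_0$ plays a role parallel to that of $\tji_r$ in the $\ji$-setting.

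First, I would establish the stabilization analogues of Propositions~\ref{ji-stab-v3}--\ref{ji-bar-stab} and \ref{ji-cop-stab} in the $\ij$-setting. Concretely, using the monomial $\ddot\f^{\ij}_{A;d}$ from (\ref{ij-fA}) as the starting point, I would introduce polynomials $Q^{\ij,t}_{i,R;A}(v,v') \in \mathscr R$ by mimicking formulas (\ref{Qji})--(\ref{Qji3}), with the special roles of $i=r,r+1$ now taken over by $i=0$ (and the neighboring $i=1$, $i=\nn-1$ factors adjusted accordingly). The specialization trick $v' \mapsto v^{-p}$ reproduces the multiplication formulas in $\Sij^{\ij}_{\nn,d+\frac{p}{2}\nn}$ for all $p\in 2\NN$, so the results of Proposition~\ref{prop-stand-mult} transport verbatim. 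From this one gets (a) the stabilization of iterated products of standard basis elements, (b) the expression of $[{}_{\ddot p}A]$ as $\f^{\ij}_{{}_{\ddot p}A;d+\frac{p}{2}\nn}$ plus $\mathscr R$-linear combinations of lower terms, and (c) the stabilization of the bar involution.

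With these in hand, part~(1) of the theorem is obtained by the standard procedure: define $\K^{\ij}_{\nn}$ as the $\mbb Q(v)$-space on symbols $[A]$ for $A\in\widetilde\MX^{\ij}_{\nn}$, equip it with the product obtained as $v'\to 1$ specialization, and then define $\f^{\ij}_A$, $m^{\ij}_A$, and the bar involution by specializing the stabilized formulas at $v^{-p} \mapsto 1$; the stably canonical basis $\{\{A\}\}$ is then characterized as in Proposition~\ref{ji-stable} by bar-invariance and asymptotic triangularity with $\pi_{A,A'}\in v^{-1}\mbb Z[v^{-1}]$. For part~(2), the assignment $\Psi^{\ij}_{\nn,d}:[A]\mapsto[A]_d$ is shown to be an algebra homomorphism by transporting the verification of Proposition~\ref{Psi-algebra} into the $\ij$-setting: the key identity $\Psi^{\ij}_{\nn,d}(\f^{\ij}_A)=\f^{\ij}_{A;d}$ follows by matching the explicit expression (\ref{ji-fa-explicit}) of $\f^{\ij}_A$ with the finite-$d$ analogue; the canonical basis statement then follows by the uniqueness characterization together with Theorem~\ref{thm:Psi:c}'s counterpart.

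For part~(3), I would define $\mathcal J^{\ij}_{<} \subset \K^{\C}_n$ to be the $\mbb Q(v)$-span of $\{[A] \mid A\in\widetilde{\MX}^{\ij}_{\nn},\ a_{0,0}<0\}$, verify it is a two-sided ideal with stably canonical basis $\{\{A\} \mid a_{0,0}<0\}$ by the argument of Lemma~\ref{I-ideal}, and then exhibit $\K^{\ij}_{\nn} \xrightarrow{\sim} \K^{\C}_n/\mathcal J^{\ij}_{<}$ via $[A]\mapsto[A]+\mathcal J^{\ij}_{<}$; that this map is an algebra isomorphism preserving stably canonical bases follows exactly as in Theorem~\ref{Kji-sq}. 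Finally, for part~(4), the $\ij$-version $\Delta^{\ij}_{\mbf b',\mbf a',\mbf b'',\mbf a''}$ is defined via the $\ij$-stabilization of the comultiplication (the analogue of Proposition~\ref{ji-cop-stab}), and both the algebra-homomorphism and coassociativity properties, i.e., the analogues of Proposition~\ref{ji-cop-alg-hom}(1)--(2), are obtained by reducing to the corresponding Schur-algebra identities (Proposition~\ref{Sj-coass} and its finite counterparts for $\Delta^{\ij}$), exactly as in the $\C$- and $\ji$-cases.

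The main obstacle, I expect, is purely bookkeeping: writing down the correct $Q^{\ij,t}_{i,R;A}$-polynomials in the three regimes $i\in[1,\nn-1]\setminus\{0,1,\nn-1\}$, $i\in\{1,\nn-1\}$, and $i=0$, and checking that the analogues of Lemma~\ref{v'=1} and the inductive identity (\ref{v'-p}) hold. Once these symmetric-around-$0$ formulas are correctly recorded, all subsequent arguments—the stability of multiplication and bar, the monomial/canonical basis characterization, the subquotient identification, and the coideal property—go through formally by the same inductive arguments as in Chapter~\ref{chap:Kc:ji}, as asserted in the sentence immediately preceding the theorem.
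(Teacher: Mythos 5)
Your proposal matches the paper's treatment: the paper proves Theorem~\ref{Kij-bases} precisely by declaring that one "repeats the process of the $\ji$-version" with the roles of the fixed points $r+1$ and $0$ interchanged, starting from the monomial (\ref{ij-fA}), which is exactly the blueprint you spell out (including the $\ij$-analogues of the $Q$-polynomials, the stabilization of products and bar, the ideal spanned by $[A]$ with $a_{0,0}<0$ for the subquotient, and the stabilized comultiplication for the coideal statement). Your write-up is, if anything, more detailed than the paper's own argument, and I see no gap.
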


%%%%%
\section{Stabilization algebra of type $\ii$}
  \label{sec:diag}

Recall the subalgebra $\Sii^{\ii}_{\eta, d}$ of $\Sj$ from (\ref{Sii}).
For each $\ii$-tridiagonal matrix $A \in \Xi^{\ii}_{\eta, d}$ (cf. \eqref{Mijd}) such that 
the matrix $\mrm{dlt}_{0, r} (A) - \sum_{1\leq i\leq \eta} \alpha_i E^{i, i+1}_{\theta, \eta}$ is diagonal, 
we introduce the following element $ \f^{\ii}_{A; d}$ in $\Sii^{\ii}_{\eta, d}$:
\begin{align}
\f^{\ii}_{A; d}
= 
\f^{(\alpha_{\eta})}_{\eta+1} * \f^{(\alpha_{r-1})}_r * 
\left (\f^{(\alpha_{\eta-1})}_{\eta} * \cdots * \f^{(\alpha_r)}_{r+1} \right ) * 
\left (\f^{(\alpha_{r-1})}_{r-1} * \cdots * \f^{(\alpha_0)}_0  \right ) * 1_{\co (A)} \in \Sii^{\ii}_{\eta, d}.
\end{align}

We collect the main results of $\K^{\ii}_{\eta}$ in the following.
The proofs are very similar to the previous cases, 
and so we shall skip them to avoid redundancy.

\begin{thm}
\label{Kii-bases}
\begin{enumerate}
\item The algebra $\K^{\ii}_{\eta}$ admits a standard basis 
$\{[A] | A\in \widetilde \Xi^{\ii}_{\eta} \}$, 
a semi-monomial basis $\{m^{\ii}_A | A\in  \widetilde \Xi^{\ii}_{\eta}\}$, 
a monomial basis $\{ \f^{\ii}_A | A\in \widetilde \Xi^{\ii}_{\eta} \}$,
and a canonical basis
$\{\{A \}| A\in  \widetilde \Xi^{\ii}_{\eta}\}$.

\item 
The assignment $[A] \mapsto [A]_d$, for all $A\in \widetilde \Xi^{\ii}_{\eta}$,  defines a surjective algebra homomorphism
$\Psi^{\ii}_{\eta, d}: \K^{\ii}_{\eta} \to \Sii^{\ii}_{\eta, d}$ such that $\Psi^{\ii}_{\eta, d} (\{A\}) = \{A\}_d$ if $A\in \Xi^{\ii}_{\eta,d}$ and zero otherwise.

\item $\K^{\ii}_{\eta}$ is a subquotient of $\K^{\ji}_{\nn}$ and $\K^{\ij}_{\nn}$, with compatible stably canonical bases.

\item The pair $(\K_{\eta}, \K^{\ii}_{\eta})$ is an idempotented quantum symmetric pair.
\end{enumerate}
\end{thm}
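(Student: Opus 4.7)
\medskip

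\noindent\textbf{Proof proposal for Theorem~\ref{Kii-bases}.}

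The plan is to follow verbatim the blueprint developed for $\K^{\ji}_{\nn}$ in the preceding sections, merging it with the symmetric $\ij$-treatment. First I would establish the Schur-algebra inputs at level $d$: using the fact that $\bj_{r,0} = \bj_r \bj_0$ and the multiplication formulas of Proposition~\ref{prop-stand-mult} and Lemma~\ref{BKLW3.9-b}, produce a monomial basis $\{\f^{\ii}_{A;d} \mid A \in \MX^{\ii}_{\mm,d}\}$ for ${\mbf S}^{\ii}_{\mm,d}$ with $\f^{\ii}_{A;d} = [A]_d + \mbox{lower terms}$, the key point being that since both the $0$-th and $(r+1)$-st rows/columns are forced to be $\delta$-rows/columns, the ``leading-term'' lemmas of Chapter~\ref{chap:formula} apply without modification. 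This is exactly the $\ii$-analogue of \eqref{ji-fA}--\eqref{ji-fA-monomial}.

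Next I would prove the $\ii$-stabilization package. One defines polynomials $Q^{\ii,t}_{i,R;A}(v,v') \in \mathscr R$ by the same case-by-case formulas as in \eqref{Qji}--\eqref{Qji3}, now using the pair $\ddot{I}_{\mm} = I_n - E^{0,0}_n - E^{r+1,r+1}_n$ and $_{\ddot p}A = A + p\ddot I_{\mm}$ for the stabilization parameter; the specialization $v'=1$ recovers the true structure constants, while $v'=v^{-p}$ captures the whole family simultaneously. The proofs of the $\ii$-counterparts of Propositions~\ref{ji-stab-v3}--\ref{ji-bar-stab} and Proposition~\ref{ji-cop-stab} are then formally identical, since stabilization is carried out entry-wise and the two ``special'' rows $0$ and $r+1$ do not interact. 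With these in hand, parts (1) and (2) are assembled exactly as in Section~\ref{Kc} and the proof of Theorem~\ref{Kji-bases}: the multiplication on $\K^{\ii}_{\mm}$ is defined by taking $v' \to 1$ in the stabilized product, the elements $\f^{\ii}_A$ and $m^{\ii}_A$ lift the monomial/semi-monomial bases, the bar involution is defined via Proposition~\ref{ji-bar-stab}'s analogue, and the stably canonical basis $\{A\}$ is then produced by the standard bar-involution + almost-upper-triangular argument. The homomorphism $\Psi^{\ii}_{\mm,d}$ is defined by $[A] \mapsto [A]_d$ (with the convention $[A]_d = 0$ if $A \notin \Xi^{\ii}_{\mm,d}$); multiplicativity follows from the stabilized formulas, and the compatibility $\Psi^{\ii}_{\mm,d}(\{A\}) = \{A\}_d$ or $0$ is the standard specialization argument, cf.\ Theorem~\ref{thm:Psi:c}.

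For part (3), I would construct two subquotient presentations in parallel. Inside $\K^{\ji}_{\nn}$, let $\mathcal J^{\ii}_{<}$ be the $\mathbb Q(v)$-span of $\{[A] \mid A \in \widetilde\MX^{\ji}_{\nn},\ a_{00} < 0\}$; mimicking Lemma~\ref{I-ideal}, one shows $\mathcal J^{\ii}_{<}$ is a two-sided ideal of the subalgebra $\K^{\ji}_{\nn,0,0}$ spanned by matrices with $\ro(A)_0 = \co(A)_0 = 0$, and carries a stably canonical basis $\{\{A\} \mid a_{00} < 0\}$. Then $[A] \mapsto [A] + \mathcal J^{\ii}_{<}$ gives an isomorphism $\K^{\ii}_{\mm} \xrightarrow{\sim} \K^{\ji}_{\nn,0,0}/\mathcal J^{\ii}_{<}$ intertwining stably canonical bases, following verbatim Proposition~\ref{prop:sq}/Theorem~\ref{Kji-sq}. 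A symmetric argument embeds $\K^{\ii}_{\mm}$ as a subquotient of $\K^{\ij}_{\nn}$ via the constraint at the $(r+1)$-entry. Finally, for part (4), the family of refined comultiplications $\Delta^{\ii}$ from Proposition~\ref{Dc-ii} stabilizes by Proposition~\ref{ji-cop-stab}'s $\ii$-analogue, yielding a comultiplication $\dot\Delta^{\ii}\colon \K^{\ii}_{\mm} \to \K^{\ii}_{\mm} \otimes \K_{\mm}$; its algebra-homomorphism and coassociativity properties are the $\ii$-versions of Propositions~\ref{ji-cop-alg-hom}, proved by reducing finite identities of structure constants to the Schur-algebra level (where they hold by Proposition~\ref{Sj-coass} restricted through $\bj_{r,0}$) using the same finiteness argument as in the proof of Proposition~\ref{prop-cop-alg-hom-A}.

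The main obstacle I expect is in part (4): verifying that both sums in the algebra-homomorphism and coassociativity identities for $\dot\Delta^{\ii}$ are genuinely finite when two independent ``fixed-point'' rows/columns (at $0$ and at $r+1$) are simultaneously present. In the $\ji$ and $\ij$ cases there is only one, and the finiteness argument of Proposition~\ref{prop-cop-alg-hom-A} uses the constraint $A', A'' \leq_{\mrm{alg}} A$ together with the row/column profile to corner finitely many diagonals; with two constraints interacting through the centrosymmetry $a_{ij} = a_{-i,-j}$, one must check that the compatible choices of $(\mbf a', \mbf a'', \mbf b', \mbf b'')$ with $(\mbf a', \mbf a'') \models \mbf a$ and $(\mbf b', \mbf b'') \models \mbf b$ still leave only finitely many diagonals free. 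Once this bookkeeping is settled, all remaining verifications are mechanical specializations of formulas already established in Chapters~\ref{chap:Kc} and \ref{chap:Kc:ji}.
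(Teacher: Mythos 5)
Your proposal follows essentially the same route the paper intends: the authors explicitly skip the proof of this theorem as a repetition of the $\ji$/$\ij$ arguments of Chapter~\ref{chap:Kc:ji} built on the blueprint of Chapter~\ref{chap:Kc}, and your reconstruction (Schur-level monomial basis, stabilization with the two-variable $Q$-polynomials and the shift by $I_n - E^{00}_n - E^{r+1,r+1}_n$, bar involution and stably canonical basis, the map $\Psi^{\ii}_{\mm,d}$, subquotient realizations inside $\K^{\ji}_{\nn}$ and $\K^{\ij}_{\nn}$, and the stabilized comultiplication giving the idempotented quantum symmetric pair) is exactly that intended argument. One small normalization: in the subquotient step the ambient subalgebra of $\K^{\ji}_{\nn}$ should be spanned by the $[A]$ with $\ro(A)_0=\co(A)_0=1$ (the diagonal entry $a_{00}$ being odd) rather than $0$, with the ideal given by $a_{00}<0$, in direct analogy with Proposition~\ref{prop:sq} and Theorem~\ref{Kji-sq}.
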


%%%%% \section{Diagrammatic summary} \label{sec:diag}

Let us summarize the interrelations among different family of Schur algebras, 
as well as the interrelations among different family of stabilization algebras of types $\jj, \ji, \ij, \ii$.

Recall $\breve n =n+2$, $n =\nn +1$, and $\nn =\eta+1$, where $n$ is even.
On the Schur algebra level, we have the following commutative diagram for natural inclusions of Schur algebras:
\begin{equation}
\xymatrix{
&\Sji^{\ji}_{\nn, d}  \ar@{^{(}->}[dr]  
%\ar@{=}[dd]^{\simeq} 
&&\\
\Sii^{\ii}_{\eta, d}  \ar@{^{(}->}[ur]  \ar@{^{(}->}[dr]  && \Sj  \ar@{^{(}->}[r] & \mbf S^{\C}_{\breve n, d} \\
& \Sij^{\ij}_{\nn, d} \ar@{^{(}->}[ur] &&
}
\end{equation}

On the stabilization algebra level, we have the following diagram of subquotients:
\begin{equation}
\xymatrix{
&\K^{\ji}_{\nn}  \ar@{->>}[dl]_{\mathfrak{sq}} % \ar@{=}[dd]^{\simeq}   
&&\\
\K^{\ii}_{\eta}  && \K^{\C}_{n}  \ar@{->>}[ul]_{\mathfrak{sq}}  \ar@{->>}[dl]^{\mathfrak{sq}} & \K^{\C}_{\breve n} \ar@{->>}[l]^{\mathfrak{sq}} \\
& \K^{\ij}_{\nn} \ar@{->>}[ul]^{\mathfrak{sq}} &&
}
\end{equation}
where the notation $\mbf K_1 \overset{\mathfrak{sq}}{\twoheadrightarrow} \mbf K_2$ stands for 
the statement that $\mbf K_2$ is a subquotient of $\mbf K_1$.
Remarkably, all the subquotients between various pairs of algebras preserve the stably canonical bases.

\begin{rem}
One can show that the Schur algebras $\Sji^{\ji}_{\nn, d}$  and $\Sij^{\ij}_{\nn, d}$ 
are isomorphic with compatible standard and canonical bases.
This isomorphism can be further lifted to the stabilization level. 
The proofs of these isomorphisms will be given elsewhere.
\end{rem}

%%%%%%%%%%%%%%%%%%%
%%%%%%%%%%%%%%%%%%%
\appendix
\chapter{Constructions in  finite type $C$}
 \label{chap:finiteC}

We shall present more details on results in finite type $C$ which was only sketched in~\cite{BKLW14}.
In addition, we will present details on comultiplications and transfer maps in finite type $C$,  adapting
the finite type $B$ formulation in \cite{FL15}. 
This will serve as a  helpful preparation for formulation and computations in affine type $C$ which are
presented in the main text.

\section{Multiplication formulas}

Recall that $\nn =2r+1$.  
We fix a non-degenerate skew-symmetric bilinear form $Q: \mbb F^{2d}_q \times \mbb F^{2d}_q \rightarrow \mbb F_q$.
Let ${\SP}({2d})$ be the symplectic subgroup of $GL({2d})$ which consists of all elements $g$ such that
$Q(gu, gu') = Q(u, u'), \forall u, u' \in \mbb F^{2d}_q$.
Consider the following sets
\begin{equation*}
  \begin{split}
    X_{\C} &= \{ 0= L_0 \subset L_1\subset \ldots \subset L_{\nn} = \mbb F^{2d}_q \vert L_{\nn -i} = L_i^{\perp}\},
     \\
    Y_{\C} &= \{ 0= L_0 \overset{1}{\subset} L_1\overset{1}{\subset} \ldots \overset{1}{\subset} L_{{2d}} = \mbb F^{2d}_q \big\vert L_{{2d} -i} = L_i^{\perp}\},
     \\
   {}^{\C}\Xi &= \Big\{A = (a_{ij}) \in {\rm Mat}_{\nn \times \nn}(\mbb N) \Big \vert \sum_{i,j\in [1,\nn]} a_{ij}= {2d},\ a_{ij} = a_{\nn+1-i, \nn+1-j}, \forall i, j\in [1,\nn] \Big
    \},
    \\
    {}^{\C}\Pi & = \Big\{B = (b_{ij}) \in {\rm Mat}_{\nn \times {2d}}(\mbb N)
      \Big\vert \sum_{i\in [1,\nn]} b_{ij}= 1,\ b_{ij} = b_{\nn+1-i, {2d}+1-j}, \forall i\in [1,\nn],j \in [1,{2d}] \Big\},\\
    {}^{\C}\Sigma & = \Big\{\sigma = (\sigma_{ij}) \in {\rm Mat}_{{2d} \times {2d}}(\mbb N) \Big \vert \sum_{i\in [1,{2d}]} \sigma_{ij}= 1=\sum_{j\in [1,{2d}]}\sigma_{ij},
      \\   &\qquad \qquad \qquad \qquad\qquad \qquad \qquad \qquad \sigma_{ij} = \sigma_{{2d}+1-i, {2d}+1-j}, \forall i,j \in [1,{2d}] \Big\}.
  \end{split}
\end{equation*}
The notation  $\overset{1}{\subset}$ above denotes inclusion of codimension $1$ as before. 
The action of ${\SP}({2d})$ on $\mbb F^{2d}_q$ induces a well-defined action of ${\SP}({2d})$ on $X_{\C}$ and $Y_{\C}$.
Let ${\SP}({2d})$ act diagonally on $X_{\C} \times X_{\C}$ %, $X_{\C} \times Y_{\C}$ 
and $Y_{\C} \times Y_{\C}$.

\begin{lem}~\cite[Lemma 6.5]{BKLW14}
There are natural bijections
 ${\SP}({2d}) \backslash X_{\C} \times X_{\C} \longleftrightarrow {}^{\C}\Xi$, 
 ${\SP}({2d}) \backslash X_{\C} \times Y_{\C} \longleftrightarrow {}^{\C}\Pi$,   and  ${\SP}({2d}) \backslash Y_{\C} \times Y_{\C} \longleftrightarrow {}^{\C}\Sigma.$
\end{lem}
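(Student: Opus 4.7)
The plan is to proceed in parallel with the affine type $C$ result already established as Proposition~\ref{IB-prop}, reducing the three bijections to a single template argument and then verifying the symmetry conditions that distinguish the finite type $C$ setting.

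First I would define the natural orbit-to-matrix maps by the usual dimension recipe: for a pair $(L, L')$ in $X_{\C} \times X_{\C}$ (respectively $X_{\C} \times Y_{\C}$ or $Y_{\C} \times Y_{\C}$) set
\[
a_{ij} = \dim_{\mbb F_q} \frac{L_{i-1} + L_i \cap L'_j}{L_{i-1} + L_i \cap L'_{j-1}}.
\]
The three statements to verify are that the resulting matrix lies in ${}^{\C}\Xi$, ${}^{\C}\Pi$, or ${}^{\C}\Sigma$ respectively; that the matrix only depends on the $\SP(2d)$-orbit of $(L,L')$; and that each matrix arises from a unique orbit.

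The $\SP(2d)$-invariance and the row/column sum conditions are immediate from linear algebra. The key point which is special to type $C$ is the centrosymmetry $a_{ij} = a_{\nn+1-i,\nn+1-j}$ (or $a_{ij} = a_{\nn+1-i, 2d+1-j}$ in the mixed case, and $\sigma_{ij} = \sigma_{2d+1-i, 2d+1-j}$ for complete flags). This follows by taking perpendiculars, exactly as in the affine calculation after Proposition~\ref{decom-lem}: using $L_{\nn-i} = L_i^{\perp}$ and $L'_{\nn-j} = L'^{\perp}_j$ together with the identities $(U+V)^{\perp} = U^{\perp} \cap V^{\perp}$ and $(U\cap V)^{\perp}= U^{\perp} + V^{\perp}$, and the fact that $\dim W = \dim W^{\perp\perp}$ and $\dim W/W' = \dim W'^{\perp}/W^{\perp}$ for $W' \subseteq W$, one converts the quotient defining $a_{\nn+1-i,\nn+1-j}$ to the one defining $a_{ij}$.

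Surjectivity and injectivity reduce to the finite type $C$ analogue of Proposition~\ref{decom-lem}. The plan is to fix a matrix $A$ in the relevant target set and produce a decomposition $\mbb F_q^{2d} = \bigoplus_{i,j} V_{ij}$ with $\dim V_{ij} = a_{ij}$, together with bases of the $V_{ij}$ in symplectic duality dictated by the centrosymmetry of $A$. Then $L_i = \bigoplus_{k\le i,\, l} V_{kl}$ and $L'_j = \bigoplus_{k,\, l\le j} V_{kl}$ gives a pair in the appropriate orbit, proving surjectivity; and any two such bases for the same matrix differ by an element of $\SP(2d)$, proving injectivity. The only subtlety is constructing the symplectic basis compatibly with the centrosymmetry: whenever $(i,j) \ne (\nn+1-i,\nn+1-j)$ one pairs $V_{ij}$ against $V_{\nn+1-i,\nn+1-j}$ by a hyperbolic basis, while on the (possibly nonempty) fixed locus where $i + k = \nn+1$ and $j + l = \nn+1$ coincide with $(i,j)$ one chooses a symplectic basis on $V_{ij}$ itself; this is where the parities encoded in ${}^{\C}\Xi$, ${}^{\C}\Pi$, ${}^{\C}\Sigma$ ensure that such a basis exists.

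The main obstacle will be this last point, namely, handling the middle indices carefully so that the isotropy constraint $L_{\nn-i} = L_i^{\perp}$ holds on the nose rather than up to a correction. Once the three bijections are proved for $Y_{\C}\times Y_{\C}$, the bijection for $X_{\C} \times Y_{\C}$ follows by collapsing the complete flag on the left to an $\nn$-step flag (summing consecutive rows of $\sigma \in {}^{\C}\Sigma$ in blocks prescribed by the flag type), and the bijection for $X_{\C}\times X_{\C}$ then follows by doing the symmetric collapse on the right as well, exactly as in the proof of the corresponding affine statement via blow-up/contraction of ${}^{\C}\Sigma_d$ to ${}^{\C}\Xi_{n,d}$ given in the main text.
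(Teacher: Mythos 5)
The paper itself gives no argument for this lemma: it is stated with the citation to \cite[Lemma 6.5]{BKLW14} and the proof is delegated entirely to that reference, so there is no in-paper proof to compare against line by line. Your proposal is the natural finite-type specialization of the affine arguments that the paper does carry out (the centrosymmetry computation after \eqref{IM}, Proposition~\ref{decom-lem}, Proposition~\ref{IB-prop}, and the merging/blow-up argument for \eqref{IM-schur}), and as a strategy it is sound. Two points deserve to be made explicit. First, for injectivity what you really need is the finite analogue of Proposition~\ref{decom-lem} in its strong form: \emph{every} pair with associated matrix $A$ admits a decomposition $V=\oplus V_{ij}$ with a compatible symplectic basis, not just one constructed model; once that is in place, the map sending one adapted basis to another lies in $\SP(2d)$ and injectivity follows exactly as in Proposition~\ref{IB-prop}. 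Second, your worry about the fixed block is harmless here: the only self-paired position in the $X_{\C}\times X_{\C}$ case is $(r+1,r+1)$, and since the off-fixed entries of $A\in{}^{\C}\Xi$ pair up under $a_{ij}=a_{\nn+1-i,\nn+1-j}$ while $\sum a_{ij}=2d$, the entry $a_{r+1,r+1}$ is automatically even, so the required nondegenerate symplectic form on $V_{r+1,r+1}$ exists; in the ${}^{\C}\Pi$ and ${}^{\C}\Sigma$ cases there are no self-paired columns at all because $2d+1$ is odd. Finally, if you take the route through $Y_{\C}\times Y_{\C}$, injectivity for the partial-flag cases needs the blow-up direction (refining two pairs with the same $A$ to complete-flag pairs with the same $w\in{}^{\C}\Sigma$), exactly as in the paper's proof of the bijectivity of \eqref{IM-schur}; alternatively, proving the decomposition lemma directly for partial flags makes that reduction unnecessary.
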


Let ${}^{\C}\mbf S_d^{\jmath} = \mathcal A_{{\SP}({2d})}(X_{\C}\times X_{\C})$ be the 
algebra of ${\SP}({2d})$-invariant $\mathcal A$-valued functions on $X_{\C}\times X_{\C}$, where $\mathcal A =\mbb Z[v,v^{-1}]$
and the multiplication is given by a convolution product.

The most typical phenomenon of type $C$ already shows up when $\nn=5$, and so let us consider this case in detail.
Let $V_k$ be a ${2d}$-dimensional vector space over $k= \mbb F_q$ equipped with a non-degenerate symplectic form.
Let $ (L_i| 0\leq i\leq 5)$ be a flag of vector subspaces in  $V_k$ such that $L_i^{\perp} = L_{5-i}$ for $i\in [0, 5]$. Consider the set
\[
Z_i = \{ U \subseteq V_k | \dim_k U =1, U \subseteq L_i, U \not \subseteq L_{i-1}\}, \quad \forall i\in [1, 4].
\]
The following lemma is an analogue of  ~\cite[Lemma 3.1.3]{FL14} with an easier proof.

\begin{lem}
  \label{typeC-counting}
For $\nn=5$, we have
$
\# Z_3 = q^{\dim L_2} \frac{ q^{\dim L_3/L_2} -1} {q-1}
$
and
$
\# Z_4 = q^{\dim L_3} \frac{ q^{\dim L_4/L_3} -1} {q-1}.
$
\end{lem}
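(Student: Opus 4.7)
The plan is straightforward: the count reduces to elementary linear algebra, and the simplification over the type $B$ case comes from the fact that every line in a symplectic space is automatically isotropic.

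First I would note that in a symplectic space $(V_k, Q)$ over $k = \mathbb{F}_q$ with $q$ odd, the form $Q$ is alternating, so $Q(u,u) = 0$ for every $u \in V_k$. Consequently every $1$-dimensional subspace $U = \langle u \rangle$ is automatically isotropic. This is the crucial point absent from the orthogonal (type $B$) analogue in \cite[Lemma 3.1.3]{FL14}, where one has to impose the non-trivial quadratic constraint $Q(u,u) = 0$ and count the isotropic lines separately.

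Next, I would count $Z_i$ by inclusion-exclusion. The set of all $1$-dimensional subspaces of $L_i$ has cardinality $\tfrac{q^{\dim L_i} - 1}{q-1}$, and similarly the set of $1$-dimensional subspaces contained in $L_{i-1}$ has cardinality $\tfrac{q^{\dim L_{i-1}} - 1}{q-1}$. Subtracting and factoring gives
\[
\# Z_i \;=\; \frac{q^{\dim L_i} - q^{\dim L_{i-1}}}{q-1} \;=\; q^{\dim L_{i-1}} \cdot \frac{q^{\dim L_i / L_{i-1}} - 1}{q-1}.
\]
Specializing to $i = 3$ and $i = 4$ yields the two stated formulas directly.

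There is no real obstacle here; the only point worth emphasizing in the write-up is the contrast with type $B$. In the orthogonal setting one must count the zeros of a non-degenerate quadratic form modulo scaling inside a given subspace, which depends on the Witt index and the parity of the dimension; in the symplectic setting this issue vanishes because the form is identically zero on every line. Thus the step that in type $B$ requires a delicate computation of numbers of isotropic lines collapses in type $C$ to counting all lines, giving the clean expressions above.
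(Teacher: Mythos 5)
Your proof is correct and is essentially the paper's own argument: the paper likewise observes that every line in a symplectic space is automatically isotropic and then computes $\#Z_i$ as the difference $\frac{q^{\dim L_i}-1}{q-1}-\frac{q^{\dim L_{i-1}}-1}{q-1}=q^{\dim L_{i-1}}\frac{q^{\dim L_i/L_{i-1}}-1}{q-1}$ for $i=3,4$. Your added remark contrasting this with the type $B$ count of isotropic lines matches the intended point of comparison with \cite[Lemma 3.1.3]{FL14}.
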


\begin{proof}
Because all lines in $V_k$  are isotropic, we have
\[
\# Z_3 = \frac{q^{\dim L_3} -1}{q-1} - \frac{q^{\dim L_2} -1}{q-1}
= q^{\dim L_2} \frac{ q^{\dim L_3/L_2} -1}{q-1}.
\]
The counting for $Z_4$ is the same.
\end{proof}

We have the following multiplication formula in finite type $C$. 
Let $E_{i,j}$ for all $1\leq i, j\leq \nn$, the standard basis of the space of $\nn$ by $\nn$ matrices.
We set $E^{\theta}_{i, j} = E_{i, j} + E_{\nn -i, \nn-j}$ for all $1\leq i, j\leq \nn $.

\begin{prop}
 \label{prop:Cmulti}
  Suppose that $h\in [1, r]$ and $R \in \mbb N$.
\begin{enumerate}
\item
For $A, B\in {}^{\C}\Xi$ such that $\ro (A) = \co (B)$ and $B - R E^{\theta}_{h, h+ 1}$ is diagonal,  we have
\begin{align}
\begin{split}
e_B * e_A
 = \sum_{t} v^{2 \sum_{j > u} a_{hj}t_u } \prod_{u=1}^{\nn} \begin{bmatrix}
   a_{hu}+t_u\\
   t_u
 \end{bmatrix} e_{A + \sum_{u=1}^{\nn}t_u(E^{\theta}_{h u} - E^{\theta}_{h + 1, u})},
\end{split}
\end{align}
where $t =(t_u) \in \mbb N^{\nn}$ such that $\sum_{u=1}^{\nn} t_u =R$ and 
$
\begin{cases}
t_u\leq a_{h+1,u}, & {\rm if}\ h<r\\
t_u+t_{\nn+1-u} \leq a_{h+1,u}, &{\rm if}\ h=r.
\end{cases}$

\item
For $A, C\in {}^{\C}\Xi$ such that $\ro (A) = \co (C)$ and $C - R E^{\theta}_{h + 1, h}$ is diagonal,  we have
\begin{align}
\begin{split}
e_C * e_A
& = \sum_{t} v^{2 \sum_{j< u} a_{h + 1,j}t_u } \prod_{u=1}^{\nn} \begin{bmatrix}
  a_{h+1,u}+t_u\\
  t_u
\end{bmatrix} e_{A - \sum_{u=1}^{\nn} t_u( E^{\theta}_{h u} - E^{\theta}_{h + 1, u})},\ {\rm if}\ h<r;
 \\
e_C * e_A
& = \sum_{t} v^{2 \sum_{j<u} a_{r + 1,j} t_u + 2\sum_{\nn +1-j <u <j}t_ut_j + \sum_{u>r+1} t_u(t_u+1) } \prod_{u<r+1} \begin{bmatrix}
  a_{r+1,u}+t_u\\
  t_u
\end{bmatrix}\\
& \cdot \prod_{u> r+1} \begin{bmatrix}
  a_{r+1,u}+t_u+t_{\nn+1-u}\\
  t_u
\end{bmatrix}\prod_{i=1}^{t_{r+1}}\frac{[a_{r+1,r+1}+2i]}{[i]}  e_{A - \sum_{u=1}^{\nn} t_u(E^{\theta}_{ru} + E^{\theta}_{r + 1, u})} \ {\rm if}\ h=r,
\end{split}
\end{align}
where $t=(t_u) \in \mbb N^{\nn}$ such that $\sum_{u=1}^{\nn} t_u =R$ and $t_u \leq a_{hu}$.
\end{enumerate}
\end{prop}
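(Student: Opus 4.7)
\medskip

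\noindent\textbf{Proof proposal.} The plan is to evaluate the convolution product geometrically, following the BLM/Lusztig template used in Chapter~\ref{chap:formula} for the affine case, but working in the finite setting so that only local structure at the steps $h$ and $\nn-h$ of the flag intervenes. Fix $L, L'\in X_{\C}$ with $(L,L')\in \mathcal O_{A_t}$, where $A_t := A+\sum_u t_u(E^{\theta}_{hu}-E^{\theta}_{h+1,u})$. By definition
\[
(e_B * e_A)(L,L') = \#\{L''\in X_{\C}\mid (L,L'')\in \mathcal O_B,\ (L'',L')\in \mathcal O_A\}.
\]
Since $B-R E^{\theta}_{h,h+1}$ is diagonal with $\ro(B)=\co(B)+R(\mathbf e_h-\mathbf e_{h+1})+R(\mathbf e_{\nn-h}-\mathbf e_{\nn+1-h})$, the pair $(L,L'')\in\mathcal O_B$ forces $L''_i=L_i$ for all $i\ne h,\nn-h$, and $L''_h$ is an $R$-codimensional subspace of $L_{h+1}$ containing $L_{h-1}$; the self-duality $L''_{\nn-h}=(L''_h)^{\perp}$ then determines the other end of the flag.

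First I would apply Proposition~\ref{decom-lem} (in its finite-type specialization) to pick a decomposition $V=\bigoplus_{i,j}V_{ij}$ with $\dim V_{ij}=(a_t)_{ij}$ refining both $L$ and $L'$, and read off the condition $(L'',L')\in\mathcal O_A$ as a condition on the image of $L''_h/L_{h-1}$ inside $L_{h+1}/L_{h-1}=\bigoplus_{j} V_{hj}\oplus V_{h+1,j}$. The count then reduces to choosing, for each column $u$, a $t_u$-dimensional subspace of $V_{h+1,u}$ whose image under a fixed transversal lies in the intersection pattern prescribed by $A_t$. A standard Gaussian binomial count gives the factor $\prod_u \binom{a_{hu}+t_u}{t_u}$, while the usual counting-dimension shift (reducing $e_B*e_A$ to a sum over fibers of the projection from the incidence variety) contributes the monomial $v^{2\sum_{j>u}a_{hj}t_u}$; this handles part~(1) for $h<r$ and symmetrically part~(2) for $h<r$, each of which is purely type $A$ in character since neither $L_h$ nor $L_{h+1}$ is self-dual.

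The main obstacle is the case $h=r$, where $L_{r+1}=L_r^{\perp}$ so that the intermediate lattice $L''_r\subset L_{r+1}$ must itself be isotropic with respect to the symplectic form on $V$. Here I would identify $L_{r+1}/L_{r-1}$ as a symplectic space (the form being induced from $Q$), and identify the choice of $L''_r$ with the choice of an isotropic flag inside this symplectic quotient, much as in Section~\ref{seclocal} but in finite type. This is exactly the scenario governed by Lemma~\ref{typeC-counting}: the extra symplectic constraint on lines lying in a maximal isotropic subspace produces the Gaussian factor
\[
\prod_{i=1}^{t_{r+1}}\frac{[a_{r+1,r+1}+2i]}{[i]}
\]
instead of a plain binomial at the middle column, together with the additional quadratic exponent $\sum_{u>r+1}t_u(t_u+1)+2\sum_{\nn+1-j<u<j}t_ut_j$ coming from the pairing between columns $u$ and $\nn+1-u$ (each choice on the right half of the flag is forced by the one on the left under $\perp$, doubling contributions and introducing the cross terms).

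Finally, the formulas for $e_C*e_A$ in part~(2) are obtained by the same argument with the roles of $L_h$ and $L_{h+1}$ reversed: $(L,L'')\in\mathcal O_C$ now requires $L''_h\supset L_h$ of codimension $-R$, and the count dualizes the one above, reversing $j>u$ to $j<u$ in the $v$-exponent and reindexing the binomials by $a_{h+1,u}$ (respectively $a_{r+1,u}$). I expect no additional technical difficulties beyond the bookkeeping; the only non-formal step is the symplectic counting at $h=r$, which is supplied by Lemma~\ref{typeC-counting} together with the elementary identity $\#\{\text{isotropic lines in a symplectic }k^{2m}\}=\prod_{i=1}^{m}\frac{q^{2i}-1}{q-1}\cdot\text{(suitable normalization)}$ that produces the numerator $[a_{r+1,r+1}+2i]$.
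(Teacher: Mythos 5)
Your overall strategy (a direct geometric count of the convolution for general $R$, in the style of the BLM/Lusztig computation) is a legitimate alternative to what the paper does, but as written it has a genuine gap precisely at the one point where the proposition is not of type $A$ character. The paper's proof is: first establish the $R=1$ case, where Lemma~\ref{typeC-counting} suffices because isotropy is vacuous for a single line (any line in a symplectic space is isotropic, and a one-dimensional enlargement of $L_r$ inside $L_{r+1}=L_r^{\perp}$ is automatically isotropic); then deduce the general $R$ by the divided-power induction of \cite[Proposition~3.3]{BKLW14} (or \cite[Corollary~4.3.4]{FL14}), using identities of the form $e_{C_1}*e_{C_R}=[R+1]\,e_{C_{R+1}}$. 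Your proposal skips this induction and instead asserts that the general-$R$ count at $h=r$ in part~(2) is ``exactly the scenario governed by Lemma~\ref{typeC-counting}.'' It is not: that lemma (and your closing ``elementary identity'' about isotropic lines) only counts \emph{lines}, i.e.\ the $R=1$ situation. For $t_{r+1}\ge 2$ you must count isotropic subspaces $L''_r$ with $L_r\subset L''_r\subset L_{r+1}$ of dimension $t_{r+1}$ in a prescribed relative position with respect to $L'$: once the first line is added, each subsequent line is constrained to lie in the perpendicular of the part already chosen, and it is exactly this constrained count that produces the factor $\prod_{i=1}^{t_{r+1}}\frac{[a_{r+1,r+1}+2i]}{[i]}$ and the quadratic exponents $\sum_{u>r+1}t_u(t_u+1)+2\sum_{\nn+1-j<u<j}t_ut_j$. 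Your sketch attributes these to a ``doubling'' heuristic but never performs the count, and the lemma you invoke cannot supply it. So the crux of the statement is unproved in your proposal; you would either have to carry out the symplectic Schubert-cell count for higher-dimensional isotropic extensions directly, or fall back on the $R=1$ plus induction-on-$R$ route that the paper actually uses.

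Two smaller points. In part~(1) your description of the fibre is off: for $(L,L'')\in\mathcal O_B$ with $B-RE^{\theta}_{h,h+1}$ diagonal, $L''_h$ is a codimension-$R$ subspace of $L_h$ (not of $L_{h+1}$) containing $L_{h-1}$, with $L''_{\nn+1-h}$ determined by duality; the subsequent identification of the count with choices measured against the decomposition of Proposition~\ref{decom-lem} should be phrased at row $h$ of $A_t$, not in terms of subspaces of $V_{h+1,u}$. Also, even for $h<r$ the cases are not literally type $A$: the mirror steps $\nn-h,\nn+1-h$ move simultaneously, and one must check (as in \cite{BKLW14}) that the mirrored relative-position conditions are automatically satisfied once the unmirrored ones are, so that the count happens at a single spot; this is easy but should be said rather than dismissed.
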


\begin{proof}
We only give a sketch as it is similar to \cite{BKLW14, FL14}. First the proposition is proved for $R=1$ with the help of
Lemma~\ref{typeC-counting} (which takes care of a genuine type $C$ counting). Then a similar argument using induction as
in \cite[Proposition 3.3]{BKLW14} or \cite[Corollary 4.3.4]{FL14}  proves the general case. 
\end{proof}

%As usual, we work over an algebraically closed field, say $\mbb C$.
For  $A = (a_{ij}) \in {}^{\C}\Xi$, we set 
\begin{equation*}
  d(A) = \dim \mathcal O_A\quad {\rm and} \quad d_A = d(A) - d(B),
\end{equation*}
where $B = (b_{ij}) $ is the diagonal matrix such that $b_{ii} = \sum_k a_{ik}$. 

%For any pair $(L, L')\in \mathcal O_A$, denote by $\mrm C_{L,L'}$  the stabilizer of $(L,L')$ in ${\SP}(D)$.

\begin{lem} \label{finite-dimension}
%Fix a pair $(L, L')\in \mathcal O_A$, we have
For any $A= (a_{ij}) \in {}^{\C}\Xi$, we have 
\begin{align}
%\dim \mrm C_{L, L'}  & = \frac{1}{2} \left ( \sum_{i\geq k, j\geq l} a_{ij} a_{kl}  + \sum_{i\geq r + 1, j\geq r + 1} a_{ij} \right ). \\
%d(A) & =\frac{1}{2} \left (\sum_{i<k\ {\rm or}\ j<l} a_{ij} a_{kl}  + \sum_{i<r + 1 \ {\rm or} \ j<r + 1} a_{ij} \right ) . \\
d_A  & =\frac{1}{2} \left (\sum_{i \geq k, j < l} a_{ij} a_{kl}  + \sum_{i\geq r + 1>j} a_{ij} \right ).
\end{align}
\end{lem}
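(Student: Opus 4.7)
The plan is to mimic the proof of Lemma~\ref{dimension} (the affine analogue) in the finite-dimensional setting, which is in fact simpler since there is no periodicity in play. First, I would fix a representative $(L, L') \in \mathcal{O}_A$ and invoke the finite-type counterpart of Proposition~\ref{decom-lem}: there exists a decomposition $V = \bigoplus_{1 \le i,j \le \nn} V_{ij}$ with $\dim V_{ij} = a_{ij}$, together with a basis $\{e^m_{ij}\}$ of each $V_{ij}$, such that $L_i = \bigoplus_{k \le i,\, \ell} V_{k\ell}$, $L'_j = \bigoplus_{k,\, \ell \le j} V_{k\ell}$, and the symplectic form pairs $V_{ij}$ non-trivially only with $V_{\nn+1-i,\nn+1-j}$. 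The centrosymmetry $a_{ij} = a_{\nn+1-i,\nn+1-j}$ of matrices in ${}^{\C}\Xi$ is precisely what makes such a decomposition available.

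Second, I would identify $d_A$ with $\dim X/X'$ where
\[
X = \{x \in \mathfrak{sp}(V) \mid x(L_i) \subseteq L_i,\ \forall i\}, \qquad X' = \{x \in X \mid x(L'_j) \subseteq L'_j,\ \forall j\},
\]
and decompose each $x$ as a family $(x_{(i,j),(k,\ell)})$ with $x_{(i,j),(k,\ell)}\colon V_{ij} \to V_{k\ell}$. The condition $x \in X$ forces $x_{(i,j),(k,\ell)} = 0$ unless $k \le i$, and $x \in X'$ additionally requires $\ell \le j$; thus $X/X'$ is parametrised by families indexed by pairs with $i \ge k$ and $\ell > j$, subject to the symplectic constraint
\[
{}^{t}x_{(i,j),(k,\ell)}\, M + M\, x_{(\nn+1-k,\nn+1-\ell),(\nn+1-i,\nn+1-j)} = 0,
\]
where $M$ is the matrix of the pairing between $V_{ij}$ and $V_{\nn+1-i,\nn+1-j}$.

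Third, I would split the count into two cases. For $(i+k,\,j+\ell) \neq (\nn+1,\nn+1)$, the symplectic constraint ties $x_{(i,j),(k,\ell)}$ freely to a distinct dual partner, so such pairs contribute exactly one half of $\sum a_{ij}a_{k\ell}$ with $i \ge k$, $\ell > j$ ranging over these indices. For $(i+k,\,j+\ell)=(\nn+1,\nn+1)$ with $(i,j) \ne (k,\ell)$, the dual partner lies in the already-counted index set, again halving the parameter count and giving the complementary half-contribution, so the two cases together yield $\tfrac{1}{2}\sum_{i \ge k,\, j < \ell} a_{ij}a_{k\ell}$. Finally, the diagonal case $i = k = r+1$ with $j + \ell = \nn+1$ and $\ell > j$ enforces the self-duality relation ${}^t x_{(i,j),(k,\ell)} M + M x_{(i,j),(k,\ell)} = 0$ on a single block, which cuts the dimension in half and contributes precisely $\tfrac{1}{2}\sum_{i \ge r+1 > j} a_{ij}$.

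The main obstacle, and the step I expect to require the most care, is the bookkeeping at the middle row $i = r+1$: there the relation above does not simply pair two independent blocks but imposes an internal self-duality whose solution space has dimension exactly $\tfrac{1}{2}(a_{ij}a_{k\ell} + a_{ij})$ rather than $a_{ij}a_{k\ell}$, and it is this adjustment that produces the extra term $\tfrac{1}{2}\sum_{i \ge r+1 > j} a_{ij}$ in the formula. Summing the two case contributions yields the claimed expression for $d_A$.
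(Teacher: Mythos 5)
Your overall strategy---identifying $d_A$ with $\dim X/X'$ for the block decomposition supplied by the finite analogue of Proposition~\ref{decom-lem}, and counting the components $x_{(i,j),(k,\ell)}$ with $k\le i$, $\ell>j$ modulo the symplectic constraint---is exactly the intended route (the paper's own proof just points to the proof of Lemma~\ref{dimension} and to \cite{BKLW14,FL14}). The problem is your case analysis of the constraint, which as written does not add up to the stated formula. By your own displayed relation, the component $x_{(i,j),(k,\ell)}$ is tied to $x_{(\nn+1-k,\nn+1-\ell),(\nn+1-i,\nn+1-j)}$, and these two components coincide precisely when $i+k=\nn+1$ and $j+\ell=\nn+1$, \emph{irrespective} of whether $(i,j)=(k,\ell)$. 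So a block such as $V_{r+2,j}\to V_{r,\nn+1-j}$ is self-constrained, not ``tied to a distinct dual partner in the already-counted index set'' as you assert in your second case. For any such self-paired block the constraint says that the bilinear form $B(u,u')=\bigl(x_{(i,j),(k,\ell)}(u),u'\bigr)$ on $V_{ij}$ is symmetric (this uses only the antisymmetry of the ambient symplectic form and the perfectness of the pairing $V_{k\ell}\times V_{ij}\to k$), so its solution space has dimension $\tfrac12 a_{ij}(a_{ij}+1)=\tfrac12 a_{ij}a_{k\ell}+\tfrac12 a_{ij}$: every self-paired block produces an extra $\tfrac12 a_{ij}$, not just the ones with $i=k=r+1$.

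Within the relevant range $k\le i$, $\ell>j$, the self-paired blocks are exactly those with $i\ge r+1>j$ (with $k=\nn+1-i$, $\ell=\nn+1-j$ forced), and summing their extra contributions is what yields the correction $\tfrac12\sum_{i\ge r+1>j}a_{ij}$ in the lemma. Your third case, restricted to $i=k=r+1$, can only account for $\tfrac12\sum_{j\le r}a_{r+1,j}$; the terms with $i>r+1>j$ are missing because you assigned those blocks to the ``halved with a distinct partner'' case. So the bookkeeping you describe gives $\tfrac12\sum_{i\ge k,\,j<\ell}a_{ij}a_{k\ell}+\tfrac12\sum_{i=r+1>j}a_{ij}$, which is not the claimed formula unless $a_{ij}=0$ for all $i>r+1>j$. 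The fix is to treat all blocks with $i+k=\nn+1$, $j+\ell=\nn+1$ uniformly as self-paired, exactly as in the proof of Lemma~\ref{dimension}, where the analogous corrections are summed over all $i\ge 0>j$ and $i\ge r+1>j$ rather than over the two fixed rows alone.
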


\begin{proof}
The proof is similar to the proof of \cite[Lemma 3.5]{BKLW14} or \cite[Lemma 4.5.1]{FL14}.
See also the proof of  Lemma ~\ref{dimension}.
\end{proof}

We set
\begin{equation}
  \label{AeA}
  [A] = v^{-d_A} e_A.\quad \forall A\in {}^{\C}\Xi.
\end{equation}
It is clear that $\{[A]|A\in {}^{\C}\Xi\}$ form an $\mathcal A$-basis of ${}^{\C}\mbf S_d^{\jmath}$, which is called a standard basis.

%\begin{rem}
%The above formulas match with those in type $B$, if we write it in terms of the matrix $A+ E_{r + 1, r + 1}$. We only need to consider $d_A$.
%We only need to see the contribution  of the first term in $d_A$ when $(i, j)$ or $(k, l)=(r + 1, r + 1)$. 
%In each case, each contributes a factor of the second term. Hence type $B$ and $C$ match.
%\end{rem}

By a direct calculation using \eqref{AeA}, we have the following reformulation of Proposition~\ref{prop:Cmulti} in terms of $[A]$. 

\begin{prop}
 \label{mul[A]}
 Suppose that $A, B,C \in {}^{\C}\Xi$, $h\in [1, r]$ and $R \in \mbb N$.
\begin{enumerate}
\item
If $\ro (A) = \co (B)$ and $B - R E^{\theta}_{h, h+ 1}$ is diagonal, then we have
\begin{align}
\begin{split}
[B] * [A]
 = \sum_{t} v^{\beta(t) } \prod_{u=1}^{\nn} \overline{\begin{bmatrix}
   a_{hu}+t_u\\
   t_u
 \end{bmatrix}} [A + \sum_{u=1}^{\nn}t_u(E^{\theta}_{h u} - E^{\theta}_{h + 1, u})],
\end{split}
\end{align}
where the sum over $t $ is as in Proposition~ \ref{prop:Cmulti}(1) and
\[\beta(t) = \sum_{u\leq j} a_{hj}t_u -\sum_{u<j} a_{h+1,j}t_u + \sum_{u<j} t_ut_j +\delta_{hn}
\Big(\sum_{\overset{u<j}{u+j<\nn+1}}t_ut_j+\sum_{u<r+1} \frac{t_u(t_u-1)}{2}\Big).
\]

\item
Assume that $\ro (A) = \co (C)$ and $C - R E^{\theta}_{h + 1, h}$ is diagonal.
Then for $h<r$ we have
\begin{align}
\begin{split}
[C] * [A]
& = \sum_{t} v^{\beta'(t) } \prod_{u=1}^{\nn} \overline{\begin{bmatrix}
  a_{h+1,u}+t_u\\
  t_u
\end{bmatrix}} [A - \sum_{u=1}^{\nn} t_u( E^{\theta}_{h u} - E^{\theta}_{h + 1, u})],
\end{split}
\end{align}
where the sum over $t$ is as  in Proposition~ \ref{prop:Cmulti}(2) and
$$\beta'(t) = \sum_{u\geq j}a_{h+1,j}t_u -\sum_{u>j}a_{hj}t_u +\sum_{u>j} t_ut_j;
$$
For $h=r$, we have
\begin{align}
\begin{split}
[C] * [A]
& = \sum_{t} v^{\gamma(t)} \prod_{u > r+1}\overline{ \begin{bmatrix}
  a_{r+1,u}+t_u\\
  t_u
\end{bmatrix}}
\prod_{u< r+1} \overline{\begin{bmatrix}
  a_{r+1,u}+t_u+t_{\nn+1-u}\\
  t_u
\end{bmatrix}}\\
&\quad  \cdot\prod_{i=1}^{t_{r+1}}\frac{\overline{[a_{r+1,r+1}+2i]}}{\overline{[i]}}  [A - \sum_{u=1}^{\nn} t_u(E^{\theta}_{ru} + E^{\theta}_{r + 1, u})],
\end{split}
\end{align}
where $\gamma(t)= \sum_{u\leq j} a_{r+1,j}t_u -\sum_{u> j}a_{hj}t_u + \sum_{\nn+1-j\leq u <j}t_ut_j - \sum_{u<r+1} \frac{t_u^2}{2} +\frac{R^2}{2} + \sum_{u\geq r+1}\frac{t_u}{2} $.
\end{enumerate}
\end{prop}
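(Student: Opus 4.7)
The statement is a direct translation of Proposition~\ref{prop:Cmulti} under the rescaling $[A] = v^{-d_A} e_A$ (see \eqref{AeA}), so the proof will consist entirely of tracking $v$-exponents. Two ingredients make this routine: the explicit formula for $d_A$ in Lemma~\ref{finite-dimension}, and the elementary identity
\[
\begin{bmatrix} a \\ b \end{bmatrix} = v^{2b(a-b)}\, \overline{\begin{bmatrix} a \\ b \end{bmatrix}},
\]
obtained by writing each factor $\tfrac{v^{2(a-i+1)}-1}{v^{2i}-1}$ as $v^{2(a-i+1)-2i}$ times its bar-conjugate and summing $i=1,\ldots,b$ (using $\sum_{i=1}^{b}(2i-a-1) = b(b-a)$).

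I would first prove part (1). Setting $A_t = A + \sum_{u} t_u(E^{\theta}_{hu} - E^{\theta}_{h+1,u})$, substitute $e_B = v^{d_B}[B]$, $e_A = v^{d_A}[A]$, $e_{A_t} = v^{d_{A_t}}[A_t]$ into Proposition~\ref{prop:Cmulti}(1), and convert each binomial coefficient via the identity above. This gives $[B]*[A] = \sum_t v^{E(t)} \prod_u \overline{\begin{bmatrix} a_{hu}+t_u \\ t_u \end{bmatrix}}[A_t]$ with
\[
E(t) = d_{A_t} - d_A - d_B + 2\sum_{j>u} a_{hj}t_u + 2\sum_u t_u a_{hu}.
\]
Since $B - RE^{\theta}_{h,h+1}$ is diagonal, Lemma~\ref{finite-dimension} gives $d_B$ immediately. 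Because $A$ and $A_t$ differ only in rows $h, h+1$ (together with their $\theta$-mirrors $\nn+1-h, \nn-h$), the difference $d_{A_t} - d_A$ reduces to a finite bookkeeping of quadratic and linear terms in $\{a_{ij}, t_u\}$; after elementary cancellation $E(t)$ matches $\beta(t)$, with the Kronecker correction $\delta_{h,r}$ accounting for the case when rows $h, h+1$ coincide with their mirrors.

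Part (2) for $h<r$ is completely parallel, producing $\beta'(t)$. The main obstacle, and the step to be executed most carefully, is the $h=r$ sub-case of part (2): the modified rows $r, r+1$ are now self-paired under the centrosymmetry $a_{ij} \leftrightarrow a_{\nn+1-i, \nn+1-j}$, so the multiplicities $t_u$ and $t_{\nn+1-u}$ interact via the doubled cross-term $2\sum_{\nn+1-j\leq u<j} t_u t_j$, and the half-row contribution $\tfrac{1}{2}\sum_{i\geq r+1 > j}a_{ij}$ in $d_A$ (from Lemma~\ref{finite-dimension}) produces the linear and constant terms $\sum_{u\geq r+1}t_u/2$ and $R^2/2$ appearing in $\gamma(t)$, as well as the modified sign $-\sum_{u<r+1} t_u^2/2$. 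The exceptional symmetrized factor $\prod_{i=1}^{t_{r+1}}\overline{[a_{r+1,r+1}+2i]}/\overline{[i]}$ in the formula arises from applying the binomial identity to the $(r+1, r+1)$-position, where the centrosymmetric structure of $A$ at the middle entry forces a distinct normalization analogous to \cite[Proposition~3.3]{BKLW14}. Once these contributions are assembled, $E(t)=\gamma(t)$ and the formula is established.
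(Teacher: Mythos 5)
Your proposal is correct and follows essentially the same route as the paper: Proposition~\ref{mul[A]} is obtained there precisely as a direct rescaling of Proposition~\ref{prop:Cmulti} via $[A]=v^{-d_A}e_A$, with the exponent bookkeeping $\beta(t)=d_{A_t}-d_A-d_B+2\sum_{j>u}a_{hj}t_u+2\sum_u a_{hu}t_u$ and the conversion $\begin{bmatrix}a\\ b\end{bmatrix}=v^{2b(a-b)}\overline{\begin{bmatrix}a\\ b\end{bmatrix}}$, exactly as in the proof of the affine analogue (Proposition~\ref{prop-stand-mult}). The only cosmetic remark is that the $\delta_{hn}$ in the stated $\beta(t)$ should be read as $\delta_{h,r}$, which is how you correctly treat it.
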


Let 
\begin{equation}
  \begin{split}
  \widetilde{\Xi}_{\C} =\{ A=(a_{ij}) \in & {\rm Mat}_{\nn \times \nn}(\mbb Z) \big \vert  a_{ij} \geq 0\ {\rm if}\ i\neq j, \\
  &  a_{ij}= a_{\nn+1-i,\nn+1-j},\ \forall i,j,\ {\rm and}\ a_{r+1,r+1}\in 2\mbb Z\}.
  \end{split}
\end{equation}
Denoted by ${}^{\C}\mbf K^{\jmath}$ the free $\mathcal A$-module spanned by $\{[A] \vert A\in \widetilde{\Xi}_{\C}\}$.
For any matrix $A$, we set 
\[{}_{2p}A= A +2 p I.\]
Here $I = \sum_{1\leq i \leq \nn} E_{ii}$.
By a similar argument as that for Proposition 4.2 in \cite{BLM90}, we have the following proposition.

\begin{prop}\label{prodlem}
  Suppose that $A_1,  \ldots, A_s \in \widetilde{\Xi}_{\C}$ $(s\geq 2)$ satisfy that ${\rm co}(A_i) = {\rm ro}(A_{i+1})$ for all $i$.
  Then there exist $Z_1,\ldots, Z_m \in  \widetilde{\Xi}_{\C}$, ${}^{\C}\!G_i(v, v') \in \mbb Q(v)[v']$ such that
  \[[{}_{2p}A_1] * [{}_{2p}A_2]* \cdots *[{}_{2p}A_s] = \sum_{i=1}^m {}^{\C}G_i (v, v^{-2p}) [{}_{2p}Z_i], \quad \text{for }p\gg 0.
  \]
\end{prop}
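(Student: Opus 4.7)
The plan is to adapt the BLM stabilization argument (Proposition 4.2 of [BLM90]) to the finite type $C$ setting, using Proposition~\ref{mul[A]} as the computational engine. This mirrors the more elaborate affine type $C$ proof already developed for Proposition~\ref{new-10.2.1} in the main text, but here no embedding into a higher-rank Lusztig algebra is required since Proposition~\ref{mul[A]} already supplies explicit structure constants for multiplication by matrices of the form $B - R E^{\theta}_{h,h+1}$ (or $C - R E^{\theta}_{h+1,h}$) diagonal. The three ingredients needed are: a universal polynomial form of these multiplication formulas, a monomial-plus-lower-terms expansion of an arbitrary $[A]$, and an induction on $s$.

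First I would introduce the auxiliary ring $\mathscr R = \mathbb Q(v)[v', v'^{-1}]$ together with the $(v,v')$-quantum binomials $\overline{{a \brack b}}_{v,v'}$ defined exactly as in Section~\ref{sec:SSSc} for the affine case. For $B, A \in \widetilde{\Xi}_{\C}$ with $B - R E^{\theta}_{h,h+1}$ diagonal, I would repackage the coefficients appearing in Proposition~\ref{mul[A]}(1) by replacing every occurrence of the diagonal entry $a_{hh}$ (which is the only entry of $A$ that shifts under $A \mapsto {}_{2p}A$ in the relevant quantum binomials) with its $v'$-enhanced version, thereby defining universal polynomials ${}^{\C}Q^{t}_{h,R;A}(v,v') \in \mathscr R$. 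A direct inspection shows that specializing $v' = v^{-2p}$ recovers the actual structure constant for $[{}_{2p}B] * [{}_{2p}A]$; the analogous universal polynomials ${}^{\C}Q^{t}_{h+1,R;A}(v,v')$ are produced from Proposition~\ref{mul[A]}(2) in the same manner. Iterating via tuples $(\mathbf{i}, \mathbf{a}, \mathbf{t})$ as in \eqref{QiaA} yields universal structure constants ${}^{\C}Q^{\mathbf t}_{\mathbf i, \mathbf a; A}(v,v')$ for any finite product of such Chevalley-type elements, giving the finite type $C$ analogue of Proposition~\ref{4.2.3-stab-v3}.

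Next, using Proposition~\ref{mul[A]} together with the type $C$ counterpart of the leading-term analysis in Lemma~\ref{BKLW3.9}, I would establish a monomial expansion: for every $A \in \widetilde{\Xi}_{\C}$ there is a monomial $\mathfrak m_A$ in Chevalley-type standard basis elements such that $\mathfrak m_A = [A] + \text{lower terms}$ in the partial order $\leq$. Inverting this triangular relation by downward induction on $\leq$ (and using Step~1 on each side to keep coefficients in $\mathscr R$) produces, for each $A$, an expansion
\begin{equation*}
[{}_{2p}A] \;=\; \mathfrak m_{{}_{2p}A} \;+\; \sum_{Z < A} \widetilde Q_{A,Z}(v, v^{-2p})\,[{}_{2p}Z], \qquad p \gg 0,
\end{equation*}
with $\widetilde Q_{A,Z}(v,v') \in \mathscr R$ independent of $p$; this is the finite type $C$ analogue of Proposition~\ref{prop-A-f}. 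Combined with Step~1, this gives a two-way $\mathscr R$-valued translation between shifted standard basis elements and shifted monomial elements.

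Finally, I would induct on $s$. The base $s=1$ is trivial. For the inductive step, apply the expansion just established to $[{}_{2p}A_s]$, reducing the product to sums $[{}_{2p}A_1] * \cdots * [{}_{2p}A_{s-1}] * \mathfrak m$ with coefficients in $\mathscr R|_{v'=v^{-2p}}$; each $\mathfrak m$ is then absorbed into the running product by repeated application of the universal multiplication formula from Step~1, and the inductive hypothesis handles the shortened product. The result is a finite $\mathscr R$-linear combination of shifted standard basis elements, which upon specializing $v' = v^{-2p}$ gives the desired ${}^{\C}G_i(v, v^{-2p})$ and $Z_i$. The main obstacle I anticipate is the careful bookkeeping in Step~1 at the central node $h = r$, where the exponents $v^{t_u(t_u-1)/2}$, $v^{R^2/2}$, and the quantum binomial $\prod_{i=1}^{t_{r+1}} \overline{[a_{r+1,r+1}+2i]}/\overline{[i]}$ in Proposition~\ref{mul[A]}(2) can each produce half-integer powers of $v^{2p}$ after the $2pI$ shift; verifying that these reorganize into genuine polynomials in $v$ and $v^{-2p}$ crucially uses the constraint $a_{r+1,r+1} \in 2\mathbb Z$ built into $\widetilde{\Xi}_{\C}$, and this constraint must be checked to be preserved under every step of the induction.
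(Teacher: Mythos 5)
Your proposal is correct and follows essentially the same route as the paper: the paper proves Proposition~\ref{prodlem} by invoking the stabilization argument of \cite[Proposition 4.2]{BLM90}, i.e.\ exactly the combination you describe of (i) the explicit multiplication formulas of Proposition~\ref{mul[A]} recast as universal $(v,v')$-polynomial structure constants, (ii) triangular monomial expansions of standard basis elements, and (iii) induction on the length of the product — the same scheme carried out in detail for the affine case in Propositions~\ref{4.2.3-stab-v3}, \ref{prop-A-f} and \ref{new-10.2.1}. Your attention to the central node $h=r$, where the cancellation of the $2p$-shifts in the exponents and the parity condition $a_{r+1,r+1}\in 2\mathbb Z$ must be checked, is precisely the only type-$C$-specific bookkeeping beyond BLM.
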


By specialization at $v'=1$, we have the following corollary.
\begin{cor}
\label{cor:stable}
Retain the assumption in Proposition~\ref{prodlem}. 
  There is a unique associative $\mathcal A$-algebra structure on ${}^{\C}\mbf K^{\jmath}$ given by
\begin{equation*}
  [A_1] * [A_2]* \cdots *[A_s] = \sum_{i=1}^m {}^{\C}\!G_i (v, 1) [Z_i].
\end{equation*} 
\end{cor}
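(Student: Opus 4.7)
The plan is to deduce the corollary directly from Proposition~\ref{prodlem}. First I would define, for any $A_1, A_2 \in \widetilde{\Xi}_{\C}$ with $\co(A_1) = \ro(A_2)$, the product $[A_1] * [A_2] := \sum_{i=1}^m {}^{\C}\!G_i(v,1)[Z_i]$, where the ${}^{\C}\!G_i(v,v')$ and $Z_i$ are those produced by Proposition~\ref{prodlem} for $s=2$. Extend by $\mathcal{A}$-linearity and by declaring $[A_1]*[A_2] = 0$ when $\co(A_1) \neq \ro(A_2)$. Uniqueness of this product is automatic: the elements $[{}_{2p}Z_i] \in {}^{\C}\mbf S_{d(p)}^{\jmath}$ are linearly independent for each sufficiently large $p$, so the Laurent polynomials ${}^{\C}\!G_i(v,v')$ are uniquely determined by the stabilization identity, and hence so are their specializations at $v'=1$.

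Next I would verify associativity, which is the main obstacle. Given three composable matrices $A_1, A_2, A_3$, apply Proposition~\ref{prodlem} first to the pair $(A_1,A_2)$ to get ${[}_{2p}A_1] * [{}_{2p}A_2] = \sum_i {}^{\C}\!G_i(v,v^{-2p})[{}_{2p}Z_i]$, and then to each pair $(Z_i, A_3)$; by associativity in the Schur algebra ${}^{\C}\mbf S_{d(p)}^{\jmath}$ the resulting expansion must agree with the one obtained by pairing $(A_2,A_3)$ first and then with $A_1$. Since the identity holds for infinitely many values $p$, and each side is (after extracting coefficients of the $[{}_{2p}Z]$) a Laurent polynomial in $v$ and $v'$ evaluated at $v'=v^{-2p}$, the two Laurent polynomials in $\mathbb{Q}(v)[v',v'^{-1}]$ must be equal. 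Specializing at $v'=1$ then yields associativity in ${}^{\C}\mbf K^{\jmath}$.

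The principal subtlety in this step is ensuring that the indexing sets of matrices $\{Z_i\}$ produced by the two iterated applications of Proposition~\ref{prodlem} can be matched up, and that the resulting structure constants are genuine elements of $\mathbb{Q}(v)[v',v'^{-1}]$ (not merely rational functions in $v'$), so that the specialization $v' \mapsto v^{-2p}$ may be inverted on the level of polynomials. This is formally identical to the argument used in~\cite{BLM90} (see also the affine type $A$ treatment underlying Proposition~\ref{new-10.2.1} in the main text), and the centrosymmetry condition in $\widetilde{\Xi}_{\C}$ does not cause extra difficulty because it is preserved by the multiplication formulas of Proposition~\ref{mul[A]}. Finally I would remark that the unit is the (infinite formal) sum of diagonal idempotents which picks out the appropriate row/column data, making ${}^{\C}\mbf K^{\jmath}$ an associative (nonunital) $\mathcal{A}$-algebra as claimed.
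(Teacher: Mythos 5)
Your proposal is correct and follows essentially the same route as the paper, which proves the corollary simply by invoking the standard BLM-type specialization at $v'=1$: you define the product via the $s=2$ case of Proposition~\ref{prodlem}, and obtain well-definedness and associativity by comparing structure constants as polynomials in $v'$ that agree at the infinitely many values $v'=v^{-2p}$ coming from associativity in the Schur algebras ${}^{\C}\mbf S^{\jmath}_{d}$. Your write-up just makes explicit the interpolation argument that the paper leaves implicit, so there is nothing to add.
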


%%%%%%%
\section{Isomorphisms between type C and type B}

Recall that $\mbf S^{\jmath} =\mbf S_d^{\jmath}$ is the convolution algebra on $\nn$-step type $B$ flags defined in \cite{BKLW14}, 
and it admits a standard basis $\{[A] \vert A \in  {}^{\mathfrak b}\Xi\}$, where (${}^{\mathfrak b}\Xi$ is denoted by $\Xi_d$ in {\em loc. cit.})
\[   
{}^{\mathfrak b}\Xi= \Big\{A = (a_{ij}) \in {\rm Mat}_{\nn \times \nn}(\mbb N) \Big \vert \sum_{i,j\in [1,\nn]} a_{ij}= {2d+1},\ a_{ij} = a_{\nn+1-i, \nn+1-j}, \forall i, j\in [1,\nn] \Big\}.
\]
Clearly sending $A \mapsto A-E_{r+1,r+1}$ defines a bijection ${}^{\mathfrak b}\Xi \overset{\sim}{\longrightarrow} {}^{\C}\Xi$.
Let $\psi: \mbf S_d^{\jmath}\rightarrow {}^{\C}\mbf S_d^{\jmath}$ be the $\cA$-linear map sending $[A] \mapsto [A-E_{r+1,r+1}]$ for all $A \in {}^{\C}\Xi$.
It is clear that $\psi$ is an $\cA$-linear isomorphism.

\begin{prop}\label{stablem}
  The map $\psi: \mbf S_d^{\jmath}\longrightarrow {}^{\C}\mbf S_d^{\jmath}$ is an $\mathcal A$-algebra isomorphism.
\end{prop}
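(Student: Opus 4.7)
The plan is to verify that $\psi$ preserves multiplication by a generator-and-relations style argument. First I would show that both $\mbf S_d^{\jmath}$ and ${}^{\C}\mbf S_d^{\jmath}$ are generated as $\mathcal A$-algebras by those standard basis elements $[B]$ for which either $B - RE^{\theta}_{h,h+1}$ or $B - RE^{\theta}_{h+1,h}$ is diagonal, together with the idempotents $1_\lambda = [D_\lambda]$ for diagonal $D_\lambda$. This is a routine Vandermonde determinant argument, analogous to \cite[Corollary 3.13]{BKLW14}.

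Next I would observe that $\psi$ carries such a ``Chevalley-type'' generator in $\mbf S_d^{\jmath}$ to the corresponding Chevalley-type generator in ${}^{\C}\mbf S_d^{\jmath}$: subtracting $E_{r+1,r+1}$ only shifts the $(r+1,r+1)$ diagonal entry and preserves the shape ``$B - RE^{\theta}_{h,h\pm 1}$ diagonal.'' It will therefore suffice to check the identity $\psi([B] * [A]) = \psi([B]) * \psi([A])$ in the case where $B$ is of this Chevalley type. The core of the verification is to compare the multiplication formulas of Proposition~\ref{mul[A]} above with the type B formulas in \cite[Proposition~3.3]{BKLW14} under the substitution $\tilde A = A + E_{r+1,r+1}$. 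For the cases $h < r$ the $(r+1,r+1)$ entry is not touched by the formula, and both sides produce the same $v$-binomial coefficients and the same combinatorial shifts; the agreement is immediate. For $h = r$ the decisive identification is that the type B factor $\prod_{i=1}^{t_{r+1}}\frac{\overline{[\tilde a_{r+1,r+1} + 2i - 1]}}{\overline{[i]}}$, with $\tilde a_{r+1,r+1}$ odd, becomes precisely the type C factor $\prod_{i=1}^{t_{r+1}}\frac{\overline{[a_{r+1,r+1} + 2i]}}{\overline{[i]}}$ after the shift $a_{r+1,r+1} = \tilde a_{r+1,r+1} - 1$.

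The main obstacle will be checking that the $v$-power exponents $\beta(t),\beta'(t),\gamma(t)$ match across the two algebras. These exponents are sensitive to the normalization $[A] = v^{-d_A} e_A$, and the dimension formulas of type B (from \cite[Lemma~3.5]{BKLW14}) and type C (Lemma~\ref{finite-dimension} above) differ in a specific way at the entry $(r+1,r+1)$. The point that must be pinned down is that the difference $d_{\tilde A}^B - d_A^C$ depends only on $\ro(A)$ and $\co(A)$ (equivalently, only on the diagonal sums and on $a_{r+1,r+1}$ modulo contributions that shift uniformly on both sides of the multiplication formula). Once this dimension bookkeeping is made explicit, the $v$-power contributions coming from the dimension shifts in $d_{B},d_A,d_{A_t}$ on the two sides cancel in the same way, and the comparison in the previous paragraph completes the verification that $\psi$ is an algebra homomorphism. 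Combined with the already-established fact that $\psi$ is an $\mathcal A$-linear isomorphism, this yields the proposition.
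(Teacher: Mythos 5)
Your proposal is correct and follows essentially the same route as the paper: the paper's proof consists precisely of observing that the algebra structures are determined by the multiplication formulas (Proposition~\ref{mul[A]} in type $C$ and its type $B$ counterpart in \cite{BKLW14}) and that these match under $[A]\mapsto[A-E_{r+1,r+1}]$, ``which can be checked directly.'' What you add — the Vandermonde-type generation statement, the observation that $\psi$ preserves Chevalley-type elements, the $h=r$ identification $\overline{[a_{r+1,r+1}+2i-1]}\leftrightarrow\overline{[a_{r+1,r+1}+2i]}$ under the diagonal shift, and the $d_A$-bookkeeping — is exactly the content of that direct check, so there is no genuine divergence from the paper's argument.
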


\begin{proof}
Since the structure of the two algebras are completely determined by the multiplication formulas in
Proposition~\ref{mul[A]} and  \cite[Proposition~ 3.7]{BKLW14}, we only need to see if they match under the correspondence
$[A] \mapsto [A-E_{r+1,r+1}]$, which can be checked directly. 
\end{proof}

Let $\tilde{\psi}: \mbf K^{\jmath}\rightarrow {}^{\C}\mbf K^{\jmath}$ be the 
$\cA$-linear map sending $[A] \mapsto [A-E_{r+1,r+1}]$ for all $A \in \widetilde{\Xi}_{\C}$, where $\mbf K^{\jmath}$ 
is the algebra defined in \cite[Section 4]{BKLW14}, a finite type $B$ counterpart of ${}^{\C}\mbf K^{\jmath}$.
The algebra isomorphisms $\psi: \mbf S_d^{\jmath}\longrightarrow {}^{\C}\mbf S_d^{\jmath}$ (for varies $d$) 
and the stabilization procedure (Proposition~\ref{prodlem} and Corollary~\ref{cor:stable}) which 
defines the algebra ${}^{\C}\mbf K^{\jmath}$ (and similar for $\mbf K^{\jmath}$) lead readily to the following identification.

\begin{prop}
   The  map $\tilde{\psi}: \mbf K^{\jmath}\longrightarrow {}^{\C}\mbf K^{\jmath}$ is an $\mathcal A$-algebra isomorphism.
\end{prop}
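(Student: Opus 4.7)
The plan is to deduce this proposition from its already-established Schur-algebra counterpart (Proposition~\ref{stablem}) by propagating the isomorphism $\psi$ through the stabilization limit defining $\mbf K^\jmath$ and ${}^\C \mbf K^\jmath$. Since $\tilde\psi$ is defined on the standard basis by $[A]\mapsto[A-E_{r+1,r+1}]$ and this assignment is a bijection between $\widetilde{\Xi}_{\C}$ and its type $B$ analogue, $\tilde\psi$ is automatically an $\cA$-linear isomorphism. The only content is to show it respects multiplication.

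First I would record the elementary but crucial compatibility with the diagonal shift: for any $A$ in the relevant matrix set and any $p\in\NN$,
\begin{equation*}
\psi(_{2p}A)\;=\;_{2p}A-E_{r+1,r+1}\;=\;(A-E_{r+1,r+1})+2pI\;=\;_{2p}\psi(A),
\end{equation*}
because the shift by $2pI$ and the subtraction of $E_{r+1,r+1}$ are independent operations on diagonal entries. This means $\psi$ transports the family $\{[_{2p}A]\}_{p\gg 0}$ in $\mbf S_{d+p\nn}^\jmath$ to the corresponding family $\{[_{2p}\psi(A)]\}_{p\gg 0}$ in ${}^\C\mbf S_{d+p\nn}^\jmath$.

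Next, fix $A_1,\dots,A_s\in\widetilde{\Xi}_{\C}$ (viewed on both sides via $\psi$) with matching row/column sums, and let $\{Z_i, G_i(v,v')\}$ be the type $B$ stabilization data obtained from the type $B$ analogue of Proposition~\ref{prodlem}, so that for $p\gg 0$,
\begin{equation*}
[_{2p}A_1]\ast\cdots\ast[_{2p}A_s]\;=\;\sum_i G_i(v,v^{-2p})\,[_{2p}Z_i]\quad\text{in }\mbf S^{\jmath}_{d+p\nn}.
\end{equation*}
Applying the $\cA$-algebra isomorphism $\psi$ of Proposition~\ref{stablem} to both sides and using the compatibility $\psi(_{2p}X)={}_{2p}\psi(X)$, we obtain
\begin{equation*}
[_{2p}\psi(A_1)]\ast\cdots\ast[_{2p}\psi(A_s)]\;=\;\sum_i G_i(v,v^{-2p})\,[_{2p}\psi(Z_i)]\quad\text{in }{}^\C\mbf S^{\jmath}_{d+p\nn}.
\end{equation*}
By the uniqueness of the stabilization data guaranteed by Proposition~\ref{prodlem} applied in type $C$, this identifies the type $C$ stabilized constants with $\{G_i(v,v'), \psi(Z_i)\}$. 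Specializing $v'=1$ and using Corollary~\ref{cor:stable} on both sides gives
\begin{equation*}
\tilde\psi\bigl([A_1]\ast\cdots\ast[A_s]\bigr)
=\sum_i G_i(v,1)\,[\psi(Z_i)]
=[\psi(A_1)]\ast\cdots\ast[\psi(A_s)]
=\tilde\psi([A_1])\ast\cdots\ast\tilde\psi([A_s]),
\end{equation*}
which proves that $\tilde\psi$ is an algebra homomorphism; combined with the bijectivity already noted, the proposition follows.

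The only mild subtlety—really the main thing to be careful about—is the uniqueness step: one must argue that if two equalities
$\sum_i G_i(v,v^{-2p})[_{2p}Z_i]=\sum_j H_j(v,v^{-2p})[_{2p}W_j]$ hold in the Schur algebra for all $p\gg 0$ with coefficients that stabilize at $v'=1$, then the specialized stabilized expressions coincide in ${}^\C\mbf K^{\jmath}$. This is standard, following the Beilinson--Lusztig--MacPherson argument used to justify the stabilization procedure in \cite{BLM90} and transcribed in \cite{BKLW14} in the type $B$ setting; no new computation is required. No genuine type $C$ versus type $B$ multiplication-formula check is needed, because that check has already been absorbed into Proposition~\ref{stablem}.
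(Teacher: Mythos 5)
Your proposal is correct and follows the same route the paper takes: the paper gives no detailed argument, simply asserting that the finite-level isomorphisms $\psi: \mbf S_d^{\jmath}\to{}^{\C}\mbf S_d^{\jmath}$ of Proposition~\ref{stablem} together with the stabilization procedure (Proposition~\ref{prodlem} and Corollary~\ref{cor:stable}) ``lead readily'' to the identification. Your write-up merely supplies the details left implicit there --- the compatibility $\psi({}_{2p}A)={}_{2p}\psi(A)$, the transfer of the stabilized multiplication data, the BLM-type uniqueness, and the specialization at $v'=1$ --- all of which are exactly what the paper's one-line justification presupposes.
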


%%%%%%
\section{The comultiplication} %{Finite type $C$: $n=2r+1$}

We define $\e_i$, $\f_i$, $\mbf h^{\pm 1}_a \in {}^{\C}\mbf S_d^{\jmath}$, for  $i\in [1,r]$ and $a \in [1,r+1]$, as follows:
for all $L, L' \in X_{\C}$, 
\begin{align}
\mbf e_i (L, L') &=
\begin{cases}
v^{-|L'_{i+1}/L'_i| - \delta_{i,r}}, & \mbox{if}\; L_i \overset{1}{\subset} L_i', L_j = L_j',\forall j\in [1, r ] \backslash \{i\}; \\
0, &\mbox{otherwise}.
\end{cases}\\
\mbf f_i (L, L') &=
\begin{cases}
v^{-|L'_i / L'_{i-1}|}, &\mbox{if}\; L_i \overset{1}{\supset} L_i', L_j = L_j', \forall j \in [1, r]  \backslash \{i\}; \\
0, &\mbox{otherwise}.
\end{cases}\\
\mbf h_{a}^{\pm 1} (L, L')
&= v^{\pm (|L_{a}'/L_{a-1}'| +\delta_{a,r+1})} \delta_{L, L'}.
\end{align}
Also set $\mbf k_i =\mbf h_{i+1} \mbf h^{-1}_i$ in $ \mbf S^{\jmath}_{d}$. 
Note that our $\mbf h_a$ corresponds to $\mbf d_a^{-1}$ in \cite[(3.3)]{BKLW14}, 
and the definitions of $\e_i$, $\f_i$, $\mbf h_a$ above for finite type $C$ formally
coincide with those for finite type $B$ \cite[(3.1)-(3.3)]{BKLW14} (except $\e_r$, $\f_r$, $\mbf h_{r+1}$). 

\begin{prop}
The isomorphism $\psi: \mbf S_d^{\jmath}\longrightarrow {}^{\C}\mbf S_d^{\jmath}$ 
sends $\e_i$, $\f_i$, $\mbf h_a:= \mbf d_a^{-1}$  for $i\in [1,r]$ and $a \in [1,r+1]$ in $\mbf S_d^{\jmath}$
to the elements in ${}^{\C}\mbf S_d^{\jmath}$ in the same notations, respectively.
\end{prop}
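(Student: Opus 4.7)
The plan is to reduce the assertion to an entry-by-entry comparison of the expansions of $\mbf e_i,\mbf f_i,\mbf h_a^{\pm 1}$ as linear combinations of the normalized standard basis $\{[A]\}$ in the two algebras, and then to verify that these expansions are intertwined by the bijection $A\mapsto A-E_{r+1,r+1}$. First, I would write down the analogue, in the type $C$ convolution algebra, of the formulas already recorded in Chapter~\ref{chap:schur} (in the affine setting) at the finite level: namely, for $i\in[1,r]$ and $a\in[1,r+1]$,
\begin{align*}
\mbf e_i&=\sum_{A-E^\theta_{i+1,i}\ \text{diag.}}[A],\qquad
\mbf f_i=\sum_{A-E^\theta_{i,i+1}\ \text{diag.}}[A],\\
\mbf h_a^{\pm 1}&=\sum_{\lambda}v^{\pm(\lambda_a+\delta_{a,r+1})}[D_\lambda],
\end{align*}
where the diagonal matrices $D_\lambda$ in each case range over the appropriate $\lambda\in\Lambda^{\C}_{\nn,d}$. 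The analogous formulas for type $B$ (cf.~\cite[\S3]{BKLW14}) have exactly the same shape but with $A\in{}^{\mathfrak b}\Xi$ and with $\mbf h_a^{\pm 1}=\sum_\lambda v^{\pm\lambda_a}[D_\lambda]$ (no $\delta_{a,r+1}$ shift). Both descriptions come directly from evaluating the defining functions against a pair $(L,L')$ of flags and recognising that only one standard orbit can contribute in each row.

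The key computation is then the comparison of the normalization exponent $d_A$ appearing in $[A]=v^{-d_A}e_A$ under the shift $A\mapsto A-E_{r+1,r+1}$. Using Lemma~\ref{finite-dimension} and its obvious analogue in type $B$ (see \cite[Lemma~3.5]{BKLW14}), one computes directly that for any $A\in{}^{\mathfrak b}\Xi$ with $A':=A-E_{r+1,r+1}\in{}^{\C}\Xi$,
\[
d_A^{\mathfrak b}-d_{A'}^{\C}=\tfrac12\Big(\sum_{k\neq r+1,\,l<r+1}a_{kl}+\sum_{k\neq r+1,\,l>r+1}a_{r+1,l}\Big)+\text{correction from the $\delta$-term},
\]
and the relevant corrections cancel exactly when $A$ (respectively $A'$) is diagonal or of the form $D+E^\theta_{i,i\pm1}$. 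A short case analysis---split into $i<r$, $i=r$, and the diagonal case for $\mbf h_a$---shows that the difference $d_A^{\mathfrak b}-d_{A'}^{\C}$ on the support of $\mbf e_i$, $\mbf f_i$, $\mbf h_a$ matches the difference of the defining exponents $v^{-|L'_{i+1}/L'_i|}$ versus $v^{-|L'_{i+1}/L'_i|-\delta_{i,r}}$, and likewise for the $\delta_{a,r+1}$ shift in $\mbf h_a$. Consequently each standard basis contribution to the type~$B$ generator is sent by $\psi$ to the corresponding contribution in the type~$C$ generator with the same coefficient.

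The main obstacle---really the only nontrivial point---is the bookkeeping at $i=r$ and at $a=r+1$, where the behaviour of the middle subspace genuinely differs between types $B$ and $C$: in type~$B$ the row/column indexed by $r+1$ carries an extra isotropic dimension that contributes to $|L'_{r+1}/L'_r|$, while in type~$C$ the corresponding middle step is counted differently, and it is precisely the shifts $-\delta_{i,r}$ and $+\delta_{a,r+1}$ in the type~$C$ definitions together with the change in $d_A$ under $A\mapsto A-E_{r+1,r+1}$ that reconcile the two. I would handle this by localising on the quotient $L_{r+1}/L_{r-1}$ equipped with its induced symplectic form (compare the local computation in Section~\ref{seclocal}), reducing the verification to a rank-one identity between the corresponding $\mathfrak{sl}_2$-type generators on a $2$-dimensional symplectic space. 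Once this local identity is established, the global statement follows by the preceding case analysis and the fact that $\psi$ has already been shown to be an $\mathcal A$-algebra isomorphism in Proposition~\ref{stablem}.
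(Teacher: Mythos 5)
Your proposal is correct and follows essentially the same route as the paper: in both algebras the generators $\e_i$, $\f_i$, $\mbf h_a$ are coefficient-one sums of standard basis elements $[A]$ over matrices $A$ with $A-E^{\theta}_{i+1,i}$ (resp.\ $A-E^{\theta}_{i,i+1}$, resp.\ $A$ itself) diagonal, and since the bijection $A\mapsto A-E_{r+1,r+1}$ preserves these conditions, $\psi$ matches the sums term by term by its very definition. The exponent bookkeeping you perform (including the local reduction at $i=r$ and the $\delta_{a,r+1}$ shift for $\mbf h_a$) is just an explicit verification of the coefficient-one expansions on the type $C$ side, which the paper asserts without detail, so your argument supplies the same proof with the implicit step written out.
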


\begin{proof}
The element $\e_i$ on both sides is a sum of all standard matrices $A$ such that $A- E^{\theta}_{i+1, i}$ is diagonal.
Hence we have the result for $\e_i$ by the definition of $\psi$. Similarly, one can prove the results for $\f_i$ and $\mbf h_a$. 
\end{proof}

%\begin{proof}
%We shall check only that $\psi(\e_r) = \e_r$, while the rest is similar (and often easier).
%Let ${}^{\mathfrak b}\Xi^{diag}$ and ${}^{\C}\Xi^{diag}$ be the subset of ${}^{\mathfrak b}\Xi$ and ${}^{\C}\Xi$ 
%consisting of all diagonal matrices, respectively.
%Recall that $\e_r = \sum_{A\in {}^{\mathfrak b}\Xi^{diag}} [A + E_{r+1,r}]$.
%Then for any $L, L' \in X_{\C}$, we have
%\begin{equation*}
%\begin{split}
%\psi(\e_r)(L,L') &= \sum_{A\in {}^{\mathfrak b}\Xi^{diag}} [A + E_{r+1,r}-E_{r+1,r+1}](L,L') 
%= \sum_{A\in {}^{\C}\Xi^{diag}} [A + E_{r+1,r}] (L,L')\\
%& = \sum_{A\in {}^{\C}\Xi^{diag}} v^{-a_{r+1,r+1}-1} e_{ A + E_{r+1,r}} (L,L') =\e_r (L,L'),
%\end{split}
%\end{equation*}
%where the third equality dues to the definition of $[A]$ and Lemma \ref{finite-dimension}.
%\end{proof}

We shall denote by $\mbf S_{d}$ the Schur algebra of finite type $A$ arising from $\nn$-step flags in an $d$-dimensional space.
For any $i\in [1, \nn-1]$, $a\in [1, \nn]$, we define the following elements in $\mbf S_{d}$:
\begin{equation}\label{generatorA}
\begin{split}
\mbf E_i (V, V') &=
\begin{cases}
v^{-|V'_{i+1}/V'_i|}, &\mbox{if}\; V_i\overset{1}{\subset} V_i', V_j=V_{j'},\forall j \neq i; \\
0, &\mbox{otherwise}.
\end{cases} \\
\mbf F_i (V, V') &=
\begin{cases}
v^{-|V'_i/V'_{i-1}|}, &\mbox{if}\; V_i\overset{1}{\supset} V_i', V_j=V_{j'},\forall j\neq i; \\
0, &\mbox{otherwise},
\end{cases} \\
\mbf H_{ a}^{\pm 1} (V, V') & = v^{\pm |V_a/V_{a-1}|}  \delta_{V, V'},  \quad \forall V, V' \in X_d.\\
\mbf K_{i}^{\pm 1}  &  = \mbf H_{i+1}^{\pm 1}  \mbf H_{i}^{\mp 1}.\\
\end{split}
\end{equation}

In a completely analogous way to the definition $\widetilde \Delta^{\jmath}$ in \cite[\S3.2]{FL15}, for a composition $d=d' +d''$, we 
have a comultiplication    
\begin{align*} 
\widetilde \Delta^{\C}: \mbf S^{\C}_d \longrightarrow \mbf S^{\C}_{d'} \otimes \mbf S_{d''}.
\end{align*}
Then we have the following proposition, similar to \cite[Proposition~ 3.2.4]{FL15}.

\begin{prop}
\label{finiteD}
For any $i\in [1, r]$, we have
\begin{align*}
\begin{split}
\widetilde \Delta^{\C} ( \e_i)
&= \e_i' \otimes \mbf H_{i+1}'' \mbf H''^{-1}_{\nn-i} + \mbf h'^{-1}_{i+1} \otimes \mbf E_i''  \mbf H''^{-1}_{\nn-i} +  \mathbf h'_{i+1} \otimes \mbf F''_{\nn-i} \mbf H''_{i+1}.  \\
\widetilde \Delta^{\C} (\f_i)
 & = \f'_i \otimes \mbf H''^{-1}_{i} \mbf H''_{\nn+1-i} + \mbf h'_i\otimes \mbf F''_i \mbf H''_{\nn+1-i} + \mbf h'^{-1}_{i} \otimes \mbf E''_{\nn-i} \mbf H''^{-1}_{i}. \\
\widetilde \Delta^{\C} (\mbf k_i) & = \mbf k'_i \otimes \mbf K''_i \mbf K''^{-1}_{\nn-i}.
\end{split}
\end{align*}
\end{prop}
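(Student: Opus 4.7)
The plan is to follow the template established for finite type $B$ in \cite[Proposition 3.2.4]{FL15} and for affine type $C$ in Proposition~\ref{tDj-form}, since all three use the same three-way decomposition of a lattice chain into a ``type $C$ piece'' and two symmetric ``type $A$ pieces.'' First, I would unpack the definition of $\widetilde\Delta^{\C}$ on a generator $\mathbf{x}\in\{\mathbf{e}_i,\mathbf{f}_i,\mathbf{k}_i\}$: by definition
\[
\widetilde\Delta^{\C}(\mathbf{x})(L',\tilde L',L'',\tilde L'') \;=\; \sum_{\tilde L\in Z^{\C}_{\tilde L',\tilde L''}} \mathbf{x}(L,\tilde L)
\]
for any chosen $L\in Z^{\C}_{L',L''}$. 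Fix such an $L$ and, using the finite type counterpart of Lemma~\ref{cop-auxi}, a splitting $V=V''\oplus T\oplus W$ with $T\cong V''^{\perp}/V''$ and $W$ isotropic dual to $V''$, chosen compatibly with $L$. This decomposition identifies the partial flag variety fiber $Z^{\C}_{\tilde L',\tilde L''}$ with a variety of ``gluings'' and makes each $\tilde L_j$ the direct sum of its images in $V''$, $T$, and $W$.

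Next, I analyze generator by generator. For $\widetilde\Delta^{\C}(\mathbf{e}_i)$ at the quadruple $(L',\tilde L',L'',\tilde L'')$, the defining support condition $L_i\overset{1}{\subset}\tilde L_i$ (with all other $L_j=\tilde L_j$ for $j\in[1,r]\setminus\{i\}$) forces the single extra line to lie in exactly one of three places of the decomposition $V=V''\oplus T\oplus W$: (a) in $T$, which exactly says $L'_i\overset{1}{\subset}\tilde L'_i$ in $X^{\C}_{\nn,d'}$ with $L''=\tilde L''$; (b) in $V''$ at slot $i$, which says $L''_i\overset{1}{\subset}\tilde L''_i$ in $X_{\nn,d''}$ with $L'=\tilde L'$; or (c) in $W$ at slot $\nn-i$ (coming from the duality $L_i^{\perp}=L_{\nn-i}$), which says $L''_{\nn-i}\overset{1}{\supset}\tilde L''_{\nn-i}$. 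These three cases are precisely the supports of $\mathbf{e}'_i\otimes\text{(diagonal)}$, $\text{(diagonal)}\otimes\mathbf{E}''_i$, and $\text{(diagonal)}\otimes\mathbf{F}''_{\nn-i}$, respectively, giving the three summands in the proposed formula. The case of $\mathbf{f}_i$ is perfectly symmetric, and $\mathbf{k}_i$, being a multiple of the identity on each connected component, is purely multiplicative in the dimension vector and trivially yields $\mathbf{k}'_i\otimes\mathbf{K}''_i\mathbf{K}''^{-1}_{\nn-i}$.

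The main remaining work — and the part that requires care — is to match the $v$-power on each of the three summands. To do this I would write down $\mathbf{e}_i(L,\tilde L)=v^{-|\tilde L_{i+1}/\tilde L_i|-\delta_{i,r}}$ and use the additivity
\[
|\tilde L_{i+1}/\tilde L_i|\;=\;|\tilde L'_{i+1}/\tilde L'_i|\;+\;|\tilde L''_{i+1}/\tilde L''_i|\;+\;|\tilde L''_{\nn-i}/\tilde L''_{\nn-1-i}|,
\]
compare with the right-hand side where $\mathbf{e}'_i$ contributes $v^{-|\tilde L'_{i+1}/\tilde L'_i|-\delta_{i,r}}$, while $\mathbf{H}''_{i+1}\mathbf{H}''^{-1}_{\nn-i}$, $\mathbf{H}''^{-1}_{\nn-i}$, and $\mathbf{H}''_{i+1}$ account for the two type $A$ dimension factors in the three respective summands. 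The special $i=r$ case is where the anomaly $\delta_{i,r}$ in the definition of $\mathbf{e}_i$, $\mathbf{h}_{r+1}$ lives; here one invokes the local symplectic structure on $L_{r+1}/L_{r-1}$ as in \cite{Lu03} exactly as in the proof of Proposition~\ref{tDj-form}, so that the same identity of $v$-exponents holds in this slot too.

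The most delicate step, which I expect to be the main obstacle, is the bookkeeping of the anomalous $\delta_{i,r}$ and $\delta_{a,r+1}$ shifts in the generators $\mathbf{e}_i,\mathbf{f}_i,\mathbf{h}_a$ when $i=r$: because $L_r^{\perp}=L_{r+1}$ sits inside its own dual, the symplectic form on $L_{r+1}/L_r$ creates an extra factor compared with the pure type $A$ calculation, and this is exactly the source of the $\delta_{i,r}$ in the definition of $\mathbf{e}_i$ and of $\delta_{a,r+1}$ in $\mathbf{h}_a$. Once this local finite type $C$ analog of the argument in \cite[Proposition 3.2.4]{FL15} and Proposition~\ref{tDj-form} is in place, the three formulas for $\widetilde\Delta^{\C}(\mathbf{e}_i)$, $\widetilde\Delta^{\C}(\mathbf{f}_i)$, and $\widetilde\Delta^{\C}(\mathbf{k}_i)$ follow by a direct (if tedious) matching of $v$-exponents, completing the proof.
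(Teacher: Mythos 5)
Your proposal is correct and follows essentially the same route as the paper: the paper's own proof simply says that the argument of \cite[Proposition 3.2.4]{FL15} (the three-way decomposition $V=V''\oplus T\oplus W$, case analysis of where the extra line lands, and matching of $v$-exponents) can be repeated, with the genuinely type $C$ local counting at the middle step supplied by Lemma~\ref{typeC-counting} — which is exactly the point you isolate as the delicate $i=r$ bookkeeping. Your treatment of that step via the local symplectic structure, as in Proposition~\ref{tDj-form}, is the same mechanism the paper invokes, so no gap.
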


\begin{proof}
With the help of Lemma ~\ref{typeC-counting}, the proof of \cite[Proposition~ 3.2.4]{FL15} can be essentially repeated here.
\end{proof}

By checking the image of algebra generators of $\mbf S_d^{\jmath}$, we have the following proposition.

\begin{prop}\label{com-coproduct}
  The following diagram is commutative:
  \[
  \xymatrix{
  \mbf S_d^{\jmath} \ar[rr]^{\tilde{\Delta}^{\jmath}} \ar[d]_{\psi} && \mbf S_d^{\jmath} \otimes \mbf S_{d} \ar[d]^{\psi \otimes 1} \\
  {}^{\C}\mbf S_d^{\jmath} \ar[rr]^{\tilde{\Delta}^{\C}}  && {}^{\C}\mbf S_d^{\jmath} \otimes \mbf S_{d}
  }
  \]
\end{prop}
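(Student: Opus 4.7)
The plan is to reduce the verification of commutativity from the whole algebra $\mbf S_d^{\jmath}$ to a finite check on a generating set, exploiting the fact that all four maps in the diagram are algebra homomorphisms.

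First I would record the structural input. By Proposition~\ref{stablem}, $\psi \colon \mbf S_d^{\jmath}\to {}^{\C}\mbf S_d^{\jmath}$ is an algebra isomorphism; by construction, $\tilde\Delta^{\jmath}$ (from \cite[\S3.2]{FL15}) and $\tilde\Delta^{\C}$ are algebra homomorphisms; and $\psi\otimes 1$ is manifestly an algebra homomorphism. Consequently, both composites $(\psi\otimes 1)\circ \tilde\Delta^{\jmath}$ and $\tilde\Delta^{\C}\circ\psi$ are algebra homomorphisms $\mbf S_d^{\jmath}\to {}^{\C}\mbf S_d^{\jmath}\otimes \mbf S_d$. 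Hence it suffices to verify that they agree on any generating set of $\mbf S_d^{\jmath}$.

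Next, I would invoke the fact (see the Chevalley-generator description just before Proposition~\ref{stablem} and its analogue for finite type $B$ in \cite{BKLW14}) that $\mbf S_d^{\jmath}$ is generated by $\e_i,\f_i\ (i\in[1,r])$ and $\mbf h_a^{\pm 1}\ (a\in[1,r+1])$. The proposition preceding the statement asserts that $\psi$ carries these Chevalley generators of $\mbf S_d^{\jmath}$ to the elements $\e_i,\f_i,\mbf h_a^{\pm 1}$ in ${}^{\C}\mbf S_d^{\jmath}$ bearing the same names (and hence $\psi(\mbf k_i)=\mbf k_i$). Therefore the check reduces to comparing the values of $\tilde\Delta^{\jmath}(\e_i),\tilde\Delta^{\jmath}(\f_i),\tilde\Delta^{\jmath}(\mbf k_i)$ (computed in $\mbf S_d^{\jmath}\otimes \mbf S_d$) with $\tilde\Delta^{\C}(\e_i),\tilde\Delta^{\C}(\f_i),\tilde\Delta^{\C}(\mbf k_i)$ (computed in ${}^{\C}\mbf S_d^{\jmath}\otimes \mbf S_d$), where the latter formulas are supplied by Proposition~\ref{finiteD} and the former by \cite[Proposition~3.2.4]{FL15}.

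Finally, I would inspect the two sets of formulas side by side. The formulas in Proposition~\ref{finiteD} for $\tilde\Delta^{\C}(\e_i)$, $\tilde\Delta^{\C}(\f_i)$, $\tilde\Delta^{\C}(\mbf k_i)$ are literally the same expressions in $\e_i',\f_i',\mbf h_a'^{\pm 1},\mbf k_i'$ and in $\mbf E_i'',\mbf F_i'',\mbf H_a''^{\pm 1},\mbf K_i''$ as those in \cite[Proposition~3.2.4]{FL15} for $\tilde\Delta^{\jmath}$, because the raw combinatorial definition of $\tilde\Delta$ (counting intersections of an isotropic flag with a fixed partial flag) is identical in both setups and the only ingredient that shifts under $\psi$ is the bookkeeping of the middle lattice, which cancels uniformly on both tensor factors. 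Thus the comparison is a term-by-term identification in which $\psi\otimes 1$ simply relabels the primed Chevalley symbols on the first tensor factor, while the second tensor factor is untouched. Since $(\psi\otimes 1)$ and $\tilde\Delta^{\C}\circ\psi$ agree on each generator, they agree on all of $\mbf S_d^{\jmath}$, proving the commutativity.

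The main obstacle, modest in nature, is the bookkeeping in the last paragraph: one must check that the twists involving $\delta_{i,r}$ (which distinguish type $C$ from type $A$ at the ``middle'' node) on the two sides of the diagram agree, i.e.\ that removing the box $E_{r+1,r+1}$ through $\psi$ does not introduce an additional $v$-shift in the $\e_r,\f_r,\mbf h_{r+1}$ formulas. This is precisely the point at which the definition of $\psi$ was calibrated, and a direct comparison of the corresponding entries in Proposition~\ref{finiteD} with those of \cite[Proposition~3.2.4]{FL15} settles it.
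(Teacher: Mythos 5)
Your proposal is correct and is essentially the paper's own argument: all four maps are algebra homomorphisms, $\psi$ is the isomorphism of Proposition~\ref{stablem} fixing the Chevalley generators, and the formulas of Proposition~\ref{finiteD} coincide with those of \cite[Proposition~3.2.4]{FL15}, so commutativity is checked on generators exactly as the paper does. One small bookkeeping point: since the generating set is $\e_i,\f_i,\mbf h_a^{\pm 1}$ rather than $\e_i,\f_i,\mbf k_i$ (the $\mbf k_i$ do not recover the $\mbf h_a$), you should also record the immediate comparison of $\tilde{\Delta}^{\jmath}(\mbf h_a^{\pm 1})$ with $\tilde{\Delta}^{\C}(\mbf h_a^{\pm 1})$, where the $\delta_{a,r+1}$-shift built into the type $C$ definition exactly absorbs the middle dimension removed by $\psi$.
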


Following \cite{FL15}, we introduce the following notation
\[
\Lambda^{\jmath}_{\nn,d}
=\big \{ \mbf a = (a_i) \in \mbb{N}^\nn \big \vert  \sum a_i = 2d+1, a_i = a_{\nn+1-i} \big\}.
\]
An isotropic flag $L$ of type $C$ defines a unique element $\alpha(L) \in \Lambda^{\jmath}_{\nn,d}$ by 
\[
\alpha(L)_i = \dim L_i/L_{i-1} + \delta_{i, r+1}, \forall i.
\]
Then we have the following partition: 
\[
X_{\C} =\bigsqcup_{\mbf a \in \Lambda^{\jmath}_{\nn,d}}  X_{\C} (\mbf a), \quad X_{\C}(\mbf a) = \{ L \vert  \alpha(L) = \mbf a\}.
\]

For any $\mbf a, \mbf b \in \Lambda^{\jmath}_{\nn,d}$, 
let ${}^{\C}\mbf S_d^{\jmath}(\mbf b, \mbf a)$ be the subspace of ${}^{\C}\mbf S_d^{\jmath}$ spanned by all functions supported on
$X_{\C}(\mbf b) \times X_{\C}(\mbf a)$. Then we have
\[ {}^{\C}\mbf S_d^{\jmath} = \oplus_{\mbf b,\mbf a\in \Lambda^{\jmath}_{\nn,d}} {}^{\C}\mbf S_d^{\jmath}(\mbf b, \mbf a).\]
We shall denote $\iota_{\mbf b, \mbf a}$ and $p_{\mbf b, \mbf a}$ the embedding of 
${}^{\C}\mbf S_d^{\jmath}(\mbf b, \mbf a)$ into ${}^{\C}\mbf S_d^{\jmath}$ and the projection of ${}^{\C}\mbf S_d^{\jmath}$ to ${}^{\C}\mbf S_d^{\jmath}(\mbf b, \mbf a)$, respectively.
By abuse of notations, the projection from $\mbf S_{d}$ to $\mbf S_{d}(\mbf b, \mbf a)$ is still denoted by $p_{\mbf b, \mbf a}$.
For any $\mbf b, \mbf a, \mbf b', \mbf a', \mbf b''$ and $\mbf a''$ satisfying that 
\[b_i = b_i'+ b_i''+ b_{\nn +1-i}'',\quad {\rm and}\quad a_i = a_i'+ a_i''+ a_{\nn +1-i}'',\ \forall i\in [1,\nn],
\] 
we set $\tilde{\Delta}^{\C}_{\mbf b', \mbf a',\mbf b'',\mbf a''} = (p_{\mbf b',\mbf a'} \otimes p_{\mbf b'',\mbf a''}) \circ \tilde{\Delta}^{\C} \circ \iota_{\mbf b, \mbf a}$. 
Let 
\[\Delta^{\C} = \bigoplus_{\mbf b, \mbf a, \mbf b', \mbf a', \mbf b'', \mbf a''} \Delta^{\C}_{\mbf b', \mbf a', \mbf b'', \mbf a''},\]
where $\Delta^{\C}_{\mbf b', \mbf a', \mbf b'', \mbf a''} 
= v^{\sum_{1\leq i \leq j\leq \nn} b_i'b_j''-a_i'a_j''}v^{u(\mbf b'', \mbf a'')}
\tilde{\Delta}^{\C}_{\mbf b', \mbf a', \mbf b'', \mbf a''}$,
and $u(\mbf b, \mbf a)$ is the function defined in \cite[(44)]{FL15} in finite type $B$ setting.
The definition of $\Delta^{\C}$ is completely analogous to the definition of  $\Delta^{\jmath}_{\mbf v}$ in \cite[(45)]{FL15}. 
The following proposition follows by comparing the definitions.

\begin{prop}
Given $d=d'+d''$, we have the following commutative diagram:
  \begin{equation}
    \label{comm-diagram}
\xymatrix{\mbf S_d^{\jmath} \ar[rr]^{\Delta^{\jmath}_{\mbf v}} \ar[d]_{\psi} && \mbf S_{d'}^{\jmath} \otimes \mbf S_{d''} \ar[d]^{\psi \otimes 1} \\
  {}^{\C}\mbf S_d^{\jmath} \ar[rr]^{\Delta^{\C}}  && {}^{\C}\mbf S_{d'}^{\jmath} \otimes \mbf S_{d''}.
  }\end{equation}
\end{prop}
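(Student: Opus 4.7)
The plan is to reduce to the raw-comultiplication commutativity already established in Proposition \ref{com-coproduct}, and then to check that the scalar twists used to refine $\widetilde{\Delta}^{\jmath}$ into $\Delta^{\jmath}_{\mbf v}$ (and $\widetilde{\Delta}^{\C}$ into $\Delta^{\C}$) agree under $\psi \otimes 1$.

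First I would verify that $\psi$ respects the bigrading indexed by $\Lambda^{\jmath}_{\nn,d} \times \Lambda^{\jmath}_{\nn,d}$. If $A \in {}^{\mathfrak b}\Xi$ has row and column sums $(\mbf b, \mbf a)$, then as a matrix $A - E_{r+1,r+1} \in {}^{\C}\Xi$ has row and column sums $(\mbf b - \mbf e_{r+1}, \mbf a - \mbf e_{r+1})$; however any type-$C$ flag $L$ realizing the corresponding orbit satisfies $\alpha(L)_i = \dim L_i/L_{i-1} + \delta_{i,r+1} = \mbf b_i$ by the very definition of $\alpha$. Hence for every $(\mbf b, \mbf a) \in \Lambda^{\jmath}_{\nn,d} \times \Lambda^{\jmath}_{\nn,d}$, the map $\psi$ restricts to an isomorphism $\mbf S_d^{\jmath}(\mbf b, \mbf a) \cong {}^{\C}\mbf S_d^{\jmath}(\mbf b, \mbf a)$, and the shape projections $p_{\mbf b, \mbf a}$ on the two sides intertwine $\psi$; an analogous statement holds for $\psi \otimes 1$, since the type-$A$ factor $\mbf S_{d''}$ is unchanged.

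Next I would verify that the twist exponents match on corresponding components. By construction both $\Delta^{\jmath}_{\mbf v, \mbf b', \mbf a', \mbf b'', \mbf a''}$ and $\Delta^{\C}_{\mbf b', \mbf a', \mbf b'', \mbf a''}$ are obtained from their raw counterparts by multiplying by $v^{\sum_{1 \leq i \leq j \leq \nn} b_i' b_j'' - a_i' a_j''} \cdot v^{u(\mbf b'', \mbf a'')}$, a scalar depending only on the shape tuples and not on the underlying matrix. Combined with Proposition \ref{com-coproduct} and the shape-preservation from step one, this yields commutativity of the refined diagram component by component; summing over all admissible quadruples $(\mbf b', \mbf a', \mbf b'', \mbf a'')$ satisfying $b_i = b_i' + b_i'' + b_{\nn+1-i}''$ and $a_i = a_i' + a_i'' + a_{\nn+1-i}''$ then yields the claim.

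The main obstacle is notational rather than substantive: one must carefully reconcile conventions between the type-$B$ formulation of \cite{FL15} and the type-$C$ formulation above, in particular matching the admissibility condition on the decompositions of shape tuples and confirming that the twist functions $s$ and $u$ are given by identical formulas in both settings (which is essentially built into the definitions). No new geometric input beyond Proposition \ref{com-coproduct} is required.
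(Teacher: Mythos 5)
Your proposal is correct and follows essentially the same route as the paper: the paper's proof is precisely "compare the definitions," i.e., the refined comultiplications on both sides are obtained from the raw ones (whose compatibility with $\psi$ is Proposition~\ref{com-coproduct}) by the same shape-dependent twist $v^{\sum_{1\leq i\leq j\leq \nn} b_i'b_j''-a_i'a_j''} v^{u(\mbf b'',\mbf a'')}$, and your observation that $\psi$ preserves the shape decomposition (because $\alpha(L)_i$ includes the $+\delta_{i,r+1}$ correction) is exactly the point implicitly built into that comparison.
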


The transfer map 
\[\phi^{\C}_{d,d-\nn}: {}^{\C}\mbf S_d \longrightarrow {}^{\C}\mbf S_{d-\nn}\]
is defined to be the composition 
$\xymatrix{{}^{\C}\mbf S_d \ar[r]^-{\tilde{\Delta}^{\C}} & {}^{\C}\mbf S_d \otimes \mbf S_{\nn} \ar[r]^-{1\otimes \chi}  & {}^{\C}\mbf S_{d-\nn} \otimes \mathcal A= {}^{\C}\mbf S_{d-\nn}}$,
where $\chi(\eta_A)= {\rm det}(A)$ for any $A\in {}^{\C}\Xi$ and $\eta_A$ is the characteristic function on the orbit corresponding the matrix $A$.
This is analogous to  the transfer map $\phi^{\jmath}_{d,d-\nn}:  \mbf S_d^{\jmath} \rightarrow \mbf S_{d-\nn}^{\jmath}$ 
defined in \cite[\S3.6]{FL15} in the finite type $B$ setting.
By Proposition \ref{com-coproduct}, we have the following proposition.
\begin{prop}
  The following diagram is commutative:
  \[
  \xymatrix{
   \mbf S_d^{\jmath} \ar[rr]^-{\phi^{\jmath}_{d,d-\nn}} \ar[d]^{\psi} && \mbf S_{d-\nn}^{\jmath} 
   \ar[d]_{\psi}\\
  {}^{\C}\mbf S_{d}^{\jmath} \ar[rr]^-{\phi^{\C}_{d,d-\nn}}  && {}^{\C}\mbf S_{d-\nn}^{\jmath}
  }
  \]
  \end{prop}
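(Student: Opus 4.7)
The plan is to deduce this commutativity formally from Proposition~\ref{com-coproduct} together with the elementary observation that two operations acting on disjoint tensor factors commute. No new computations are needed.

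First I would unravel the definitions on both sides of the target diagram. By definition, $\phi^{\jmath}_{d,d-\nn} = (1\otimes\chi)\circ \widetilde{\Delta}^{\jmath}$, where the comultiplication is taken with $(d',d'')=(d-\nn,\nn)$, and similarly $\phi^{\C}_{d,d-\nn} = (1\otimes\chi)\circ \widetilde{\Delta}^{\C}$ with the same splitting. Both transfer maps factor through tensor products whose second factor is the same type $A$ Schur algebra $\mbf S_{\nn}$, and the character $\chi:\mbf S_{\nn}\to \cA$ is literally the same map in both lines.

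Next I would invoke Proposition~\ref{com-coproduct} specialized to $(d',d'')=(d-\nn,\nn)$, which gives
\[
(\psi\otimes 1)\circ \widetilde{\Delta}^{\jmath} \;=\; \widetilde{\Delta}^{\C}\circ \psi \qquad\text{as maps}\quad \mbf S_d^{\jmath}\longrightarrow {}^{\C}\mbf S_{d-\nn}^{\jmath}\otimes \mbf S_{\nn}.
\]
Since $\psi$ acts on the first tensor factor and $\chi$ on the second, they commute trivially in the sense that
\[
(1\otimes \chi)\circ (\psi\otimes 1) \;=\; \psi\circ (1\otimes \chi) \qquad\text{as maps}\quad \mbf S_{d-\nn}^{\jmath}\otimes \mbf S_{\nn}\longrightarrow {}^{\C}\mbf S_{d-\nn}^{\jmath}.
\]
Concatenating these two identities yields
\[
\phi^{\C}_{d,d-\nn}\circ\psi \;=\; (1\otimes\chi)\circ\widetilde{\Delta}^{\C}\circ\psi \;=\; (1\otimes\chi)\circ(\psi\otimes 1)\circ\widetilde{\Delta}^{\jmath} \;=\; \psi\circ(1\otimes\chi)\circ\widetilde{\Delta}^{\jmath} \;=\; \psi\circ \phi^{\jmath}_{d,d-\nn},
\]
which is exactly the desired commutativity.

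There is essentially no obstacle: the statement is a formal consequence of the preceding proposition once the transfer maps are written as $(1\otimes\chi)\circ\widetilde{\Delta}$. The only mild subtlety worth checking is that Proposition~\ref{com-coproduct}, though stated somewhat schematically, does apply componentwise to the fixed splitting $d=(d-\nn)+\nn$; this is immediate from the definitions of $\widetilde{\Delta}^{\jmath}$ and $\widetilde{\Delta}^{\C}$, which are built up as direct sums over all such splittings.
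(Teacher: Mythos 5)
Your proposal is correct and is exactly the paper's argument: the paper derives this proposition directly from Proposition~\ref{com-coproduct}, since both transfer maps are by definition $(1\otimes\chi)\circ\widetilde{\Delta}$ with the same second factor $\mbf S_{\nn}$ and the same $\chi$, so the compatibility of the raw comultiplications with $\psi$ plus the trivial commutation of $\psi\otimes 1$ with $1\otimes\chi$ gives the claim. Your remark that the schematic Proposition~\ref{com-coproduct} is to be read componentwise at the splitting $d=(d-\nn)+\nn$ is the right reading and needs no further justification.
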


Finally, we address the $\imath$-version.
Recall that $\mm=\nn-1=2r$.
Let
\[ 
X^{\imath}_{\C} = \{ 0= V_0 \subseteq V_1\subseteq \ldots \subseteq V_{\mm} = \mbb F^{2d}_q \big \vert V_{\mm -i} = V_i^{\perp}\}.
\]
The convolution algebra on $X^{\imath}_{\C} \times X^{\imath}_{\C}$ is denoted by ${}^{\C}\mbf  S_d^{\imath}$.
We shall naturally embed $X^{\imath}_{\C}$ into $X_{\C}$ by sending a $\mm$-step flag in $X^{\imath}_{\C}$ as above to
an $\nn$-step flag 
\[
0= V_0 \subseteq V_1\subseteq \ldots \subseteq V_r \subseteq V_r \subseteq \ldots \subseteq V_{\mm}= \mbb F^{2d}_q
\]
(where the maximal isotropic subspace $V_r$ in the middle is repeated).
Therefore, ${}^{\C}\mbf  S_d^{\imath}$ is naturally a subalgebra of ${}^{\C}\mbf  S_d$.
Consider the following set
\begin{equation*}
  \begin{split}
    \Xi^{\imath}_{\C} &= \{A =(a_{ij} \in {}^{\C}\Xi \big \vert  a_{r+1,j}=0 = a_{i, r+1},\ \forall i, j\}.
    %,\\ \Pi_{\C}^{\imath} & = \{B = (b_{ij}) \in \Pi_{\C} | b_{r+1,j}= 0, \forall j\}.
  \end{split}
\end{equation*}
By \cite[Lemma 6.1]{BKLW14}, we have a natural bijection
${\SP}({2d}) \backslash X_{\C}^{\imath} \times X_{\C}^{\imath} \leftrightarrow {}^{\C}\Xi^{\imath}$,
%and $\# {}^{\C}\Xi =\begin{pmatrix}   2r^2+d-1\\  d \end{pmatrix}$.
and moreover, $\{[A]| A\in \Xi^{\imath}_{\C}\}$ forms a basis of  ${}^{\C}\mbf  S_d^{\imath}$.
Recall a completely analogous subalgebra $\mbf  S_d^{\imath}$ of $\mbf  S_d^{\jmath}$ was defined in ~\cite[\S5]{BKLW14}.
The standard basis of $\mbf  S_d^{\imath}$ is parametrized by a subset $\Xi^\imath \subset \Xi$, and there is a natural bijection
$\Xi^\imath \longrightarrow  \Xi^{\imath}_\C$,  $A \mapsto A-E_{r+1,r+1}$.
The following proposition follows by the definition of $\psi$.
\begin{prop}
  The restriction of $\psi: \mbf S_d^{\jmath}\longrightarrow {}^{\C}\mbf S_d^{\jmath}$ induces an algebra isomorphism ${}^{\C}\mbf  S_d^{\imath}\simeq \mbf  S_d^{\imath}$.
\end{prop}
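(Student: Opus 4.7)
The plan is to combine three ingredients already in place: (i) the algebra isomorphism $\psi: \mbf S_d^{\jmath}\longrightarrow {}^{\C}\mbf S_d^{\jmath}$ established in Proposition~\ref{stablem}; (ii) the explicit basis parametrizations $\{[A]\mid A\in \Xi^\imath\}$ and $\{[A]\mid A\in \Xi^{\imath}_{\C}\}$ of the respective subalgebras, recalled in the last paragraph; and (iii) the shift bijection on index sets. The whole argument is formal once these pieces are aligned, so it should be short.

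First I would pin down the bijection of indexing sets. By definition $\Xi^\imath\subset {}^{\mathfrak b}\Xi$ consists of those $A$ with $a_{r+1,j}=\delta_{r+1,j}$ and $a_{i,r+1}=\delta_{i,r+1}$, whereas $\Xi^{\imath}_{\C}\subset {}^{\C}\Xi$ consists of those $A'$ with $a'_{r+1,j}=a'_{i,r+1}=0$ for all $i,j$. The map $A\mapsto A-E_{r+1,r+1}$ carries the former set bijectively onto the latter, since subtracting the single unit at position $(r+1,r+1)$ converts the Kronecker-delta condition into the vanishing condition and leaves all other entries untouched (centrosymmetry is preserved because $(r+1,r+1)$ is the central diagonal entry). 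Consequently $\psi$ sends the standard basis $\{[A]\mid A\in\Xi^\imath\}$ of $\mbf S_d^{\imath}$ bijectively onto the standard basis $\{[A']\mid A'\in\Xi^{\imath}_{\C}\}$ of ${}^{\C}\mbf S_d^{\imath}$.

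From this it follows at once that $\psi$ restricts to an $\mathcal A$-module isomorphism $\mbf S_d^{\imath}\stackrel{\sim}{\longrightarrow} {}^{\C}\mbf S_d^{\imath}$. Since $\psi$ itself is an algebra homomorphism and $\mbf S_d^{\imath}$, ${}^{\C}\mbf S_d^{\imath}$ are subalgebras of $\mbf S_d^{\jmath}$, ${}^{\C}\mbf S_d^{\jmath}$ respectively, the restriction automatically preserves multiplication; being a bijection of modules, it is an algebra isomorphism. No direct verification on generators or multiplication formulas is needed.

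There is no real obstacle here: once the shift bijection of parameter sets is spelled out, the proposition is immediate from Proposition~\ref{stablem}. The only minor point to check carefully is that the $\imath$-subalgebras are genuinely identified as the spans of the indicated basis elements (so that the image of $\mbf S_d^{\imath}$ under $\psi$ is literally equal to ${}^{\C}\mbf S_d^{\imath}$, not merely contained in it), but this is exactly the content of \cite[\S5]{BKLW14} combined with \cite[Lemma 6.1]{BKLW14} cited just above.
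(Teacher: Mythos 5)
Your argument is correct and is essentially the paper's own: the paper simply notes the proposition ``follows by the definition of $\psi$,'' i.e.\ $\psi([A])=[A-E_{r+1,r+1}]$ matches the standard basis of $\mbf S_d^{\imath}$ (middle row/column supported only at $(r+1,r+1)$) with that of ${}^{\C}\mbf S_d^{\imath}$ (middle row/column zero), and then invokes that $\psi$ is already an algebra isomorphism. Your write-up just makes this bijection of index sets and the restriction argument explicit.
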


\begin{rem}
It should be clear for the reader that
the various canonical bases from finite type B/C geometries are compatible under the isomorphism $\psi$.
\end{rem}

\clearpage
%\printindex{Index of Notation}
%\addcontentsline{toc}{chapter}{Index}


\begin{thebibliography}{DDPW08}\frenchspacing

%\bibitem[BB05]{BB05}
%A.~Bj{\"o}rner and F.~Brenti,
%{\em Combinatorics of {C}oxeter groups}, 
%Graduate Texts in Mathematics {\bf 231}, Springer, New York, 2005.

%\bibitem[B86]{B86} R. Bedard, {\em Cells for two Coxeter groups}, Comm. Algebra, {\bf 14} (1986), 1253-1286.

\bibitem[BBD82]{BBD82}
A.A. Beilinson, J. Bernstein and P. Deligne,
{\em Faisceaux pervers},
Ast\'{e}risque {\bf 100} (1982).

\bibitem[BLM90]{BLM90} 
A. Beilinson, G. Lusztig and R. McPherson,
          {\em A geometric setting for the quantum deformation of $GL_n$}, 
Duke Math. J., {\bf 61} (1990), 655--677.

\bibitem[Bao16]{Bao16} 
H. Bao, 
{\em Kazhdan-Lusztig theory of super type $D$ and quantum symmetric pairs}, Preprint, 2016.

\bibitem[BKLW14]{BKLW14}
H.~Bao, J.~Kujawa, Y.~Li, and W.~Wang,
{\em Geometric {S}chur duality of classical type}, 
\href{http://arxiv.org/abs/1404.4000}{arXiv:1404.4000v3}.

\bibitem[BLW14]{BLW14}
H.~Bao,  Y.~Li, and W.~Wang, 
{\em A geometric setting for the coideal algebra $\dot{\bU}^\imath$  and compatibility of canonical bases}, 
Appendix to \cite{BKLW14}, 15pp.

\bibitem[Br03]{Br03}
T. Braden,
{\em Hyperbolic localization of intersection cohomology},
Transform. Groups {\bf 8} (2003),  209--216.

\bibitem[BW13]{BW13}
H. Bao and W. Wang,
{\em A new approach to Kazhdan-Lusztig theory of type $B$ via quantum symmetric pairs},
\href{http://arxiv.org/abs/1310.0103}{	arXiv:1310.0103}.

\bibitem[BW16]{BW16} H. Bao and W. Wang,
{\em Canonical bases arising from quantum symmetric pairs},   
in preparation.

\bibitem[CG97]{CG97} N. Chriss and V. Ginzburg,
        {\em Representation Theory and Complex Geometry},
        Birkh\"auser, Boston (1997).

%\bibitem[Cur85]{Cur85}
%C.~Curtis,
%{\em On {L}usztig's isomorphism theorem for {H}ecke algebras},
%J. Algebra {\bf 92} (1985), 348--365.

\bibitem[DD05]{DD05}
B. Deng and J. Du, 
{\em Monomial bases for quantum affine $sl_n$}, 
Adv. in  Math.  {\bf 191} (2005), 276--304.

\bibitem[DDF12]{DDF12}
B. Deng, J. Du and Q.  Fu,
{\em A double Hall algebra approach to affine quantum Schur-Weyl theory.}
London Mathematical Society Lecture Note Series, {\bf 401}.
Cambridge University Press, Cambridge, 2012.

\bibitem[DDPW08]{DDPW08}
B.~Deng, J.~Du, B.~Parshall and J.~Wang, 
{\em Finite dimensional algebras and quantum groups}, 
Mathematical Surveys and Monographs {\bf 150}.
American Mathematical Society, Providence, RI, 2008.

\bibitem[DF13]{DF13} 
J. Du and Q. Fu,
{\em  Quantum affine $\frak{gl}_n$ via Hecke algebras},
\href{http://arxiv.org/abs/1311.1868}{arXiv:1311.1868}, 
Adv. in Math. (to appear).

\bibitem[DF14]{DF14} 
J. Du and Q. Fu,
{\em The integral quantum loop algebra of $\mathfrak{gl}_n$}, 
\href{http://arxiv.org/abs/1404.5679}{arXiv:1404.5679}.   

\bibitem[Dr86]{Dr86} V. Drinfeld,
{\em Quantum groups}, Proc. Int. Congr. Math. Berkeley 1986, vol. {\bf 1}, Amer. Math. Soc. 1988, 798--820. 
 
\bibitem[ES13]{ES13} 
M.~Ehrig and C.~Stroppel,
{\em Nazarov-Wenzl algebras, coideal subalgebras and categorified skew Howe duality}, 
\href{http://arxiv.org/abs/1310.1972}{arXiv:1310.1972}.

%\bibitem[EE98]{EE98}
%H.~Eriksson and K.~Eriksson.
%{\em Affine {W}eyl groups as infinite permutations},
%Electron. J. Combin. {\bf 5} (1998),   Research Paper 18, 32 pp. 

\bibitem[FLLLW]{FLLLW}
Z. Fan, C. Lai,  Y. Li, L. Luo and W. Wang,
{\em Affine Hecke algebras and quantum symmetric pairs}, in preparation, 2016. 

\bibitem[FL14]{FL14}
Z. Fan and Y. Li,
{\em Geometric Schur duality of classical type, II},
Trans. Amer. Math. Soc.,
Series {\bf B 2} (2015), 51--92.  % arXiv:1408.6740.

\bibitem[FL15]{FL15}
Z. Fan and Y. Li,
{\em Positivity of canonical basis under comultiplication},  
\href{http://arxiv.org/abs/1511.02434}{arXiv:1511.02434}.

\bibitem[G97]{G97} 
R. Green, 
{\em Hyperoctahedral Schur algebras}, 
J. Algebra  {\bf 192} (1997), 418--438.

\bibitem[G99]{G99}
R. Green,
{\em The affine q-Schur algebra},
J. Algebra {\bf 215} (1999), 379--411.

\bibitem[GL92]{GL92} 
I. Grojnowski and G. Lusztig, 
{\em  On bases of irreducible representations of quantum $GL_n$}. 
In:  {\em Kazhdan-Lusztig theory and related topics}  
(Chicago, IL, 1989), 167-174, Contemp. Math. {\bf 139}, Amer. Math. Soc., Providence, RI, 1992.

\bibitem[GV93]{GV93} 
V. Ginzburg and E. Vasserot,  
{\em Langlands reciprocity for affine quantum groups of type $A_n$},
Internat. Math. Res. Notices {\bf 3} (1993), 67--85. 

\bibitem[H99]{H99} 
R. Howe, 
{\em Affine-like Hecke algebras and p-adic representation theory.} in
{\em  Iwahori-Hecke algebras and their representation theory} 
(Martina-Franca, 1999), 27-69, Lecture Notes in Math., 1804, Springer, Berlin, 2002.

\bibitem[I64]{I64} N. Iwahori,
{\em On the structure of a Hecke ring of a Chevalley group over a finite field}, 
J. Fac. Sci. Univ. Tokyo Sect. I {\bf 10} (1964), 215--236.

\bibitem[IM65]{IM65} N. Iwahori and H. Matsumoto,
{\em On some Bruhat decomposition and the structure of the
Hecke rings of $p$-adic Chevalley groups}, 
Publications math. I.H.E.S. {\bf 25} (1965), 5--48.

\bibitem[Jim86]{Jim86} M. Jimbo,
{\em A $q$-analogue of $U({\mathfrak g\mathfrak l}(N+1))$, Hecke
algebra, and the Yang-Baxter equation}, Lett. Math. Phys. {\bf 11}
(1986), 247--252.

\bibitem[K91]{K91} 
M. Kashiwara,
{\em On crystal bases of the $Q$-analogue of universal enveloping algebras}, 
Duke Math.~J.~{\bf 63} (1991), 456--516.

\bibitem[K94]{K94} M. Kashiwara,
{\em Crystal bases of modified quantized enveloping algebra,}
Duke Math. J. {\bf 73} (1994) 383-413.


\bibitem[Ko14]{Ko14} 
S. Kolb,
{\em Quantum symmetric Kac-Moody pairs}, 
Adv. in Math. {\bf 267} (2014), 395--469.
		%arXiv:1207.6036.          

\bibitem[KL79]{KL79}
D.~Kazhdan and G.~Lusztig,
{\em Representations of {C}oxeter groups and {H}ecke algebras},
Invent. Math. {\bf 53} (1979), 165--184.

%\bibitem[KhL10]{KhL10} M. Khovanov and A. Lauda,
%{\em A categorification of quantum $\mathfrak{sl}(n)$}, Quantum Topology {\bf 1} (2010), 1--92. 
          
\bibitem[Le02]{Le02}
G.~Letzter, {\em Coideal subalgebras and quantum symmetric pairs}, 
New directions in Hopf algebras (Cambridge), MSRI publications, vol. {\bf 43}, Cambridge Univ. Press, 2002, pp. 117--166.

\bibitem[LL15]{LL15}
C. Lai and L. Luo, 
{\em An elementary construction of monomial bases of quantum affine $\mathfrak{gl}_n$},
\href{http://arxiv.org/abs/1506.07263}{arXiv:1506.07263}.

\bibitem[Lu90]{Lu90} G. Lusztig, 
{\em Canonical bases arising from
quantized enveloping algebras}, J.~Amer.~Math.~Soc.~{\bf 3} (1990), 447--498.

\bibitem[Lu93]{Lu93}
G.~Lusztig, 
{\em Introduction to quantum groups}, 
Progress in Mathematics {\bf 10},
Birkh\"auser Boston, Inc., Boston, MA, 1993.

\bibitem[L97]{L97}
G.  Lusztig,
{\em Cells in affine Weyl groups and tensor categories,}
Adv. Math. 129 (1997), no. 1, 85-98.

\bibitem[Lu99]{Lu99} 
G.~Lusztig,
{\em Aperiodicity in quantum affine $\mathfrak{gl}_n$}, Asian J. Math. {\bf 3} (1999), 147--177.

\bibitem[Lu00]{Lu00} 
G.~Lusztig, {\em Transfer maps for quantum affine $\mathfrak{sl}_n$,} 
in 
{\em Representations and quantizations}, (ed.  J.~Wang et. al.), 
China Higher Education Press and Springer Verlag 2000, 341--356. 

\bibitem[Lu03]{Lu03}
G.~Lusztig,
{\em Hecke algebras with unequal parameters},
CRM Monograph Series, {\bf 18}. American Mathematical Society, Providence, RI, 2003.

\bibitem[LW15]{LW15}
Y. Li and W. Wang,
{\em Positivity vs negativity of canonical basis},
\href{http://arxiv.org/abs/1501.00688}{arXiv:1501.00688v3}.

\bibitem[Mc12]{Mc12} 
K.~McGerty,
{\em On the geometric realization of the inner product and canonical basis for quantum affine $\mathfrak{sl}_n$},
Alg. and Number Theory {\bf 6} (2012), 1097--1131.

%\bibitem[MSV13]{MSV13}  
%M.~Mackaay, M.~Stosic and P.~Vaz,
%{\em A diagrammatic categorification of the q-Schur algebra},  Quantum Topol. {\bf 4} (2013), 1--75.

\bibitem[P09]{P09} 
G. Pouchin,
{\em A geometric Schur-Weyl duality for quotients of affine Hecke algebras,} 
J. Algebra {\bf  321}  (2009),   230--247. 

\bibitem[R90]{R90} 
C. M. Ringel,
{\em Hall algebras and quantum groups}, 
Invent. Math. {\bf 101} (1990), 583-592.
        
\bibitem[Sa99]{Sa99}
D. Sage,
{\em The geometry of fixed point varieties of affine flag manifolds},
Transactions AMS, {\bf 352} (1999), 2087--2119.


\bibitem[Sch06]{Sch06}
O. Schiffmann,
{\em Lectures on Hall algebras},
\href{http://arxiv.org/abs/math/0611617}{arXiv:math/0611617}.


%\bibitem[Shi94]{Shi94}
%J.-Y. Shi,
%{\em The verification of a conjecture on left cells of certain Coxeter groups},
%Hiroshima Math. J. {\bf 24} (1994), 627-646.

%\bibitem[St74]{St74}
%R. Steinberg,
%{\em Conjugacy classes in algebraic groups}.
%Notes by Vinay V. Deodhar. Lecture Notes in Mathematics, Vol. {\bf 366}. Springer-Verlag, Berlin-New York, 1974.
        
\bibitem[SV00]{SV00} 
O. Schiffmann and E. Vasserot, {\em Geometric construction of the 
global base of the quantum modified algebra of $\widehat{ \mathfrak{gl}}_n$}, Transform. Groups {\bf 5} (2000), 351--360.

\bibitem[VV99]{VV99} 
M. Varagnolo and E. Vasserot,  
{\em On the decomposition matrices of the quantized Schur algebra}, 
Duke Math. J. {\bf 100} (1999), 267--297.

\end{thebibliography}
\end{document}